\documentclass[openany, 12pt]{amsbook}
\usepackage{comment}
\usepackage[utf8]{inputenc}
\usepackage[T1]{fontenc}
\DeclareMathAlphabet{\mathpzc}{OT1}{pzc}{m}{it}
\usepackage{geometry}\geometry{margin=1.25in}


\usepackage{appendix}
\usepackage{chngcntr}
\usepackage{etoolbox}

\usepackage[dvipsnames]{xcolor}




\usepackage{amsfonts}
\usepackage{amssymb}
\usepackage{amsthm}
\usepackage{amsmath}
\usepackage{amscd}
\usepackage[shortlabels]{enumitem}
\usepackage{mathrsfs}
\usepackage{tikz}
\usetikzlibrary{calc,arrows,decorations.pathreplacing}
\usepackage{nicefrac, xfrac}
\usepackage{mathtools,xparse}

\usepackage{imakeidx}

\makeindex

\usepackage[pagebackref, colorlinks = true, 
linkcolor = red,
urlcolor  = blue,
citecolor = blue,
anchorcolor = blue]{hyperref}
\hypersetup{pdftitle={\@title},pdfauthor={\@author}}

\setlength{\topmargin}{0in} \setlength{\oddsidemargin}{0in}
\setlength{\evensidemargin}{0in} \setlength{\textwidth}{6.4in}
\setlength{\textheight}{8.6in}

\theoremstyle{plain}

\newtheorem{theorem}{Theorem}[section]
\newtheorem{corollary}[theorem]{Corollary}
\newtheorem{lemma}[theorem]{Lemma}

\newtheorem{proposition}[theorem]{Proposition}

\newtheorem{question}[theorem]{Question} 
\newtheorem{claim}{Claim}[theorem]

\theoremstyle{definition}
\newtheorem{definition}[theorem]{Definition}
\newtheorem{observation}[theorem]{Observation}
\newtheorem{remark}[theorem]{Remark}
\newtheorem{example}[theorem]{Example}

\newtheorem*{theorem*}{Theorem}

\newtheorem{thmx}{Theorem}
\newtheorem{corx}[thmx]{Corollary}

\newenvironment{subproof}[1][\proofname]{%
	\begin{proof}[#1]%
}{%
	\end{proof}%
}

\numberwithin{section}{chapter}
\numberwithin{subsection}{section}

\renewcommand{\epsilon}{\varepsilon}

\newcommand{\vp}{\ensuremath{\varphi}}
\newcommand{\spn}{\ensuremath{\mathrm{span}}}

\newcommand{\sub}{\ensuremath{ \subseteq}} 
\newcommand{\bus}{\ensuremath{ \supseteq}} 
\newcommand{\union}{\ensuremath{\bigcup}} 
\newcommand{\set}[1]{\ensuremath{ \left\{#1\right\} }} 
\newcommand{\norm}[1]{\ensuremath{ \Vert#1\Vert }} 

\newcommand{\vertiii}[1]{{\left\vert\kern-0.25ex\left\vert\kern-0.25ex\left\vert #1
	\right\vert\kern-0.25ex\right\vert\kern-0.25ex\right\vert}}
\newcommand{\trinorm}[1]{\ensuremath{\vertiii{#1}}} 
\newcommand{\absval}[1]{\ensuremath{ \left\lvert#1\right\rvert }} 
\newcommand{\spot}{\ensuremath{ \makebox[1ex]{\textbf{$\cdot$}} }}

\newcommand{\ol}[1]{\overline{#1}}
\newcommand{\Ul}[1]{\underline{#1}}

\newcommand{\bs}{\ensuremath{\backslash}}

\renewcommand\~{\tilde}
 
\newcommand{\cEP}{\ensuremath{\cE P}} 
\newcommand{\pzh}{\ensuremath{\mathpzc{h}}}

\newcommand{\floor}[1]{\ensuremath{\left\lfloor#1 \right\rfloor}}

\newcommand{\lt}{\ensuremath{\left}}
\newcommand{\rt}{\ensuremath{\right}}

\newcommand{\oio}{\ensuremath{\omega\in\Omega}}

\newcommand{\Om}{\ensuremath{\Omega}}
\newcommand{\Lm}{\ensuremath{\Lambda}}
\newcommand{\Gm}{\ensuremath{\Gamma}}

\newcommand{\Sg}{\ensuremath{\Sigma}}

\newcommand{\Dl}{\ensuremath{\Delta}}

\newcommand{\Ta}{\ensuremath{\Theta}}

\newcommand{\om}{\ensuremath{\omega}}
\newcommand{\lm}{\ensuremath{\lambda}}
\newcommand{\gm}{\ensuremath{\gamma}}
\newcommand{\al}{\ensuremath{\alpha}}
\newcommand{\bt}{\ensuremath{\beta}}
\newcommand{\sg}{\ensuremath{\sigma}}
\newcommand{\ep}{\ensuremath{\epsilon}}
\newcommand{\eps}{\ensuremath{\epsilon}}
\newcommand{\dl}{\ensuremath{\delta}}
\newcommand{\kp}{\ensuremath{\kappa}}
\newcommand{\zt}{\ensuremath{\zeta}}
\newcommand{\ta}{\ensuremath{\theta}}

\newcommand{\vta}{\ensuremath{\vartheta}}
\newcommand{\vkp}{\ensuremath{\varkappa}}
\newcommand{\vrho}{\ensuremath{\varrho}}

\DeclareSymbolFont{bbold}{U}{bbold}{m}{n}
\DeclareSymbolFontAlphabet{\mathbbold}{bbold}

\newcommand{\ind}{\ensuremath{\mathbbold{1}}}
\DeclareMathOperator*{\esssup}{ess\sup}
\DeclareMathOperator*{\essinf}{ess\inf}
\DeclareMathOperator{\HD}{HD}
\DeclareMathOperator{\diam}{diam}
\DeclareMathOperator{\supp}{supp}
\DeclareMathOperator{\rank}{rank}
\DeclareMathOperator{\End}{End}

\newcommand{\cA}{\ensuremath{\mathcal{A}}}
\newcommand{\cB}{\ensuremath{\mathcal{B}}}
\newcommand{\cC}{\ensuremath{\mathcal{C}}}
\newcommand{\cD}{\ensuremath{\mathcal{D}}}
\newcommand{\cE}{\ensuremath{\mathcal{E}}}
\newcommand{\cF}{\ensuremath{\mathcal{F}}}
\newcommand{\cG}{\ensuremath{\mathcal{G}}}

\newcommand{\cI}{\ensuremath{\mathcal{I}}}
\newcommand{\cJ}{\ensuremath{\mathcal{J}}}

\newcommand{\cL}{\ensuremath{\mathcal{L}}}

\newcommand{\tcL}{\ensuremath{\tilde{\mathcal{L}}}}

\newcommand{\cO}{\ensuremath{\mathcal{O}}}
\newcommand{\cP}{\ensuremath{\mathcal{P}}}
\newcommand{\cQ}{\ensuremath{\mathcal{Q}}}

\newcommand{\cV}{\ensuremath{\mathcal{V}}}

\newcommand{\cX}{\ensuremath{\mathcal{X}}}

\newcommand{\cZ}{\ensuremath{\mathcal{Z}}}

\newcommand{\sA}{\ensuremath{\mathscr{A}}}
\newcommand{\sB}{\ensuremath{\mathscr{B}}}
\newcommand{\sC}{\ensuremath{\mathscr{C}}}

\newcommand{\sF}{\ensuremath{\mathscr{F}}}

\newcommand{\sL}{\ensuremath{\mathscr{L}}}

\newcommand{\CC}{\ensuremath{\mathbb C}} 

\newcommand{\LL}{\ensuremath{\mathbb L}}

\newcommand{\NN}{\ensuremath{\mathbb N}}

\newcommand{\RR}{\ensuremath{\mathbb R}}

\newcommand{\XX}{\ensuremath{\mathbb X}}
\newcommand{\YY}{\ensuremath{\mathbb Y}}
\newcommand{\ZZ}{\ensuremath{\mathbb Z}} 

\def\lra{\longrightarrow}
\def\var{\text{{\rm var}}}

\def\BV{\text{{\rm BV}}}
\def\Leb{\text{{\rm Leb}}}
\def\Bd{\text{{\rm B}}}
\newcommand{\BVT}{\ensuremath{\BV_{temp}}}

\DeclareMathOperator*{\nuessinf}{\ensuremath{\nu\text{-ess}\inf}}
\DeclareMathOperator*{\Lebessinf}{\ensuremath{\text{Leb-ess}\inf}}
\DeclareMathOperator*{\nuesssup}{\ensuremath{\nu\text{-ess}\sup}}
\DeclareMathOperator*{\Lebesssup}{\ensuremath{\text{Leb-ess}\sup}}

\newcommand{\maeom}{$m$ a.e. $\om\in\Om$}

\def\lt{\left}
\def\rt{\right}

\providecommand{\phantomsection}{}
\AtBeginDocument{\let\textlabel\label}
\makeatletter
\newcommand{\mylabel}[2]{\raisebox{.7\normalbaselineskip}{\phantomsection}(#1)%
	\def\@currentlabel{#1}\textlabel{#2}}
\makeatother

\makeatletter
\newcommand{\myglabel}[2]{\raisebox{.7\normalbaselineskip}{\phantomsection}%
	\def\@currentlabel{#1}\textlabel{#2}}
\makeatother

\makeatletter
\newcommand\xlabel[2][]{\phantomsection\def\@currentlabelname{#1}\label{#2}}
\makeatother

\newcommand{\bcomma}{,\allowbreak}

\ExplSyntaxOn
\NewDocumentCommand{\mathlist}{ O{,} m m }
{
	\egreg_mathlist:nnn { #1 } { #2 } { #3 }
}
\seq_new:N \l__egreg_mathlist_seq
\cs_new_protected:Npn \egreg_mathlist:nnn #1 #2 #3
{
	\seq_set_split:Nnn \l__egreg_mathlist_seq { #1 } { #3 }
	\seq_use:Nnnn \l__egreg_mathlist_seq { #2 } { #2 } { #2 }
}
\ExplSyntaxOff

\AtBeginEnvironment{subappendices}{%
	\chapter*{Chapter 2 Appendices}
	\counterwithin{figure}{section}
	\counterwithin{table}{section}
}

\AtEndEnvironment{subappendices}{%
	\counterwithout{figure}{section}
	\counterwithout{table}{section}
}

\allowdisplaybreaks
\numberwithin{equation}{section}
\title[]{Thermodynamic Formalism and Perturbation Formulae for Quenched Random Open Dynamical Systems}
\date{\today}

\author{Jason Atnip}
\address{School of Mathematics and Statistics, University of New South Wales, Sydney, NSW 2052, Australia}
\curraddr{School of Mathematics and Physics, The University of Queensland, Brisbane, QLD 4072, Australia}
\email{\href{j.atnip@uq.edu.au}{j.atnip@uq.edu.au} }

\author{Gary Froyland}
\address{School of Mathematics and Statistics, University of New South Wales, Sydney, NSW 2052, Australia}
\email{\href{g.froyland@unsw.edu.au}{g.froyland@unsw.edu.au} }

\author{Cecilia Gonz\'alez-Tokman}
\address{School of Mathematics and Physics, The University of Queensland, Brisbane, QLD 4072, Australia}
\email{\href{cecilia.gt@uq.edu.au}{cecilia.gt@uq.edu.au} }

\author{Sandro Vaienti}
\address{Aix Marseille Université, Université de Toulon, CNRS, CPT, 13009 Marseille, France}
\email{\href{vaienti@cpt.univ-mrs.fr}{vaienti@cpt.univ-mrs.fr} }


\usepackage[automake]{glossaries-extra}

\makeglossaries
\loadglsentries{glos_entries.tex}

\makeatletter
\newglossarystyle{mystyle}{%
	\setglossarystyle{list}%
	{\end{description}}
	\renewcommand*{\glossentry}[2]{%
	\item[\glsentryitem{##1}%
	\glstarget{##1}{\glossentryname{##1}}]%
	\mbox{}\par\nobreak\@afterheading
	\glossentrydesc{##1}\glspostdescription
	{\def\hfill{\hskip 25pt plus 3fill}\dotfill\mbox{\textit{see page} ##2}}%
	}%
}
\makeatother

\setglossarystyle{mystyle}
\glstocfalse 

\begin{document}
\begin{abstract}
We develop a quenched thermodynamic formalism for open random dynamical systems generated by finitely branched, piecewise-monotone mappings of the interval.
The openness refers to the presence of holes in the interval, which terminate trajectories once they enter;  the holes may also be random. Our random driving is generated by an invertible, ergodic, measure-preserving transformation $\sigma$ on a probability space $(\Omega,\mathscr{F},m)$.
For each $\omega\in\Omega$ we associate a piecewise-monotone, surjective map $T_\omega:I\to I$, and a hole $H_\omega\subset [0,1]$;  the map $T_\omega$, the random potential $\vp_\omega$, and the hole $H_\omega$ generate the corresponding open transfer operator $\mathcal{L}_\omega$. The paper is divided into two chapters. In the first chapter we prove, for a contracting potential, that there exists a unique random probability measure $\nu_\omega$ supported on the survivor set ${X}_{\omega,\infty}$ satisfying $\nu_{\sigma(\omega)}(\mathcal{L}_\omega f)=\lambda_\omega\nu_\omega(f)$.
Correspondingly, we also prove the existence of a unique (up to scaling and modulo $\nu$) random family of functions $\phi_\omega$ that satisfy $\mathcal{L}_\omega \phi_\omega=\lambda_\omega \phi_{\sigma(\omega)}$.
Together, these provide an ergodic random invariant measure $\mu=\nu \phi     $ supported on the global survivor set $\mathcal{X}_{\infty}$, while $\phi$ combined with the random closed conformal measure yields a random absolutely continuous conditional invariant measure (RACCIM) $\eta$ supported on $[0,1]$.
Further, we prove quasi-compactness of the transfer operator cocycle generated by $\mathcal{L}_\omega$ and exponential decay of correlations for $\mu$.
The escape rates of the random closed conformal measure and the RACCIM $\eta$ coincide, and are given by the difference of the expected pressures for the closed and open random systems. Finally, we prove that the Hausdorff dimension of the surviving set $X_{\omega,\infty}$ is equal to the unique zero of the expected pressure function for almost every fiber $\omega\in\Omega$. We provide examples, including a large class of random Lasota-Yorke maps with holes, for which the above results apply.  
\\
In the second chapter of the paper we consider quasi-compact linear operator cocycles $\mathcal{L}^{n}_{\omega,0}:=\mathcal{L}_{\sigma^{n-1}\omega,0}\circ\cdots\circ\mathcal{L}_{\sigma\omega,0}\circ \mathcal{L}_{\omega,0}$, and their small perturbations $\mathcal{L}_{\omega,\epsilon}^{n}$. The operators $\cL_{\om,0}$ and $\cL_{\om,\ep}$ need not be transfer operators. 
We prove an abstract $\omega$-wise first-order formula for the leading Lyapunov multipliers  $\lambda_{\omega,\epsilon}=\lambda_{\omega,0}-\theta_{\omega}\Delta_{\omega,\epsilon}+o(\Delta_{\omega,\epsilon})$, where $\Delta_{\omega,\epsilon}$ quantifies the closeness of $\mathcal{L}_{\omega,\epsilon}$ and $\mathcal{L}_{\omega,0}$.
We then consider the situation where $\mathcal{L}_{\omega,0}^{n}$ is a transfer operator cocycle for a closed random map cocycle $T_\omega^{n}$ 
and the perturbed transfer operators $\mathcal{L}_{\omega,\epsilon}$ are defined by the introduction of small random holes $H_{\omega,\epsilon}$ in $[0,1]$, creating a random open dynamical system.
We obtain a first-order perturbation formula in this setting, which reads $\lambda_{\omega,\epsilon}=\lambda_{\omega,0}-\theta_{\omega}\mu_{\omega,0}(H_{\omega,\epsilon})+o(\mu_{\omega,0}(H_{\omega,\epsilon})),$ where $\mu_{\omega,0}$ is the unique equivariant random measure (and equilibrium state) for the original closed random dynamics. 
Our new machinery is then deployed to create a spectral approach for a quenched extreme value theory that considers random dynamics and random observations.
An extreme value law is derived using the first-order terms $\theta_\omega$.
Further, in the setting of random piecewise expanding interval maps, we establish the existence of random equilibrium states and conditionally invariant measures for random open systems with small holes via a random perturbative approach, in contrast to the cone-based arguments of the first chapter.
Finally we prove quenched statistical limit theorems for random equilibrium states arising from contracting potentials.
We illustrate all of the above theory with a variety of examples.

\end{abstract}

\maketitle
\newpage
\glsxtrRevertTocMarks
\tableofcontents
\clearpage

\setcounter{chapter}{-1}

\chapter{Introduction}
In this paper we develop a quenched thermodynamic formalism for random open dynamical systems, and their perturbations. This work may be seen as a successor to \cite{AFGTV20}, which was devoted to the construction of conformal measures and equilibrium states for random potentials associated to countably branched random maps of the unit interval. We now extend such a formalism to the more challenging setting of random \textit{open} dynamical systems. A random dynamical system is qualified to be open if it contains holes which terminate trajectories once they enter. Similarly, when referring to a \textit{closed} dynamical system, we mean a dynamical system (in the usual sense) without a hole in phase space. These notions of openness and closedness have nothing to do with the topological concepts of open and closed sets.
Throughout the text we will refer to open and closed measures and open and closed transfer operators, by which we mean measures and transfer operators associated to the random dynamical system with or without holes, respectively. We note that our closed transfer operator does not have any relation to the notion of closed (unbounded) linear operators.

The current paper is divided into three chapters.
In the first chapter, the random holes are allowed to be large ensuring that the asymptotic dynamics take place on a surviving set and the objective will be to define equilibrium states and their variational principles. A key step in this program will be the construction of (random) conditionally invariant probability measures. Moreover we will give an explicit formula for the escape rate. 
In the second chapter, the introduction of a decreasing family of small random holes with measure tending to zero are seen as a perturbation of a transfer operator. We will first develop a perturbation theory for (nonautonomous) cocycles of transfer operators, 
which we will then use to study the escape rates and recurrence to small random holes. 
We then use our general random perturbation theory to prove extreme value laws. Finally, in the third chapter, we present a short selection of open questions. 

\, 

\begin{center}
\textbf{Summary of Chapter 1. Thermodynamic formalism for random interval maps with holes}
\end{center}

\, 

Deterministic closed transitive dynamics $T:[0,1]\to[0,1]$ with enough expansivity enjoy a ``thermodynamic formalism'':  the transfer operator with a sufficiently regular potential $\vp$ has a unique absolutely continuous invariant measure (ACIM) $\mu_0$, absolutely continuous with respect to the conformal measure $\nu_0$.
Furthermore, $\mu_0$ arises as an equilibrium state, i.e.\ a maximiser of the free energy, which is a sum of the integral of the potential $\vp$ and the metric entropy $h(\mu_0)$.

Continuing with the deterministic setting, if one introduces a hole $H\subset [0,1]$, the situation becomes considerably more complicated.
In the simplest case where the potential is the usual geometric potential $\vp=-\log|T'|$, because of the lack of mass conservation, one expects at best an absolutely continuous \emph{conditionally} invariant measure (ACCIM) $\eta$, conditioned according to survival from the infinite past, and absolute continuous with respect to the closed conformal measure $\nu_0$. Considering the dynamics restricted to the survivor set $X_\infty$, namely the set of points whose infinite forward trajectories remain in $[0,1]$, there exists a unique open ACIM $\mu$, supported on $X_\infty$, absolutely continuous with respect to the open conformal measure $\nu$. 
Early work on the existence of the ACCIM and exponential convergence of non-equilibrium densities, includes \cite{vandenBedemChernov97,Colletetal00}.
The paper \cite{LMD} handles general potentials that are contracting \cite{LSV} for the closed system, demonstrating exponential decay for $\mu$.
There has been further work on the Lorentz gas and billiards \cite{DemersWrightYoung10,Demers13}, intermittent maps \cite{DemersFernandez14,DemersTodd17intermittent}, and multimodal maps \cite{DemersTodd17multimodal, DemersTodd20multimodal}.
In the setting of diffeomorphisms with SRB measures, following the introduction of a hole, relations between escape rates and the pressures have been studied in \cite{DemersWrightYoung12}. 

Looking to the fractal dimension of the surviving set $X_\infty$, the machinery of thermodynamic formalism was first employed by Bowen \cite{bowen_hausdorff_1979} to find the Hausdorff dimension of the limit sets of quasi-Fuchsian groups in terms of the pressure function, and then pioneered in the setting of open dynamical systems in \cite{urbanski_hausdorff_1986, U87}. 

In the random setting, repeated iteration of a single deterministic map is replaced with the composition of maps $T_\omega:X\to X$ drawn from a collection $\{T_\omega\}_{\omega\in\Omega}$.
A driving map $\sigma:\Omega\to\Omega$ on a probability space $(\Omega,\sF,m)$ creates a map cocycle $T_\omega^n:=T_{\sigma^{n-1}\omega}\circ\cdots\circ T_{\sigma\omega}\circ T_\omega$.
The authors recently developed a complete, quenched thermodynamic formalism for random, countably-branched, piecewise monotonic interval maps \cite{AFGTV20}, enabling the treatment of discontinuous, non-Markov $T_\omega$.

There are two approaches to study random dynamical systems, \emph{annealed} and \emph{quenched}. 
In the quenched setting, which is the setting in which we work, one considers general ergodic random driving and uses a fiberwise transfer operator $\cL_\om$ defined for each realization $\om$ of the driving to find an $\omega$-family of invariant measures. Then one seeks to prove results which hold almost surely, or for a ``typical'' driving realization.
On the other hand, in the annealed setting, one considers i.i.d. random dynamics (see Remark \ref{rem: iid iteration} for details) and uses the annealed ($\omega$-averaged) transfer operator to find an annealed invariant measure. In the setting of i.i.d. random driving, the annealed invariant measure is the marginal of the quenched random invariant measure, see \cite[2.1.8 Theorem]{arnold_random_1998}. Thus, the quenched approach covers a broader class of driving mechanisms, and when the annealed approach is also available, the quenched approach provides information about specific random trajectories, as opposed to information about the averaged behaviour from the annealed perspective.

The situation of random open dynamics is relatively untouched.
For a single piecewise expanding map $T:[0,1]\to[0,1]$ with holes $H_\omega$ randomly chosen in an i.i.d.\ fashion, \cite{BahsounVaienti13} consider escape rates for the annealed transfer operator in the small hole limit (the Lebesgue measure of the $H_\omega$ goes to zero).
In a similar setting, now assuming $T$ to be Markov and considering non-vanishing holes, \cite{Bahsounetal15} show existence of equilibrium states, again for the annealed transfer operator.
For the first time in \cite{froyland_metastability_2013} the authors consider escape rates for quenched random open interval maps where they are able to show that the escape rate is bounded above by the Lyapunov exponent of a Perron-Frobenius cocycle. 
In \cite{atnip_critically_2020}, the authors consider random, full-branched interval maps with negative Schwarzian derivative. The maps are allowed to have critical points, but the partition of monotonicity and holes, made up of finitely many open intervals, are fixed and non-random. 
In this setting the existence of a unique invariant random probability measure is proven as well as a formula for the Hausdorff dimension of the surviving set. 
In our current setting, we do not allow the existence of critical points, however our maps may have non-full branches, and our partitions of monotonicity as well as our holes are allowed to vary randomly from fiber to fiber.

Sequential systems with holes have been considered in \cite{GeigerOtt21}, where a cocycle $T_\om$ of open maps is generated by a single $\omega$ orbit.
The maps (which include the hole) must be chosen in a small neighborhood of a fixed map (with hole), in contrast to our setting where our cocycle may include very different maps. 
Moreover in \cite{GeigerOtt21}, Lebesgue is used as a reference measure and the specific potential $-\log|\det DT|$ is used.
The theory is developed for uniformly expanding maps in higher dimensions and the main goal is to establish the ``conditional memory loss'', a concept analogous to exponential decay of correlations for closed dynamics.


Other related work includes \cite{AFGTV-IVC}, which considered non-transitive random interval maps with holes. In \cite{AFGTV-IVC} we proved a complete thermodynamic formalism for random interval maps (i) containing sufficiently many full branches and (ii) random potentials satisfying a strong contracting potential assumption. In the current work we treat maps that contain no full branches and potentials that satisfy a significantly weaker contracting potential assumption at the cost of assuming the closed system satisfies a mild covering condition. This allows us to obtain results for a large class of random Lasota-Yorke maps (Section \ref{sec: p1 ly map example}). Furthermore, the current work and that of \cite{AFGTV-IVC} are complementary, and neither work generalizes the other. 
For example, the current work does not need a full-branch condition in order to obtain a thermodynamic formalism for systems as in Chapter~\ref{part 1}, while \cite{AFGTV-IVC} provides such a theory (Theorems 6.1, 6.4 and 6.6) for non-transitive maps (Example 7.5) which are not covered in this work.

In Chapter \ref{part 1} of the present paper, we establish a full, quenched thermodynamic formalism for piecewise monotonic random dynamics with general potentials and general driving---the random driving $\sigma$ can be any invertible ergodic process on $\Omega$.
We begin with the random closed dynamics dealt with in \cite{AFGTV20}:  piecewise monotonic interval maps satisfying a random covering condition;  we have no Markovian assumptions, our maps may have discontinuities and may lack full branches.
The number of branches of our maps need not be uniformly bounded above in $\omega$ and our potentials $\vp_\omega$ need not be uniformly bounded below or above in $\omega$. 
To this setting we introduce random holes $H_\omega$ and formulate sufficient conditions that guarantee a random conformal measure $\nu_\omega$ and corresponding equivariant measure $\mu_\omega$ supported on the random survivor set $X_{\om,\infty}$, and a random ACCIM $\eta_\omega$ supported on $H_\om^c$.
These augment the notion of a random contracting potential \cite{AFGTV20} with accumulation rates of contiguous ``bad'' intervals (with zero conformal measure), and extend similar constructions of \cite{LMD} to the random situation.

To establish the existence of the family of measures $(\nu_\omega)_{\om\in\Om}$, we follow the limiting functional approach of \cite{LSV, LMD} by defining a random functional $\Lm_\om$ which is a limit of ratios of transfer operators and then showing that $\Lm_\om$ may be identified with the open conformal measure $\nu_\om$. This technique improves on the approach of \cite{AFGTV20}, which uses the Schauder-Tichonov Fixed Point Theorem to prove the existence of $\nu_{\om,0}$, by eliminating the extra steps necessary to show that the family $(\nu_\omega)_{\omega\in\Omega}$ is measurable with respect to $m$. Several steps are needed to achieve the construction of the conformal random measures. We start in  Section~\ref{sec:cones} by giving background material on Birkhoff cone techniques and the construction of our random cones. In Section~\ref{sec: LY ineq} we develop several random Lasota-Yorke type inequalities in terms of the variation and the random functional $\Lm_\om$. Section~\ref{sec:good} sees the construction of a large measure set of ``good'' fibers $\Om_G\sub\Om$ for which we obtain cone invariance at a uniform time step, and in Section~\ref{sec:bad} we show that the remaining ``bad'' fibers occur infrequently and behave sufficiently well. In Section~\ref{sec: props of Lm} we collect further properties of the random functional $\Lm$, which are then used in Section~\ref{sec: fin diam} to construct a large measure set of fibers $\Om_F\sub\Om$ for which we obtain cone contraction with a finite diameter image in a random time step. Using Hilbert metric contraction arguments, Section~\ref{sec: conf and inv meas} collects together the fruits of Sections~\ref{sec:good}-\ref{sec: fin diam} to prove our main technical lemma (Lemma~\ref{lem: exp conv in C+ cone}), which is then used to (i) obtain the existence of a random density $\phi$, (ii) prove the existence of a unique non-atomic random conformal measure $\nu$, and (iii) a random $T$-invariant measure $\mu$ which is absolutely continuous with respect to $\nu$. All these facts are collected in  our first result (detailed statements can be found in 
Corollary \ref{cor: exist of unique dens q},
Proposition \ref{prop: lower bound for density},
Lemma \ref{lem: Lm is a linear functional},
Lemma \ref{lem: Lm is conf meas},
Proposition \ref{prop: mu_om T invar},
Proposition \ref{prop: uniqueness of nu and mu},
Lemma \ref{lem: raccim eta}, and Theorem \ref{thm:eqstates}).
\begin{thmx}\label{main thm: existence}
Given a random open system $(\mathlist{\bcomma}{\Om, m, \sg, \cJ_0, T, \cB, \cL_0, \nu_0, \phi_0, H})$ (see Section \ref{sec: open systems}) satisfying \eqref{T1}-\eqref{T3}, \eqref{LIP}, \eqref{GP}, \eqref{A1}-\eqref{A2}, 
and \eqref{cond Q1}-\eqref{cond Q3} (see Sections \ref{sec: prelim} and \ref{sec: tr op and Lm}), the following hold. 
\begin{enumerate}
\item There exists a unique random probability measure $\nu\in\cP_\Om(\Om\times I)$ supported in $\cX_\infty$ such that 
\begin{align*}
\nu_{\sg(\om)}(\cL_\om f)=\lm_\om\nu_\om(f),
\end{align*}
for each $f\in\BV(I)$, where 
\begin{align*}
\lm_\om:=\nu_{\sg(\om)}(\cL_\om\ind_\om).
\end{align*} 	
Furthermore, we have that $\log\lm_\om\in L^1(m)$.
\item There exists a function $\phi\in\BV_\Om(I)$ such that $\nu(\phi)=1$ and for $m$-a.e. $\om\in\Om$ we have 
\begin{align*}
\cL_\om \phi_\om=\lm_\om \phi_{\sg(\om)}.
\end{align*}
Moreover, $\phi$ is unique modulo $\nu$. 
\item The measure $\mu:=\phi\nu$ is a $T$-invariant and ergodic random probability measure supported in $\cX_{\infty}$ and the unique relative equilibrium state for the potential $\vp$ satisfying the following variational principle: 
\begin{align*}
\cEP(\vp)
= h_\mu(T)+\int_{\Om \times I} \vp \,d\mu
=
\sup_{\eta\in\cP_{T,m}^H(\Om\times I)} \lt(h_\eta(T)+\int_{\Om \times I} \vp \,d\eta\rt).
\end{align*}
Furthermore, for each $\eta\in\cP_{T,m}^H(\Om)$ (the set of all random $T$-invariant Borel probability measures supported on $\Om\times I$) different from $\mu$ we have that 
\begin{align*}
h_\eta(T)+\int_{\Om \times I} \vp \,d\eta
<
h_\mu(T)+\int_{\Om \times I} \vp \,d\mu.
\end{align*}

\item There exists a random conditionally invariant probability measure $\eta$ absolutely continuous with respect to $\nu_0$, which is supported on $\cup_{\om\in\Om}(\{\om\}\times I\bs H_\om)$, and whose disintegrations are given by
\begin{align*}
\eta_\om(f):=\frac{\nu_{\om,0}\lt(\ind_{H_\om^c} \phi_{\om} f\rt)}{\nu_{\om,0}\lt(\ind_{H_\om^c} \phi_\om\rt)}
\end{align*} 
for all $f\in\BV(I)$. 
\end{enumerate}
\end{thmx}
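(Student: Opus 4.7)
My plan is to follow the cone-contraction strategy outlined in Sections~\ref{sec:cones}--\ref{sec: conf and inv meas}. The central object is the random functional $\Lambda_\omega$ defined as a limit of ratios of iterates of $\mathcal{L}_\omega$ applied to suitable test functions in $\BV(I)$; once I show that this limit exists and defines a positive bounded linear functional that behaves well under the cocycle, I will recognize it as a (disintegration of a) conformal measure. First I would establish the random Lasota--Yorke inequalities of Section~\ref{sec: LY ineq}, which will simultaneously control the variation of $\mathcal{L}_\omega^n f$ and give an $L^\infty$-bound of the right order. These inequalities, together with the covering and contracting-potential conditions \eqref{A1}--\eqref{A2} and \eqref{cond Q1}--\eqref{cond Q3}, allow me to define a family of random Birkhoff cones $\mathcal{C}_\omega \subset \BV(I)$ of positive functions with controlled ratio between variation and $\Lambda_\omega$-mass.

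Next, I would separate the fibers into a ``good'' set $\Omega_G \subset \Omega$ on which cone invariance occurs at a uniform time step (Section~\ref{sec:good}) and the complementary ``bad'' set on which I can still control the distortion because, by ergodicity of $\sigma$ and the integrability of the return times, bad fibers are visited sufficiently infrequently (Section~\ref{sec:bad}). Collecting blocks of good fibers between rare bad ones, I obtain a set $\Omega_F$ of full measure on which the transfer operator cocycle maps the random cone strictly inside itself with finite Hilbert diameter in a random number of steps (Section~\ref{sec: fin diam}). Birkhoff's theorem on Hilbert-metric contractions, applied fiberwise along $\sigma$-orbits, then yields the exponential contraction stated in Lemma~\ref{lem: exp conv in C+ cone}. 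From this contraction I read off: the existence of the equivariant density $\phi_\omega$ with $\mathcal{L}_\omega\phi_\omega = \lambda_\omega \phi_{\sigma(\omega)}$ and its uniqueness modulo $\nu$; the fact that $\Lambda_\omega$ is linear, positive and non-atomic, so by a Riesz-type identification it is integration against a probability measure $\nu_\omega$ with $\nu_{\sigma(\omega)}(\mathcal{L}_\omega f) = \lambda_\omega \nu_\omega(f)$; and the $L^1(m)$-integrability of $\log\lambda_\omega$ via the uniform-in-fiber cone estimates and standard subadditive arguments.

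Once parts (1) and (2) are in hand, I would define $\mu = \phi\,\nu$ and verify $T$-invariance directly from $\mathcal{L}_\omega(\phi_\omega) = \lambda_\omega \phi_{\sigma(\omega)}$ together with the equivariance of $\nu$ under $\mathcal{L}$; support in $\mathcal{X}_\infty$ is automatic from the construction of $\nu$ on the survivor set. Ergodicity of $\mu$ with respect to the skew product $T$ reduces, by the usual argument, to showing that any $T$-invariant $L^\infty(\mu)$ function is $\mu$-a.e.\ constant on fibers; the exponential contraction from Lemma~\ref{lem: exp conv in C+ cone} gives this since it forces asymptotic independence of normalized iterates from their initial $\BV$ representative. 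For the variational principle and uniqueness among $T$-invariant random probability measures in $\mathcal{P}_{T,m}^H(\Omega\times I)$, I would combine the relative entropy and pressure formalism already developed in \cite{AFGTV20} with the contracting-potential hypothesis, adapting the arguments to the open setting by restricting to measures supported on $\mathcal{X}_\infty$; the strict inequality for non-$\mu$ competitors follows from standard convexity arguments once the equilibrium state is identified with $\mu$.

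For part (4), I would simply define $\eta_\omega$ by the stated formula and verify conditional invariance by a direct computation using $\mathcal{L}_{\omega,0}(\ind_{H_\omega^c} \phi_\omega f) = \mathcal{L}_\omega(\phi_\omega f) = \lambda_\omega \phi_{\sigma(\omega)} f\circ T_\omega^{-1}\text{-type identities}$, together with the equivariance of $\phi$ and of $\nu_0$ under $\mathcal{L}_0$; absolute continuity with respect to $\nu_0$ is built into the definition. The main obstacle in this program is the treatment of bad fibers in Section~\ref{sec:bad}: because we do not assume a uniform covering time, uniform bounds on the number of branches, or uniform bounds on the potential, the cone estimates degenerate on bad fibers, and the delicate step is to show that accumulation of such fibers along $\sigma$-orbits is slow enough (in terms of the conformal masses of the holes and of the ``zero-measure'' intervals created by them) that the random Hilbert diameter after a large iterate remains finite almost surely. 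Controlling this accumulation, via \eqref{cond Q1}--\eqref{cond Q3} and Kac-type return-time estimates, is what makes the whole contraction argument go through and therefore drives the rest of the theorem.
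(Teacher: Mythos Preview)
Your proposal is correct and follows essentially the same route as the paper: the authors likewise build the random functional $\Lambda_\omega$, prove the Lasota--Yorke inequalities of Section~\ref{sec: LY ineq}, separate good and bad fibers via coating intervals (Sections~\ref{sec:good}--\ref{sec:bad}), obtain finite Hilbert diameter on a large set $\Omega_F$ (Section~\ref{sec: fin diam}), and then invoke the main contraction Lemma~\ref{lem: exp conv in C+ cone} to produce $\phi_\omega$, identify $\Lambda_\omega$ with $\nu_\omega$, and build $\mu$ and $\eta$. Two small refinements worth noting: the $L^1(m)$-integrability of $\log\lambda_\omega$ in the paper comes not from subadditivity but from the identity $\lambda_\omega=\rho_\omega$ (Lemma~\ref{lem: Lm is a linear functional}) together with the a~priori bound \eqref{eq: rho log int}; and the control of bad-fiber accumulation is done via Buzzi-style coating intervals and the ergodic theorem rather than Kac-type return-time estimates.
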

We also show that the operator cocycle is quasi-compact.
\begin{thmx}\label{main thm: quasicompact}
With the same hypotheses as Theorem~\ref{main thm: existence}, there exists $\kp\in(0,1)$ such that 
for each $f\in\BV(I)$ there exists a measurable function $\Om\ni\om\mapsto D_f(\om)\in(0,\infty)$ 
such that for $m$-a.e. $\om\in\Om$ and all $n\in\NN$ we have
\begin{align*}
\norm{\lt(\lm_\om^n\rt)^{-1}\cL_{\om}^n f - \nu_{\om}(f)\phi_{\sg^n(\om)}}_\infty\leq D_f(\om)\norm{f}_\BV\kp^n
\end{align*}
and 
\begin{align*}
\absval{
	\frac{\eta_\om\lt(f \rvert_{X_{\om,n}}\rt)}
	{\eta_\om\lt(X_{\om,n}\rt)}
	- 
	\mu_\om(f)
}\leq D_f(\om)\norm{f}_\BV\kp^n.
\end{align*}
Furthermore, for all sets $A$ that are a finite union of intervals, there exists a measurable function $\Om\ni\om\mapsto D_A(\om)\in(0,\infty)$ such that we have 
\begin{align*}
\absval{
	\nu_{\om,0}\lt(T_\om^{-n}(A)\,\rvert\, X_{\om,n}\rt)
	-
	\eta_{\sg^n(\om)}(A)	
}
\leq 
D_A(\om)\kp^n.
\end{align*}
\end{thmx}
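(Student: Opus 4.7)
The backbone of the proof is the exponential Hilbert metric contraction in the cone $C_+$ established in Lemma~\ref{lem: exp conv in C+ cone}. From this I will extract sup-norm convergence for cone elements, extend to arbitrary $\BV$ functions via a cone decomposition, and then manipulate transfer-operator identities and closed conformality to derive statements (2) and (3) as corollaries of (1).

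For statement (1), I start from the exponential Hilbert metric bound between $\lm_\om^{-n}\cL_\om^n g$ and $\phi_{\sg^n\om}$ for $g\in C_+$ with $\nu_\om(g)=1$, and pass to sup-norm using the standard comparability of the Hilbert metric with the uniform norm on a bounded slice of the cone (this brings in a multiplicative factor controlled by $\nu_\om(g)$ and the cone parameters at $\om$). To pass from $C_+$ to a general $f\in\BV(I)$, I choose $c_\om(f)$ proportional to $\|f\|_\BV$, with the proportionality factor a measurable function of $\om$, so that both $c_\om(f)\phi_\om+f$ and $c_\om(f)\phi_\om-f$ lie in $C_+$; linearity of $\cL_\om^n$ and subtraction of the two cone estimates give the desired bound, with $D_f(\om)$ absorbing $c_\om(f)/\|f\|_\BV$ together with the sup-norm vs.\ Hilbert comparison constant.

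For statement (2), I rewrite
\begin{align*}
\frac{\eta_\om(f|_{X_{\om,n}})}{\eta_\om(X_{\om,n})}
=\frac{\nu_{\om,0}(\phi_\om f\,\ind_{X_{\om,n}})}{\nu_{\om,0}(\phi_\om\,\ind_{X_{\om,n}})},
\end{align*}
using the definition of $\eta_\om$ together with $\ind_{H_\om^c}\ind_{X_{\om,n}}=\ind_{X_{\om,n}}$ for $n\geq 1$. The identity $\cL_\om^n(h)=\cL_{\om,0}^n(h\cdot\ind_{X_{\om,n}})$ combined with conformality of $\nu_0$ converts this ratio, after cancelling $\lm_{\om,0}^n/\lm_\om^n$ between numerator and denominator, into
\begin{align*}
\frac{\nu_{\sg^n\om,0}\bigl(\lm_\om^{-n}\cL_\om^n(\phi_\om f)\bigr)}{\nu_{\sg^n\om,0}\bigl(\lm_\om^{-n}\cL_\om^n\phi_\om\bigr)}.
\end{align*}
Applying (1) to the numerator with $\phi_\om f\in\BV(I)$ and to the denominator with $\phi_\om$, the limits are $\mu_\om(f)\,\nu_{\sg^n\om,0}(\phi_{\sg^n\om})$ and $\nu_{\sg^n\om,0}(\phi_{\sg^n\om})$ respectively, where I use $\nu(\phi)=1$ and $\mu=\phi\nu$. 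The exponential rate is inherited from (1), provided $\nu_{\sg^n\om,0}(\phi_{\sg^n\om})$ admits a measurable uniform lower bound along the orbit, which follows from $\phi\in\BV_\Om(I)$ together with Proposition~\ref{prop: lower bound for density}. Statement (3) is handled analogously: a direct transfer-operator manipulation gives
\begin{align*}
\nu_{\om,0}\bigl(T_\om^{-n}(A)\,|\,X_{\om,n}\bigr)
=\frac{\nu_{\sg^n\om,0}\bigl(\ind_A\cdot\lm_\om^{-n}\cL_\om^n\ind_I\bigr)}{\nu_{\sg^n\om,0}\bigl(\lm_\om^{-n}\cL_\om^n\ind_I\bigr)},
\end{align*}
and (1) with $f=\ind_I$ (together with the eigenfunction equation forcing $\phi_{\sg^n\om}$ to vanish on $H_{\sg^n\om}$) yields the exponential convergence to $\eta_{\sg^n\om}(A)$. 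The fact that $A$ is a finite union of intervals ensures $\|\ind_A\|_\BV<\infty$ and hence $D_A(\om)<\infty$.

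The main obstacle I anticipate is measurable control of the constants $D_f(\om)$ and $D_A(\om)$ as functions of $\om$: the decomposition constant $c_\om(f)$, the cone parameters at the fiber, the sup-norm vs.\ Hilbert-metric comparison, and the lower bound on $\nu_{\sg^n\om,0}(\phi_{\sg^n\om})$ must all be combined into a single measurable, almost surely finite function. Each piece is measurable by the earlier constructions (Sections~\ref{sec:good}-\ref{sec: fin diam} and Lemma~\ref{lem: exp conv in C+ cone}), but assembling them — especially controlling how cone geometry varies with $\om$ and ensuring the products remain finite — is the delicate bookkeeping that drives the length of the proof.
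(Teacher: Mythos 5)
Your overall architecture (cone contraction from Lemma~\ref{lem: exp conv in C+ cone}, sup-norm comparison, then transfer-operator identities for the second and third claims) is the paper's, but two of your concrete steps fail. First, the decomposition $c_\om(f)\phi_\om\pm f\in\sC_{\om,+}$ requires $|f|\le c_\om(f)\phi_\om$ pointwise, and in the open setting $\phi_\om$ vanishes off $D_{\om,\infty}$: it is the limit of $\~\cL_{\sg^{-n}(\om)}^n\ind_{\sg^{-n}(\om)}$, which is supported on $D_{\om,n}$, and Proposition~\ref{prop: lower bound for density} gives positivity only on $D_{\om,\infty}$. So whenever $D_{\om,\infty}\neq I$ there is no finite $c_\om(f)$ even for $f=\ind$, and your reduction of general $f\in\BV(I)$ to cone elements collapses. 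The paper instead writes $f=f_+-f_-$, whose parts lie in $\sC_{\om,+}$ automatically, and compares $\~\cL_\om^n f_\pm$ with $\phi$ via Corollary~\ref{cor: exp conv in sup norm}, which needs no domination between $f$ and $\phi$. Relatedly, the contraction estimate is only available for $n\ge N_3(\om)$, with $N_3$ depending on $f$; for $n<N_3(\om)$ you still need a separate bound whose factor $\kp^{-N_3(\om)}$ is folded into $D_f(\om)$ (the paper does this using \eqref{cond Q1} and \eqref{LIP}, which is where the $(\var f)^{C(\om)}$ factor comes from). Your proposal never addresses the small-$n$ regime; for the per-$f$ statement it is a mild repair, but it is not mere ``bookkeeping of constants''.

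Second, your derivation of the third claim is wrong as written. Since $\cL_\om^n(h)=\cL_{\om,0}^n(h\,\hat X_{\om,n-1})$, the quantity $\nu_{\sg^n(\om),0}(\ind_A\,\lm_\om^{-n}\cL_\om^n\ind)$ is proportional to $\nu_{\om,0}(T_\om^{-n}(A)\cap X_{\om,n-1})$, not $X_{\om,n}$; the correct identity (Lemma~\ref{lem: useful identity}) carries an extra factor $\ind_{\sg^n(\om)}=\ind_{H_{\sg^n(\om)}^c}$ in numerator and denominator, and it is exactly this factor that produces the weight in $\eta_{\sg^n(\om)}$. Your patch — that the eigenfunction equation forces $\phi_{\sg^n(\om)}$ to vanish on $H_{\sg^n(\om)}$ — is false: $\cL_\om f=\cL_{\om,0}(f\ind_\om)$ removes the hole at the preimage, not the image, so $\cL_\om\phi_\om$, hence $\phi_{\sg(\om)}$, is in general strictly positive on part of $H_{\sg(\om)}$; this is precisely why the RACCIM density in Lemma~\ref{lem: raccim eta} is $\ind_{H_\om^c}\phi_\om$ rather than $\phi_\om$. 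Without the indicator your ratio converges to $\nu_{\sg^n(\om),0}(\ind_A\phi_{\sg^n(\om)})/\nu_{\sg^n(\om),0}(\phi_{\sg^n(\om)})$, which differs from $\eta_{\sg^n(\om)}(A)$. The same missing indicator occurs in your second claim but happens to cancel in the quotient there; the paper avoids the issue by passing through the fully normalized operator and the identity $\eta_\om(f\rvert_{X_{\om,n}})/\eta_\om(X_{\om,n})=\eta_{\sg^n(\om)}(\sL_{\om}^n f)$. Finally, a lower bound on $\nu_{\sg^n(\om),0}(\phi_{\sg^n(\om)})$ uniform in $n$ does not follow from Proposition~\ref{prop: lower bound for density} alone; one needs temperedness (Lemma~\ref{lem: conv of pressure limits}) or, as the paper does for the third claim, to keep that quantity inside the constant.
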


For the proof of Theorem~\ref{main thm: quasicompact}, as well as a more general statement, see Theorem~\ref{thm: exp convergence of tr op} and Corollary~\ref{cor: exp conv of eta}.
From quasi-compactness we easily deduce the exponential decay of correlations for the invariant measure $\mu$. 
\begin{thmx}\label{main thm: exp dec of corr}
With the same hypotheses as Theorem~\ref{main thm: existence}, 
for every $h\in \BV(I)$ there exists a measurable function $\Om\ni\om\mapsto C_h(\om)\in(0,\infty)$ such that for every $f\in L^1(\mu)$, every $n\in\NN$, and for $m$-a.e. $\om\in\Om$ we have 
\begin{align*}
\absval{
	\mu_{\om}
	\lt(\lt(f_{\sg^{n}(\om)}\circ T_{\om}^n\rt)h \rt)
	-
	\mu_{\sg^{n}(\om)}(f_{\sg^{n}(\om)})\mu_{\om}(h)
}
\leq C_h(\om)
\norm{f_{\sg^n(\om)}}_{L^1(\mu_{\sg^n(\om)})}\norm{h}_\infty\vkp^n
\end{align*} 
for any $\vkp\in(\kp,1)$, with $\kp$ as in Theorem~\ref{main thm: quasicompact}.
\end{thmx}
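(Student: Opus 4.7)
The plan is to reduce the correlation estimate to a single application of the quasi-compactness bound in Theorem~\ref{main thm: quasicompact}. Using the three identities $\mu_\om=\phi_\om\nu_\om$, $\nu_{\sg(\om)}(\cL_\om g)=\lm_\om\nu_\om(g)$, and the transfer-operator pull-back $\cL_\om^n((g\circ T_\om^n)f)=g\cdot \cL_\om^n f$, I would rewrite
\begin{align*}
\mu_\om\bigl((f_{\sg^n(\om)}\circ T_\om^n)h\bigr)=(\lm_\om^n)^{-1}\,\nu_{\sg^n(\om)}\bigl(f_{\sg^n(\om)}\cdot \cL_\om^n(h\phi_\om)\bigr),
\end{align*}
and observe that $\mu_{\sg^n(\om)}(f_{\sg^n(\om)})\,\mu_\om(h) = \nu_{\sg^n(\om)}(f_{\sg^n(\om)}\phi_{\sg^n(\om)})\,\nu_\om(h\phi_\om)$. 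Subtracting these expressions and then multiplying and dividing by $\phi_{\sg^n(\om)}$ to reintroduce $\mu_{\sg^n(\om)}$ inside the integral, the correlation equals $\mu_{\sg^n(\om)}(f_{\sg^n(\om)}\,R_{\om,n})$, where
\begin{align*}
R_{\om,n}:=\phi_{\sg^n(\om)}^{-1}\bigl[(\lm_\om^n)^{-1}\cL_\om^n(h\phi_\om)-\nu_\om(h\phi_\om)\phi_{\sg^n(\om)}\bigr].
\end{align*}

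Applying $L^1$-$L^\infty$ duality with respect to $\mu_{\sg^n(\om)}$ yields the upper bound $\|f_{\sg^n(\om)}\|_{L^1(\mu_{\sg^n(\om)})}\,\|R_{\om,n}\|_\infty$, which already provides the desired $L^1(\mu)$ factor. Next I would estimate $\|R_{\om,n}\|_\infty \leq \|\phi_{\sg^n(\om)}^{-1}\|_\infty\,\|(\lm_\om^n)^{-1}\cL_\om^n(h\phi_\om)-\nu_\om(h\phi_\om)\phi_{\sg^n(\om)}\|_\infty$, where the first factor is finite for $m$-a.e.\ $\om$ by Proposition~\ref{prop: lower bound for density}, and the second factor is controlled by Theorem~\ref{main thm: quasicompact} applied to the $\BV$ function $h\phi_\om$, giving at most $D_{h\phi_\om}(\om)\,\|h\phi_\om\|_\BV\,\kp^n$.

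The main obstacle is upgrading the rate from $\kp^n$ to $\vkp^n$ while keeping $C_h(\om)$ independent of $n$. The only prefactor that depends on $n$ is $\|\phi_{\sg^n(\om)}^{-1}\|_\infty$, which is tempered along the $\sg$-cocycle; combined with $\kp^n$ it is therefore dominated by $\vkp^n$ up to a finite $\om$-measurable multiplicative constant for any $\vkp\in(\kp,1)$. The remaining $h$-dependent factor $\|h\phi_\om\|_\BV$ is controlled by a constant times $\|h\|_\BV\,\|\phi_\om\|_\BV$, and I would absorb the ratio $\|h\|_\BV/\|h\|_\infty$ together with $\|\phi_\om\|_\BV$, $D_{h\phi_\om}(\om)$, and the temperedness constant into $C_h(\om)$, leaving the explicit $\|h\|_\infty$ factor in the final bound. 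Measurability of $C_h(\om)$ is then inherited from the measurability of $\phi$, $\lm$, $\nu$, and $\om\mapsto D_{h\phi_\om}(\om)$ furnished by Theorem~\ref{main thm: quasicompact}.
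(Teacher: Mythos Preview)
Your approach is correct and is essentially the paper's route (which defers to Theorem~11.1 of \cite{AFGTV20}), but two technical points need attention. First, Proposition~\ref{prop: lower bound for density} only yields $\inf_{D_{\om,\infty}}\phi_\om>0$; in general $\phi_\om$ vanishes off $D_{\om,\infty}$, so $\|\phi_{\sg^n(\om)}^{-1}\|_\infty$ need not be finite. This is harmless because the $L^1$--$L^\infty$ duality only requires $\|R_{\om,n}\|_{L^\infty(\mu_{\sg^n(\om)})}$ and $\supp(\mu_{\sg^n(\om)})\subset D_{\sg^n(\om),\infty}$, but you should take the supremum there; temperedness of $(\inf_{D_{\sg^n(\om),\infty}}\phi_{\sg^n(\om)})^{-1}$ is then furnished by Lemma~\ref{lem: conv of pressure limits}. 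Second, Theorem~\ref{main thm: quasicompact} is stated for a \emph{fixed} $f\in\BV(I)$ (with $D_f$ and the exceptional null set depending on $f$), so it does not literally apply to the $\om$-dependent function $h\phi_\om$; the correct reference is the more general Theorem~\ref{thm: exp convergence of tr op}, which handles any $f\in\BVT$, and $h\phi\in\BVT$ since $\phi\in\BVT$ by Lemma~\ref{lem: BV norm q om growth bounds}.

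The paper sidesteps both issues by working with the fully normalized operator $\sL_\om$ of \eqref{eq: def fully norm tr op}: the exact identity
\[
\mu_\om\bigl((f_{\sg^n(\om)}\circ T_\om^n)h\bigr)-\mu_{\sg^n(\om)}(f_{\sg^n(\om)})\mu_\om(h)
=\mu_{\sg^n(\om)}\bigl(f_{\sg^n(\om)}\,[\sL_\om^n h-\mu_\om(h)\ind_{\sg^n(\om)}]\bigr)
\]
together with \eqref{eq: exp conv full norm op} gives the bound $D(\om)(\var h)^{C(\om)}\|h\|_\infty\,\kp^n$ directly, with the division by $\phi_{\sg^n(\om)}$ already absorbed and no loss from $\kp$ to $\vkp$.
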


Theorem~\ref{main thm: exp dec of corr} (stated in more detail in Theorem \ref{thm: dec of cor}) is proven in Section~\ref{sec: dec of cor}. The presence of holes leads naturally to introduce the notion of fiberwise escape rate $R(\rho_{\om})$ of the measure $\rho_{\om}$ from the holes; the definition in the random setting in given in \ref{def: escape rate}. We will show that the escape rate is constant $m$-almost everywhere and is given in terms of the closed and open expected pressures, denoted with $\cEP(\vp)$ for a given potential $\vp,$ which are properly  defined in Definition \ref{def: expected pressure}. 

\begin{thmx}\label{main thm: escape rate}
With the same hypotheses as Theorem~\ref{main thm: existence}, for $m$-a.e. $\om\in\Om$ we have that 
\begin{align*}
R(\nu_{\om,0})=R(\eta_\om)=\cEP(\vp_0)-\cEP(\vp)=\int_\Om\log\frac{\lm_{\om,0}}{\lm_\om}\, dm(\om).
\end{align*}
\end{thmx}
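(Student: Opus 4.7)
The last equality is immediate from the definition of expected pressure, $\cEP(\vp):=\int_\Om\log\lm_\om\,dm(\om)$ and the corresponding definition for $\vp_0$, together with the fact (Theorem~\ref{main thm: existence}(1)) that $\log\lm_\om,\log\lm_{\om,0}\in L^1(m)$. The core of the argument is therefore to identify both escape rates with $\int_\Om\log(\lm_{\om,0}/\lm_\om)\,dm$, for which the plan is to express $\nu_{\om,0}(X_{\om,n})$ and $\eta_\om(X_{\om,n})$ via the open and closed transfer operator cocycles and then apply the Birkhoff ergodic theorem to $\sg$.

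For $R(\nu_{\om,0})$, I would first observe the basic identity $\cL_\om^n f=\cL_{\om,0}^n(\ind_{X_{\om,n}}f)$, which follows from the definition of the open transfer operator. Applying the $\cL_{\om,0}$-conformality of $\nu_{\om,0}$ on the right and writing $\lm_\om^n:=\prod_{k=0}^{n-1}\lm_{\sg^k\om}$, $\lm_{\om,0}^n:=\prod_{k=0}^{n-1}\lm_{\sg^k\om,0}$, this gives
\begin{align*}
\nu_{\sg^n(\om),0}\bigl(\cL_\om^n\ind_\om\bigr)=\lm_{\om,0}^{n}\,\nu_{\om,0}(X_{\om,n}).
\end{align*}
On the other hand, Theorem~\ref{main thm: quasicompact} applied to $f=\ind_\om$ (so $\nu_\om(f)=1$) gives $\|(\lm_\om^n)^{-1}\cL_\om^n\ind_\om-\phi_{\sg^n(\om)}\|_\infty\le D_{\ind}(\om)\kp^n$, and testing against $\nu_{\sg^n(\om),0}$ yields
\begin{align*}
\nu_{\sg^n(\om),0}\bigl(\cL_\om^n\ind_\om\bigr)=\lm_\om^n\bigl(\nu_{\sg^n(\om),0}(\phi_{\sg^n(\om)})+O(\kp^n)\bigr).
\end{align*}
Combining the two and taking $-\frac1n\log$ gives
\begin{align*}
-\frac{1}{n}\log\nu_{\om,0}(X_{\om,n})=\frac{1}{n}\sum_{k=0}^{n-1}\log\frac{\lm_{\sg^k(\om),0}}{\lm_{\sg^k(\om)}}-\frac{1}{n}\log\nu_{\sg^n(\om),0}(\phi_{\sg^n(\om)})+o(1).
\end{align*}
Since $\log(\lm_{\om,0}/\lm_\om)\in L^1(m)$ and $\sg$ is ergodic, Birkhoff sends the first term to $\cEP(\vp_0)-\cEP(\vp)$ a.s. The second term vanishes in the limit provided $\frac1n\log\nu_{\sg^n(\om),0}(\phi_{\sg^n(\om)})\to 0$ a.s., which is the tempered-growth property one gets from ergodicity of $\sg$ together with measurability and $m$-a.s.\ finiteness of $\om\mapsto\nu_{\om,0}(\phi_\om)$ (see e.g.\ the standard Borel--Cantelli argument in Arnold's book).

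For $R(\eta_\om)$, I would plug $f=\ind_{X_{\om,n}}$ into the formula from Theorem~\ref{main thm: existence}(4). Since $X_{\om,n}\sub H_\om^c$ for $n\ge 1$, the numerator becomes $\nu_{\om,0}(\phi_\om\ind_{X_{\om,n}})$, and applying $\cL_{\om,0}$-conformality together with the eigenfunction identity $\cL_\om^n\phi_\om=\lm_\om^n\phi_{\sg^n(\om)}$ gives
\begin{align*}
\nu_{\om,0}(\phi_\om\ind_{X_{\om,n}})=(\lm_{\om,0}^n)^{-1}\nu_{\sg^n(\om),0}(\cL_\om^n\phi_\om)=\frac{\lm_\om^n}{\lm_{\om,0}^n}\,\nu_{\sg^n(\om),0}(\phi_{\sg^n(\om)}),
\end{align*}
so that
\begin{align*}
\eta_\om(X_{\om,n})=\frac{\lm_\om^n}{\lm_{\om,0}^n}\cdot\frac{\nu_{\sg^n(\om),0}(\phi_{\sg^n(\om)})}{\nu_{\om,0}(\ind_{H_\om^c}\phi_\om)}.
\end{align*}
Taking $-\frac1n\log$ and again applying Birkhoff to the first factor plus temperedness to the second yields $R(\eta_\om)=\cEP(\vp_0)-\cEP(\vp)$ for $m$-a.e.\ $\om$, finishing the proof.

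The main obstacle is the control of the error/normalisation terms $\nu_{\sg^n(\om),0}(\phi_{\sg^n(\om)})$ and $\nu_{\om,0}(\ind_{H_\om^c}\phi_\om)$: to conclude that they are subexponential along the orbit of $\sg$, one needs that these quantities are $m$-measurable, a.s.\ finite and bounded below away from zero on a measurable set (so that $\log$ is integrable, or at least tempered under $\sg$). These facts should follow from the measurability and boundedness properties of $\phi$ collected in Sections~\ref{sec: props of Lm}--\ref{sec: conf and inv meas}, together with the standard random-dynamics principle that any $m$-a.s.\ finite measurable function is tempered with respect to an ergodic automorphism.
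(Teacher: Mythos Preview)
Your proposal is correct and follows essentially the same route as the paper's proof (Proposition~\ref{prop: escape rate}): express $\nu_{\om,0}(X_{\om,n-1})$ as $\frac{\lm_\om^n}{\lm_{\om,0}^n}\,\nu_{\sg^n(\om),0}(\~\cL_\om^n\ind_\om)$, invoke the quasicompactness result to replace $\~\cL_\om^n\ind_\om$ by $\phi_{\sg^n(\om)}$ up to an exponentially decaying error, apply Birkhoff to the product $\lm_\om^n/\lm_{\om,0}^n$, and kill the remaining term by temperedness of $\nu_{\sg^n(\om),0}(\phi_{\sg^n(\om)})$. The paper organises this last step through its Lemma~\ref{lem: conv of pressure limits}, which records precisely that $\tfrac1n\log\inf_{D_{\sg^n(\om),\infty}}\phi_{\sg^n(\om)}$ and $\tfrac1n\log\|\phi_{\sg^n(\om)}\|_\infty$ both tend to zero; your appeal to the general temperedness principle (Arnold, Proposition~4.1.3) is an equally valid way to close this gap, and your final ``main obstacle'' paragraph already anticipates the need for the lower bound on $\nu_{\om,0}(\phi_\om)$.

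One small remark: your treatment of $R(\eta_\om)$ is in fact slightly cleaner than the paper's (which just says ``follows similarly''), because you directly exploit the exact eigenfunction relation $\cL_\om^n\phi_\om=\lm_\om^n\phi_{\sg^n(\om)}$ and thereby bypass a second application of Theorem~\ref{main thm: quasicompact}. Note also the harmless index shift $X_{\om,n}$ versus $X_{\om,n-1}$ in your first identity (the open operator satisfies $\cL_\om^n f=\cL_{\om,0}^n(f\hat X_{\om,n-1})$), which of course does not affect the limit.
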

Theorem~\ref{main thm: escape rate} is proven in Section~\ref{sec: exp press}.
The expected pressure function is further developed and used to prove a Bowen's formula type result for the Hausdorff dimension of the survivor set $X_{\om,\infty}$ for $m$-a.e. $\om\in\Om$ in Section~\ref{sec: bowen}. This requires us to introduce the bounded distortion property for a given potential, large images of the map $T$, and large images with respect to the hole $H$; see Section~\ref{sec: bowen} for the full definitions of these terms. We therefore have:
\begin{thmx}\label{main thm: Bowens formula}
With the same hypotheses as Theorem~\ref{main thm: existence}, we additionally suppose that 
$$\int_\Om \log\inf|T_\om'|\ dm(\om)>0$$ 
and  $g_0=1/|T'|$ has bounded distortion.
Then there exists a unique $h\in[0,1]$ such that $\cEP(t)>0$ for all $0\leq t<h$ and $\cEP(t)<0$ for all $h<t\leq 1$.

Furthermore, if $T$ has large images and large images with respect to $H$, then for $m$-a.e. $\om\in\Om$
\begin{align*}
\HD(X_{\om,\infty})=h,		
\end{align*}
where $\HD(A)$ denotes the Hausdorff dimension of the set $A$.
\end{thmx}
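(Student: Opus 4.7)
The strategy is the standard Bowen framework adapted to the random open setting: parametrise potentials by $\vp_t := -t\log|T'|$, $t\in[0,1]$, show that $t\mapsto \cEP(t)$ is continuous, strictly decreasing, and crosses zero exactly once at some $h\in[0,1]$, and then identify this $h$ with $\HD(X_{\om,\infty})$ via matched upper and lower Hausdorff estimates. For the existence and uniqueness of $h$, I would first check that each $\vp_t$ satisfies the standing hypotheses of Theorem~\ref{main thm: existence}, so that $\cEP(t)$ is well defined via the variational principle. As a supremum of functions affine in $t$, $\cEP$ is convex and hence continuous on $[0,1]$, and if $t_1<t_2$ with $\mu_{t_1}$ the equilibrium state for $\vp_{t_1}$,
\begin{align*}
\cEP(t_1) - \cEP(t_2) \;\geq\; (t_2-t_1)\int_{\Om\times I}\log|T_\om'|\,d\mu_{t_1} \;\geq\; (t_2-t_1)\int_\Om \log\inf|T_\om'|\,dm \;>\; 0,
\end{align*}
giving strict monotonicity. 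At $t=1$, Theorem~\ref{main thm: escape rate} (with $\nu_0$ at parameter $0$ taken to be Lebesgue, so that $\cEP_0(\vp_1)=0$ by the entropy formula for the acim) gives $\cEP(1) = -R(\eta_1) < 0$ since $H$ is a genuine hole. If $\cEP(0)\leq 0$, take $h=0$; otherwise continuity and strict monotonicity produce a unique $h \in (0,1)$ with $\cEP(h)=0$.

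For the upper bound, fix $t>h$ so $\cEP(t)<0$, and cover $X_{\om,\infty}$ by the collection $\cZ_{\om,n}$ of monotonicity intervals of $T_\om^n$ surviving to time $n$; uniform expansion gives $\max_{Z} |Z| \to 0$ exponentially. Bounded distortion yields $|Z|^t \asymp g_\om^{(n),t}(x_Z)|T_\om^n(Z)|^t$ for any $x_Z\in Z$, and large images give $|T_\om^n(Z)| \geq c_0 > 0$, so $|T_\om^n(Z)|^t \leq c_0^{t-1}|T_\om^n(Z)|$ and
\begin{align*}
\sum_{Z\in\cZ_{\om,n}}|Z|^t \;\leq\; C\sum_Z g_\om^{(n),t}(x_Z)|T_\om^n(Z)| \;\leq\; C'\int_I \lt(\cL_\om^{(t)}\rt)^n\ind(y)\,dy,
\end{align*}
where $\cL_\om^{(t)}$ denotes the open transfer operator for the potential $\vp_t$. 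Theorem~\ref{main thm: quasicompact} applied to the $\vp_t$-cocycle bounds the right-hand side by $C''\prod_{k=0}^{n-1}\lm_{\sg^k\om}^{(t)}$, and Birkhoff's ergodic theorem gives $\tfrac{1}{n}\log \prod_{k=0}^{n-1}\lm_{\sg^k\om}^{(t)} \to \cEP(t) < 0$, so $\sum_Z|Z|^t\to 0$ and $\HD(X_{\om,\infty})\leq t$; letting $t\searrow h$ gives $\HD(X_{\om,\infty})\leq h$.

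For the lower bound, fix $t<h$ and let $\nu_\om^{(t)}$ be the open conformal random measure for $\vp_t$ from Theorem~\ref{main thm: existence}, supported on $X_{\om,\infty}$. Iterating conformality along a monotonicity branch of $T_\om^n$, with bounded distortion and large images with respect to $H$ (so that $\nu_{\sg^n\om}^{(t)}(T_\om^n(Z))\leq 1$ and $|T_\om^n(Z)|\geq c_0$), yields
\begin{align*}
\nu_\om^{(t)}(Z) \;\leq\; C\,|Z|^t\prod_{k=0}^{n-1}(\lm_{\sg^k\om}^{(t)})^{-1}, \qquad Z\in\cZ_{\om,n}.
\end{align*}
For a ball $B(x,r)$ with $x\in X_{\om,\infty}$ and $r$ small, choose $n$ so that the cylinder $Z_n(x)\ni x$ has $|Z_n(x)|\asymp r$ (hence $n\asymp\log(1/r)$); boundedly many such cylinders cover $B(x,r)\cap X_{\om,\infty}$, and Birkhoff with $\cEP(t)>0$ makes $\prod_{k=0}^{n-1}(\lm_{\sg^k\om}^{(t)})^{-1}$ bounded for $m$-a.e.\ $\om$. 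Thus $\nu_\om^{(t)}(B(x,r))\leq C(\om)\,r^t$ uniformly in $x$, and Frostman's mass distribution principle gives $\HD(X_{\om,\infty})\geq t$; letting $t\nearrow h$ completes the proof. The main technical obstacle is the uniformity bookkeeping in $\om$: the bounded distortion constants, the large-image lower bounds on $T_\om^n(Z)$, and the Birkhoff convergence of $\tfrac{1}{n}\log\prod\lm^{(t)}_{\sg^k\om}$ must all be controlled with constants independent of $n$ (or at worst tempered in $\om$), and the choice of $n=n(\om,x,r)$ in the lower bound requires the conformal estimate to hold uniformly across cylinders of all generations, which typically needs an aperiodicity-type argument on top of the single-step large-images hypothesis with respect to $H$.
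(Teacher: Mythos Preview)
Your overall Bowen framework matches the paper's, and your arguments for strict monotonicity of $\cEP(t)$ and for the upper bound $\HD(X_{\om,\infty})\le h$ are both sound. The paper obtains monotonicity by the direct operator comparison $\cL_{\om,t}^n\ind_\om\le\|g_{\om,1}^{(n)}\|_\infty^{t-s}\cL_{\om,s}^n\ind_\om$ rather than through the variational principle, and gets the upper bound from the conformal identity $\nu_{\om,t}(Z)\asymp (\lm_{\om,t}^n)^{-1}(g_{\om,0,1}^{(n)})^t(x_Z)$ (yielding $\sum_Z\diam^t(Z)\le K_\om^{2t}\lm_{\om,t}^n$ in one line) rather than integrating $\cL_\om^{(t),n}\ind$ against Lebesgue; your routes are longer but work. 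Two small points: the paper never claims continuity of $\cEP$ or $\cEP(h)=0$---the statement only asserts a sign change, so your continuity detour is unnecessary; and the paper does not assert $\cEP(1)<0$ strictly, allowing $h=1$.

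The genuine gap is in the lower bound, exactly at the step you yourself flag as problematic. Fixing a single generation $n$ with $|Z_n(x)|\asymp r$ and asserting that boundedly many level-$n$ cylinders cover $B(x,r)\cap X_{\om,\infty}$ is unjustified: bounded distortion controls each individual cylinder but says nothing about the \emph{relative} sizes of neighbouring level-$n$ cylinders, whose ratio can be as bad as $(\sup|T_\om'|/\inf|T_\om'|)^n$, so a ball of radius $r$ can meet exponentially many of them. The paper resolves this with a variable-scale device: given $\ep>0$ and $x\in X_{\om,\infty}$, let $n_{\om,0}+1$ be least with some $y_0\in B(x,\ep)$ satisfying $g_{\om,0}^{(n_{\om,0}+1)}(y_0)\le 2\ep K_\om$; the cylinder $Z_0\in\cZ_\om^{(n_{\om,0})}$ through $y_0$ then obeys $\diam(Z_0)>2\ep$. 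If $B_{\om,1}:=B(x,\ep)\setminus Z_0\neq\emptyset$, repeat to find $Z_1$ at some $n_{\om,1}\ge n_{\om,0}$ with $\diam(Z_1)>2\ep$. Three disjoint intervals of length $>2\ep$ cannot all meet $B(x,\ep)$, so the process stops with at most two cylinders covering the ball. Then $\nu_{\om,t}(Z_j)\le K_\om^t(\lm_{\om,t}^{n_{\om,j}})^{-1}\diam^t(Z_j)$, the upper estimate $\diam(Z_j)\le 2\ep K_\om^2/\inf g_{\sg^{n_{\om,j}}\om,0}$, and the temperedness of $\inf g_{\sg^n\om,0}$ combine to give $\liminf_{\ep\to 0}\log\nu_{\om,t}(B(x,\ep))/\log\ep\ge t$, whence Young's mass-distribution criterion yields $\HD(X_{\om,\infty})\ge t$. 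This two-cylinders-at-different-scales trick is the missing ingredient in your sketch.
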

The proof of Theorem~\ref{main thm: Bowens formula} appears in Section~\ref{sec: bowen}.

In Section \ref{sec: examples} we apply our general theory to a large class of random $\bt$-transformations with random holes as well as a general random Lasota-Yorke maps with random holes.
In fact, our theory applies to all of the finitely-branched examples discussed in \cite{AFGTV20} (this includes maps which are non-uniformly expanding or have contracting branches which appear infrequently enough that we still maintain on-average expansion) when suitable conditions are put on the holes $H_\om$. This includes the case where $H_\om$ is composed of finitely many intervals and the number of connected components of $H_\om$ is $\log$-integrable with respect to $m$. 

\, 

\begin{center}
\textbf{Summary of Chapter 2. Perturbation formulae for quenched random dynamics with applications to open systems and extreme value theory}
\end{center}

\,

The spectral approach to studying deterministic closed dynamical systems $T:X\to X$ on a phase space $X$, centers on the analysis of a transfer operator $\mathcal{L}:\mathcal{B}(X)\to\mathcal{B}(X)$, given by $\mathcal{L}f(y)=\sum_{x\in T^{-1}y}e^{\vp(x)}f(x)$, for $f$ in a suitable Banach space $\mathcal{B}(X)$ and for suitable potential function $\vp:X\to\mathbb{R}$.
If the map $T$ is \emph{covering} and the potential function is \emph{contracting} in the sense of \cite{LSV} (or similarly if $\sup\vp <P(T)$ as in \cite{denker_existence_1991,denker1990}
or if $\sup\vp-\inf\vp <h_{top}(T)$ as in \cite{hofbauer_equilibrium_1982,bruin_equilibrium_2008}, or if $\vp$ is \emph{hyperbolic} as in \cite{InoquioRivera}),
then one obtains the existence of an equilibrium state $\mu$, with associated conformal measure $\nu$, with the topological pressure $P(T)$ and the density of equilibrium state $d\mu/d\nu$ given by the logarithm of the leading (positive) eigenvalue $\lambda$ of $\mathcal{L}$ and the corresponding positive eigenfunction $h$, respectively. The map $T$ exhibits an exponential decay of correlations with respect to $\nu$ and $\mu$. 

Keller and Liverani \cite{KL99} showed that the leading eigenvalue $\lambda$ and eigenfunction $h$ of $\mathcal{L}$ vary continuously with respect to certain small perturbations of $\mathcal{L}$.
One example of such a perturbation is the introduction of a small hole $H\subset X$.
The set of initial conditions of trajectories that never land in $H$ is the survivor set $X_\infty$.
For small holes, specialising to Lasota-Yorke maps of the interval, Liverani and Maume-Dechamps \cite{LMD} apply the perturbation theory of \cite{KL99} to obtain the existence of a unique conformal measure $\nu$ and an absolutely continuous \emph{conditionally} invariant measure $\mu$, with density $h\in\BV([0,1])$. 
The leading eigenvalue $\lambda$ is interpretable as an \emph{escape rate}, and the open system displays an exponential decay of correlations with respect to $\nu$ and $\mu$.

To obtain finer information on the behaviour of $\lambda$ with respect to perturbation size, particularly in the situation where the perturbation is not smooth, such as perturbations arising from the introduction of a hole, one requires some additional control on the perturbation.
Keller and Liverani \cite{keller_rare_2009} develop abstract conditions on $\mathcal{L}$ and its perturbations $\mathcal{L}_\epsilon$ to ensure good first-order behaviour with respect to the perturbation size. 
Following \cite{keller_rare_2009}, several authors \cite{LMD,ferguson_escape_2012,FFT015,pollicott_open_2017,BDT18} have used the Keller-Liverani \cite{KL99} perturbation theory to obtain similar first-order behaviour of the escape rate with respect to the perturbation size for open systems in various settings.

This ``linear response'' of $\lambda$ is exploited in Keller \cite{keller_rare_2012} to develop an elegant spectral approach to deriving an exponential extreme value law to describe likelihoods of observing extreme values from evaluating an observation function $h:X\to\mathbb{R}$ along orbits of $T$.
In particular, the $N\to\infty$ limiting law of
\begin{equation}\label{K12}
\nu\left(\left\{x\in X: h(T^j(x))\le z_{N}, j=0, \dots, N-1\right\}\right),
\end{equation}
where the thresholds $z_N$ are chosen so that $\lim_{N\to\infty} N\mu(\{x:h(x)>z_N\})\to t$ for some $t>0$, is shown to be exponential.
The spectral approach of \cite{keller_rare_2012} also provides a relatively explicit expression for the limit of (\ref{K12}), namely
$$
\lim_{N\to\infty}\nu\left(\left\{x\in X: h(T^j(x))\le z_{N}, j=0, \dots, N-1\right\}\right)=\exp(-t\theta_0),
$$
where $\theta_0$ is the extremal index.

In Chapter \ref{part 2} we begin with sequential composition of linear operators $\mathcal{L}_{\omega,0}^n:=\mathcal{L}_{\sigma^{n-1}\omega,0}\circ\cdots\circ \mathcal{L}_{\sigma\omega,0}\circ \mathcal{L}_{\omega,0}$, 
where $\sigma:\Omega\to\Omega$ is an invertible map on a configuration set $\Omega$.
The driving $\sigma$ could also be an ergodic map on a probability space $(\Omega,\mathcal{F},m)$.
We then consider a family of perturbed cocycles $\mathcal{L}_{\omega,\ep}^n:=\mathcal{L}_{\sigma^{n-1}\omega,\ep}\circ\cdots\circ \mathcal{L}_{\sigma\omega,\ep}\circ \mathcal{L}_{\omega,\ep}$, where the size of the perturbation $\mathcal{L}_{\omega,0}-\mathcal{L}_{\omega,\ep}$ is quantified by the value $\Delta_{\omega,\epsilon}$ (Definition \ref{def: DL_om}).
Our first main result is an abstract quenched formula (Theorem \ref{thm: GRPT}) for the Lyapunov multipliers $\lambda_{\omega,0}$ up to first order in the size of the perturbation $\Delta_{\omega,\epsilon}$ of the operators $\mathcal{L}_{\omega,0}$.
This quenched random formula generalizes the main abstract first-order formula in \cite{keller_rare_2009} stated in the case of a single deterministic operator $\mathcal{L}_0$.
\begin{thmx}\label{urp main thm A}
Suppose that assumptions \eqref{P1}-\eqref{P9} hold (see Section \ref{Sec: Gen Perturb Setup}).
If $\Delta_{\omega,\epsilon}>0$ for all $\epsilon>0$ then for $m$-a.e. $\omega\in\Omega$:
\begin{align*}
\lim_{\ep\to 0}\frac{\lm_{\om,0}-\lm_{\om,\ep}}{\Dl_{\om,\ep}}=1-\sum_{k=0}^{\infty}\hat{q}_{\om,0}^{(k)}
=:\theta_{\omega,0}.
\end{align*}

\end{thmx}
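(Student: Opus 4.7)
The approach is a quenched adaptation of the abstract deterministic perturbation calculation of Keller--Liverani. The starting point is the pair of fiberwise eigenequations
\[
\mathcal{L}_{\omega,0}\phi_{\omega,0}=\lambda_{\omega,0}\phi_{\sigma\omega,0},\qquad \nu_{\sigma\omega,0}\mathcal{L}_{\omega,0}=\lambda_{\omega,0}\nu_{\omega,0},
\]
together with the analogous equations for the perturbed cocycle. The existence, measurability, and stability of $\phi_{\omega,\epsilon}$, $\nu_{\omega,\epsilon}$, and $\lambda_{\omega,\epsilon}$ for small $\epsilon$ are granted by the standing assumptions \eqref{P1}--\eqref{P9}, which play the role of the random analogue of the Keller--Liverani stability hypotheses. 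Testing the perturbed right eigenequation $\mathcal{L}_{\omega,\epsilon}\phi_{\omega,\epsilon}=\lambda_{\omega,\epsilon}\phi_{\sigma\omega,\epsilon}$ against the unperturbed left eigenvector $\nu_{\sigma\omega,0}$, under a normalization chosen so that $\nu_{\omega,0}(\phi_{\omega,\epsilon})$ is $\omega$-independent, and then rearranging produces the exact identity
\[
\lambda_{\omega,0}-\lambda_{\omega,\epsilon}=\frac{\nu_{\sigma\omega,0}\bigl((\mathcal{L}_{\omega,0}-\mathcal{L}_{\omega,\epsilon})\phi_{\omega,\epsilon}\bigr)}{\nu_{\sigma\omega,0}(\phi_{\sigma\omega,\epsilon})},
\]
which already isolates the perturbation scale $\Delta_{\omega,\epsilon}$ in the numerator.

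The second step is to expand $\phi_{\omega,\epsilon}$ as a power series in the perturbation along the orbit $\omega,\sigma\omega,\sigma^{2}\omega,\ldots$. Iterating the resolvent-type identity $\mathcal{L}_{\sigma^k\omega,\epsilon}=\mathcal{L}_{\sigma^k\omega,0}-(\mathcal{L}_{\sigma^k\omega,0}-\mathcal{L}_{\sigma^k\omega,\epsilon})$ inside the cocycle relation satisfied by $\phi_{\omega,\epsilon}$ generates a telescoping expansion whose normalized $k$th summand equals exactly $\hat q_{\omega,0}^{(k)}\Delta_{\omega,\epsilon}+o(\Delta_{\omega,\epsilon})$ as $\epsilon\to 0$. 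Collecting terms, the $k=0$ contribution produces the leading $1$ in the claimed formula, while the remaining contributions assemble into $-\sum_{k\geq 0}\hat q_{\omega,0}^{(k)}$; the sign reflects the fact that each additional iterate corresponds to mass that was removed by the perturbation returning to the support of $\mathcal{L}_{\omega,0}-\mathcal{L}_{\omega,\epsilon}$.

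The main obstacle is to justify absolute convergence of the series and the interchange of the limit $\epsilon\to 0$ with the infinite sum, uniformly along an $m$-full-measure set of orbits. In the deterministic Keller--Liverani setting a uniform spectral gap and weak operator norm continuity of $\mathcal{L}_\epsilon\to\mathcal{L}_0$ suffice; here the spectral gap is fiberwise and, together with the associated Lasota--Yorke constants, is in general only tempered in $\omega$. My plan is to exploit the fiberwise quasi-compactness of the unperturbed cocycle (Theorem~\ref{main thm: quasicompact}) to obtain an exponential decay rate $\kappa<1$ that lifts to decay of $\hat q_{\sigma^j\omega,0}^{(k)}$ in $k$. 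Restricting to a measurable exhaustion $\Omega_N\nearrow\Omega$ on which all relevant constants are uniformly bounded, and then invoking a tempering/Borel--Cantelli argument to restore an $m$-a.e.\ statement, provides a term-by-term dominant for the series.

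Finally, combining the random stability $\phi_{\omega,\epsilon}\to\phi_{\omega,0}$ in the weak norm supplied by \eqref{P1}--\eqref{P9} with dominated convergence allows the limit $\epsilon\to 0$ to be passed inside both the quotient in the exact identity and the sum in the expansion, yielding
\[
\lim_{\epsilon\to 0}\frac{\lambda_{\omega,0}-\lambda_{\omega,\epsilon}}{\Delta_{\omega,\epsilon}}=1-\sum_{k=0}^{\infty}\hat q_{\omega,0}^{(k)}=\theta_{\omega,0}
\]
for $m$-a.e. $\omega\in\Omega$, as required.
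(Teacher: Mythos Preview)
Your starting identity is correct (and, since \eqref{P2} gives $\nu_{\sg\om,0}(\phi_{\sg\om,\ep})=1$, your denominator is just $1$, matching the paper's \eqref{diff eigenvalues identity unif}). However, the rest of the proposal has two genuine gaps.

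\textbf{Wrong direction of expansion.} You propose to expand $\phi_{\om,\ep}$ along the forward orbit $\om,\sg\om,\sg^2\om,\ldots$. But the quantities $q_{\om,0}^{(k)}$ in \eqref{P9} (and hence $\hat q_{\om,0}^{(k)}$) are built from operators at the \emph{backward} fibers $\sg^{-k}\om$, $\sg^{-(k+1)}\om$. A forward expansion cannot produce them. The paper's key step is backward: one multiplies the basic identity by $\nu_{\sg^{-n}\om,\ep}(\phi_{\sg^{-n}\om,0})$ and uses \eqref{P3} to replace $\nu_{\sg^{-n}\om,\ep}(\phi_{\sg^{-n}\om,0})\,\phi_{\om,\ep}$ by $\~\cL_{\sg^{-n}\om,\ep}^{n}\phi_{\sg^{-n}\om,0}-Q_{\sg^{-n}\om,\ep}^{n}\phi_{\sg^{-n}\om,0}$. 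The telescoping of $\~\cL_{\sg^{-n}\om,0}^{n}-\~\cL_{\sg^{-n}\om,\ep}^{n}$ along the backward orbit then manufactures the $q_{\om,\ep}^{(k)}$ for $0\le k\le n-1$.

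\textbf{Wrong convergence mechanism and wrong tools.} You plan to prove absolute convergence of the infinite series via exponential decay of $\hat q_{\om,0}^{(k)}$ obtained from Theorem~\ref{main thm: quasicompact}, combined with a tempering/Borel--Cantelli exhaustion and dominated convergence. This is both unnecessary and misplaced: Theorem~\ref{thm: GRPT} lives in the purely sequential abstract setting of Section~\ref{Sec: Gen Perturb Setup}, where there is no dynamics, no holes, and no measurability (the paper explicitly notes that no measurability is required in that section); Theorem~\ref{main thm: quasicompact} from Chapter~\ref{part 1} is not available and not relevant. The paper never proves absolute convergence of the series. Instead it truncates at level $n$: for \emph{fixed} $n$ one lets $\ep\to 0$, using \eqref{P9} on the finitely many $q_{\om,\ep}^{(k)}$, \eqref{P5}--\eqref{P6} to kill the middle sum \eqref{final calc second summand}, and \eqref{P7} on the prefactor $\nu_{\sg^{-n}\om,\ep}(\phi_{\sg^{-n}\om,0})$. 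Only then does one send $n\to\infty$, with \eqref{P8} disposing of the $Q$-remainder \eqref{final calc third summand}. No uniform-in-$k$ decay, no ergodic theory, and no exchange of an infinite sum with the $\ep\to 0$ limit is needed.
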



The existence of a random quenched equilibrium state, conformal measure, escape rates, and exponential decay of correlations is established in Chapter \ref{part 1} for relatively large holes, generalizing the large-hole constructions of \cite{LMD} for a single deterministic map $T$ to the random setting with general driving.
In contrast, the focus of Chapter \ref{part 2} is to establish a random quenched analogue of the results of \cite{LMD}, \cite{keller_rare_2009}, and \cite{keller_rare_2012} discussed above.
To this end, 
we let $\mathcal{L}_{\omega,\epsilon}$ be the transfer operator for the open map $T_\omega$ with a hole $H_{\omega,\epsilon}$ introduced in $X$, namely $\mathcal{L}_{\omega,\epsilon}(f)=\mathcal{L}_\omega(\ind_{X\setminus H_{\omega,\epsilon}}f)$.
Our second collection of main results is a quenched formula for the derivative of $\lambda_{\omega,\epsilon}$ with respect to the sample invariant probability measure of the hole $\mu_{\omega,0}(H_{\omega,\epsilon})$ (Theorem \ref{thm: dynamics perturb thm}),
as well as a quenched formula for the derivative of the fiberwise escape rate $R_\ep(\mu_{\om,0})$ with respect to the sample invariant probability measure of the hole $\mu_{\omega,0}(H_{\omega,\epsilon})$ (Corollary \ref{esc rat cor}). 

\begin{corx}\label{urp main cor A}
If $(\mathlist{\bcomma}{\Om, m, \sg, \cJ_0, T, \cB, \cL_0, \nu_0, \phi_0, H_\ep})$ is a  random open  system (see Section \ref{sec: open systems}) with
$\mu_{\om,0}(H_\om)>0$ for all $\ep>0$ and \eqref{C1}--\eqref{C8} hold (see Section \ref{sec:goodrandom}), then for $m$-a.e.\ $\omega\in\Omega$:
\begin{align}\label{cor A eq}
\lim_{\ep\to 0}
\frac{1-\lm_{\om,\ep}/\lm_{\om,0}}{\mu_{\om,0}(H_{\om,\ep})}
=
1-\sum_{k=0}^{\infty}\hat{q}_{\om,0}^{(k)}=:\ta_{\om,0}
\end{align}
and 
\begin{align*}
R_\ep(\mu_{\om,0})=\int_\Om \log \lm_{\om,0}-\log\lm_{\om,\ep}\, dm(\om).
\end{align*}
In addition, if \eqref{DCT cond 1*} holds and the $\mu_{\om,0}$-measures of the random holes scale with $\epsilon$ according to \eqref{EVT style cond}, then for $m$-a.e.\ $\omega\in\Omega$ 
\begin{equation*}
\label{escaperatederiv}
\lim_{\ep\to 0}
\frac{R_\ep(\mu_{\om,0})}{\mu_{\om,0}(H_{\om,\ep})}
=
\int_\Om \ta_{\om,0}\, dm(\om).
\end{equation*}
\end{corx}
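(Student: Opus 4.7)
The plan is to proceed in three steps, tracking the three conclusions of the corollary, with the abstract first-order theorem (Theorem \ref{urp main thm A}) as the engine.

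\textbf{Step 1 (Deriving \eqref{cor A eq}).} First I would verify that the random open-system conditions \eqref{C1}--\eqref{C8} instantiate the abstract hypotheses \eqref{P1}--\eqref{P9} for the specific perturbation $\cL_{\om,\ep}(f) = \cL_{\om,0}(\ind_{X\setminus H_{\om,\ep}} f)$, so that Theorem \ref{urp main thm A} applies and produces a limit $\ta_{\om,0}$ of ratios $(\lm_{\om,0}-\lm_{\om,\ep})/\Dl_{\om,\ep}$. The crucial identification is the elementary computation
\begin{equation*}
(\cL_{\om,0}-\cL_{\om,\ep})(f) = \cL_{\om,0}(\ind_{H_{\om,\ep}} f),
\end{equation*}
which, evaluated on the normalized closed density $\phi_{\om,0}$ and paired with the closed conformal measure $\nu_{\sg\om,0}$, produces $\lm_{\om,0}\mu_{\om,0}(H_{\om,\ep})$. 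This identifies the abstract quantity $\Dl_{\om,\ep}$ of Definition \ref{def: DL_om} with $\mu_{\om,0}(H_{\om,\ep})$ up to the fixed scaling by $\lm_{\om,0}$, so that \eqref{cor A eq} follows from Theorem \ref{urp main thm A} by rewriting $(\lm_{\om,0}-\lm_{\om,\ep})/\Dl_{\om,\ep}$ as $\lm_{\om,0}\cdot(1-\lm_{\om,\ep}/\lm_{\om,0})/\mu_{\om,0}(H_{\om,\ep})$.

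\textbf{Step 2 (Escape rate formula).} Next I would apply Theorem \ref{main thm: escape rate} to the perturbed open system built from $\cL_{\om,\ep}$, whose hypotheses follow from \eqref{C1}--\eqref{C8}. This directly yields
\begin{equation*}
R_\ep(\nu_{\om,0}) = \cEP(\vp_0) - \cEP(\vp_\ep) = \int_\Om (\log\lm_{\om,0} - \log\lm_{\om,\ep})\, dm(\om).
\end{equation*}
A short comparison argument then passes from the escape rate of $\nu_{\om,0}$ to that of $\mu_{\om,0}=\phi_{\om,0}\nu_{\om,0}$: since $\phi_{\om,0}$ is bounded in $\BV(I)$ and positive (via the thermodynamic formalism of Theorem \ref{main thm: existence} applied to the closed system), the two measures are comparable and their exponential rates of escape coincide, delivering the second displayed identity.

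\textbf{Step 3 (Limit of the normalized escape rate).} Using Step 2, write
\begin{equation*}
\frac{R_\ep(\mu_{\om,0})}{\mu_{\om,0}(H_{\om,\ep})} = \frac{1}{\mu_{\om,0}(H_{\om,\ep})}\int_\Om (\log\lm_{\om',0}-\log\lm_{\om',\ep})\, dm(\om').
\end{equation*}
The scaling hypothesis \eqref{EVT style cond} ensures that $\mu_{\om,0}(H_{\om,\ep})$ has a universal (deterministic) leading-order behavior as $\ep \to 0$, so the prefactor $1/\mu_{\om,0}(H_{\om,\ep})$ may asymptotically be traded for $1/\mu_{\om',0}(H_{\om',\ep})$ under the integral sign. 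Combined with \eqref{cor A eq} and the expansion $\log(1-x)=-x+o(x)$, the integrand then converges $m$-a.e.\ to $\ta_{\om',0}$. Dominated convergence, whose hypothesis is exactly \eqref{DCT cond 1*}, interchanges the limit with the integral and produces the claimed identity $\int_\Om \ta_{\om,0}\, dm$.

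\textbf{Main obstacle.} The principal difficulty lies in Step 3: the pointwise first-order statement \eqref{cor A eq} must be upgraded to an $L^1(m)$-dominated statement to justify Dominated Convergence, and simultaneously one must handle the asymmetry that the denominator $\mu_{\om,0}(H_{\om,\ep})$ depends on the ``outer'' fiber $\om$ while the integrand is indexed by the ``inner'' fiber $\om'$. The conditions \eqref{DCT cond 1*} and \eqref{EVT style cond} are engineered precisely to resolve both issues; once these are in place the argument reduces to a routine application of dominated convergence to the explicit integral representation furnished by Step 2.
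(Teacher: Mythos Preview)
Your Steps 1 and 3 are essentially the paper's approach: Step 1 matches Theorem~\ref{thm: dynamics perturb thm} (which packages the verification that \eqref{C1}--\eqref{C8} imply \eqref{P1}--\eqref{P9} via Lemma~\ref{lem: checking P7 and P8} and the identification $\Dl_{\om,\ep}=\lm_{\om,0}\mu_{\om,0}(H_{\om,\ep})$), and Step 3 matches Corollary~\ref{esc rat cor}.

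Step 2, however, has a genuine gap. You propose to invoke Theorem~\ref{main thm: escape rate}, but that result lives in Chapter~\ref{part 1} and is proved under the interval-map hypotheses \eqref{T1}--\eqref{T3}, \eqref{LIP}, \eqref{GP}, \eqref{A1}--\eqref{A2}, \eqref{cond Q1}--\eqref{cond Q3}. These do \emph{not} follow from \eqref{C1}--\eqref{C8}: the latter are abstract spectral hypotheses on a general random open system $(\Om,m,\sg,\cJ_0,T,\cB,\cL_0,\nu_0,\phi_0,H_\ep)$ with arbitrary Banach spaces $\cB_\om$, whereas the former are specific structural conditions on piecewise-monotone interval maps with $\BV$ potentials. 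So the hypotheses of Theorem~\ref{main thm: escape rate} are simply unavailable here, and your comparison argument via boundedness of $\phi_{\om,0}$ cannot get off the ground.

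The paper instead proves the escape rate formula directly from \eqref{C1}--\eqref{C7} (Proposition~\ref{prop: escape rates}). The argument is short and purely spectral: using \eqref{C3} one writes
\[
\mu_{\om,0}(X_{\om,N-1,\ep})=\frac{\lm_{\om,\ep}^N}{\lm_{\om,0}^N}\Big(\nu_{\om,\ep}(\phi_{\om,0})+\nu_{\sg^N\om,0}\big(Q_{\om,\ep}^N(\phi_{\om,0})\big)\Big),
\]
then \eqref{C4}, \eqref{C5}, \eqref{C7} force the bracketed term to have vanishing exponential contribution, so $\frac{1}{N}\log\mu_{\om,0}(X_{\om,N-1,\ep})$ and $\frac{1}{N}\log(\lm_{\om,\ep}^N/\lm_{\om,0}^N)$ have the same limit. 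Birkhoff (using $\log\lm_{\om,\ep}\in L^1(m)$ from \eqref{C2}) then gives the integral formula. You should replace your appeal to Theorem~\ref{main thm: escape rate} with this computation.
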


To generalize to the random setting the rescaled distribution of the maxima given by (\ref{K12}), we now consider the real-valued random observables $h_{\omega}$ defined on the phase space $X$
and construct a process $h_{\sigma^j\omega}\circ T^j_{\omega}$.
We are interested in determining the limiting law of
\begin{equation}\label{HHU}
\mu_{\omega,0}\left(\left\{x\in X: h_{\sigma^j\omega}(T^j_{\omega}(x))\le z_{\sigma^j\omega, N}, j=0, \dots, N-1\right\}\right),
\end{equation}
where $\{z_{\sigma^j\omega, N}\}_{0\le j\le N-1}$ is a collection of real-valued  thresholds.
The sample probability measure $\mu_{\omega,0}$ enjoys the equivariance property $T^*_{\omega}\mu_{\omega,0}=\mu_{\sigma\omega,0},$ however the process $h_{\sigma^j\omega}\circ T^j_{\omega}$ is not stationary in the probabilistic sense, which makes the theory slightly more difficult.

The first approach to non-stationary extreme value theory (EVT) was given 
under convenient assumptions, by H\"usler in \cite{H83,H86}. He was able to recover the usual extremal behaviour seen for
i.i.d. or stationary sequences under Leadbetter’s conditions \cite{LED}, namely (i)
guaranteed mixing properties for the probability
measure governing the process 
and (ii) 
that the exceedances should appear scattered through the time period under consideration.
H\"usler's results can not be applied in the dynamical systems setting because his uniform bounds on the control of the exceedances are not  satisfied for deterministic, random, or sequential compositions of maps. 
The first contribution dealing explicitly with extreme value theory for random and sequential systems is the paper \cite{FFV017};  see also \cite{FFMV016} for an  application to point processes. 
These works were an adaptation of  Leadbetter's conditions and Hüsler’s approach: let us call them the probabilistic approach to extreme value theory, to distinguish it from the spectral and perturbative approach used in the current paper.

As in the deterministic case, in order to avoid a degenerate limit distribution, one should conveniently choose the thresholds $z_{\sigma^j\omega,N}.$  H\"usler proved convergence to the Gumbel's law if for some $0<t<\infty$ we have convergence of the sum 	\begin{equation}\label{BLH}
\sum_{j=0}^{N-1}\mu_{\omega,0}\left(h_{\sigma^j\omega}(T^j_{\omega}(x))> z_{\sigma^j\omega, N}\right)\rightarrow t
\end{equation} for $m$-a.e.\ $\omega.$
In our current framework we will additionally allow the positive number $t$ to be any positive random variable in $L^\infty(m)$.
The nonstationary theory developed in \cite{FFV017} for quenched random processes, has the further restrictions that the observation function is fixed ($\omega$-independent), and the thresholds $z_N$ (like the scaling $t$) are just real numbers, and requires the obvious restricted equivalent of (\ref{BLH}).
In our framework the observation function $h_\omega$, the scaling $t_\omega$, and the thresholds $z_{\omega,N}$ may all be random (but need not be).
We generalize and simplify the requirement (\ref{BLH}) to
\begin{equation}\label{os}
N\mu_{\omega,0}\left(h_{\omega}(x)> z_{\omega, N}\right)=t_{\omega}+\xi_{\omega, N},
\end{equation}
where the scaling $t$ may be a random variable $t\in L^\infty(m)$ and the ``errors'' $\xi_{\omega,N}$ satisfy (i) $\lim_{N\rightarrow \infty}\xi_{\omega,N}=0$ a.e., and (ii) $|\xi_{\omega,N}|\le W<\infty$ for a.e.\ $\omega$ and all sufficiently large $N$. 
We provide a more detailed discussion of the relationship between the conditions (\ref{BLH}) and (\ref{os}) at the end of Section \ref{EEVV}.

In summary, we derive a spectral approach for a quenched random extreme value law, where the dynamics $T_\omega$ is random, the observation functions $h_\omega$ can be random, the thresholds controlling what is an extreme value can be random, and the scalings of the likelihoods of observing extreme values can be random, all controlled by general invertible ergodic driving.
Moreover, we obtain a formula for the explicit form of Gumbel law for the extreme value distribution.
This leads to our main extreme value theory result (stated in detail later as Theorem \ref{evtthm}):
\begin{thmx}\label{urp main thm B}
For a random open system  $(\mathlist{\bcomma}{\Om, m, \sg, \cJ_0, T, \cB, \cL_0, \nu_0, \phi_0, H_\ep})$ (see Section \ref{sec: open systems}),  assuming \eqref{C1'}, \eqref{C2}, \eqref{C3}, \eqref{C4'}, \eqref{C5'}, \eqref{C7'}, \eqref{C8},  and \eqref{xibound} (see Sections~\ref{sec:goodrandom} and \ref{EEVV}),  for $m$-a.e.\ $\omega\in\Omega$ one has
\begin{align*}
&\lim_{N\to\infty} \nu_{\omega,0}\left(x\in X: h_{\sigma^j\omega}(T^j_{\omega}(x))\le z_{\sigma^j\omega, N}, j=0, \dots, N-1\right)\\
&\qquad\qquad =\lefteqn{\lim_{N\to\infty} \mu_{\omega,0}\left(x\in X: h_{\sigma^j\omega}(T^j_{\omega}(x))\le z_{\sigma^j\omega, N}, j=0, \dots, N-1\right)}\\
&\qquad\qquad =\exp\left(-\int_\Omega t_\omega\theta_{\omega,0}\ dm(\omega)\right),
\end{align*}
where $\nu_{\omega,0}$ and $\mu_{\omega,0}$ are the random conformal measure and the random invariant measure, respectively, for our random dynamics, $t_\omega$ is a random scaling function, and $\theta_{\omega,0}$ is an $\omega$-local extremal index corresponding to the quantity given in Corollary \ref{urp main cor A}.
\end{thmx}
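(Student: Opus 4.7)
The plan is to reduce the survival probability in the theorem to a product of ratios of leading Lyapunov multipliers $\lambda_{\sigma^j\omega,\epsilon_N}/\lambda_{\sigma^j\omega,0}$ for the open cocycle, and then to apply the quenched perturbation formula (Corollary~\ref{urp main cor A}) together with Birkhoff's ergodic theorem for $\sigma$. First I would identify the event
\[
\{x \in X : h_{\sigma^j\omega}(T^j_\omega x) \le z_{\sigma^j\omega,N},\ j = 0,\dots,N-1\}
\]
with the $N$-step survivor set $X_{\omega,N,\epsilon_N}$ corresponding to the random hole $H_{\omega,\epsilon_N} := \{h_\omega > z_{\omega,N}\}$, where $\epsilon_N \to 0$ is implicitly determined by the rescaling \eqref{os}. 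Using the conformality relation $\nu_{\sigma^N\omega,0}(\mathcal{L}_{\omega,\epsilon_N}^N f) = \bigl(\prod_{j=0}^{N-1}\lambda_{\sigma^j\omega,0}\bigr)\,\nu_{\omega,0}(\mathbf{1}_{X_{\omega,N,\epsilon_N}} f)$ with $f = \phi_{\omega,0}$ (respectively $f=\mathbf{1}$), I obtain closed-form expressions for both $\mu_{\omega,0}(X_{\omega,N,\epsilon_N})$ and $\nu_{\omega,0}(X_{\omega,N,\epsilon_N})$ in terms of the cocycle $\mathcal{L}_{\omega,\epsilon_N}^N$.

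Next I would invoke the quasi-compact spectral decomposition for the perturbed cocycle guaranteed by assumptions \eqref{C1'}--\eqref{C8}: for $m$-a.e.\ $\omega$ and sufficiently small $\epsilon$, $\bigl(\prod_{j=0}^{N-1}\lambda_{\sigma^j\omega,\epsilon}\bigr)^{-1}\mathcal{L}_{\omega,\epsilon}^N \phi_{\omega,0}$ converges exponentially in $N$ to $\nu_{\omega,\epsilon}(\phi_{\omega,0})\,\phi_{\sigma^N\omega,\epsilon}$, with a tempered-in-$\omega$ error that is uniform over $\epsilon$ in a neighbourhood of $0$. Combined with the continuity $\phi_{\omega,\epsilon_N}\to\phi_{\omega,0}$ and $\nu_{\omega,\epsilon_N}\to\nu_{\omega,0}$ furnished by the Keller--Liverani-type stability of our framework, this yields
\[
\mu_{\omega,0}(X_{\omega,N,\epsilon_N}) \;=\; (1+o(1))\prod_{j=0}^{N-1} \frac{\lambda_{\sigma^j\omega,\epsilon_N}}{\lambda_{\sigma^j\omega,0}},
\]
and the identical leading behaviour for $\nu_{\omega,0}(X_{\omega,N,\epsilon_N})$. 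This simultaneously proves the equality of the two limits in the theorem.

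The final step is to take logarithms and expand using Corollary~\ref{urp main cor A}: for each $j$,
\[
\log\frac{\lambda_{\sigma^j\omega,\epsilon_N}}{\lambda_{\sigma^j\omega,0}} \;=\; -\theta_{\sigma^j\omega,0}\,\mu_{\sigma^j\omega,0}(H_{\sigma^j\omega,\epsilon_N}) + o\!\bigl(\mu_{\sigma^j\omega,0}(H_{\sigma^j\omega,\epsilon_N})\bigr).
\]
Substituting the rescaling \eqref{os}, namely $N\mu_{\sigma^j\omega,0}(H_{\sigma^j\omega,\epsilon_N}) = t_{\sigma^j\omega} + \xi_{\sigma^j\omega,N}$, the sum over $j$ splits as
\[
-\frac{1}{N}\sum_{j=0}^{N-1} \theta_{\sigma^j\omega,0}\, t_{\sigma^j\omega} \;-\; \frac{1}{N}\sum_{j=0}^{N-1} \theta_{\sigma^j\omega,0}\, \xi_{\sigma^j\omega,N} \;+\; o(1).
\]
The first Birkhoff average converges to $-\int_\Omega \theta_{\omega,0} t_\omega\,dm(\omega)$ by ergodicity of $\sigma$ on $(\Omega,\mathscr{F},m)$, and exponentiating gives the claimed Gumbel-type limit. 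The second average vanishes thanks to the $L^\infty$-bound $|\xi_{\omega,N}|\le W$ from \eqref{xibound} coupled with the pointwise decay $\xi_{\omega,N}\to 0$, via a truncation/Egorov-plus-dominated-convergence argument applied to the doubly-indexed family.

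The hard part will be uniformly controlling the perturbation remainders across the $N$-fold product. The $o(\mu_{\omega,0}(H_{\omega,\epsilon}))$ term in Corollary~\ref{urp main cor A} is $\omega$-dependent a priori, yet it must be small enough to be negligible after summing $N$ terms each of order $1/N$. This forces a quantitative, tempered version of the linear-response formula; establishing this uniformity is precisely the role of hypotheses \eqref{C1'}--\eqref{C8} and \eqref{xibound}, and its careful execution -- together with managing the diagonal coupling between $N\to\infty$ and $\epsilon_N\to 0$ -- will occupy the bulk of the technical work.
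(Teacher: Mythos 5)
Your reduction is the same as the paper's: identifying the non-exceedance event with the survivor set and writing $\nu_{\omega,0}(X_{\omega,N-1,\epsilon_N})$ and $\mu_{\omega,0}(X_{\omega,N-1,\epsilon_N})$ as $\lambda_{\omega,\epsilon_N}^N/\lambda_{\omega,0}^N$ times a prefactor $\nu_{\omega,\epsilon_N}(f)+\nu_{\sigma^N\omega,0}(Q_{\omega,\epsilon_N}^N f)$ (with $f=\ind$ or $f=\phi_{\omega,0}$) that tends to $1$ is exactly (\ref{evtexpression3})--(\ref{evtexpression2mu}) and Step~6 of the proof of Theorem~\ref{evtthm}; this also gives the equality of the two limits, as you say. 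The genuine gap is in how you evaluate $\lim_{N\to\infty}\lambda_{\omega,\epsilon_N}^N/\lambda_{\omega,0}^N$: you expand each factor with Corollary~\ref{urp main cor A} and then invoke Birkhoff, and two steps fail as written. First, Corollary~\ref{urp main cor A} is a pointwise limit with an $\omega$-dependent, rate-free remainder; summing over $j=0,\dots,N-1$ you add $N$ remainders, each $o(1/N)$ only at its own fiber $\sigma^{j}\omega$, and without an $L^1(m)$-dominated (let alone uniform) bound the total need not vanish --- no such bound can be read off from the statement of the corollary, only from its proof. Second, even the resulting average $\frac{1}{N}\sum_{j<N}\theta_{\sigma^j\omega,0}\bigl(t_{\sigma^j\omega}+\xi_{\sigma^j\omega,N}\bigr)$ plus the remainder sum is not a Birkhoff average of a fixed function: the summand depends on $N$ through the diagonal coupling $\epsilon_N\leftrightarrow N$, so the classical ergodic theorem does not apply. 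You flag both issues but defer them; they are where the content of the proof lies.

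The paper closes these gaps with two tools you do not supply. (i) Maker's generalized ergodic theorem (Lemma~\ref{ptwiselemma2}): if $g_N\to g$ $m$-a.e.\ and $|g_N|\le g^*\in L^1(m)$, then $\frac{1}{N}\sum_{i<N}g_N(\sigma^i\omega)\to\int_\Omega g\,dm$ a.e.; this replaces Birkhoff for $N$-dependent summands. (ii) Instead of quoting Corollary~\ref{urp main cor A}, the paper re-runs the finite-$n$, finite-$N$ expansion from the proof of Theorem~\ref{thm: GRPT} (equation (\ref{maineq})) and uses the uniform hypotheses \eqref{C1'}, \eqref{C4'}, \eqref{C5'}, \eqref{C7'}, \eqref{xibound}, together with the uniform convergence $\nu_{\omega,\epsilon_N}(\phi_{\omega,0})\to 1$ of Lemma~\ref{lemC9'}, to obtain $\omega$-uniform, $L^1$-dominated bounds on the error families $g^{(1)}_{N,n}$, $g^{(2)}_{N,n}$, $g^{(3)}_{N,n}$ and on the Taylor remainder, finishing with an $n\to\infty$ dominated-convergence step via $0\le\theta_{\omega,0,n}\le 1$. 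If you wish to keep your higher-level route, the minimal repair is to apply Maker's lemma directly to $g_N(\omega):=N\log\bigl(\lambda_{\omega,0}/\lambda_{\omega,\epsilon_N}\bigr)$: its a.e.\ limit is $t_\omega\theta_{\omega,0}$ by Corollary~\ref{urp main cor A} (available here by Remark~\ref{rem checking esc cor cond}) combined with \eqref{xibound}, and the domination $0\le g_N\le A(|t|_\infty+W)$ follows from the uniform bound of Remark~\ref{rem checking esc cor cond}. Either way, a Maker-type ergodic theorem plus a dominated bound on the per-fiber errors must be stated and proved; ``Birkhoff plus Egorov/dominated convergence'' as written does not close the argument.
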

This result generalizes the spectral approach to extreme value theory in \cite{keller_rare_2012}, for a single map $T$, single observation function $h$, single scaling, and single sequence of thresholds.

Given a family of random holes $\mathcal{H}_{\om,N}:=\{H_{\sigma^j\om, \epsilon_N}\}_{j\ge 0},$ one can define
the first (random) hitting time to a hole, starting at initial condition $x$ and random configuration $\omega$:
$$
\tau_{\om, \mathcal{H}_{\om,N}}(x):=\inf\{k\ge 1, T^k_{\om}(x)\in H_{\sigma^k\om, \epsilon_N}\}.$$
When this family of holes shrink with increasing $N$ according to Condition (\ref{xibound}) (see Section \ref{sec:hts}),
Theorem \ref{urp main thm B} provides a description of the statistics of random hitting times, scaled by the measure of the holes (see Theorem \ref{hve}).
\begin{corx}\label{urp main cor B}
For a random open system  $(\mathlist{\bcomma}{\Om, m, \sg, \cJ_0, T, \cB, \cL_0, \nu_0, \phi_0, H_\ep})$ (see Section \ref{sec: open systems}),  assume \eqref{C1'}, \eqref{C2}, \eqref{C3}, \eqref{C4'}, \eqref{C5'}, \eqref{C7'}, \eqref{C8},  and \eqref{xibound} (see Sections~\ref{sec:goodrandom} and \ref{EEVV}).
For $m$-a.e.\ $\omega\in\Omega$ one has
\begin{equation}
\label{eqhit0}
\lim_{N\to\infty}\mu_{\om,0}\left(\tau_{\om, \mathcal{H}_{\om,N}} \mu_{\om,0}(H_{\om, \epsilon_N})>t_{\om}\right) = \exp\left(-{\int_\Om t_{\om} \theta_{\om,0}\ dm(\omega)}\right).
\end{equation}
\end{corx}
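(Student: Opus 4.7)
The plan is to deduce this hitting time statement from the quenched extreme value law (Theorem~\ref{urp main thm B}) via the classical identification between first-hitting times to small holes and non-exceedance of a suitable observation. First, I would introduce a measurable family of observables $h_\om:X\to\RR$ together with thresholds $\{z_{\om,n}\}_{n\in\NN}$ chosen so that the super-level sets satisfy $\{x:h_\om(x)>z_{\om,n}\}=H_{\om,\ep_N}$ whenever the time index $n$ equals $K_N(\om):=\lfloor t_\om/\mu_{\om,0}(H_{\om,\ep_N})\rfloor$. Assuming (as is natural) that the holes $\{H_{\om,\ep}\}_\ep$ are nested, one may take $h_\om(x):=\sup\{M:x\in H_{\om,\ep_M}\}$ and pick the thresholds to realize this identification. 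Because $t_\om\in L^\infty(m)$ and $\mu_{\om,0}(H_{\om,\ep_N})\to 0$ by hypothesis, $K_N(\om)$ is $m$-a.e.\ a well-defined positive integer tending to $\infty$ as $N\to\infty$.

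Next, I would unpack the hitting time event:
\begin{align*}
\lt\{\tau_{\om,\cH_{\om,N}}\cdot\mu_{\om,0}(H_{\om,\ep_N})>t_\om\rt\}
&=\lt\{\tau_{\om,\cH_{\om,N}}>K_N(\om)\rt\}\\
&=\bigcap_{j=1}^{K_N(\om)}\lt\{x:T^j_\om x\notin H_{\sg^j\om,\ep_N}\rt\}\\
&=\bigcap_{j=1}^{K_N(\om)}\lt\{x:h_{\sg^j\om}(T^j_\om x)\leq z_{\sg^j\om,K_N(\om)}\rt\},
\end{align*}
where the final equality uses the super-level set identification above, applied at each fiber $\sg^j\om$. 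Adjoining the missing $j=0$ factor alters the measure by at most $\mu_{\om,0}(H_{\om,\ep_N})\to 0$, and a comparable error is incurred by the floor in $K_N(\om)$. With this identification, hypothesis \eqref{os} of Theorem~\ref{urp main thm B} at index $K_N(\om)$ reads
\begin{align*}
K_N(\om)\,\mu_{\om,0}(H_{\om,\ep_N})=t_\om+\xi_{\om,K_N(\om)},\qquad |\xi_{\om,K_N(\om)}|\leq\mu_{\om,0}(H_{\om,\ep_N})\to 0,
\end{align*}
so the uniform bound \eqref{xibound} is automatic. Theorem~\ref{urp main thm B} then delivers the desired limit $\exp\lt(-\int_\Om t_\om\ta_{\om,0}\,dm(\om)\rt)$.

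The principal subtlety is that Theorem~\ref{urp main thm B} is formulated for a deterministic integer sequence $N\to\infty$, whereas the reduction above applies it along the $\om$-measurable sequence $K_N(\om)$. I expect this to be the main obstacle. The cleanest way to address it is to revisit the proof of Theorem~\ref{urp main thm B}: the controlling spectral quantity is $(\lm_{\om,\ep_N}/\lm_{\om,0})^{K_N(\om)}$, which, by Theorem~\ref{urp main thm A} and Corollary~\ref{urp main cor A}, behaves like $\exp\lt(-K_N(\om)\,\mu_{\om,0}(H_{\om,\ep_N})\,\ta_{\om,0}\rt)\to \exp(-t_\om\ta_{\om,0})$; since this asymptotic depends only on the scaling product $K_N(\om)\,\mu_{\om,0}(H_{\om,\ep_N})\to t_\om$ rather than on $K_N(\om)$ itself, the pointwise limit survives when the time index is made $\om$-dependent. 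As an alternative, one can sandwich $K_N(\om)$ between two deterministic integer sequences realizing slightly perturbed scaling constants $t_\om^\pm$, apply Theorem~\ref{urp main thm B} to each, and exploit monotonicity of the hitting-time event in the time parameter together with continuity of $s\mapsto\exp(-\int_\Om s\,\ta_{\om,0}\,dm(\om))$ to pinch the limit. The residual accounting (joint measurability of $(\om,\ep)\mapsto\mu_{\om,0}(H_{\om,\ep})$, the $j=0$ correction, and floor-vs-ceiling conventions) is routine given the integrability assumptions on $t$.
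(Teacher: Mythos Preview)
Your identification of the hitting-time event with a survivor set is correct, and you rightly flag the random time index $K_N(\om)$ as the crux. However, your proposed resolution contains a genuine error, and the paper bypasses the whole difficulty by a simpler route.

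\textbf{The error in your spectral argument.} The quantity controlling $\mu_{\om,0}(X_{\om,K-1,\ep_N})$ is not a power of the single ratio $\lm_{\om,\ep_N}/\lm_{\om,0}$ but the orbit product $\prod_{j=0}^{K-1}\lm_{\sg^j\om,\ep_N}/\lm_{\sg^j\om,0}$ (this is the paper's convention for $\lm_{\om,\ep_N}^K/\lm_{\om,0}^K$). Each factor contributes $\ta_{\sg^j\om,0}\,\mu_{\sg^j\om,0}(H_{\sg^j\om,\ep_N})\approx \ta_{\sg^j\om,0}(t_{\sg^j\om}+\xi_{\sg^j\om,N})/N$, and the sum is handled by an ergodic average (Maker's theorem in the proof of Theorem~\ref{evtthm}). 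The limit is therefore $\exp(-\int_\Om t_\om\ta_{\om,0}\,dm)$, not your $\exp(-t_\om\ta_{\om,0})$. Fortunately this \emph{is} the desired answer, but your asymptotic ``$\exp(-K_N(\om)\,\mu_{\om,0}(H_{\om,\ep_N})\,\ta_{\om,0})$'' is wrong: it puts all the weight on the single fiber $\om$ and misses the orbit averaging. To actually push this through you would need to rerun the proof of Theorem~\ref{evtthm} with the random upper limit $K_N(\om)$ in the Birkhoff-type sums, using $K_N(\om)/N\to 1$; this is doable but is real work, not the one-line appeal to Corollary~\ref{urp main cor A} you suggest.

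\textbf{How the paper avoids the random time.} The paper reverses your direction of attack. Rather than starting from $\{\tau\mu_{\om,0}(H_{\om,\ep_N})>t_\om\}$ and translating to a random time, it starts from the deterministic-time event $\{\tau_{\om,\cH_{\om,N}}>N\}$, which equals $T_\om^{-1}(X_{\sg\om,N-1,\ep_N})$ and hence has $\mu_{\om,0}$-measure $\mu_{\sg\om,0}(X_{\sg\om,N-1,\ep_N})$ by equivariance; Theorem~\ref{evtthm} applies directly. Condition~\eqref{xibound} then says $N\mu_{\om,0}(H_{\om,\ep_N})=t_\om+\xi_{\om,N}$, so $\{\tau>N\}=\{\tau\mu_{\om,0}(H_{\om,\ep_N})>t_\om+\xi_{\om,N}\}$ exactly. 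The remaining task is to show
\[
\bigl|\mu_{\om,0}(\tau\mu_{\om,0}(H_{\om,\ep_N})>t_\om+\xi_{\om,N})-\mu_{\om,0}(\tau\mu_{\om,0}(H_{\om,\ep_N})>t_\om)\bigr|\to 0,
\]
which the paper does by covering the symmetric difference with $O(|\xi_{\om,N}|/\mu_{\om,0}(H_{\om,\ep_N}))$ preimages of holes, each of $\mu_0$-measure $O(1/N)$; the product is $O(|\xi_{\om,N}|)\to 0$. This is the key estimate your sandwich sketch would also need, but here it falls out in two lines and no random-index ergodic theorem is required.
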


By assuming some additional uniformity in $\omega$ on the maps $T_\omega$ we use a recent random perturbative result \cite{C19} to obtain a complete quenched thermodynamic formalism. 
The following existence result extends Theorem C of \cite{LMD}, which concerned inserting a single small hole into the phase space of a single deterministic map $T$, to the situation of random map cocycles with small random holes with the random process controlled by general invertible ergodic driving $\sigma$.

\begin{thmx}\label{urp main thm C}
Suppose 
that (\ref{E1})--(\ref{E9})
(see Section~\ref{sec: existence})
hold for the random open system $(\mathlist{\bcomma}{\Om, m, \sg, [0,1], T, \BV([0,1]), \cL_0, \nu_0, \phi_0, H_\ep})$ (see Section \ref{sec: open systems}).
Then for each $\ep>0$ sufficiently small there exists a unique random $T$-invariant probability measure $\mu_\ep=\set{\mu_{\om,\ep}}_{\om\in\Om}$ with $\supp(\mu_{\om,\ep})\sub X_{\om,\infty,\ep}$. 
Furthermore, $\mu_\ep$ is the unique relative equilibrium state for the random open system 
and satisfies a forward and backward exponential decay of correlations. In addition, there exists a random absolutely continuous (with respect to $\set{\nu_{\om,0}}_{\om\in\Om}$) conditionally invariant probability measure $\varrho_\ep=\set{\varrho_{\om,\ep}}_{\om\in\Om}$ with $\supp(\varrho_{\om,\ep})\sub[0,1]\bs H_{\om,\ep}$ and density function $\psi_{\om,\ep}\in \BV([0,1])$. 

\end{thmx}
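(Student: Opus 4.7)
The plan is to obtain Theorem~\ref{urp main thm C} as a direct application of the random quasi-compactness stability theorem of \cite{C19} to the cocycle of $\BV$ operators $\cL_{\om,\ep}$ viewed as a small random perturbation of $\cL_{\om,0}$. Under the hypotheses of Chapter~\ref{part 1}, the closed cocycle $\cL_{\om,0}$ is quasi-compact with simple top Lyapunov exponent $\int \log\lm_{\om,0}\, dm(\om)$, realised by the one-dimensional top Oseledets space spanned by $\phi_{\om,0}$, with equivariant dual measure $\nu_{\om,0}$ and invariant state $\mu_{\om,0}=\phi_{\om,0}\nu_{\om,0}$. The aim is to transfer this spectral picture to $\cL_{\om,\ep}(f)=\cL_{\om,0}(\ind_{[0,1]\bs H_{\om,\ep}}f)$ for small $\ep$ and then read off the four claimed objects.

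The first step is to verify the hypotheses of \cite{C19} under \eqref{E1}--\eqref{E9}: measurability and log-integrability of random Lasota--Yorke constants for $\cL_{\om,\ep}$ uniform in $\ep$, together with operator-norm smallness $\|\cL_{\om,0}-\cL_{\om,\ep}\|_{\BV\to L^1(\nu_{\sg(\om),0})}\to 0$ with a log-integrable dominating function. For the hole perturbation the difference operator is $f\mapsto \cL_{\om,0}(\ind_{H_{\om,\ep}}f)$, so the required bound reduces to smallness of $\nu_{\om,0}(H_{\om,\ep})$ combined with a standard $\BV$ covering estimate; this is precisely what \eqref{E1}--\eqref{E9} supply. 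The stability theorem then produces, for $\ep$ small and $m$-a.e.\ $\om$, a perturbed leading multiplier $\lm_{\om,\ep}$, a $\BV$ eigenfunction $\phi_{\om,\ep}$ with $\cL_{\om,\ep}\phi_{\om,\ep}=\lm_{\om,\ep}\phi_{\sg(\om),\ep}$, an equivariant dual measure $\nu_{\om,\ep}$, and a uniform Lyapunov gap between the top and second exponents of the perturbed cocycle, all close to their unperturbed counterparts.

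From this data the remaining assertions are routine. Define the random ACCIM by $\psi_{\om,\ep}:=\phi_{\om,\ep}$ and normalise to obtain $\varrho_{\om,\ep}$ against $\nu_{\om,0}$; a single application of $\cL_{\om,\ep}$ kills the mass on $H_{\om,\ep}$, so $\supp(\varrho_{\om,\ep})\sub[0,1]\bs H_{\om,\ep}$ and $\psi_{\om,\ep}\in\BV([0,1])$. Set $\mu_{\om,\ep}:=\phi_{\om,\ep}\nu_{\om,\ep}$, normalised so that $\nu_{\om,\ep}(\phi_{\om,\ep})=1$; equivariance of both factors yields $T$-invariance of $\mu_\ep$, and iteration of $\cL_{\om,\ep}^n$ confines its support to $X_{\om,\infty,\ep}$. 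Uniqueness of $\mu_\ep$ both as a random invariant measure and as the relative equilibrium state follows from simplicity of the top Oseledets space together with the random variational principle of Theorem~\ref{main thm: existence}, applied to the perturbed system whose Chapter~\ref{part 1} hypotheses hold for small $\ep$ thanks to \eqref{E1}--\eqref{E9}. Forward and backward exponential decay of correlations are immediate from the uniform Lyapunov gap, by projecting along the Oseledets splitting and using invertibility of $\sg$ for the backward statement.

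The main obstacle I anticipate is the log-integrable smallness of the perturbation $\Dl_{\om,\ep}$ uniformly in $\om$: in the deterministic setting of \cite{LMD} this reduces to a single scalar estimate, whereas here one must simultaneously track measurability and log-integrability of the random constants controlling the Lasota--Yorke cycle, the conformal mass $\nu_{\om,0}(H_{\om,\ep})$ of the holes, and the $\BV$ norm of the unperturbed density $\phi_{\om,0}$. This is the precise point at which conditions \eqref{E1}--\eqref{E9} do the real work, and is likely where most of the technical verification of the proof will be concentrated.
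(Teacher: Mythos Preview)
Your overall strategy---apply Crimmins' stability theorem \cite{C19} to the cocycle $\cL_{\om,\ep}$ as a perturbation of $\cL_{\om,0}$, then read off the thermodynamic objects from the perturbed Oseledets decomposition---is exactly the paper's approach (Lemma~\ref{harrylemma2} and Theorem~\ref{EXISTENCE THEOREM}). However, there are three concrete technical points where your proposal departs from what actually works.

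\textbf{Uniformity, not log-integrability.} Theorem~4.4 of \cite{C19} requires \emph{uniform-in-$\om$} Lasota--Yorke constants and \emph{uniform} triple-norm smallness $\lim_{\ep\to 0}\esssup_\om\trinorm{\cL_{\om,0}-\cL_{\om,\ep}}=0$, not merely log-integrable bounds. This is precisely why the paper normalises and works with $\hat\cL_{\om,\ep}:=\lm_{\om,0}^{-1}\cL_{\om,\ep}$, and why \eqref{E1}--\eqref{E9} are phrased with $\esssup_\om$ and $\essinf_\om$ throughout. Your anticipated ``main obstacle'' of log-integrable smallness is aiming at the wrong target. Relatedly, the paper uses $(\BV_1,L^1(\Leb))$ as the fixed Banach/weak pair for \cite{C19} (which is formulated on a single Banach space), proving a non-random norm equivalence $\|\cdot\|_{\BV_1}\asymp\|\cdot\|_{\cB_\om}$ (Proposition~\ref{prop norm equiv}) to translate back; your $L^1(\nu_{\sg\om,0})$ weak norm would make the Banach-space pair $\om$-dependent.

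\textbf{You cannot invoke Chapter~\ref{part 1}.} Conditions \eqref{E1}--\eqref{E9} do \emph{not} imply the Chapter~\ref{part 1} hypotheses \eqref{cond Q1}--\eqref{cond Q3} on contiguous bad intervals; the two chapters are explicitly complementary. The variational principle and uniqueness of the equilibrium state in Theorem~\ref{EXISTENCE THEOREM}(3) are obtained by transplanting the arguments of \cite{AFGTV20} (Theorem~2.23 there) directly, using the conformal data $(\zt_{\om,\ep},\psi_{\om,\ep},\rho_{\om,\ep})$ produced by the perturbation, not by appealing to Theorem~\ref{main thm: existence}.

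\textbf{Decay of correlations is not immediate.} The $\BV$ spectral gap gives exponential convergence in $\|\cdot\|_{\cB_\om}$, but $\mu_{\om,\ep}$ is supported on the survivor set, a $\nu_{\om,0}$-null set, so the $\cB_\om$ norm and $\mu_{\om,\ep}$-integrals are incompatible. The paper handles this in Appendix~\ref{appDec} by passing through the RACCIM $\vrho_{\om,\ep}$ via item~\eqref{item 5} of Theorem~\ref{EXISTENCE THEOREM} to close the estimate.
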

For a more explicit statement of Theorem \ref{urp main thm C} as well as the relevant assumptions and definitions see Section~\ref{sec: existence} and Theorem~\ref{EXISTENCE THEOREM}.\\

In Section \ref{sec: limit theorems} we prove some quenched limit theorems for closed random dynamics. 
These limit results are new for more general potentials and their associated equilibrium states. We use two approaches. 
The first is based on the perturbative technique developed in \cite{dragicevic_spectral_2018}, which  generalizes the Nagaev-Guivarc'h method to random cocycles. 
This technique establishes a relation between a suitable twisted operator cocycle and  the distribution of the random Birkhoff sums. 
As a consequence, it is possible to get quenched versions of the large deviation principle, the central limit theorem, and the local central limit theorem. 
The second approach invokes the martingale  techniques previously used in the quenched random setting in \cite{DFGTV18A}. 
We obtain the almost sure invariance principle (ASIP) for the equivariant measure $\mu_{\om,0},$ which also implies the central limit theorem and the law of iterated logarithms, a general bound for large deviations and a dynamical Borel-Cantelli lemma. 
In addition, using the Sprindzuk theorem we are able to obtain a quenched shrinking target result.

We conclude in Section \ref{sec:examples} with several explicit examples of Theorems \ref{urp main thm A}--\ref{urp main thm C}.
We start in Example \ref{example1} with the weight $1/T'_\omega$ for a family of random maps, random scalings, and random observations $h_\omega$ with a common extremum location in phase space, which is a common fixed point of the $T_\omega$.
The special cases of a fixed map $T$ on the one hand, and a fixed scaling $t$ on the other, are also considered.
The same calculations can be extended to observation functions with common extrema on a periodic orbit common to all $T_\omega$.
Next in Example \ref{example2} we consider the more difficult case where orbits are distributed according to equilibrium states of a general geometric weight $|DT_\omega|^{-r}$,
using random $\beta$-maps and random observation functions $h_\omega$ with a common extremum at $x=0$.
Example \ref{example3} investigates a fixed map $T$ with random observation functions $h_\omega$ with extrema in a shrinking neighbourhood of a fixed point of $T$, where the neighbourhood lacks the symmetry of Example \ref{example1}.
In the last example, Example \ref{example4}, we again consider random maps $T_\om$  
with random observations $h_\omega$, but now the maxima of the observations are not related to fixed or periodic points of $T_\om$.

Though we apply our results to the setting of random interval maps our results apply equally well to other random settings including random subshifts 
\cite{Bogenschutz_RuelleTransferOperator_1995a,mayer_countable_2015}, random distance expanding maps \cite{mayer_distance_2011}, random polynomial systems \cite{Bruck_Generalizediteration_2003}, random transcendental maps \cite{mayer_random_2018}. In addition, Theorem \ref{urp main thm A} (as well as \eqref{cor A eq} of Corollary \ref{urp main cor A})
applies to sequential and semi-group settings including \cite{atnip_dynamics_2020,stadlbauer_quenched_2020}. See Remark~\ref{rem seq exist} for a sequential version of Theorem \ref{urp main thm C}.

\section{Preliminaries on random open systems }\label{sec:IntroPrelims}

In this section we introduce the general setup of random open systems.
We begin with a probability space $(\Om,\sF,m)$\index{$\Om$}\index{$\sF$} and an ergodic, invertible map $\sg:\Om\to\Om$ which preserves the measure $m$, i.e.
\begin{align*}
m\circ\sg^{-1}=m.
\end{align*}\index{$m$}\index{$\sg:\Om\to\Om$}
We will refer to the tuple $(\Om,\sF,m,\sg)$ as the \textit{base dynamical system}\index{base dynamical system}.
For each $\om\in \Om$, let $\cJ_{\om,0}$\index{$\cJ_{\om,0}$} be a closed subset of a complete metrisable space $X$ such that the map
\begin{align*}
\Om\ni \om\longmapsto\cJ_{\om,0}
\end{align*}
is a closed random set\index{closed random set}, i.e. $\cJ_{\om,0}\sub X$ is closed for each $\om\in\Om$ and the map $\om\mapsto\cJ_{\om,0}$ is measurable  
(see \cite{crauel_random_2002}), and we consider the maps
\begin{align*}
T_\om:\cJ_{\om,0}\to\cJ_{\sg\om,0}.
\end{align*}
Random iteration is given by $T_\om^n:\cJ_{\om,0}\to\cJ_{\sg^n\om,0}$\index{$T_\om^n:\cJ_{\om,0}\to\cJ_{\sg^n\om,0}$}, by which we mean the $n$-fold composition 
\begin{align*}
T_{\sg^{n-1}\om}\circ\dots\circ T_\om:\cJ_{\om,0}\to\cJ_{\sg^n\om,0}.
\end{align*}
Thus, our random iterates are compositions which are driven by the orbits of the base dynamical system $\sg:\Om\to\Om$.
Given a set $A\sub\cJ_{\sg^n\om,0}$ we let
\begin{align*}
T_\om^{-n}(A):=\set{x\in\cJ_{\om,0}:T_\om^n(x)\in A}
\end{align*}
denote the inverse image of $A$ under the map $T_\om^n$ for each $\om\in\Om$ and $n\geq 1$.
Now let
\begin{align*}
\cJ_0:=\bigcup_{\om\in\Om}\set{\om}\times\cJ_{\om,0}\sub \Om\times X,
\end{align*}\index{$\cJ_0$}
and define the induced skew-product map $T:\cJ_0\to\cJ_0$\index{$T:\cJ_0\to\cJ_0$} by
\begin{align*}
T(\om,x)=(\sg\om,T_\om(x)).
\end{align*}
Let $\sB$\index{$\sB$} denote the Borel $\sg$-algebra of $X$ and let $\sF\otimes\sB$\index{$\sF\otimes\sB$} be the product $\sg$-algebra on $\Om\times X$. Throughout the text we denote Lebesgue measure by $\Leb$.\index{$\Leb$} We suppose the following:

\,

\begin{enumerate}[align=left,leftmargin=*,labelsep=\parindent]
\item[(\Gls*{M1})]\myglabel{M1}{M1} The map $T:\cJ_0\to\cJ_0$ is measurable with respect to $\sF\otimes\sB$.
\end{enumerate} 

\,

\begin{remark}\label{rem: iid iteration}
    Throughout this manuscript we consider general ergodic driving $\sg:\Om\to\Om$. To consider the simpler setting of iid driving let $\Sg=\{1,\dots,k\}$ be a finite set together with an associated probability vector $p=(p_1,\dots, p_k)$. Then take $\Om=\Sg^\ZZ$ to be the space of all bi-infinite sequences $\om=\dots\om_{-2}\om_{-1}.\om_0\om_1\om_2\dots$ with symbols from $\Sg$, let $\sg:\Om\to\Om$ be the left-shift map, and let $m$ be the Bernoulli measure on $\Om$ generated by the vector $p$. Now for each of the $k$ symbols of $\Sg$, we associate a map $T_j:X\to X$ ($1\leq j\leq k$), and for each $\oio$ we denote $T_\om:=T_{\om_0}\rvert_{\cJ_{\om,0}}:\cJ_{\om,0}\to X$.   
\end{remark}

\begin{definition}\label{def: random prob measures}	
A measure $\mu$ on $\Om\times X$ with respect to the product $\sg$-algebra $\sF\otimes\sB$ is said to be \textit{random measure}\index{random measure} relative to $m$ if it has marginal $m$, i.e. if
$$
\mu\circ\pi^{-1}_1=m.
$$ 
The disintegrations $\set{\mu_\om}_{\om\in\Om}$ of $\mu$ with respect to the partition $\lt(\{\om\}\times X\rt)_{\om\in\Om}$ satisfy the following properties:
\begin{enumerate}
\item For every $B\in\sB$, the map $\Om\ni\om\longmapsto\mu_\om(B)\in [0,\infty]$ is measurable, 
\item For $m$-a.e. $\om\in\Om$, the map $\sB\ni B\longmapsto\mu_\om(B)\in [0,\infty]$ is a Borel measure.
\end{enumerate}
We say that the random measure $\mu=\set{\mu_\om}_{\om\in\Om}$ is a \textit{random probability measure}\index{random measure!random probability measure} if for $m$-a.e. $\om\in\Om$ the fiber measure $\mu_\om$ is a probability measure. Given a set $Y=\cup_{\om\in\Om}\set{\om}\times Y_\om\sub\Om\times X$, we say that the random measure $\mu=\set{\mu_\om}_{\om\in\Om}$ is supported in $Y$ if $\supp(\mu)\sub Y$ and consequently $\supp(\mu_\om)\sub Y_\om$ for $m$-a.e. $\om\in\Om$. We let $\cP_\Om(Y)$\index{$\cP_\Om(Y)$} denote the set of all random probability measures supported in $Y$. We will frequently denote a random measure $\mu$ by $\set{\mu_\om}_{\om\in\Om}$.
\end{definition}
The following proposition from Crauel \cite{crauel_random_2002}, shows that a random probability measure $\set{\mu_\om}_{\om\in\Om}$ on $\cJ_0$ uniquely identifies a probability measure on $\cJ_0$.
\begin{proposition}[\cite{crauel_random_2002}, Propositions 3.3]\label{prop: random measure equiv}
If $\set{\mu_\om}_{\om\in\Om}\in\cP_\Om(\cJ_0)$ is a random probability measure on $\cJ_0$, then for every bounded measurable function $f:\cJ_0\to\RR$, the function 
$$
\Om\ni\om\longmapsto \int_{\cJ_{\om,0}} f(\om,x) \, d\mu_\om(x)
$$ 
is measurable and 
$$
\sF\otimes\sB\ni A\longmapsto\int_\Om \int_{\cJ_{\om,0}} \ind_A(\om,x) \, d\mu_\om(x)\, dm(\om)
$$
defines a probability measure on $\cJ_0$.
\end{proposition}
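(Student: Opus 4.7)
The plan is to establish the measurability of $\om\mapsto\int f(\om,x)\,d\mu_\om(x)$ by a standard ``good functions'' (monotone class) argument, and then verify the measure axioms for the constructed functional. The only input we genuinely need is the defining measurability property of random measures in Definition \ref{def: random prob measures}(1), which says that $\om\mapsto\mu_\om(B)$ is measurable for each fixed $B\in\sB$.

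First I would handle indicator functions. For a measurable rectangle $A\times B$ with $A\in\sF$ and $B\in\sB$ we have
\begin{align*}
\int_{\cJ_{\om,0}}\ind_{A\times B}(\om,x)\,d\mu_\om(x)=\ind_A(\om)\,\mu_\om(B\cap\cJ_{\om,0}),
\end{align*}
which is the product of a measurable function of $\om$ and (by hypothesis together with the fact that $\cJ_0$ is a measurable subset of $\Om\times X$ and hence $B\cap\cJ_{\om,0}\in\sB$) another measurable function of $\om$. Next I would let $\cG$ denote the collection of sets $E\in\sF\otimes\sB$ for which $\om\mapsto\mu_\om(E_\om)$ is measurable, where $E_\om=\{x:(\om,x)\in E\}$. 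The previous step shows $\cG$ contains the $\pi$-system of measurable rectangles; since $\mu_\om$ is a finite measure, monotone convergence and finite additivity show $\cG$ is a $\lambda$-system. By Dynkin's $\pi$-$\lambda$ theorem, $\cG=\sF\otimes\sB$. This proves the measurability statement for $f=\ind_E$ with $E\in\sF\otimes\sB$.

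The extension to bounded measurable $f$ is routine: linearity handles simple functions, and for a general bounded measurable $f$ I would write $f=f^+-f^-$ and approximate each part by an increasing sequence of nonnegative simple functions, using monotone convergence fiberwise and the fact that a pointwise limit of measurable functions is measurable to conclude that $\om\mapsto\int f(\om,x)\,d\mu_\om(x)$ is measurable. Boundedness ensures the fiber integrals are finite and the limits exchange with integration against $m$.

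For the second assertion, define $\mu(A):=\int_\Om\int_{\cJ_{\om,0}}\ind_A(\om,x)\,d\mu_\om(x)\,dm(\om)$ for $A\in\sF\otimes\sB$; the inner integral is well defined and measurable by the first part applied to $f=\ind_A$, and the outer integral makes sense because the integrand is bounded by $1$. Non-negativity and $\mu(\emptyset)=0$ are immediate, and countable additivity follows from two applications of monotone convergence: first fiberwise (each $\mu_\om$ is $\sigma$-additive) and then in $m$. Finally, $\mu(\cJ_0)=\int_\Om\mu_\om(\cJ_{\om,0})\,dm(\om)=\int_\Om 1\,dm(\om)=1$ since each $\mu_\om$ is a probability measure supported in $\cJ_{\om,0}$. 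The only place requiring care is ensuring that $\ind_{\cJ_0}(\om,x)$ restricted to the fiber is $\ind_{\cJ_{\om,0}}(x)$, which is immediate from the definition of $\cJ_0$ as the disjoint union. The main (minor) obstacle is the Dynkin class verification, which hinges on $\mu_\om$ being finite uniformly in $\om$---granted here because the $\mu_\om$ are probability measures.
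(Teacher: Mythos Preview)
The paper does not prove this proposition; it is quoted directly from Crauel \cite{crauel_random_2002} (Proposition 3.3) without argument. Your proof is correct and follows the standard monotone class route one finds in Crauel's book: use Dynkin's $\pi$--$\lambda$ theorem starting from measurable rectangles to get measurability of $\om\mapsto\mu_\om(E_\om)$ for all $E\in\sF\otimes\sB$, extend to bounded measurable $f$ by simple-function approximation and monotone convergence, and then verify the measure axioms via two applications of monotone convergence. There is nothing to compare against in the paper itself, and your argument stands on its own.
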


For functions $f:\cJ_0\to\RR$ and $F:\Om\to\RR$ we let 
\begin{align*}
S_{n,T}(f_\om):=\sum_{j=0}^{n-1}f_{\sg^j(\om)}\circ T_\om^j
\quad\text{ and }\quad
S_{n,\sg}(F):=\sum_{j=0}^{n-1}F\circ\sg^j
\end{align*}\index{$S_{n,T}$}\index{$S_{n,\sg}$}
denote the Birkhoff sums of $f$ and $F$ with respect to $T$ and $\sg$ respectively. 
We will consider a potential\index{potential} of the form $\vp_0:\cJ_0\to\RR$\index{$\vp_0:\cJ_0\to\RR$}, and for each $n\geq 1$ we consider the weight $g_0^{(n)}:\cJ_0\to\RR$\index{weight} whose disintegrations are given by
\begin{align}\label{def: formula for g_om^n}
g_{\om,0}^{(n)}:=\exp(S_{n,T}(\vp_{\om,0})) =\prod_{j=0}^{n-1}g_{\sg^j\om,0}^{(1)}\circ T_\om^j
\end{align}\index{$g_{\om,0}$}
for each $\om\in\Om$. We will often denote $g_{\om,0}^{(1)}$ by $g_{\om,0}$. We assume there exists a family of Banach spaces $\set{\cB_\om,\norm{\spot}_{\cB_\om}}_{\om\in\Om}$\index{$\cB_\om$} of real-valued functions on each $\cJ_{\om,0}$ with $g_{\om,0}\in\cB_\om$ such that the fiberwise (Perron-Frobenius) transfer operator $\cL_{\om,0}:\cB_\om\to\cB_{\sg\om}$ given by
\begin{align}\label{glob def: closed tr op}
\cL_{\om,0}(f)(x):=\sum_{y\in T_\om^{-1}(x)}f(y)g_{\om,0}(y), \quad f\in\cB_\om, \, x\in\cJ_{\sg\om,0}
\end{align}\index{$\cL_{\om,0}$}
is well defined.
Using induction we see that iterates $\cL_{\om,0}^n:\cB_{\om}\to\cB_{\sg^n\om}$ of the transfer operator are given by
\begin{align*}
\cL_{\om,0}^n(f)(x):=\sum_{y\in T_\om^{-n}(x)}f(y)g_{\om,0}^{(n)}(y), \quad f\in\cB_\om, \, x\in\cJ_{\sg^n\om,0}.
\end{align*}
We let $\cB$ denote the space of functions $f:\cJ_0\to\RR$ such that $f_\om\in\cB_\om$ for each $\om\in\Om$ and we define the global transfer operator $\cL_0:
\cB\to \cB$ by \index{$\cB$}
$$
(\cL_0 f)_\om(x):=\cL_{\sg^{-1}\om,0}f_{\sg^{-1}\om}(x)
$$ 
for $f\in\cB$ and $x\in\cJ_{\om,0}$.
We assume the following measurability assumption:

\,

\begin{enumerate}[align=left,leftmargin=*,labelsep=\parindent]

\item[(\Gls*{M2})]\myglabel{M2}{M2} For every measurable function $f \in \cB$, the map 
$(\om, x) \mapsto (\cL_0 f)_\om(x)$ is measurable.

\end{enumerate}

\,

We suppose the following condition on the existence of a closed conformal measure. 

\,

\begin{enumerate}[align=left,leftmargin=*,labelsep=\parindent]
\item[(\Gls*{CCM})]\myglabel{CCM}{CCM} There exists a random probability  measure $\nu_0=\set{\nu_{\om,0}}_{\om\in\Om}\in \cP_\Om(\cJ_0)$ and measurable functions $\lm_0:\Om \to\RR\bs\set{0}$ and $\phi_0:\cJ_0\to (0,\infty)$ with $\phi_0\in\cB$ such that 
\begin{align*}
\cL_{\om,0}(\phi_{\om,0})=\lm_{\om,0}\phi_{\sg\om,0}
\quad\text{ and }\quad
\nu_{\sg\om,0}(\cL_{\om,0}(f))=\lm_{\om,0}\nu_{\om,0}(f)
\end{align*}
for all $f\in\cB_\om$ where $\phi_{\om,0}(\spot):=\phi_0(\om,\spot)$. Furthermore, we suppose that the fiber measures $\nu_{\om,0}$ are non-atomic and that $\lm_{\om,0}:=\nu_{\sg\om,0}(\cL_{\om,0}\ind)$ with $\log\lm_{\om,0}\in L^1(m)$. 
We then define the random probability measure 
$\mu_0$ on $\cJ_0$ by
\begin{align}\label{eq: def of mu_om,0}
\mu_{\om,0}(f):=\int_{\cJ_{\om,0}} f\phi_{\om,0} \ d\nu_{\om,0},  \qquad f\in L^1(\nu_{\om,0}).
\end{align}
\end{enumerate}

\,

From the definition, one can easily show that $\mu_0$ is $T$-invariant, that is,  
\begin{align}\label{eq: mu_om,0 T invar}
\int_{\cJ_{\om,0}} f\circ T_\om \ d\mu_{\om,0}
=
\int_{\cJ_{\sg\om,0}} f \ d\mu_{\sg\om,0}, \qquad f\in L^1(\mu_{\sg\om,0}).
\end{align}
\begin{remark}
Our Assumption \eqref{CCM} has been shown to hold in for several large classes of random dynamical systems, for example random interval maps \cite{AFGTV20,AFGTV-IVC}, random subshifts \cite{Bogenschutz_RuelleTransferOperator_1995a,mayer_countable_2015}, 
random distance expanding maps \cite{mayer_distance_2011}, 
random polynomial systems \cite{Bruck_Generalizediteration_2003}, 
and random transcendental maps \cite{mayer_random_2018}. 
\end{remark}

\begin{definition}\label{def CRS}
We will call the collection $(\mathlist{\bcomma}{\Om, m, \sg, \cJ_0, T, \cB, \cL_0, \nu_0, \phi_0})$ a \textit{closed random dynamical system}\index{closed random dynamical system} if the assumptions \eqref{M1}, \eqref{M2}, and \eqref{CCM}  
are satisfied.
\end{definition}
We are now ready to introduce holes into the closed systems.

\subsection{Random Open Systems}\label{sec: open systems}

We let $H\sub \cJ_0$ be measurable with respect to the product $\sg$-algebra $\sF\otimes\sB$ on $\cJ_0$ such that 
\begin{align*}
    0<\nu_0(H)<1.
\end{align*}
For each $\om\in\Om$ the sets $H_\om\sub \cJ_{\om,0}$ are uniquely determined by the condition that 
\begin{align}\label{hole defn}
\set{\om}\times H_\om=H\cap\lt(\set{\om}\times \cJ_{\om,0}\rt).
\end{align}
Equivalently we have 
\begin{align*}
H_\om=\pi_2(H\cap(\set{\om}\times \cJ_{\om,0})),
\end{align*}
where $\pi_2:\cJ_0\to \cJ_{\om,0}$ is the projection onto the second coordinate. By definition we have that the sets $H_\om$ are $\nu_{\om,0}$-measurable.
Now define
\begin{align*}
\cJ_\om:=\cJ_{\om,0}\bs H_\om,
\end{align*}\index{$\cJ_\om$}
and let
\begin{align*}
\cJ:=\bigcup_{\om\in\Om}\set{\om}\times\cJ_\om.
\end{align*}\index{$\cJ$}
Throughout the manuscript, in particular Chapter \ref{part 1}, we denote $\ind_\om:=\ind_{\cJ_\om}$.\index{$\ind_\om$}
For each $\om\in\Om$, $n\geq 0$ we define
\begin{align}\label{def: X_n surv}
X_{\om,n}:&=\set{x\in\cJ_{\om,0}: T_\om^j(x)\notin H_{\sg^j\om} \text{ for all } 0\leq j\leq n }
=\bigcap_{j=0}^n T_\om^{-j}\left(\cJ_{\sg^j\om}\right)
\end{align}\index{$X_{\om,n}$}
to be the set of points in $\cJ_\om$ which survive, i.e. those points whose trajectories do not land in the holes, for $n$ iterates. We then naturally define
\begin{align}\label{def: X_infty surv}
X_{\om,\infty}:=\bigcap_{n=0}^\infty X_{\om,n}=\bigcap_{n=0}^\infty T_\om^{-n}(\cJ_{\sg^n\om})
\end{align}\index{$X_{\om,\infty}$}
to be the set of points which will never land in a hole under iteration of the maps $T_\om^n$ for any $n\geq 0$. We call $X_{\om,\infty}$ the \textit{$\om$-surviving set}.\index{surviving set} 
Note that the sets $X_{\om,n}$ and $X_{\om,\infty}$ are forward invariant satisfying the properties
\begin{align}
T_\om(X_{\om,n})\sub X_{\sg\om,n-1}
\qquad\text{ and }\qquad
T_\om(X_{\om,\infty})\sub X_{\sg\om,\infty}.
\label{surv set forw inv}
\end{align}
Now for any $0\leq \al\leq \infty$ we set
\begin{align*}
\hat{X}_{\om,\al}:=\ind_{X_{\om,\al}}
= 
\prod_{j=0}^{\al}\ind_{\cJ_{\sg^j\om}}\circ T_\om^j.
\end{align*}\index{$\hat{X}_{\om,n}$}\index{$\hat{X}_{\om,\infty}$}
The global surviving set is defined as
\begin{align*}
\cX_{\al}:=\bigcup_{\om\in\Om}\set{\om}\times X_{\om,\al}
\end{align*}\index{$\cX_{n}$}\index{$\cX_{\infty}$}
for each $0\leq \al\leq \infty$.
Then $\cX_{\infty}\sub\cJ$ is precisely the set of points that survive under forward iteration of the skew-product map $T$. We will assume that the fiberwise survivor sets are nonempty:

\,

\begin{enumerate}[align=left,leftmargin=*,labelsep=\parindent]
\item[(\Gls*{X})]\myglabel{X}{cond X}
For $m$-a.e. $\om\in\Om$ we have $X_{\om,\infty}\neq\emptyset$.
\end{enumerate}

\,

As an immediate consequence of \eqref{cond X} we have that $\cX_{\infty}\neq\emptyset$. In fact, \eqref{cond X} together with the forward invariance of the sets $X_{\om,\infty}$ imply that $\cX_\infty$ is infinite. 
The following proposition presents a setting in random piecewise continuous open systems for which \eqref{cond X} holds and the survivor set is nonempty for random piecewise continuous open dynamics. 
\begin{proposition}\label{prop surv nonemp}
Suppose that for $m$-a.e. $\om\in\Om$ 
there exist $V_\om,U_{\om,1}, \dots, U_{\om,k_\om}\sub\cJ_\om$ nonempty compact subsets such that for $m$-a.e. $\om\in\Om$
\begin{enumerate}
\item $T_\om\rvert_{U_\om,j}$ is continuous for each $1\leq j\leq k_\om$,
\item $T_\om(U_\om)\bus V_{\sg\om}$, where $U_\om:=\cup_{j=1}^{k_\om}U_{\om,j}\sub V_\om$ for each $\om$.
\end{enumerate}
Then $X_{\om,\infty}\neq\emptyset$ for $m$-a.e. $\om\in\Om$ and consequently $\cX_{\infty}\neq\emptyset$.
Furthermore, if 
$$
m(\set{\om\in\Om: k_\om>1})>0
$$ 
then for $m$-a.e. $\om\in\Om$ the survivor set $X_{\om,\infty}$ is uncountable. 
\end{proposition}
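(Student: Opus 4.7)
The plan is to use the covering condition $T_\om(U_\om)\bus V_{\sg\om}$ to perform a backward pullback of chains of preimages, then extract a survivor via nested compactness. For each $n\geq 1$ I would set
\begin{align*}
W_{\om,n}:=\lt\{x\in U_\om:T_\om^k(x)\in U_{\sg^k\om}\text{ for every }0\leq k\leq n-1\rt\}.
\end{align*}
To see $W_{\om,n}\nonempty$, pick any $y_{n-1}\in U_{\sg^{n-1}\om}\sub V_{\sg^{n-1}\om}$ and apply (2) iteratively backward: for $0\leq k\leq n-2$, since $y_{k+1}\in V_{\sg^{k+1}\om}\sub T_{\sg^k\om}(U_{\sg^k\om})$, there is $y_k\in U_{\sg^k\om}$ with $T_{\sg^k\om}(y_k)=y_{k+1}$. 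The endpoint $y_0$ then lies in $W_{\om,n}$.

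The next step is to check that $W_{\om,n}$ is compact by decomposing along symbolic branches. Writing
\begin{align*}
W_{\om,n}=\bigcup_{\mathbf{j}}W_{\om,n}^{(\mathbf{j})},\qquad W_{\om,n}^{(\mathbf{j})}:=\lt\{x\in U_{\om,j_0}:T_\om^k(x)\in U_{\sg^k\om,j_k},\,0\leq k\leq n-1\rt\},
\end{align*}
with $\mathbf{j}=(j_0,\dots,j_{n-1})$ and $j_k\in\{1,\dots,k_{\sg^k\om}\}$, on each branch piece the iterates $T_\om^k$ act as compositions of the continuous restrictions $T_{\sg^i\om}|_{U_{\sg^i\om,j_i}}$. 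Hence $W_{\om,n}^{(\mathbf{j})}$ is a closed subset of the compact $U_{\om,j_0}$ and is compact, and the finite union $W_{\om,n}$ is compact. Since $W_{\om,n+1}\sub W_{\om,n}$, the finite intersection property yields $\bigcap_n W_{\om,n}\nonempty$, and any $x$ in this intersection satisfies $T_\om^k(x)\in U_{\sg^k\om}\sub\cJ_{\sg^k\om}$ for all $k\geq 0$, so $x\in X_{\om,\infty}$. Therefore $X_{\om,\infty}\nonempty$ for $m$-a.e.\ $\om$, which directly gives $\cX_\infty\nonempty$.

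For the uncountability under $m(\{k_\om>1\})>0$, I would invoke the pointwise ergodic theorem applied to $\ind_{\{k_\om>1\}}$: for $m$-a.e.\ $\om$ the set $\{n\geq 0:k_{\sg^n\om}\geq 2\}$ is infinite, say $n_1<n_2<\dots$. The plan is to encode a Cantor family of survivors indexed by $b\in\{1,2\}^\NN$ by re-running the backward construction above while forcing the preimage chain to pass through branch $U_{\sg^{n_i}\om,b_i}$ at each branching time $n_i$. Applying the same compactness argument to these branch-restricted nested sets yields a distinct survivor for each $b$, since distinct binary sequences produce orbits occupying different branches at some $n_i$.

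The main obstacle in this last step is to verify that the branch-restricted backward construction remains nonempty at every level, i.e.\ that preimages can genuinely be routed through the prescribed branch $U_{\sg^{n_i}\om,b_i}$ at time $n_i$. In the piecewise-monotone interval settings of Section~\ref{sec: examples} this is automatic from branchwise surjectivity onto $V_{\sg\om}$; in the generality of the proposition one must either reinforce (2) to a branchwise covering condition or pass, by a pigeonhole / subtree extraction, to a thinner subsequence of branching times at which the preimage splitting is genuine.
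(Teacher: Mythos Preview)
Your nonemptiness argument is correct and is essentially the paper's approach: decompose into branch cylinders $W_{\om,n}^{(\mathbf j)}$, each a closed subset of a compact $U_{\om,j_0}$, and apply nested compactness to the finite union $W_{\om,n}$. The paper phrases this as the decreasing sequence $T_{\om,U}^{-n}(V_{\sg^n\om}):=\bigcup_{\gm}T_{\om,\gm}^{-n}(V_{\sg^n\om})$, terminating in $V$ rather than $U$, but the mechanism is identical.

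For uncountability, the obstacle you flag is real, and the paper's proof does not resolve it either. The paper asserts that ``for each $1\le j\le k_\om$ we have that $T_{\om,j}^{-1}(V_{\sg\om})$ is a nonempty compact subset of $U_{\om,j}$'' and then appeals to ``the usual bijection'' with the full symbolic space $\Sg_\om$. But nonemptiness of each branch preimage amounts to $T_\om(U_{\om,j})\cap V_{\sg\om}\neq\emptyset$, which hypothesis (2) does \emph{not} guarantee---it only says the union $U_\om$ covers $V_{\sg\om}$. Your proposed strengthening to branchwise covering $T_\om(U_{\om,j})\supseteq V_{\sg\om}$ is exactly what makes the symbolic-coding argument go through, and it is what actually holds in the paper's explicit applications where the $U_{\om,j}$ are full branches outside the hole. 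Without it the conclusion can genuinely fail: a deterministic example with $\cJ_\om=V_\om=[0,\tfrac12]$, $U_{\om,1}=[0,\tfrac16]$ with $T_\om(x)=3x$, $U_{\om,2}=[\tfrac13,\tfrac12]$ with $T_\om(x)=x+\tfrac12$ (landing in the hole $(\tfrac12,1]$), and $T_\om$ sending $(\tfrac16,\tfrac13)$ into the hole as well, satisfies (1)--(2) with $k_\om=2$ but has $X_{\om,\infty}=\{0\}$. So you were right to isolate this step; the fix is to add the branchwise hypothesis rather than to look for a subtree extraction.
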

\begin{proof}
For each $1\leq j\leq k_\om$ let $T_{\om,j}:U_{\om,j}\to\cJ_{\sg\om,0}$ denote the continuous map $T_\om\rvert_{U_{\om,j}}$, and let $T_{\om,U}:U_\om\to\cJ_{\sg\om,0}$ denote the map $T_\om\rvert_{U_\om}$. Since $V_{\sg\om}$ is compact,  for each $1\leq j\leq k_\om$ we have that $T_{\om,j}^{-1}(V_{\sg\om})$ is a nonempty compact subset of $U_{\om,j}$. 
Given $n\geq 1$ let $\gm=\gm_0\gm_1\dots\gm_{n-1}$ be an $n$-length word with $1\leq \gm_j\leq k_{\sg^j\om}$ for each $0\leq j\leq n-1$. Let $\Gm_{\om,n}$ denote the finite collection of all such words of length $n$. Then for each $n\geq 1$ and each $\gm\in\Gm_{\om,n}$
$$
T_{\om,\gm}^{-n}(V_{\sg^n\om}):=
T_{\om,\gm_0}^{-1}\circ\dots\circ T_{\sg^{n-1}\om,\gm_{n-1}}^{-1}(V_{\sg^n\om})
\sub U_{\om,\gm_0}
$$
is compact. 
Furthermore, $T_{\om,\gm}^{-n}(V_{\sg^n\om})$ forms a decreasing sequence in $U_{\om,\gm_0}$.  Hence, we see that 
$$
T_{\om,U}^{-n}(V_{\sg^n\om})=\bigcup_{\gm\in\Gm_{\om,n}}T_{\om,\gm}^{-n}(V_{\sg^n\om})
$$
forms a decreasing sequence of compact subsets of $U_\om$. 
Thus,
$$
X_{\om,\infty}=\bigcap_{n=0}^\infty T_\om^{-n}(V_{\sg^n\om})
\bus \bigcap_{n=0}^\infty T_{\om,U}^{-n}(V_{\sg^n\om})\neq\emptyset
$$ 
as desired. The final claim follows from the ergodicity of $\sg$, which ensures that for a.e. $\om$ there are infinitely many $j\in\NN$ such that $k_{\sg^j\om}>1$, and the usual bijection between a point in $X_{\om,\infty}$ and an infinite word $\gm$ in the fiberwise sequence space $\Sg_\om:=\set{\gm=\gm_1\gm_2\dots: 1\leq \gm_j\leq k_{\sg^j\om}}$.
\end{proof}
Now we define the perturbed operator $\cL_\om:\cB_\om\to\cB_{\sg\om}$ by
\begin{align}\label{glob def: open tr op}
\cL_\om(f):=\cL_{\om,0}\left(f\cdot\ind_{\cJ_\om}\right)
=\sum_{y\in T_\om^{-1}(x)}f(y)\ind_{\cJ_\om}g_{\om,0}(y)
=\sum_{y\in T_\om^{-1}(x)}f(y)g_\om(y)
, \quad f\in\cB_\om,
\end{align}\index{$\cL_\om$}
where for each $\om\in\Om$ we define $g_\om:=g_{\om,0}\ind_{\cJ_\om}$, and, similarly, for each $n\in\NN$, 
$$
g_\om^{(n)}:=\prod_{j=0}^{n-1}g_{\sg^j\om}\circ T_\om^j.
$$
Note that measurability of $H\sub \cJ_0$ and condition \eqref{M2} imply that for every $f\in\cB$ the map $(\om,x)\mapsto (\cL f)_\om (x)$ is also measurable.
Iterates of the perturbed operator $\cL_\om^n:\cB_{\om}\to\cB_{\sg^n\om}$ are given by
\begin{align*}
\cL_\om^n:=\cL_{\sg^{n-1}\om}\circ\dots\circ\cL_\om,
\end{align*}
which, using induction, we may write as
\begin{align*}
\cL_\om^n(f)=\cL_{\om,0}^n\left(f\cdot\hat{X}_{\om,n-1}\right), \qquad f\in\cB_\om.
\end{align*}
We define the sets $D_{\om,n}$\index{$D_{\om,n}$} to be the support of $\cL_{\sg^{-n}(\om)}^n\ind_{\sg^{-n}(\om)}$, that is, we set
\begin{align}\label{defn of D sets}
D_{\om,n}:=\set{x\in \cJ_{\sg^{-n}\om,0}: \cL_{\sg^{-n}(\om)}^n\ind_{\sg^{-n}(\om)}(x)\neq 0}.
\end{align}
Note that, by definition, we have 
\begin{align*}
D_{\om,n+1}\sub D_{\om,n} 
\end{align*}
for each $n\in\NN$, and we similarly define
\begin{align*}
D_{\om,\infty}:=\bigcap_{n=0}^\infty D_{\om,n}.
\end{align*}\index{$D_{\om,\infty}$}
From this moment on we will assume that for $m$-a.e. $\om\in\Om$ we have that 
\begin{align}
&D_{\om,\infty}\neq\emptyset.
\label{cond D}\tag{D}
\end{align}
\begin{remark}
We note that if $T_\om(\cJ_\om)=\cJ_{\sg\om,0}$ for $m$-a.e. $\om$, then  
$$
D_{\om,\infty}=\cJ_{\om,0}
$$
for $m$-a.e. $\om\in\Om$, and in particular we have that \eqref{cond D} holds.
\end{remark}
We let
\begin{align}\label{eq: hat D notation}
\hat{D}_{\om,\al}:=\ind_{D_{\om,\al}}
\end{align}\index{$\hat{D}_{\om,n}$}\index{$\hat{D}_{\om,\infty}$}
for each $0\leq\al\leq \infty$. Since $D_{\om,n}$ is the support of $\cL_{\sg^{-n}(\om)}^n\ind_{\sg^{-n}(\om)}$, using the notation of \eqref{eq: hat D notation} we may write
\begin{align*}
\cL_\om^n(f)=\hat{D}_{\sg^n(\om),n}\cL_{\om}^n(f).
\end{align*}
More generally, we have that, for $k>j$, $D_{\sg^{k}(\om),j}$ is the support of $\cL_{\sg^{k-j}(\om)}^{j}\ind$, i.e.
\begin{align}\label{support of pert tr op}
\cL_{\sg^{k-j}(\om)}^{j}(f)=\hat{D}_{\sg^k(\om),j}\cL_{\sg^{k-j}(\om)}^{j}(f)
\end{align}
for $f\in L^1(\nu_{\sg^k(\om),0})$.
Note that 
\begin{align}\label{eq: D sets in terms of X sets}
D_{\om,n}=T_{\sg^{-n}(\om)}^n(X_{\sg^{-n}(\om),n-1}).
\end{align}
Finally, we note that since $g_\om^{(n)}:= g_{\om,0}^{(n)}\rvert_{X_{\om,n-1}}$, for each $n\in\NN$ we have that
\begin{align}\label{eq: ineq leads to log integ}
\inf g_{\om,0}^{(n)}
\leq 
\inf_{X_{\om,n-1}} g_\om^{(n)}
\leq
\norm{g_\om^{(n)}}_\infty
\leq 
\norm{g_{\om,0}^{(n)}}_\infty
\end{align}
and 
\begin{align}\label{eq: ineq leads to log integ2}
\inf g_{\om,0}^{(n)}
\leq 
\inf_{D_{\sg^n(\om),\infty}} \cL_\om^n\ind_\om
\leq 
\norm{\cL_\om^n\ind_\om}_\infty
\leq
\norm{\cL_{\om,0}\ind}_\infty.
\end{align}

\begin{definition}\label{def ROS prelim}
We will call a closed random dynamical system $(\mathlist{\bcomma}{\Om, m, \sg, \cJ_0, T, \cB, \cL_0, \nu_0, \phi_0})$ (meaning that \eqref{M1}, \eqref{M2}, and \eqref{CCM} are satisfied) a \textit{random open system} if assumptions \eqref{cond D}  and \eqref{cond X} are also satisfied. We let $(\mathlist{\bcomma}{\Om, m, \sg, \cJ_0, T, \cB, \cL_0, \nu_0, \phi_0, H})$ denote the random open system generated by the random maps $T_\om:\cJ_{\om,0}\to\cJ_{\sg\om,0}$ and random holes $H_\om\sub\cJ_{\om,0}$.
\end{definition}

\section{Simple Examples}
We finish this introductory chapter by presenting a simple class of examples that satisfy our main assumptions. Consequently we have that Theorems \ref{main thm: existence} -- \ref{main thm: escape rate}, \ref{urp main thm A}, \ref{urp main thm B}, \ref{urp main thm C} and Corollaries \ref{urp main cor A}, \ref{urp main cor B} hold. In particular, the examples in this section are presented purely for pedagogical purposes and do not represent the full generality of our theory. As such, we will omit technical details and proofs. 
 
In this section we will build on the abstract framework of Sections \ref{sec:IntroPrelims} and \ref{sec: open systems} and consider the i.i.d. iteration of $\bt$-transformations. 
We begin by following the i.i.d. setup of Remark \ref{rem: iid iteration}, that is we let $\Sg=\{1,\dots,k\}$ be a finite set together with a probability vector $p=(p_1,\dots,p_k)$, and then we take $\Om=\Sg^\ZZ$, $\sg:\Om\to\Om$ to be the shift map, and $m$ is the Bernoulli measure generated by $p$. 
For each $\oio$ we take the fibers $\cJ_{\om,0}=[0,1]$ and we consider the measurable function $\Om\ni\om\mapsto\bt_\om\in [a,b]$ where $2<a<b<\infty$.  As in Remark \ref{rem: iid iteration}, we assume that the maps $\bt_\om=\bt_{\om_0}$ are determined by the zeroth coordinate of the bi-infinite sequence $\dots\om_{-1}.\om_0\om_1\dots$ from a finite set of $\bt_j\in[a,b]$ for $1\leq j\leq k$.
For each $\oio$, we take the Banach spaces $\cB_\om=\BV$, the space of bounded variation functions, and let $T_\om:[0,1]\to[0,1]$ be given by $T_\om(x)=\bt_\om x\mod 1$. We consider the (closed) Perron-Frobenius operator $\cL_{\om,0}:\BV\to\BV$ defined by 
\begin{align*}
    \cL_{\om,0} f(x)=\sum_{y\in T_\om^{-1}(x)}\frac{f(y)}{|T_\om'(y)|}.
\end{align*}
In this case we have that the potential $\vp_\om=-\log|T_\om'|$. Furthermore, we have that for almost every $\oio$,  the closed conformal measure is Lebesgue, i.e. $\nu_{\om,0}\equiv \Leb$, and it follows from \cite{buzzi_exponential_1999, AFGTV20} that the assumptions \eqref{M1}, \eqref{M2}, and \eqref{CCM} are satisfied. Let $\cZ_\om$ be the partition of $[0,1]$ into intervals of monotonicity of the map $T_\om$.
\\
Now depending on how one chooses the random holes $H_\om$, either \textit{large} holes or holes which are sufficiently \emph{small} to apply a perturbative approach, we are able to apply a different collection of our results. We will begin with the results that apply for \emph{large} holes described in Chapter \ref{part 1}.
\\
\subsection{Large holes}
For each $\oio$, we take $H_\om$ to be one of the intervals of monotonicity $Z\in\cZ_\om$, after which we can define the open transfer operator $\cL_\om:\BV\to\BV$ and the surviving sets $X_{\om,n}$ and $X_{\om,\infty}$ as in Section \ref{sec: open systems}. Since $T_\om$ contains full branches outside of the hole $H_\om$, we have that the open transfer operator $\cL_\om$ is fully supported, meaning that $D_{\om,\infty}=[0,1]$, which further implies that our assumption \eqref{cond D} holds. Furthermore, applying Proposition \ref{prop surv nonemp} with the sets $U_{\om,j}$ to be these full branches outside of the hole, we see that the surviving sets $X_{\om,\infty}$ are nonempty, and thus that our assumption \eqref{cond X} is satisfied. Finally, assume that 
$$
    \sum_{j=1}^kp_j (\log\lfloor\bt_j\rfloor-1)
    > \log 5. 
$$
Without presenting the details, it can easily be checked that our assumptions \eqref{T1}-\eqref{T3}, \eqref{LIP}, \eqref{GP}, \eqref{A1}-\eqref{A2}, and \eqref{cond Q1}-\eqref{cond Q3} (which are presented in Chapter \ref{part 1}) are satisfied for the i.i.d. random $\beta$-transformations described here. Thus Theorems \ref{main thm: existence} -- \ref{main thm: escape rate} hold. Under some additional minor assumptions (bounded distortion and large images) that will be explained in Section \ref{sec: bowen}, Theorem \ref{main thm: Bowens formula} can be shown to hold in this setting as well.
For details and additional examples in greater generality see Section \ref{sec: examples}. 
 
Now we consider examples for which the holes are chosen sufficiently small that we may take a perturbative approach as described in Chapter \ref{part 2}.
\subsection{Small holes} 
For each $\oio$ fix some $x_\om\in[0,1]$ such that $x_\om$ is not a point of discontinuity for the map $T_\om$. Let $\ep>0$ and for each $\oio$ take $H_{\om,\ep}=B(x_\om,\ep)$ be the ball of radius $\ep$ centered at the point $x_\om$. 
For each $\ep>0$ define the open operators $\cL_{\om,\ep}$ and surviving sets $X_{\om,\infty,\ep}$ as in \ref{sec: open systems}.
As the holes are contained in single branches of monotonicity, following the same reasoning as in the setting of \emph{large} holes, we again see that our assumptions \eqref{cond D} and \eqref{cond X} hold. It can be shown that the assumptions \eqref{E1}--\eqref{E9} of Section \ref{sec: existence} hold in this setting, and thus we have that Theorem \ref{urp main thm C} holds. In particular, we have that for each $\ep>0$ there exists a random conformal measure $\nu_\ep=\{\nu_{\om,\ep}\}_{\oio}$, a random invariant density $\phi_\ep$, and a random invariant measure $\mu_\ep=\{\mu_{\om,\ep}\}_{\oio}$ such that 
\begin{align*}
    \cL_{\om,\ep}(\phi_{\om,\ep})=\lm_{\om,\ep}\phi_{\sg\om,\ep}
    \quad\text{ and }\quad
    \nu_{\sg\om,\ep}(\cL_{\om,\ep}(f))=\lm_{\om,\ep}\nu_{\om,\ep}(f)
\end{align*}
for all $f\in\BV$ and that there exists $C>0$ and $\kp\in(0,1)$ such that for all $f\in\BV$ 
\begin{align*}
\sup_{\ep>0}\|\lm_{\om,\ep}^{-n}\cL_{\om,\ep}^n(f)-\nu_{\om,\ep}(f)\cdot\phi_{\sg^n\om,\ep}\|_\infty\leq C\|f\|_\BV\kp^n. 
\end{align*}
It immediately follows that our assumptions \eqref{C1}--\eqref{C7} of Section \ref{sec:goodrandom} hold as well as the relevant strengthenings \eqref{C1'},  \eqref{C4'}, \eqref{C5'}, and \eqref{C7'} of Section \ref{EEVV}.  See Section \ref{sec: existence} for further details.
 
In order to apply Corollary \ref{urp main cor A} (which is a dynamical version and corollary of the general perturbation result Theorem \ref{urp main thm A}), Theorem \ref{urp main thm B}, and Corollary \ref{urp main cor B}, we must check the assumptions \eqref{C8} and \eqref{xibound} of Sections \ref{sec:goodrandom} and \ref{EEVV} respectively. First, to satisfy \eqref{xibound} we further assume that $x_\om=0$ is non-random and we take a sequence $\ep_N\to 0$ such that 
\begin{align*}
    \mu_{\om,0}(H_{\om,\ep_N})=\frac{t}{N}
\end{align*}
for some $t>0$, which can be done since $\phi_0$ (the invariant density of the random ACIM $\mu_0$) is uniformly (in $\om$) bounded above and away from zero.  
To see that \eqref{C8} holds, we consider the quantity 
\begin{align*}
    q_{\om,N}^{(k)}=
    \frac{\mu_{\sg^{-(k+1)}\om,0}\left(
	T_{\sg^{-(k+1)}\om}^{-(k+1)}(H_{\om,\ep_N})
	\cap\left(\bigcap_{j=1}^k T_{\sg^{-(k+1)}\om}^{-(k+1)+j} (H_{\sg^{-j}\om,\ep_N}^c)\right)
	\cap H_{\sg^{-(k+1)}\om,\ep_N}
	\right)}
    {\mu_{\sg^{-(k+1)}\om,0}\left(
	T_{\sg^{-(k+1)}\om}^{-(k+1)}(H_{\om,\ep_N})\right)},
\end{align*}
which is a simplification of the quantity $\hat q_{\om,\ep}^{(k)}$ defined in \eqref{def of hat q}. A simple calculation shows that  $\lim_{N\to\infty}q_{\om,N}^{(k)}=0$ for all $k>0$. Thus we have only to calculate $\lim_{N\to\infty}q_{\om,N}^{(0)}$ to verify \eqref{C8}: 
\begin{align*}
    \lim_{N\to \infty}q_{\om,N}^{(0)}&=\lim_{N\to\infty}
    \frac{\mu_{\sg^{-1}\om,0}\left(
	T_{\sg^{-1}\om}^{-1}(H_{\om,\ep_N})
	\cap H_{\sg^{-1}\om,\ep_N}
	\right)}
    {\mu_{\om,0}(H_{\om,\ep_N})}
    =\frac{1}{\bt_{\sg^{-1}\om}}.
\end{align*}
Thus, we have that Corollary \ref{urp main cor A}, Theorem \ref{urp main thm B}, and Corollary \ref{urp main cor B} hold with the random extremal index given by 
\begin{align*}
    \ta_\om =1-\frac{1}{\bt_{\sg^{-1}\om}}.
\end{align*}
For further details and more general examples see Section \ref{sec:examples}.

\chapter{Thermodynamic formalism for random interval maps with holes}\label{part 1}
\normalsize
In this first chapter we consider fixed random holes $H_{\om}$ and the main objective will be to  construct a random conformal measure $\nu_{\om}$ and corresponding equivariant  measures $\mu_{\om}$ supported on the random survivor set $X_{\om,\infty}.$ The measures $\mu_\om$ will be shown to be the unique relative equilibrium state for the potential $\vp$. We will also get a random  absolutely continuous conditionally invariant measure $\eta_{\om}$ supported on $H_{\om}^c.$ Successively we define the escape rate of the closed conformal measure and show that it equals the difference of the expected pressures of the closed and open random systems. Finally we establish a Bowen's like  formula for the Hausdorff dimension of the survivor set for a specific potential.

\section{Preliminaries of random interval maps with holes}\label{sec: prelim}
We begin with a base dynamical system $(\Om,\sF,m,\sg)$, i.e. the map $\sg:\Om\to\Om$ is invertible, ergodic, and preserves the measure $m$.
For the remainder of Chapter \ref{part 1} for each $\om\in\Om$ we take $\cJ_{\om,0} = I$ to be a compact interval in $\RR$, and we consider the map
$T_\om:I\to I$ such that there exists a finite partition $\cZ_\om$\index{$\cZ_\om$} of $I$ such that $Z$ is an interval for each $Z\in\cZ_\om$ and 
\begin{enumerate}
\item[(\Gls*{T1})]\myglabel{T1}{T1}
    $T_\om:I\to I \text{ is surjective},$
    
\item[(\Gls*{T2})]\myglabel{T2}{T2}
    $T_\om(Z) \text{ is an interval for each } Z\in\cZ_\om,$
\item[(\Gls*{T3})]\myglabel{T3}{T3}
    $T_\om\rvert_Z \text{ is continuous and strictly monotone for each } Z\in\cZ_\om.$
\end{enumerate}
In addition, we will assume that 
\begin{enumerate}
    \item[(\Gls*{LIP})]\myglabel{LIP}{LIP}
    $\log\#\cZ_\om\in L^1(m).$
\end{enumerate}
The maps $T_\om$ induce the skew product map $T:\Om\times I\to \Om\times I$ given by 
\begin{align*}
T(\om,x)=(\sg(\om),T_\om(x)).
\end{align*}
For each $n\in\NN$ we consider the fiber dynamics of the maps $T_\om^n:I\to I$ given by the compositions
\begin{align*}
T_\om^n(x)=T_{\sg^{n-1}(\om)}\circ\dots\circ T_\om(x).
\end{align*}
We let $\cZ_{\om}^{(n)}$, for $n\geq 2$, denote the monotonicity partition of $T_\om^n$ on $I$ which is given by 
\begin{align*}
\cZ_\om^{(n)}:=\bigvee_{j=0}^{n-1}T_\om^{-j}\lt(\cZ_{\sg^j(\om)}\rt).
\end{align*}\index{$\cZ_\om^{(n)}$}
Given $Z\in\cZ_\om^{(n)}$, we denote by
$$	
T_{\om,Z}^{-n}:T_\om^n(Z)\lra Z
$$ 
the inverse branch of $T_\om^n$ which takes $T_\om^n(x)$ to $x$ for each $x\in Z$.
We will assume that the partitions $\cZ_\om$ are generating, i.e. 
\begin{align}\myglabel{GP}{GP}
\bigvee_{n=1}^\infty \cZ_\om^{(n)}=\sB,
\tag{\Gls*{GP}}
\end{align}
where $\sB=\sB(I)$ denotes the Borel $\sg$-algebra of $I$. 
Let $\Bd(I)$\index{$\Bd(I)$} denote the set of all bounded real-valued functions on $I$ and for each $f\in\Bd(I)$ and each $A\sub I$ let
\begin{align*}
\var_A(f):=\sup\set{\sum_{j=0}^{k-1}\absval{f(x_{j+1})-f(x_j)}: x_0<x_1<\dots x_k, \, x_j\in A \text{ for all } k\in\NN},
\end{align*} \index{$\var_A(f)$}\index{$\var(f)$} 
denote the variation of $f$ over $A$. If $A=I$ we denote $\var(f):=\var_I(f)$. We let 
\begin{align*}
\BV(I):=\set{f\in\Bd(I): \var(f)<\infty}
\end{align*}\index{$\BV(I)$}
denote the set of functions of bounded variation on $I$. For each $\om\in\Om$ we will set the Banach space $\cB_\om=\BV(I)$. Let 
$$
\norm{f}_\infty:=\sup(|f|)
\qquad\text{ and }\qquad
\norm{f}_\BV:=\var(f)+\norm{f}_\infty
$$ \index{$\norm{f}_\infty$}\index{$\norm{f}_\BV$}
be norms on the respective Banach spaces $\Bd(I)$ and $\BV(I)$. Given a function $f:\Om\times I\to\RR$, by $f_\om: I\to I$ we mean 
\begin{align*}
f_\om(\spot):=f(\om,\spot).
\end{align*}
\begin{definition}\label{def: random bounded}
We say that a function $f:\Om\times I\to\RR$ is \textit{random bounded}\index{random bounded} if 
\begin{enumerate}[(i)]
\item $f_\om\in\Bd(I)$ for each $\om\in\Om$, 
\item for each $x\in I$ the function $\Om\ni\om\mapsto f_\om(x)$ is measurable, 
\item the function $\Om\ni\om\mapsto\norm{f_\om}_\infty$ is measurable. 
\end{enumerate}
Let $\Bd_\Om(I)$\index{$\Bd_\Om(I)$} denote the collection of all random bounded functions on $\Om\times I$.
\end{definition}
\begin{definition}\label{def: random BV}
We say that a function $f\in\Bd_\Om(I)$ is of \textit{random bounded variation}\index{random bounded variation} if $f_\om\in\BV(I)$ for each $\om\in\Om$ and the map $\om\mapsto\var(f_\om)$ is measurable.
We let $\BV_\Om(I)$\index{$\BV_\Om(I)$}, which will take the place of the space $\cB$ from Section \ref{sec:IntroPrelims}, denote the set of all random bounded variation functions.
\end{definition}

As in \eqref{glob def: closed tr op}, we define the (closed) transfer operator, $\cL_{\om,0}:\Bd(I)\to \Bd(I)$, with respect to the potential $\vp_0:\Om\times I\to\RR$ by 
\begin{align*}
\cL_{\om,0} (f)(x):=\sum_{y\in T_\om^{-1}(x)}g_{\om,0}(y)f(y); \quad 
f\in \Bd(I), \
x\in I.
\end{align*} 
For each $\om\in\Om$ we let $\Bd^*(I)$ and $\BV^*(I)$ denote the respective dual spaces of $\Bd(I)$ and $\BV(I)$. We let $\cL_{\om,0}^*:\Bd^*(I)\to\Bd^*(I)$ denote the dual transfer operator.
\begin{definition}\label{def: admissible potential}
We will say that a measurable potential $\vp_0:\Om\times I\to\RR$ is \textit{admissible}\index{admissible potential} if for $m$-a.e. $\om\in\Om$ we have 
\begin{enumerate}
    \item[(\Gls*{A1})]\myglabel{A1}{A1} 
        $\inf \vp_{\om,0}, \sup\vp_{\om,0}\in L^1(m),$
    \item[(\Gls*{A2})]\myglabel{A2}{A2} 
        $g_{\om,0}\in\BV(I).$
\end{enumerate}
\end{definition}
\begin{remark}\label{rem: A1 and A2 hold}
Note that if $\vp_{\om,0}\in\BV(I)$ for each $\om\in\Om$ then \eqref{A2} is immediate. Furthermore, as $\vp_{\om,0}\in\Bd(I)$ we also have that $\inf g_{\om,0}^{(n)}>0$ for $m$-a.e. $\om\in\Om$ and each $n\in\NN$.
\end{remark}
As an immediate consequence of \eqref{A1} we have that
\begin{flalign}
\log\inf g_{\om,0}, \log\norm{g_{\om,0}}_\infty\in L^1(m).
\label{eq: cons of A1} 	
\end{flalign} 
Note that since we can write 
\begin{align*}
g_{\om,0}^{(n)}:=\prod_{j=0}^{n-1}g_{\sg^j(\om),0}\circ T_\om^j.
\end{align*}
for each $n\in\NN$ we must have that $g_{\om,0}^{(n)}\in\BV(I)$. 
Clearly we have that the sequence $\norm{g_{\om,0}^{(n)}}_\infty$ is submultiplicative, i.e.
\begin{align*}
\norm{g_{\om,0}^{(n+m)}}_\infty\leq \norm{g_{\om,0}^{(n)}}_\infty\cdot\norm{g_{\sg^n(\om),0}^{(m)}}_\infty.
\end{align*}
Similarly we see that the sequence $\inf g_{\om,0}^{(n)}$ is supermultiplicative. Submultiplicativity and supermultiplicativity of $\norm{g_{\om,0}^{(n)}}_\infty$ and $\inf g_{\om,0}^{(n)}$ together with \eqref{eq: cons of A1} gives that 
\begin{align}\label{eq: log sup inf g integ}
\log\norm{g_{\om,0}^{(n)}}_\infty, \log\inf g_{\om,0}^{(n)}\in L^1(m)
\end{align}
for each $n\in\NN$.
Our assumptions \eqref{T1} and \eqref{LIP} combined with \eqref{eq: log sup inf g integ} implies that 
\begin{align}\label{eq: log sup inf L integ}
\log\norm{\cL_{\om,0}^n\ind}_\infty, \log\inf \cL_{\om,0}^n\ind\in L^1(m)
\end{align}
for each $n\in\NN$. Note that, in view of \eqref{eq: ineq leads to log integ}-\eqref{eq: ineq leads to log integ2}, 
\eqref{eq: log sup inf g integ} and \eqref{eq: log sup inf L integ}, imply that 
\begin{align}\label{eq: log int open weight and tr op}
\log\inf_{X_{\om,n-1}}g_\om^{(n)}, \,
\log\norm{g_\om^{(n)}}_\infty, \,
\log\inf_{D_{\sg^n(\om),n}}\cL_\om^n\ind_\om, \,
\log\norm{\cL_{\om}^n\ind_\om}_\infty\in L^1(m).
\end{align}
The Birkhoff Ergodic Theorem then implies that the quantities in \eqref{eq: ineq leads to log integ} and \eqref{eq: ineq leads to log integ2} are \emph{tempered}\index{tempered}, e.g. 
\begin{align*}
\lim_{|k|\to\infty}\frac{1}{|k|}\log\inf g_{\sg^k(\om)}^{(n)}=0
\end{align*}
for $m$-a.e. $\om\in\Om$ and each $n\in\NN$.

In addition to the assumptions \eqref{T1}-\eqref{T3}, \eqref{A1}, \eqref{A2}, \eqref{GP}, and \eqref{LIP} above, we note that in our current setting assumption \eqref{M1} implies that
\begin{enumerate}
\item[(\Gls*{M})]\myglabel{M}{cond M1} The map $T:\Om\times I\to\Om\times I$ is measurable. 
 
\end{enumerate}	
Furthermore, in our current setting our assumption \eqref{CCM} translates to the following:
\begin{enumerate}
\item[(\Gls*{C})]\myglabel{C}{cond C1}  
There exists a random probability measure $\nu_0=\set{\nu_{\om,0}}_{\om\in\Om}\in \cP_\Om(\Om\times I)$ and measurable functions $\lm_0:\Om \to\RR\bs\set{0}$ and $\phi_0:\cJ_0\to (0,\infty)$ with $\phi_0\in\BV_\Om(I)$ such that 
\begin{align*}
\cL_{\om,0}(\phi_{\om,0})=\lm_{\om,0}\phi_{\sg\om,0}
\quad\text{ and }\quad
\nu_{\sg\om,0}(\cL_{\om,0}(f))=\lm_{\om,0}\nu_{\om,0}(f)
\end{align*}
for all $f\in\BV(I)$. Furthermore, we suppose that the fiber measures $\nu_{\om,0}$ are non-atomic and that $\lm_{\om,0}:=\nu_{\sg\om,0}(\cL_{\om,0}\ind)$ with $\log\lm_{\om,0}\in L^1(m)$. The $T$-invariant random probability measure $\mu_0$ on $\Om\times I$ is given by
\begin{align*}
\mu_{\om,0}(f):=\int_{I} f\phi_{\om,0} \ d\nu_{\om,0},  \qquad f\in L^1(\nu_{\om,0}).
\end{align*}
\end{enumerate}
\begin{remark}
Note that \eqref{cond M1} together with \eqref{A2} implies that for $f\in\BV_\Om(I)$ we have $\cL_0 f\in\BV_\Om(I)$.
\end{remark}
\begin{remark}
Examples which satisfy the conditions \eqref{T1}-\eqref{T3}, \eqref{A1}, \eqref{A2}, \eqref{GP}, \eqref{LIP}, \eqref{cond M1}, and \eqref{cond C1} can be found in \cite{AFGTV20}.
\end{remark}

\subsection{Random Interval Maps with Holes}
We now wish to introduce holes into the class of finite branched random weighted covering systems. 

Let $H\sub \Om\times I$ be measurable with respect to the product $\sg$-algebra $\sF\otimes\sB$ on $\Om\times I$. 
By definition, i.e. \eqref{hole defn}, we have that the sets $H_\om$ are $\nu_{\om,0}$-measurable. 
Suppose that $0<\nu_0(H)<1$, that is we have 
\begin{align*}
0<\int_\Om\nu_{\om,0}(H_\om)\, dm(\om)<1.
\end{align*}
Now define
\begin{align*}
I_{\om}:=I\bs H_{\om}.
\end{align*}\index{$I_\om$}
and recall from Section~\ref{sec:IntroPrelims} that we denote 
\begin{align*}
\ind_\om:=\ind_{I_\om}.
\end{align*}\index{$\ind_\om$}
We then let
\begin{align*}
\cI:=H^c=\bigcup_{\om\in\Om}\set{\om}\times I_{\om}.
\end{align*}\index{$\cI$}
For each $\om\in\Om$ and $n\geq 0$ we define the $\om$-surviving sets $X_{\om,n}$ and $X_{\om,\infty}$ as in \eqref{def: X_n surv}-\eqref{def: X_infty surv}.
By definition, for each $n\in\NN$, we have that 
\begin{align*}
T_\om(X_{\om,n})\sub X_{\sg(\om),n-1}
\quad\text{ and }\quad
T_\om(X_{\om,\infty})\sub X_{\sg(\om),\infty}.
\end{align*}
Note, however, that these survivor sets are, in general, only forward invariant and not backward invariant. 
For notational convenience for any $0\leq \al\leq \infty$ we set
\begin{align*}
\hat{X}_{\om,\al}:=\ind_{X_{\om,\al}}.
\end{align*}

For each $\om\in\Om$ we let $\vp_{\om}=\vp_{\om,0}\rvert_{I_{\om}}$, and thus for each $n\in\NN$ this gives
\begin{align*}
g_{\om}^{(n)}=g_{\om,0}^{(n)}\rvert_{X_{\om,n-1}}=\exp\lt(S_{n,T}(\vp_\om)\rt)=\prod_{j=0}^{n-1}g_{\sg^j(\om)}\circ T_\om^j.
\end{align*}
Now define the open operator $\cL_{\om}:L^1(\nu_{\om,0})\to L^1(\nu_{\sg(\om),0})$, where $\nu_0$ comes from our assumption \eqref{cond C1} on the closed system, by
\begin{align}\label{eq: def of open tr op}
\cL_{\om}(f):=\cL_{\om,0}\left(f\cdot\ind_\om\right), \quad f\in L^1(\nu_{\om,0}), \, x\in I.
\end{align}
As a consequence of \eqref{eq: def of open tr op}, we have that 
\begin{align*}
\cL_\om\ind=\cL_\om\ind_\om.
\end{align*}
Iterates of the open operator $\cL_{\om}^n:L^1(\nu_{\om,0})\to L^1(\nu_{\sg^n(\om),0})$ are given by
\begin{align*}
\cL_{\om}^n:=\cL_{\sg^{n-1}(\om)}\circ\dots\circ\cL_{\om},
\end{align*}
which, using induction, we may write in terms of the closed operator $\cL_{\om,0}$ as
\begin{align*}
\cL_{\om}^n(f)=\cL_{\om,0}^n\left(f\cdot\hat{X}_{\om,n-1}\right), \qquad f\in L^1(\nu_{\om,0}).
\end{align*}

\section{Random conditionally invariant probability measures}\label{sec: RCIM}
In this section we introduce the notion of a random conditionally invariant measure and give suitable conditions for their existence. As in the previous section, we consider random interval maps with holes, however we would like to point out that the definitions, results, and proofs of this section hold in the greater generality of Section \ref{sec:IntroPrelims}.
We begin with the following definition. 
\begin{definition}
We say that a random probability measure $\eta\in \cP_\Om(\Om\times I)$ is a random conditionally invariant probability measure (RCIM)\index{random conditionally invariant probability measure (RCIM)} if
\begin{align}\label{RCIM def}
\eta_\om(T_\om^{-n}(A)\cap X_{\om,n})=\eta_{\sg^n(\om)}(A)\eta_\om(X_{\om,n})
\end{align}
for all $n\geq 0$, $\om\in\Om$, and all Borel sets $A\sub I$. If a RCIM $\eta$ is absolutely continuous with respect to a random probability measure $\zt$ we call $\eta$ a random absolutely continuous conditionally invariant probability measure (RACCIM)\index{random absolutely continuous conditionally invariant probability measure (RACCIM)} with respect to $\zt$. 
\end{definition}
Straight from the definition of a RCIM we make the following observations.
\begin{observation}\label{O1 RCIM}
Note that if we plug $A=I_{\om}=X_{\om,0}$ into \eqref{RCIM def} with $n=0$, we have that
$$
\eta_\om(I_{\om})=\eta_\om^2(I_{\om}),
$$
which immediately implies that $\eta_\om(I_\om)$ id either $0$ or $1$. If $\eta_\om(H_\om)=0$ then we have that $\eta_\om$ is supported in $I_{\om}$.
\end{observation}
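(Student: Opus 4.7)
The plan is to read off both conclusions directly from the defining identity \eqref{RCIM def} of a RCIM, with essentially no additional machinery. First I would take $n=0$ and $A = I_\om$ in \eqref{RCIM def}. By \eqref{def: X_n surv} we have $X_{\om,0} = \{x \in I : x \notin H_\om\} = I_\om$, and $T_\om^{-0}$ is the identity on $I$, so the equation collapses to
\begin{align*}
\eta_\om(I_\om) = \eta_\om(I_\om) \cdot \eta_\om(I_\om).
\end{align*}
Setting $t := \eta_\om(I_\om) \in [0,1]$, this is the quadratic $t = t^2$, whose only solutions are $t = 0$ and $t = 1$, which yields the first assertion.

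For the second assertion, I would partition $I$ as the disjoint union $I = I_\om \sqcup H_\om$, since $I_\om = I \setminus H_\om$ by definition. Because $\eta_\om$ is a probability measure, additivity gives $1 = \eta_\om(I_\om) + \eta_\om(H_\om)$, and combining with the hypothesis $\eta_\om(H_\om) = 0$ forces $\eta_\om(I_\om) = 1$, which is precisely the statement that $\eta_\om$ is supported in $I_\om$.

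There is no real obstacle here: both parts are purely formal consequences of the RCIM condition and the fact that $\eta_\om$ is a probability measure. The observation mainly serves to motivate the working convention that a nontrivial RCIM is one for which $\eta_\om(I_\om) = 1$, excluding the degenerate case in which $\eta_\om$ is concentrated entirely on the hole.
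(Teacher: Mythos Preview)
Your proof is correct and matches the paper's approach exactly. The paper presents this as a self-explanatory observation with no separate proof, and you have simply filled in the routine details that the paper leaves implicit.
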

\begin{observation}\label{O2 RCIM}
Note that since
\begin{align*}
T_\om^{-n}(X_{\sg^n(\om),m})\cap X_{\om,n}=X_{\om,n+m}
\end{align*}
for each $n,m\in\NN$ and $\om\in\Om$ we have that if $\eta$ is a RCIM then
\begin{align*}
\eta_{\om}(X_{\om,n+m})=\eta_{\om}(X_{\om,n})\eta_{\sg^n(\om)}(X_{\sg^n(\om),m})
\end{align*}
for each $n,m\in\NN$. In particular, we have that
\begin{align*}
\eta_\om(X_{\om,n})=\prod_{j=0}^{n-1}\eta_{\sg^j(\om)}(X_{\sg^j(\om),1}).
\end{align*}
\end{observation}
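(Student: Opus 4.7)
The plan is to verify the set identity first, then apply the RCIM defining property, and finally iterate to obtain the product formula.

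For the set identity $T_\om^{-n}(X_{\sg^n(\om),m})\cap X_{\om,n}=X_{\om,n+m}$, I would unpack the definition \eqref{def: X_n surv}, which states $X_{\om,k}=\bigcap_{j=0}^{k}T_\om^{-j}(I_{\sg^j(\om)})$. A point $x$ lies in $X_{\om,n+m}$ if and only if $T_\om^j(x)\in I_{\sg^j(\om)}$ for every $0\leq j\leq n+m$. Splitting the intersection at $j=n$ and invoking the cocycle relation $T_\om^{n+i}=T_{\sg^n(\om)}^{i}\circ T_\om^n$, the constraints for $0\leq j\leq n$ are exactly $x\in X_{\om,n}$, while the constraints for $n\leq j\leq n+m$ translate, via the change of variable $j\mapsto j-n$, to $T_\om^n(x)\in X_{\sg^n(\om),m}$, i.e.\ $x\in T_\om^{-n}(X_{\sg^n(\om),m})$.

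Next I would apply the RCIM property \eqref{RCIM def} with the Borel set $A=X_{\sg^n(\om),m}$, yielding $\eta_\om(T_\om^{-n}(X_{\sg^n(\om),m})\cap X_{\om,n})=\eta_{\sg^n(\om)}(X_{\sg^n(\om),m})\,\eta_\om(X_{\om,n})$. The set identity established above then rewrites the left-hand side as $\eta_\om(X_{\om,n+m})$, which is the first multiplicative relation.

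For the product formula I would proceed by induction on $n$, with the base case $n=1$ tautological. The inductive step follows by applying the multiplicative relation with the second index equal to $1$ to write $\eta_\om(X_{\om,n+1})=\eta_\om(X_{\om,n})\,\eta_{\sg^n(\om)}(X_{\sg^n(\om),1})$, after which the inductive hypothesis delivers the claimed product. The only nontrivial content is the set identity, which is essentially bookkeeping; I do not expect any serious obstacle.
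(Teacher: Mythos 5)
Your proposal is correct and is precisely the argument the paper intends for this observation: the set identity follows from splitting the intersection in \eqref{def: X_n surv} via the cocycle relation, the multiplicative relation is \eqref{RCIM def} applied with $A=X_{\sg^n(\om),m}$, and the product formula follows by induction using the case $m=1$. No gaps to report.
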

In light of Observation~\ref{O2 RCIM}, given a RCIM $\eta$, for each $\om\in\Om$ we let
$$
\al_{\om}:=\eta_\om(X_{\om,1}).
$$
Thus we have
\begin{align}\label{eq: def of al_om}
\al_{\om}^n:=\prod_{j=0}^{n-1}\al_{\sg^j(\om)}=\eta_\om(X_{\om,n}).
\end{align}
We now prove a useful identity. 
\begin{lemma}\label{lem: useful identity}
Given any $f,h\in\BV(I)$, any $\om\in\Om$, and $n\in\NN$  we have that 
\begin{align*}
\int_{I_{\sg^n(\om)}} h \cdot \cL_\om^n f \,d\nu_{\sg^n(\om),0}
&=
\lm_{\om,0}^n\int_{X_{\om,n}}f\cdot h\circ T_\om^n\, d\nu_{\om,0}.
\end{align*}
\end{lemma}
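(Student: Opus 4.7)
The plan is to reduce the identity to the well-known adjoint/change-of-variables formula for the \emph{closed} transfer operator iterated $n$ times, and then absorb the indicator $\ind_{\sg^n(\om)}$ into the survivor set indicator on the right. Throughout, write $\lm_{\om,0}^n:=\prod_{j=0}^{n-1}\lm_{\sg^j(\om),0}$, so that $n$-fold iteration of the conformality relation in \eqref{cond C1} gives $\nu_{\sg^n(\om),0}(\cL_{\om,0}^n F)=\lm_{\om,0}^n\,\nu_{\om,0}(F)$ for every $F\in\BV(I)$.

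First I would establish the one-step adjoint identity
\begin{equation*}
\int_I h\cdot \cL_{\om,0} F\,d\nu_{\sg(\om),0}
\;=\;\lm_{\om,0}\int_I F\cdot (h\circ T_\om)\,d\nu_{\om,0},
\end{equation*}
valid for $F\in\BV(I)$ and $h\in\BV(I)$. For simple functions $h=\ind_A$ with $A\in\sB$, the left side equals $\nu_{\sg(\om),0}(\ind_A\cL_{\om,0}F)=\nu_{\sg(\om),0}(\cL_{\om,0}(F\cdot\ind_A\circ T_\om))$ (by the definition \eqref{glob def: closed tr op}, since $\ind_A(T_\om y)$ is constant as $y$ ranges over $T_\om^{-1}(x)$, so it factors through the sum), and then one application of \eqref{cond C1} yields $\lm_{\om,0}\,\nu_{\om,0}(F\cdot\ind_A\circ T_\om)$. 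Extending linearly and by monotone/dominated convergence gives the identity for all bounded measurable $h$.

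Iterating this formula $n$ times produces
\begin{equation*}
\int_I h\cdot \cL_{\om,0}^n F\,d\nu_{\sg^n(\om),0}
\;=\;\lm_{\om,0}^n\int_I F\cdot (h\circ T_\om^n)\,d\nu_{\om,0}.
\end{equation*}
Now apply this with $F=f\cdot\hat X_{\om,n-1}$ and the bounded function $\ind_{\sg^n(\om)}\cdot h$ in place of $h$. Using $\cL_\om^n f=\cL_{\om,0}^n(f\cdot\hat X_{\om,n-1})$ on the left, and observing on the right that
\begin{equation*}
\hat X_{\om,n-1}\cdot\bigl(\ind_{\sg^n(\om)}\circ T_\om^n\bigr)=\hat X_{\om,n},
\end{equation*}
because $X_{\om,n}=X_{\om,n-1}\cap T_\om^{-n}(I_{\sg^n(\om)})$, we get exactly
\begin{equation*}
\int_{I_{\sg^n(\om)}}h\cdot\cL_\om^n f\,d\nu_{\sg^n(\om),0}
\;=\;\lm_{\om,0}^n\int_{X_{\om,n}}f\cdot(h\circ T_\om^n)\,d\nu_{\om,0},
\end{equation*}
which is the claim.

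The only genuine point requiring care is the one-step adjoint identity; everything else is bookkeeping with indicator functions. Because the fibre measures $\nu_{\om,0}$ are non-atomic by \eqref{cond C1} and $T_\om$ is piecewise monotone with a finite partition $\cZ_\om$ by \eqref{T1}--\eqref{T3}, the pull-through of $h\circ T_\om$ past the sum defining $\cL_{\om,0}$ is unambiguous, so no technical subtlety appears at this step.
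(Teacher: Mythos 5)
Your proof is correct and follows essentially the same route as the paper: write $\cL_\om^n f=\cL_{\om,0}^n(f\cdot\hat X_{\om,n-1})$, pull $h\circ T_\om^n$ (and $\ind_{\sg^n(\om)}\circ T_\om^n$) inside the closed operator, apply the $n$-fold conformality $\nu_{\sg^n(\om),0}(\cL_{\om,0}^n\,\cdot\,)=\lm_{\om,0}^n\nu_{\om,0}(\cdot)$, and use $\hat X_{\om,n-1}\cdot(\ind_{\sg^n(\om)}\circ T_\om^n)=\hat X_{\om,n}$. The only difference is your detour through indicator test functions and approximation to justify the adjoint identity, which is unnecessary: the pull-through $h\cdot\cL_{\om,0}F=\cL_{\om,0}\bigl(F\cdot(h\circ T_\om)\bigr)$ holds pointwise for arbitrary $h$ directly from the definition of $\cL_{\om,0}$, which is exactly how the paper argues.
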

\begin{proof}
To prove the identity we calculate the following:
\begin{align*}
\int_{I_{\sg^n(\om)}} h \cdot \cL_\om^n f \,d\nu_{\sg^n(\om),0}
&=
\int_I h \cdot\ind_{\sg^n(\om)}\cdot\cL_{\om,0}^n\lt(f\cdot \hat X_{\om,n-1}\rt) \, d\nu_{\sg^n(\om),0}
\\
&=
\int_I \cL_{\om,0}^n\lt(f (h\circ T_\om^n) (\ind_{\sg^n(\om)}\circ T_\om^n)\cdot \hat X_{\om,n-1}\rt)\, d\nu_{\sg^n(\om),0}
\\
&=
\lm_{\om,0}^n \int_I f (h\circ T_\om^n)\cdot \hat X_{\om,n}\, d\nu_{\om,0}
\\
&=\lm_{\om,0}^n\int_{X_{\om,n}}f\cdot h\circ T_\om^n\, d\nu_{\om,0}.	 	
\end{align*}
\end{proof}
The following lemma gives a useful characterization of RACCIM (with respect to $\nu_0$) in terms of the transfer operators $\cL_{\om}$.
\begin{lemma}\label{LMD lem 1.1.1}
Suppose $\eta=\ind_{\cI}h\nu_0$ is a random probability measure on $\cI$ absolutely continuous with respect to $\nu_0$, whose disintegrations are given by
$$
\eta_{\om}=\ind_\om h_{\om}\nu_{\om,0}.
$$
Then $\eta$ is a RACCIM (with respect to $\nu_0$) if and only if there exists $\al_{\om}>0$ such that 
\begin{align}\label{eq1 lem iff accipm}
\cL_{\om}h_{\om}=\lm_{\om,0}\al_{\om}h_{\sg(\om)}
\end{align}
for each $\om\in\Om$.
\end{lemma}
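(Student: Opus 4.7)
The plan is to use Lemma~\ref{lem: useful identity} as the bridge between the conditional invariance equation \eqref{RCIM def} for the measure $\eta$ and the functional equation \eqref{eq1 lem iff accipm} for the density $h$. For both directions I would apply that identity with $f = h_\om$ and with test functions $h = \ind_A$ for arbitrary Borel $A \subseteq I$, then read off equalities of measures from the arbitrariness of $A$. Since $\eta_\om$ is supported in $I_\om$, the density satisfies $h_\om = \ind_\om h_\om$ modulo $\nu_{\om,0}$, and since $X_{\om,n} \subseteq I_\om$, the indicator factors $\ind_\om$ and $\hat X_{\om,n-1}$ inserted by the operator definition \eqref{eq: def of open tr op} and by Lemma~\ref{lem: useful identity} can be manipulated freely.

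For the forward direction ($\Rightarrow$), first set $\al_\om := \eta_\om(X_{\om,1})$, which is positive under the assumption that $\eta$ is a probability RCIM and $X_{\om,1}$ has positive $\eta_\om$-measure. Apply Lemma~\ref{lem: useful identity} with $n=1$, $f = h_\om$, $h = \ind_A$ to get
\begin{align*}
\int_{A \cap I_{\sg(\om)}} \cL_\om h_\om \, d\nu_{\sg(\om),0}
= \lm_{\om,0} \int_{X_{\om,1} \cap T_\om^{-1}(A)} h_\om \, d\nu_{\om,0}
= \lm_{\om,0} \, \eta_\om(T_\om^{-1}(A) \cap X_{\om,1}).
\end{align*}
By \eqref{RCIM def} with $n=1$ the right-hand side equals $\lm_{\om,0} \al_\om \eta_{\sg(\om)}(A) = \lm_{\om,0} \al_\om \int_{A \cap I_{\sg(\om)}} h_{\sg(\om)} \, d\nu_{\sg(\om),0}$. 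Since $A$ is arbitrary, the equality of densities $\cL_\om h_\om = \lm_{\om,0} \al_\om h_{\sg(\om)}$ holds $\nu_{\sg(\om),0}$-a.e.\ on $I_{\sg(\om)}$, which is the correct interpretation since both densities describe measures supported in $I_{\sg(\om)}$.

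For the backward direction ($\Leftarrow$), iterate \eqref{eq1 lem iff accipm} to obtain $\cL_\om^n h_\om = \lm_{\om,0}^n \big(\prod_{j=0}^{n-1} \al_{\sg^j(\om)}\big) h_{\sg^n(\om)}$ for every $n \ge 1$, then run the same computation in reverse: Lemma~\ref{lem: useful identity} with $f = h_\om$, $h = \ind_A$ yields
\begin{align*}
\Big(\prod_{j=0}^{n-1} \al_{\sg^j(\om)}\Big) \eta_{\sg^n(\om)}(A)
= \eta_\om(T_\om^{-n}(A) \cap X_{\om,n}).
\end{align*}
Taking $A = I$ and using $\eta_{\sg^n(\om)}(I) = 1$ identifies $\prod_{j=0}^{n-1} \al_{\sg^j(\om)} = \eta_\om(X_{\om,n})$, which in light of \eqref{eq: def of al_om} gives \eqref{RCIM def} and establishes that $\eta$ is a RACCIM.

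The computations are essentially a direct unwinding of Lemma~\ref{lem: useful identity}; the only real care needed is bookkeeping of the supports. The density $h_\om$ lives on $I_\om$, so the functional equation has to be read on $I_{\sg(\om)}$ against $\nu_{\sg(\om),0}$; values of $\cL_\om h_\om$ on $H_{\sg(\om)}$ are invisible to $\eta_{\sg(\om)}$. Once the factors $\ind_\om$, $\ind_{\sg(\om)}$, and $\hat X_{\om,n-1}$ are tracked carefully, no further ingredients are required.
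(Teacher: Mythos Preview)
Your proposal is correct and follows essentially the same approach as the paper: both directions are obtained by pairing Lemma~\ref{lem: useful identity} (with $f=h_\om$ and $h=\ind_A$) against arbitrary Borel sets $A$, then identifying $\prod_j\al_{\sg^j(\om)}$ with $\eta_\om(X_{\om,n})$ by taking $A=I$ (the paper uses $A=I_{\sg^n(\om)}$, which is the same by Observation~\ref{O1 RCIM}). Your explicit remark that the density identity $\cL_\om h_\om=\lm_{\om,0}\al_\om h_{\sg(\om)}$ is to be read $\nu_{\sg(\om),0}$-a.e.\ on $I_{\sg(\om)}$ is a useful clarification the paper leaves implicit.
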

\begin{proof}
Beginning with the ``reverse'' direction, we first suppose \eqref{eq1 lem iff accipm} holds for all $\om\in\Om$. Let $A\in\sB$ (Borel $\sg$-algebra). Using Lemma~\ref{lem: useful identity} gives 
\begin{align}
\eta_{\om}(T_\om^{-n}(A)\cap X_{\om,n})&=\int_{X_{\om,n}}(\ind_{A}\circ T_\om^n)\cdot h_{\om}\, d\nu_{\om,0}
\nonumber\\
&=(\lm_{\om,0}^n)^{-1}\int_{I_{\sg^n(\om)}}\ind_A\cdot \cL_{\om}^nh_{\om}\, d\nu_{\sg^n(\om),0}
\nonumber\\
&=\int_{I_{\sg^n(\om)}}\ind_A\cdot\al_{\om}^nh_{\sg^n(\om)}\, d\nu_{\sg^n(\om),0}
\nonumber\\
&=\al_{\om}^n\eta_{\sg^n(\om)}(A).\label{LD lem 1.1 eq1}
\end{align}
Inserting $A=I_{\sg^n(\om)}$ into \eqref{LD lem 1.1 eq1} gives
\begin{align*}
\eta_{\om}(T_\om^{-n}(I_{\sg^n(\om)})\cap X_{\om,n})
&=\al_{\om}^n\eta_{\sg^n(\om)}(I_{\sg^n(\om)}).
\end{align*}
Observation \ref{O1 RCIM} implies that $\eta_{\sg^n(\om)}(I_{\sg^n(\om)})=1$, and thus 
\begin{align*}
\al_{\om}^n
&=\eta_{\om}(T_\om^{-n}(I_{\sg^n(\om)})\cap X_{\om,n})
=\eta_\om(X_{\om,n}),
\end{align*}
since $T_\om^{-n}(I_{\sg^n(\om)})\cap X_{\om,n}=X_{\om,n}$.
Thus,  for $A\in\sB$ we have
\begin{align*}
\eta_{\om}(T_\om^{-n}(A)\cap X_{\om,n})=\eta_{\sg^n(\om)}(A)\eta_{\om}(X_{\om,n})
\end{align*}
as desired.

Now to prove the opposite direction, suppose $\eta_{\om}(\ind_\om h_{\om})$ is a RACCIM. Then by the definition of a RCIM there exists $\al_{\om}$ such that for any $A\in\sB$ we have
\begin{align*}
\eta_{\om}(T_\om^{-n}(A)\cap X_{\om,n})=\al_{\om}^n\eta_{\sg^n(\om)}(A).
\end{align*}
So we calculate
\begin{align*}
(\lm_{\om,0}^n)^{-1}\int_{I_{\sg^n(\om)}}\ind_A\cdot\cL_{\om}^nh_{\om}\, d\nu_{\sg^n(\om),0}
&=\int_{X_{\om,n}}(\ind_A\circ T_\om^n)h_{\om}\, d\nu_{\om,0}
=\eta_{\om}(T_\om^{-n}(A)\cap X_{\om,n})
\\
&
=\al_{\om}^n\eta_{\sg^n(\om)}(A)
=\al_{\om}^n\int_{I_{\sg^n(\om)}}\ind_A\cdot h_{\sg^n(\om)}\, d\nu_{\sg^n(\om),0}.	 	
\end{align*}
So we have
\begin{align}\label{eq al_om measure part 2}
\cL_{\om}^nh_{\om}=\lm_{\om,0}^n\al_{\om}^nh_{\sg^n(\om)},
\end{align}
which completes the proof.
\end{proof}

\section{Functionals and partitions}\label{sec: tr op and Lm}
In this section we follow \cite{LSV, LMD} and introduce the random functional $\Lm_\om$ that we will later show is equivalent to the conformal measure for the open system. We also introduce certain refinements of the partition of monotonicity which are used to define ``good'' and ``bad'' intervals and are needed to state our main assumptions on the open system. Following the statement of our main hypotheses, we state our main results.  

We begin by defining the functional $\Lm_{\om}:\BV(I)\to\RR$ by
\begin{align}\label{eq: def of Lm}
\Lm_{\om}(f):=\lim_{n\to\infty}\inf_{x\in D_{\sg^n(\om),n}}\frac{\cL_{\om}^n(f)(x)}{\cL_{\om}^n(\ind_\om)(x)}, \quad f\in\BV(I).
\end{align}\index{$\Lm_{\om}(f)$}
We note that this limit exists as the sequence is bounded and increasing. Indeed, we have that
\begin{align}\label{eq: sup norm bound ratio}
-\norm{f}_{\infty}\leq \inf_{x\in D_{\sg^n(\om),n}}\frac{\cL_{\om}^n(f)(x)}{\cL_{\om}^n(\ind_\om)(x)} \leq \norm{f}_{\infty},
\end{align}
and to see that the ratio is increasing we note that
\begin{align}
&\inf_{x\in D_{\sg^{n+1}(\om),n+1}}
\frac{\cL_{\om}^{n+1}(f)(x)}
{\cL_{\om}^{n+1}(\ind_\om)(x)}
=
\inf_{x\in D_{\sg^{n+1}(\om),n+1}}
\frac{\cL_{\sg^n(\om)}\lt(\hat D_{\sg^n(\om),n}\cdot \cL_{\om}^n(f)\rt)(x)}
{\cL_{\om}^{n+1}(\ind_\om)(x)}
\nonumber\\
&\quad=
\inf_{x\in D_{\sg^{n+1}(\om),n+1}}
\frac{\cL_{\sg^n(\om)}\lt(\hat D_{\sg^n(\om),n}\cdot \cL_{\om}^n(\ind_\om)
	\cdot\frac{\cL_{\om}^n(f)}{\cL_{\om}^n(\ind_\om)}\rt)(x)}
{\cL_{\om}^{n+1}(\ind_\om)(x)}
\nonumber\\
&\quad\geq
\inf_{x\in D_{\sg^{n}(\om),n}}
\frac{\cL_{\om}^{n}(f)(x)}
{\cL_{\om}^{n}(\ind_\om)(x)}
\cdot
\inf_{x\in D_{\sg^{n+1}(\om),n+1}}
\frac{\cL_{\sg^n(\om)}\lt(\hat D_{\sg^n(\om),n}\cdot \cL_{\om}^n\lt(\ind_\om\rt)\rt)(x)}
{\cL_{\om}^{n+1}(\ind_\om)(x)}
\nonumber\\
&\quad=
\inf_{x\in D_{\sg^{n}(\om),n}}
\frac{\cL_{\om}^{n}(f)(x)}
{\cL_{\om}^{n}(\ind_\om)(x)}.	
\label{rho^n increasing}
\end{align}
In particular, \eqref{eq: sup norm bound ratio} of the above argument gives that
\begin{align}\label{eq: Lm bdd in sup norm}
-\norm{f}_{\infty}\leq\inf f\leq \Lm_{\om}(f) \leq \norm{f}_{\infty}.
\end{align}
\begin{observation}\label{obs: properties of Lm}
One can easily check that the functional $\Lm_{\om}$ has the following properties.
\begin{enumerate}
\item[\mylabel{1}{Lm prop1}] $\Lm_\om(\ind)=\Lm_{\om}(\ind_\om)=1$.
\item[\mylabel{2}{Lm prop2}] $\Lm_{\om}$ is continuous with respect to the supremum norm.
\item[\mylabel{3}{Lm prop3}] $f\geq h$ implies that $\Lm_{\om}(f)\geq \Lm_{\om}(h)$.
\item[\mylabel{4}{Lm prop4}] $\Lm_{\om}(cf)=c\Lm_{\om}(f)$.
\item[\mylabel{5}{Lm prop5}] $\Lm_{\om}(f+h)\geq \Lm_{\om}(f)+\Lm_{\om}(h)$.
\item[\mylabel{6}{Lm prop6}] $\Lm_{\om}(f+a)=\Lm_{\om}(f)+a$ for all $a\in\RR$.
\item[\mylabel{7}{Lm prop7}] If $A\cap X_{\om,n}=\emptyset$ for some $n\in\NN$ then $\Lm_{\om}(\ind_A)=0$.
\end{enumerate}
Furthermore, we note that the homogeneity \eqref{Lm prop4} and super-additivity \eqref{Lm prop5} imply that $\Lm_{\om}$ is convex. In the sequel, we will show that $\Lm_{\om}$ is in fact linear, and can thus be associated with a unique probability measure on $I_{\om}$ via the Riesz Representation Theorem.
\end{observation}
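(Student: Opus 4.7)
The plan is to verify each of the seven assertions directly from the definition \eqref{eq: def of Lm}. The single fact that makes essentially every case work is the identity
\begin{align*}
\cL_\om^n(\ind)=\cL_\om^n(\ind_\om)=\cL_{\om,0}^n(\hat X_{\om,n-1}),
\end{align*}
which follows inductively from $\cL_\om=\cL_{\om,0}(\spot\cdot\ind_\om)$ in \eqref{eq: def of open tr op}; this quantity is strictly positive precisely on $D_{\sg^n(\om),n}$ by the definition \eqref{defn of D sets}, which is the set over which the infimum in \eqref{eq: def of Lm} is taken. In particular, \eqref{Lm prop1} is immediate because the ratio in \eqref{eq: def of Lm} is identically $1$ on $D_{\sg^n(\om),n}$, and \eqref{Lm prop6} follows from linearity of $\cL_\om^n$, which gives $\cL_\om^n(f+a)/\cL_\om^n(\ind_\om)=\cL_\om^n(f)/\cL_\om^n(\ind_\om)+a$ on $D_{\sg^n(\om),n}$; taking $\inf$ and then $n\to\infty$ yields the claim.

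For \eqref{Lm prop3}, positivity of $g_\om$ yields $\cL_\om^n(f)\geq\cL_\om^n(h)$ pointwise whenever $f\geq h$, and division by the positive denominator $\cL_\om^n(\ind_\om)$ preserves the inequality on $D_{\sg^n(\om),n}$. Positive homogeneity \eqref{Lm prop4} uses $\inf(cg)=c\inf g$ for $c\geq 0$ combined with linearity of $\cL_\om^n$; super-additivity \eqref{Lm prop5} uses $\inf(a+b)\geq \inf a+\inf b$ in the same way. For \eqref{Lm prop7}, I would note that $X_{\om,m}\sub X_{\om,n}$ for $m\geq n$, so if $A\cap X_{\om,n}=\emptyset$ then $\ind_A\cdot\hat X_{\om,m-1}\equiv 0$ for all $m\geq n$, whence $\cL_\om^m(\ind_A)=0$ and the ratio vanishes identically on $D_{\sg^m(\om),m}$.

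The one item that does not fall out by merely unpacking the definition is continuity \eqref{Lm prop2}, which I would derive from the other properties rather than estimate directly. Combining super-additivity \eqref{Lm prop5} with the a priori bound $-\norm{\spot}_\infty\leq\Lm_\om\leq\norm{\spot}_\infty$ from \eqref{eq: Lm bdd in sup norm} gives
\begin{align*}
\Lm_\om(f)=\Lm_\om\bigl((f-h)+h\bigr)\geq \Lm_\om(f-h)+\Lm_\om(h)\geq -\norm{f-h}_\infty+\Lm_\om(h),
\end{align*}
and the symmetric inequality with $f$ and $h$ swapped yields $|\Lm_\om(f)-\Lm_\om(h)|\leq\norm{f-h}_\infty$. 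I do not anticipate any real obstacle. The two small cautions to keep in mind are (i) to work throughout on the positivity set $D_{\sg^n(\om),n}$ so the denominator in \eqref{eq: def of Lm} is nonzero, and (ii) to read \eqref{Lm prop4} as positive homogeneity; an additive upgrade of \eqref{Lm prop5} is not a consequence of the definition alone and will be established only later, when $\Lm_\om$ is shown to be linear and hence identified with a probability measure via Riesz representation.
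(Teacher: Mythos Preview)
Your proposal is correct and is exactly the kind of direct verification the paper intends; the paper states these as an Observation without proof, so there is no alternative argument to compare against. Two minor remarks: in \eqref{Lm prop7} your index should be $m\geq n+1$ rather than $m\geq n$ (you need $A\cap X_{\om,m-1}=\emptyset$, and $X_{\om,m-1}\supseteq X_{\om,n}$ when $m-1\leq n$), though this does not affect the limit; and your caution that \eqref{Lm prop4} should be read as positive homogeneity is well taken, since for $c<0$ the infimum flips to a supremum and the identity only becomes exact once linearity is established later.
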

\begin{remark}
Let $f\in\BV(I)$, then for all $x,y\in I_{\om}$ we have
\begin{align*}
f(x)\leq f(y)+\var(f).
\end{align*}
Using property \eqref{Lm prop2} of $\Lm_{\om}$, together with \eqref{eq: sup norm bound ratio}, implies
\begin{align}
f(x)
\leq 
\inf f+\var(f)
\leq
\Lm_{\om}(f)+\var(f)
\leq 
\norm{f}_{\infty} +\var(f).
\label{f leq Lm(f)+var(f)}
\end{align}
\end{remark}
We set
\begin{align}\label{rho def}
\rho_{\om}:=\Lm_{\sg(\om)}\left(\cL_{\om}(\ind_\om)\right).
\end{align}\index{$\rho_\om$}
The following propositions concern various estimates of $\rho_{\om}$. We begin by setting
\begin{align}\label{rho^n def}
\rho_{\om}^{(n)}:=\inf_{x\in D_{\sg^{n+1}(\om),n}}\frac{\cL_{\sg(\om)}^n(\cL_{\om}(\ind_\om))(x)}{\cL_{\sg(\om)}^{n}(\ind_{\sg(\om)})(x)}.
\end{align}
Then, by the definition and \eqref{rho^n increasing}, we have that
\begin{align}\label{rho^n inc converge to rho}
\rho_{\om}^{(n)}\nearrow\Lm_{\sg(\om)}(\cL_{\om}(\ind_\om))=\rho_{\om}
\end{align}
as $n\to\infty$.
\begin{remark}\label{rem: rho inequalities}
Note that \eqref{rho def} and the definition of $\cL_\om$ together immediately imply that
\begin{align}\label{rho rough up and low bdd}
\inf_{I_\om} g_\om\leq \rho_{\om}\leq \norm{\cL_{\om}\ind_\om}_{\infty},
\end{align}
and similarly, \eqref{rho^n def} implies that
\begin{align}\label{rho^n rough up and low bdd}
\inf_{I_\om} g_\om
\leq 
\inf_{D_{\sg(\om),1}}\cL_{\om}(\ind_\om)
=
\rho_{\om}^{(0)}
\leq 
\rho_{\om}^{(n)}
\leq 
\norm{\cL_{\om}\ind_\om}_{\infty}
\leq 
\#\cZ_\om\norm{g_{\om,0}}_\infty,
\end{align}
for all $\om\in\Om$ and $n\geq 0$.
Furthermore, \eqref{rho rough up and low bdd}, together with \eqref{eq: log int open weight and tr op}, gives that 
\begin{align}\label{eq: rho log int}
\log\rho_\om\in L^1(m).
\end{align}
The ergodic theorem then implies that 
\begin{align*}
\lim_{n\to\infty}\frac{1}{n}\log\rho_\om^n=\int_\Om\log\rho_\om\,dm(\om),
\end{align*}
where 
\begin{align*}
\rho_\om^n:=\prod_{j=0}^{n-1}\rho_{\sg^j(\om)}.
\end{align*}
\end{remark}	

\begin{proposition}\label{prop: D sets stabilize}
There exists a measurable and finite $m$-a.e. function $N_\infty:\Om\to[1,\infty]$ such that 
\begin{align*}
D_{\om,n}=D_{\om,\infty}
\end{align*}
for all $n\geq N_\infty(\om)$. Furthermore, this implies that
\begin{align}\label{eq: L1 pos on D infty}
\inf_{D_{\om,\infty}}\cL_{\sg^{-n}(\om)}^n\ind_{\sg^{-n}(\om)}>0
\end{align}
for all $n\geq N_\infty(\om)$.
\end{proposition}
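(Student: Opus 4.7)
The plan is to exploit the recursion satisfied by the sets $D_{\om,n}$ together with the finite-branch structure of $T_\om$. First, using the definition \eqref{defn of D sets} and the formula \eqref{eq: def of open tr op} for the open transfer operator, combined with positivity of $g_{\om,0}$ on its domain from \eqref{A1}--\eqref{A2}, I would derive the recursion
\[
D_{\om,n+1}=T_{\sg^{-1}\om}\lt(D_{\sg^{-1}\om,n}\cap I_{\sg^{-1}\om}\rt),\qquad D_{\om,0}=I_\om.
\]
This identifies $D_{\om,n}$ as an iterated forward image, and in particular yields $D_{\om,\infty}=T_{\sg^{-1}\om}(D_{\sg^{-1}\om,\infty}\cap I_{\sg^{-1}\om})$ upon passing to the intersection, where the interchange of intersection and image is justified by $T_\om$ being at most $\#\cZ_\om$-to-one on $I$ (so preimages of points are finite).

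Next I would argue by induction that each $D_{\om,n}$ is a finite union of closed subintervals of $I$: by \eqref{T1}--\eqref{T3} each branch of $T_\om$ sends an interval to an interval, and provided $H_\om$ consists of finitely many components (the natural setting in view of \eqref{LIP} and the examples of Section~\ref{sec: examples}), intersection with $I_{\sg^{-1}\om}$ preserves the finite-union-of-intervals class; hence so does the recursion. Because the decreasing sequence $(D_{\om,n})_n$ lies in this discrete class of configurations and each strict decrease at step $n$ requires removing a whole component via a new hole intersection, the total number of strict decreases is $m$-a.s.\ finite by \eqref{LIP}. Stabilization therefore must occur. Setting $N_\infty(\om):=\inf\set{n\ge 1: D_{\om,n}=D_{\om,\infty}}$, measurability of $N_\infty$ follows from the measurability of each $D_{\om,n}$ inherited from \eqref{cond M1} and \eqref{M2}.

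The main obstacle is establishing a genuinely quantitative control on the stabilization: a priori, $D_{\om,n}$ could conceivably lose a vanishingly small subinterval at every step indefinitely, and ruling this out rests on the finite-component nature of the iterated sets together with \eqref{cond D}. Once stabilization is established, the final inequality is immediate: for $n\ge N_\infty(\om)$ and any $x\in D_{\om,\infty}=D_{\om,n}$, there exists at least one $n$-step backward orbit of $x$ staying in the hole complements, so $\cL_{\sg^{-n}\om}^n\ind_{\sg^{-n}\om}(x)\ge \inf g_{\sg^{-n}\om,0}^{(n)}$, which is strictly positive by \eqref{A1} and \eqref{eq: log sup inf g integ}.
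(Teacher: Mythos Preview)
Your approach has a genuine gap that you partially identify but do not close. The combinatorial claim that ``each strict decrease at step $n$ requires removing a whole component via a new hole intersection'' is simply false: a finite union of intervals can strictly decrease indefinitely by having one of its components shrink, as in $[0,1]\supset[0,\tfrac12]\supset[0,\tfrac14]\supset\cdots$, without ever losing a component. Thus lying in the ``discrete class'' of finite unions of intervals gives no bound on the number of strict decreases. Your appeal to \eqref{LIP} to bound this number has no mechanism behind it. In addition, the recursion you derive,
\[
D_{\om,n+1}=T_{\sg^{-1}\om}\bigl(D_{\sg^{-1}\om,n}\cap I_{\sg^{-1}\om}\bigr),
\]
couples fibers: $D_{\om,n}$ depends on the entire backward orbit $\sg^{-1}\om,\sg^{-2}\om,\ldots$, so a single-fiber combinatorial count cannot work. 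Finally, your argument requires $H_\om$ to have finitely many components, which is not assumed in the general setting of Chapter~\ref{part 1}.

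The paper's proof is short and uses a completely different idea: the log-integrability of $\rho_\om$ established in \eqref{eq: rho log int}. If stabilization failed along a sequence $n_k$, one could pick points $x_{n_k}\in D_{\sg^{n_k+1}\om,n_k}\setminus D_{\sg^{n_k+1}\om,n_k+1}$ at which the denominator $\cL_{\sg\om}^{n_k}\ind_{\sg\om}(x_{n_k})$ is positive while the numerator $\cL_\om^{n_k+1}\ind_\om(x_{n_k})$ vanishes, forcing $\rho_\om^{(n_k)}=0$ and hence $\rho_\om=0$, contradicting $\log\rho_\om\in L^1(m)$. This argument needs no structural assumption on the holes and is the key input you are missing.
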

\begin{proof}
We proceed via contradiction, assuming that there is a sequence $(n_k)_{k=1}^\infty$ in $\NN$ such that
$$
D_{\sg^{n_k+1}(\om),n_k+1}\subsetneq D_{\sg^{n_k+1}(\om),n_k}.
$$
Let $x_{n_k}\in D_{\sg^{n_k+1}(\om),n_k}\bs D_{\sg^{n_k+1}(\om),n_k+1}$. Then, we have that
$$
\rho_{\om}^{(n_k)}=\inf_{x\in D_{\sg^{n_k+1}(\om),n_k}}\frac{\cL_{\sg(\om)}^{n_k}(\cL_{\om}(\ind_\om))(x)}{\cL_{\sg(\om)}^{n_k}(\ind_{\sg(\om)})(x)}
\leq \frac{\cL_{\sg(\om)}^{n_k}(\cL_{\om}(\ind_\om))(x_{n_k})}{\cL_{\sg(\om)}^{n_k}(\ind_{\sg(\om)})(x_{n_k})}.
$$
By our choice of $x_{n_k}$ and by the definition \eqref{defn of D sets}, we have that the numerator of the quantity on the right is zero, while its denominator is strictly positive. As this holds for each  $k\in\NN$, this implies that $\rho_\om=0$ for $m$-a.e. $\om\in\Om$, which contradicts \eqref{eq: rho log int}. Thus, we are done.
\end{proof}
\begin{remark}\label{rem: check cond D}
Note that our assumption \eqref{cond D}, that $D_{\om,\infty}\neq\emptyset$, is satisfied if $T_\om(I_\om)\bus I_{\sg(\om)}$ for $m$-a.e. $\om\in\Om$. 
Moreover, this also implies that $\rho_\om\geq \inf_{I_{\sg(\om)}}\cL_\om\ind_\om>0$. This occurs, for example if for $m$-a.e. $\om\in\Om$ there exists a full branch, i.e. there exists $Z\in\cZ_\om$ with $T_\om(Z)=I$, outside of the hole $H_\om$, in which case we would have that $D_{\om,\infty}=I$ for $m$-a.e. $\om\in\Om$. 
\end{remark}
We now describe various partitions, which depend on the functional $\Lm_\om$, that we will used to obtain a Lasota-Yorke inequality in Section~\ref{sec: LY ineq}. Recall that $\cZ_\om^{(n)}$ denotes the partition of monotonicity of $T_\om^n$. Now, for each $n\in\NN$ and $\om\in\Om$ we let $\sA_\om^{(n)}$\index{$\sA_\om^{(n)}$} be the collection of all finite partitions of $I$ such that
\begin{align}\label{eq: def A partitiona}
\var_{A_i}(g_{\om}^{(n)})\leq 2\norm{g_{\om}^{(n)}}_{\infty}
\end{align}
for each $\cA=\set{A_i}\in\sA_{\om}^{(n)}$.\index{$\cA$}

Given $\cA\in\sA_\om^{(n)}$, let $\widehat\cZ_\om^{(n)}(\cA)$ be the coarsest partition amongst all those finer than $\cA$ and $\cZ_\om^{(n)}$ such that all elements of $\widehat\cZ_\om^{(n)}(\cA)$\index{$\widehat\cZ_\om^{(n)}(\cA)$} are either disjoint from $X_{\om,n-1}$ or contained in $X_{\om,n-1}$. Now, define the following subcollections:
\begin{align}
\cZ_{\om,*}^{(n)}&:=\set{Z\in \widehat\cZ_\om^{(n)}(\cA): Z\sub X_{\om,n-1} },
\label{eq: Z_*}
\\
\cZ_{\om,b}^{(n)}&:=\set{Z\in \widehat\cZ_\om^{(n)}(\cA): Z\sub X_{\om,n-1} \text{ and } \Lm_{\om}(\ind_Z)=0 },
\label{eq: Z_b}
\\
\cZ_{\om,g}^{(n)}&:=\set{Z\in \widehat\cZ_\om^{(n)}(\cA): Z\sub X_{\om,n-1} \text{ and } \Lm_{\om}(\ind_Z)>0}.
\label{eq: Z_g}
\end{align}\index{$\cZ_{\om,*}^{(n)}$}\index{$\cZ_{\om,b}^{(n)}$}\index{$\cZ_{\om,g}^{(n)}$}
\begin{remark}
Note that in light of \eqref{eq: def of Lm} and \eqref{rho^n increasing}, for every $Z\in\cZ_{\om,g}^{(n)}$ we may define the open covering time\index{open covering time} $M_\om(Z)\in\NN$\index{$M_\om(Z)$} to be the least integer such that 
\begin{align}\label{eq: def of open covering}
\inf_{x\in D_{\sg^{M_\om(Z)}(\om),M_\om(Z)}}\frac{\cL_{\om}^{M_\om(Z)}\ind_Z(x)}{\cL_{\om}^{M_\om(Z)}(\ind_\om)(x)} > 0
\end{align}
which is finite since the ratio in \eqref{eq: def of open covering} increases to $\Lm_\om(\ind_Z)>0$.
Conversely, given that the ratio in \eqref{eq: def of open covering} is increasing by \eqref{rho^n increasing} we see that, for $Z\in\cZ_{\om,*}^{(n)}$, if there exists any $N\in\NN$ such that 
\begin{align*}
\inf_{x\in D_{\sg^{N}(\om),N}}\frac{\cL_{\om}^{N}\ind_Z(x)}{\cL_{\om}^{N}(\ind_\om)(x)} > 0,
\end{align*}
then we must have that $\Lm_\om(\ind_Z)>0$ and equivalently $Z\in\cZ_{\om,g}^{(n)}$.
\end{remark}
We adapt the following definition from \cite{LMD}.
\begin{definition}\label{def: contiguous}
We say that two elements $W, Z\in\cZ_{\om,*}^{(n)}$ are \textit{contiguous}\index{contiguous} if either $W$ and $Z$ are contiguous in the usual sense, i.e. they share a boundary point, or if they are separated by a connected component of $\cup_{j=0}^{n-1} T_\om^{-j}(H_{\sg^j(\om)})$.
\end{definition}

We will consider random open systems that satisfy the following conditions.
\begin{enumerate}
\item[(\Gls*{Q1})]\myglabel{Q1}{cond Q1}
For each $\om\in\Om$ and $n\in\NN$ we let $\xi_{\om}^{(n)}$\index{$\xi_{\om}^{(n)}$} denote the maximum number of contiguous elements of $\cZ_{\om,b}^{(n)}$.
We assume 
\begin{align*}
\lim_{n\to\infty}\frac{1}{n}\log\norm{g_{\om}^{(n)}}_\infty
+
\limsup_{n\to\infty}\frac{1}{n}\log\xi_{\om}^{(n)}
<
\lim_{n\to\infty}\frac{1}{n}\log\rho_{\om}^n
=
\int_\Om \log\rho_{\om}\, dm(\om).
\end{align*}

\item[(\Gls*{Q2})]\myglabel{Q2}{cond Q2} We assume that for each $n\in\NN$ we have $\log\xi_\om^{(n)}\in L^1(m)$.

\item[(\Gls*{Q3})]\myglabel{Q3}{cond Q3} Let 
\begin{align}\label{eq: def of dl_om,n}
\dl_{\om,n}:=\min_{Z\in\cZ_{\om,g}^{(n)}}\Lm_\om(\ind_Z).
\end{align}\index{$\dl_{\om,n}$}
We assume that, for each $n\in\NN$, $\log\dl_{\om,n}\in L^1(m)$. 
\end{enumerate}
\begin{remark}
In Section~\ref{sec: examples} we give several alternate hypotheses to our assumptions \eqref{cond Q1}-\eqref{cond Q3} that are slightly more restrictive, but much simpler to check. 
\end{remark}
\begin{remark}\label{rem: further props of Q assumps}
\noindent
\begin{enumerate}[(1)]
\item Note that since $\lim_{n\to\infty}\frac{1}{n}\log\xi_{\om}^{(n)}\geq 0$, assumption \eqref{cond Q1} implies that 
\begin{align*}
\lim_{n\to\infty}\frac{1}{n}\log\norm{g_{\om}^{(n)}}_\infty
<
\lim_{n\to\infty}\frac{1}{n}\log\rho_{\om}^n.
\end{align*}

\item Since $\norm{g_{\om}}_\infty\leq\norm{g_{\om,0}}_\infty$ and $\inf_{D_{\sg(\om), \infty}}\cL_{\om}\ind_\om\leq \rho_\om$ to check \eqref{cond Q1} it suffices to have 
\begin{align*}
\lim_{n\to\infty}\frac{1}{n}\log\norm{g_{\om,0}^{(n)}}_\infty 
+
\lim_{n\to\infty}\frac{1}{n}\log\xi_{\om}^{(n)}
< 
\lim_{n\to\infty}\frac{1}{n}\log\inf_{D_{\sg^n(\om),\infty}}\cL_{\om}^n\ind_\om .
\end{align*} 
\item One can use the open covering times defined in \eqref{eq: def of open covering} to check \eqref{cond Q3}. Indeed, note that if 
\begin{align*}
N\geq M_{\om,n}:=\max\set{M_\om(Z): Z\in\cZ_{\om,g}^{(n)}}
\end{align*}
then we have that 
\begin{align*}
\dl_{\om,n}
&\geq 
\min_{Z\in\cZ_{\om,g}^{(n)}}\inf_{x\in D_{\sg^N(\om),N}}\frac{\cL_\om^N\ind_Z(x)}{\cL_\om^N\ind_\om(x)}
\\
&\geq
\min_{Z\in\cZ_{\om,g}^{(n)}}
\frac{\inf_{x\in D_{\sg^N(\om),N}}\cL_\om^N\ind_Z(x)}{\norm{\cL_\om^N\ind_\om}_\infty}
\\
&\geq 
\frac{\inf_{X_{\om,N-1}} g_\om^{(N)}}{\norm{\cL_\om^N\ind_\om}_\infty}
\\
&\geq
\frac{\inf g_{\om,0}^{(N)}}{\norm{\cL_{\om,0}^N\ind}_\infty}>0.
\end{align*}
Thus \eqref{cond Q3} holds if $\log \inf g_{\om,0}^{(M_{\om,n})}, \log\norm{\cL_{\om,0}^{M_{\om,n}}\ind_\om}_\infty\in L^1(m)$ for each $n\in\NN$.
\end{enumerate}
\end{remark}

\section{Random Birkhoff cones and Hilbert metrics}\label{sec:cones}
In this section we first recall the theory of convex cones first used by Birkhoff in \cite{birkhoff_lattice_1940}, and then present the random cones on which our operator $\cL_{\om}$ will act as a contraction. We begin with a definition. 
\begin{definition}
Given a vector space $\cV$, we call a subset $\cC\sub\cV$ a \textit{convex cone}\index{convex cone} if $\cC$ satisfies the following:
\begin{enumerate}
\item $\cC\cap-\cC=\emptyset$,
\item for all $\al>0$, $\al\cC=\cC$, 
\item $\cC$ is convex,
\item for all $f, h\in\cC$ and all $\al_n\in\RR$ with $\al_n\to\al$ as $n\to\infty$, if $h-\al_nf\in\cC$ for each $n\in\NN$, then $h-\al f\in\cC\cup\set{0}$.
\end{enumerate}
\end{definition}
\begin{lemma}[Lemma 2.1 \cite{LSV}]
The relation $\leq $ defined on $\cV$ by 
$$
f\leq h \text{ if and only if } h-f\in\cC\cup\set{0}
$$
is a partial order satisfying the following:
\begin{flalign*}
& f\leq 0\leq f  \implies f=0,
\tag{i}
&\\
& \lm>0 \text{ and } f\geq 0 \iff \lm f\geq 0,
\tag{ii} \\
& f\leq h \iff 0\leq h-f,
\tag{iii}\\
& \text{for all } \alpha_n\in\RR \text{ with } \al_n\to\al, \, \al_nf\leq h \implies \al f\leq h,
\tag{iv} \\
& f\geq 0 \text{ and } h\geq 0 \implies f+h\geq 0.
\tag{v}
\end{flalign*}
\end{lemma}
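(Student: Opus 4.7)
The plan is to verify that $\leq$ is indeed a partial order (reflexive, antisymmetric, transitive) and then establish each of the five listed properties by direct translation into the defining cone axioms (1)--(4). Since the lemma is a routine unpacking of the cone definition, no deep ideas are needed; the main task is to match each property to the appropriate cone axiom and to handle the ``$\cup\{0\}$'' case carefully.

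First I would handle the partial order axioms. Reflexivity $f\leq f$ is immediate from $f-f=0\in\cC\cup\{0\}$. For antisymmetry, if $f\leq h$ and $h\leq f$, then $h-f$ and $-(h-f)=f-h$ both lie in $\cC\cup\{0\}$; if $h-f\neq 0$, both would lie in $\cC$, contradicting axiom (1) that $\cC\cap(-\cC)=\emptyset$, so $f=h$. For transitivity, if $f\leq g$ and $g\leq h$, then $u:=g-f$ and $v:=h-g$ each belong to $\cC\cup\{0\}$; if either is zero, then $h-f=u+v\in\cC\cup\{0\}$ trivially, while otherwise convexity (axiom (3)) gives $\tfrac12(u+v)\in\cC$, and axiom (2) with $\alpha=2$ then yields $u+v\in\cC$.

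Next I would verify (i)--(v) in order. Property (i) follows immediately from antisymmetry applied with $h=0$. Property (iii) is literally the definition. For (ii), observe $f\geq 0$ means $f\in\cC\cup\{0\}$, and for $\lambda>0$ axiom (2) ($\lambda\cC=\cC$) preserves this, while $\lambda\cdot 0=0$ handles the zero case; the reverse direction uses the same axiom with $\lambda^{-1}$. For (v), with $f,h\in\cC\cup\{0\}$: if both vanish there is nothing to show; if exactly one vanishes the sum equals the nonzero summand; if both lie in $\cC$, convexity gives $\tfrac12(f+h)\in\cC$ and axiom (2) promotes this to $f+h\in\cC$.

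The only step that requires care is (iv). From $\alpha_n f\leq h$ we have $h-\alpha_n f\in\cC\cup\{0\}$ for each $n$. Split into two cases: either there is an infinite subsequence $n_k$ with $h-\alpha_{n_k}f\in\cC$, in which case axiom (4) applied to this subsequence (with the limit $\alpha$) directly yields $h-\alpha f\in\cC\cup\{0\}$; or $h-\alpha_n f=0$ for all sufficiently large $n$, whence for any two such indices $n\neq n'$ we get $(\alpha_n-\alpha_{n'})f=0$, so either $f=0$ (and then $h=0$, giving $h-\alpha f=0$) or $\alpha_n$ is eventually constant equal to $\alpha$, so $h-\alpha f=0\in\cC\cup\{0\}$. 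I do not foresee a genuine obstacle in the argument; the main subtlety is simply keeping track of the ``$\cup\{0\}$'' alternative in each step so that the strict cone axioms can be invoked, since axioms (1),(2),(4) are stated for elements of $\cC$ itself rather than for $\cC\cup\{0\}$.
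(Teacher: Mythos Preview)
Your proof is correct; each step properly handles the $\cup\{0\}$ case and invokes the appropriate cone axiom. The paper does not prove this lemma at all---it is stated as a citation from \cite{LSV} and no proof is given---so there is nothing to compare against beyond noting that your direct verification is sound.
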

The Hilbert metric on $\cC$ is given by the following definition.
\begin{definition}
Define a distance $\Ta(f,h)$ by 
\begin{align*}
\Ta(f,h):=\log\frac{\bt(f,h)}{\al(f,h)},
\end{align*}
where 
\begin{align*}
\al(f,h):=\sup\set{a>0: af\leq h} 
\quad\text{ and }\quad
\bt(f,h):=\inf\set{b>0: bf\geq h}.
\end{align*}
\end{definition}
Note that $\Ta$ is a pseudo-metric as two elements in the cone may be at an infinite distance from each other. Furthermore, $\Ta$ is a projective metric because any two proportional elements must be zero distance from each other. The next theorem, which is due to Birkhoff \cite{birkhoff_lattice_1940}, shows that every positive linear operator that preserves the cone is a contraction provided that the diameter of the image is finite. 
\begin{theorem}[\cite{birkhoff_lattice_1940}]\label{thm: cone distance contraction}
Let $\cV_1$ and $\cV_2$ be vector spaces with convex cones $\cC_1\sub\cV_1$ and $\cC_2\sub\cV_2$ and a positive linear operator $\cL:\cV_1\to\cV_2$ such that $\cL(\cC_1)\sub \cC_2$. If $\Ta_i$ denotes the Hilbert metric on the cone $\cC_i$ and if 
$$
\Dl=\sup_{f,h\in\cC_1}\Ta_2(\cL f,\cL h),
$$
then 
$$
\Ta_2(\cL f,\cL h)\leq \tanh\lt(\frac{\Dl}{4}\rt)\Ta_1(f,h)
$$
for all $f,h\in\cC_1$.
\end{theorem}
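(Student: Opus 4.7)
The plan is to follow Birkhoff's classical argument, which is essentially a two-step procedure: (i) use the assumption $\cL(\cC_1)\sub\cC_2$ together with finite diameter $\Dl$ to produce numerical bounds on an auxiliary pair of cone elements, and then (ii) translate these bounds back into a bound on $\Ta_2(\cL f,\cL h)$ via an elementary optimization. I may freely assume $\Ta_1(f,h)<\infty$ (otherwise the inequality is vacuous) and that $f,h$ are not proportional.

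First I would set $\al=\al(f,h)$ and $\bt=\bt(f,h)$, so that by definition $h-\al f,\ \bt f-h\in\cC_1\cup\{0\}$. Applying the positive linear operator $\cL$ and using linearity gives that $\cL h-\al\cL f$ and $\bt\cL f-\cL h$ lie in $\cC_2\cup\{0\}$. Crucially, each is the image under $\cL$ of an element of $\cC_1$, hence lies in $\cL(\cC_1)$, so by the definition of the diameter,
\begin{equation*}
\Ta_2(\cL h-\al\cL f,\ \bt\cL f-\cL h)\leq \Dl.
\end{equation*}
Unpacking this via the $\al,\bt$ description of the Hilbert metric produces, for any prescribed margin, positive scalars $s\le t$ with $t/s$ arbitrarily close to $e^{\Dl}$ such that $s(\cL h-\al\cL f)\leq \bt\cL f-\cL h\leq t(\cL h-\al\cL f)$.

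Next, I would rearrange each of these two inequalities to isolate $\cL h$ against a scalar multiple of $\cL f$: the right-hand inequality yields $\cL h\leq \frac{\bt+s\al}{1+s}\cL f$, and the left-hand inequality yields $\cL h\geq \frac{\bt+t\al}{1+t}\cL f$. Therefore
\begin{equation*}
e^{\Ta_2(\cL f,\cL h)}\;=\;\frac{\bt(\cL f,\cL h)}{\al(\cL f,\cL h)}\;\leq\;\frac{(1+t)(\bt+s\al)}{(1+s)(\bt+t\al)}\;=\;\frac{(1+t)(r+s)}{(1+s)(r+t)},
\end{equation*}
where $r:=\bt/\al=e^{\Ta_1(f,h)}$. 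The constraint from step one is $t/s\leq e^{\Dl}$, so maximizing the right-hand side in $s,t$ subject to this constraint will produce the desired bound.

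The main obstacle, and the part that requires actual calculation rather than formal manipulation, is the optimization in step three. Writing $v=t$ and $u=s$ with the extremal ratio $v=e^{\Dl}u$, setting the logarithmic derivative in $u$ to zero yields (after the cross-terms cancel) the single equation $r=e^{\Dl}u^2$, so the optimum occurs at $u=\sqrt{r}\,e^{-\Dl/2}$ and $v=\sqrt{r}\,e^{\Dl/2}$. Substituting back and simplifying reveals that the maximum equals $\bigl(\tfrac{\sqrt{r}\,e^{\Dl/4}+e^{-\Dl/4}}{\sqrt{r}\,e^{-\Dl/4}+e^{\Dl/4}}\bigr)^{2}$, which after taking logarithms and using standard hyperbolic identities gives exactly $\Ta_2(\cL f,\cL h)\leq \tanh(\Dl/4)\,\Ta_1(f,h)$. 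One should also check the degenerate cases (when $\cL h-\al\cL f$ or $\bt\cL f-\cL h$ is zero, when $\al=\bt$, or when $\Dl=\infty$) separately, but each reduces to a triviality. I would close by remarking that the contraction factor $\tanh(\Dl/4)<1$ is optimal, although optimality is not needed for the sequel.
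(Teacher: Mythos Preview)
The paper does not give a proof of this theorem; it is simply cited as a classical result of Birkhoff, so there is no ``paper's own proof'' to compare against. Your proposal is a correct sketch of the standard argument (as found, e.g., in Liverani's or Viana's expositions): reduce to the pair $\cL h-\al\cL f,\ \bt\cL f-\cL h\in\cL(\cC_1)$, use the diameter bound, and optimize the resulting cross-ratio. A couple of minor points you might tighten if you write this out: (i) you should note that $F(s,t)$ is increasing in $t$ and decreasing in $s$, so the supremum under the constraint $t/s\le e^{\Dl}$ is attained on the boundary $t=e^{\Dl}s$, justifying the reduction to one variable; and (ii) after reaching $\log F = 2\log\bigl(\cosh((d+\Dl)/4)/\cosh((d-\Dl)/4)\bigr)$ with $d=\Ta_1(f,h)$, the passage to $\tanh(\Dl/4)\cdot d$ is an inequality (the ratio is maximized as $d\to 0$), not an identity, so ``standard hyperbolic identities'' slightly undersells the remaining step.
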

Note that it is not clear whether $(\cC,\Ta)$ is complete.
The following lemma of \cite{LSV} addresses this problem by linking the metric $\Ta$ with a suitable norm $\norm{\spot}$ on $\cV$.
\begin{lemma}[\cite{LSV}, Lemma 2.2]\label{lem: birkhoff cone contraction}
Let $\norm{\spot}$ be a norm on $\cV$ such that for all $f,h\in\cV$ if $-f\leq h\leq f$, then $\norm{h}\leq \norm{f}$, and let  $\vrho:\cC\to (0,\infty)$ be a homogeneous and order-preserving function, which means that for all $f,h\in\cC$ with $f\leq h$ and all $\lm>0$ we have 
$$
\vrho(\lm f)=\lm\vrho(f) \qquad\text{ and }\qquad \vrho(f)\leq \vrho(h).
$$
Then, for all $f,h\in\cC$ $\vrho(f)=\vrho(h)>0$ implies that 
$$
\norm{f-h}\leq \lt(e^{\Ta(f,h)}-1\rt)\min\set{\norm{f},\norm{h} }.
$$
\end{lemma}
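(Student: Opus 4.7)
The plan is to translate the Hilbert-metric estimate into a norm estimate via sandwich inequalities, using the cone's partial order to turn scalar multiples of $f$ into bounds on $f-h$, and then apply the monotonicity property of $\|\cdot\|$.

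First I would reduce to the non-trivial case $\Theta(f,h)<\infty$ (otherwise the bound is vacuous), which means $0<\alpha\leq \beta<\infty$ where $\alpha:=\alpha(f,h)$ and $\beta:=\beta(f,h)$. Property (iv) of the partial order (the closedness under scalar limits built into the definition of a convex cone) ensures that the sup and inf are attained, so $\alpha f\leq h\leq \beta f$. Next, I would apply $\vrho$ to both of these inequalities. Since $\vrho$ is homogeneous and order-preserving, $\alpha f\leq h$ gives $\alpha\vrho(f)\leq \vrho(h)$ and $h\leq \beta f$ gives $\vrho(h)\leq \beta\vrho(f)$; combined with the hypothesis $\vrho(f)=\vrho(h)>0$ this yields $\alpha\leq 1\leq \beta$. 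This is the key structural fact that lets the argument bridge the ratio $\beta/\alpha$ (which appears in $\Theta$) and the differences $1-\alpha$ and $\beta-1$ (which appear in the norm bound).

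Subtracting $f$ throughout $\alpha f\leq h\leq \beta f$ gives $(\alpha-1)f\leq h-f\leq (\beta-1)f$, equivalently
\begin{equation*}
-(\beta-1)f\ \leq\ f-h\ \leq\ (1-\alpha)f.
\end{equation*}
Setting $c:=\max\{\beta-1,\,1-\alpha\}\geq 0$, this strengthens to $-cf\leq f-h\leq cf$. Applying the hypothesis on $\|\cdot\|$ to the pair $(cf,\,f-h)$ (together with positive homogeneity of the norm) then yields $\|f-h\|\leq c\|f\|$.

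The remaining task is to show $c\leq e^{\Theta(f,h)}-1=\beta/\alpha-1$. From $\alpha\leq 1$ one has $\beta-1\leq \beta/\alpha-1$, and from $\alpha\leq 1\leq \beta$ one has $1-\alpha\leq 1/\alpha-1\leq \beta/\alpha-1$, so indeed $c\leq \beta/\alpha-1$. This gives $\|f-h\|\leq (e^{\Theta(f,h)}-1)\|f\|$. By the symmetry $\Theta(f,h)=\Theta(h,f)$, running the same argument with the roles of $f$ and $h$ interchanged yields $\|f-h\|\leq (e^{\Theta(f,h)}-1)\|h\|$, and taking the minimum proves the claim. The only step that requires genuine care is the comparison $\max\{\beta-1,1-\alpha\}\leq \beta/\alpha-1$, but this is elementary given $\alpha\leq 1\leq \beta$; there is no real obstacle beyond the bookkeeping of inequalities in the cone.
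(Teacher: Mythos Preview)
Your proof is correct and is essentially the standard argument from \cite{LSV}: use closedness of the cone order to get $\alpha f\le h\le\beta f$, apply $\vrho$ to pin down $\alpha\le 1\le\beta$, sandwich $f-h$ between $\pm cf$ with $c=\max\{\beta-1,1-\alpha\}$, and bound $c$ by $\beta/\alpha-1$. The paper itself does not give a proof of this lemma but simply cites \cite{LSV}, so there is nothing further to compare.
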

\begin{remark}
Note that the choice $\vrho(\spot)=\norm{\spot}$ satisfies the hypothesis, however from this moment on we shall make the choice of $\vrho=\Lm_{\om}$.
\end{remark}
\begin{definition}\label{def: + and a cones}

For each $a>0$ and $\om\in\Om$ let 
\begin{align}\label{eq: def of a cones}
\sC_{\om,a}:=\set{f\in\BV(I): f\geq 0,\, \var(f)\leq a\Lm_{\om}(f)}.
\end{align}\index{$\sC_{\om,a}$}
To see that this cone is non-empty, we note that the function $f+c\in\sC_{\om,a}$ for $f\in\BV(I)$ and $c\geq a^{-1}\var(f)-\inf_{X_\om}f$. We also define the cone 
\begin{align*}
\sC_{\om,+}:=\set{f\in\BV(I): f\geq 0}.
\end{align*}

\end{definition} 
Let $\Ta_{\om,a}$ and $\Ta_{\om,+}$\index{$\Ta_{\om,a}$}\index{$\Ta_{\om,+}$} denote the Hilbert metrics induced on the respective cones $\sC_{\om,a}$ and $\sC_{\om,+}$\index{$\sC_{\om,+}$}. 
For each $\om\in\Om$, $a>0$, and any set $Y\sub \sC_{\om,a}$ we let
\begin{align*}
\diam_{\om,a}(Y):=\sup_{x,y\in Y} \Ta_{\om,a}(x,y)
\end{align*}\index{$\diam_{\om,a}(Y)$}
and 
\begin{align*}
\diam_{\om,+}(Y):=\sup_{x,y\in Y} \Ta_{\om,+}(x,y)
\end{align*}\index{$\diam_{\om,+}(Y)$}
denote the diameter of $Y$ in the respective cones $\sC_{\om,a}$ and $\sC_{\om,+}$ with respect to the respective metrics $\Ta_{\om,a}$ and $\Ta_{\om,+}$.
The following lemma collects together the main properties of these metrics. 
\begin{lemma}[\cite{LSV}, Lemmas 4.2, 4.3, 4.5]\label{lem: summary of cone dist prop}
For $f, h\in\sC_{\om,+}$ the $\Ta_{\om,+}$ distance between $f,h$ is given by 
\begin{align*}
\Ta_{\om,+}(f,h)=\log\sup_{x,y\in X_\om}\frac{f(y)h(x)}{f(x)h(y)}.
\end{align*}
If $f, h\in\sC_{\om,a}$, then
\begin{align}\label{eq: Ta+ leq Ta}
\Ta_{\om,+}(f,h)\leq \Ta_{\om,a}(f,h), 
\end{align}
and if $f\in\sC_{\om,\eta a}$, for $\eta\in (0,1)$, we then have
\begin{align*}
\Ta_{\om,a}(\ind,f)\leq \log\frac{\norm{f}_\infty+\eta\Lm_\om(f)}{\min\set{\inf_{X_\om} f, (1-\eta)\Lm_\om(f)}}.
\end{align*}
\end{lemma}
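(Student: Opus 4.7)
The strategy is a direct unwinding of the definitions of $\al$ and $\bt$ for each cone, together with the elementary algebraic properties of $\Lm_\om$ listed in Observation~\ref{obs: properties of Lm}; no dynamical input is required. I anticipate this to be mostly a bookkeeping exercise rather than a substantive argument.

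For Part (1), the partial order on $\sC_{\om,+}$ is pointwise nonnegativity, so $cf\le h$ is equivalent to $c\le h(x)/f(x)$ at every $x$ on the effective support of the cone elements. This immediately gives $\al(f,h)=\inf_{x\in X_\om} h(x)/f(x)$ and $\bt(f,h)=\sup_{x\in X_\om} h(x)/f(x)$. Forming the ratio,
\begin{align*}
\frac{\bt(f,h)}{\al(f,h)}=\sup_{x,y\in X_\om}\frac{f(y)h(x)}{f(x)h(y)},
\end{align*}
and taking logarithms yields the claimed identity.

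For Part (2), the inclusion $\sC_{\om,a}\subset\sC_{\om,+}$ is immediate from the cone definitions. Consequently the order $\le_a$ is a restriction of $\le_+$: if $h-cf\in\sC_{\om,a}$, then in particular $h-cf\ge 0$, so $h-cf\in\sC_{\om,+}\cup\{0\}$. Therefore $\{c>0: cf\le_a h\}\subset\{c>0: cf\le_+ h\}$ and dually for the corresponding $\bt$-sets, which gives $\al_{\om,a}(f,h)\le\al_{\om,+}(f,h)$ and $\bt_{\om,a}(f,h)\ge\bt_{\om,+}(f,h)$. Taking the ratio produces $\Ta_{\om,+}(f,h)\le\Ta_{\om,a}(f,h)$.

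For Part (3), I would compute $\al_{\om,a}(\ind,f)$ and $\bt_{\om,a}(\ind,f)$ explicitly, using properties \ref{Lm prop1}, \ref{Lm prop4}, and \ref{Lm prop6} of $\Lm_\om$ to rewrite $\Lm_\om(b\ind-f)=b-\Lm_\om(f)$ and $\Lm_\om(f-c\ind)=\Lm_\om(f)-c$. Membership $b\ind-f\in\sC_{\om,a}$ requires (i) $b\ge\norm{f}_\infty$ (pointwise nonnegativity) and (ii) $\var(f)\le a(b-\Lm_\om(f))$, i.e.\ $b\ge\Lm_\om(f)+\var(f)/a$. The hypothesis $f\in\sC_{\om,\eta a}$ gives $\var(f)/a\le\eta\Lm_\om(f)$, and combined with $\Lm_\om(f)\le\norm{f}_\infty$ from \eqref{eq: Lm bdd in sup norm}, the choice $b=\norm{f}_\infty+\eta\Lm_\om(f)$ simultaneously dominates both constraints, giving $\bt_{\om,a}(\ind,f)\le\norm{f}_\infty+\eta\Lm_\om(f)$. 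Similarly, $f-c\ind\in\sC_{\om,a}$ demands $c\le\inf_{X_\om} f$ together with $c\le\Lm_\om(f)-\var(f)/a$, and the contraction hypothesis bounds the second term below by $(1-\eta)\Lm_\om(f)$, whence $\al_{\om,a}(\ind,f)\ge\min\{\inf_{X_\om}f,\,(1-\eta)\Lm_\om(f)\}$. Taking $\log(\bt/\al)$ then produces the stated diameter bound.

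The only mildly delicate point, and hence the step I would write out most carefully, is the verification in Part (3) that $\norm{f}_\infty+\eta\Lm_\om(f)$ simultaneously dominates both $\norm{f}_\infty$ and $\Lm_\om(f)+\var(f)/a$: the first is obvious, and the second rearranges to $\norm{f}_\infty-(1-\eta)\Lm_\om(f)\ge\var(f)/a$, which in turn follows from $\norm{f}_\infty\ge\Lm_\om(f)$ combined with $\var(f)/a\le\eta\Lm_\om(f)$. Apart from this bookkeeping there is no serious obstacle, so the main care is in making sure the ordering $\le$ and the functional $\Lm_\om$ interact cleanly under the subtraction of a multiple of $\ind$.
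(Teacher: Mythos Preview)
The paper does not prove this lemma; it simply cites \cite{LSV}, Lemmas 4.2, 4.3, 4.5. Your argument is correct and is essentially the standard computation from that reference: unwinding the definitions of $\al$ and $\bt$ for the two cones and using the algebraic properties of $\Lm_\om$ from Observation~\ref{obs: properties of Lm}. There is nothing to compare against.
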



\section{Lasota-Yorke inequalities}\label{sec: LY ineq}
The main goal of this section is to prove a Lasota-Yorke type inequality. We adopt the strategy of \cite{AFGTV20}, where we first prove a less-refined Lasota-Yorke inequality with (random) coefficients that behave in a difficult manner, and then, using the first inequality, prove a second inequality with measurable random coefficients and uniform decay on the variation as in \cite{buzzi_exponential_1999}. 

We now prove a Lasota-Yorke type inequality following the approach of \cite{LMD} utilizing the ``good'' and ``bad'' interval partitions defined in \eqref{eq: Z_*}-\eqref{eq: Z_g}. 
\begin{lemma}\label{ly ineq}	
For all $\om\in\Om$, all $f\in\BV(I)$, and all $n\in\NN$ there exist positive, measurable constants $A_{\om}^{(n)}$ and $B_{\om}^{(n)}$ such that 
\begin{align*}
\var(\cL_{\om}^nf)\leq A_{\om}^{(n)}\var(f)+B_{\om}^{(n)}\Lm_{\om}(|f|), 
\end{align*}
where
\begin{align*}
A_{\om}^{(n)}:=(9+16\xi_{\om}^{(n)})\norm{g_{\om}^{(n)}}_{\infty}
\end{align*}\index{$A_{\om}^{(n)}$}
and 
\begin{align*}
B_{\om}^{(n)}:=8(2\xi_{\om}^{(n)}+1)\norm{g_{\om}^{(n)}}_{\infty}
\dl_{\om,n}^{-1}.
\end{align*}\index{$B_{\om}^{(n)}$}
\end{lemma}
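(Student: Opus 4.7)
The plan is to follow the strategy of \cite{LMD}, adapted to the random setting along the lines of \cite{AFGTV20}, by decomposing $\cL_\om^n f$ branch-by-branch over the refined partition $\widehat{\cZ}_\om^{(n)}(\cA)$ for a partition $\cA\in\sA_\om^{(n)}$ whose existence guarantees the crucial bound $\var_{A_i}(g_\om^{(n)})\leq 2\|g_\om^{(n)}\|_\infty$. Branches outside $X_{\om,n-1}$ contribute nothing to $\cL_\om^n f$, so only the branches indexed by $\cZ_{\om,*}^{(n)}=\cZ_{\om,g}^{(n)}\cup\cZ_{\om,b}^{(n)}$ appear in $\cL_\om^n f = \sum_{Z}(f\cdot g_\om^{(n)})\circ T_{\om,Z}^{-n}\cdot\ind_{T_\om^n(Z)}$.

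First I would carry out the standard branchwise variation estimate: for each $Z\in\cZ_{\om,*}^{(n)}$,
\begin{align*}
\var\bigl((f g_\om^{(n)})\circ T_{\om,Z}^{-n}\ind_{T_\om^n(Z)}\bigr)
\leq \var_Z(f g_\om^{(n)}) + 2\sup_Z|f g_\om^{(n)}|,
\end{align*}
where the $2\sup_Z|fg_\om^{(n)}|$ term accounts for the two boundary jumps. Expanding $\var_Z(f g_\om^{(n)})\le \|g_\om^{(n)}\|_\infty\var_Z(f) + \sup_Z|f|\,\var_Z(g_\om^{(n)})$ and using $\var_Z(g_\om^{(n)})\leq 2\|g_\om^{(n)}\|_\infty$ (because $\widehat{\cZ}_\om^{(n)}(\cA)$ refines $\cA$), one lands at the estimate
\begin{align*}
\var(\cL_\om^n f) \leq \|g_\om^{(n)}\|_\infty\sum_{Z\in\cZ_{\om,*}^{(n)}}\bigl(\var_Z(f)+4\sup_Z|f|\bigr).
\end{align*}
Summing $\var_Z(f)$ over all $Z$ yields at most $\var(f)$, which gives the $\var(f)$ piece of the inequality with a harmless constant absorbed into the coefficient $A_\om^{(n)}$.

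The remaining task, and the main obstacle, is to convert $\sum_Z \sup_Z|f|$ into something controlled by $\var(f)$ plus $\Lm_\om(|f|)$. On each good interval $Z\in\cZ_{\om,g}^{(n)}$ one has $\Lm_\om(\ind_Z)\geq \dl_{\om,n}>0$ by \eqref{eq: def of dl_om,n}; then using monotonicity of $\Lm_\om$ (Observation \ref{obs: properties of Lm}\eqref{Lm prop3}), one bounds
\begin{align*}
\sup_Z|f| \leq \inf_Z|f| + \var_Z(f)
\leq \dl_{\om,n}^{-1}\Lm_\om(|f|\ind_Z)+\var_Z(f)
\leq \dl_{\om,n}^{-1}\Lm_\om(|f|)+\var_Z(f).
\end{align*}
Summing this bound over good intervals produces the desired $\dl_{\om,n}^{-1}\Lm_\om(|f|)$ term (with multiplicity at worst proportional to the number of maximal contiguous good blocks, which is dominated by $\xi_\om^{(n)}+1$ via the contiguity counting), plus another copy of $\var(f)$.

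For bad intervals $Z\in\cZ_{\om,b}^{(n)}$, where $\Lm_\om(\ind_Z)=0$ forbids this direct inequality, the trick (the heart of the \cite{LMD} argument) is to handle them in maximal contiguous runs. By Definition \ref{def: contiguous}, such a run sits between two good intervals (or a boundary), and one replaces $\sup_Z|f|$ on each bad $Z$ by $\sup_{Z'}|f|$ for a neighbouring good $Z'$ together with the variation of $f$ across the intervening gap. Since a maximal contiguous run of bad intervals has length at most $\xi_\om^{(n)}$, this produces a factor linear in $\xi_\om^{(n)}$ in both $A_\om^{(n)}$ and $B_\om^{(n)}$; the bad variation contributions telescope to at most $\var(f)$ over the whole run, and the good-interval suprema are then handled as above. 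Careful bookkeeping of constants -- $4$ from the branch boundary terms, factors of $2$ from the split $\sup\le\inf+\var$, and the $\xi_\om^{(n)}$ multiplicities from contiguous bad runs -- yields exactly $A_\om^{(n)}=(9+16\xi_\om^{(n)})\|g_\om^{(n)}\|_\infty$ and $B_\om^{(n)}=8(2\xi_\om^{(n)}+1)\|g_\om^{(n)}\|_\infty\dl_{\om,n}^{-1}$. Measurability of these coefficients in $\om$ follows from measurability of $\xi_\om^{(n)}$ (via \eqref{cond Q2}), $\dl_{\om,n}$ (via \eqref{cond Q3}), and $\|g_\om^{(n)}\|_\infty$ (from \eqref{A2} and \eqref{cond M1}).
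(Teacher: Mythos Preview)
Your overall strategy is correct and closely parallels the paper's, but there is a genuine gap in how you sum over good intervals. You correctly derive the intermediate bound $\inf_Z|f|\le \dl_{\om,n}^{-1}\Lm_\om(|f|\ind_Z)$ for each $Z\in\cZ_{\om,g}^{(n)}$, but then you pass immediately to $\dl_{\om,n}^{-1}\Lm_\om(|f|)$ per interval and attempt to bound the resulting multiplicity by $\xi_\om^{(n)}+1$. This is false: $\xi_\om^{(n)}$ counts contiguous \emph{bad} intervals and says nothing about the number of good intervals, which can be arbitrarily large even when $\xi_\om^{(n)}=1$ (think of alternating good/bad). With your argument as written, the $B_\om^{(n)}$ coefficient would carry a factor of $\#\cZ_{\om,g}^{(n)}$, which you have no control over.

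The fix is to stay at the intermediate level and use superadditivity of $\Lm_\om$ (Observation~\ref{obs: properties of Lm}\eqref{Lm prop5}): since the good intervals are disjoint,
\[
\sum_{Z\in\cZ_{\om,g}^{(n)}}\Lm_\om(|f|\ind_Z)\le \Lm_\om\Bigl(\sum_{Z\in\cZ_{\om,g}^{(n)}}|f|\ind_Z\Bigr)\le \Lm_\om(|f|),
\]
the last step by monotonicity. The paper achieves the same end by working at the operator level: it picks $N_{\om,n}$ so that $\cL_\om^{N_{\om,n}}(\ind_Z)/\cL_\om^{N_{\om,n}}(\ind_\om)\ge \dl_{\om,n}$ for all good $Z$, uses linearity of $\cL_\om^{N_{\om,n}}$ to sum $\sum_Z\cL_\om^{N_{\om,n}}(|f|\ind_Z)\le \cL_\om^{N_{\om,n}}(|f|)$, and only then takes the infimum to land on $\Lm_\om(|f|)$. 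Either route gives $\sum_{Z\in\cZ_{\om,g}^{(n)}}\inf_Z|f|\le \dl_{\om,n}^{-1}\Lm_\om(|f|)$ with no multiplicity issue; combined with the bad-interval contiguity bound $\sum_{b}\inf_Z|f|\le 2\xi_\om^{(n)}\bigl(\var(f)+\sum_{g}\inf_Z|f|\bigr)$, this yields the stated constants.
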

\begin{proof}
Since $\cL_{\om}^n(f)=\cL_{\om,0}^n(f\cdot\hat X_{\om,n-1})$, if $Z\in\widehat\cZ_\om^{(n)}(\cA)\bs\cZ_{\om,*}^{(n)}$, then  $Z\cap X_{\om,n-1}=\emptyset$, and thus, we have $\cL_{\om}^n(f\ind_Z)=0$ for each $f\in\BV(I)$. Thus, considering only intervals $Z$ in $\cZ_{\om,*}^{(n)}$, we are able to write 
\begin{align}\label{eq: ly ineq 1a}
\cL_{\om}^nf=\sum_{Z\in\cZ_{\om,*}^{(n)}}(\ind_Z f g_{\om}^{(n)})\circ T_{\om,Z}^{-n}
\end{align} 
where 
$$	
T_{\om,Z}^{-n}:T_\om^n(I_{\om})\to Z
$$ 
is the inverse branch which takes $T_\om^n(x)$ to $x$ for each $x\in Z$. Now, since 
$$
\ind_Z\circ T_{\om,Z}^{-n}=\ind_{T_\om^n(Z)},
$$
we can rewrite \eqref{eq: ly ineq 1a} as 
\begin{align}\label{eq: ly ineq 2}
\cL_{\om}^nf=\sum_{Z\in\cZ_{\om,*}^{(n)}}\ind_{T_\om^n(Z)} \lt((f g_{\om}^{(n)})\circ T_{\om,Z}^{-n}\rt).
\end{align}
So,
\begin{align}\label{var tr op sum}
\var(\cL_{\om}^nf)\leq \sum_{Z\in\cZ_{\om,*}^{(n)}}\var\lt(\ind_{T_\om^n(Z)} \lt((f g_{\om}^{(n)})\circ T_{\om,Z}^{-n}\rt)\rt).
\end{align}
Now for each $Z\in\cZ_{\om,*}^{(n)}$, using \eqref{eq: def A partitiona}, we have 
\begin{align}
&\var\lt(\ind_{T_\om^n(Z)} \lt((f g_{\om}^{(n)})\circ T_{\om,Z}^{-n}\rt)\rt)
\leq \var_Z(f g_{\om}^{(n)})+2\sup_Z\absval{f g_{\om}^{(n)}}
\nonumber\\
&\qquad\qquad\leq 3\var_Z(f g_{\om}^{(n)})+2\inf_Z\absval{f g_{\om}^{(n)}}
\nonumber\\
&\qquad\qquad\leq 3\norm{g_{\om}^{(n)}}_{\infty}\var_Z(f)+3\sup_Z|f|\var_Z(g_{\om}^{(n)})+2\norm{g_{\om}^{(n)}}_{\infty}\inf_Z|f|
\nonumber\\
&\qquad\qquad\leq 
3\norm{g_{\om}^{(n)}}_{\infty}\var_Z(f)+6\norm{g_{\om}^{(n)}}_{\infty}\sup_Z|f|+2\norm{g_{\om}^{(n)}}_{\infty}\inf_Z|f|
\nonumber\\
&\qquad\qquad\leq 
9\norm{g_{\om}^{(n)}}_{\infty}\var_Z(f)+8\norm{g_{\om}^{(n)}}_{\infty}\inf_Z|f|.
\label{var ineq over partition}
\end{align}
Now, using \eqref{var ineq over partition}, we may further estimate \eqref{var tr op sum} as
\begin{align}
\var(\cL_{\om}^nf)
&\leq 
\sum_{Z\in\cZ_{\om,*}^{(n)}} \lt(9\norm{g_{\om}^{(n)}}_{\infty}\var_Z(f)+8\norm{g_{\om}^{(n)}}_{\infty}\inf_Z|f|\rt)
\nonumber\\
&\leq 
9\norm{g_{\om}^{(n)}}_{\infty}\var(f)+8\norm{g_{\om}^{(n)}}_{\infty}
\lt(\sum_{Z\in\cZ_{\om,g}^{(n)}}\inf_Z|f|+\sum_{Z\in\cZ_{\om,b}^{(n)}}\inf_Z|f|\rt).
\label{sum over *partition}
\end{align}
In order to investigate each of the two sums in the line above, we first note that as $\cZ_{\om,g}^{(n)}$ is finite then, by definition, there exists a constant $\dl_{\om,n}>0$ (defined by \eqref{eq: def of dl_om,n}) such that 
\begin{align*}
\inf_{Z\in\cZ_{\om,g}^{(n)}}\Lm_{\om}(\ind_Z)\geq 2\dl_{\om,n}>0.
\end{align*}
So, we may choose $N_{\om,n}\in\NN$ such that for $x\in D_{\sg^{N_{\om,n}}(\om),N_{\om,n}}$ we have 
\begin{align}\label{eq: ratio bigger than bt_om,n}
\inf_{Z\in\cZ_{\om,g}^{(n)}}\frac{\cL_{\om}^{N_{\om,n}}(\ind_Z)(x)}{\cL_{\om}^{N_{\om,n}}(\ind_\om)(x)}\geq \dl_{\om,n}.
\end{align}
Note that since this ratio is increasing we have that \eqref{eq: ratio bigger than bt_om,n} holds for all $\~N\geq N_{\om,n}$.
Then for each $x\in D_{\sg^{N_{\om,n}}(\om),N_{\om,n}}$ and $Z\in\cZ_{\om,g}^{(n)}$ we have
\begin{align*}
\cL_{\om}^{N_{\om,n}}(|f|\ind_Z)(x)&\geq \inf_Z|f|\cL_{\om}^{N_{\om,n}}(\ind_Z)(x)
\geq \inf_Z|f|\dl_{\om,n}\cL_{\om}^{N_{\om,n}}(\ind_\om)(x).
\end{align*}
In particular, for each $x\in D_{\sg^{N_{\om,n}}(\om),N_{\om,n}}$, we see that 
\begin{align}
\sum_{Z\in\cZ_{\om,g}^{(n)}}\inf_Z|f|
&\leq \dl_{\om,n}^{-1}\sum_{Z\in\cZ_{\om,g}^{(n)}}\frac{\cL_{\om}^{N_{\om,n}}(|f|\ind_Z)(x)}{\cL_{\om}^{N_{\om,n}}(\ind_\om)(x)}
\leq 
\dl_{\om,n}^{-1}\frac{\cL_{\om}^{N_{\om,n}}(|f|)(x)}{\cL_{\om}^{N_{\om,n}}(\ind_\om)(x)}.\label{est of sum over good}
\end{align}
We are now interested in finding appropriate upper bounds for the sum 
\begin{align*}
\sum_{Z\in\cZ_{\om,b}^{(n)}}\inf_Z|f|.
\end{align*}
However, we must first be able to associate each of the elements of $\cZ_{\om,b}^{(n)}$ with one of the elements of $\cZ_{\om,g}^{(n)}$. To that end, let $Z_*$ and $Z^*$ denote the elements of $\cZ_{\om,*}^{(n)}$ that are the furthest to the left and the right respectively.  
Now, enumerate each of the elements of $\cZ_{\om,g}^{(n)}$, $Z_1,\dots, Z_k$ (clearly $k$ depends on $\om$, $n$, and $\cA$), such that $Z_{j+1}$ is to the right of $Z_j$ for $j=1,\dots, k-1$.
Given $Z_j\in\cZ_{\om,g}^{(n)}$ ($1\leq j\leq k$), with $Z_j\neq Z^*$  let $J_{\om,+}(Z_j)$ be the union of all contiguous elements $Z\in\cZ_{\om,b}^{(n)}$ which are to the right of $Z_j$ and also to the left of $Z_{j+1}$. In other words, $J_{\om,+}(Z_j)$\index{$J_{\om,+}(Z)$} is the union of all elements of $\cZ_{\om,b}^{(n)}$ between $Z_j$ and $Z_{j+1}$. Similarly, for $Z_j\neq Z_*$, we define $J_{\om,-}(Z_j)$\index{$J_{\om,-}(Z)$} be the union of all contiguous elements $Z\in\cZ_{\om,b}^{(n)}$ which are to the left of $Z_j$ and also to the right of $Z_{j-1}$. Now, we note that our assumption \eqref{cond Q1} implies that each $J_{\om,-}(Z)$ and $J_{\om,+}(Z)$ ($Z\in \cZ_{\om,g}^{(n)}$) is the union of at most $\xi_{\om}^n$ contiguous elements of $\cZ_{\om,b}^{(n)}$.  
For $Z\in\cZ_{\om,g}^{(n)}$ let  
$$
J_{\om,-}^*(Z)=Z\cup J_{\om,-}(Z) 
\qquad\text{ and }\qquad
J_{\om,+}^*(Z)=Z\cup J_{\om,+}(Z). 
$$
Then for $W\sub J_{\om,-}^*(Z)$ we have 
\begin{align}\label{inf over union BV ineq}
\inf_W |f|\leq \inf_{Z}|f|+\var_{J_{\om,-}^*(Z)}(f).
\end{align}
We obtain a similar inequality for $W\sub J_{\om,+}^*(Z)$. 
We now consider the following two cases. 
\begin{enumerate}
\item[\mylabel{Case 1:}{Case 1}] At least one of the intervals $Z_*$ and $Z^*$ is an element of $\cZ_{\om,g}^{(n)}$. 
\item[\mylabel{Case 2:}{Case 2}] Neither of the intervals $Z_*$, $Z^*$ is an element of $\cZ_{\om,g}^{(n)}$.
\end{enumerate}
If we are in the first case, we assume without loss of generality that $Z_1=Z_*$, and thus every element $Z\in\cZ_{\om,b}^{(n)}$ is contained in exactly one union $J_{\om,+}(Z_j)$ for some $Z_j\in\cZ_{\om,g}^{(n)}$ for some $1\leq j\leq k$. If $Z_1\neq Z_*$ and instead we have that $Z_k=Z^*$ we could simply replace $J_{\om,+}(Z_j)$ with $J_{\om,-}(Z_j)$ in the previous statement. In view of \eqref{inf over union BV ineq}, Case 1 leads to the conclusion that  
\begin{align*}
\sum_{Z\in\cZ_{\om,b}^{(n)}}\inf_Z|f| \leq 	\xi_{\om}^{(n)}\lt(\sum_{Z\in\cZ_{\om,g}^{(n)}}\inf_Z|f|+\var_{J_{\om,-}^*(Z)}(f)\rt).
\end{align*}	
If we are instead in the second case, then for each $Z\in\cZ_{\om,b}^{(n)}$ to the left of $Z_k$ there is exactly one $Z_j$, $1\leq j\leq k$, such that $Z\sub J_{\om,-}(Z_j)$. This leaves each of the elements $Z\in\cZ_{\om,b}^{(n)}$ to the right of $Z_k$ uniquely contained in the union $J_{\om,+}(Z_k)$. Thus, Case 2 yields  
\begin{align*}
\sum_{Z\in\cZ_{\om,b}^{(n)}}\inf_Z|f| &\leq 	\xi_{\om}^{(n)}\lt(\inf_{Z_k}|f|+\var_{J_{\om,+}^*(Z_k)}(f)+\sum_{Z\in\cZ_{\om,g}^{(n)}}\inf_Z|f|+\var_{J_{\om,-}^*(Z)}(f)\rt)\\
&\leq 2\xi_{\om}^{(n)}\lt(\var(f)+\sum_{Z\in\cZ_{\om,g}^{(n)}}\inf_Z|f|\rt).
\end{align*}
Hence, either case gives that 
\begin{align}\label{est of sum over bad}
\sum_{Z\in\cZ_{\om,b}^{(n)}}\inf_Z|f| \leq 2\xi_{\om}^{(n)}\lt(\var(f)+\sum_{Z\in\cZ_{\om,g}^{(n)}}\inf_Z|f|\rt).
\end{align}
Inserting \eqref{est of sum over good} and \eqref{est of sum over bad} into \eqref{sum over *partition} gives 
\begin{align*}
\var(\cL_{\om}^nf)
&\leq 9\norm{g_{\om}^{(n)}}_{\infty}\var(f)\\
&\qquad\quad
+8\norm{g_{\om}^{(n)}}_{\infty}\lt(2\xi_{\om}^{(n)}\lt(\var(f)+\sum_{Z\in\cZ_{\om,g}^{(n)}}\inf_Z|f|\rt)+\dl_{\om,n}^{-1}\frac{\cL_{\om}^{N_{\om,n}}(|f|)(x)}{\cL_{\om}^{N_{\om,n}}(\ind_\om)(x)}\rt)
\\
&\leq 
(9+16\xi_{\om}^{(n)})\norm{g_{\om}^{(n)}}_{\infty}\var(f)
+
8(2\xi_{\om}^{(n)}+1)\norm{g_{\om}^{(n)}}_{\infty}
\dl_{\om,n}^{-1}\frac{\cL_{\om}^{N_{\om,n}}(|f|)(x)}{\cL_{\om}^{N_{\om,n}}(\ind_\om)(x)}.
\end{align*}
In view of \eqref{rho^n increasing}, taking the infimum over $x\in D_{\sg^{N_{\om,n}}(\om),N_{\om,n}}$ allows us to replace the ratio $\frac{\cL_{\om}^{N_{\om,n}}(|f|)(x)}{\cL_{\om}^{N_{\om,n}}(\ind_\om)(x)}$ with $\Lm_{\om}(|f|)$, that is, we have
\begin{align*}
&\var(\cL_{\om}^nf)
\leq 
(9+16\xi_{\om}^{(n)})\norm{g_{\om}^{(n)}}_{\infty}\var(f)
+
8(2\xi_{\om}^{(n)}+1)\norm{g_{\om}^{(n)}}_{\infty}
\dl_{\om,n}^{-1}\Lm_{\om}(|f|).
\end{align*}
Setting 
\begin{align}\label{eq: def of A and B in ly ineq}
A_{\om}^{(n)}:=(9+16\xi_{\om}^{(n)})\norm{g_{\om}^{(n)}}_{\infty}
\qquad\text{ and }\qquad
B_{\om}^{(n)}:=8(2\xi_{\om}^{(n)}+1)\norm{g_{\om}^{(n)}}_{\infty}
\dl_{\om,n}^{-1}
\end{align}
finishes the proof.
\end{proof}
\begin{remark}
As a consequence of Lemma~\ref{ly ineq} we have that 
\begin{align}\label{eq: L_om is a weak contraction on C_+}
\cL_\om\lt(\sC_{\om,+}\rt)\sub \sC_{\sg(\om),+},
\end{align}
and thus $\cL_{\om}$ is a weak contraction on $\sC_{\om,+}$. 
\end{remark}
Define the random constants
\begin{align}\label{eq: def of Q and K}
Q_{\om}^{(n)}:=\frac{A_\om^{(n)}}{\rho_{\om}^n}
\quad \text{ and }\quad
K_{\om}^{(n)}:=\frac{B_\om^{(n)}}{\rho_{\om}^n}. 
\end{align}\index{$Q_{\om}^{(n)}$}\index{$K_{\om}^{(n)}$}
In light of our assumption \eqref{cond Q1} on the potential and number of contiguous bad intervals, we see that $Q_\om^{(n)}\to 0$ exponentially quickly for each $\om\in\Om$.

The following proposition now follows from \eqref{eq: log int open weight and tr op}, \eqref{eq: rho log int}, and assumptions \eqref{cond Q2}-\eqref{cond Q3}. 
\begin{proposition}\label{prop: log integr of Q and K}
For each $n\in\NN$, $\log^+ Q_\om^{(n)}, \log K_\om^{(n)}\in L^1_m(\Om)$. 
\end{proposition}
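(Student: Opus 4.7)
The plan is to expand the logarithms of $Q_\om^{(n)}$ and $K_\om^{(n)}$ into sums of terms, each of which is log-integrable by the hypotheses already gathered in the section. I would first record the ingredients: by \eqref{eq: log int open weight and tr op} we have $\log\|g_\om^{(n)}\|_\infty\in L^1(m)$; by \eqref{eq: rho log int} combined with $\sigma$-invariance of $m$, the sum
\begin{equation*}
\log\rho_\om^n=\sum_{j=0}^{n-1}\log\rho_{\sigma^j(\om)}
\end{equation*}
lies in $L^1(m)$; assumption \eqref{cond Q2} gives $\log\xi_\om^{(n)}\in L^1(m)$; and assumption \eqref{cond Q3} gives $\log\delta_{\om,n}\in L^1(m)$.

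For $K_\om^{(n)}$, the plan is to write
\begin{equation*}
\log K_\om^{(n)}=\log 8+\log(2\xi_\om^{(n)}+1)+\log\|g_\om^{(n)}\|_\infty-\log\delta_{\om,n}-\log\rho_\om^n,
\end{equation*}
bound $0\le\log(2\xi_\om^{(n)}+1)\le\log 3+\log^{+}\xi_\om^{(n)}$, and use the triangle inequality on $|\log K_\om^{(n)}|$ to conclude $\log K_\om^{(n)}\in L^1(m)$ from the four $L^1$ ingredients above. For $Q_\om^{(n)}$, I would write
\begin{equation*}
\log Q_\om^{(n)}=\log(9+16\xi_\om^{(n)})+\log\|g_\om^{(n)}\|_\infty-\log\rho_\om^n,
\end{equation*}
and apply the inequality $\log^{+}(a)+\log^{+}(b)+\log^{+}(c)\ge\log^{+}(a+b+c)-\log 3$ together with the estimate
\begin{equation*}
\log^{+}Q_\om^{(n)}\le\log 25+\log^{+}\xi_\om^{(n)}+\bigl|\log\|g_\om^{(n)}\|_\infty\bigr|+\bigl|\log\rho_\om^n\bigr|,
\end{equation*}
from which $\log^{+}Q_\om^{(n)}\in L^1(m)$ follows immediately.

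There is no real obstacle here; the statement is a bookkeeping consequence of the assumptions \eqref{cond Q2}, \eqref{cond Q3}, and the log-integrability bounds \eqref{eq: log int open weight and tr op} and \eqref{eq: rho log int} that were established earlier. The only mildly delicate point is to control the $\log(9+16\xi_\om^{(n)})$ and $\log(2\xi_\om^{(n)}+1)$ factors in terms of $\log^{+}\xi_\om^{(n)}$ (and hence $|\log\xi_\om^{(n)}|$), which is done by the elementary bounds used above. Measurability of $Q_\om^{(n)}$ and $K_\om^{(n)}$ in $\omega$ is automatic from the measurability of all constituent quantities.
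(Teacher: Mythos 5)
Your proposal is correct and is exactly the routine bookkeeping the paper has in mind: the paper offers no explicit proof, simply asserting that the proposition follows from \eqref{eq: log int open weight and tr op}, \eqref{eq: rho log int}, \eqref{cond Q2}, and \eqref{cond Q3}, which is precisely the decomposition of $\log Q_\om^{(n)}$ and $\log K_\om^{(n)}$ into log-integrable pieces that you carry out (the cited three-term $\log^{+}$ subadditivity inequality is unnecessary given your product-form expansion, but harmless). No gaps.
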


\begin{lemma}\label{LMD l3.6}
For each $f\in\BV(I)$ and each $n,k\in\NN$ we have
\begin{align}
\Lm_{\sg^k(\om)}\lt(\cL_{\om}^kf\rt)&\geq \Lm_{\sg^k(\om)}\lt(\cL_{\om}^k\ind_\om\rt)\cdot\Lm_{\om}(f).
\label{eq: Lm^k1 ineq}
\end{align}
Furthermore, we have that 
\begin{align}\label{LMD l3.6 key ineq}
\rho_{\om}^n\cdot\Lm_{\om}(f)\leq \Lm_{\sg^n(\om)}(\cL_{\om}^n f).
\end{align}
In particular, this yields
\begin{align*}
\rho_{\om}^n\leq \Lm_{\sg^n(\om)}(\cL_{\om}^n\ind_\om).
\end{align*}
\end{lemma}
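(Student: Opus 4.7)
I would prove the three assertions in sequence, starting with the main inequality \eqref{eq: Lm^k1 ineq}; both \eqref{LMD l3.6 key ineq} and the final claim will then follow by a telescoping iteration.

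The strategy for \eqref{eq: Lm^k1 ineq} is to exploit the \emph{monotone} nature of the increasing sequence of infima defining $\Lm_\om(f)$ (cf.\ \eqref{rho^n increasing}) so as to convert this definition into a pointwise lower bound. Fix any $c<\Lm_\om(f)$: for all $m$ large enough we have $\cL_\om^m f\geq c\,\cL_\om^m\ind_\om$ on $D_{\sg^m\om,m}$, and in fact pointwise on $I$, since both sides vanish outside $D_{\sg^m\om,m}$ (using the trivial bound $|\cL_\om^m f|\leq \norm{f}_\infty\cL_\om^m\ind_\om$). Setting $m=k+n$, applying the cocycle identity $\cL_\om^{k+n}=\cL_{\sg^k\om}^n\circ\cL_\om^k$, dividing through by $\cL_{\sg^k\om}^n\ind_{\sg^k\om}>0$ on $D_{\sg^{k+n}\om,n}$, and (for $c\geq 0$) taking the infimum over that set yields
\begin{align*}
\inf_{y\in D_{\sg^{k+n}\om,n}}\frac{\cL_{\sg^k\om}^n(\cL_\om^k f)(y)}{\cL_{\sg^k\om}^n\ind_{\sg^k\om}(y)}
\;\geq\; c\cdot \inf_{y\in D_{\sg^{k+n}\om,n}}\frac{\cL_{\sg^k\om}^n(\cL_\om^k\ind_\om)(y)}{\cL_{\sg^k\om}^n\ind_{\sg^k\om}(y)}.
\end{align*}
Sending $n\to\infty$ and then $c\nearrow \Lm_\om(f)$ produces \eqref{eq: Lm^k1 ineq} provided $\Lm_\om(f)\geq 0$. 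The remaining case $\Lm_\om(f)<0$ I would reduce to the previous one by translating: $\tilde f=f+b$ with $b>|\Lm_\om(f)|$ has $\Lm_\om(\tilde f)>0$ by property \eqref{Lm prop6}, and the inequality is then transferred back via the identity $\cL_\om^k\tilde f=\cL_\om^k f+b\cL_\om^k\ind_\om$ together with the homogeneity and sub-/super-additivity properties of $\Lm$.

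To obtain \eqref{LMD l3.6 key ineq} and the final claim, I would iterate \eqref{eq: Lm^k1 ineq} with $k=1$ applied to the nonnegative functions $\cL_\om^j\ind_\om$ for $j=0,\dots,n-1$ (for which the sign issue does not arise). Each step reads
\begin{align*}
\Lm_{\sg^{j+1}\om}(\cL_\om^{j+1}\ind_\om)
\;\geq\; \Lm_{\sg^{j+1}\om}(\cL_{\sg^j\om}\ind_{\sg^j\om})\cdot \Lm_{\sg^j\om}(\cL_\om^j\ind_\om)
\;=\;\rho_{\sg^j\om}\cdot \Lm_{\sg^j\om}(\cL_\om^j\ind_\om),
\end{align*}
by the definition \eqref{rho def} of $\rho_{\sg^j\om}$, with base case $\Lm_\om(\ind_\om)=1$ from property \eqref{Lm prop1}; composing $n$ such steps gives $\Lm_{\sg^n\om}(\cL_\om^n\ind_\om)\geq\rho_\om^n$, which is the final assertion. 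Substituting this bound into \eqref{eq: Lm^k1 ineq} with $k=n$ then yields \eqref{LMD l3.6 key ineq}. The principal obstacle is the sign control when dividing by positive quantities and passing to infima: for $c\geq 0$ the manipulation is routine, whereas $c<0$ would reverse the useful direction of the inequality, which is precisely what forces the translation trick in the first step and the choice to iterate along the nonnegative functions $\cL_\om^j\ind_\om$ in the second.
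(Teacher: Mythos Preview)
Your approach for the case $\Lm_\om(f)>0$ is correct and essentially the same as the paper's: both arguments extract the infimum of the ratio $\cL_\om^m f/\cL_\om^m\ind_\om$ from inside $\cL_{\sg^k\om}^n$ and then pass to the limit. Your iteration to obtain $\Lm_{\sg^n\om}(\cL_\om^n\ind_\om)\geq\rho_\om^n$ by applying the $k=1$ case to the nonnegative functions $\cL_\om^j\ind_\om$ is also sound and is just a harmless reordering of the paper's direct induction on \eqref{LMD l3.6 key ineq}.

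The gap is in your translation trick for $\Lm_\om(f)<0$. At this point in the paper $\Lm_{\sg^k\om}$ is only known to be \emph{positively} homogeneous (property~\eqref{Lm prop4} is stated for $c>0$, and together with super-additivity it gives convexity, not linearity). Writing $\cL_\om^k\tilde f=\cL_\om^k f+b\,\cL_\om^k\ind_\om$, super-additivity yields
\[
\Lm_{\sg^k\om}(\cL_\om^k\tilde f)\;\geq\;\Lm_{\sg^k\om}(\cL_\om^k f)+b\,\Lm_{\sg^k\om}(\cL_\om^k\ind_\om),
\]
which is the \emph{wrong direction} to transfer the inequality back to $f$; you would need sub-additivity here, and there is none. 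Property~\eqref{Lm prop6} does not help either, since $b\,\cL_\om^k\ind_\om$ is not a constant. Your final substitution step ($\Lm_{\sg^n\om}(\cL_\om^n\ind_\om)\cdot\Lm_\om(f)\geq\rho_\om^n\cdot\Lm_\om(f)$) likewise fails when $\Lm_\om(f)<0$, since multiplying the inequality $\Lm_{\sg^n\om}(\cL_\om^n\ind_\om)\geq\rho_\om^n$ by a negative number reverses it.

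It is worth noting that the paper itself only proves \eqref{eq: Lm^k1 ineq} under the hypothesis $f\geq 0$ (see the first line of its proof), and every subsequent use of the lemma in the paper is for nonnegative $f$. So your argument is complete for the cases that matter; the attempted extension to $\Lm_\om(f)<0$ cannot be repaired with the tools available at this stage (linearity of $\Lm_\om$ is only established later, in Lemma~\ref{lem: Lm is a linear functional}).
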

\begin{proof}
For each $f\in\BV(I)$ with $f\geq 0$, $k\in\NN$, and $x\in D_{\sg^{n+k}(\om),n}$ we have
\begin{align*}
\frac{\cL_{\sg^k(\om)}^n\lt(\cL_{\om}^kf\rt)(x)}
{\cL_{\sg^k(\om)}^n\lt(\ind_{\sg^k(\om)}\rt)(x)}
&=
\frac{\cL_{\sg^n(\om)}^k\lt(\cL_{\om}^nf\rt)(x)}
{\cL_{\sg^k(\om)}^n\lt(\ind_{\sg^k(\om)}\rt)(x)}
\\
&=
\frac{\cL_{\sg^n(\om)}^k\lt(\hat D_{\sg^n(\om),n}\cdot\frac{\cL_{\om}^nf}{\cL_{\om}^n\ind_\om}\cdot \cL_{\om}^n\ind_\om\rt)(x)}
{\cL_{\sg^k(\om)}^n\lt(\ind_{\sg^k(\om)}\rt)(x)}
\\
&\geq
\frac{\cL_{\sg^k(\om)}^n\lt(\cL_{\om}^k\ind_\om\rt)(x)}
{\cL_{\sg^k(\om)}^n\lt(\ind_{\sg^k(\om)}\rt)(x)}
\cdot\inf_{D_{\sg^n(\om),n}}
\frac{\cL_{\om}^n\lt(f\rt)}
{\cL_{\om}^n\lt(\ind_\om\rt)}.
\end{align*}
Taking the infimum over $x\in D_{\sg^{n+k}(\om),n}$ and letting $n\to\infty$ gives
\begin{align}
\Lm_{\sg^k(\om)}\lt(\cL_{\om}^kf\rt)
&\geq 
\Lm_{\sg^k(\om)}\lt(\cL_{\om}^k\ind_\om\rt)\cdot\Lm_{\om}(f),
\label{eq1 LMD l3.6}
\end{align}
proving the first claim. Now to see the second claim we note that as \eqref{eq1 LMD l3.6} holds for all $\om\in\Om$ with $k=1$, we must also have
\begin{align}
\Lm_{\sg^{n+1}(\om)}\lt(\cL_{\sg^n(\om)}f\rt)&\geq \Lm_{\sg^{n+1}(\om)}\lt(\cL_{\sg^n(\om)}\ind_{\sg^n(\om)}\rt)\cdot\Lm_{\sg^n(\om)}(f)
=\rho_{\sg^n(\om)}\cdot\Lm_{\sg^n(\om)}(f)
\label{eq1 LMD l3.6 all fibers}
\end{align}
for any $f\in\BV(I)$ and each $n\in\NN$. Proceeding via induction, using \eqref{eq1 LMD l3.6} as the base case, we now suppose that
\begin{align}\label{eq: induction step}
\Lm_{\sg^n(\om)}\lt(\cL_{\om}^nf\rt)\geq \rho_{\om}^{n}\cdot\Lm_{\om}(f)
\end{align}
holds for $n\geq 1$.
Using \eqref{eq1 LMD l3.6 all fibers} and \eqref{eq: induction step}, we see
\begin{align*}
\Lm_{\sg^{n+1}(\om)}\lt(\cL_{\sg^n(\om)}(\cL_{\om}^nf)\rt)
&\geq
\rho_{\sg^n(\om)}\cdot\Lm_{\sg^n(\om)}(\cL_{\om}^nf)
\\
&\geq
\rho_{\om}^{n+1}\cdot\Lm_{\om}(f).
\end{align*}	
Considering $f=\ind_\om$ proves the final claim, and thus we are done. 
\end{proof}

Define the normalized operator $\cL_\om:L^1(\nu_{\om,0})\to L^1(\nu_{\sg(\om),0})$ by 
\begin{align}\label{eq: def of tilde cL norm op}
\~\cL_\om f:=\rho_\om^{-1}\cL_\om f; 
\qquad f\in L^1(\nu_{\om,0}).
\end{align}\index{$\~\cL_\om$}
In light of Lemma~\ref{LMD l3.6}, for each $\om\in\Om$, $n\in\NN$, and $f\in\BV(I)$ we have that 
\begin{align}\label{eq: equivariance prop of Lm}
\Lm_\om(f)\leq \Lm_{\sg^n(\om)}\lt(\~\cL_\om^n f\rt). 
\end{align}
Now, considering the normalized operator, we arrive at the following immediate corollary.
\begin{corollary}\label{cor: normalized LY ineq 1}
For all $\om\in\Om$, all $f\in\BV(I)$, and all $n\in\NN$ we have 
\begin{align*}
\var(\~\cL_{\om}^nf)\leq Q_{\om}^{(n)}\var(f)+K_{\om}^{(n)}\Lm_{\om}(|f|).
\end{align*}
\end{corollary}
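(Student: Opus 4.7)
The plan is to deduce this directly from Lemma~\ref{ly ineq} by normalizing both sides by $\rho_\om^n$, since all the substantive analytic content (the partition arguments, the estimates via the contiguous bad intervals, and the ratio bound via $\dl_{\om,n}$) has already been packaged into the constants $A_\om^{(n)}$ and $B_\om^{(n)}$. Concretely, I would first unpack the definition of the iterate $\~\cL_\om^n$: since $\~\cL_\om f = \rho_\om^{-1}\cL_\om f$, a straightforward induction on $n$ (using that $\rho_\om^n := \prod_{j=0}^{n-1}\rho_{\sg^j(\om)}$) yields
\begin{align*}
\~\cL_\om^n f = \lt(\rho_\om^n\rt)^{-1}\cL_\om^n f.
\end{align*}

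Next I would use the fact that total variation is positively homogeneous, i.e. $\var(c h) = c\var(h)$ for any $c>0$ and $h\in\BV(I)$. Combined with the previous identity and $\rho_\om^n>0$ (which follows from \eqref{rho rough up and low bdd} and \eqref{eq: rho log int}), this gives
\begin{align*}
\var\lt(\~\cL_\om^n f\rt) = \lt(\rho_\om^n\rt)^{-1}\var\lt(\cL_\om^n f\rt).
\end{align*}
Then I would apply Lemma~\ref{ly ineq} to the right-hand side and divide by $\rho_\om^n$:
\begin{align*}
\var\lt(\~\cL_\om^n f\rt) \leq \frac{A_\om^{(n)}}{\rho_\om^n}\var(f) + \frac{B_\om^{(n)}}{\rho_\om^n}\Lm_\om(|f|).
\end{align*}
Recognising the ratios as $Q_\om^{(n)}$ and $K_\om^{(n)}$ per \eqref{eq: def of Q and K} yields the claim.

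There is really no obstacle here beyond bookkeeping; the heart of the matter has already been settled in the proof of Lemma~\ref{ly ineq}. The only subtle point worth stating explicitly is the inductive derivation of $\~\cL_\om^n = (\rho_\om^n)^{-1}\cL_\om^n$, which is a consequence of the cocycle structure of both $\cL_\om^n$ (by the definition $\cL_\om^n := \cL_{\sg^{n-1}\om}\circ\cdots\circ \cL_\om$) and $\rho_\om^n$. Everything else is a single division.
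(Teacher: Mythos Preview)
Your proof is correct and matches the paper's approach exactly: the paper treats this as an immediate consequence of Lemma~\ref{ly ineq} and the definitions \eqref{eq: def of tilde cL norm op} and \eqref{eq: def of Q and K}, without writing out the details. Your exposition simply unpacks what ``immediate'' means here---the cocycle identity $\~\cL_\om^n=(\rho_\om^n)^{-1}\cL_\om^n$, positive homogeneity of $\var$, and a single division.
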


\begin{definition}
Since $Q_\om^{(n)}\to 0$ exponentially fast by our assumption \eqref{cond Q1}, we let $N_*\in\NN$\index{$N_*$} be the minimum integer $n\geq 1$ such that
\begin{align}\label{eq: def of N}
-\infty< \int_\Om \log Q_\om^{(n)} dm(\om) <0, 
\end{align}
and we define the number
\begin{align}\label{eq: def of ta}
\ta:=-\frac{1}{N_*}\int_\Om \log Q_\om^{(N_*)} dm(\om).
\end{align}\index{$\ta$}
\end{definition}

\begin{remark}\label{rem: alternate hypoth}
As we are primarily interested in pushing forward in blocks of length $N_*$ we are able to weaken two or our main hypotheses. In particular, we may replace \eqref{cond Q2} and \eqref{cond Q3} with the following: 
\begin{enumerate}
\item[(\Gls*{Q2'})]\myglabel{Q2'}{cond Q2'} We have $\log\xi_\om^{(N_*)}\in L^1(m)$.

\item[(\Gls*{Q3'})]\myglabel{Q3'}{cond Q3'} We have $\log\dl_{\om,N_*}\in L^1(m)$, where $\dl_{\om,n}$ is defined by \eqref{eq: def of dl_om,n}. 
\end{enumerate}
\end{remark}

In light of Corollary~\ref{cor: normalized LY ineq 1} we may now find an appropriate upper bound for the BV norm of the normalized transfer operator. 
\begin{lemma}\label{lem: buzzi LY1}
There exists a measurable function $\om\mapsto L_{\om}\in(0,\infty)$\index{$L_\om$} with $\log L_{\om}\in L^1_m(\Om)$ such that for all $f\in\BV(I)$ and each $1\leq n\leq N_*$ we have 
\begin{align}\label{eq: BV norm bound using L}
\norm{\~\cL_{\om}^n f}_\BV \leq L_{\om}^n\lt(\var(f)+\Lm_{\sg^n(\om)}\lt(\~\cL_{\om}^n f\rt)\rt).
\end{align}
where 
\begin{align*}
L_\om^n=L_\om L_{\sg(\om)}\cdots L_{\sg^{n-1}(\om)}\geq 6^n.
\end{align*}
\end{lemma}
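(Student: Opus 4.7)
The plan is to combine the variation estimate of Corollary \ref{cor: normalized LY ineq 1} with the pointwise bound \eqref{f leq Lm(f)+var(f)} and the equivariance of $\Lm$ given by \eqref{eq: equivariance prop of Lm}. I first treat $f \in \BV(I)$ with $f \geq 0$; the general case follows by decomposing $f = f_+ - f_-$, using sublinearity of $\norm{\spot}_\BV$ and $\var(f_{\pm}) \leq \var(f)$, at worst doubling the final constant.

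Assume $f \geq 0$. Corollary \ref{cor: normalized LY ineq 1} yields
\begin{equation*}
\var\lt(\~\cL_\om^n f\rt) \leq Q_\om^{(n)} \var(f) + K_\om^{(n)} \Lm_\om(f),
\end{equation*}
and by \eqref{eq: equivariance prop of Lm} one has $\Lm_\om(f) \leq \Lm_{\sg^n(\om)}(\~\cL_\om^n f)$. Since $\~\cL_\om^n f \geq 0$, inequality \eqref{f leq Lm(f)+var(f)} applied to $\~\cL_\om^n f$ at an arbitrary point of $I$ gives
\begin{equation*}
\norm{\~\cL_\om^n f}_\infty \leq \Lm_{\sg^n(\om)}\lt(\~\cL_\om^n f\rt) + \var\lt(\~\cL_\om^n f\rt).
\end{equation*}
Adding these two estimates and using the preceding display,
\begin{equation*}
\norm{\~\cL_\om^n f}_\BV \leq 2 Q_\om^{(n)} \var(f) + \lt(2 K_\om^{(n)} + 1\rt) \Lm_{\sg^n(\om)}\lt(\~\cL_\om^n f\rt).
\end{equation*}

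The final step is to package the coefficients into a genuine cocycle product along the $\sigma$-orbit. I would define
\begin{equation*}
L_\om := \max\set{6,\ 2Q_\om^{(k)},\ 2K_\om^{(k)} + 1 \ : \ 1 \leq k \leq N_*}.
\end{equation*}
This is measurable and bounded below by $6$. Log-integrability, $\log L_\om \in L^1(m)$, follows from Proposition \ref{prop: log integr of Q and K} together with the elementary bound $\log^+ \max\{a_1,\dots,a_r\} \leq \sum_i \log^+ a_i$ and the constant lower bound. Since $L_{\sg^j\om} \geq 6$ for every $j$, the cocycle $L_\om^n := L_\om L_{\sg\om} \cdots L_{\sg^{n-1}\om}$ satisfies $L_\om^n \geq 6^n$, and for each $1 \leq n \leq N_*$ the bound $L_\om^n \geq L_\om \geq \max\{2Q_\om^{(n)},\ 2K_\om^{(n)} + 1\}$ implies the required inequality \eqref{eq: BV norm bound using L}.

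The only mildly delicate point is the cocycle structure of $L_\om^n$: one wants a product along the orbit, not just a measurable function of $\om$ or an $n$-th power. This is handled cleanly by the fact that the Lemma only demands the estimate on the finite range $1 \leq n \leq N_*$, so taking a single $L_\om$ that majorises all $N_*$ pairs of coefficients suffices, and the built-in lower bound $L_\om \geq 6$ ensures that incrementing $n$ can only enlarge the cocycle product. Extending to signed $f$ by decomposition, as noted above, costs at most a multiplicative constant that can be absorbed into the definition of $L_\om$ without affecting measurability or log-integrability.
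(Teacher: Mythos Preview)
Your proof is correct and follows essentially the same route as the paper: bound $\norm{\~\cL_\om^n f}_\infty$ by $\var(\~\cL_\om^n f)+\Lm_{\sg^n(\om)}(\~\cL_\om^n f)$ via \eqref{f leq Lm(f)+var(f)}, apply Corollary~\ref{cor: normalized LY ineq 1} and the equivariance \eqref{eq: equivariance prop of Lm}, and then define $L_\om:=\max\{6,\,2Q_\om^{(k)},\,2K_\om^{(k)}+1:1\le k\le N_*\}$ exactly as the paper does. Your observation that $L_\om^n\ge L_\om$ because each factor is at least $6$ is the point that makes the single-$\om$ bound propagate to the cocycle product, and the paper leaves this implicit.

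One small caveat: your reduction to $f\ge 0$ is actually necessary (and the paper's write-up is tacitly using it), since both the step $\norm{h}_\infty\le\var(h)+\Lm(h)$ and the replacement of $\Lm_\om(|f|)$ by $\Lm_{\sg^n(\om)}(\~\cL_\om^n f)$ fail for signed $f$. Your proposed extension by decomposing $f=f_+-f_-$ does not literally recover the stated inequality, because at this stage $\Lm$ is only super-additive and you end up with $\Lm_{\sg^n(\om)}(\~\cL_\om^n|f|)$ rather than $\Lm_{\sg^n(\om)}(\~\cL_\om^n f)$ on the right. This is harmless in practice since the lemma is only ever invoked for $f$ in the positive cones $\sC_{\om,+}$ or $\sC_{\om,a_*}$, but the claimed extension to all of $\BV(I)$ should be dropped or rephrased with $|f|$ in place of $f$.
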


\begin{proof}
Corollary~\ref{cor: normalized LY ineq 1} and \eqref{eq: equivariance prop of Lm} give 
\begin{align*}
\norm{\~\cL_{\om}^n f}_\BV
&=
\var(\~\cL_{\om}^n f)+\norm{\~\cL_{\om}^n f}_\infty
\leq 
2\var(\~\cL_{\om}^n f)+\Lm_{\sg^n(\om)}\lt(\~\cL_{\om}^n f\rt)
\\
&\leq
2\lt(Q_{\om}^{(n)}\var(f)+K_{\om}^{(n)}\Lm_{\om}(|f|)\rt)+\Lm_{\sg^n(\om)}\lt(\~\cL_{\om}^n f\rt)
\\
&\leq 
2Q_{\om}^{(n)}\var(f)+\lt(2K_{\om}^{(n)}+1\rt)\Lm_{\sg^n(\om)}\lt(\~\cL_{\om}^n f\rt).
\end{align*}
Now, set
\begin{align*}
\~L_{\om}^{(n)}:=\max\set{6, 2Q_{\om}^{(n)}, 2K_{\om}^{(n)}+1}.
\end{align*}
Finally, setting 
\begin{align}\label{eq: defn of L_om^n}
L_\om:=\max\set{\~L_{\om}^{(j)}: 1\leq j\leq N_*}
\end{align}
and 
\begin{align*}
L_\om^n:=\prod_{j=0}^{n-1}L_{\sg^j(\om)}
\end{align*}
for all $n\geq 1$ suffices. The $\log$-integrability of $L_\om^n$ follows from Proposition~\ref{prop: log integr of Q and K}.
\end{proof}
We now define the number $\zt>0$ by 
\begin{align}\label{eq: def of zt}
\zt:=\frac{1}{N_*}\int_\Om \log L_\om^{N_*} dm(\om). 
\end{align}\index{$\zt$}

The constants $B_\om^{(n)}$ and $K_\om^{(n)}$ in the Lasota-Yorke inequalities from Lemma~\ref{ly ineq} and  Corollary~\ref{cor: normalized LY ineq 1} grow to infinity with $n$, making them difficult to use. Furthermore, the rate of decay of the $Q_\om^{(n)}$ in Corollary~\ref{cor: normalized LY ineq 1} may depend on $\om$. To remedy these difficulties we prove another, more useful, Lasota-Yorke inequality in the style of Buzzi \cite{buzzi_exponential_1999}.

\begin{proposition}\label{prop: LY ineq 2}
For each $\ep>0$ there exists a measurable, $m$-a.e. finite function $C_\ep(\om)>0$\index{$C_\ep(\om)$} such that for $m$-a.e. $\om\in\Om$, each $f\in\BV(I)$, and all $n\in\NN$ we have 
\begin{align*}
\var(\~\cL_{\sg^{-n}(\om)}^nf)\leq C_\ep(\om)e^{-(\ta-\ep)n}\var(f)+C_\ep(\om)\Lm_\om(\~\cL_{\sg^{-n}(\om)}^nf).
\end{align*}
\end{proposition}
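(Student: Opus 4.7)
I plan to iterate the normalized Lasota-Yorke inequality of Corollary \ref{cor: normalized LY ineq 1} in blocks of length $N_*$ along the backward $\sg$-orbit of $\om$, push every residual $\Lm$-term forward to the terminal fiber $\om$ using the equivariance property \eqref{eq: equivariance prop of Lm}, handle the non-integral remainder via Lemma \ref{lem: buzzi LY1}, and then extract the rate $\ta$ by Birkhoff's ergodic theorem applied to the $L^1(m)$-cocycle $\log Q^{(N_*)}$.

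It suffices to treat $f\geq 0$; the signed case follows by splitting $f=f^+-f^-$ and using linearity. Fix $n\geq 1$, set $\om':=\sg^{-n}\om$, decompose $n=kN_*+r$ with $0\leq r<N_*$, and write $\om^{(j)}:=\sg^{jN_*}\om'=\sg^{-(k-j)N_*-r}\om$ for $j=0,\dots,k$, so $\om^{(0)}=\om'$ and $\om^{(k)}=\sg^{-r}\om$. Split $\~\cL_{\om'}^n=\~\cL_{\sg^{-r}\om}^r\circ\~\cL_{\om'}^{kN_*}$ and set $h_j:=\~\cL_{\om'}^{jN_*}f\geq 0$. Iterating Corollary \ref{cor: normalized LY ineq 1} at the fibers $\om^{(0)},\dots,\om^{(k-1)}$ with step $N_*$ gives the recursion
\begin{align*}
\var(h_k)\leq P_k(\om)\var f+\sum_{j=0}^{k-1}R_j(\om)\,K^{(N_*)}_{\om^{(j)}}\,\Lm_{\om^{(j)}}(h_j),
\end{align*}
where $P_k(\om):=\prod_{j=0}^{k-1}Q^{(N_*)}_{\om^{(j)}}$ and $R_j(\om):=\prod_{i=j+1}^{k-1}Q^{(N_*)}_{\om^{(i)}}$. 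Two successive applications of \eqref{eq: equivariance prop of Lm}, first through $\~\cL_{\om^{(j)}}^{(k-j)N_*}$ (whose action on $h_j$ produces $h_k$) and then through $\~\cL_{\sg^{-r}\om}^r$, yield $\Lm_{\om^{(j)}}(h_j)\leq\Lm_\om(\~\cL_{\om'}^n f)$ for every $j$. Applying Lemma \ref{lem: buzzi LY1} at $\sg^{-r}\om$ for the final $r$ iterates then gives
\begin{align*}
\var(\~\cL_{\om'}^n f)\leq L_{\sg^{-r}\om}^r\lt[P_k(\om)\var f+\lt(1+\sum_{j=0}^{k-1}R_j(\om)K^{(N_*)}_{\om^{(j)}}\rt)\Lm_\om(\~\cL_{\om'}^n f)\rt].
\end{align*}

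The remainder of the argument is to extract, for each $\ep>0$ and $m$-a.e.\ $\om$, a finite random constant $C_\ep(\om)$ dominating $L_{\sg^{-r}\om}^rP_k(\om)/e^{-(\ta-\ep)n}$ and the $\Lm$-coefficient. Proposition \ref{prop: log integr of Q and K} together with Lemma \ref{lem: buzzi LY1} places $\log Q^{(N_*)}$, $\log K^{(N_*)}$ and $\log L$ in $L^1(m)$ (the definition of $N_*$ itself forces $\log Q^{(N_*)}\in L^1(m)$). Since $r<N_*$ is bounded, $L_{\sg^{-r}\om}^r$ is a product of at most $N_*$ tempered log-integrable factors and is therefore at worst sub-exponentially large in $n$. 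By Birkhoff's ergodic theorem under $\sg$, the values $K^{(N_*)}_{\om^{(j)}}$ grow at most sub-exponentially along the backward orbit, so the geometric decay of $R_j(\om)$ induced by the $P_k(\om)$-bound below controls $\sum_jR_j(\om)K^{(N_*)}_{\om^{(j)}}$ by a convergent geometric series whose total is a finite, $\om$-dependent constant.

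The principal technical hurdle is the exponential bound $P_k(\om)\leq C'_\ep(\om)e^{-(\ta-\ep)N_*k}$. The fibers $\om^{(j)}$ form a sub-sequence of the backward $\sg$-orbit of $\om$, spaced $N_*$ apart, so the natural Birkhoff argument is with respect to the measure-preserving transformation $\sg^{N_*}$, which need not be ergodic even though $\sg$ is. One handles this by applying Birkhoff on each ergodic component of $\sg^{N_*}$---whose component-wise averages of $\log Q^{(N_*)}$ integrate back to $\int\log Q^{(N_*)}\,dm=-\ta N_*$---and absorbing any component-dependent excess into the $\ep$-loss in the exponential rate and into the $\om$-dependence of $C_\ep(\om)$. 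Combining with the ergodicity of $\sg$ yields the bound for $m$-a.e.\ $\om$ uniformly in $r\in\{0,1,\dots,N_*-1\}$, completing the proof.
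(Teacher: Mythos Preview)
Your overall strategy (block iteration of Corollary~\ref{cor: normalized LY ineq 1}, pushing $\Lambda$-terms forward via \eqref{eq: equivariance prop of Lm}, handling remainders by Lemma~\ref{lem: buzzi LY1}, and extracting the rate by an ergodic theorem) is the correct one and matches the Buzzi-type argument of \cite{AFGTV20} that the paper cites. The gap is in your final paragraph: the claim that the ``component-dependent excess'' can be absorbed into the $\ep$-loss and into $C_\ep(\om)$ does not hold. If the ergodic components of $\sg^{N_*}$ carry Birkhoff averages $a_0,\dots,a_{d-1}$ for $\log Q^{(N_*)}$, only their \emph{mean} equals $-\ta N_*$; individual $a_l$ may exceed $-\ta N_*$ and can even be nonnegative. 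For any $n$ whose residue $r$ places $\sg^{-r}\om$ on such a component, your product $P_k(\om)$ decays at the wrong rate (or grows), and no finite $C_\ep(\om)$ rescues the inequality for small $\ep$. A two-point example is decisive: $\Om=\{0,1\}$, $\sg$ the swap, $N_*=2$, $Q^{(2)}_0=2$, $Q^{(2)}_1=1/8$; then $\ta=\tfrac12\log2>0$, yet for $\om=0$ and even $n$ all of your blocks sit at fiber $0$ and $P_k=2^k\to\infty$.

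The repair is to exploit the freedom in where the remainder is placed rather than letting the residue of $n$ dictate it. Since the $d\mid N_*$ ergodic components of $\sg^{N_*}$ are cyclically permuted by $\sg$, for each $\om$ there is a measurable offset $s_2(\om)\in\{0,\dots,N_*-1\}$ with $\sg^{-s_2(\om)}\om$ on the component achieving the \emph{minimal} average $\min_l a_l\le-\ta N_*$. For each $n\ge N_*$ write $n=s_1+mN_*+s_2(\om)$ with $0\le s_1<N_*$, handle the $s_1$ initial and $s_2$ terminal iterates by Lemma~\ref{lem: buzzi LY1} (the temperedness argument you already give controls $L_{\sg^{-n}\om}^{s_1}$ sub-exponentially), and run the block iteration along the $\sg^{N_*}$-orbit of $\sg^{-s_2(\om)}\om$. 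Birkhoff for $\sg^{N_*}$ on that component then yields $P_m\le C(\om)e^{(\min_l a_l+\dl)m}$ for every $\dl>0$, and since $\min_l a_l\le-\ta N_*$ this delivers the rate $\ta-\ep$ uniformly in $n$.
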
 
As the proof of Proposition~\ref{prop: LY ineq 2} follows similarly to that of Proposition~4.9 of \cite{AFGTV20}, using \eqref{eq: equivariance prop of Lm} to obtain $\Lm_\om(\~\cL_{\sg^{-n}(\om)}^nf)$ rather than $\Lm_{\sg^{-n}(\om)}(f)$, we leave it to the dedicated reader.

\section{Cone invariance on good fibers}\label{Sec: good fibers}\label{sec:good}

In this section we follow Buzzi's approach \cite{buzzi_exponential_1999}, and describe the good behavior across a large measure set of fibers. In particular, we will show that, for sufficiently many iterates $R_*$, the normalized transfer operator $\~\cL_\om^{R_*}$ uniformly contracts the cone $\sC_{\om,a}$ on ``good'' fibers $\om$ for cone parameters $a>0$ sufficiently large. 
Recall that the numbers $\ta$ and $\zt$ are given by 
\begin{align*}
\ta:=-\frac{1}{N_*}\int_\Om \log Q_\om^{(N_*)} dm(\om)>0
\quad \text{ and } \quad
\zt:=\frac{1}{N_*}\int_\Om \log L_\om^{N_*} dm(\om)>0. 
\end{align*}
Note that Lemma~\ref{lem: buzzi LY1} and the ergodic theorem imply that 
\begin{align}\label{eq: zt geq log 6}
\log 6\leq \zt = \lim_{n\to\infty}\frac{1}{nN_*}\sum_{k=0}^{n-1}\log L_{\sg^{kN_*}(\om)}^{N_*}.
\end{align}
The following definition is adapted from \cite[Definition~2.4]{buzzi_exponential_1999}.
\begin{definition}
We will say that $\omega$ is a \textit{good fiber} with respect to the numbers $\ep$, $a$, $B_*$, and $R_a=q_aN_*$ if the following hold:\index{good fiber}\index{$R_a$}
\begin{flalign} 
& B_*q_ae^{-\frac{\ta}{2}R_a}\leq \frac{1}{3},
\tag{G1}\label{G1} 
&\\
& \frac{1}{R_a}\sum_{k=0}^{\sfrac{R_a}{N_*}-1}\log L_{\sg^{kN_*}(\om)}^{N_*} 
\in[\zt-\ep,\zt+\ep].
\tag{G2}\label{G2} 
\end{flalign}
\end{definition}
Now, we denote
\begin{align}\label{eq: def of ep_0}
\ep_0:=\min\set{1, \frac{\ta}{2}}.
\end{align}
The following lemma describes the prevalence of the good fibers as well as how to find them. 
\begin{lemma}\label{lem: constr of Om_G}
Given $\ep<\ep_0$ and $a>0$, there exist parameters $B_*$\index{$B_*$} and  $R_a$ (both of which depend on $\ep$) such that there is a set $\Om_G\sub \Om$\index{$\Om_G$} of good fibers $\om$ with $m(\Om_G)\geq 1-\sfrac{\ep}{4}$. 
\end{lemma}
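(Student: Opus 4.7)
The key observation is that condition \eqref{G1} is a purely deterministic constraint on the parameters $B_*$, $q_a$, $R_a$, $\ta$, and $N_*$, and does not depend on $\om$ at all, while condition \eqref{G2} is an $\om$-dependent Birkhoff-average condition. My strategy is therefore to use ergodic-theoretic tools to produce a set of large measure on which \eqref{G2} holds for every sufficiently large $R_a$, then to enlarge $R_a$ until \eqref{G1} is also satisfied.

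First, I would rewrite the sum in \eqref{G2} in a more transparent form. Recalling from Lemma~\ref{lem: buzzi LY1} that $L_\om^{N_*} = \prod_{j=0}^{N_*-1} L_{\sg^j(\om)}$, a regrouping gives
\[
\sum_{k=0}^{q_a-1}\log L_{\sg^{kN_*}(\om)}^{N_*} \;=\; \sum_{k=0}^{q_a-1}\sum_{j=0}^{N_*-1}\log L_{\sg^{kN_*+j}(\om)} \;=\; \sum_{i=0}^{R_a-1}\log L_{\sg^i(\om)}.
\]
Moreover, $\sg$-invariance of $m$ together with the definition \eqref{eq: def of zt} of $\zt$ yields
\[
\zt \;=\; \frac{1}{N_*}\int_\Om \sum_{j=0}^{N_*-1}\log L_{\sg^j(\om)}\,dm(\om) \;=\; \int_\Om \log L_\om\,dm(\om).
\]
Thus condition \eqref{G2} is precisely the statement that the $R_a$-step Birkhoff average of $\log L_\om$ under $\sg$ lies in $[\zt-\ep,\zt+\ep]$.

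Second, by Lemma~\ref{lem: buzzi LY1} and Proposition~\ref{prop: log integr of Q and K} the function $\log L_\om$ lies in $L^1(m)$, and $(\Om,\sF,m,\sg)$ is ergodic, so Birkhoff's ergodic theorem gives $\frac{1}{R}\sum_{i=0}^{R-1}\log L_{\sg^i(\om)} \to \zt$ for $m$-a.e.\ $\om\in\Om$. Egorov's theorem then supplies $R^*\in\NN$ and a set $\Om_0\sub\Om$ with $m(\Om_0)\geq 1-\ep/4$ on which this convergence is uniform, so that
\[
\left|\,\tfrac{1}{R}\sum_{i=0}^{R-1}\log L_{\sg^i(\om)} - \zt\,\right| < \ep \qquad \text{for every } R \geq R^*, \, \om \in \Om_0.
\]

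Finally, for any fixed choice of $B_*>0$ (chosen depending on $\ep$ if a further bound on some other random quantity is required by later applications), the map $q\mapsto B_*q\,e^{-\ta qN_*/2}$ tends to $0$ as $q\to\infty$, so we may select $q_a\in\NN$ large enough that simultaneously $R_a := q_aN_* \geq R^*$ and $B_*q_ae^{-\ta R_a/2}\leq 1/3$, which is exactly \eqref{G1}. Setting $\Om_G:=\Om_0$ yields a set of good fibers with $m(\Om_G)\geq 1-\ep/4$, as required. The only minor obstacle is the identity $\zt=\int\log L_\om\,dm$ and the corresponding reindexing of the double sum; once these elementary manipulations are in place, the lemma reduces to a standard combination of Birkhoff and Egorov.
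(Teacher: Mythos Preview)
Your argument is correct for the lemma as literally stated: you correctly rewrite \eqref{G2} as an ordinary Birkhoff average of $\log L_\om$, invoke Birkhoff and Egorov to find a large-measure set on which \eqref{G2} holds for all $R\ge R^*$, and then enlarge $R_a$ so that the purely deterministic condition \eqref{G1} holds as well.

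However, your construction differs from the paper's in a way that matters for what comes next. In the paper, $B_*$ is not arbitrary: it is first fixed so that the set $\Om_1=\{\om:C_\ep(\om)\le B_*\}$ (with $C_\ep$ from Proposition~\ref{prop: LY ineq 2}) has measure at least $1-\ep/8$. The paper then chooses $\Om_2$ to be the set where \eqref{G2} holds for a single suitably large $R_a$ (no Egorov is needed, just convergence in measure), and defines $\Om_G:=\Om_2\cap\sg^{-R_a}(\Om_1)$. The extra factor $\sg^{-R_a}(\Om_1)$ is not part of the formal definition of ``good fiber'', but it is precisely what is used immediately after the lemma to derive \eqref{G1 cons}: applying Proposition~\ref{prop: LY ineq 2} with the endpoint fiber $\sg^{R_a}(\om)$ requires $C_\ep(\sg^{R_a}(\om))\le B_*$. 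Your set $\Om_0$ carries no such information, so while your $\Om_G$ consists of good fibers, it would not support the subsequent variance estimate without further modification. This is easy to repair---intersect your $\Om_0$ with $\sg^{-R_a}(\Om_1)$ and split the $\ep/4$ budget---but it is worth noting that the paper's choice of $B_*$ and its definition \eqref{eq: def of good set} of $\Om_G$ are doing double duty.
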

\begin{proof}
We begin by letting 
\begin{align}\label{eq: def of Om_1}
\Om_1=\Om_1(B_*):=\set{\om\in\Om: C_\ep(\om)\leq B_*},
\end{align}\index{$B_*$}
where $C_\ep(\om)>0$ is the $m$-a.e. finite measurable constant coming from Proposition~\ref{prop: LY ineq 2}.
Choose $B_*$ sufficiently large such that $m(\Om_1)\geq 1-\sfrac{\ep}{8}$. 
Noting that $\ep<\ta/2$ by \eqref{eq: def of ep_0}, we set $R_0=q_0N_*$ and choose $q_0$ sufficiently large such that 
\begin{align*}
B_*q_0e^{-(\ta-\ep)R_0}\leq B_*q_0e^{-\frac{\ta}{2}R_0}\leq\frac{1}{3}.
\end{align*}
Now let $q_1\geq q_0$ and define the set 
\begin{align*}
\Om_2=\Om_2(q_1):=\set{\om\in\Om: \eqref{G2} \text{ holds for the value } R_1=q_1N_* }.
\end{align*}
Now choose $q_a\geq q_1$ such that $m(\Om_2(q_a))\geq 1-\sfrac{\ep}{8}$. Set $R_a:=q_aN_*$. Set 
\begin{align}\label{eq: def of good set}
\Om_G:=\Om_2\cap \sg^{-R_a}(\Om_1).
\end{align}
Then $\Om_G$ is the set of all $\om\in\Om$ which are good with respect to the numbers $B_*$ and $R_a$, and 
$m(\Om_G)\geq 1-\sfrac{\ep}{4}$.  
\end{proof}

In what follows, given a value $B_*$, we will consider cone parameters 
\begin{align}\label{eq cone param a}
a\geq a_0:=6B_* 
\end{align}\index{$a_0$}
and we set 
\begin{align}\label{eq: def of R*}
q_*=q_{a_0} 
\quad\text{ and }\quad 
R_*:=R_{a_0}=q_*N_*.
\end{align}\index{$R_*$}\index{$q_*$}

Note that \eqref{G1} together with Proposition~\ref{prop: LY ineq 2} implies that, for $\ep<\ep_0$ and $\om\in\Om_G$, we have 
\begin{align}
\var(\~\cL_\om^{R_*}f)
&\leq 
B_*e^{-(\ta-\ep)R_*}\var(f)+B_*\Lm_{\sg^{R_*}(\om)}(\~\cL_\om^{R_*} f)
\nonumber\\
&\leq 
B_*q_*e^{-\frac{\ta}{2}R_*}\var(f)+B_*\Lm_{\sg^{R_*}(\om)}(\~\cL_\om^{R_*} f)
\nonumber\\
&\leq 
\frac{1}{3}\var(f)+B_*\Lm_{\sg^{R_*}(\om)}(\~\cL_\om^{R_*} f).
\label{G1 cons}
\end{align}

The next lemma shows that the normalized operator is a contraction on the fiber cones $\sC_{\om,a}$ and that the image has finite diameter.
\begin{lemma}\label{lem: cone invariance for good om}
If $\om$ is good with respect to the numbers $\ep$, $a_0$, $B_*$, and $R_*$, then for each $a\geq a_0$ we have 
\begin{align*}
\~\cL_{\om}^{R_*}(\sC_{\om,a})\sub \sC_{\sg^{R_*}(\om), \sfrac{a}{2}}\sub \sC_{\sg^{R_*}(\om),a}.
\end{align*}
\end{lemma}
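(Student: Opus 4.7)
The plan is to verify the two inclusions separately, with the second one being essentially immediate from the monotonicity of the cones in the parameter $a$.

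For the second inclusion, if $f \in \sC_{\sg^{R_*}(\om), a/2}$, then $f \geq 0$ and $\var(f) \leq (a/2)\Lm_{\sg^{R_*}(\om)}(f) \leq a\Lm_{\sg^{R_*}(\om)}(f)$, so $f \in \sC_{\sg^{R_*}(\om), a}$. This observation uses only the definition \eqref{eq: def of a cones}.

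For the main inclusion, fix $f \in \sC_{\om,a}$ with $a \geq a_0 = 6B_*$. Positivity of $\~\cL_\om^{R_*} f$ follows from $f \geq 0$ and the weak contraction property \eqref{eq: L_om is a weak contraction on C_+}, which shows $\cL_\om$ (hence $\~\cL_\om$) preserves $\sC_{\om,+}$ iteratively across the $R_*$ steps. It remains to establish the variation bound. Since $\om \in \Om_G$ satisfies \eqref{G1}, inequality \eqref{G1 cons} applies and yields
\begin{align*}
\var(\~\cL_\om^{R_*} f)
\leq \frac{1}{3}\var(f) + B_* \Lm_{\sg^{R_*}(\om)}\!\left(\~\cL_\om^{R_*} f\right).
\end{align*}
Now I would use the hypothesis $f \in \sC_{\om,a}$, which gives $\var(f) \leq a\Lm_\om(f)$, combined with the equivariance inequality \eqref{eq: equivariance prop of Lm}, namely $\Lm_\om(f) \leq \Lm_{\sg^{R_*}(\om)}(\~\cL_\om^{R_*} f)$. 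Chaining these gives
\begin{align*}
\var(\~\cL_\om^{R_*} f)
\leq \left(\frac{a}{3} + B_*\right)\Lm_{\sg^{R_*}(\om)}\!\left(\~\cL_\om^{R_*} f\right).
\end{align*}

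The final step is the numerical check: the condition $a \geq a_0 = 6B_*$ is precisely equivalent to $B_* \leq a/6$, which yields $a/3 + B_* \leq a/3 + a/6 = a/2$. Substituting gives $\var(\~\cL_\om^{R_*} f) \leq (a/2)\Lm_{\sg^{R_*}(\om)}(\~\cL_\om^{R_*} f)$, so $\~\cL_\om^{R_*} f \in \sC_{\sg^{R_*}(\om), a/2}$. There is no real obstacle here; the lemma is essentially a packaging of the Lasota–Yorke inequality from \eqref{G1 cons} (which in turn packages Proposition~\ref{prop: LY ineq 2} with the goodness condition \eqref{G1}) together with the equivariance property \eqref{eq: equivariance prop of Lm} of the functional $\Lm$. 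The choice $a_0 = 6B_*$ in \eqref{eq cone param a} was made precisely so that the two contributions $a/3$ (from variation) and $B_*$ (from the Lasota–Yorke constant) combine to halve the cone parameter.
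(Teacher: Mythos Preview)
Your proof is correct and follows essentially the same route as the paper: apply the Lasota--Yorke consequence \eqref{G1 cons}, use cone membership to bound $\var(f)\le a\Lm_\om(f)$, invoke the equivariance inequality \eqref{eq: equivariance prop of Lm}, and conclude via $B_*\le a/6$ that $a/3+B_*\le a/2$. You are a bit more explicit about positivity and the trivial second inclusion, but the argument is the same.
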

\begin{proof}
For $\om$ good and $f\in\sC_{\om,a}$, \eqref{G1 cons} and \eqref{eq cone param a} give
\begin{align*}
\var(\~\cL_{\om}^{R_*} f)
&\leq 
\frac{1}{3}\var(f) + B_*\Lm_{\sg^{R_*}(\om)}(\~\cL_\om^{R_*}f) 
\\&
\leq 
\frac{a}{3}\Lm_{\om}(f) + \frac{a}{6}\Lm_{\sg^{R_*}(\om)}(\~\cL_\om^{R_*}f) 
\\
&
\leq\frac{a}{2}\Lm_{\sg^{R_*}(\om)}(\~\cL_\om^{R_*} f).
\end{align*}
Hence we have 
\begin{align*}
\~\cL_{\om}^{R_*}(\sC_{\om,a})\sub \sC_{\sg^{R_*}(\om), \sfrac{a}{2}}\sub \sC_{\sg^{R_*}(\om),a}
\end{align*}
as desired.
\end{proof}

\section{Density estimates and cone invariance on bad fibers}\label{sec:bad}
In this section we recall the notion of ``bad'' fibers from \cite{AFGTV20, buzzi_exponential_1999}. We show that for fibers in the small measure set, $$\Om_B:=\Om\bs\Om_G,$$\index{$\Om_B$}
the cone $\sC_{\om,a}$ of positive functions is invariant after sufficiently many iterations for sufficiently large parameters $a>0$. We accomplish this by introducing the concept of bad blocks (coating intervals), which we then show make up a relatively small portion of an orbit. 
As the content of this section is adapted from the closed dynamical setting of Section 7 of \cite{AFGTV20}, we do not provide proofs.

Recall that $R_*$ is given by \eqref{eq: def of R*}. Following Section~7 of \cite{AFGTV20}, and using the same justifications therein, we define the measurable function $y_*:\Om\to\NN$ so that 
\begin{align}\label{eq: def of y_*}
0\leq y_*(\om)<R_*
\end{align}
is the smallest integer such that for either choice of sign $+$ or $-$ we have 
\begin{flalign} 
& \lim_{n\to\infty} \frac{1}{n}\#\set{0\leq k< n: \sg^{\pm kR_*+y_*(\om)}(\om)\in\Om_G} >1-\ep,
\label{def y*1} 
&\\
& \lim_{n\to\infty} \frac{1}{n}\#\set{0\leq k< n: C_\ep\lt(\sg^{\pm kR_*+y_*(\om)}(\om)\rt)\leq B_*} >1-\ep.
\label{def y*2} 
\end{flalign}
Clearly, $y_*:\Om\to\NN$\index{$y_*:\Om\to\NN$} is a measurable function such that 
\begin{flalign} 
& y_*(\sg^{y_*(\om)}(\om))=0,
\label{prop j*1} 
&\\
& y_*(\sg^{R_*}(\om))=y_*(\om).
\label{prop j*2} 
\end{flalign}
In particular, \eqref{prop j*1} and \eqref{prop j*2} together imply that 
\begin{align}\label{prop j*3}
y_*(\sg^{y_*(\om)+kR_*}(\om))=0
\end{align}
for all $k\in\NN$.
Let 
\begin{align}\label{def Gm}
\Gm(\om):=\prod_{k=0}^{q_*-1} L_{\sg^{kN_*}(\om)}^{N_*},
\end{align}\index{$\Gm(\om)$}
where $q_*$ is given by \eqref{eq: def of R*},
and for each $\om\in\Om$, given $\ep>0$, we define the \textit{coating length} $\ell(\om)=\ell_\ep(\om)$ as follows:
\begin{itemize}
\item if $\om\in\Om_G$, then set $\ell(\om):=1$, 
\item if $\om\in\Om_B$, then 
\begin{align}\label{def: coating length}
\ell(\om):=\min\set{n\in\NN: \frac{1}{n}\sum_{0\leq k< n} \lt(\ind_{\Om_B}\log \Gm\rt)(\sg^{kR_*}(\om)) \leq \zt R_*\sqrt{\ep}},
\end{align}\index{$\ell(\om)$}
where $\zt$ is as in \eqref{eq: def of zt}.
If the minimum is not attained we set $\ell(\om)=\infty$.
\end{itemize}
Since $L_\om^{N_*}\geq 6^{N_*}$ by Lemma~\ref{lem: buzzi LY1}, we must have that 
\begin{align}\label{eq: Bm geq 6^R}
\Gm(\om)\geq 6^{R_*}
\end{align}
for all $\om\in\Om$. 
It follows from Lemma~\ref{lem: buzzi LY1} that for all $\om\in\Om$ we have 
\begin{align}\label{eq: LY ineq for bad fibers}
\var(\~\cL_\om^{R_*}f)
&\leq \Gm(\om)(\var(f)+\Lm_{\sg^{R_*}(\om)}(f)).  
\end{align}
Furthermore, if $\om\in\Om_G$ it follows from \eqref{G2} that 
\begin{align}\label{eq: up and low bds for Gamma on good om}
R_*(\zt-\ep)\leq \log\Gm(\om)\leq R_*(\zt+\ep).
\end{align}
The following proposition collects together some of the key properties of the coating length $\ell(\om)$. 
\begin{proposition}\label{prop: ell(om) props}
For all $\ep>0$ sufficiently small the number $\ell(\om)$ satisfies the following.
\begin{flalign*}
& \text{For }m\text{-a.e. } \om\in\Om \text{ such that } y_*(\om)=0 \text{ we have } \ell(\om)<\infty,
\tag{i}\label{prop: ell(om) props item i}
&\\
& \text{If }\om\in\Om_B \text{ then }\ell(\om)\geq 2.
\tag{ii} \label{prop: ell(om) props item ii}
\end{flalign*}
\end{proposition}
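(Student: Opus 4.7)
The plan is to handle the two parts separately. Part (ii) will follow from a direct one-step bound using the lower estimate on $\Gm$, while part (i) will require the Birkhoff ergodic theorem combined with the frequency estimate \eqref{def y*1} that defines the set $\{y_*=0\}$.

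For part (ii), I would fix $\om \in \Om_B$ and examine the condition in \eqref{def: coating length} at $n = 1$. The single-term average reduces to $\ind_{\Om_B}(\om)\log\Gm(\om) = \log\Gm(\om)$, which by \eqref{eq: Bm geq 6^R} satisfies $\log\Gm(\om) \geq R_* \log 6$. Since $\zt$ from \eqref{eq: def of zt} is a fixed constant independent of $\ep$, whenever $\sqrt{\ep} < \log 6/\zt$ one has $R_* \log 6 > R_*\zt\sqrt{\ep}$, so the defining inequality fails at $n = 1$ and therefore $\ell(\om) \geq 2$. This is exactly (ii), and such $\ep$ is always available within the range imposed by \eqref{eq: def of ep_0} and \eqref{eq: zt geq log 6}.

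For part (i), I would set $\psi := \ind_{\Om_B}\log\Gm$. The $L^1$-integrability of $\log L_\om$ implicit in \eqref{eq: def of zt} (and traceable to Proposition~\ref{prop: log integr of Q and K}), combined with the fact that $\log\Gm$ is a finite sum of shifted copies of $\log L_{\sg^{jN_*}(\om)}^{N_*}$, ensures $\log\Gm, \psi \in L^1(m)$. By Birkhoff's ergodic theorem applied to the $m$-preserving map $\sg^{R_*}$, for $m$-a.e. $\om$ the averages $\frac{1}{n}\sum_{k=0}^{n-1}\psi(\sg^{kR_*}\om)$ converge to a limit $\psi^*(\om)$. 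On the set $\{y_*=0\}$, the defining property \eqref{def y*1} provides the strict bound $\lim_n \frac{1}{n}\sum_{k=0}^{n-1}\ind_{\Om_B}(\sg^{kR_*}\om) < \ep$. To combine this density estimate with the integrability of $\log\Gm$, I will truncate: for any $M > 0$,
\[
\frac{1}{n}\sum_{k=0}^{n-1}\psi(\sg^{kR_*}\om) \leq M\cdot\frac{1}{n}\sum_{k=0}^{n-1}\ind_{\Om_B}(\sg^{kR_*}\om) + \frac{1}{n}\sum_{k=0}^{n-1}(\log\Gm - M)^+(\sg^{kR_*}\om).
\]
In the limit, the first summand is bounded above by $M\ep$, and by Birkhoff the second converges to a function with $m$-integral $\int(\log\Gm - M)^+dm$, which tends to $0$ as $M\to\infty$ by dominated convergence. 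Choosing $M = M(\ep)$ so that both $M\ep$ and the tail integral are smaller than $R_*\zt\sqrt{\ep}/2$, one concludes $\psi^*(\om) < R_*\zt\sqrt{\ep}$ for $m$-a.e.\ $\om \in \{y_*=0\}$, so that the defining inequality of $\ell(\om)$ is eventually satisfied and hence $\ell(\om)<\infty$.

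The main obstacle will be the quantitative matching between the truncation level $M$ and the tolerance $\sqrt{\ep}$: one needs a version of absolute continuity of the integral strong enough to admit $M$ of order $R_*\zt/\sqrt{\ep}$ while keeping the tail small, which can be arranged by exploiting the slightly-better-than-$L^1$ behaviour of $\log\Gm$ or by absorbing the exceptional set $\{\log\Gm > M\}$ into an enlarged version of $\Om_B$ of measure still at most $\ep/2$ (and correspondingly adjusting $y_*$). A secondary technicality is that $\sg^{R_*}$ need not be ergodic even though $\sg$ is; this is handled routinely by passing to the $\sg^{R_*}$-ergodic decomposition and using that $\sg$ permutes these components, so the relevant integrals distribute evenly across them.
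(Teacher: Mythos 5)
Your treatment of (ii) is correct and is the intended one-line argument: for $\om\in\Om_B$ the $n=1$ average in \eqref{def: coating length} equals $\log\Gm(\om)\geq R_*\log 6$ by \eqref{eq: Bm geq 6^R}, which exceeds $\zt R_*\sqrt{\ep}$ as soon as $\sqrt{\ep}<\log 6/\zt$ (this is already forced by \eqref{eq: def of gm constant}), so $\ell(\om)\geq 2$. The gap is in (i), at the final step where you pass from ``the $m$-integral of the limit of $\frac1n\sum_{k<n}(\log\Gm-M)^+(\sg^{kR_*}\om)$ is small'' to ``$\psi^*(\om)<R_*\zt\sqrt{\ep}$ for $m$-a.e.\ $\om$ with $y_*(\om)=0$''. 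Birkhoff's theorem for $\sg^{R_*}$ identifies that limit with the conditional expectation of $(\log\Gm-M)^+$ on the $\sg^{R_*}$-invariant $\sg$-algebra; since ergodicity of $\sg$ does not give ergodicity of $\sg^{R_*}$, this is a genuinely non-constant function, and smallness of its integral only yields, via Markov, a small-measure exceptional set on which it may be large. Nothing in the definition of $y_*$ --- conditions \eqref{def y*1}--\eqref{def y*2} only constrain visit frequencies to $\Om_G$ and to $\set{C_\ep\leq B_*}$ --- prevents that exceptional set from meeting $\set{y_*=0}$ in positive measure, so the pointwise conclusion does not follow. Your closing remark that the $\sg^{R_*}$-ergodic decomposition makes ``the relevant integrals distribute evenly'' across components is also incorrect: the components have equal measure and are permuted by $\sg$, but $\ind_{\Om_B}\log\Gm$ is not $\sg$-invariant, so its integrals over distinct components need not be comparable.

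The quantitative obstacle you flag is real and is not repaired by either of your suggested fixes. Only $\log L_\om\in L^1(m)$ is available (Lemma \ref{lem: buzzi LY1}, via Proposition \ref{prop: log integr of Q and K}), so there is no better-than-$L^1$ behaviour to exploit; and since $M\ep\leq R_*\zt\sqrt{\ep}/2$ forces $M\leq \zt R_*/(2\sqrt{\ep})$, writing $\log\Gm(\om)=\sum_{i=0}^{R_*-1}\log L_{\sg^i(\om)}$ from \eqref{def Gm} reduces the tail requirement to $\int_\Om\lt(\log L_\om-\zt/(2\sqrt{\ep})\rt)^+dm\leq \zt\sqrt{\ep}/2$, which can fail for \emph{every} small $\ep$ when $\log L$ has a heavy but integrable tail. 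Absorbing $\set{\log\Gm>M}$ into an enlarged $\Om_B$ and ``adjusting $y_*$'' is not a proof of the stated proposition but a modification of the objects on which Sections \ref{sec:bad}--\ref{sec: conf and inv meas} rely. A correct argument (the paper defers to Section 7 of \cite{AFGTV20}) must build the required smallness into the construction rather than derive it afterwards from the two frequency conditions: one uses the freedom in Lemma \ref{lem: constr of Om_G} to take $B_*$ and $q_a$ so large that, by absolute continuity of the integral of the fixed $L^1$ function $\log L$, $\int_{\Om_B}\log\Gm\, dm$ is itself a small multiple of $\zt R_*\sqrt{\ep}$, and then selects the residue $y_*(\om)$ by the same averaging-over-residues argument used for \eqref{def y*1}--\eqref{def y*2} so that, in addition, the Birkhoff limit of $\ind_{\Om_B}\log\Gm$ along the $\sg^{R_*}$-orbit through $\sg^{y_*(\om)}(\om)$ lies below the threshold; with that extra property attached to $y_*$, item (i) is immediate. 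Without tying $y_*$ (or the choice of $\Om_G$) to $\ind_{\Om_B}\log\Gm$ in this way, the visit-frequency conditions alone do not imply (i).
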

\begin{remark}\label{rem: y*=0 implies ell finite}
Given $\om_0\in\Om$, for each $j\geq 0$ let $\om_{j+1}=\sg^{\ell(\om_j)R_*}(\om_j)$. 
As a consequence of Proposition~\ref{prop: ell(om) props} \eqref{prop: ell(om) props item i} and \eqref{prop j*2}, we see that for $m$-a.e. $\om_0\in\Om$ with $y_*(\om_0)=0$, we must have that $\ell(\om_j)<\infty$ for all $j\geq 0$.
\end{remark}
\begin{definition}\label{def: coating intervals}	
We will call a (finite) sequence $\om, \sg(\om), \dots, \sg^{\ell(\om)R_*-1}(\om)$ of $\ell(\om)R_*$ fibers a \textit{good block}\index{good block} (originating at $\om$) if $\om\in\Om_G$ (which implies that $\ell(\om)=1$). If, on the other hand, $\om\in\Om_B$ we call such a sequence a \textit{bad block}\index{bad block}, or coating interval, originating at $\om$.
\end{definition}
For $\ep>0$ sufficiently small 
we have that $\sfrac{\zt \sqrt{\ep}}{\log 6}<1$, and so 
we let $\gm<1$ such that 
\begin{align}\label{eq: def of gm constant}
\frac{\zt \sqrt{\ep}}{\log 6}<\gm<1.
\end{align}

We now wish to show that the normalized operator $\~\cL_\om$ is weakly contracting (i.e. non-expanding) on the fiber cones $\sC_{\om,a}$ for sufficiently large values of $a>a_0$. We obtain this cone invariance on blocks of length $\ell(\om)R_*$, however in order to obtain cone contraction with a finite diameter image we will have to travel along several such blocks. For this reason we introduce the following notation. 

Given $\om\in\Om$ with $y_*(\om)=0$ for each $k\geq 1$ we define the length
\begin{align*}
\Sg_\om^{(k)}:=\sum_{j=0}^{k-1} \ell(\om_j)R_*
\end{align*}\index{$\Sg_\om^{(k)}$}
where $\om_0:=\om$ and for each $j\geq 1$ we set $\om_j:=\sg^{\Sg_\om^{(j-1)}}(\om)$.
This construction is justified as we recall from Proposition~\ref{prop: ell(om) props} that for $m$-a.e. $\om\in\Om$ with $y_*(\om)=0$ we have that $\ell(\om)<\infty$. 
The next lemma was adapted from Lemma~7.5 of \cite{AFGTV20}.
\begin{lemma}\label{lem: cone cont for coating blocks}
For $\ep>0$ sufficiently small, each $N\in\NN$, and $m$-a.e. $\om\in\Om$ with $y_*(\om)=0$ we have that 
\begin{align}\label{eq: important coating block ineq}	
\var\lt(\~\cL_\om^{\Sg_\om^{(N)}}f\rt)
&\leq
\lt(\frac{1}{3}\rt)^{\Sg_\om^{(N)}/R_*}\var(f)
+
\frac{a_*}{6}\Lm_{\sg^{\Sg_\om^{(N)}}(\om)}(\~\cL_\om^{\Sg_\om^{(N)}} f).
\end{align}
Moreover, we have that 
\begin{align}\label{eq: cone inv for large bad blocks}
\~\cL_\om^{\Sg_\om^{(N)}}(\sC_{\om,a_*})\sub \sC_{\sg^{\Sg_\om^{(N)}}(\om), \sfrac{a_*}{2}},
\end{align}
where 
\begin{align}\label{eq: def of a_*}
a_*=a_*(\ep):=2a_0e^{\zt R_*\sqrt{\ep}}=12B_*e^{\zt R_*\sqrt{\ep}}.
\end{align}
\end{lemma}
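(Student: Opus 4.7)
The plan is to induct on $N$, deriving a block-level variance estimate and chaining along the block sequence. Write $\om_j := \sg^{\Sg_\om^{(j)}}(\om)$, $V_j := \var\lt(\~\cL_\om^{\Sg_\om^{(j)}}f\rt)$, and $\Lambda_j := \Lm_{\om_j}\lt(\~\cL_\om^{\Sg_\om^{(j)}}f\rt)$; by \eqref{eq: equivariance prop of Lm} we have $\Lambda_0 \leq \Lambda_1 \leq \cdots \leq \Lambda_N$, so intermediate $\Lambda$'s can all be replaced by $\Lambda_N$ when we aggregate. The hypothesis $y_*(\om) = 0$ ensures, through Proposition~\ref{prop: ell(om) props} and Remark~\ref{rem: y*=0 implies ell finite}, that every $\ell(\om_j)$ is finite, so the recursion $\om_{j+1} = \sg^{\ell(\om_j) R_*}(\om_j)$ is well-defined.

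The heart of the argument is a per-block estimate $V_{j+1} \leq (1/3)^{\ell(\om_j)} V_j + c_j \Lambda_{j+1}$ with $c_j \leq a_*/9$ uniformly in $j$. For good blocks ($\om_j \in \Om_G$, $\ell(\om_j) = 1$) this is immediate from \eqref{G1 cons}. For bad blocks ($\om_j \in \Om_B$, $\ell_j := \ell(\om_j) \geq 2$), one iterates the single-step Lasota--Yorke inequality $\ell_j$ times, applying \eqref{G1 cons} at each sub-step whose base-point $\sg^{kR_*}(\om_j)$ lies in $\Om_G$ (contributing a factor $1/3$ to the $V_j$-coefficient) and the crude bound \eqref{eq: LY ineq for bad fibers} at sub-steps whose base-point lies in $\Om_B$ (contributing $\Gm(\sg^{kR_*}\om_j)$). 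Writing $G_j, B_j$ for the index sets of good and bad sub-steps, solving the resulting linear recursion yields a $V_j$-coefficient of $(1/3)^{|G_j|}\prod_{k \in B_j}\Gm(\sg^{kR_*}\om_j)$. The coating-length condition \eqref{def: coating length} gives $\sum_{k \in B_j}\log\Gm(\sg^{kR_*}\om_j) \leq \ell_j \zt R_* \sqrt{\ep}$; combined with the universal lower bound $\log\Gm \geq R_* \log 6$ from \eqref{eq: Bm geq 6^R}, this forces $|B_j| < \gm \ell_j$ with $\gm$ as in \eqref{eq: def of gm constant}. These two bounds compress the $V_j$-coefficient to $(1/3)^{\ell_j}$ once the arrangement-dependent overhead $3^{|B_j|} e^{\ell_j \zt R_* \sqrt{\ep}}$ is absorbed into $c_j$; the scale $a_* = 12 B_* e^{\zt R_* \sqrt{\ep}}$ is tuned precisely to accommodate this absorption uniformly, a consequence of the choice of $\ep$ small enough that $(1-\gm)\log 3 > \zt R_* \sqrt{\ep}$.

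Chaining the per-block estimate across $j = 0, \ldots, N-1$ and using $\Lambda_{j+1} \leq \Lambda_N$, the variance coefficients multiply to $\prod_j (1/3)^{\ell(\om_j)} = (1/3)^{\Sg_\om^{(N)}/R_*}$, while the cumulative $\Lambda$-coefficient is bounded via a geometric series by $\tfrac{a_*}{9}\cdot\tfrac{1}{1-1/3} = \tfrac{a_*}{6}$, yielding \eqref{eq: important coating block ineq}. The cone invariance \eqref{eq: cone inv for large bad blocks} follows immediately: for $f \in \sC_{\om,a_*}$, equivariance gives $\var(f) \leq a_* \Lm_\om(f) \leq a_* \Lambda_N$, and hence
$$V_N \leq \lt[(1/3)^{\Sg_\om^{(N)}/R_*} a_* + \tfrac{a_*}{6}\rt]\Lambda_N \leq \tfrac{a_*}{2}\Lambda_N,$$
since $\Sg_\om^{(N)}/R_* \geq N \geq 1$. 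The main technical obstacle is the combinatorial control within bad blocks: converting the averaged coating-length bound into a count-bound on $|B_j|$ via the universal lower bound on $\log\Gm$, and then matching the arrangement-dependent overhead $3^{|B_j|} e^{\ell_j \zt R_* \sqrt{\ep}}$ uniformly against $a_*$, which is precisely what dictates the tuning of $\ep$ and $\gm$ in \eqref{eq: def of gm constant}.
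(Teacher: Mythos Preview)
There is a genuine gap in your control of the $V_j$-coefficient within bad blocks. You apply the already-coarsened bound \eqref{G1 cons} at good sub-steps, obtaining a per-block $V_j$-coefficient of $(1/3)^{|G_j|}\prod_{k\in B_j}\Gm(\sg^{kR_*}\om_j)$, and then claim this can be ``compressed to $(1/3)^{\ell_j}$ once the overhead $3^{|B_j|}e^{\ell_j\zt R_*\sqrt\ep}$ is absorbed into $c_j$.'' But absorbing a multiplicative factor on the $V_j$-term into the additive $\Lambda$-term is not a valid operation, and the bound $(1/3)^{\ell_j}$ is simply false under your setup: since $\om_j\in\Om_B$, at least one sub-step is bad, and each bad sub-step contributes $\Gm\geq 6^{R_*}>3$, so $(1/3)^{|G_j|}\prod_{k\in B_j}\Gm>(1/3)^{\ell_j}$ strictly. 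Your stated constraint $(1-\gm)\log 3>\zt R_*\sqrt\ep$ only yields the per-step factor $3^{-(1-\gm)}e^{\zt R_*\sqrt\ep}<1$, which is strictly larger than $1/3$; chaining this gives $(3^{-(1-\gm)}e^{\zt R_*\sqrt\ep})^{L_N}$, not $(1/3)^{L_N}$, so the first inequality of the lemma is not obtained.

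The paper avoids this by \emph{not} coarsening to $1/3$ at the good sub-steps. Instead it retains the sharper factor $B_*e^{-(\ta-\ep)R_*}$ (the intermediate estimate from which \eqref{G1 cons} is derived), and exploits the \emph{minimality} of $\ell(\om_j)$ in \eqref{def: coating length} to obtain the tail bound $\frac{1}{\ell_j-k}\sum_{k\le m<\ell_j}\ind_{\Om_B}\log\Gm(\sg^{mR_*}\om_j)\le\zt R_*\sqrt\ep$ for every $0\le k<\ell_j$, not merely for $k=0$. Combining, the tail products satisfy $\prod_{m=k}^{\ell_j-1}\Phi_{\sg^{mR_*}\om_j}\le\bigl(B_*e^{(\zt\sqrt\ep-(\ta-\ep)(1-\gm))R_*}\bigr)^{\ell_j-k}$, and the choice of $\ep$ small enough (so that $\zt\sqrt\ep-(\ta-\ep)(1-\gm)<-\ta/2$, as in the paper's footnote) together with \eqref{G1} finally reduces this to $(1/3)^{\ell_j-k}$. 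The extra decay $e^{-(\ta-\ep)R_*}$ at good sub-steps is exactly what compensates the $\Gm$-growth at bad sub-steps; throwing it away prematurely via \eqref{G1 cons} is fatal. The tail structure is also what controls the $\Lambda$-coefficient $c_j$, which your proposal does not substantiate.
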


\begin{proof}
Throughout the proof we will denote $\ell_i=\ell(\om_i)$ and $L_i=\sum_{k=0}^{i-1}\ell_k$ for each $0\leq i<N$. Then $\Sg_\om^{(N)}=L_NR_*$.
Using \eqref{G1 cons} on good fibers and \eqref{eq: LY ineq for bad fibers} on bad fibers, for any $p\geq 1$ and $f\in\sC_{\om,+}$ we have 
\begin{align}\label{eq: var coating length}
\var(\~\cL_\om^{pR_*} f)
&\leq
\lt(\prod_{j=0}^{p-1} \Phi_{\sg^{jR_*}(\om)}^{(R_*)}\rt)\var(f)
+ 
\sum_{j=0}^{p-1}\lt(D_{\sg^{jR_*}(\om)}^{(R_*)}\cdot \prod_{k=j+1}^{p-1}\Phi_{\sg^{kR_*}(\om)}^{(R_*)}\rt)\Lm_{\sg^{pR_*}(\om)}(\~\cL_\om^{pR_*} f),
\end{align}
where 
\begin{equation}\label{eq: def of Phi_tau^R}
\Phi_\tau^{(R_*)}=
\begin{cases}
B_* e^{-(\ta-\ep)R_*} &\text{for } \tau\in\Om_G\\
\Gm(\tau) &\text{for } \tau\in\Om_B
\end{cases}
\end{equation}
and 
\begin{equation}\label{eq: def of D_tau^R}
D_\tau^{(R_*)}=
\begin{cases}
B_* &\text{for } \tau\in\Om_G\\
\Gm(\tau) &\text{for } \tau\in\Om_B.
\end{cases}
\end{equation}
For any $0\leq i<N$ and  $0\leq j<\ell_i$ we can write 
\begin{align*}
\sum_{0\leq k< \ell_i}\lt(\ind_{\Om_B}\log \Gm\rt)(\sg^{kR_*}(\om_i))
=
\sum_{0\leq k< j }\lt(\ind_{\Om_B}\log \Gm\rt)(\sg^{kR_*}(\om_i))
+
\sum_{j\leq k< \ell_i }\lt(\ind_{\Om_B}\log \Gm\rt)(\sg^{kR_*}(\om_i)).
\end{align*}
The definition of $\ell(\om_i)$, \eqref{def: coating length}, then implies that 
\begin{align*}
\frac{1}{j}\sum_{0\leq k< j }\lt(\ind_{\Om_B}\log \Gm\rt)(\sg^{kR_*}(\om_i))
> 
\zt R_*\sqrt{\ep},
\end{align*}
and consequently that 
\begin{align}\label{eq: avg sum log Gm for bad om}
\frac{1}{\ell_i-j}\sum_{j\leq k< \ell_i}\lt(\ind_{\Om_B}\log \Gm\rt)(\sg^{kR_*}(\om_i))
\leq
\zt R_*\sqrt{\ep}. 
\end{align}
Now, using \eqref{eq: Bm geq 6^R}, \eqref{eq: avg sum log Gm for bad om}, and \eqref{eq: def of gm constant} we see that for $\ep$ sufficiently small, the proportion of bad blocks is given by 
\begin{align}
\frac{1}{\ell_i-j}\#\set{j\leq k<\ell_i: \sg^{kR_*}(\om_i)\in\Om_B}
&=
\frac{1}{\ell_i-j}\sum_{j\leq k<\ell_i} \lt(\ind_{\Om_B}\rt)(\sg^{kR_*}(\om_i))
\nonumber\\
&\leq
\frac{1}{(\ell_i-j)R_*\log 6}\sum_{j\leq k<\ell_i} \lt(\ind_{\Om_B}\log \Gm\rt)(\sg^{kR_*}(\om_i))
\leq \gm.
\label{eq: proportion bad blocks}
\end{align}
In view of \eqref{eq: def of Phi_tau^R}, using \eqref{eq: avg sum log Gm for bad om}, \eqref{eq: proportion bad blocks}, for any $0\leq i<N$ and $0\leq j<\ell_i$ we have 
\begin{align}
\prod_{k=j}^{\ell_i-1} \Phi_{\sg^{kR_*}(\om_i)}^{(R_*)}
&=
\prod_{\substack{j\leq k<\ell_i \\ \sg^{kR_*}(\om_i)\in\Om_G}}
B_*e^{-(\ta-\ep)R_*}
\cdot 
\prod_{\substack{j\leq k<\ell_i \\ \sg^{kR_*}(\om_i)\in\Om_B}}
\Gm(\sg^{kR_*}(\om_i))
\nonumber\\
&\leq 
\lt(B_*e^{-(\ta-\ep)R_*}\rt)^{(1-\gm)(\ell_i-j)}\cdot \exp\lt(\lt(\zt R_*\sqrt{\ep}\rt)(\ell_i-j)\rt)
\nonumber\\
&=
\lt(B_*^{1-\gm}\exp\lt(\lt(\zt \sqrt{\ep}-(\ta-\ep)(1-\gm)\rt)R_*\rt)\rt)^{\ell_i-j}
\nonumber\\
&< 
\lt(B_*\exp\lt(\lt(\zt \sqrt{\ep}-(\ta-\ep)(1-\gm)\rt)R_*\rt)\rt)^{\ell_i-j}.
\label{eq: est for coating lengths 1}
\end{align}
Now for any $0\leq j<L_N$ there must exist some $0\leq i_0<N$ and some $0\leq j_0<\ell_{i_0+1}$
such that $L_{i_0-1}+j_0= j<L_{i_0}$. Thus, using \eqref{eq: est for coating lengths 1} we can write
\begin{align}
\prod_{k=j}^{L_N-1} \Phi_{\sg^{kR_*}(\om)}^{(R_*)}
&=
\prod_{k=j_0}^{\ell_{i_0+1}-1} \Phi_{\sg^{kR_*}(\om_{i_0})}^{(R_*)}
\cdot
\prod_{i=i_0+1}^{N-1} \prod_{k=0}^{\ell_i-1} \Phi_{\sg^{kR_*}(\om_i)}^{(R_*)}
\nonumber\\
&< 
\lt(B_*\exp\lt(\lt(\zt \sqrt{\ep}-(\ta-\ep)(1-\gm)\rt)R_*\rt)\rt)^{\ell_{i_0+1}-j_0}
\cdot
\nonumber\\
&\qquad\cdot
\prod_{i=i_0+1}^{N-1} \lt(B_*\exp\lt(\lt(\zt \sqrt{\ep}-(\ta-\ep)(1-\gm)\rt)R_*\rt)\rt)^{\ell_i}
\nonumber\\
&=
\lt(B_*\exp\lt(\lt(\zt \sqrt{\ep}-(\ta-\ep)(1-\gm)\rt)R_*\rt)\rt)^{L_N-j}.
\label{eq: est for coating lengths 1 full}
\end{align}

Now, since $B_*,\Gm(\om)\geq 1$ for all $\om\in\Om$, using \eqref{eq: def of D_tau^R} and \eqref{eq: avg sum log Gm for bad om}, we have that for $0\leq i<N$ and $0\leq j<\ell_i$ 
\begin{align}\label{eq: est for coating lengths 2}
D_{\sg^{jR_*}(\om_i)}^{(R_*)}
\leq
B_*\Gm(\sg^{jR_*}(\om_i))
\leq 
B_*\cdot\prod_{\substack{j\leq k< \ell_i \\ \sg^{kR_*}(\om_i)\in\Om_B}} \Gm(\sg^{kR_*}(\om_i))
\leq
B_*\lt(e^{\zt R_*\sqrt{\ep}}\rt)^{(\ell_i-j)}.		
\end{align}
Similarly to the reasoning used to obtain \eqref{eq: est for coating lengths 1 full}, for any $0\leq j<L_N$ we see that we can improve \eqref{eq: est for coating lengths 2} so that we have 
\begin{align}\label{eq: est for coating lengths 2 full}
D_{\sg^{jR_*}(\om_i)}^{(R_*)}
\leq
B_*\lt(e^{\zt R_*\sqrt{\ep}}\rt)^{L_N-j}.		
\end{align}
Thus, inserting \eqref{eq: est for coating lengths 1 full} and \eqref{eq: est for coating lengths 2 full} into \eqref{eq: var coating length} (with $p=L_N$) we see that
\begin{align*}
&\var\lt(\~\cL_\om^{\Sg_\om^{(N)}} f\rt)
\leq
\lt(\prod_{j=0}^{L_N-1} \Phi_{\sg^{jR_*}(\om)}^{(R_*)}\rt)\var(f)
+ 
\sum_{j=0}^{L_N-1}\lt(D_{\sg^{jR_*}(\om)}^{(R_*)}\cdot \prod_{k=j+1}^{L_N-1}\Phi_{\sg^{kR_*}(\om)}^{(R_*)}\rt)\Lm_{\sg^{\Sg_\om^{(N)}}(\om)}\lt(\~\cL_\om^{\Sg_\om^{(N)}} f\rt),
\\
&\leq 
\lt(B_*\exp\lt(\lt(\zt \sqrt{\ep}-(\ta-\ep)(1-\gm)\rt)R_*\rt)\rt)^{L_N}\var(f)
\\
&\qquad
+
\Lm_{\sg^{\Sg_\om^{(N)}}(\om)}\lt(\~\cL_\om^{\Sg_\om^{(N)}} f\rt)\sum_{j=0}^{L_N-1}
B_*\lt(e^{\zt R_*\sqrt{\ep}}\rt)^{(L_N-j)}
\cdot 
\lt(B_*\exp\lt(\lt(\zt \sqrt{\ep}-(\ta-\ep)(1-\gm)\rt)R_*\rt)\rt)^{L_N-j-1}
\\
&
=
\lt(B_*\exp\lt(\lt(\zt \sqrt{\ep}-(\ta-\ep)(1-\gm)\rt)R_*\rt)\rt)^{L_N}\var(f)
\\
&\qquad
+
B_*e^{\zt R_*\sqrt{\ep}}\cdot \Lm_{\sg^{\Sg_\om^{(N)}}(\om)}\lt(\~\cL_\om^{\Sg_\om^{(N)}} f\rt)\sum_{j=0}^{L_N-1}		
\lt(B_*\exp\lt(\lt(2\zt \sqrt{\ep}-(\ta-\ep)(1-\gm)\rt)R_*\rt)\rt)^{L_N-j-1}.
\end{align*}
Therefore, taking $\ep>0$ sufficiently small\footnote{Any $\ep<\min\set{\lt(\frac{\log 55}{4\zt}\rt)^2, \lt(\frac{\ta}{8\zt}\rt)^2}$ such that $\frac{\sqrt{\ep}\zt}{2\log 55}\leq \gm$, which implies $-\frac{\ta}{2}> 2\sqrt{\ep}\zt-(\ta-\ep)(1-\gm) > \sqrt{\ep}\zt-(\ta-\ep)(1-\gm)$, will suffice; see Observation~7.4 of \cite{AFGTV20} for details.} 
in conjunction with \eqref{G1}, we have that
\begin{align*}
\var\lt(\~\cL_\om^{\Sg_\om^{(N)}}f\rt)
&\leq
\lt(B_*e^{-\frac{\ta}{2}R_*}\rt)^{L_N}\var(f)
+
B_*e^{\zt R_*\sqrt{\ep}}\cdot \Lm_{\sg^{\Sg_\om^{(N)}}(\om)}\lt(\~\cL_\om^{\Sg_\om^{(N)}} f\rt)\sum_{j=0}^{L_N-1}		
\lt(B_*e^{-\frac{\ta}{2}R_*}\rt)^{L_N-j-1} 
\\
&\leq 
\lt(\frac{1}{3}\rt)^{L_N}\var(f)
+
B_*e^{\zt R_*\sqrt{\ep}}\cdot \Lm_{\sg^{\Sg_\om^{(N)}}(\om)}\lt(\~\cL_\om^{\Sg_\om^{(N)}} f\rt)\sum_{j=0}^{L_N-1}		
\lt(\frac{1}{3}\rt)^{L_N-j-1},
\end{align*} 
and so we must have that 
\begin{align*}
\var\lt(\~\cL_\om^{\Sg_\om^{(N)}} f\rt)
&\leq 
\lt(\frac{1}{3}\rt)^{L_N}\var(f)+2B_*e^{\zt R_*\sqrt{\ep}}\Lm_{\sg^{\Sg_\om^{(N)}}(\om)}\lt(\~\cL_\om^{\Sg_\om^{(N)}} f\rt)
\\
&=
\lt(\frac{1}{3}\rt)^{L_N}\var(f)+\frac{a_*}{6}\Lm_{\sg^{\Sg_\om^{(N)}}(\om)}\lt(\~\cL_\om^{\Sg_\om^{(N)}} f\rt),
\end{align*}
which proves the first claim.
Thus, for any $f\in\sC_{\om,a_*}$ we have that 
\begin{align*}
\var(\~\cL_\om^{\Sg_\om^{(N)}} f)
&\leq 
\lt(\frac{1}{3}\rt)^{L_N}\Lm_{\om}(f)+\frac{a_*}{6}\Lm_{\sg^{\Sg_\om^{(N)}}(\om)}\lt(\~\cL_\om^{\Sg_\om^{(N)}} f\rt)
\\
&\leq
\frac{a_*}{3}\Lm_{\sg^{\Sg_\om^{(N)}}(\om)}\lt(\~\cL_\om^{\Sg_\om^{(N)}} f\rt)+\frac{a_*}{6}\Lm_{\sg^{\Sg_\om^{(N)}}(\om)}\lt(\~\cL_\om^{\Sg_\om^{(N)}} f\rt), 
\end{align*}
where we have used the fact that $a_*> 1$, and consequently we have 
\begin{align*}
\~\cL_\om^{\Sg_\om^{(N)}}(\sC_{\om,a_*})\sub \sC_{\sg^{\Sg_\om^{(N)}}(\om),\sfrac{a_*}{2}}\sub \sC_{\sg^{\Sg_\om^{(N)}}(\om),a_*}
\end{align*}
as desired.
\end{proof}

The next lemma shows that the total length of the bad blocks take up only a small proportion of an orbit, however before stating the result we establish the following notation. For each  $n\in\NN$ we let $K_n\geq 0$ be the integer such that 
\begin{align}\label{eq: n KR+zt decomp}
n=K_nR_*+h(n)
\end{align}
where $0\leq h(n)<R_*$ is a remainder term. Given $\om_0\in\Om$, let 
\begin{align}\label{eq: om_j notation}
\om_j=\sg^{\ell(\om_{j-1})R_*}(\om_{j-1})
\end{align} 
for each $j\geq 1$. Then for each $n\in\NN$ we can break the $n$-length $\sg$-orbit of $\om_0$ in $\Om$ into $k_{\om_0}(n)+1$ blocks of length $\ell(\om_j)R_*$ (for $0\leq j\leq k_{\om_0}(n)$) plus some remaining block of length $r_{\om_0}(n)R_*$ where $0\leq r_{\om_0}(n)<\ell(\om_{k_{\om_0}(n)+1})$ plus a remainder segment of length $h(n)$, i.e. we can write 
\begin{align}\label{eq: n length orbit in om_j}
n=\sum_{0\leq j\leq k_{\om_0}(n)}\ell(\om_j)R_* +r_{\om_0}(n)R_* + h(n); 
\end{align}
see Figure~\ref{figure: om_j}. We also note that \eqref{eq: n KR+zt decomp} and \eqref{eq: n length orbit in om_j} imply that 
\begin{align}\label{eq: Kn equality}
K_n=\sum_{0\leq j\leq k_{\om_0}(n)}\ell(\om_j) +r_{\om_0}(n).
\end{align}
\begin{figure}[h]
\centering	
\begin{tikzpicture}[y=1cm, x=1.5cm, thick, font=\footnotesize]    
\draw[line width=1.2pt,  >=latex'](0,0) -- coordinate (x axis) (10,0);       

\draw (0,-4pt)   -- (0,4pt) {};
\node at (0, -.5) {$\om_0$};

\node at (.5, .5) {$\ell(\om_0)R_*$};

\draw (1,-4pt)   -- (1,4pt) {};
\node at (1,-.5) {$\om_1$};

\node at (1.625,.5) {$\ell(\om_1)R_*$};

\draw (2.25,-4pt)  -- (2.25,4pt)  {};
\node at (2.25,-.5) {$\om_2$};

\node at (2.625,.3) {$\cdots$};
\node at (2.625,-.3) {$\cdots$};

\draw (3,-4pt)  -- (3,4pt)  {};
\node at (3,-.5) {$\om_{k-1}$};

\node at (3.825,.5) {$\ell(\om_{k-1})R_*$};

\draw (4.75,-4pt)  -- (4.75,4pt)  {};
\node at (4.75,-.5) {$\om_{k_{\om_0}(n)}$};

\node at (5.75,.5) {$\ell(\om_{k_{\om_0}(n)})R_*$};	

\draw (6.75,-4pt)  -- (6.75,4pt)  {};
\node at (6.75,-.5) {$\om_{k+1}$};

\node at (7.5,.5) {$r_{\om_0}(n)R_*$};

\draw (8.25,-4pt)  -- (8.25,4pt)  {};
\node at (8.25,-.5) {$\sg^{K_nR_*}(\om_0)$};

\node at (8.75,.5) {$h(n)$};

\draw (9.25,-4pt)  -- (9.25,4pt)  {};
\node at (9.25,-.5) {$\sg^{n}(\om_0)$};

\draw (10,-4pt)  -- (10,4pt)  {};
\node at (10,-.5) {$\om_{k+2}$};

\draw[decorate,decoration={brace,amplitude=8pt,mirror}] 
(6.75,-1)  -- (10,-1) ; 
\node at (8.125,-1.6){$\ell(\om_{k+1})R_*$};

\draw[decorate,decoration={brace,amplitude=8pt}] 
(0,1)  -- (9.25,1) ; 
\node at (4.625,1.6){$n$};

\end{tikzpicture}
\caption{The decomposition of $n=\sum_{0\leq j\leq k_{\om_0}(n)}\ell(\om_j)R_*+r_{\om_0}(n)R_* +h(n)$ and the fibers $\om_j$.}
\label{figure: om_j}
\end{figure}
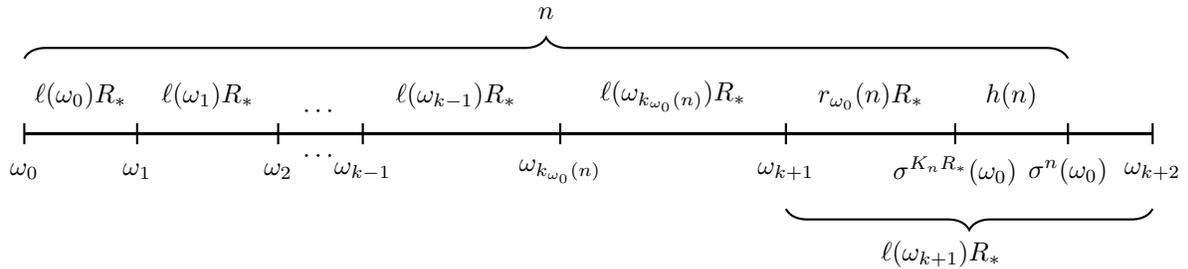
The proof of the following lemma is nearly identical to that of Lemma~7.6 of \cite{AFGTV20}, and therefore it shall be omitted. 
\begin{lemma}\label{lem: proportion of bad blocks}
There exists a measurable function $N_0:\Om\to\NN$ such that for all $n\geq N_0(\om_0)$ and for $m$-a.e. $\om_0\in\Om$ with $y_*(\om_0)=0$  we have 
\begin{align*}
&E_{\om_0}(n):=
\sum_{\substack{0\leq j \leq k_{\om_0}(n) \\ \om_j\in\Om_B}}\ell(\om_j) +r_{\om_0}(n)
<
Y\cdot \ep K_n
\leq 
\frac{Y}{R_*}\ep n
\end{align*}
where 
\begin{align}\label{eq: def of Y}
Y=Y_\ep:=\frac{2(2+\zt) R_*}{\zt R_*\sqrt{\ep}},
\end{align}
and where $K_n$ is as in \eqref{eq: n KR+zt decomp}, 
$\om_j$ is as in \eqref{eq: om_j notation}, and $k_{\om_0}(n)$ and $r_{\om_0}(n)$ are as in \eqref{eq: n length orbit in om_j}.
\end{lemma}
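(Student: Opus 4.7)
The plan is to compare the total bad-block length $E_{\om_0}(n)$ to the Birkhoff sum $S_n(\om_0):=\sum_{k=0}^{K_n-1}(\ind_{\Om_B}\log\Gm)(\sg^{kR_*}(\om_0))$ and then estimate $S_n(\om_0)$ via the ergodic theorem. Throughout I will use that $\ind_{\Om_B}\log\Gm\geq 0$ and that $\log\Gm\geq R_*\log 6$ on $\Om_B$ by \eqref{eq: Bm geq 6^R}.

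First I would exploit the minimality in \eqref{def: coating length}. For each $0\leq j\leq k_{\om_0}(n)$ with $\om_j\in\Om_B$, the choice $p=\ell(\om_j)-1$ in the defining property gives
\[
\sum_{k=0}^{\ell(\om_j)-2}(\ind_{\Om_B}\log\Gm)(\sg^{kR_*}(\om_j))>(\ell(\om_j)-1)\zt R_*\sqrt{\ep}.
\]
A parallel lower bound $r_{\om_0}(n)\zt R_*\sqrt{\ep}$ holds for the partial block: whenever $r_{\om_0}(n)\geq 1$ one necessarily has $\om_{k_{\om_0}(n)+1}\in\Om_B$ (otherwise $\ell=1$ forces $r=0$), and $r_{\om_0}(n)<\ell(\om_{k_{\om_0}(n)+1})$ triggers the same minimality. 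Because these sub-sums correspond to pairwise disjoint segments of the $\sg^{R_*}$-orbit of $\om_0$ contained in $\{0,\dots,K_n-1\}$, adding them yields
\[
\sum_{\substack{0\leq j\leq k_{\om_0}(n)\\ \om_j\in\Om_B}}(\ell(\om_j)-1)+r_{\om_0}(n)\leq\frac{1}{\zt R_*\sqrt{\ep}}\,S_n(\om_0).
\]
The remaining ``$+1$ per bad block'' is absorbed via $b_n:=\#\{0\leq j\leq k_{\om_0}(n):\om_j\in\Om_B\}\leq (R_*\log 6)^{-1}S_n(\om_0)$, which uses $\log\Gm\geq R_*\log 6$ on $\Om_B$. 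Combining these gives
\[
E_{\om_0}(n)\leq\left(\frac{1}{\zt R_*\sqrt{\ep}}+\frac{1}{R_*\log 6}\right)S_n(\om_0).
\]

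Next I would apply the Birkhoff ergodic theorem to the $m$-preserving map $\sg^{R_*}$: the averages $S_n(\om_0)/K_n$ converge $m$-a.e.\ to the conditional expectation of $\ind_{\Om_B}\log\Gm$ with respect to the $\sg^{R_*}$-invariant sub-$\sg$-algebra, whose $m$-integral equals $\int_\Om \ind_{\Om_B}\log\Gm\,dm$. To bound this integral I would combine $\int_\Om\log\Gm\,dm=q_*\int_\Om\log L_\om^{N_*}\,dm=\zt R_*$ (from \eqref{def Gm}, \eqref{eq: def of zt} and $\sg$-invariance of $m$) with the lower bound $\log\Gm\geq R_*(\zt-\ep)$ on $\Om_G\sub\Om_2$ from \eqref{eq: up and low bds for Gamma on good om} and $m(\Om_G)\geq 1-\sfrac{\ep}{4}$, yielding
\[
\int_\Om\ind_{\Om_B}\log\Gm\,dm\leq\zt R_*-R_*(\zt-\ep)\bigl(1-\tfrac{\ep}{4}\bigr)\leq R_*\ep\bigl(1+\tfrac{\zt}{4}\bigr).
\]
Defining $N_0(\om_0)$ as the least integer after which $S_n(\om_0)/K_n$ is bounded by, say, $2R_*\ep(1+\zt/4)$ produces a measurable, $m$-a.e.\ finite function; substituting gives $E_{\om_0}(n)\leq\frac{2(1+\zt/4)\sqrt{\ep}}{\zt}K_n$ modulo a lower-order correction from the $(R_*\log 6)^{-1}$ term, which is absorbed for $\ep$ sufficiently small. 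Using $1+\zt/4<2+\zt$ then gives the strict inequality $E_{\om_0}(n)<Y\ep K_n$, and the second inequality $Y\ep K_n\leq (Y/R_*)\ep n$ is immediate from $K_n\leq n/R_*$.

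The main obstacle is the $m$-a.e.\ pointwise control at finite time of the Birkhoff average for $\sg^{R_*}$, which need not be ergodic even though $\sg$ is; the limit is therefore a conditional expectation rather than a constant, and the measurable function $N_0$ encodes precisely the quantitative rate at which this pointwise limit is attained. This is the argument carried out in \cite[Lemma~7.6]{AFGTV20} in the closed setting, and it transfers essentially verbatim here because $\Gm$, $\ell(\om)$, $\Om_G$, and $\Om_B$ are defined identically in the open setting; the insertion of the hole only affects the operator-level Lasota-Yorke coefficients, not the fiberwise decomposition of the orbit into good and bad blocks.
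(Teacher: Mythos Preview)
Your overall strategy---bounding $E_{\om_0}(n)$ by a multiple of the Birkhoff sum $S_n(\om_0)=\sum_{k=0}^{K_n-1}(\ind_{\Om_B}\log\Gm)(\sg^{kR_*}\om_0)$ via the minimality in \eqref{def: coating length}, then controlling $S_n(\om_0)/K_n$ ergodically---is exactly the route taken in \cite[Lemma~7.6]{AFGTV20}, which the paper also cites. The first half of your argument (the reduction to $S_n$) is correct.

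There is, however, a genuine gap in the second half. You apply the Birkhoff theorem for the non-ergodic map $\sg^{R_*}$ to $\ind_{\Om_B}\log\Gm$, obtain a conditional expectation as the a.e.\ limit, and then bound only the \emph{integral} of that conditional expectation by $R_*\ep(1+\zt/4)$. From this you cannot conclude that the \emph{pointwise} limit is bounded by twice that value on an a.e.\ set: the conditional expectation could exceed its integral on a positive-measure set, in which case your $N_0(\om_0)$ would be infinite there. You never invoke the hypothesis $y_*(\om_0)=0$, and this is precisely what is missing. The correct argument (as in \cite{AFGTV20}) splits $S_n=\sum_k\log\Gm-\sum_k\ind_{\Om_G}\log\Gm$. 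Since $\log\Gm(\om)=\sum_{i=0}^{R_*-1}\log L_{\sg^i\om}$, the first sum is a Birkhoff sum for the \emph{ergodic} map $\sg$ and converges pointwise a.e.\ to the constant $R_*\zt$. For the second sum, the $y_*(\om_0)=0$ condition \eqref{def y*1} gives that the density of $\Om_G$-visits along the $\sg^{R_*}$-orbit of $\om_0$ exceeds $1-\ep$, and combined with $\log\Gm\geq R_*(\zt-\ep)$ on $\Om_G$ this yields a pointwise lower bound. Subtracting gives $S_n(\om_0)/K_n\leq R_*\ep(1+\zt)+o(1)$ for such $\om_0$, which suffices for the stated constant $Y$.
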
	
To end this section we note that 
\begin{align}\label{eq: def ep 4}
\ep \cdot Y_\ep\to 0
\end{align}
as $\ep\to 0$. For the remainder of the document we will assume that $\ep>0$ is always taken sufficiently small such that the results of Section~\ref{sec:bad} apply.

\section{Further properties of $\Lm_\om$}\label{sec: props of Lm}
In this section we prove some additional properties of the functional $\Lm_\om$ that will be necessary in Section~\ref{sec: fin diam} to obtain cone contraction with finite diameter. In particular, in the main result of this section, which is a version of Lemma~3.11 of \cite{LMD} and dates back to \cite[Lemma 3.2]{liverani_decay_1995-1}, we show that for a function $f\in\sC_{\om,a_*}$ there exists a partition element on which the function $f$ takes values at least as large as $\Lm_\om(f)/4$.

Now we prove the following upper and lower bounds for $\Lm_{\sg^{\Sg_\om^{(k)}}(\om)}\lt(\~\cL_{\om}^{\Sg_\om^{(k)}} f\rt)$.
\begin{lemma}\label{lem: LMD 3.8 temporary}
For $m$-a.e. $\om\in\Om$ such that $y_*(\om)=0$, and each $k\in\NN$ we have that 
\begin{align*}
\Lm_{\sg^{\Sg_\om^{(k)}}(\om)}\lt(\~\cL_{\om}^{\Sg_\om^{(k)}}\ind_\om\rt)\Lm_{\om}(f)
\leq
\Lm_{\sg^{\Sg_\om^{(k)}}(\om)}\lt(\~\cL_{\om}^{\Sg_\om^{(k)}} f\rt)
\leq
a_*\Lm_{\sg^{\Sg_\om^{(k)}}(\om)}\lt(\~\cL_{\om}^{\Sg_\om^{(k)}}\ind_\om\rt)\Lm_{\om}(f).
\end{align*}
\end{lemma}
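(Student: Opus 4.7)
The plan is to handle the two inequalities separately; both are short consequences of previously established facts.

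For the lower bound, I would invoke Lemma~\ref{LMD l3.6} directly: applying \eqref{eq: Lm^k1 ineq} with iteration count $\Sg_\om^{(k)}$ yields
\begin{align*}
\Lm_{\sg^{\Sg_\om^{(k)}}(\om)}\!\bigl(\cL_{\om}^{\Sg_\om^{(k)}} f\bigr)
\geq
\Lm_{\sg^{\Sg_\om^{(k)}}(\om)}\!\bigl(\cL_{\om}^{\Sg_\om^{(k)}} \ind_\om\bigr)\cdot \Lm_\om(f),
\end{align*}
and dividing both sides by the multiplicative cocycle $\rho_\om^{\Sg_\om^{(k)}}$ (which converts both occurrences of $\cL$ into $\~\cL$ via \eqref{eq: def of tilde cL norm op}) gives precisely the left inequality. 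Note that the hypothesis $y_*(\om)=0$ is only needed so that $\Sg_\om^{(k)}$ is finite $m$-a.e., per Remark~\ref{rem: y*=0 implies ell finite}.

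For the upper bound, the key input is the cone membership $f\in\sC_{\om,a_*}$, which I expect is the implicit standing hypothesis on $f$ (otherwise no such pointwise control is possible). Since $\var(f)\leq a_*\Lm_\om(f)$ and $\inf f\leq \Lm_\om(f)$ by \eqref{eq: Lm bdd in sup norm}, for every $x\in I$ one has
\begin{align*}
f(x)\leq \inf f + \var(f)\leq (1+a_*)\Lm_\om(f),
\end{align*}
so $f\leq (1+a_*)\Lm_\om(f)\cdot\ind$ pointwise. Applying $\~\cL_\om^{\Sg_\om^{(k)}}$ (which preserves the pointwise order on $\sC_{\om,+}$ by \eqref{eq: L_om is a weak contraction on C_+}) and using the identity $\~\cL_\om^{\Sg_\om^{(k)}}\ind=\~\cL_\om^{\Sg_\om^{(k)}}\ind_\om$ (a direct consequence of the definition \eqref{eq: def of open tr op}), one obtains
\begin{align*}
\~\cL_\om^{\Sg_\om^{(k)}} f \leq (1+a_*)\,\Lm_\om(f)\,\~\cL_\om^{\Sg_\om^{(k)}}\ind_\om.
\end{align*}
Now I would apply $\Lm_{\sg^{\Sg_\om^{(k)}}(\om)}$ to both sides and invoke the monotonicity \eqref{Lm prop3} together with the homogeneity \eqref{Lm prop4} of $\Lm$.

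The only potential obstacle is the discrepancy between the factor $(1+a_*)$ that this direct argument produces and the factor $a_*$ stated in the lemma. This is resolved by noting that Lemma~\ref{lem: cone cont for coating blocks} gives the cone invariance $\~\cL_\om^{\Sg_\om^{(k)}}(\sC_{\om,a_*})\subseteq \sC_{\sg^{\Sg_\om^{(k)}}(\om),a_*/2}$, so in fact the natural object of interest lies in the tighter cone $\sC_{\cdot,a_*/2}$; applied with parameter $a_*/2$ the pointwise bound becomes $f\leq (1+a_*/2)\Lm_\om(f)\cdot\ind$, and since $a_*=12B_*e^{\zt R_*\sqrt{\ep}}\geq 12$ by \eqref{eq: def of a_*} we have $1+a_*/2\leq a_*$, yielding the bound exactly as stated. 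No technical subtleties are expected beyond bookkeeping with the normalization $\rho_\om^{\Sg_\om^{(k)}}$ and tracking that the image factor $\~\cL_\om^{\Sg_\om^{(k)}}\ind_\om$ on the right-hand side is already in the normalized form.
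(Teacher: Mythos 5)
The lower-bound half of your argument is fine and coincides with the paper's: it is the first inequality of Lemma~\ref{LMD l3.6} applied with $\Sg_\om^{(k)}$ iterates, rescaled by the cocycle $\rho_\om^{\Sg_\om^{(k)}}$ via homogeneity of $\Lm$. The genuine gap is in the upper bound. The lemma carries no cone hypothesis on $f$, and it cannot: later in the paper it is applied with $f=\ind_Z$ for $Z\in\cZ_{\om,b}^{(n)}$, where $\Lm_\om(\ind_Z)=0$ --- this is exactly how \eqref{eq: functional over bad int eq 0} kills the bad-interval terms in the proof of Lemma~\ref{lem: bound on partition ele} --- and such $\ind_Z$ can never belong to $\sC_{\om,a_*}$, since $\var(\ind_Z)>0=a_*\Lm_\om(\ind_Z)$. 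Your pointwise domination $f\leq(1+a_*)\Lm_\om(f)\,\ind$ is simply unavailable for these functions (no bound of the form $f\leq C\,\Lm_\om(f)$ can hold when $\Lm_\om(f)=0$ and $f\not\equiv 0$), so your argument proves a strictly weaker statement than the one the paper needs. Moreover, even granting $f\in\sC_{\om,a_*}$, your constant repair is not valid: Lemma~\ref{lem: cone cont for coating blocks} places the \emph{image} $\~\cL_\om^{\Sg_\om^{(k)}}f$ in $\sC_{\sg^{\Sg_\om^{(k)}}(\om),\,a_*/2}$, not the domain function $f$ in $\sC_{\om,a_*/2}$, so you may not upgrade $\var(f)\leq a_*\Lm_\om(f)$ to $\var(f)\leq\tfrac{a_*}{2}\Lm_\om(f)$; your route is stuck at the constant $1+a_*$.

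The paper's proof puts the cone information on $\ind_\om$ rather than on $f$, which is why it needs nothing from $f$. First, a ratio computation directly from the definition of $\Lm$ (splitting $\~\cL_\om^{N+n}f/\~\cL_{\sg^N(\om)}^n\ind_{\sg^N(\om)}$ into two factors and using positivity of the operators) gives
\begin{align*}
\Lm_{\sg^{N}(\om)}\lt(\~\cL_{\om}^{N}f\rt) \leq \norm{\~\cL_{\om}^{N}\ind_\om}_\infty\, \Lm_{\om}(f)
\end{align*}
for all $f$, with no cone assumption (this is \eqref{eq: a* bound 1}). Then, taking $N=\Sg_\om^{(k)}$ and using that $\ind_\om\in\sC_{\om,a_*}$ together with the cone invariance \eqref{eq: cone inv for large bad blocks} of Lemma~\ref{lem: cone cont for coating blocks} (this is where $y_*(\om)=0$ enters, not merely for finiteness of $\Sg_\om^{(k)}$), one bounds
\begin{align*}
\norm{\~\cL_{\om}^{\Sg_\om^{(k)}}\ind_\om}_\infty
\leq \lt(\tfrac{a_*}{2}+1\rt)\Lm_{\sg^{\Sg_\om^{(k)}}(\om)}\lt(\~\cL_{\om}^{\Sg_\om^{(k)}}\ind_\om\rt)
\leq a_*\,\Lm_{\sg^{\Sg_\om^{(k)}}(\om)}\lt(\~\cL_{\om}^{\Sg_\om^{(k)}}\ind_\om\rt),
\end{align*}
using $a_*>2$. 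Combining the two displays gives the stated upper bound with constant $a_*$ and for general $f$. If you wish to keep your own scheme, the step you must replace is the pointwise domination of $f$: prove an analogue of \eqref{eq: a* bound 1} and transfer the cone estimate to $\ind_\om$ as above.
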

\begin{proof}
From Lemma~\ref{LMD l3.6} we already see that the first inequality holds.
Now, fix $\om\in\Om$ (with $y_*(\om)=0$) and $k\in\NN$. 
To see the other inequality we first let $n, N\in\NN$ and $x\in D_{\sg^N(\om),N+n}$, then
\begin{align*}
\frac{\~\cL_{\om}^{N+n}(f)(x)}
{\~\cL_{\sg^{N}(\om)}^{n}(\ind_{\sg^{N}(\om)})(x)}
&=
\frac{\~\cL_{\om}^{N+n}(f)(x)}
{\~\cL_{\om}^{N+n}(\ind_\om)(x)}
\cdot\frac{\~\cL_{\om}^{N+n}(\ind_\om)(x)}{\~\cL_{\sg^{N}(\om)}^{n}(\ind_{\sg^{N}(\om)})(x)}
\\
&=
\frac{\~\cL_{\om}^{N+n}(f)(x)}
{\~\cL_{\om}^{N+n}(\ind_\om)(x)}
\cdot
\frac{\~\cL_{\sg^{N}(\om)}^{n}\lt(\cL_{\om}^{N}(\ind_\om)\cdot \ind_{\sg^{N}(\om)}\rt)(x)}
{\~\cL_{\sg^{N}(\om)}^{n}(\ind_{\sg^{N}(\om)})(x)}
\\
&\leq
\frac{\~\cL_{\om}^{N+n}(f)(x)}
{\~\cL_{\om}^{N+n}(\ind_\om)(x)}
\cdot
\norm{\~\cL_{\om}^{N}\ind_\om}_\infty.
\end{align*}
Now taking the infimum over $x\in D_{\sg^N(\om),N+n}$ and letting $n\to\infty$ gives
\begin{align}
\Lm_{\sg^{N}(\om)}(\~\cL_{\om}^{N}f) \leq \norm{\~\cL_{\om}^{N}\ind_\om}_\infty \Lm_{\om}(f).
\label{eq: a* bound 1}
\end{align}
Now, set $N=\Sg_\om^{(k)}$.
Since $\ind_\om\in\sC_{\om,a_*}$, \eqref{eq: cone inv for large bad blocks} from Lemma~\ref{lem: cone cont for coating blocks}  implies that
\begin{align}
\norm{\~\cL_{\om}^{\Sg_\om^{(k)}}\ind_\om}_\infty
&\leq 
\var(\~\cL_{\om}^{\Sg_\om^{(k)}}(\ind_\om))
+
\Lm_{\sg^{\Sg_\om^{(k)}}(\om)}(\~\cL_{\om}^{\Sg_\om^{(k)}}(\ind_\om))
\nonumber\\
&\leq 
\lt(\frac{a_*}{2}+1\rt)\Lm_{\sg^{\Sg_\om^{(k)}}(\om)}(\~\cL_{\om}^{\Sg_\om^{(k)}}(\ind_\om))
\nonumber\\
&\leq
a_*\Lm_{\sg^{\Sg_\om^{(k)}}(\om)}(\~\cL_{\om}^{\Sg_\om^{(k)}}(\ind_\om)),
\label{eq: a* bound 2}
\end{align}
where we have used the fact that $a_*>2$ which follows from \eqref{eq: def of a_*}.
Combining \eqref{eq: a* bound 2} with \eqref{eq: a* bound 1}, we see that 
\begin{align*}
\Lm_{\sg^{\Sg_\om^{(k)}}(\om)}(\~\cL_{\om}^{\Sg_\om^{(k)}} f)
\leq
a_*\Lm_{\sg^{\Sg_\om^{(k)}}(\om)}(\~\cL_{\om}^{\Sg_\om^{(k)}}\ind_\om)\Lm_{\om}(f)
\end{align*}
completing the proof.
\end{proof}

\begin{lemma}\label{lem: LMD 3.10}
For each $\dl>0$ and each $\om\in\Om$ there exists $N_{\om,\dl}$ such that for each $n\geq N_{\om,\dl}$, $\cZ_{\om}^{(n)}$ has the property that
\begin{align*}
\sup_{Z\in\cZ_{\om}^{(n)}}\Lm_{\om}(\ind_Z)\leq \dl.
\end{align*}
\end{lemma}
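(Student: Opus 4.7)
The plan is to bound $\Lm_\om(\ind_Z)$ via an iterate of the normalized transfer operator and exploit that on the refined monotonicity partition $\cZ_\om^{(n)}$, the operator $\cL_\om^m$ collapses to a single-branch evaluation. First I would combine the two basic size estimates for $\Lm$---the sup-norm bound from \eqref{eq: Lm bdd in sup norm} and the quasi-invariance $\Lm_\om(f)\leq \Lm_{\sg^m(\om)}(\~\cL_\om^m f)$ from \eqref{eq: equivariance prop of Lm} (a consequence of Lemma~\ref{LMD l3.6})---to obtain, for every $m\in\NN$,
\begin{align*}
\Lm_\om(\ind_Z)\;\leq\;\Lm_{\sg^m(\om)}(\~\cL_\om^m\ind_Z)\;\leq\;\norm{\~\cL_\om^m\ind_Z}_\infty\;=\;\rho_\om^{-m}\,\norm{\cL_\om^m\ind_Z}_\infty.
\end{align*}

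Next I would fix $m\leq n$ and $Z\in\cZ_\om^{(n)}$. Since $\cZ_\om^{(n)}=\bigvee_{j=0}^{n-1}T_\om^{-j}\cZ_{\sg^j(\om)}$ refines $\cZ_\om^{(m)}$, the map $T_\om^m$ is strictly monotone on $Z$, so the preimage set $T_\om^{-m}(y)\cap Z$ contains at most one point for every $y\in I$. Writing $\cL_\om^m\ind_Z=\cL_{\om,0}^m(\ind_Z\cdot\hat X_{\om,m-1})$ and reading off the single nonzero preimage when it exists gives
\begin{align*}
\norm{\cL_\om^m\ind_Z}_\infty\;\leq\;\sup_{Z\cap X_{\om,m-1}}g_\om^{(m)}\;\leq\;\norm{g_\om^{(m)}}_\infty,
\end{align*}
so combining with the previous display produces the key estimate
\begin{align*}
\Lm_\om(\ind_Z)\;\leq\;\rho_\om^{-m}\,\norm{g_\om^{(m)}}_\infty
\quad\text{for every }Z\in\cZ_\om^{(n)},\ m\leq n.
\end{align*}

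Finally I would invoke \eqref{cond Q1}: since $\limsup_m\frac{1}{m}\log\xi_\om^{(m)}\geq 0$ (see Remark~\ref{rem: further props of Q assumps}), it implies that $\lim_{m\to\infty}\frac{1}{m}\log\bigl(\rho_\om^{-m}\norm{g_\om^{(m)}}_\infty\bigr)<0$, so $\rho_\om^{-m}\norm{g_\om^{(m)}}_\infty\to 0$ exponentially. Given $\dl>0$, I choose $N_{\om,\dl}$ so that $\rho_\om^{-m}\norm{g_\om^{(m)}}_\infty\leq\dl$ for all $m\geq N_{\om,\dl}$. Then for any $n\geq N_{\om,\dl}$ and any $Z\in\cZ_\om^{(n)}$, applying the key estimate with $m=N_{\om,\dl}\leq n$ produces $\Lm_\om(\ind_Z)\leq \dl$, which is the desired conclusion.

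The argument is essentially a chain of routine inequalities and does not present any substantial obstacle. The only moderately delicate observation is the single-branch reduction: namely, that the refinement relation $\cZ_\om^{(n)}\succ\cZ_\om^{(m)}$ for $m\leq n$ forces each element $Z$ of the finer partition to lie inside a monotonicity interval of $T_\om^m$, so that $\cL_\om^m\ind_Z$ has at most one nonzero preimage at each point and therefore is bounded by $\norm{g_\om^{(m)}}_\infty$ rather than by a sum of branch contributions.
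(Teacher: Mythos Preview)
Your proposal is correct and takes essentially the same approach as the paper: both rely on the single-branch bound $\|\cL_\om^m\ind_Z\|_\infty\leq\|g_\om^{(m)}\|_\infty$ for $Z\in\cZ_\om^{(n)}$ with $m\le n$, combined with the decay of $\rho_\om^{-m}\|g_\om^{(m)}\|_\infty$ coming from \eqref{cond Q1}. The only cosmetic difference is that the paper takes $m=n$ and unwinds the definition of $\Lm_\om$ directly via the ratio at time $n+m$ (letting $m\to\infty$ and invoking Lemma~\ref{LMD l3.6} to bound the denominator), whereas you reach the same estimate more modularly by citing the pre-packaged inequalities \eqref{eq: equivariance prop of Lm} and \eqref{eq: Lm bdd in sup norm}.
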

\begin{proof}
Choose $N_{\om,\dl}\in\NN$ such that
$$
\frac{\norm{g_{\om}^{(n)}}_\infty}{\rho_{\om}^n}\leq\dl
$$
for each $n\geq N_{\om,\dl}$. Now, fix some $n\geq N_{\om,\dl}$ and let $m\in\NN$.
Then, for $Z\in\cZ_{\om}^{(n)}$ we have
\begin{align*}
\cL_{\om}^n\ind_Z(x)\leq \norm{g_{\om}^{(n)}}_\infty \leq \dl\rho_{\om}^n.
\end{align*}
For each $x\in D_{\sg^{n+m}(\om),n+m}\sub D_{\sg^{n+m}(\om),m}$ we have
\begin{align*}
\frac
{\cL_{\om}^{n+m}\ind_Z(x)}
{\cL_{\om}^{n+m}\ind_\om(x)}
&\leq
\frac
{\norm{\cL_{\om}^n\ind_Z}_\infty\cL_{\sg^n(\om)}^{m}\ind_{\sg^n(\om)}(x)}
{\cL_{\om}^{n+m}\ind_\om(x)}
\leq
\frac
{\dl\rho_{\om}^n\cL_{\sg^n(\om)}^{m}\ind_{\sg^n(\om)}(x)}
{\cL_{\sg^n(\om)}^m\lt(\cL_{\om}^{n}\ind_\om\rt)(x)}
\\
&
\leq
\dl\rho_{\om}^n\frac{1}{
	\inf_{y\in D_{\sg^{n+m}(\om),m}}
	\frac
	{\cL_{\sg^n(\om)}^m\lt(\cL_{\om}^{n}\ind_\om\rt)(y)}
	{\cL_{\sg^n(\om)}^{m}\ind_{\sg^n(\om)}(y)}
}.
\end{align*}
In view of Lemma~\ref{LMD l3.6}, taking the infimum over $x\in D_{\sg^{n+m}(\om),n+m}$ and letting $m\to \infty$ gives
\begin{align*}
\Lm_{\om}(\ind_Z)\leq \dl\rho_{\om}^n\cdot\frac{1}{\Lm_{\sg^n(\om)}(\cL_{\om}^n\ind_\om)}
\leq
\dl\rho_{\om}^n(\rho_{\om}^n)^{-1}= \dl.
\end{align*}
\end{proof}

We are now ready to prove the main result of this section, a random version of Lemma~3.11 in \cite{LMD}. Let
\begin{align}\label{def: delta0}
\dl_0:=\frac{1}{8a_*^3}.
\end{align}
\begin{lemma}\label{lem: bound on partition ele}
For $m$-a.e. $\om\in\Om$ with $y_*(\om)=0$, for all $\dl<\dl_0$, all $n\geq N_{\om,\dl}$ (where $N_{\om,\dl}$ is as in Lemma~\ref{lem: LMD 3.10}), and all $f\in\sC_{\om,a_*}$ there exists $Z_f\in\cZ_{\om,g}^{(n)}$ such that 
\begin{align*}
\inf_{Z_f}f \geq \frac{1}{4}\Lm_{\om}(f).
\end{align*}
\end{lemma}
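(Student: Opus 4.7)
The plan is to argue by contradiction: suppose $\inf_Z f < \Lm_\om(f)/4$ for every $Z\in\cZ_{\om,g}^{(n)}$. On each such good cell this gives the pointwise bound $f|_Z < \Lm_\om(f)/4+\var_Z(f)$; on each $Z\in\cZ_{\om,b}^{(n)}$ I will use the crude bound $f\ind_Z\leq \|f\|_\infty \ind_Z$; and on each partition element $Z$ of $\widehat\cZ_\om^{(n)}(\cA)$ disjoint from $X_{\om,n-1}$ the factor $\hat X_{\om,N-1}$ inside $\cL_\om^N$ kills the contribution for $N\geq n$.

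Next I would unfold the definition of $\Lm_\om$. For $N\geq n$ and $x\in D_{\sg^N(\om),N}$, setting $w_Z(x,N):=\cL_\om^N\ind_Z(x)/\cL_\om^N\ind_\om(x)$ and $B_\om:=\bigcup_{Z\in\cZ_{\om,b}^{(n)}}Z$, the weights over $\cZ_{\om,*}^{(n)}$ sum to $1$ (since $X_{\om,N-1}\subset X_{\om,n-1}$), and partitioning $\cL_\om^N f$ along $\widehat\cZ_\om^{(n)}(\cA)$ yields
\begin{equation*}
\frac{\cL_\om^N f(x)}{\cL_\om^N\ind_\om(x)}
\leq \frac{\Lm_\om(f)}{4}
+ \sum_{Z\in\cZ_{\om,g}^{(n)}}\var_Z(f)\,w_Z(x,N)
+ \|f\|_\infty\,w_{B_\om}(x,N).
\end{equation*}

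The task now is to control the two tail terms after taking $\inf_x$ and letting $N\to\infty$. For the variation-weighted sum, since $n\geq N_{\om,\dl}$, Lemma~\ref{lem: LMD 3.10} yields $\Lm_\om(\ind_Z)\leq \dl$ for every $Z\in\cZ_\om^{(n)}$, and this passes to every $Z\in\cZ_{\om,g}^{(n)}$ by monotonicity of $\Lm_\om$ in the underlying set. Exploiting the monotonicity in \eqref{rho^n increasing}, the finiteness of $\cZ_{\om,g}^{(n)}$, and the stabilization and lower bounds from Proposition~\ref{prop: D sets stabilize}, I would extract a sequence $x_N$ of approximate minimizers of $\cL_\om^N f/\cL_\om^N\ind_\om$ along which $w_Z(x_N,N)\leq 2\dl$ holds simultaneously for every $Z\in\cZ_{\om,g}^{(n)}$. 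For the bad-set contribution, the key observation is that $\cL_\om^N\ind_{X_{\om,n-1}}=\cL_\om^N\ind$ for $N\geq n$, forcing $\Lm_\om(\ind_{X_{\om,n-1}})=1$; combined with the superadditivity \eqref{Lm prop5} and $\Lm_\om(\ind_Z)=0$ for each of the finitely many $Z\in\cZ_{\om,b}^{(n)}$, this drives $w_{B_\om}(x_N,N)\to 0$ along the same subsequence.

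Passing to the limit, the left side converges to $\Lm_\om(f)$ while the right side is dominated by $\Lm_\om(f)/4 + 2\dl\,\var(f) + o(1)$. Invoking the cone condition $\var(f)\leq a_*\Lm_\om(f)$ and $\dl<\dl_0=1/(8a_*^3)$ yields
\begin{equation*}
\frac{3}{4}\Lm_\om(f)\ \leq\ 2a_*\dl\,\Lm_\om(f)\ <\ \frac{1}{4a_*^2}\Lm_\om(f),
\end{equation*}
which contradicts $a_*\geq 1$ and $\Lm_\om(f)>0$. The principal obstacle is exactly this simultaneous control: because $\Lm_\om$ is only superadditive, the individual bounds $\Lm_\om(\ind_Z)\leq\dl$ do not automatically upgrade to a uniform-in-$x$ estimate on the sum $\sum_Z w_Z$, and reconciling the infimum in the definition of $\Lm_\om(f)$ with the separate infima realizing each $\Lm_\om(\ind_Z)$ at a common $x_N$ is the technically delicate step requiring both the monotonicity of the approximating infima and the tempered growth estimates in \eqref{eq: log int open weight and tr op}.
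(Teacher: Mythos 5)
Your skeleton matches the paper's (contradiction, split over $\cZ_{\om,g}^{(n)}$ and $\cZ_{\om,b}^{(n)}$, use $\inf_Z f<\Lm_\om(f)/4$ on good cells, $\Lm_\om(\ind_Z)\leq\dl$ from Lemma~\ref{lem: LMD 3.10}, the cone bound $\var(f)\leq a_*\Lm_\om(f)$, and $\dl<\dl_0$), but the step you yourself flag as ``technically delicate'' is a genuine gap, not a technicality. You need the weights $w_Z(x,N)=\cL_\om^N\ind_Z(x)/\cL_\om^N\ind_\om(x)$ to be small \emph{at specific points} (your common approximate minimizers $x_N$), whereas the only information available is that $\Lm_\om(\ind_Z)$ --- a limit of infima over $x$ --- is $\leq\dl$ (good cells) or $=0$ (bad cells). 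The infima defining $\Lm_\om(\ind_Z)$ for the different $Z$, and the infimum defining $\Lm_\om(f)$, need not be approached anywhere near a common point, and none of the tools you cite (the monotonicity \eqref{rho^n increasing}, finiteness of $\cZ_{\om,g}^{(n)}$, Proposition~\ref{prop: D sets stabilize}, temperedness) yields any uniform-in-$x$ control of $w_Z(\cdot,N)$. Your bad-cell argument is also flawed on its own terms: superadditivity gives $1=\Lm_\om(\ind_{X_{\om,n-1}})\geq\Lm_\om(\ind_{B_\om})+\sum_{Z}\Lm_\om(\ind_Z)$, i.e.\ an inequality in the useless direction, and linearity of $\Lm_\om$ (Lemma~\ref{lem: Lm is a linear functional}) cannot be invoked here because it is proved later using results that rest on the present lemma; and even $\Lm_\om(\ind_{B_\om})=0$ would still only be a statement about infima, not about $w_{B_\om}(x_N,N)$.

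What the paper's proof supplies at exactly this point is a sup-norm mechanism, and this is why the hypothesis $y_*(\om)=0$ appears in the statement (your proposal never uses it). One pushes forward along the coating blocks $\Sg_\om^{(k)}$ and combines the coating-block Lasota--Yorke inequality \eqref{eq: important coating block ineq} of Lemma~\ref{lem: cone cont for coating blocks} with the two-sided functional bound of Lemma~\ref{lem: LMD 3.8 temporary}: since $\ind_Z$ has variation at most $2$, this gives the pointwise estimates $\~\cL_\om^{\Sg_\om^{(k)}}\ind_Z\leq 2(1/3)^{\Sg_\om^{(k)}/R_*}$ for $Z\in\cZ_{\om,b}^{(n)}$ and $\~\cL_\om^{\Sg_\om^{(k)}}\ind_Z\leq 2(1/3)^{\Sg_\om^{(k)}/R_*}+a_*^2\Lm_{\sg^{\Sg_\om^{(k)}}(\om)}\bigl(\~\cL_\om^{\Sg_\om^{(k)}}\ind_\om\bigr)\Lm_\om(\ind_Z)$ for $Z\in\cZ_{\om,g}^{(n)}$, valid at \emph{every} point; one then applies $\Lm_{\sg^{\Sg_\om^{(k)}}(\om)}$, divides by $\Lm_{\sg^{\Sg_\om^{(k)}}(\om)}(\~\cL_\om^{\Sg_\om^{(k)}}\ind_\om)$, and lets $k\to\infty$. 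Without a substitute for this conversion of functional bounds into sup-norm bounds, the extraction of a common sequence $x_N$ with $w_Z(x_N,N)\leq 2\dl$ simultaneously for all good $Z$ and $w_{B_\om}(x_N,N)\to 0$ is unsubstantiated, and your proof does not close.
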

\begin{proof}
We shall the prove the lemma via contradiction. To that end suppose that the conclusion is false, that is we suppose that 
\begin{align}\label{eq: contr on part ele}
\inf_Z f< \frac{\Lm_\om(f)}{4}  
\end{align}
for all $Z\in\cZ_{\om,g}^{(n)}$. 
Then, for each $n\geq N_{\om,\dl}$ and each $k\in\NN$ such that $n<\Sg_\om^{(k)}$, using \eqref{eq: contr on part ele} we can write 
\begin{align}
\~\cL_{\om}^{\Sg_\om^{(k)}} f
&=
\sum_{Z\in\cZ_{\om}^{(n)}} \~\cL_{\om}^{\Sg_\om^{(k)}} (f\ind_Z)
=
\sum_{Z\in\cZ_{\om,*}^{(n)}} \~\cL_{\om}^{\Sg_\om^{(k)}} (f\ind_Z)
\nonumber\\
&=
\sum_{Z\in\cZ_{\om,g}^{(n)}} \~\cL_{\om}^{\Sg_\om^{(k)}} (f\ind_Z)
+
\sum_{Z\in\cZ_{\om,b}^{(n)}} \~\cL_{\om}^{\Sg_\om^{(k)}} (f\ind_Z)
\nonumber\\
&\leq 
\frac{\Lm_{\om}(f)}{4}
\sum_{Z\in\cZ_{\om,g}^{(n)}} \~\cL_{\om}^{\Sg_\om^{(k)}} (\ind_Z)
+
\sum_{Z\in\cZ_{\om,g}^{(n)}} \~\cL_{\om}^{\Sg_\om^{(k)}} (\ind_Z)\var_Z(f)
+
\norm{f}_\infty
\sum_{Z\in\cZ_{\om,b}^{(n)}} \~\cL_{\om}^{\Sg_\om^{(k)}} (\ind_Z).
\label{eq: spec part ele 1}
\end{align}	
Now for $Z\in\cZ_{\om,b}^{(n)}$, Lemma~\ref{lem: LMD 3.8 temporary} implies that  
\begin{align}
\Lm_{\sg^{\Sg_\om^{(k)}}(\om)}\lt(\~\cL_{\om}^{\Sg_\om^{(k)}} (\ind_Z)\rt)
\leq
a_*\Lm_{\sg^{\Sg_\om^{(k)}}(\om)}\lt(\~\cL_{\om}^{\Sg_\om^{(k)}}\ind_\om\rt)\Lm_{\om}(\ind_Z)
=0.
\label{eq: functional over bad int eq 0}
\end{align}
Thus, for $Z\in\cZ_{\om,b}^{(n)}$, using \eqref{eq: functional over bad int eq 0} and \eqref{eq: important coating block ineq}, applied along the blocks $\Sg_\om^{(k)}$, we have that 
\begin{align}
\~\cL_{\om}^{\Sg_\om^{(k)}} (\ind_Z)
&\leq 
\Lm_{\sg^{\Sg_\om^{(k)}}(\om)}\lt(\~\cL_{\om}^{\Sg_\om^{(k)}} (\ind_Z)\rt)+\var\lt(\~\cL_{\om}^{\Sg_\om^{(k)}} (\ind_Z)\rt)
\nonumber\\
&=\var\lt(\~\cL_{\om}^{\Sg_\om^{(k)}} (\ind_Z)\rt)
\nonumber\\
&\leq 2\lt(\frac{1}{3}\rt)^{\Sg_\om^{(k)}/R_*}
+
a_*\Lm_{\sg^{\Sg_\om^{(k)}}(\om)}\lt(\~\cL_{\om}^{\Sg_\om^{(k)}} (\ind_Z)\rt)
=2\lt(\frac{1}{3}\rt)^{\Sg_\om^{(k)}/R_*}.		
\label{eq: spec part ele 2}
\end{align}
Note that the right-hand side above goes to zero as $k\to\infty$.
On the other hand, for $Z\in \cZ_{\om,g}^{(n)}$ we again use \eqref{eq: important coating block ineq} in conjunction with Lemma~\ref{lem: LMD 3.8 temporary} to get that 
\begin{align}
\~\cL_{\om}^{\Sg_\om^{(k)}} (\ind_Z)
&\leq 
\Lm_{\sg^{\Sg_\om^{(k)}}(\om)}\lt(\~\cL_{\om}^{\Sg_\om^{(k)}} (\ind_Z)\rt)+\var\lt(\~\cL_{\om}^{\Sg_\om^{(k)}} (\ind_Z)\rt)
\nonumber\\
&\leq 
2\lt(\frac{1}{3}\rt)^{\Sg_\om^{(k)}/R_*}
+
a_*\Lm_{\sg^{\Sg_\om^{(k)}}(\om)}\lt(\~\cL_{\om}^{\Sg_\om^{(k)}} (\ind_Z)\rt)
\nonumber\\
&\leq  
2\lt(\frac{1}{3}\rt)^{\Sg_\om^{(k)}/R_*}+a_*^2\Lm_{\sg^{\Sg_\om^{(k)}}(\om)}\lt(\~\cL_{\om}^{\Sg_\om^{(k)}}\ind_\om\rt)\Lm_{\om}(\ind_Z).
\label{eq: spec part ele 3}
\end{align}
Substituting \eqref{eq: spec part ele 3} and \eqref{eq: spec part ele 2} into \eqref{eq: spec part ele 1}, applying the functional $\Lm_{\sg^{\Sg_\om^{(k)}}}$ to both sides yields
\begin{align}
&\Lm_{\sg^{\Sg_\om^{(k)}}(\om)}\lt(\~\cL_{\om}^{\Sg_\om^{(k)}}f\rt)
\leq 
\Lm_{\sg^{\Sg_\om^{(k)}}(\om)}\lt(\~\cL_{\om}^{\Sg_\om^{(k)}}\ind_\om\rt)
\cdot 
\frac{\Lm_{\om}(f)}{4}
\nonumber\\
&\qquad
+
\sum_{Z\in\cZ_{\om,g}^{(n)}} \lt(\lt(a_*\Lm_{\om}(\ind_Z) +2\lt(\frac{1}{3}\rt)^{\Sg_\om^{(k)}/R_*}+a_*^2\Lm_{\om}(\ind_Z)\rt)\var_Z(f)\rt)\Lm_{\sg^{\Sg_\om^{(k)}}(\om)}\lt(\~\cL_{\om}^{\Sg_\om^{(k)}}\ind_\om\rt)
\nonumber\\
&\quad\qquad
+
\sum_{Z\in\cZ_{\om,b}^{(n)}} 2\lt(\frac{1}{3}\rt)^{\Sg_\om^{(k)}/R_*}\norm{f}_\infty\Lm_{\sg^{\Sg_\om^{(k)}}(\om)}\lt(\~\cL_{\om}^{\Sg_\om^{(k)}}\ind_\om\rt).
\label{eq: spec part ele 4}
\end{align}
Dividing \eqref{eq: spec part ele 4} on both sides by $\Lm_{\sg^{\Sg_\om^{(k)}}(\om)}\lt(\~\cL_{\om}^{\Sg_\om^{(k)}}\ind_\om\rt)$,
letting $k\to\infty$, and using Lemmas~\ref{lem: LMD 3.8 temporary}, \ref{lem: LMD 3.10}, and \ref{lem: cone cont for coating blocks} gives us 
\begin{align*}
\Lm_{\om}(f)
&\leq
\frac{\Lm_{\om}(f)}{4}
+
\sum_{Z\in\cZ_{\om,g}^{(n)}} (a_*+a_*^2)\Lm_{\om}(\ind_Z)\var_Z(f)
\nonumber\\
&\leq
\frac{\Lm_{\om}(f)}{4}
+
(a_*+a_*^2)\var(f)\sup_{Z\in\cZ_{\om,g}^{(n)}}\Lm_{\om}(\ind_Z)
\nonumber\\
&\leq
\lt(\frac{1}{4}
+
2a_*^3\dl\rt)\Lm_{\om}(f).
\end{align*}
Given our choice \eqref{def: delta0} of $\dl<\dl_0$ we arrive at the contradiction
\begin{align*}
\Lm_{\om}(f)\leq \frac{1}{2}\Lm_{\om}(f), 
\end{align*}
and thus we are done.
\end{proof}
\section{Finding finite diameter images}\label{sec: fin diam}
We now find a large measure set of fibers $\om\in \Om_F\sub\Om$ for which the image of the cone $\sC_{\om,a_*}$ has a finite diameter image after sufficiently many iterates of the normalized operator $\~\cL_\om$. Towards accomplishing this task we first recall that for each $\om\in\Om$ with $y_*(\om)=0$ and each $k\geq 0$ 
\begin{align*}
\Sg_\om^{(k)}:=\sum_{j=0}^{k-1} \ell(\om_j)R_*
\end{align*}
where $\om_0:=\om$ and for each $j\geq 1$ we set $\om_j:=\sg^{\Sg_\om^{(j-1)}}(\om)$. For each $\om\in\Om$ with $y_*(\om)=0$, we define the number
\begin{align}\label{eq: def of Sg_om}
\Sg_\om:=\min\set{\Sg_\om^{(k)}: 
	\inf_{x\in D_{\sg^{\Sg_\om^{(k)}}(\om), \Sg_\om^{(k)}}}\frac{\cL_{\om}^{\Sg_\om^{(k)}}\ind_Z(x)}{\cL_{\om}^{\Sg_\om^{(k)}}\ind_\om(x)}
	\geq
	\frac{\Lm_{\om}(\ind_Z)}{2} \text{ for all } Z\in\cZ_{\om,g}^{(N_{\om,\dl_0})}
}.
\end{align}\index{$\Sg_\om$}
Note that by definition we must have that $\Sg_\om\geq N_{\om,\dl_0}$.
Recall from the proof of Lemma~\ref{lem: constr of Om_G} that the set $\Om_1=\Om_1(B_*)$ is given by 
\begin{align}\label{eq: def of Om_1 part 2}
\Om_1:=\set{\om\in\Om: C_\ep(\om)\leq B_*},
\end{align}
where $C_\ep(\om)$ comes from Proposition~\ref{prop: LY ineq 2} and $B_*$ was chosen sufficiently large such that $m(\Om_1)\geq 1-\sfrac{\ep}{8}$. 
For $\al_*>0$ and $C_*\geq 1$ we consider the following
\begin{flalign} 
& \Lm_{\sg^{\Sg_\om}(\om)}\lt(\~\cL_\om^{\Sg_\om}\ind_Z\rt)\geq \al_* \text{ for all } Z\in\cZ_{\om,g}^{(N_{\om,\dl_0})},
\tag{F1}\label{F1} 
&\\
& C_*^{-1}\leq \inf_{D_{\sg^{\Sg_\om}(\om), \Sg_\om}}\~\cL_{\om}^{\Sg_\om}\ind_\om\leq \norm{\~\cL_{\om}^{\Sg_\om}\ind_\om}_\infty\leq  C_*.
\tag{F2}\label{F2} 
\end{flalign}
Now, we define the set $\Om_3$, depending on parameters $S_*=kR_*$ for some $k\in\NN$, $\al_*>0$, and $C_*\geq 1$, by
\begin{align}\label{eq: def of Om_3}
\Om_3=\Om_3(S_*, \al_*, C_*):=\set{\om\in\Om: \Sg_\om\leq S_*, \text{ and } \eqref{F1}-\eqref{F2} \text{ hold }}, 
\end{align} 
and choose $S_*=kR_*$,\index{$S_*$} $\al_*>0$, and $C_*\geq 1$ such that $m(\Om_3)\geq 1-\sfrac{\ep}{8}$. Finally, we define
\begin{align}\label{eq: def of Om_F}
\Om_F:=\Om_1\cap \Om_3, 
\end{align}\index{$\Om_F$}
which must of course have measure $m(\Om_F)\geq 1-\ep$. Furthermore, in light of the definition of $\Om_G$ from \eqref{eq: def of good set}, we have that 
\begin{align*}
\sg^{-R_*}(\Om_F)\sub\Om_G.
\end{align*}

\begin{lemma}\label{lem: cone contraction for good om}
For all $\om\in\Om_F$ such that $y_*(\om)=0$ we have that 
\begin{align*}
\~\cL_{\om}^{\Sg_\om}\sC_{\om,a_*}\sub \sC_{\sg^{\Sg_\om}(\om), \sfrac{a_*}{2}} \sub \sC_{\sg^{\Sg_\om}(\om), a_*}
\end{align*}
with 
\begin{align}\label{eq: def of Dl}
\diam_{\sg^{\Sg_\om}(\om),a_*}\lt(\~\cL_{\om}^{\Sg_\om}\sC_{\om,a_*}\rt)
\leq
\Dl
:=
2\log \frac{8C_*^2a_*(3+a_*)}{\al_*}<\infty.
\end{align}
\end{lemma}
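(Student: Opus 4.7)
The plan is to split the proof into two pieces: the cone inclusion $\~\cL_\om^{\Sg_\om}\sC_{\om,a_*}\sub\sC_{\sg^{\Sg_\om}(\om),a_*/2}$, and then the uniform bound on the $\Ta_{\sg^{\Sg_\om}(\om),a_*}$-diameter of the image. For the first piece, notice that since $y_*(\om)=0$ the number $\Sg_\om$ is by construction one of the block sums $\Sg_\om^{(k)}$, so the inclusion is already contained in \eqref{eq: cone inv for large bad blocks} of Lemma~\ref{lem: cone cont for coating blocks}. Thus no new argument is required for cone invariance.

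For the diameter, I would use the triangle inequality to reduce to controlling $\Ta_{\om',a_*}(\ind,F)$ for every $F=\~\cL_\om^{\Sg_\om}f$ with $f\in\sC_{\om,a_*}$, where $\om':=\sg^{\Sg_\om}(\om)$. Since $F\in\sC_{\om',a_*/2}$, Lemma~\ref{lem: summary of cone dist prop} applied with $\eta=1/2$ gives
$$\Ta_{\om',a_*}(\ind,F)\le\log\frac{\norm{F}_\infty+\tfrac12\Lm_{\om'}(F)}{\min\{\inf F,\,\tfrac12\Lm_{\om'}(F)\}}.$$
The numerator is bounded above via the cone condition: $\norm{F}_\infty\le\inf F+\var(F)\le\Lm_{\om'}(F)+\tfrac{a_*}{2}\Lm_{\om'}(F)$, so the numerator is at most $\tfrac{3+a_*}{2}\Lm_{\om'}(F)$.

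The substantive work is the matching lower bound on $\inf F$. Apply Lemma~\ref{lem: bound on partition ele} at level $n=N_{\om,\dl_0}$ (which is admissible since $\Sg_\om\ge N_{\om,\dl_0}$) to produce $Z_f\in\cZ_{\om,g}^{(N_{\om,\dl_0})}$ with $\inf_{Z_f}f\ge\tfrac14\Lm_\om(f)$, so $F\ge\tfrac14\Lm_\om(f)\,\~\cL_\om^{\Sg_\om}\ind_{Z_f}$. By the defining property \eqref{eq: def of Sg_om} of $\Sg_\om$, on $D_{\om',\Sg_\om}$ one has $\~\cL_\om^{\Sg_\om}\ind_{Z_f}\ge\tfrac12\Lm_\om(\ind_{Z_f})\,\~\cL_\om^{\Sg_\om}\ind_\om$, and condition \eqref{F2} gives $\~\cL_\om^{\Sg_\om}\ind_\om\ge C_*^{-1}$ on this set. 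Hence
$$\inf F\ \ge\ \frac{\Lm_\om(f)\,\Lm_\om(\ind_{Z_f})}{8C_*}.$$
Now invert the chain using Lemma~\ref{lem: LMD 3.8 temporary}: its upper inequality combined with \eqref{F2} yields $\Lm_\om(f)\ge\Lm_{\om'}(F)/(a_*C_*)$, and applied to $\ind_{Z_f}$ with \eqref{F1} and \eqref{F2} yields $\Lm_\om(\ind_{Z_f})\ge\al_*/(a_*C_*)$. Substituting these back produces $\inf F\ge c\,\Lm_{\om'}(F)$ with $c$ an explicit constant of the form $\al_*/(\text{poly}(a_*,C_*))$, which (after possibly shrinking $c$ to ensure $c\le 1/2$) lets us identify the minimum in the denominator with $\inf F$. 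Plugging everything into Lemma~\ref{lem: summary of cone dist prop} yields the single-function bound of the form $\log\tfrac{8C_*^2a_*(3+a_*)}{\al_*}$, and the triangle inequality doubles it to give $\Dl$.

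The main obstacle is bookkeeping: one must juggle three distinct quantitative inputs (the ratio lower bound from \eqref{eq: def of Sg_om}, the fiberwise $\sup$/$\inf$ control \eqref{F2}, and the functional normalization \eqref{F1}) through the comparisons supplied by Lemmas~\ref{lem: LMD 3.8 temporary} and~\ref{lem: bound on partition ele}, and verify that the resulting constant $c$ is indeed no larger than $1/2$ so that the $\min$ in the Hilbert-metric bound is attained at $\inf F$. Everything else (cone invariance, the numerator estimate, the triangle inequality step) is mechanical once this central estimate is in place.
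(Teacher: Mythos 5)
Your proposal follows the paper's proof almost step for step: cone invariance is quoted from Lemma~\ref{lem: cone cont for coating blocks} (valid since $y_*(\om)=0$ makes $\Sg_\om$ one of the block sums $\Sg_\om^{(k)}$), the diameter is controlled through the Hilbert-metric estimate of Lemma~\ref{lem: summary of cone dist prop} with $\eta=\sfrac12$, the crucial lower bound on $\inf_{D_{\sg^{\Sg_\om}(\om),\Sg_\om}}\~\cL_\om^{\Sg_\om}f$ comes from Lemma~\ref{lem: bound on partition ele}, the defining property \eqref{eq: def of Sg_om} of $\Sg_\om$, and \eqref{F1}--\eqref{F2}, and the triangle inequality doubles the single-function bound — exactly the paper's route.

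The one place you diverge is the normalization, and it costs you the stated constant. The paper keeps both numerator and denominator expressed through $\Lm_\om(f)$: the numerator is pushed up to $\frac{C_*a_*(3+a_*)}{2}\Lm_\om(f)$ using the cone condition together with Lemma~\ref{lem: LMD 3.8 temporary} and \eqref{F2}, while the denominator is bounded below by $\frac{\al_*\Lm_\om(f)}{8C_*}$, so the factor $\Lm_\om(f)$ cancels and one lands on $\log\frac{8C_*^2a_*(3+a_*)}{\al_*}$. You instead convert everything to $\Lm_{\sg^{\Sg_\om}(\om)}\lt(\~\cL_\om^{\Sg_\om}f\rt)$, and each of your two inversions, $\Lm_\om(f)\geq\Lm_{\sg^{\Sg_\om}(\om)}(\~\cL_\om^{\Sg_\om}f)/(a_*C_*)$ and $\Lm_\om(\ind_{Z_f})\geq\al_*/(a_*C_*)$, costs a factor $a_*C_*$: your constant is $c=\al_*/(8a_*^2C_*^3)$, so your single-function bound is $\log\frac{4a_*^2C_*^3(3+a_*)}{\al_*}$, which is strictly larger than $\log\frac{8C_*^2a_*(3+a_*)}{\al_*}$ whenever $a_*C_*>2$ — always the case, since $a_*\geq 12B_*$ and $C_*\geq1$. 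So your claim that your estimates "yield the single-function bound of the form $\log\frac{8C_*^2a_*(3+a_*)}{\al_*}$" is an arithmetic slip; as written you prove the lemma only with a larger $\Dl$. This is harmless downstream (only finiteness of $\Dl$, through $\tanh(\Dl/4)<1$, is ever used), but to obtain the precise constant in \eqref{eq: def of Dl} you should do as the paper does: compare the numerator bound $\frac{C_*a_*(3+a_*)}{2}\Lm_\om(f)$ directly with your own intermediate estimate $\inf_{D_{\sg^{\Sg_\om}(\om),\Sg_\om}}\~\cL_\om^{\Sg_\om}f\geq\frac{\Lm_\om(f)\Lm_\om(\ind_{Z_f})}{8C_*}$ (together with $\frac12\Lm_{\sg^{\Sg_\om}(\om)}(\~\cL_\om^{\Sg_\om}f)\geq\frac{\Lm_\om(f)}{2C_*}$ for the other entry of the minimum), rather than passing back to $\Lm_{\sg^{\Sg_\om}(\om)}(\~\cL_\om^{\Sg_\om}f)$.
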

\begin{proof}
The invariance follows from Lemma~\ref{lem: cone cont for coating blocks}. 
To show that the diameter is finite we first note that for $0\not\equiv f\in\sC_{\om,a_*}$ we must have that $\Lm_\om(f)>0$ by definition. Now, 
Lemma~\ref{lem: summary of cone dist prop} implies that for $f\in\sC_{\om,a_*}$ we have 
\begin{align}
\Ta_{\sg^{\Sg_\om}(\om),a_*}(\~\cL_{\om}^{\Sg_\om}f, \ind_{\sg^{\Sg_\om}(\om)})
\leq 
\log \frac{\norm{\~\cL_{\om}^{\Sg_\om}f}_\infty + \frac{1}{2}\Lm_{\sg^{\Sg_\om}(\om)}\lt(\~\cL_{\om}^{\Sg_\om}f\rt)}
{\min\set{\inf_{D_{\sg^{\Sg_\om}(\om), \Sg_\om}} \~\cL_{\om}^{\Sg_\om}f, \frac{1}{2}\Lm_{\sg^{\Sg_\om}(\om)}\lt(\~\cL_{\om}^{\Sg_\om}f\rt)}}.
\label{eq: diam est 1}
\end{align}
Using Lemmas~\ref{lem: cone cont for coating blocks} and \ref{lem: LMD 3.8 temporary} and \eqref{F2} we bound the numerator by  
\begin{align}
\norm{\~\cL_{\om}^{\Sg_\om}f}_\infty + \frac{1}{2}\Lm_{\sg^{\Sg_\om}(\om)}\lt(\~\cL_{\om}^{\Sg_\om}f\rt)
&\leq 
\var\lt(\~\cL_{\om}^{\Sg_\om}f\rt)
+
\frac{3}{2}\Lm_{\sg^{\Sg_\om}(\om)}\lt(\~\cL_{\om}^{\Sg_\om}f\rt)
\nonumber\\
&\leq 
\frac{3a_*+a_*^2}{2}\Lm_{\sg^{\Sg_\om}(\om)}\lt(\~\cL_{\om}^{\Sg_\om}\ind_\om\rt)\Lm_{\om}(f)\nonumber\\
&\leq 
\frac{C_*a_*(3+a_*)}{2}\Lm_{\om}(f).
\label{eq: diam est 2}
\end{align}
To find a lower bound for the denominator we first note that for each $f\in\sC_{\om,a_*}$, by Lemma~\ref{lem: bound on partition ele} there exists $Z_f\in\cZ_{\om,g}^{(N_{\om,\dl_0})}$ such that 
\begin{align}\label{eq: lower bound for f on Z_f}
\inf f\rvert_{Z_f}\geq \frac{\Lm_{\om}(f)}{4}.
\end{align}
Thus, using \eqref{eq: lower bound for f on Z_f}, for each $x\in D_{\sg^{\Sg_\om}(\om), \Sg_\om}$ we have that 	
\begin{align*}
\inf_{x\in D_{\sg^{\Sg_\om}(\om), \Sg_\om}}\~\cL_{\om}^{\Sg_\om} f(x)
&\geq 
\inf_{x\in D_{\sg^{\Sg_\om}(\om), \Sg_\om}}\~\cL_{\om}^{\Sg_\om}( f\ind_{Z_f})(x)
\\
&\geq
\inf_{Z_f} f \cdot		
\inf_{x\in D_{\sg^{\Sg_\om}(\om), \Sg_\om}}\~\cL_{\om}^{\Sg_\om}\ind_{Z_f}(x)
\\
&\geq
\frac{\Lm_{\om}(f)}{4}
\inf_{x\in D_{\sg^{\Sg_\om}(\om), \Sg_\om}}\~\cL_{\om}^{\Sg_\om}\ind_{Z_f}(x)
\\
&\geq 
\frac{\Lm_{\om}(f)}{4}
\inf_{y\in D_{\sg^{\Sg_\om}(\om), \Sg_\om}}\frac{\~\cL_{\om}^{\Sg_\om}\ind_{Z_f}(y)}{\~\cL_{\om}^{\Sg_\om}\ind_\om(y)}
\inf_{z\in D_{\sg^{\Sg_\om}(\om), \Sg_\om}} \~\cL_{\om}^{\Sg_\om}\ind_\om(z).
\end{align*}
In light of conditions \eqref{F1}-\eqref{F2} we in fact have that 
\begin{align}
\inf_{D_{\sg^{\Sg_\om}(\om), \Sg_\om}}\~\cL_{\om}^{\Sg_\om} f
\geq 
\frac{\al_*\Lm_{\om}(f)}{8C_*}>0.
\label{eq: diam est 3}
\end{align}
Combining the estimates \eqref{eq: diam est 2} and \eqref{eq: diam est 3} with \eqref{eq: diam est 1} gives 
\begin{align*}
\Ta_{\sg^{\Sg_\om}(\om),a_*}(\~\cL_{\om}^{\Sg_\om}f, \ind_{\sg^{\Sg_\om}(\om)})
\leq 
\log\frac{8C_*^2a_*(3+a_*)}{\al_*}<\infty.
\end{align*}
Taking the supremum over all functions $f\in\sC_{\om,a_*}$, and applying the triangle inequality finishes the proof. 
\end{proof}

To end this section we recall from Section~\ref{sec:bad} that $0\leq y_*(\om)< R_*$ is chosen to be the smallest integer such that for either choice of sign $+$ or $-$ we have 
\begin{flalign} 
& \lim_{n\to\infty} \frac{1}{n}\#\set{0\leq k< n: \sg^{\pm kR_*+y_*(\om)}(\om)\in\Om_G} >1-\ep,
\label{def y*1 part 2} 
&\\
& \lim_{n\to\infty} \frac{1}{n}\#\set{0\leq k< n: C_\ep\lt(\sg^{\pm kR_*+y_*(\om)}(\om)\rt)\leq B_*} >1-\ep.
\label{def y*2 part 2} 
\end{flalign}
In light of the definition of $\Om_F$ \eqref{eq: def of Om_F} and using the same reasoning as in Section~\ref{sec:bad} for the existence of $y_*$ (see Section~7 of \cite{AFGTV20}), for each $\om\in\Om$, we now let $0\leq v_*(\om)<R_*$ be the least integer such that for either choice of sign $+$ or $-$ we have that the following hold:
\begin{flalign}
& \lim_{n\to\infty} \frac{1}{n}\#\set{0\leq k< n: \sg^{\pm kR_*+v_*(\om)}(\om)\in\Om_G} >1-\ep,
\label{def v*1} 
&\\
&\lim_{n\to\infty} \frac{1}{n}\#\set{0\leq k< n: \sg^{\pm kR_*+v_*(\om)}(\om)\in\Om_F} >1-\ep.
\label{def v*3}
\end{flalign}
Two significant properties of $v_*$ are the following:
\begin{flalign}
& v_*(\sg^{v_*(\om)}(\om))=0,
\label{eq: v* prop1} 
&\\
& \text{if }v_*(\om)=0, \text{ then } y_*(\om)=0.
\label{eq: v* prop2} 
\end{flalign}
\section{Conformal and invariant measures}\label{sec: conf and inv meas}
We are now ready to bring together all of the results from Sections~\ref{sec: LY ineq}-\ref{sec: fin diam} to establish the existence of conformal and invariant measures supported in the survivor set $X_{\om,\infty}$. We follow the methods of \cite{AFGTV20} and  \cite{LMD}, and we begin with the following technical lemma from which the rest of our results will follow. 
\begin{lemma}\label{lem: exp conv in C+ cone}
Let $f,h\in\BV_\Om(I)$, let $\ep>0$ sufficiently small such that the results of Section~\ref{sec:bad} apply, and let $V:\Om\to(0,\infty)$ be a measurable function. 
Suppose that for each $n\in\NN$, each $|p|\leq n$, each $l\geq 0$, and for $m$-a.e. $\om\in\Om$ we have
$f_{\sg^p(\om)}\in\sC_{\sg^p(\om), +}$ with $\var(f_{\sg^p(\om)})\leq e^{\ep n}V(\om)$ and  $h_{\sg^{p-l}(\om)}\in\sC_{\sg^{p-l}(\om), +}$ with $\var(h_{\sg^{p-l}(\om)})\leq e^{\ep (n+l)}V(\om)$.
Then there exists $\vta\in(0,1)$ and a measurable function $N_3:\Om\to\NN$ such that for all $n\geq N_3(\om)$, all $l\geq 0$, and all $|p|\leq n$ we have 
\begin{align}\label{eq: lem 8.1 ineq}
\Ta_{\sg^{n+p}(\om), +}\lt(\~\cL_{\sg^p(\om)}^n f_{\sg^p(\om)}, \~\cL_{\sg^{p-l}(\om)}^{n+l} h_{\sg^{p-l}(\om)}\rt) \leq \Dl\vta^n.
\end{align} 
Furthermore, $\Dl$, defined in \eqref{eq: def of Dl}, and $\vta$ do not depend on $V$.
\end{lemma}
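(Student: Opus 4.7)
The strategy, adapted from \cite{LMD} and the closed-system argument of \cite{AFGTV20}, is to combine the Lasota--Yorke inequality of Proposition~\ref{prop: LY ineq 2} with the cone-contraction estimates of Lemmas~\ref{lem: cone cont for coating blocks}--\ref{lem: cone contraction for good om}, and then cash in Birkhoff's Theorem~\ref{thm: cone distance contraction}. I split the plan into three stages: (i) drive both $\~\cL_{\sg^p(\om)}^n f_{\sg^p(\om)}$ and $\~\cL_{\sg^{p-l}(\om)}^{n+l} h_{\sg^{p-l}(\om)}$ into the thick cone $\sC_{\sg^{n+p}(\om),a_*}$; (ii) accumulate Hilbert-metric contractions along the asymptotic density $1-\ep$ of visits of the orbit to $\Om_F$; (iii) transfer the resulting bound from $\Ta_{\cdot,a_*}$ to $\Ta_{\cdot,+}$.

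For (i), fix $\ep' > 0$ with $\ep + \ep' < \ta$ and apply Proposition~\ref{prop: LY ineq 2} at the target fiber $\sg^{n+p}(\om)$, using the equivariance $\Lm_{\sg^p(\om)}(f_{\sg^p(\om)}) \le \Lm_{\sg^{n+p}(\om)}(\~\cL_{\sg^p(\om)}^n f_{\sg^p(\om)})$ from Lemma~\ref{LMD l3.6}, to obtain
\[
\var\bigl(\~\cL_{\sg^p(\om)}^n f_{\sg^p(\om)}\bigr) \;\le\; C_{\ep'}(\sg^{n+p}(\om))\,e^{-(\ta-\ep-\ep')n}V(\om) \;+\; C_{\ep'}(\sg^{n+p}(\om))\,\Lm_{\sg^{n+p}(\om)}\bigl(\~\cL_{\sg^p(\om)}^n f_{\sg^p(\om)}\bigr),
\]
and the same bound for $\~\cL_{\sg^{p-l}(\om)}^{n+l} h_{\sg^{p-l}(\om)}$. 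Since $C_{\ep'}$ is tempered along the two-sided $\sg$-orbit of $\om$ and $|p|\le n$, there exists a measurable $N_1(\om)$ so that, for $n \ge N_1(\om)$, $|p|\le n$, $l\ge 0$, the first term is absorbed by the second, placing both iterated images inside $\sC_{\sg^{n+p}(\om),a_*}$.

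For (ii), condition \eqref{def v*3} guarantees that the subsequence $\{\sg^{kR_*+v_*(\om)}(\om)\}_{k\ge 0}$ meets $\Om_F$ with asymptotic frequency at least $1-\ep$. At each hit $\om_\star \in \Om_F$, Lemma~\ref{lem: cone contraction for good om} bounds the $\Ta_{\cdot,a_*}$-diameter of $\~\cL_{\om_\star}^{\Sg_{\om_\star}}(\sC_{\om_\star,a_*})$ by $\Dl$, so Theorem~\ref{thm: cone distance contraction} gives a strict contraction of $\Ta_{\cdot,a_*}$ by the factor $\kp_0 := \tanh(\Dl/4) \in (0,1)$. Between such hits, Lemma~\ref{lem: cone cont for coating blocks}, applied along the blocks $\Sg_\om^{(j)}$, ensures that $\~\cL$ preserves $\sC_{\cdot,a_*}$ and is therefore a weak contraction. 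Composing the approximately $(1-\ep)n/S_*$ strict contractions encountered in $[N_1(\om), n]$ yields
\[
\Ta_{\sg^{n+p}(\om),a_*}\!\bigl(\~\cL_{\sg^p(\om)}^n f_{\sg^p(\om)},\ \~\cL_{\sg^{p-l}(\om)}^{n+l} h_{\sg^{p-l}(\om)}\bigr) \;\le\; \Dl\,\vta^n, \qquad \vta := \kp_0^{(1-\ep)/S_*}.
\]
Stage (iii) is then immediate from \eqref{eq: Ta+ leq Ta} of Lemma~\ref{lem: summary of cone dist prop}, with $N_3(\om)$ chosen to absorb $N_1(\om)$, the first-visit time to $\Om_F$, and the shift $v_*(\om)$; both $\vta$ and $\Dl$ depend only on $\ep,a_*,B_*,C_*,\al_*,S_*$, not on $V$.

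The principal difficulty is maintaining the uniformity over $|p|\le n$ and $l\ge 0$ required by the statement: the LY constant $C_{\ep'}(\sg^{n+p}(\om))$ and the block data $(L_\om, \Gm(\om), \ell(\om))$ all depend on the base fiber, and the starting fiber of $h$ is displaced by an arbitrary $l\ge 0$. As in \cite{AFGTV20}, this is managed by exploiting temperedness of these measurable functions along the two-sided orbit of $\om$ together with the alignment $v_*(\om)=0$ guaranteed by \eqref{eq: v* prop1}--\eqref{eq: v* prop2}, so that the frequency estimates of Lemma~\ref{lem: proportion of bad blocks} and \eqref{def v*3} engage uniformly from the start of the push-forward.
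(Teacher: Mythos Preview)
Your high-level plan (Lasota--Yorke to enter the cone, then Birkhoff contractions, then pass to $\Ta_+$) is the same as the paper's. However, Stage~(i) as written has a genuine gap that prevents the argument from closing.

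You apply Proposition~\ref{prop: LY ineq 2} at the terminal fiber $\sg^{n+p}(\om)$ and claim that for $n\ge N_1(\om)$ the variation bound places $\~\cL_{\sg^p(\om)}^n f_{\sg^p(\om)}$ into $\sC_{\sg^{n+p}(\om),a_*}$. But the inequality yields
\[
\var\bigl(\~\cL_{\sg^p(\om)}^n f_{\sg^p(\om)}\bigr)\le C_{\ep'}(\sg^{n+p}(\om))\,e^{-(\ta-\ep-\ep')n}V(\om)+C_{\ep'}(\sg^{n+p}(\om))\,\Lm_{\sg^{n+p}(\om)}\bigl(\~\cL_{\sg^p(\om)}^n f_{\sg^p(\om)}\bigr),
\]
and even if the first term decays, membership in $\sC_{\cdot,a_*}$ requires the coefficient $C_{\ep'}(\sg^{n+p}(\om))$ of the second term to be bounded by (a fixed fraction of) $a_*$. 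The paper only establishes that $C_{\ep}$ is measurable and $m$-a.e.\ finite, not tempered; and even temperedness would give $C_{\ep'}(\sg^{n+p}(\om))\lesssim e^{\ep'|n+p|}$, which diverges and cannot be absorbed by the \emph{fixed} cone parameter $a_*$. Moreover, to accumulate contractions in Stage~(ii) you need cone membership at an \emph{intermediate} fiber (after only $O(\ep n)$ steps), not at the terminal one; but for intermediate times $n'\ll n$ the hypothesis $\var(f_{\sg^p(\om)})\le e^{\ep n}V(\om)$ still carries the large factor $e^{\ep n}$, so the first term does not decay.

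The paper resolves this by constructing, for each $n$, an offset $\hat v_*=v_*(\sg^p(\om))+d_*R_*=O(\ep n)$ chosen so that simultaneously (a) $\sg^{p+\hat v_*}(\om)\in\Om_F\subset\Om_1$, where $C_\ep\le B_*$ is uniformly bounded, and (b) $e^{-(\ta-\ep)\hat v_*}\,e^{\ep n}V(\om)\le 1$. This forces $\~\cL_{\sg^p(\om)}^{\hat v_*}f_{\sg^p(\om)}\in\sC_{\tau_0,a_*}$ with $a_*=12B_*$ fixed. The finite diameter $\Dl$ is then realised \emph{once} at $\tau_0\in\Om_F$ via Lemma~\ref{lem: cone contraction for good om}, after which the coating-block decomposition $\tau_1,\tau_2,\dots$ (which requires $y_*(\tau_0)=0$) carries the iterate forward inside $\sC_{\cdot,a_*}$, and the $\tanh(\Dl/4)$ contractions are counted at \emph{good blocks} (elements of $\Om_G$), not at $\Om_F$ visits. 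Your Stage~(ii) mixes the $\Om_F$ frequency from \eqref{def v*3} with the coating-block cone-preservation of Lemma~\ref{lem: cone cont for coating blocks}; these are different decompositions and do not align without the paper's single-$\Om_F$-visit construction.
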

\begin{proof}
We begin by noting that by \eqref{eq: L_om is a weak contraction on C_+} for each $l\geq 0$ we have that $\~\cL_{\sg^{p-l}(\om)}^l h_{\sg^{p-l}(\om)}\in\sC_{\sg^{p}(\om),+}$ for each $h_{\sg^{p-l}(\om)}\in\sC_{\sg^{p-l}(\om),+}$, and let 
\begin{align*}
h_l=\~\cL_{\sg^{p-l}(\om)}^l h_{\sg^{p-l}(\om)}.
\end{align*}
Set $v_*=v_*(\sg^p(\om))$ (defined at the end of Section~\ref{sec: fin diam}) and let $d_*=d_*(\sg^p(\om))\geq 0$ be the smallest integer that satisfies 
\begin{flalign} 
& v_*+d_*R_*\geq \frac{\ep n+\log V(\om)}{\ta-\ep}, 
\label{eq: d prop1} 
&\\
& \sg^{p+v_*+d_*R_*}(\om)\in\Om_F.
\label{eq: d prop2}
\end{flalign}
where $\ta$ was defined in \eqref{eq: def of ta}.
Choose 
\begin{align}\label{eq: coose N_1 ge log V}
N_1(\om)\ge \frac{\log V(\om)}{\ep}
\end{align} 
and let $n\geq N_1(\om)$. Now using \eqref{eq: coose N_1 ge log V} to write
\begin{align*}
\frac{4\ep n}{\ta}=\frac{\ep n+\ep n}{\ta/2}\geq \frac{\ep n+\log V(\om)}{\ta/2}
\end{align*}
and then using \eqref{eq: def of ep_0} and \eqref{eq: def of y_*}, we see that \eqref{eq: d prop1} is satisfied for any $d_*R_*\geq 4\epsilon n/\ta$.
Using \eqref{def v*3}, the construction of $v_*$, and the ergodic decomposition of $\sigma^{R_*}$ following \eqref{def v*3}, we have for $m$-a.e. $\omega\in\Om$ there is an infinite, increasing sequence of integers $d_j\ge 0$ satisfying \eqref{eq: d prop2}. Furthermore, \eqref{def v*3} implies that 
\begin{align*}
\lim_{n\to\infty}\frac{1}{n/R_*}\#\set{0\leq k< \frac{n}{R_*}: \sg^{\pm kR_*+v_*(\om)}(\om)\notin\Om_F} <\ep,
\end{align*}
and thus for $n\in\NN$ sufficiently large (depending measurably on $\om$), say $n\geq N_2(\om)\geq N_1(\om)$, we have that 
\begin{align*}
\#\set{0\leq k< \frac{n}{R_*}: \sg^{\pm kR_*+v_*(\om)}(\om)\notin\Om_F} < \frac{\ep n}{R_*}.
\end{align*}
Thus  the smallest integer $d_*$ satisfying \eqref{eq: d prop1} and \eqref{eq: d prop2} also satisfies 
\begin{align}\label{eq: dR is Oepn}
d_*R_*\leq\frac{4\ep n}{\ta}+\ep n=\lt(\frac{4+\ta}{\ta }\rt)\ep n.
\end{align}
Let 
\begin{align}\label{eq: def of hat y_*}
\hat v_*=v_*+d_*R_*.
\end{align}
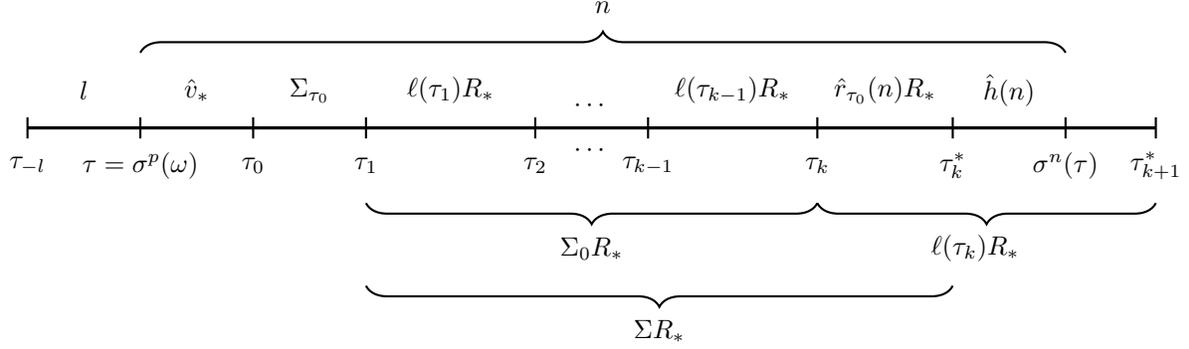
\begin{figure}[h]
\centering	
\begin{tikzpicture}[y=1cm, x=1.5cm, thick, font=\footnotesize]    
\draw[line width=1.2pt,  >=latex'](0,0) -- coordinate (x axis) (10,0);       

\draw (0,-4pt)   -- (0,4pt) {};
\node at (0, -.5) {$\tau_{-l}$};

\node at (.5, .5) {$l$};

\draw (1,-4pt)   -- (1,4pt) {};
\node at (1,-.5) {$\tau=\sg^p(\om)$};

\node at (1.5,.5) {$\hat v_*$};

\draw (2,-4pt) -- (2,4pt) {};
\node at (2,-.5) {$\tau_0$};

\node at (2.5,.5) {$\Sg_{\tau_0}$};

\draw (3,-4pt)  -- (3,4pt)  {};
\node at (3,-.5) {$\tau_1$};

\node at (3.75,.5) {$\ell(\tau_{1})R_*$};

\draw (4.5,-4pt)  -- (4.5,4pt)  {};
\node at (4.5,-.5) {$\tau_{2}$};

\node at (5,.3) {$\cdots$};
\node at (5,-.3) {$\cdots$};

\draw (5.5,-4pt)  -- (5.5,4pt)  {};
\node at (5.5,-.5) {$\tau_{k-1}$};

\node at (6.25,.5) {$\ell(\tau_{k-1})R_*$};	

\draw (7,-4pt)  -- (7,4pt)  {};
\node at (7,-.5) {$\tau_{k}$};

\node at (7.6,.5) {$\hat r_{\tau_0}(n)R_*$};

\draw (8.2,-4pt)  -- (8.2,4pt)  {};
\node at (8.2,-.5) {$\tau_{k}^*$};
\node at (8.7,.5) {$\hat h(n)$};

\draw (9.2,-4pt)  -- (9.2,4pt)  {};
\node at (9.2,-.5) {$\sg^n(\tau)$};

\draw (10,-4pt)  -- (10,4pt)  {};
\node at (10,-.5) {$\tau_{k+1}^*$};


\draw[decorate,decoration={brace,amplitude=8pt,mirror}] 
(3,-2.1)  -- (8.2,-2.1) ; 
\node at (5.6,-2.7){$\Sg R_*$};		

\draw[decorate,decoration={brace,amplitude=8pt,mirror}] 
(3,-1) -- (7,-1) ; 
\node at (5,-1.6){$\Sg_0 R_*$};

\draw[decorate,decoration={brace,amplitude=8pt,mirror}] 
(7,-1)  -- (10,-1) ; 
\node at (8.4,-1.6){$\ell(\tau_{k})R_*$};

\draw[decorate,decoration={brace,amplitude=8pt}] 
(1,1)  -- (9.2,1) ; 
\node at (5.1,1.6){$n$};

\end{tikzpicture}
\caption{The fibers $\tau_j$ and the decomposition of $n=\hat v_*+\Sg_{\tau_0}+\Sg R_*+\hat h(n)$.}
\label{figure: om_k}
\end{figure}
Now, we wish to examine the iteration of our operator cocycle along a collection $\Sg R_*$ of blocks, each of length $\ell(\om)R_*$, so that the images of $\~\cL_\om^{\ell(\om)R_*}$ are contained in $\sC_{\sg^{\ell(\om)R_*}(\om),\sfrac{a_*}{2}}$ as in Lemma~\ref{lem: cone cont for coating blocks}; see Figure~\ref{figure: om_k}.

We begin by establishing some simplifying notation. To that end, set $\tau=\sg^{p}(\om)$, $\tau_{-l}=\sg^{p-l}(\om)$, and $\tau_0=\sg^{p+\hat v_*}(\om)$; see Figure~\ref{figure: om_k}. Note that in light of \eqref{eq: v* prop1}, \eqref{eq: v* prop2}, and \eqref{eq: def of hat y_*} we have that 
\begin{align}\label{eq: y_* tau_0}
v_*(\tau_0)=y_*(\tau_0)=0.
\end{align}
Now, by our choice of $d_*$, we have that if $f_\tau\in\sC_{\tau,+}$ with $\var(f_\tau)\leq e^{\ep n}V(\om)$, then 
\begin{align}\label{eq: f in a cone}
\~\cL_\tau^{\hat v_*} f_\tau\in\sC_{\tau_0,a_*}.
\end{align}
Indeed, applying Proposition~\ref{prop: LY ineq 2}, \eqref{eq: d prop1}, \eqref{eq: d prop2}, and the definition of $\Om_F$ \eqref{eq: def of Om_F}, we have 
\begin{align*}
\var\lt(\~\cL_{\tau}^{\hat v_*}f_\tau\rt)
&\leq 
C_{\ep}(\sg^{\hat v_*}(\tau))e^{-(\ta-\ep)\hat v_*}\var(f_\tau)
+C_{\ep}(\sg^{\hat v_*}(\tau))\Lm_{\sg^{\hat v_*}(\tau)}\lt(\~\cL_{\tau}^{\hat v_*}f_\tau\rt)
\\
&\leq 
B_*e^{-(\ta-\ep)\hat v_*}\var(f_\tau)+B_*\Lm_{\sg^{\hat v_*}(\tau)}\lt(\~\cL_{\tau}^{\hat v_*}f_\tau\rt)
\\&
\leq 
B_*\frac{\var(f_\tau)}{e^{\ep n}V(\om)}+B_*\Lm_{\sg^{\hat v_*}(\tau)}\lt(\~\cL_{\tau}^{\hat v_*}f_\tau\rt) 
\\
&\leq
B_*+B_*\Lm_{\sg^{\hat v_*}(\tau)}\lt(\~\cL_{\tau}^{\hat v_*}f_\tau\rt)  
\\
&
\leq 2B_*\Lm_{\sg^{\hat v_*}(\tau)}\lt(\~\cL_{\tau}^{\hat v_*}f_\tau\rt)\leq \frac{a_*}{6}\Lm_{\sg^{\hat v_*}(\tau)}\lt(\~\cL_{\tau}^{\hat v_*}f_\tau\rt), 
\end{align*}
where we recall that $a_*> 12B_*$ is defined in \eqref{eq: def of a_*}. 
A similar calculation yields that if $h_{\tau_{-l}}\in\sC_{\tau_{-l},+}$ with $\var(h_{\tau_{-l}})\leq e^{\ep (n+l)}V(\om)$, then $\~\cL_{\tau_{-l}}^{l+\hat v_*} h_{\tau_{-l}}\in\sC_{\tau_0,a_*}$.

We now set $\tau_1=\sg^{\Sg_{\tau_0}}(\tau_0)$ and for each $j\geq 2$ let $\tau_j=\sg^{\ell(\tau_{j-1})R_*}(\tau_{j-1})$.
Note that since $\tau_0\in\Om_F$, we have that $\Sg_{\tau_0}\leq S_*$. 

As there are only finitely many blocks (good and bad) that will occur within an orbit of length $n$, let $k\geq 1$ be the integer such that 
\begin{align*}
\hat v_*+\Sg_{\tau_0}+\sum_{j=1}^{k-1} \ell(\tau_j)R_* \leq n < \hat v_*+\Sg_{\tau_0}+\sum_{j=1}^{k} \ell(\tau_j)R_*,
\end{align*}
and let 
\begin{equation*}
\Sg_0:=\sum_{j=1}^{k-1} \ell(\tau_j)
\qquad \text{ and } \qquad
\hat r_{\tau_0}(n):=r_{\tau_0}(n-\hat v_*)
\end{equation*} 
where $r_{\tau_0}(n-\hat v_*)$ is the number defined in \eqref{eq: n length orbit in om_j}.
Finally setting 
$$
\Sg=\Sg_0+\hat r_{\tau_0}(n),
\qquad 
\hat h(n):=n-\hat v_*-\Sg_{\tau_0}-\Sg R_*,
\qquad \text{ and } \qquad
\tau_k^*:=\sg^{\hat r_{\tau_0}(n)}(\tau_k),
$$
we have the right decomposition of our orbit length $n$ into blocks which do not expand distances in the fiber cones $\sC_{\om,a_*}$ and $\sC_{\om,+}$.
Now let
\begin{align}\label{eq: def of N_3}
n\geq N_3(\om):=\max\set{N_2(\om), \frac{S_*}{\ep}}.
\end{align}
Since $v_*, \hat h(n)\leq R_*$, by \eqref{eq: def of N_3}, \eqref{eq: dR is Oepn}, and for 
\begin{align}\label{key}
\ep<\frac{\ta}{8(1+\ta)}
\end{align}
sufficiently small, we must have that
\begin{align}\label{eq: SgR is big O ep n}
\Sg R_*
&=
n-\hat v_*-\Sg_{\tau_0}-\hat h(n)
=
n-v_*-d_*R_*-\Sg_{\tau_0}-\hat h(n)
\nonumber\\
&\geq 
n-\lt(\frac{4+\ta}{\ta}\rt)\ep n-2R_*-S_*
\geq 
n-\lt(\frac{4+\ta}{\ta}\rt)\ep n-3\ep n
\nonumber\\
&\geq 
n\lt(1-4\ep\lt(\frac{1+\ta}{\ta}\rt)\rt) > \frac{n}{2}.
\end{align}
Now we note that since $\~\cL_{\tau_k^*}^{\hat h(n)}(\sC_{\tau_k^*,+})\sub\sC_{\sg^{n+p}(\om), +}$ we have that $\~\cL_{\tau_k^*}^{\hat h(n)}$ is a weak contraction, and hence, we have 
\begin{align}\label{eq: L^r a weak contraction on C_+}
\Ta_{\sg^{n+p}(\om), +}\lt(\~\cL_{\tau_k^*}^{\hat h(n)} f' , \~\cL_{\tau_k^*}^{\hat h(n)} h' \rt)\leq \Ta_{\tau_k^*,+}(f',h'), 
\qquad f',h'\in\Ta_{\tau_k^*,+}.	
\end{align}
Recall that $E_{\tau_1}(n-\hat v_*-\Sg_{\tau_0})$, defined in Lemma~~\ref{lem: proportion of bad blocks}, is the total length of the bad blocks of the $n-\hat v_*$ length orbit starting at $\tau_0$, i.e. 
\begin{align*}
E_{\tau_1}(n-\hat v_*-\Sg_{\tau_0})&=\sum_{\substack{1\leq j< k \\ \tau_j\in\Om_B}} \ell(\tau_j) +r_{\tau_0}(n-\hat v_*).
\end{align*}
Lemma~\ref{lem: proportion of bad blocks} then gives that 
\begin{align}\label{eq: est of S}
E_{\tau_1}(n-\hat v_*-\Sg_{\tau_0})<Y\ep\Sg. 
\end{align}
We are now poised to calculate \eqref{eq: lem 8.1 ineq}, but first we note that we can write
\begin{align}
n&=\hat v_* + \Sg_{\tau_0} + \Sg R_* +\hat h(n)
\nonumber\\
&=\hat v_* + \Sg_{\tau_0} +\Sg_0 R_*+ \hat r_{\tau_0}(n)+ \hat h(n)
\label{eq: lem 8.1 decomp of n final ineq}	
\end{align}
and that the number of good blocks contained in the orbit of length $n-\hat v_*-\Sg_{\tau_0}$ is given by 
\begin{align}\label{eq: number of good blocks}
\Sg_G:=\#\set{1\leq j\leq k: \tau_j\in\Om_G}
=\Sg-E_{\tau_1}(n-\hat v_*-\Sg_{\tau_0})
\leq \Sg_0.
\end{align} 
Now, using \eqref{eq: lem 8.1 decomp of n final ineq} we combine (in order) \eqref{eq: L^r a weak contraction on C_+}, \eqref{eq: Ta+ leq Ta}, and Theorem~\ref{thm: cone distance contraction} (repeatedly) in conjunction with the fact that $\tau_0\in\Om_F$ to see that
\begin{align}
&\Ta_{\sg^{n+p}(\om), +}
\lt(\~\cL_{\tau}^{n} (f_\tau), \~\cL_{\tau_{-l}}^{n+l} (h_{\tau_{-l}}) \rt)
\nonumber\\
&\quad
=
\Ta_{\sg^{n+p}(\om), +}
\lt(
\~\cL_{\tau_k}^{\hat h(n)} \circ \~\cL_{\tau_1}^{\Sg R_*} \circ \~\cL_{\tau_0}^{\Sg_{\tau_0}} \circ \~\cL_{\tau}^{\hat v_*} (f_\tau)
,
\~\cL_{\tau_k}^{\hat h(n)} \circ \~\cL_{\tau_1}^{\Sg R_*} \circ \~\cL_{\tau_0}^{\Sg_{\tau_0}} \circ \~\cL_{\tau}^{\hat v_*}\circ \~\cL_{\tau_{-l}}^{l} (h_{\tau_{-l}}) 
\rt)
\nonumber\\
&\quad
\leq
\Ta_{\tau_k^*, +}
\lt(
\~\cL_{\tau_1}^{\Sg R_*} \circ \~\cL_{\tau_0}^{\Sg_{\tau_0}} \circ \~\cL_{\tau}^{\hat v_*} (f_\tau)
,
\~\cL_{\tau_1}^{\Sg R_*} \circ \~\cL_{\tau_0}^{\Sg_{\tau_0}} \circ \~\cL_{\tau}^{\hat v_*} (h_l) 
\rt)	
\nonumber\\
&\quad
\leq
\Ta_{\tau_k^*, a_*}
\lt(
\~\cL_{\tau_k}^{\hat r_{\tau_0}(n)} \circ \~\cL_{\tau_1}^{\Sg_0 R_*} \circ \~\cL_{\tau_0}^{\Sg_{\tau_0}} \circ \~\cL_{\tau}^{\hat v_*} (f_\tau)
,
\~\cL_{\tau_k}^{\hat r_{\tau_0}(n)} \circ \~\cL_{\tau_1}^{\Sg_0 R_*} \circ \~\cL_{\tau_0}^{\Sg_{\tau_0}} \circ \~\cL_{\tau}^{\hat v_*} (h_l) 
\rt)
\nonumber\\
&\quad
\leq 
\Ta_{\tau_k, a_*}
\lt(
\~\cL_{\tau_1}^{\Sg_0 R_*} \circ \~\cL_{\tau_0}^{\Sg_{\tau_0}} \circ \~\cL_{\tau}^{\hat v_*} (f_\tau)
,
\~\cL_{\tau_1}^{\Sg_0 R_*} \circ \~\cL_{\tau_0}^{\Sg_{\tau_0}} \circ \~\cL_{\tau}^{\hat v_*} (h_l) 
\rt)
\nonumber\\	
&\quad
\leq
\lt(\tanh\lt(\frac{\Dl}{4}\rt)\rt)^{\Sg_G}
\Ta_{\tau_1, a_*}
\lt(
\~\cL_{\tau_0}^{\Sg_{\tau_0}} \circ \~\cL_{\tau}^{\hat v_*} (f_\tau)
,
\~\cL_{\tau_0}^{\Sg_{\tau_0}} \circ \~\cL_{\tau}^{\hat v_*} (h_l) 
\rt).
\label{eq: cone Cauchy 1}
\end{align} 
Now since $\tau_0\in \Om_F$ and in light of \eqref{eq: f in a cone}, applying Lemma~\ref{lem: cone contraction for good om} allows us to estimate the $\Ta_{\tau_1,a_*}$ term in the right hand side of \eqref{eq: cone Cauchy 1} to give 
\begin{align}\label{eq: cone Cauchy Ta leq Dl}
\Ta_{\sg^{n+p}(\om), +}
\lt(\~\cL_{\tau}^{n} (f_\tau), \~\cL_{\tau_{-l}}^{n+l} (h_{\tau_{-l}}) \rt)
\leq \lt(\tanh\lt(\frac{\Dl}{4}\rt)\rt)^{\Sg_G}\Dl.
\end{align}
Using \eqref{eq: number of good blocks}, the fact that $E_{\tau_1}(n-\hat v_*-\Sg_{\tau_0})\geq 1$, \eqref{eq: est of S}, and \eqref{eq: SgR is big O ep n}, we see that
\begin{align}
\Sg_G
&= 
\Sg-E_{\tau_1}(n-\hat v_*-\Sg_{\tau_0})
\nonumber\\
&\geq
\Sg-Y\ep\Sg
\nonumber\\
&=
\Sg\lt(1-Y\ep\rt)
\nonumber\\
&\geq
\frac{\lt(1-Y\ep\rt)n}{2R_*}.
\label{eq: cone Cauchy exponent}
\end{align}
In light of \eqref{eq: def ep 4}, for  all $\ep>0$ sufficiently small we have that $1-Y\ep>0$. 
Finally, inserting \eqref{eq: cone Cauchy Ta leq Dl} and \eqref{eq: cone Cauchy exponent} into \eqref{eq: cone Cauchy 1} gives
\begin{align*}
&\Ta_{\sg^{n+p}(\om), +}
\lt(\~\cL_{\tau}^{n} (f_\tau), \~\cL_{\tau_{-l}}^{n+l} (h_{\tau_{-l}}) \rt)
\leq 
\Dl \vta^n,
\end{align*}
where 
$$
\vta:=\lt(\tanh\lt(\frac{\Dl}{4}\rt)\rt)^{\frac{\lt(1-Y\ep\rt)}{2R_*}}<1,
$$
which completes the proof.
\end{proof}
Combining Lemma~\ref{lem: exp conv in C+ cone} together with Lemma~\ref{lem: birkhoff cone contraction} gives the following immediate corollary.
\begin{corollary}\label{cor: exp conv in sup norm}
Suppose $\ep>0$, $V:\Om\to(0,\infty)$, $f_{\sg^p(\om)}\in\sC_{\sg^p(\om),+}$, and $h_{\sg^{p-l}(\om)}\in\sC_{\sg^{p-l}(\om),+}$ all satisfy the hypotheses of Lemma~\ref{lem: exp conv in C+ cone}. Then 
for $m$-a.e. $\om\in\Om$, all $n\geq N_3(\om)$, all $l\geq 0$, and all $|p|\leq n$ we have 
\begin{align*}
\norm{\~\cL_{\sg^p(\om)}^n f_{\sg^p(\om)} -\~\cL_{\sg^{p-l}(\om)}^{n+l} h_{\sg^{p-l}(\om)}}_\infty
&\leq 
\norm{\~\cL_{\sg^p(\om)}^n f_{\sg^p(\om)}}_\infty \lt(e^{\Dl\vta^n}-1\rt).
\end{align*} 
\end{corollary}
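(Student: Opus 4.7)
The corollary should follow from chaining the cone-distance estimate of Lemma~\ref{lem: exp conv in C+ cone} with the general norm/Hilbert-metric comparison of Lemma~\ref{lem: birkhoff cone contraction}. First I would fix $F := \~\cL_{\sg^p(\om)}^n f_{\sg^p(\om)}$ and $H := \~\cL_{\sg^{p-l}(\om)}^{n+l} h_{\sg^{p-l}(\om)}$; the hypotheses of Lemma~\ref{lem: exp conv in C+ cone} carry over verbatim and the lemma is invoked to conclude
\[
\Ta_{\sg^{n+p}(\om),+}(F,H)\;\leq\;\Dl\,\vta^n
\qquad\text{for }n\geq N_3(\om),\ l\geq 0,\ |p|\leq n,
\]
with both $F$ and $H$ belonging to $\sC_{\sg^{n+p}(\om),+}$, which has Birkhoff pseudo-metric $\Ta_{\sg^{n+p}(\om),+}$.

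Next I would invoke Lemma~\ref{lem: birkhoff cone contraction} applied to the cone $\sC_{\sg^{n+p}(\om),+}$ with the norm $\norm{\spot}_\infty$ (the cone is built from pointwise-nonnegative functions, so $-f\leq h\leq f$ forces $\norm{h}_\infty\leq\norm{f}_\infty$, as required). The hypothesis of that lemma is $\vrho(F)=\vrho(H)>0$ for a homogeneous order-preserving gauge $\vrho$. Here I would take $\vrho=\norm{\spot}_\infty$, and exploit the fact that the Hilbert metric $\Ta_{\sg^{n+p}(\om),+}$ is projective: replacing $H$ by its positive scalar multiple $H' := c\,H$ with $c=\norm{F}_\infty/\norm{H}_\infty$ leaves the cone distance unchanged while forcing $\vrho(H')=\vrho(F)$. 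Lemma~\ref{lem: birkhoff cone contraction} then yields
\[
\norm{F-H'}_\infty \;\leq\; \bigl(e^{\Ta_{\sg^{n+p}(\om),+}(F,H')}-1\bigr)\min\bigl\{\norm{F}_\infty,\norm{H'}_\infty\bigr\}
\;\leq\;\bigl(e^{\Dl\vta^n}-1\bigr)\norm{F}_\infty,
\]
the minimum being realized (or at least bounded) by $\norm{F}_\infty$ after renormalization.

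The final inequality stated in the corollary is then obtained by identifying $H$ with its properly rescaled representative: because $F$ and $H$ lie on the same ray in the projective cone up to error $\Dl\vta^n$, the distance between $F$ and $H$ coincides with the distance between $F$ and $H'$ up to this same exponentially small correction, which is already absorbed in the factor $e^{\Dl\vta^n}-1$. The main (only) subtlety is this rescaling step: since $F$ and $H$ are produced by applying different iterates of the normalized transfer operator to different initial conditions, there is no reason for $\norm{F}_\infty=\norm{H}_\infty$ a priori, and one must rely on the projective invariance of $\Ta_+$ to align the gauges before invoking Lemma~\ref{lem: birkhoff cone contraction}. Everything else is a mechanical substitution $\Theta\leq\Dl\vta^n$ into the monotone function $x\mapsto e^x-1$.
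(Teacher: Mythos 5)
Your skeleton---invoke Lemma~\ref{lem: exp conv in C+ cone} to get $\Ta_{\sg^{n+p}(\om),+}(F,H)\leq \Dl\vta^n$ for $F:=\~\cL_{\sg^p(\om)}^n f_{\sg^p(\om)}$ and $H:=\~\cL_{\sg^{p-l}(\om)}^{n+l}h_{\sg^{p-l}(\om)}$, then convert cone distance into a sup-norm bound via Lemma~\ref{lem: birkhoff cone contraction}---is exactly the combination the paper has in mind. The gap is in your choice of gauge and the rescaling step. Taking $\vrho=\norm{\spot}_\infty$ and replacing $H$ by $H':=cH$ with $c=\norm{F}_\infty/\norm{H}_\infty$, the lemma controls $\norm{F-H'}_\infty$, not $\norm{F-H}_\infty$, and your closing claim that the two agree up to a correction ``absorbed in $e^{\Dl\vta^n}-1$'' is not justified: the Hilbert metric is projective, hence blind to scale, so the bound $\Ta_{+}(F,H)\leq\Dl\vta^n$ gives no control on $|1-c|$, and $\norm{H-H'}_\infty=|1-c|\,\norm{H}_\infty$ can be of the same order as $\norm{F}_\infty$. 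Concretely, with $p=l=0$ and $h=2f$ one has $\Ta_{\sg^{n}(\om),+}(F,H)=0$ while $\norm{F-H}_\infty=\norm{F}_\infty$, so no appeal to projective invariance can pass from the pair $(F,H')$ back to $(F,H)$ without a scale-fixing hypothesis on $f$ and $h$.

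The intended reading (see the remark immediately following Lemma~\ref{lem: birkhoff cone contraction}) is to apply that lemma with the gauge $\vrho=\Lm_{\sg^{n+p}(\om)}$, which is homogeneous and order preserving on $\sC_{\sg^{n+p}(\om),+}$; the required equality $\Lm_{\sg^{n+p}(\om)}(F)=\Lm_{\sg^{n+p}(\om)}(H)>0$ is supplied in every application of the corollary by the normalisation of the inputs---in Corollary~\ref{cor: exist of unique dens q} both images are normalised to have $\Lm$-value $1$, and in Theorem~\ref{thm: exp convergence of tr op} one compares $\~\cL_{\sg^p(\om)}^n f_{\sg^p(\om)}$ with $\nu_{\sg^p(\om)}(f_{\sg^p(\om)})\phi_{\sg^{p+n}(\om)}$, which have equal $\Lm$-values. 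With that gauge the conclusion is immediate, since $\norm{F-H}_\infty\leq\lt(e^{\Ta_{\sg^{n+p}(\om),+}(F,H)}-1\rt)\min\set{\norm{F}_\infty,\norm{H}_\infty}\leq\norm{F}_\infty\lt(e^{\Dl\vta^n}-1\rt)$. So the repair is not to renormalise in the sup norm, but to take $\Lm$ as the gauge (equivalently, to record explicitly that the corollary is only invoked for functions whose images have matching $\Lm$-values); you correctly spotted that the scales of $F$ and $H$ are not a priori aligned, but the mechanism you proposed to align them does not close the argument.
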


Notice that if we wish to apply Lemma~\ref{lem: exp conv in C+ cone} (or Corollary~\ref{cor: exp conv in sup norm}) repeatedly iterating in the forward direction, i.e. taking $p=0$ so that we push forward starting from the $\om$ fiber, then we only need that $f\in\sC_{\om,+}$ and do not need to be concerned with the assumption on the variation. Indeed, as $p=0$ is fixed, then we will have $\var(f)\leq \var(f)\cdot e^{\ep n}$ for any $n\geq 1$. However, if we wish to apply Lemma~\ref{lem: exp conv in C+ cone} repeatedly with $p=-n$ for $n$ increasing to $\infty$, then we will need to consider special functions $f$. 
\begin{definition}\label{def: set D of tempered BV func}
We let the set $\BVT$\index{$\BVT$} denote the set of functions $f\in\BV_\Om(I)$ such that the function $\Om\ni\om\mapsto\var(f_\om)$ is tempered, that is for $m$-a.e. $\oio$ 
\begin{align*}
    \lim_{|n|\to\infty}\frac{1}{|n|}\log\var(f_{\sg^n\om}) =0.
\end{align*}
In particular, this implies that for each $\ep>0$ there exists a tempered function $V_{f,\ep}:\Om\to (0,\infty)$ such that for all $n\in\ZZ$ \index{$V_{f,\ep}$}
\begin{align*}
    \var(f_{\sg^n(\om)})\leq V_{f,\ep}(\om)e^{\ep|n|}.
\end{align*}
Let $\BVT^+\sub\BVT$\index{$\BVT^+$} denote the collection of all functions $f\in\BVT$ such that $f_\om\geq 0$ for each $\om\in\Om$.
\end{definition}
Note that we may apply Lemma~\ref{lem: exp conv in C+ cone} and Corollary~\ref{cor: exp conv in sup norm} to functions $f,h\in\BVT^+$ by taking $V(\om)=\max\{V_{f,\ep}, V_{h,\ep}\}$ for a given $\ep>0$.
\begin{remark}
Note that the space $\BVT$ is nonempty. In particular, $\BVT$ contains any function $f:\Om\times I\to\RR$ such that $f_\om$ is equal to some fixed function $f\in\BV(I)$ for $m$-a.e. $\oio$, in which case we may take $V_{f,\ep}(\om)=\var(f)$ for each $\ep>0$. More generally, $\BVT$ contains any function $f\in\BV_\Om(I)$ such that $\log\var(f_\om)\in L^1(m)$. 
\end{remark}
\begin{remark}\label{rem: combining D1 and D2}
Note that if $f\in \BVT$ and additionally satisfies 
\begin{align}
    \Lm_{\sg^n(\om)}(|f_{\sg^n(\om)}|)\geq V_{f,\ep}^{-1}(\om)e^{-\ep |n|}
\label{cond cD2}
\end{align}
for each $\ep>0$,
then taking $V_f(\om)=V_{f,\ep}^2(\om)$ measurable and $\ep'=\ep/2$ for a fixed $\ep>0$
we have that
\begin{align*}
\frac{\var(f_{\sg^{-n}(\om)})}{\Lm_{\sg^{-n}(\om)}(f_{\sg^{-n}(\om)})}
\leq
V_{f}(\om)\frac{\var(f_\om)}{\Lm_{\om}(f_\om)}e^{\ep' n}.
\end{align*}
\end{remark}
In the following corollary we establish the existence of an invariant density.
\begin{corollary}\label{cor: exist of unique dens q}
There exists a function $\phi\in\BV_\Om(I)$ and a measurable function $\lm:\Om\to\RR^+$ such that for $m$-a.e. $\om\in\Om$
\begin{align}\label{eq: q invariant density}
\cL_\om \phi_\om = \lm_\om \phi_{\sg(\om)}
\qquad\text{ and }\qquad 
\Lm_\om(\phi_\om)=1.
\end{align}
Furthermore, we have that $\log\lm_\om\in L^1(m)$ and for $m$-a.e. $\om\in\Om$, $\lm_\om\geq \rho_\om$. 
\end{corollary}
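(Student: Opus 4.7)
The plan is to realize $\phi$ as the normalized backward-iterate limit
\begin{align*}
\phi_\om := \lim_{n\to\infty} \phi^{(n)}_\om,
\qquad
\phi^{(n)}_\om := \frac{\~\cL_{\sg^{-n}(\om)}^n \ind}{\Lm_\om\lt(\~\cL_{\sg^{-n}(\om)}^n \ind\rt)}.
\end{align*}
By \eqref{eq: equivariance prop of Lm} applied with $f=\ind$ in the fiber $\sg^{-n}(\om)$, the denominator is bounded below by $\Lm_{\sg^{-n}(\om)}(\ind) = 1$, so each $\phi^{(n)}_\om$ is a well-defined non-negative $\BV$ function with $\Lm_\om(\phi^{(n)}_\om) = 1$ by the homogeneity \eqref{Lm prop4}. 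Proposition~\ref{prop: LY ineq 2} applied to $\ind$ (whose variation vanishes), divided through by the normalizer, then gives the uniform bound $\var(\phi^{(n)}_\om) \leq C_\ep(\om)$, and combining this with \eqref{f leq Lm(f)+var(f)} produces the uniform sup-norm bound $\norm{\phi^{(n)}_\om}_\infty \leq 1 + C_\ep(\om)$.

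Cauchy convergence follows by applying Lemma~\ref{lem: exp conv in C+ cone} with $f = h = \ind$, $V \equiv 1$, $p = -n$, and arbitrary $l$ (the variation hypothesis being trivial), which yields
\begin{align*}
\Ta_{\om,+}\lt(\~\cL_{\sg^{-n}(\om)}^n\ind, \~\cL_{\sg^{-(n+l)}(\om)}^{n+l}\ind\rt) \leq \Dl\vta^n.
\end{align*}
The Hilbert metric is invariant under positive scaling, so the same bound transfers to $\Ta_{\om,+}(\phi^{(n)}_\om,\phi^{(n+l)}_\om)$; Lemma~\ref{lem: birkhoff cone contraction} applied with $\vrho = \Lm_\om$ and $\norm{\cdot}_\infty$ then converts it to
\begin{align*}
\norm{\phi^{(n)}_\om - \phi^{(n+l)}_\om}_\infty \leq \lt(e^{\Dl\vta^n}-1\rt)(1+C_\ep(\om)).
\end{align*}
Thus $\{\phi^{(n)}_\om\}$ is Cauchy in $L^\infty$; let $\phi_\om$ be its limit. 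Lower semicontinuity of $\var$, measurability of each $\phi^{(n)}_\om$ in $\om$, and sup-norm continuity of $\Lm_\om$ (Observation~\ref{obs: properties of Lm}\eqref{Lm prop2}) place $\phi$ in $\BV_\Om(I)$ with $\Lm_\om(\phi_\om) = 1$ and $\var(\phi_\om) \leq C_\ep(\om)$.

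For the cocycle identity, note that $\~\cL_\om \phi^{(n)}_\om$ is a positive multiple of $\~\cL_{\sg^{-n}(\om)}^{n+1}\ind$, which (since $\sg^{-(n+1)}(\sg(\om)) = \sg^{-n}(\om)$) is in turn a positive multiple of $\phi^{(n+1)}_{\sg(\om)}$; identifying the scalar via $\Lm_{\sg(\om)}$ gives $\~\cL_\om \phi^{(n)}_\om = \Lm_{\sg(\om)}(\~\cL_\om \phi^{(n)}_\om)\,\phi^{(n+1)}_{\sg(\om)}$. Passing to the sup-norm limit (using the $L^\infty$-boundedness $\norm{\~\cL_\om f}_\infty \leq \norm{\~\cL_\om\ind}_\infty\norm{f}_\infty$ and continuity of $\Lm_{\sg(\om)}$) yields $\~\cL_\om\phi_\om = \Lm_{\sg(\om)}(\~\cL_\om\phi_\om)\,\phi_{\sg(\om)}$, so setting $\lm_\om := \rho_\om \Lm_{\sg(\om)}(\~\cL_\om\phi_\om)$ produces $\cL_\om\phi_\om = \lm_\om\phi_{\sg(\om)}$. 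Applying \eqref{eq: equivariance prop of Lm} to $\phi_\om$ immediately yields $\lm_\om \geq \rho_\om$.

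The main obstacle is $\log\lm_\om \in L^1(m)$. The lower tail is absorbed by $\log\rho_\om \in L^1(m)$ via \eqref{eq: rho log int}, while the upper tail follows from the estimate
\begin{align*}
\lm_\om = \Lm_{\sg(\om)}(\cL_\om\phi_\om) \leq \norm{\phi_\om}_\infty\norm{\cL_\om\ind}_\infty \leq (1 + C_\ep(\om))\norm{\cL_\om\ind}_\infty,
\end{align*}
combined with \eqref{eq: log int open weight and tr op}, provided one can establish $\log^+ C_\ep(\om) \in L^1(m)$. This last point is the only non-routine step; it requires tracking the constants in the proof of Proposition~\ref{prop: LY ineq 2} (adapted from \cite{AFGTV20}) back to the log-integrable factors $L_\om^{N_*}$ supplied by Lemma~\ref{lem: buzzi LY1}. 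Everything else is a standard Hilbert-cone contraction argument once Lemma~\ref{lem: exp conv in C+ cone} is available.
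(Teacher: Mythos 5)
Your construction (pull back $\ind$, normalise by $\Lm_\om$, get uniform $\BV$ bounds from Proposition~\ref{prop: LY ineq 2}, and use Lemma~\ref{lem: exp conv in C+ cone} together with Lemma~\ref{lem: birkhoff cone contraction} to get a Cauchy sequence, then pass the one-step intertwining relation to the limit) is essentially the paper's argument specialised to $f=\ind$ (the paper runs it for all $f\in\BVT^+$ satisfying \eqref{cond cD2} only because $f$-independence of $\phi_{\om,f}$ and $\lm_{\om,f}$ is wanted later; for the corollary itself your specialisation is harmless). The genuine gap is in the last step, the log-integrability of $\lm_\om$. You bound $\lm_\om\leq\norm{\phi_\om}_\infty\norm{\cL_\om\ind_\om}_\infty\leq(1+C_\ep(\om))\norm{\cL_\om\ind_\om}_\infty$ and then need $\log^+C_\ep(\om)\in L^1(m)$. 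That is nowhere established: Proposition~\ref{prop: LY ineq 2} only gives a measurable, $m$-a.e.\ finite $C_\ep(\om)$, and constants produced by this Buzzi-type argument come from an almost-everywhere Birkhoff/temperedness statement, so there is no reason they admit a log-integrable version; your proposed ``tracking of constants back to $L_\om^{N_*}$'' is exactly the part that may fail, and the paper never proves (or needs) such a statement.

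The step is easily repaired with tools you already invoke, and this is what the paper does. From Lemma~\ref{LMD l3.6} (with $k=1$) and the inequality \eqref{eq: a* bound 1} (with $N=1$), for any $f\in\sC_{\om,+}$ one has $\rho_\om\Lm_\om(f)\leq\Lm_{\sg(\om)}(\cL_\om f)\leq\norm{\cL_\om\ind_\om}_\infty\Lm_\om(f)$; applied either to the ratios $\Lm_{\sg(\om)}(\cL_{\sg^{-n}(\om)}^{n+1}\ind)/\Lm_\om(\cL_{\sg^{-n}(\om)}^{n}\ind)$ before taking limits (as in \eqref{eq: lm geq rho}--\eqref{eq: rho leq lm leq a rho} of the paper), or directly to $f=\phi_\om$ using $\Lm_\om(\phi_\om)=1$, this gives $\rho_\om\leq\lm_\om\leq\norm{\cL_\om\ind_\om}_\infty$ with no appearance of $C_\ep(\om)$. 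Log-integrability then follows at once from \eqref{eq: rho log int} and \eqref{eq: log int open weight and tr op}. With that substitution (and a word on measurability of $\om\mapsto\lm_\om$ as a limit of measurable ratios, which you leave implicit), your proof is complete and matches the paper's.
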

\begin{proof}
First fix $\ep>0$ sufficiently small such that Lemma~\ref{lem: exp conv in C+ cone} applies. We note that for any $f\in\BVT^+$ which satisfies \eqref{cond cD2}, Lemma~\ref{LMD l3.6} and Remark~\ref{rem: combining D1 and D2} give that 
\begin{align*}
\var\lt(f_{\om,n}\rt)
&=
\frac{\rho_{\sg^{-n}(\om)}^n}{\Lm_{\om}\lt(\cL_{\sg^{-n}(\om)}^n f_{\sg^{-n}(\om)}\rt)}\var\lt(f_{\sg^{-n}(\om)}
\rt)
\leq 
\frac{\var(f_{\sg^{-n}(\om)})}{\Lm_{\sg^{-n}(\om)}(f_{\sg^{-n}(\om)})}
\leq
V_{f}(\om)\frac{\var(f_\om)}{\Lm_{\om}(f_\om)}e^{\ep n}
\end{align*}	
for all $n\in\NN$ sufficiently large, say for $n\geq N_4(\om)$, and some measurable $V_f:\Om\to(0,\infty)$, where
\begin{align*}
f_{\om,n}:=\frac{f_{\sg^{-n}(\om)}\rho_{\sg^{-n}(\om)}^n}{\Lm_{\om}\lt(\cL_{\sg^{-n}(\om)}^n f_{\sg^{-n}(\om)}\rt)}\in\sC_{\sg^{-n}(\om),+}.
\end{align*}
Thus, Corollary~\ref{cor: exp conv in sup norm} (with $p=-n$ and $V(\om)=V_f(\om)\var(f_\om)/\Lm_\om(f_\om)$) gives that 
$$
(\~\cL_{\sg^{-n}(\om)}^n f_{\om,n})_{n\in\NN}
=
\lt(\frac{\cL_{\sg^{-n}(\om)}^n f_{\sg^{-n}(\om)}}{\Lm_{\om}\lt(\cL_{\sg^{-n}(\om)}^n f\rt)}\rt)_{n\in\NN}
$$ 
forms a Cauchy sequence in $\sC_{\om,+}$, and therefore there must exist some $\phi_{\om,f}\in\sC_{\om,+}$ with 
\begin{align}\label{eq: constr of q}
\phi_{\om,f}:=\lim_{n\to\infty}\frac{\cL_{\sg^{-n}(\om)}^n f_{\sg^{-n}(\om)}}{\Lm_\om\lt(\cL_{\sg^{-n}(\om)}^n f_{\sg^{-n}(\om)}\rt)}.
\end{align}
By construction we have that $\Lm_\om(\phi_{\om,f})=1$.
Now, in view of calculating $\cL_\om \phi_{\om,f}$, we note that \eqref{eq: a* bound 1} (with $N=1$ and $f=\cL_{\sg^{-n}(\om)}^{n} f_{\sg^{-n}(\om)}$) gives that 
\begin{align}\label{eq: 11.21 up bd}
\frac{\Lm_{\sg(\om)}\lt(\cL_{\sg^{-n}(\om)}^{n+1} f_{\sg^{-n}(\om)}\rt)}{\Lm_{\om}\lt(\cL_{\sg^{-n}(\om)}^{n} f_{\sg^{-n}(\om)}\rt)}
\leq 
\frac{\norm{\cL_\om\ind_\om}_\infty\Lm_{\om}\lt(\cL_{\sg^{-n}(\om)}^{n} f_{\sg^{-n}(\om)}\rt)}{\Lm_{\om}\lt(\cL_{\sg^{-n}(\om)}^{n} f_{\sg^{-n}(\om)}\rt)}
=
\norm{\cL_\om\ind_\om}_\infty. 
\end{align}
Lemma~\ref{LMD l3.6} (with $k=1$ and $f=\cL_{\sg^{-n}(\om)}^{n} f_{\sg^{-n}(\om)}$) implies that 
\begin{align*}
\frac{\Lm_{\sg(\om)}\lt(\cL_{\sg^{-n}(\om)}^{n+1} f_{\sg^{-n}(\om)}\rt)}{\Lm_{\om}\lt(\cL_{\sg^{-n}(\om)}^{n} f_{\sg^{-n}(\om)}\rt)}
\geq
\frac{\rho_\om\Lm_{\om}\lt(\cL_{\sg^{-n}(\om)}^{n} f_{\sg^{-n}(\om)}\rt)}{\Lm_{\om}\lt(\cL_{\sg^{-n}(\om)}^{n} f_{\sg^{-n}(\om)}\rt)}
=
\rho_\om,
\end{align*}
and thus, together with \eqref{eq: 11.21 up bd}, we have  
\begin{align}\label{eq: lm geq rho}
\frac{\Lm_{\sg(\om)}\lt(\cL_{\sg^{-n}(\om)}^{n+1} f_{\sg^{-n}(\om)}\rt)}{\Lm_{\om}\lt(\cL_{\sg^{-n}(\om)}^{n} f_{\sg^{-n}(\om)}\rt)}
\in [\rho_\om,\norm{\cL_\om\ind_\om}_\infty].
\end{align}
Thus there must exist a sequence $(n_k)_{k\in\NN}$ along which this ratio converges to some value $\lm_{\om,f}$, that is
\begin{align*}
\lm_{\om,f}:=\lim_{k\to\infty} \frac{\Lm_{\sg(\om)}\lt(\cL_{\sg^{-n_k}(\om)}^{n_k+1} f\rt)}{\Lm_{\om}\lt(\cL_{\sg^{-n_k}(\om)}^{n_k} f\rt)}.
\end{align*}
Hence we have 
\begin{align}\label{eq: Lq=lm q depends on f*}
\cL_\om \phi_{\om,f}
&=
\lim_{k\to\infty} 
\frac{\cL_{\sg^{-n_k}(\om)}^{n_k+1} f}{\Lm_\om\lt(\cL_{\sg^{-n_k}(\om)}^{n_k} f\rt)}
\nonumber\\
&=
\lim_{k\to\infty} 
\frac{\cL_{\sg^{-n_k}(\om)}^{n_k+1} f}{\Lm_{\sg(\om)}\lt(\cL_{\sg^{-n_k}(\om)}^{n_k+1} f\rt)}
\cdot
\frac{\Lm_{\sg(\om)}\lt(\cL_{\sg^{-n_k}(\om)}^{n_k+1} f\rt)}{\Lm_{\om}\lt(\cL_{\sg^{-n_k}(\om)}^{n_k} f\rt)}
=
\lm_{\om,f}\phi_{\sg(\om),f}.		
\end{align}
From \eqref{eq: Lq=lm q depends on f*} it follows that $\lm_{\om,f}$ does not depend on the sequence $(n_k)_{k\in\NN}$, and in fact we have
\begin{align*}
\lm_{\om,f}=\lim_{n\to\infty} \frac{\Lm_{\sg(\om)}\lt(\cL_{\sg^{-n}(\om)}^{n+1} f\rt)}{\Lm_{\om}\lt(\cL_{\sg^{-n}(\om)}^{n} f\rt)},
\end{align*}
and thus,
\begin{align}\label{eq: Lq=lm q depends on f}
\cL_\om \phi_{\om,f}=\lm_{\om,f}\phi_{\sg(\om),f}.
\end{align}
To see that $\phi_{\om,f}$ and $\lm_{\om,f}$ do not depend on $f$, we apply Lemma~\ref{lem: exp conv in C+ cone} (with $p=-n$,  $l=0$, and $V(\om)=\max\set{V_f(\om)\var(f_\om)/\Lm_\om(f_\om), V_h(\om) \var(h_\om)/\Lm_\om(h_\om)}$) to functions $f,h\in\BVT^+$ satisfying \eqref{cond cD2} to get that 
\begin{align}
\Ta_{\om,+}\lt(\phi_{\om,f}, \phi_{\om,h}\rt)
\leq 
\Ta_{\om,+}\lt(\phi_{\om,f}, f_{\om,n}\rt)
+
\Ta_{\om,+}\lt(f_{\om,n}, h_{\om,n}\rt)
+
\Ta_{\om,+}\lt(\phi_{\om,h}, h_{\om,n}\rt)
\leq 
3\Dl\vta^n
\label{eq: Ta estimate for q_om,f}
\end{align}
for each $n\geq N_3(\om)$. Thus, inserting \eqref{eq: Ta estimate for q_om,f} into Lemma~\ref{lem: birkhoff cone contraction} yields
\begin{align*}
\norm{\phi_{\om,f}-\phi_{\om,h}}_\infty
&\leq 
\norm{\phi_{\om,f}}_\infty \lt(e^{\lt(\Ta_{\om,+}(\phi_{\om,f}, \phi_{\om,h})\rt)}-1\rt)
\leq \norm{\phi_{\om,f}}_\infty \lt(e^{3\Dl\vta^n}-1\rt),
\end{align*}
which converges to zero exponentially fast as $n$ tends towards infinity. Thus we must in fact have that $\phi_{\om,f}=\phi_{\om,h}$ for all $f,h$. Moreover, in light of \eqref{eq: Lq=lm q depends on f}, this implies that $\lm_{\om,f}=\lm_{\om,h}$. We denote the common values by $\phi_\om$ and $\lm_\om$ respectively. It follows from \eqref{eq: rho log int} and \eqref{eq: lm geq rho} that 
\begin{align}\label{eq: rho leq lm leq a rho}
0<\rho_\om\leq\lm_\om\leq \norm{\cL_\om\ind_\om}_\infty.
\end{align} 
Measurability of the map $\om\mapsto\lm_\om$ 
follows from the measurability of the sequence 
$$\lt(
\frac{\Lm_{\sg(\om)}\lt(\cL_{\sg^{-n}(\om)}^{n+1}\ind_{\sg^{-n}(\om)}\rt)}
{\Lm_{\om}\lt(\cL_{\sg^{-n}(\om)}^{n}\ind_{\sg^{-n}(\om)}\rt)}
\rt)_{n\in\NN}.
$$ 
The $\log$-integrability of $\lm_\om$ follows from the $\log$-integrability of $\rho_\om$ and \eqref{eq: rho leq lm leq a rho}.
Finally, measurability of the maps $\om\mapsto\inf \phi_\om$ and $\om\mapsto\norm{\phi_\om}_\infty$ follows from the fact that we have 
\begin{align*}
\phi_\om=\lim_{n\to\infty}\frac{\cL_{\sg^{-n}(\om)}^n\ind_{\sg^{-n}(\om)}}{\Lm_\om(\cL_{\sg^{-n}(\om)}^n\ind_{\sg^{-n}(\om)})},
\end{align*}
which is a limit of measurable functions, and thus finishes the proof. 

\end{proof}
\begin{remark}\label{rem: def of lm^k}
For each $k\in\NN$, inducting on \eqref{eq: Lq=lm q depends on f} for any $f\in\BVT^+$ satisfying \eqref{cond cD2}  yields 
\begin{align}
\cL_\om^k(\phi_\om)
&=
\lim_{n\to\infty}
\frac{\cL_{\sg^{-n}(\om)}^{n+k} f_{\sg^{-n}(\om)}}
{\Lm_\om(\cL_{\sg^{-n}(\om)}^n f_{\sg^{-n}(\om)})}
\nonumber\\
&=
\lim_{n\to\infty}
\frac{\cL_{\sg^{-n}(\om)}^{n+k}f_{\sg^{-n}(\om)}}
{\Lm_{\sg^k(\om)}(\cL_{\sg^{-n}(\om)}^{n+k} f_{\sg^{-n}(\om)})}
\cdot 
\frac{\Lm_{\sg^k(\om)}(\cL_{\sg^{-n}(\om)}^{n+k} f_{\sg^{-n}(\om)})}
{\Lm_\om(\cL_{\sg^{-n}(\om)}^n f_{\sg^{-n}(\om)})}
\nonumber\\
&=
\phi_{\sg^k(\om)}\cdot 
\lim_{n\to\infty} 
\frac{\Lm_{\sg^k(\om)}(\cL_{\sg^{-n}(\om)}^{n+k} f_{\sg^{-n}(\om)})}
{\Lm_\om(\cL_{\sg^{-n}(\om)}^n f_{\sg^{-n}(\om)})}.
\label{eq: def of lm_f^k limits}
\end{align}
The final limit in \eqref{eq: def of lm_f^k limits} telescopes to give us 
\begin{align*}
\lim_{n\to\infty} 
\frac{\Lm_{\sg^k(\om)}(\cL_{\sg^{-n}(\om)}^{n+k} f_{\sg^{-n}(\om)})}
{\Lm_\om(\cL_{\sg^{-n}(\om)}^n f_{\sg^{-n}(\om)})}
&=
\lim_{n\to\infty} 
\frac{\Lm_{\sg(\om)}(\cL_{\sg^{-n}(\om)}^{n+1} f_{\sg^{-n}(\om)})}
{\Lm_\om(\cL_{\sg^{-n}(\om)}^n f_{\sg^{-n}(\om)})}
\cdots
\frac{\Lm_{\sg^k(\om)}(\cL_{\sg^{-n}(\om)}^{n+k} f_{\sg^{-n}(\om)})}
{\Lm_{\sg^{k-1}(\om)}(\cL_{\sg^{-n}(\om)}^{n+k-1} f_{\sg^{-n}(\om)})}
\nonumber\\
&=
\lm_\om\lm_{\sg(\om)}\cdots\lm_{\sg^{k-1}(\om)},
\end{align*}
For each $k\geq 1$ we denote 
\begin{align}\label{eq: def of lm_f^k}
\lm_\om^k:=\lm_\om\lm_{\sg(\om)}\cdots\lm_{\sg^{k-1}(\om)}.
\end{align}
Rewriting \eqref{eq: def of lm_f^k limits} gives
\begin{align*}
\cL_\om^k\phi_\om=\lm_\om^k \phi_{\sg^k(\om)}.
\end{align*}
\end{remark}
The following proposition shows that the density $\phi_\om$ coming from Corollary~\ref{cor: exist of unique dens q} is in fact supported on the set $D_{\om,\infty}$.
\begin{proposition}\label{prop: lower bound for density}
For $m$-a.e. $\om\in\Om$ we have that
\begin{align*}
\inf_{D_{\om,\infty}} \phi_\om >0. 
\end{align*} 
\end{proposition}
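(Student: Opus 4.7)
The plan is to compare $\phi_\om$ with the preimage pushforward $f_{n_0}:=\cL_{\sg^{-n_0}(\om)}^{n_0}\ind$ for a suitably large $n_0$. Proposition~\ref{prop: D sets stabilize} supplies the key structural facts about $f_{n_0}$: its support stabilizes to $D_{\om,\infty}$ and it is bounded below by a positive constant there. The cone-contraction result in Lemma~\ref{lem: exp conv in C+ cone} will then force $\Ta_{\om,+}(\phi_\om,f_{n_0})<\infty$, and finite Hilbert distance in $\sC_{\om,+}$ produces a two-sided pointwise comparison $\phi_\om\asymp f_{n_0}$ on the common support, which is $D_{\om,\infty}$. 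Combining the comparison with the pointwise lower bound on $f_{n_0}$ yields the desired conclusion.

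First I would record that $\phi_\om\in\sC_{\om,a_*/2}$ with $\Lm_\om(\phi_\om)=1$. Writing $\psi_n:=\~\cL_{\sg^{-n}(\om)}^n\ind/\Lm_\om(\~\cL_{\sg^{-n}(\om)}^n\ind)$, the construction in Corollary~\ref{cor: exist of unique dens q} gives $\psi_n\to\phi_\om$ in sup norm. Starting from $\ind\in\sC_{\sg^{-n}(\om),a_*}$ and iterating through sufficiently many coating blocks, Lemma~\ref{lem: cone cont for coating blocks} places $\psi_n\in\sC_{\om,a_*/2}$ for all large $n$; lower semicontinuity of $\var$ combined with sup-norm continuity of $\Lm_\om$ (property \eqref{Lm prop2}) passes the cone inclusion to the limit. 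Moreover, for $n\geq N_\infty(\om)$ the support of $\psi_n$ equals $D_{\om,\infty}$ by Proposition~\ref{prop: D sets stabilize}, so $\supp(\phi_\om)\subseteq D_{\om,\infty}$ as well.

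Next, fix $n_0\geq\max\{N_3(\om),N_\infty(\om)\}$. Applying Lemma~\ref{lem: exp conv in C+ cone} with the constant choice $f=h\equiv 1$ (for which $\var\equiv 0$ so the tempered variation hypothesis is trivial), $p=-n_0$, and $l=m-n_0\geq 0$, and using that $\Ta_{\om,+}$ is projective, yields
\[
\Ta_{\om,+}(\psi_{n_0},\psi_m)\leq \Dl\vta^{n_0}=:C \qquad\text{for all }m\geq n_0.
\]
Unwrapping the explicit expression for $\Ta_{\om,+}$ from Lemma~\ref{lem: summary of cone dist prop} at any reference point $y_0\in D_{\om,\infty}$ (such a point exists by \eqref{cond D} and, since $n_0\geq N_\infty(\om)$, satisfies $\psi_{n_0}(y_0)>0$), the bound rearranges to
\[
\psi_m(x)\geq e^{-C}\,\frac{\psi_m(y_0)}{\psi_{n_0}(y_0)}\,\psi_{n_0}(x), \qquad x\in D_{\om,\infty},\ m\geq n_0.
\]
Sending $m\to\infty$ and using pointwise convergence $\psi_m\to\phi_\om$, the same inequality passes to the limit. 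Because $\Lm_\om(\phi_\om)=1>0$, the function $\phi_\om$ does not vanish identically on its support $D_{\om,\infty}$, so $y_0$ can be chosen with $\phi_\om(y_0)>0$; combining this with $\inf_{D_{\om,\infty}}\psi_{n_0}>0$ from Proposition~\ref{prop: D sets stabilize} gives $\inf_{D_{\om,\infty}}\phi_\om>0$.

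The main delicate point is that the Hilbert-metric comparison is only meaningful if the reference point $y_0$ lies in the common support of $\psi_{n_0}$, of $\psi_m$ for all large $m$, and of $\phi_\om$. Proposition~\ref{prop: D sets stabilize} is exactly what removes this ambiguity, pinning the common support as $D_{\om,\infty}$ and supplying the positive lower bound on $\psi_{n_0}$ that propagates to $\phi_\om$ in the limit. Once that identification is made, the remainder of the argument is a direct cone-geometric calculation.
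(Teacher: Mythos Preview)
Your argument is correct but follows a genuinely different route from the paper. The paper proceeds more directly via equivariance: writing $\phi_\om=(\lm_{\sg^{-n}\om}^n)^{-1}\cL_{\sg^{-n}\om}^n\phi_{\sg^{-n}\om}$, it lower-bounds the right-hand side on $D_{\om,\infty}$ by $\bigl(\inf_{X_{\sg^{-n}\om,n-1}}\phi_{\sg^{-n}\om}\bigr)\cdot(\lm_{\sg^{-n}\om}^n)^{-1}\inf_{D_{\om,\infty}}\cL_{\sg^{-n}\om}^n\ind$, the first factor being positive because $\Lm(\phi)=1$ and the last by Proposition~\ref{prop: D sets stabilize}. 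Your approach instead passes through the Hilbert-metric contraction Lemma~\ref{lem: exp conv in C+ cone}: the finite $\Ta_{\om,+}$-distance between $\psi_{n_0}$ and $\psi_m$ yields a two-sided pointwise comparison on the common support $D_{\om,\infty}$, which survives the limit $m\to\infty$. The paper's proof is more elementary in that it needs only equivariance of $\phi$ rather than the full cone machinery; on the other hand, your argument cleanly sidesteps the intermediate claim $\inf_{X_{\om,n-1}}\phi_\om>0$ on which the paper's proof relies (and whose justification there is somewhat elliptical). Both proofs ultimately hinge on Proposition~\ref{prop: D sets stabilize} to secure positivity of $\cL^n\ind$ on $D_{\om,\infty}$. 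One minor remark: the material in your second paragraph about $\phi_\om\in\sC_{\om,a_*/2}$ is not actually used later; the only fact from that paragraph you need is $\supp(\phi_\om)\subseteq D_{\om,\infty}$, which follows immediately from the sup-norm convergence $\psi_n\to\phi_\om$.
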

\begin{proof}

First we note that since $\Lm_\om(\phi_\om)=1>0$ for $m$-a.e. $\om\in\Om$, using the definition of $\Lm_\om$ \eqref{eq: def of Lm}, we must in fact have that 
$$
\inf_{D_{\sg^n(\om),n}} \cL_\om^n(\phi_\om)>0
$$ 
for $n\in\NN$ sufficiently large, which, in turn implies that 
\begin{align}\label{eq: q pos on X_n}
\inf_{X_{\om,n-1}}\phi_\om>0
\end{align} 
for all $n\in\NN$ sufficiently large.
Next, for $m$-a.e. $\om\in\Om$ and all $n\in\NN$ we use \eqref{eq: q invariant density} to see that 
\begin{align}
\inf_{D_{\om,\infty}}\phi_\om
&=
\lt(\lm_{\sg^{-n}(\om)}^n\rt)^{-1}\inf_{D_{\om,\infty}}\cL_{\sg^{-n}(\om)}^n\phi_{\sg^{-n}(\om)}
\nonumber\\
&\geq 
\inf_{X_{\sg^{-n}(\om),n-1}} \phi_{\sg^{-n}(\om)} \lt(\lm_{\sg^{-n}(\om)}^n\rt)^{-1}\inf_{D_{\om,\infty}}\cL_{\sg^{-n}(\om)}^n\ind_{\sg^{-n}(\om)}.
\end{align}	
As the right hand side is strictly positive for all $n\in\NN$ sufficiently large by \eqref{eq: q pos on X_n}, \eqref{eq: rho leq lm leq a rho}, and \eqref{eq: L1 pos on D infty}, we are finished.
\end{proof}
\begin{lemma}\label{lem: Lm is a linear functional}
For each $\om\in\Om$ the functional $\Lm_\om$ is linear, positive, and enjoys the property that 
\begin{align}\label{eq: Lam equivariance}
\Lm_{\sg(\om)}(\cL_\om f)=\lm_\om \Lm_\om(f)
\end{align}
for each $f\in\BV(I)$. Furthermore, for each $\om\in\Om$ we have that 
\begin{align}\label{eq: lam functional integral def}
\lm_\om=\rho_\om=\Lm_{\sg(\om)}(\cL_\om\ind_\om).
\end{align}
\end{lemma}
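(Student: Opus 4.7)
The plan is to first strengthen the defining infimum-limit of $\Lm_\om$ to a uniform convergence statement, after which linearity, equivariance, and the identification $\lm_\om=\rho_\om$ will all follow by short calculations.

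I would first show that for every $f\in\BV(I)$ with $\inf f>0$ the ratio $\cL_\om^n f(x)/\cL_\om^n\ind_\om(x)$ converges uniformly in $x\in D_{\sg^n(\om),n}$ to $\Lm_\om(f)$. The natural tool is Lemma~\ref{lem: exp conv in C+ cone} applied with the $\om$-independent (hence trivially tempered) random functions $f_\tau\equiv f$ and $h_\tau\equiv \ind$, both of which lie in the respective cones $\sC_{\tau,+}$. Combined with the identity $\cL_\om\ind=\cL_\om\ind_\om$ from \eqref{eq: def of open tr op}, the lemma gives $\Ta_{\sg^n(\om),+}\lt(\~\cL_\om^n f,\~\cL_\om^n\ind_\om\rt)\leq \Dl\vta^n$, and the explicit formula for $\Ta_{\tau,+}$ from Lemma~\ref{lem: summary of cone dist prop} forces
\begin{align*}
\sup_{x\in D_{\sg^n(\om),n}}\frac{\cL_\om^n f(x)}{\cL_\om^n\ind_\om(x)}\leq e^{\Dl\vta^n}\inf_{x\in D_{\sg^n(\om),n}}\frac{\cL_\om^n f(x)}{\cL_\om^n\ind_\om(x)}.
\end{align*}
Since the infimum already increases to $\Lm_\om(f)$ by definition, the supremum must converge to the same value, yielding uniform convergence. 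A general non-negative $f$ is handled by applying the result to $f+\dl$ for small $\dl>0$ and using $\cL_\om^n(f+\dl)=\cL_\om^n f+\dl\cL_\om^n\ind_\om$ together with property \eqref{Lm prop6} of Observation~\ref{obs: properties of Lm}.

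With uniform convergence in hand, linearity on non-negative functions is immediate from the identity $\cL_\om^n(f+g)/\cL_\om^n\ind_\om=\cL_\om^n f/\cL_\om^n\ind_\om+\cL_\om^n g/\cL_\om^n\ind_\om$; extension to arbitrary $f,g\in\BV(I)$ uses the shift property \eqref{Lm prop6} and the homogeneity \eqref{Lm prop4}, while positivity is simply property \eqref{Lm prop3}. For the equivariance I would decompose, on $D_{\sg^{n+1}(\om),n+1}$,
\begin{align*}
\frac{\cL_{\sg(\om)}^n(\cL_\om f)(x)}{\cL_{\sg(\om)}^n\ind_{\sg(\om)}(x)}
=\frac{\cL_\om^{n+1}f(x)}{\cL_\om^{n+1}\ind_\om(x)}\cdot\frac{\cL_{\sg(\om)}^n(\cL_\om\ind_\om)(x)}{\cL_{\sg(\om)}^n\ind_{\sg(\om)}(x)},
\end{align*}
where the first factor converges uniformly to $\Lm_\om(f)$ and, by the same uniform-convergence argument at fiber $\sg(\om)$ applied to the non-negative function $\cL_\om\ind_\om\in\BV(I)$, the second converges uniformly to $\Lm_{\sg(\om)}(\cL_\om\ind_\om)=\rho_\om$. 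Taking the infimum and letting $n\to\infty$ yields $\Lm_{\sg(\om)}(\cL_\om f)=\rho_\om\,\Lm_\om(f)$ for $f\geq 0$, and linearity extends this to all $f\in\BV(I)$.

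Finally, applying the equivariance at the fiber $\om$ to the specific $\BV$ function $\phi_\om$, and using $\cL_\om\phi_\om=\lm_\om\phi_{\sg(\om)}$ together with $\Lm_\om(\phi_\om)=\Lm_{\sg(\om)}(\phi_{\sg(\om)})=1$ from Corollary~\ref{cor: exist of unique dens q}, produces
\begin{align*}
\lm_\om=\lm_\om\,\Lm_{\sg(\om)}(\phi_{\sg(\om)})=\Lm_{\sg(\om)}(\cL_\om\phi_\om)=\rho_\om\,\Lm_\om(\phi_\om)=\rho_\om,
\end{align*}
which, combined with the definition $\rho_\om=\Lm_{\sg(\om)}(\cL_\om\ind_\om)$ in \eqref{rho def}, completes the chain of identities in \eqref{eq: lam functional integral def}. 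The only genuine difficulty is the uniform convergence in the first step; fortunately this is precisely what the Hilbert-metric contraction of Lemma~\ref{lem: exp conv in C+ cone} was designed to deliver, so the application in the present lemma is quite direct once the correct test pair $(f,\ind)$ has been identified.
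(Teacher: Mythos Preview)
Your proof is correct and follows essentially the same route as the paper: both arguments invoke the Hilbert-metric contraction of Lemma~\ref{lem: exp conv in C+ cone} with the pair $(f,\ind)$ to upgrade the infimum defining $\Lm_\om$ to a genuine limit independent of the evaluation point, then read off linearity, use the same product decomposition $\cL_\om^{n+1}f/\cL_{\sg(\om)}^n\ind_{\sg(\om)}=(\cL_\om^{n+1}f/\cL_\om^{n+1}\ind_\om)\cdot(\cL_{\sg(\om)}^n\cL_\om\ind_\om/\cL_{\sg(\om)}^n\ind_{\sg(\om)})$ for equivariance, and evaluate at $\phi_\om$ to obtain $\lm_\om=\rho_\om$. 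The only cosmetic difference is that the paper extends from non-negative to general $f\in\BV(I)$ via the splitting $f=f_+-f_-$, while you use the shift $f\mapsto f+\dl$ and property~\eqref{Lm prop6}; both are routine.
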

\begin{proof}
Positivity of $\Lm_\om$ follows from the initial properties of $\Lm_\om$ shown in Observation~\ref{obs: properties of Lm}. To prove the remaining claims we first prove a more robust limit characterization of $\Lm_\om$ than the one given by its definition, \eqref{eq: def of Lm}. 
Now, for any two sequences of points $(x_n)_{n\geq 0}$ and $(y_n)_{n\geq 0}$ with $x_n,y_n\in D_{\sg^n(\om), n}$ we have 
\begin{align}
\lim_{n\to\infty}
\absval{
	\frac{\cL_\om^n f}{\cL_\om^n \ind_\om}(x_n)
	-
	\frac{\cL_\om^n f}{\cL_\om^n \ind_\om}(y_n)
}
&=
\lim_{n\to\infty}
\absval{\frac{\cL_\om^n f}{\cL_\om^n \ind_\om}(y_n)}
\cdot
\absval{
	\frac
	{\cL_\om^n f(x_n) }
	{\cL_\om^n \ind_\om(x_n)}
	\cdot 
	\frac
	{\cL_\om^n \ind_\om(y_n)}
	{\cL_\om^n f(y_n)}
	-
	1
}
\nonumber\\
&\leq 
\norm{f}_\infty
\limsup_{n\to\infty}
\absval{\exp\lt(\Ta_{\sg^n(\om),+}\lt(\~\cL_\om^n f, \~\cL_\om^n\ind_\om\rt)\rt)-1}
= 0.\label{eq: calc to show Lm does not need inf}
\end{align} 
Thus, we have shown that we may remove the infimum from \eqref{eq: def of Lm}, which defines the functional $\Lm_\om$, that is now we may write   
\begin{align}\label{eq: lim characterization of Lm}
\Lm_\om(f)= \lim_{n\to\infty}\frac{\cL_\om^n f}{\cL_\om^n\ind_\om}(x_n)
\end{align}
for all $f\in\sC_{\om,+}$ and all $x_n \in D_{\sg^n(\om),n}$. Moreover, this identity also shows that the functional $\Lm_\om$ is linear. 
To extend \eqref{eq: lim characterization of Lm} to all of $\BV(I)$, we simply write $f=f_+ -f_-$ so that $f_+, f_-\in\sC_{\om,+}$ for each $f\in\BV(I)$ so that we have 
\begin{align}\label{eq: lim char for all BV}
\Lm_\om(f)
=
\Lm_\om(f_+)-\Lm_\om(f_-)
=
\lim_{n\to\infty}\frac{\cL_\om^n f_+}{\cL_\om^n\ind_\om} - \lim_{n\to\infty}\frac{\cL_\om^n f_-}{\cL_\om^n\ind_\om}
=
\lim_{n\to\infty}\frac{\cL_\om^n f}{\cL_\om^n\ind_\om}.
\end{align} 
To prove \eqref{eq: Lam equivariance} and \eqref{eq: lam functional integral def} we use \eqref{eq: lim char for all BV} to note that 
\begin{align}\label{eq: Lam L f limit equality}
\Lm_{\sg(\om)}(\cL_\om f) 
&=\lim_{n\to \infty}\frac{\cL_\om^{n+1} f}{\cL_{\sg(\om)}^{n}\ind_{\sg(\om)}}(x_{n+1})
\nonumber\\
&=\lim_{n\to \infty}\frac{\cL_\om^{n+1} f}{\cL_\om^{n+1}\ind_\om}(x_{n+1})
\cdot \frac{\cL_\om^{n+1} \ind_\om}{\cL_{\sg(\om)}^{n}\ind_{\sg(\om)}}(x_{n+1})
\nonumber\\
&=\Lm_\om(f)\cdot\Lm_{\sg(\om)}(\cL_\om\ind_\om).		
\end{align}
Considering the case where $f=\phi_\om$ in \eqref{eq: Lam L f limit equality} in conjunction with the fact that $\Lm_\om(\phi_\om)=1$ and $\cL_\om \phi_\om=\lm_\om \phi_{\sg(\om)}$ gives
\begin{align}\label{eq: lam functional integral def in proof}
\rho_\om:=
\Lm_{\sg(\om)}(\cL_\om \ind_\om)
=
\Lm_\om(\phi_\om)\Lm_{\sg(\om)}(\cL_\om \ind_\om)
=
\Lm_{\sg(\om)}(\cL_\om \phi_\om)
=
\Lm_{\sg(\om)}(\lm_\om \phi_{\sg(\om)})
=
\lm_\om,
\end{align}
which finishes the proof.
\end{proof}
\begin{remark}\label{rem: log integ lm and rho}
In light of the fact that $\log\rho_\om\in L^1(m)$ by \eqref{eq: rho log int}, Lemma~\ref{lem: Lm is a linear functional} implies that 
\begin{align}\label{eq: lm log int}
\log\lm_\om\in L^1(m).
\end{align}
\end{remark}
In the next lemma we are finally able to show that the functional $\Lm_\om$ can be thought of as Borel probability measure for the random open system. 
\begin{lemma}\label{lem: Lm is conf meas}
There exists a non-atomic Borel probability measure $\nu_\om$ on $I_\om$ such that 
$$
\Lm_\om(f)=\int_{I_\om}f \, d\nu_\om
$$
for all $f\in\BV(I)$. Consequently, we have that  
\begin{align}\label{eq: conformal measure property}
\nu_{\sg(\om)}(\cL_\om f)=\lm_\om\nu_\om(f)
\end{align}
for all $f\in\BV(I)$.
Furthermore, we have that $\supp(\nu_\om)\sub X_{\om,\infty}$.
\end{lemma}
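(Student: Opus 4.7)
The plan is to realize $\Lm_\om$ as integration against a Borel probability measure on $I$ and then read off each asserted property. By Lemma~\ref{lem: Lm is a linear functional} and Observation~\ref{obs: properties of Lm}, $\Lm_\om\colon \BV(I)\to\RR$ is a positive linear functional with $\Lm_\om(\ind)=1$ and $|\Lm_\om(f)|\leq \|f\|_\infty$ via \eqref{eq: Lm bdd in sup norm}. Since piecewise-linear functions are sup-norm dense in $C(I)$ and lie in $\BV(I)\cap C(I)$, extend $\Lm_\om$ by continuity to a positive linear functional on $C(I)$. The Riesz--Markov--Kakutani theorem then supplies a unique regular Borel probability measure $\nu_\om$ on $I$ such that $\Lm_\om(f)=\int_I f\,d\nu_\om$ for every $f\in C(I)$.

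To upgrade the identity from $C(I)$ to $\BV(I)$, I first establish non-atomicity. For $x\in I$ and $n\geq 1$, direct computation gives $\cL_\om^n(\ind_{\{x\}}) = g_\om^{(n)}(x)\,\hat X_{\om,n-1}(x)\,\ind_{\{T_\om^n(x)\}}$, which vanishes on $D_{\sg^n\om,n}\setminus\{T_\om^n(x)\}$. Since $D_{\sg^n\om,n}=T_\om^n(X_{\om,n-1})$ is a finite union of non-degenerate intervals (by \eqref{T1}--\eqref{T3} and finiteness of $\cZ_\om$ from \eqref{LIP}), evaluating the infimum in \eqref{eq: def of Lm} at any $y\in D_{\sg^n\om,n}$ distinct from $T_\om^n(x)$ yields $0$, so $\Lm_\om(\ind_{\{x\}})=0$ for every $x$. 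For interval indicators, non-atomicity gives $\Lm_\om(\ind_{[a,b]})=\Lm_\om(\ind_{(a,b)})$ and the analogous statement for $\nu_\om$; approximating $\ind_{[a,b]}$ from above by continuous $g_k\searrow\ind_{[a,b]}$ and $\ind_{(a,b)}$ from below by continuous $h_k\nearrow\ind_{(a,b)}$, monotonicity of $\Lm_\om$ (property \eqref{Lm prop3}) together with dominated convergence against $\nu_\om$ sandwich the identity $\Lm_\om(\ind_{[a,b]})=\nu_\om([a,b])$. Linearity transfers this to step functions, and for a general $f\in\BV(I)$ a sandwich between upper and lower step-function envelopes over partitions of vanishing mesh combined with bounded convergence in $\nu_\om$ (permissible since the countable set of discontinuities of $f$ is $\nu_\om$-null) gives $\Lm_\om(f)=\int f\,d\nu_\om$ on all of $\BV(I)$.

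The conformality identity is now immediate from \eqref{eq: Lam equivariance}: for any $f\in\BV(I)$,
\begin{align*}
\nu_{\sg\om}(\cL_\om f) = \Lm_{\sg\om}(\cL_\om f) = \lm_\om\,\Lm_\om(f) = \lm_\om\,\nu_\om(f).
\end{align*}
For the support, note that $\hat X_{\om,n}\in\BV(I)$ (as $T_\om$ is piecewise monotonic with finite $\cZ_\om$), and for $m\geq n+1$ the inequality $\hat X_{\om,m-1}\leq\hat X_{\om,n}$ gives $\hat X_{\om,n}\hat X_{\om,m-1}=\hat X_{\om,m-1}$, so
\begin{align*}
\cL_\om^m \hat X_{\om,n} = \cL_{\om,0}^m\bigl(\hat X_{\om,n}\hat X_{\om,m-1}\bigr) = \cL_{\om,0}^m(\hat X_{\om,m-1}) = \cL_\om^m\ind.
\end{align*}
Passing to the limit in \eqref{eq: def of Lm} yields $\Lm_\om(\hat X_{\om,n})=\Lm_\om(\ind)=1$, hence $\nu_\om(X_{\om,n})=1$ for every $n$. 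Letting $n\to\infty$ gives $\nu_\om(X_{\om,\infty})=1$, and therefore $\supp(\nu_\om)\sub X_{\om,\infty}$. The main technical obstacle is the clean passage between the functional setting on $\BV(I)$ and the measure-theoretic setting, since $\BV$ and $C$ are not sup-norm comparable in either direction; the key unlocking step is to extract non-atomicity directly from the infimum form \eqref{eq: def of Lm} before invoking any measure-theoretic machinery, which then permits the sandwich arguments on intervals and propagates to all of $\BV(I)$.
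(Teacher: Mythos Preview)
Your overall strategy---Riesz on $C(I)$, then upgrade via a sandwich---is the standard one and matches what Lemma~4.3 of \cite{LMD} (to which the paper defers) does. The conformality and support arguments are fine; in fact your support argument (showing $\Lm_\om(\hat X_{\om,n})=1$ directly from the definition \eqref{eq: def of Lm}) is a pleasant variant of the paper's, which instead uses the already-established conformality identity to show that any $f$ vanishing on $X_{\om,n-1}$ satisfies $\nu_\om(f)=0$.

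There is, however, a circularity in your non-atomicity step. You correctly show $\Lm_\om(\ind_{\{x\}})=0$, and by linearity this gives $\Lm_\om(\ind_{[a,b]})=\Lm_\om(\ind_{(a,b)})$. But your sandwich only yields
\[
\nu_\om((a,b)) \;\le\; \Lm_\om(\ind_{(a,b)}) \;=\; \Lm_\om(\ind_{[a,b]}) \;\le\; \nu_\om([a,b]),
\]
and closing this chain requires $\nu_\om(\{a\})=\nu_\om(\{b\})=0$, i.e.\ non-atomicity of the \emph{Riesz measure} $\nu_\om$. You assert this (``the analogous statement for $\nu_\om$'') without proof; the identity $\Lm_\om(\ind_{\{x\}})=0$ does not transfer to $\nu_\om(\{x\})=0$ until the $\BV$ representation is in hand, which is precisely what you are trying to establish. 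Nothing in your argument rules out a $\nu_\om$ with atoms and a corresponding gap between $\nu_\om((a,b))$ and $\nu_\om([a,b])$ in which $\Lm_\om(\ind_{[a,b]})$ sits.

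The fix is to obtain non-atomicity of $\nu_\om$ independently, via Lemma~\ref{lem: LMD 3.10}: given $\delta>0$, choose $n$ so that $\Lm_\om(\ind_Z)\le\delta$ for every $Z\in\cZ_\om^{(n)}$; for any $x\in I$ pick a continuous tent $g$ with $g(x)=1$ and $0\le g\le \ind_{Z_1}+\ind_{Z_2}$, where $Z_1,Z_2\in\cZ_\om^{(n)}$ are the (at most two) adjacent intervals whose closures contain $x$. Then $\nu_\om(\{x\})\le \int g\,d\nu_\om = \Lm_\om(g)\le 2\delta$, and since $\delta>0$ was arbitrary, $\nu_\om$ is non-atomic. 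With this in place your sandwich closes and the rest of the argument goes through.
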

\begin{proof}
The proof that the functional $\Lm_\om$ can be equated to a non-atomic Borel probability measure $\nu_\om$ goes exactly like the proof of Lemma~4.3 in \cite{LMD}. Thus, we have only to prove that $\supp(\nu_\om)\sub X_{\om,\infty}$.
To that end, suppose $f\in L^1(\nu_{\om,0})$ with $f\equiv0$ on $X_{\om,n-1}$. Then
\begin{align*}
\int_{I}f \,d\nu_{\om}
&=\lt(\lm_\om^n\rt)^{-1}\int_{I}\cL_{\om}^n(f)\,d\nu_{\sg^n(\om)}
=\lt(\lm_\om^n\rt)^{-1}\int_{I}\cL_{\om,0}^n(\hat X_{\om,n-1}\cdot f)\,d\nu_{\sg^n(\om)}
=0.
\end{align*}
As $0<\lm_\om^n<\infty$ for each $n\in\NN$, we must have that $\supp(\nu_{\om})\sub X_{\om,\infty}$.
\end{proof}
\begin{remark}\label{rem: conformal meas prop}
We can immediately see, cf. \cite{denker_existence_1991, atnip_critically_2020}, that the conformality of the family $(\nu_\om)_{\om\in\Om}$ produced in Lemma~\ref{lem: Lm is conf meas} enjoys the property that for each $n\geq 1$ and each set $A$ on which $T_\om^n\rvert_A$ is one-to-one we have 
\begin{align*}
\nu_{\sg^n(\om)}(T_\om^n(A))=\lm_\om^n\int_A e^{-S_{n,T}(\vp_\om)} \, d\nu_\om.
\end{align*}
In particular, this gives that for each $n\geq 1$ and each $Z\in\cZ_\om^{(n)}$ we have 
\begin{align*}
\nu_{\sg^n(\om)}(T_\om^n(Z))=\lm_\om^n\int_Z e^{-S_{n,T}(\vp_\om)} \, d\nu_\om.
\end{align*} 	
\end{remark}

\begin{remark}\label{rem: props of norm op}
In light of Lemmas~\ref{lem: Lm is a linear functional} and \ref{lem: Lm is conf meas}, the normalized operator $\~\cL_\om$ is given by $\~\cL_\om(\spot):=\rho_\om^{-1}\cL_\om(\spot)=\lm_\om^{-1}\cL_\om(\spot)$. Furthermore, $\~\cL_\om$ enjoys the properties 
\begin{align*}
\~\cL_\om \phi_\om = \phi_{\sg(\om)}
\qquad\text{ and }\qquad
\nu_{\sg(\om)}\lt(\~\cL_\om(f)\rt)=\nu_\om(f)
\end{align*}
for all $f\in\BV(I)$. 
\end{remark}

For each $\om\in\Om$ we may now define the measure $\mu_\om\in\cP(I)$ by 
\begin{align}\label{eq: def of mu_om}
\mu_\om(f):=\int_{X_{\om,\infty}} f\phi_\om \,d\nu_\om,  \qquad f\in L^1(\nu_\om).
\end{align}
Lemma~\ref{lem: Lm is conf meas} and Proposition~\ref{prop: lower bound for density} together show that, for $m$-a.e. $\om\in\Om$, $\mu_\om$ is a non-atomic Borel probability measure with $\supp(\mu_\om)\sub X_{\om,\infty}$, which is absolutely continuous with respect to $\nu_\om$. Furthermore, in view of Proposition~\ref{prop: lower bound for density}, for $m$-a.e. $\om\in\Om$, we may now define the fully normalized transfer operator $\sL_\om:\BV(I)\to\BV(I)$ by 
\begin{align}\label{eq: def fully norm tr op}
\sL_\om f:= \frac{1}{\phi_{\sg(\om)}}\~\cL_\om(f\phi_\om) = \frac{1}{\lm_\om \phi_{\sg(\om)}}\cL_\om (f\phi_\om), 
\qquad f\in \BV(I).
\end{align}\index{$\sL_\om$}
As an immediate consequence of Remark~\ref{rem: props of norm op} and \eqref{eq: def fully norm tr op}, we get that 
\begin{align}\label{eq: fully norm op fix ind}
\sL_\om\ind_\om =\ind_{\sg(\om)}.
\end{align}
We end this section with the following proposition which shows that the family $(\mu_\om)_{\om\in\Om}$ of measures is $T$-invariant.
\begin{proposition}\label{prop: mu_om T invar}
The family $(\mu_\om)_{\om\in\Om}$ defined by \eqref{eq: def of mu_om} is $T$-invariant in the sense that 
\begin{align}\label{eq: mu_om T invar}
\int_{X_{\om,\infty}} f\circ T_\om \, d\mu_\om 
=
\int_{X_{\sg(\om),\infty}} f \, d\mu_{\sg(\om)}
\end{align}
for $f\in L^1(\mu_{\sg(\om)})=L^1(\nu_{\sg(\om)})$.	
\end{proposition}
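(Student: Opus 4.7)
The plan is to verify the identity \eqref{eq: mu_om T invar} by a short direct computation that chains the two defining relations of the conformal measure and the density, both of which share the common eigenvalue $\lm_\om$. The key pointwise identity underlying everything is that for any integrable $f$ on $I$ and any $g\in L^1(\nu_{\om,0})$,
\begin{align*}
\cL_\om\bigl((f\circ T_\om)\,g\bigr)(x)
&=\cL_{\om,0}\bigl((f\circ T_\om)\,g\,\ind_\om\bigr)(x)
=\sum_{y\in T_\om^{-1}(x)} g_{\om,0}(y)\,f(T_\om y)\,g(y)\ind_\om(y)\\
&=f(x)\,\cL_{\om,0}(g\ind_\om)(x)
=f(x)\,\cL_\om(g)(x),
\end{align*}
which holds for every $x\in I$ (and in particular $\nu_{\sg\om}$-a.e.).

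First I would unfold the definition \eqref{eq: def of mu_om} of $\mu_\om$ to write
$$\int_{X_{\om,\infty}} f\circ T_\om\,d\mu_\om
=\int_{I}(f\circ T_\om)\,\phi_\om\,d\nu_\om,$$
which is legitimate because Lemma~\ref{lem: Lm is conf meas} gives $\supp(\nu_\om)\sub X_{\om,\infty}$, and Proposition~\ref{prop: lower bound for density} combined with the forward invariance $T_\om(X_{\om,\infty})\sub X_{\sg\om,\infty}$ ensures that $f\circ T_\om \in L^1(\nu_\om)$ whenever $f\in L^1(\mu_{\sg\om})=L^1(\nu_{\sg\om})$ (densities are bounded above and below on the relevant survivor sets). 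Next, I would apply the conformality relation \eqref{eq: conformal measure property} with the choice $g=(f\circ T_\om)\phi_\om$:
$$\int(f\circ T_\om)\,\phi_\om\,d\nu_\om
=\lm_\om^{-1}\int \cL_\om\bigl((f\circ T_\om)\,\phi_\om\bigr)\,d\nu_{\sg\om}.$$

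The main (and essentially only) step is now to invoke the pointwise identity above with $g=\phi_\om$, and then the eigenfunction relation $\cL_\om\phi_\om=\lm_\om\phi_{\sg\om}$ from \eqref{eq: q invariant density}, to obtain
$$\cL_\om\bigl((f\circ T_\om)\phi_\om\bigr)(x)=f(x)\,\cL_\om(\phi_\om)(x)=\lm_\om\,f(x)\,\phi_{\sg\om}(x).$$
Substituting and cancelling $\lm_\om$ yields
$$\int(f\circ T_\om)\,d\mu_\om
=\int f\,\phi_{\sg\om}\,d\nu_{\sg\om}
=\int_{X_{\sg\om,\infty}} f\,d\mu_{\sg\om},$$
which is exactly \eqref{eq: mu_om T invar}.

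I do not expect any real obstacle: everything has been set up in the preceding lemmas, and the three ingredients (conformality of $\nu$, equivariance of $\phi$, and the pointwise factorization of $\cL_\om$ against $f\circ T_\om$) combine in one line. The only mild technical point is that \eqref{eq: conformal measure property} was stated for $f\in\BV(I)$, whereas here we need it for $(f\circ T_\om)\phi_\om$ with $f\in L^1(\mu_{\sg\om})$; this is handled by a standard density/monotone class argument, first verifying the identity for $f=\ind_A$ with $A\sub I$ Borel (where both sides make sense as integrals of nonnegative measurable functions and the pointwise identity for $\cL_\om$ is evident), and then extending to $L^1$ by linearity and dominated convergence.
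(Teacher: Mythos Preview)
Your proposal is correct and is precisely the standard argument one would expect here: the combination of the conformality of $\nu$, the eigenfunction property of $\phi$, and the pull-out identity $\cL_\om\bigl((f\circ T_\om)g\bigr)=f\,\cL_\om(g)$ is exactly what is needed, and your handling of the $\BV$-to-$L^1$ extension is the right caveat. The paper itself omits the proof entirely, referring to Proposition~8.11 of \cite{AFGTV20}, which carries out the same computation you have written.
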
 
The proof of Proposition~\ref{prop: mu_om T invar} goes just like the proof of Proposition~8.11 of \cite{AFGTV20}, and has thus been omitted.

\section{Decay of correlations}\label{sec: dec of cor}
We are now ready to show that images under the normalized transfer operator $\~\cL_\om$ converge exponentially fast to the invariant density as well as the fact that the invariant measure $\mu_\om$ established in Section~\ref{sec: conf and inv meas} satisfies an exponential decay of correlations. Furthermore, we show that the families $(\nu_\om)_{\om\in\Om}$ and $(\mu_\om)_{\om\in\Om}$ are in fact random measures and then introduce the RACCIM $\eta$ supported on $\cI$. 

To begin this section we state a lemma which shows that the $\BV$ norm of the invariant density $\phi_\om$ does not grow too much along a $\sg$-orbit of fibers by providing a measurable upper bound. In fact, we show that the $\BV$ norm of $\phi_\om$ is tempered. As the proof of the following lemma is the same as the proof of Lemma 8.5 in the closed dynamical setting of \cite{AFGTV20}, its proof is omitted. 
\begin{lemma}\label{lem: BV norm q om growth bounds}
For all $\dl>0$ there exists a tempered measurable random constant $C(\om,\dl)>0$\index{$C(\om,\dl)$} such that for all $k\in\ZZ$ and $m$-a.e. $\om\in\Om$ we have 
\begin{align*}
\norm{\phi_{\sg^k(\om)}}_{\BV}
=
\norm{\phi_{\sg^k(\om)}}_{\infty}+\var(\phi_{\sg^k(\om)})
\leq
C(\om,\dl)e^{\dl|k|}.
\end{align*} 
Consequently, we have that $\phi\in\BVT$.
\end{lemma}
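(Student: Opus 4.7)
The plan is to first establish a pointwise bound $\norm{\phi_\om}_\BV \le D(\om)$ with $\log^+ D \in L^1(m)$, and then obtain the tempered control along orbits by applying Birkhoff's theorem to $D$. For the pointwise bound, I would exploit the limit identification of $\phi_\om$ from Corollary~\ref{cor: exist of unique dens q} with $f = \ind$---this lies in $\BVT^+$ because $\var(\ind_\om) \le 2\xi_\om^{(1)}$ is tempered by \eqref{cond Q2} and $\Lm_\om(\ind_\om) = 1$---so the sequence
\begin{align*}
h_n := \frac{\~\cL_{\sg^{-n}\om}^n\ind_{\sg^{-n}\om}}{\Lm_\om\bigl(\~\cL_{\sg^{-n}\om}^n\ind_{\sg^{-n}\om}\bigr)}
\end{align*}
converges to $\phi_\om$ in sup norm by Corollary~\ref{cor: exp conv in sup norm}, with $\Lm_\om(h_n) \equiv 1$. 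Applying Proposition~\ref{prop: LY ineq 2} to $\ind_{\sg^{-n}\om}$ and dividing through by $K_n := \Lm_\om(\~\cL_{\sg^{-n}\om}^n\ind_{\sg^{-n}\om})$---which is $\ge 1$ by Lemma~\ref{LMD l3.6} applied to $\sg^{-n}\om$ together with the identity $\lm_\om = \rho_\om$ from Lemma~\ref{lem: Lm is a linear functional}---yields
\begin{align*}
\var(h_n) \le C_\ep(\om)\, e^{-(\ta-\ep)n} \var(\ind_{\sg^{-n}\om}) + C_\ep(\om).
\end{align*}
Temperedness of $\ind$ sends the first term to zero as $n \to \infty$, and passing to $\liminf_n$ while invoking the lower semicontinuity of $\var$ under uniform convergence together with the continuity of $\Lm_\om$ (Observation~\ref{obs: properties of Lm}\eqref{Lm prop2}) gives $\var(\phi_\om) \le 2C_\ep(\om)$; the inequality \eqref{f leq Lm(f)+var(f)} then provides $\norm{\phi_\om}_\BV \le D(\om) := 4C_\ep(\om) + 1$.

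Next, I would show that $D$ is tempered. Tracing the construction of $C_\ep$ in Proposition~\ref{prop: LY ineq 2} (which mirrors Proposition~4.9 of \cite{AFGTV20}) realises $C_\ep(\om)$ as a finite polynomial expression in shifts of $L_\om^{N_*}$, $Q_\om^{(N_*)}$ and $K_\om^{(N_*)}$, each of which has $\log^+$-moment in $L^1(m)$ by Lemma~\ref{lem: buzzi LY1} and Proposition~\ref{prop: log integr of Q and K}. Hence $\log^+ D \in L^1(m)$, and Birkhoff's ergodic theorem gives $|k|^{-1}\log D(\sg^k\om) \to 0$ $m$-a.s.\ as $|k| \to \infty$; that is, $D$ is tempered.

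Finally, for any $\dl > 0$ set
\begin{align*}
C(\om,\dl) := \sup_{k \in \ZZ} D(\sg^k\om)\, e^{-\dl|k|},
\end{align*}
which is measurable and $m$-a.e.\ finite by temperedness of $D$, and satisfies $\norm{\phi_{\sg^k\om}}_\BV \le D(\sg^k\om) \le C(\om,\dl)\, e^{\dl|k|}$ for every $k \in \ZZ$. This proves the lemma, and the consequence $\phi \in \BVT$ (Definition~\ref{def: set D of tempered BV func}) is immediate. The delicate step is the pointwise bound: variation is only lower semicontinuous under uniform convergence, so one cannot commute $\var$ with the limit $h_n \to \phi_\om$---one must work with $\liminf_n \var(h_n)$ and, crucially, keep $K_n$ uniformly bounded below so that the LY estimate is $n$-independent. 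Supplying $K_n \ge 1$ is exactly the role of Lemma~\ref{LMD l3.6}. The subsidiary bookkeeping in the second step---verifying that $C_\ep$ inherits $\log^+$-integrability from its building blocks---is routine but requires a careful rereading of the Buzzi-style construction underlying Proposition~\ref{prop: LY ineq 2}.
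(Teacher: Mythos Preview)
Your approach is the one the paper has in mind (it defers to Lemma~8.5 of \cite{AFGTV20}): bound $\|\phi_\om\|_{\BV}$ pointwise via the Lasota--Yorke constant $C_\ep(\om)$ applied to approximants, then show that bound is tempered. Two points deserve comment.

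First, a simplification. After Lemma~\ref{lem: Lm is a linear functional} you have the \emph{exact} conformality $\Lm_{\sg^n\om}(\~\cL_\om^n f)=\Lm_\om(f)$, so your normaliser $K_n$ equals $1$ identically---no need to invoke Lemma~\ref{LMD l3.6} for a lower bound. Moreover, since $\cL_\om\ind=\cL_\om\ind_\om$ and $\var(\ind)=0$, taking $f=\ind$ in Proposition~\ref{prop: LY ineq 2} kills the first term outright, so your detour through the temperedness of $\var(\ind_\om)$ is unnecessary (and the bound $\var(\ind_\om)\le 2\xi_\om^{(1)}$ is not correct in any case: $\xi_\om^{(1)}$ counts contiguous bad partition elements, not components of $H_\om$).

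Second, and more substantively: your description of $C_\ep(\om)$ as a ``finite polynomial expression in shifts of $L_\om^{N_*}$, $Q_\om^{(N_*)}$, $K_\om^{(N_*)}$'' is not accurate. The Buzzi-style construction behind Proposition~\ref{prop: LY ineq 2} produces $C_\ep(\om)$ as a \emph{supremum over all $n$} of products and sums of backward shifts of these building blocks---this is how a single constant valid for every $n$ is obtained. That supremum is $m$-a.e.\ finite by the ergodic theorem, but passing to $\log^+C_\ep\in L^1(m)$ (equivalently, temperedness) is not routine bookkeeping: it is the actual analytic content of the step and is what Lemma~8.5 of \cite{AFGTV20} carries out. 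You have correctly located where the work lives, but the work is genuinely more than your sketch indicates.
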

We are now able to prove the following theorem which completes the proof of Theorem~\ref{main thm: quasicompact}. 
\begin{theorem}\label{thm: exp convergence of tr op}
There exist measurable, $m$-a.e. finite functions $C, D:\Om\to\RR$ and $\kp<1$ such that for each $f\in\BVT$, each $n\in\NN$, each $|p|\leq n$, and $m$-a.e. $\oio$ we have
\begin{align}\label{eq: exp conv norm op}
\norm{\~\cL_{\sg^p(\om)}^n f_{\sg^p(\om)} - \nu_{\sg^p(\om)}(f_{\sg^p(\om)})\phi_{\sg^{p+n}(\om)}}_\infty
\leq 
D(\om) (\var f_\om)^{C(\om)}\lt\|f_{\sg^p\om}\rt\|_\infty\kp^n
\end{align}
and 
\begin{align}\label{eq: exp conv full norm op}
\norm{\sL_{\sg^p(\om)}^n f_{\sg^p(\om)} - \mu_{\sg^p(\om)}(f_{\sg^p(\om)})\ind_{\sg^{p+n}(\om)}}_\infty
\leq 
D(\om) (\var f_\om)^{C(\om)}\lt\|f_{\sg^p\om}\rt\|_\infty\kp^n.
\end{align}
\end{theorem}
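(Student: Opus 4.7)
The plan is to combine the cone contraction of Corollary~\ref{cor: exp conv in sup norm} (applied with $h=\phi$) with the Lasota--Yorke estimate of Proposition~\ref{prop: LY ineq 2} so as to absorb the $f$-dependent cone entry time into the claimed factor $(\var f_\om)^{C(\om)}$. By Lemma~\ref{lem: BV norm q om growth bounds}, $\phi\in\BVT$ and $\~\cL_\om\phi_\om=\phi_{\sg(\om)}$, making $\phi$ the natural reference function. I first reduce to the positive case by writing $f_{\sg^p(\om)}=g_+-g_-$ with $g_\pm:=\max(\pm f_{\sg^p(\om)},0)+\|f_{\sg^p(\om)}\|_\infty$; each $g_\pm$ lies in $\sC_{\sg^p(\om),+}$, is bounded below by $\|f_{\sg^p(\om)}\|_\infty>0$, has $\var(g_\pm)\le\var(f_{\sg^p(\om)})$, and $\|g_\pm\|_\infty\le 2\|f_{\sg^p(\om)}\|_\infty$. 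By linearity of $\~\cL^n$ and of $\nu$, it suffices to prove the estimate for each $g_\pm$ in place of $f$.

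For such a positive $g$ and $h=\phi$, Corollary~\ref{cor: exp conv in sup norm} with $l=0$ and $V(\om)=\max(V_{g,\ep}(\om),V_{\phi,\ep}(\om))$, combined with Lemma~\ref{lem: birkhoff cone contraction} (with $\vrho=\nu$) applied to $g$ and $\nu_{\sg^p(\om)}(g)\phi_{\sg^p(\om)}$, gives
\[
\|\~\cL_{\sg^p(\om)}^n g-\nu_{\sg^p(\om)}(g)\phi_{\sg^{p+n}(\om)}\|_\infty\le\nu_{\sg^p(\om)}(g)\|\phi_{\sg^{p+n}(\om)}\|_\infty\bigl(e^{\Dl\vta^n}-1\bigr)
\]
for $n\ge N_3(\om)$. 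The tempered growth $\|\phi_{\sg^{p+n}(\om)}\|_\infty\le C(\om,\dl)e^{\dl(n+|p|)}\le C(\om,\dl)e^{2\dl n}$ from Lemma~\ref{lem: BV norm q om growth bounds} (using $|p|\le n$), together with $e^{\Dl\vta^n}-1\le 2\Dl\vta^n$ for $n$ large, produces a bound of the form $D_1(\om)\|f_{\sg^p(\om)}\|_\infty\kp_0^n$ with $\kp_0:=e^{2\dl}\vta<1$ when $\dl$ is sufficiently small.

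The main obstacle is to remove the $f$-dependent threshold $N_3(\om)$. Applying Proposition~\ref{prop: LY ineq 2} at fiber $\sg^p(\om)$ and using $\Lm_{\sg^{p+n}(\om)}(\~\cL^n g)=\Lm_{\sg^p(\om)}(g)>0$ (Lemma~\ref{lem: Lm is a linear functional}), one finds that $\~\cL^n g$ enters the invariant cone $\sC_{\sg^{p+n}(\om),a_*}$ of Lemma~\ref{lem: cone cont for coating blocks} once $n\ge N_g(\om,p)$, with $N_g(\om,p)\le N(\om)+(\ta-\ep)^{-1}\log^+\!\bigl(\var(f_{\sg^p(\om)})/\Lm_{\sg^p(\om)}(g)\bigr)$. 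Temperedness of $f$ allows $\var(f_{\sg^p(\om)})$ to be replaced by $\var(f_\om)\,V_\ep(\om)e^{\ep|p|}$, yielding $N_g(\om,p)\le N_1(\om)+C_2(\om)\log^+\var(f_\om)+\ep n/(\ta-\ep)$; the last summand is absorbed into $\kp_0$ by shrinking $\ep$. For $n<N_g(\om,p)$ the trivial estimate $\|\~\cL^n f-\nu(f)\phi\|_\infty\le 2\|f_{\sg^p(\om)}\|_\infty(\|\~\cL^n\ind\|_\infty+\|\phi_{\sg^{p+n}(\om)}\|_\infty)$ combined with $1\le\kp_0^n\kp_0^{-N_g(\om,p)}$ produces an extra factor $\kp_0^{-N_1(\om)}(\var f_\om)^{-C_2(\om)\log\kp_0}$. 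Setting $C(\om):=-C_2(\om)\log\kp_0$ and letting $D(\om)$ absorb the tempered prefactors yields the first inequality.

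The second inequality follows by applying the first inequality to the function $f\phi\in\BVT$, using $\sL_{\sg^p(\om)}^n f=\~\cL^n(f_{\sg^p(\om)}\phi_{\sg^p(\om)})/\phi_{\sg^{p+n}(\om)}$ together with $\mu_{\sg^p(\om)}(f)=\nu_{\sg^p(\om)}(f\phi)$, and dividing by the pointwise lower bound $\inf_{D_{\sg^{p+n}(\om),\infty}}\phi_{\sg^{p+n}(\om)}>0$ supplied by Proposition~\ref{prop: lower bound for density}. The tempered behavior of this lower bound, and the tempered BV factors arising through the product rule $\var(f\phi)\le\|\phi\|_\infty\var f+\|f\|_\infty\var\phi$, may all be absorbed into $D(\om)$ and $(\var f_\om)^{C(\om)}$, at the price of possibly enlarging $C(\om)$ and taking $\kp$ slightly closer to $1$.
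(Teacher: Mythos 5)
Your overall strategy is the paper's own: split into the regimes $n\geq N_3(\om)$ and $n<N_3(\om)$, use Corollary~\ref{cor: exp conv in sup norm} with $h=\phi$ and the temperedness of $\norm{\phi_\om}_\infty$ (Lemma~\ref{lem: BV norm q om growth bounds}) in the first regime, and in the second regime pay a factor exponential in the threshold, which becomes $(\var f_\om)^{C(\om)}$ because the threshold is bounded by a multiple of $\log\var f_\om$; the second inequality is then obtained through the fully normalized operator and the lower bound on $\phi$, essentially as the paper (which only sketches this part) intends.

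However, there is a concrete gap in your small-$n$ regime. Your ``trivial estimate'' contains the factor $\norm{\~\cL_{\sg^p(\om)}^n\ind}_\infty$, which you never bound and then implicitly treat as a tempered prefactor when you conclude that the only extra cost is $\kp_0^{-N_1(\om)}(\var f_\om)^{-C_2(\om)\log\kp_0}$. This quantity is not tempered in $n$: the only a priori control is $\norm{\~\cL_{\sg^p(\om)}^n\ind}_\infty\leq \#\cZ_{\sg^p\om}^{(n)}\norm{g_{\sg^p\om,0}^{(n)}}_\infty/\lm_{\sg^p\om}^n$, and while \eqref{cond Q1} makes $\norm{g^{(n)}}_\infty/\lm^n$ decay, the partition count $\#\cZ_{\sg^p\om}^{(n)}$ grows exponentially at rate $\int_\Om\log\#\cZ_\om\,dm(\om)$ (by \eqref{LIP} and Birkhoff), so the whole factor can grow exponentially in $n$ at a rate that cannot be absorbed into $\kp_0^n$ alone. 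Consequently your final exponent $C(\om)=-C_2(\om)\log\kp_0$ does not close the estimate. The repair is exactly the extra step the paper supplies: bound the operator factor by $C_{g,\dl}(\om)C_{\cZ,\dl}(\om)e^{n\int_\Om\log\#\cZ_\om\,dm}$ and then trade the exponential in $n$ against $n\leq N_g(\om,p)\lesssim C_3(\om)\log\var f_\om$, which converts it into an additional factor $(\var f_\om)^{C_3(\om)\int_\Om\log\#\cZ_\om\,dm}$ and enlarges $C(\om)$ accordingly; an alternative would be to prove that the constant $C_\ep$ of Proposition~\ref{prop: LY ineq 2} is tempered and apply that inequality backward to $\ind_{\sg^p\om}$, but temperedness of $C_\ep$ is not established in the paper, so you cannot invoke it for free. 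A smaller point: in your derivation of \eqref{eq: exp conv full norm op} you apply the first inequality to $f\phi$, so the exponentiated factor is $(\var(f_\om\phi_\om))^{C(\om)}$, which contains $\norm{f_\om}_\infty$ through the product rule; reducing this to the stated form $D(\om)(\var f_\om)^{C(\om)}\norm{f_{\sg^p\om}}_\infty$ needs a word more than ``absorb into $D(\om)$'', since $\norm{f_\om}_\infty$ cannot be dominated by $\var f_\om$.
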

\begin{proof}
First suppose that $f\in\BVT^+$. 
    In light of Lemma \ref{lem: BV norm q om growth bounds}, we see that $\phi\in\BVT$, and thus taking $V(\om)=\max\{V_f(\om)\var(f_\om), V_\phi(\om)\}$. 
    We recall that the measurable function $N_3(\om)$ from the proof of Lemma \ref{lem: exp conv in C+ cone}, specifically \eqref{eq: coose N_1 ge log V} and \eqref{eq: def of N_3}, is given by
    \begin{align*}
        N_3(\om)=\max\lt\{\ep^{-1}\log V(\om), N_2(\om), S_*\ep^{-1} \rt\},
    \end{align*} 
    and so we can find a measurable function $C_3:\Om\to(0,\infty)$ independent\footnote{Note that from the definition of $N_3$, only $V(\om)$ depends upon $f$.} of $f$ such that 
    \begin{align}\label{eq new ineq 4}
        N_3(\om)\leq C_3(\om)\cdot\log \var(f_\om).
    \end{align}
It follows from Corollary \ref{cor: exp conv in sup norm} (with $h_\om=\phi_\om$ and $l=0$) and the definition of $\phi$ that for all $n\geq N_3(\om)$ we have 
\begin{align}
\norm{\~\cL_{\sg^p(\om)}^nf_{\sg^p(\om)}-\nu_{\sg^p(\om)}(f_{\sg^p(\om)})\phi_{\sg^{p+n}(\om)}}_\infty
&\leq 
|\nu_{\sg^p(\om)}(f_{\sg^p(\om)})|\norm{\phi_{\sg^{p+n}(\om)}}_\infty\lt(e^{\Dl\vta^n}-1\rt)
\nonumber\\
&
\leq 
\norm{f_{\sg^p(\om)}}_\infty\norm{\phi_{\sg^{p+n}(\om)}}_\infty\~\kp^n
\label{eq: exp conv eq 2}
\end{align}
for some\footnote{Any $\~\kp>\vta$ will work for $n$ sufficiently large.}  $\~\kp\in(0,1)$.
\\
Now to deal with $n\leq N_3(\om)$, we use \eqref{rho^n rough up and low bdd} to see that 
\begin{align}
    \norm{\~\cL_{\sg^p(\om)}^nf_{\sg^p(\om)}-\nu_{\sg^p(\om)}(f_{\sg^p(\om)})\phi_{\sg^{p+n}(\om)}}_\infty
    &=
    (\lm_{\sg^p\om}^n)^{-1}\lt\|\cL_{\sg^p\om}^n\lt(f_{\sg^p\om}-\nu_{\sg^p\om}(f_{\sg^p\om})\phi_{\sg^p\om}\rt)\rt\|_\infty
    \nonumber\\
    &\leq 
    (\lm_{\sg^p\om}^n)^{-1}\lt\|\cL_{\sg^p\om}^n \ind\rt\|_\infty\lt\|f_{\sg^p\om}-\nu_{\sg^p\om}(f_{\sg^p\om})\phi_{\sg^p\om}\rt\|_\infty
    \nonumber\\
    &\leq 
    \frac{\|g_{\sg^p\om}^{(n)}\|_\infty}{\lm_{\sg^p\om}^n}\#\cZ_{\sg^p\om}^{(n)}\lt\|f_{\sg^p\om}\rt\|_\infty\lt(1+\lt\|\phi_{\sg^p\om}\rt\|_\infty\rt).
    \label{eq new ineq 0}
\end{align}
In light of Lemma \ref{lem: Lm is a linear functional} we have that $\lm_\om=\rho_\om$ and so it follows from Remark \ref{rem: further props of Q assumps} (as a consequence of our assumption \eqref{cond Q1}) that
\begin{align*}
    \lim_{n\to\infty}\frac{1}{n}\log\frac{\|g_{\sg^p\om}^{(n)}\|_\infty}{\lm_{\sg^p\om}^n}<0.
\end{align*}
Thus, for each $\dl>0$ (sufficiently small) there exists a measurable function $C_{g,\dl}:\Om\to\RR$ such that for each $n\in\NN$ 
\begin{align}\label{eq new ineq 1}
     \frac{\|g_{\sg^p\om}^{(n)}\|_\infty}{\lm_{\sg^p\om}^n}
    \leq 
    C_{g,\dl}(\om)e^{-n\dl}.
\end{align}
Furthermore, it follows from \eqref{LIP} and the Birkhoff Ergodic Theorem 
that for each $\dl>0$ there exists a measurable function $C_{\cZ,\dl}:\Om\to\RR$ such that for each $n\in\NN$ 
\begin{align}\label{eq new ineq 2}
    \#\cZ_{\sg^p\om}^{(n)}
    \leq 
    C_{\cZ,\dl}(\om)e^{n(\int_\Om\log\#\cZ_\om\, dm(\om)+\dl)}.
\end{align}
Combining \eqref{eq new ineq 1} and \eqref{eq new ineq 2} 
we have that 
\begin{align}\label{eq new ineq 3}
    \frac{\|g_{\sg^p\om}^{(n)}\|_\infty}{\lm_{\sg^p\om}^n}\#\cZ_{\sg^p\om}^{(n)}
    &\leq 
    C_{g,\dl}(\om)C_{\cZ,\dl}(\om) e^{n\int_\Om\log\#\cZ_\om\, dm(\om)}
    \nonumber\\
    &\leq  
    C_{g,\dl}(\om)C_{\cZ,\dl}(\om) e^{N_3(\om)\int_\Om\log\#\cZ_\om\, dm(\om)}
\end{align}
    Thus, using \eqref{eq new ineq 4} we may upper bound \eqref{eq new ineq 3} by writing 
    \begin{align}\label{eq new ineq 5}
    \frac{\|g_{\sg^p\om,0}^{(n)}\|_\infty}{\lm_{\sg^p\om}^n}\#\cZ_{\sg^p\om}^{(n)}
    &\leq  
    C_{g,\dl}(\om)C_{\cZ,\dl}(\om) e^{N_3(\om)\int_\Om\log\#\cZ_\om\, dm(\om)}
    \~\kp^{-N_3(\om)}\~\kp^n
    \nonumber\\
    &\leq C_{g,\dl}(\om)C_{\cZ,\dl}(\om) e^{C_3(\om)\log\var f_\om\int_\Om\log\#\cZ_\om\, dm(\om)}
    \~\kp^{-C_3(\om)\log\var f_\om}\~\kp^n
    \nonumber\\
    &= C_{g,\dl}(\om)C_{\cZ,\dl}(\om) (\var f_\om)^{C_3(\om)(\int_\Om\log\#\cZ_\om\, dm(\om)-\log\~\kp)}
    \~\kp^n.
\end{align}
    Inserting \eqref{eq new ineq 5} into \eqref{eq new ineq 0} and using the temperedness of $\norm{\phi_\om}_\infty$, a consequence of Lemma~\ref{lem: BV norm q om growth bounds}, gives that 
    \begin{align}\label{eq new ineq 6}
    &\norm{\~\cL_{\sg^p(\om)}^nf_{\sg^p(\om)}-\nu_{\sg^p(\om)}(f_{\sg^p(\om)})\phi_{\sg^{p+n}(\om)}}_\infty
    \nonumber\\
    &\quad
    \leq 
    C_{g,\dl}(\om)C_{\cZ,\dl}(\om) (\var f_\om)^{C_3(\om)(\int_\Om\log\#\cZ_\om\, dm(\om)-\log\~\kp)}
    \lt\|f_{\sg^p\om}\rt\|_\infty\lt(1+\lt\|\phi_{\sg^p\om}\rt\|_\infty\rt)\~\kp^n
    \nonumber\\
    &\quad
    \leq C(\om,\dl)C_{g,\dl}(\om)C_{\cZ,\dl}(\om) (\var f_\om)^{C_3(\om)(\int_\Om\log\#\cZ_\om\, dm(\om)-\log\~\kp)}\lt\|f_{\sg^p\om}\rt\|_\infty e^{2n\dl}\norm{\phi_\om}_\infty\~\kp^n
    \nonumber\\
    &\quad
    \leq C(\om,\dl)C_{g,\dl}(\om)C_{\cZ,\dl}(\om) (\var f_\om)^{C_3(\om)(\int_\Om\log\#\cZ_\om\, dm(\om)-\log\~\kp)}\lt\|f_{\sg^p\om}\rt\|_\infty\norm{\phi_\om}_\infty\kp^n
    \nonumber\\
    &\quad
    \leq B(\om) (\var f_\om)^{C(\om)}\lt\|f_{\sg^p\om}\rt\|_\infty\kp^n,
\end{align}
where here we have fixed $\dl>0$ sufficiently small such that 
\begin{align*}
e^{2\dl}\~\kp=:\kp<1,
\end{align*}
and we have set 
$$
    B(\om)=C(\om,\dl)C_{g,\dl}(\om)C_{\cZ,\dl}(\om)\norm{\phi_\om}_\infty
$$ 
    and 
$$
    C(\om)=C_3(\om)\lt(\int_\Om\log\#\cZ_\om\, dm(\om)-\log\~\kp\rt).
$$
Now, to extend \eqref{eq new ineq 6} to all of $\BVT$ we write a function $f\in\BVT^+$ as $f=f_+-f_-$, where $f_+,f_-\in\BVT^+$. Applying the triangle inequality and using \eqref{eq new ineq 6} twice gives
\begin{align*}
&\norm{\~\cL_{\sg^p(\om)}^n f_{\sg^p(\om)} - \nu_{\sg^p(\om)}(f_{\sg^p(\om)})\phi_{\sg^{p+n}(\om)}}_\infty
\leq
4B(\om) (\var f_\om)^{C(\om)}\lt\|f_{\sg^p\om}\rt\|_\infty\kp^n.
\end{align*}
Setting $D(\om):=4B(\om)$ finishes the proof of \eqref{eq: exp conv norm op}. To prove the second claim concerning $\sL$ follows easily from the first claim in a similar fashion as in the proof of Theorem 10.4 of \cite{AFGTV20}.
\end{proof}

From the previous result we easily deduce that the invariant measure $\mu$ satisfies an exponential decay of correlations. The following theorem, whose proof is exactly the same as Theorem~11.1 of \cite{AFGTV20}, completes the proof of Theorem~\ref{main thm: exp dec of corr}.
\begin{theorem}\label{thm: dec of cor}
For $m$-a.e. every $\om\in\Om$, every $n\in\NN$, every $|p|\leq n$, every $f\in L^1(\mu)$, and every $h\in \BVT$ we have 
\begin{align*}
\absval{
	\mu_{\tau}
	\lt(\lt(f_{\sg^{n}(\tau)}\circ T_{\tau}^n\rt)h_{\tau} \rt)
	-
	\mu_{\sg^{n}(\tau)}(f_{\sg^{n}(\tau)})\mu_{\tau}(h_{\tau})
}
\leq 
D(\om) \norm{f_{\sg^n(\tau)}}_{L^1(\mu_{\sg^n(\tau)})}(\var h_\tau)^{C(\om)}\lt\|h_{\tau}\rt\|_\infty\kp^n,
\end{align*} 
where $\tau=\sg^p(\om)$. 
\end{theorem}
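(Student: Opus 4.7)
The plan is to reduce the decay-of-correlations estimate to the exponential convergence of the fully normalized transfer operator $\sL_\tau^n$ provided by Theorem~\ref{thm: exp convergence of tr op}, via the standard duality between the Koopman action of $T_\tau$ on $L^1(\mu)$ and $\sL_\tau$ on $L^\infty$.

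First, starting from the definition \eqref{eq: def of mu_om} of $\mu_\tau = \phi_\tau\nu_\tau$, conformality \eqref{eq: conformal measure property} of $\nu$, and the identity $\cL_\tau^n((f\circ T_\tau^n)\cdot u) = f\cdot\cL_\tau^n(u)$ (which holds for the open operator because the indicator factor $\hat X_{\tau,n-1}$ appearing when one writes $\cL_\tau^n = \cL_{\tau,0}^n(\,\cdot\,\hat X_{\tau,n-1})$ can be pulled through the composition with $T_\tau^n$), I would compute
\begin{align*}
\mu_\tau\bigl((f_{\sg^n\tau}\circ T_\tau^n)h_\tau\bigr)
&= \int (f_{\sg^n\tau}\circ T_\tau^n)\,h_\tau\phi_\tau\,d\nu_\tau
= \lm_\tau^{-n}\int f_{\sg^n\tau}\,\cL_\tau^n(h_\tau\phi_\tau)\,d\nu_{\sg^n\tau}.
\end{align*}
Using the definition \eqref{eq: def fully norm tr op} of the fully normalized operator, $\lm_\tau^{-n}\cL_\tau^n(h_\tau\phi_\tau) = \phi_{\sg^n\tau}\,\sL_\tau^n h_\tau$, this becomes
\begin{align*}
\mu_\tau\bigl((f_{\sg^n\tau}\circ T_\tau^n)h_\tau\bigr) = \int f_{\sg^n\tau}\,(\sL_\tau^n h_\tau)\,d\mu_{\sg^n\tau}.
\end{align*}

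Next, subtracting the product of means and using $\mu_{\sg^n\tau}(\ind_{\sg^n\tau})=1$, I would write
\begin{align*}
\mu_\tau\bigl((f_{\sg^n\tau}\circ T_\tau^n)h_\tau\bigr) - \mu_{\sg^n\tau}(f_{\sg^n\tau})\mu_\tau(h_\tau)
= \int f_{\sg^n\tau}\Bigl(\sL_\tau^n h_\tau - \mu_\tau(h_\tau)\ind_{\sg^n\tau}\Bigr)\,d\mu_{\sg^n\tau}.
\end{align*}
Taking absolute values and applying the trivial $L^1$--$L^\infty$ estimate against $\mu_{\sg^n\tau}$ gives
\begin{align*}
\bigl|\mu_\tau((f_{\sg^n\tau}\circ T_\tau^n)h_\tau) - \mu_{\sg^n\tau}(f_{\sg^n\tau})\mu_\tau(h_\tau)\bigr|
\leq \|f_{\sg^n\tau}\|_{L^1(\mu_{\sg^n\tau})}\;\bigl\|\sL_\tau^n h_\tau - \mu_\tau(h_\tau)\ind_{\sg^n\tau}\bigr\|_\infty.
\end{align*}

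Finally, since $h\in\BVT$ and $|p|\leq n$, the bound \eqref{eq: exp conv full norm op} of Theorem~\ref{thm: exp convergence of tr op} applied at the base fiber $\om$ with shift $p$ yields
\begin{align*}
\bigl\|\sL_{\sg^p\om}^n h_{\sg^p\om} - \mu_{\sg^p\om}(h_{\sg^p\om})\ind_{\sg^{p+n}\om}\bigr\|_\infty
\leq D(\om)(\var h_\tau)^{C(\om)}\|h_\tau\|_\infty\,\kp^n,
\end{align*}
which, substituted into the previous display, gives exactly the stated estimate. The step that requires the most care is the very first identity, namely verifying that $\cL_\tau^n$ commutes with multiplication by a pullback $f\circ T_\tau^n$ in spite of the truncation induced by the hole; this is where one must carefully exploit the factorization $\cL_\tau^n(\cdot) = \cL_{\tau,0}^n(\,\cdot\,\hat X_{\tau,n-1})$ and the fact that $\hat X_{\tau,n-1}$ is itself a pullback under $T_\tau^n$ of a characteristic function, so that it absorbs harmlessly into the composition. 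Everything else is a routine $L^1$--$L^\infty$ duality combined with the quasi-compactness bound already proved.
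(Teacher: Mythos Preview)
Your proof is correct and follows exactly the standard duality argument the paper has in mind; the paper itself omits the proof and refers to Theorem~11.1 of \cite{AFGTV20}, which proceeds precisely as you outline: rewrite the correlation via the fully normalized operator $\sL_\tau^n$, apply the $L^1$--$L^\infty$ estimate, and invoke \eqref{eq: exp conv full norm op}. One minor comment: your remark that $\hat X_{\tau,n-1}$ is ``itself a pullback under $T_\tau^n$'' is not quite right (it is a product of pullbacks under $T_\tau^j$ for $0\le j\le n-1$), but this is irrelevant to the argument since the identity $\cL_{\tau,0}^n((f\circ T_\tau^n)u)=f\cdot\cL_{\tau,0}^n(u)$ for the closed operator already suffices to pull $f_{\sg^n\tau}$ outside $\cL_\tau^n$.
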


\begin{remark}\label{rem: strong lim char of nu}
Note that Theorem~\ref{thm: exp convergence of tr op} implies a stronger limit characterization of the measure $\nu_\om$ than what is concluded in \eqref{eq: lim characterization of Lm}. Indeed, Theorem~\ref{thm: exp convergence of tr op} implies that 
\begin{align*}
\nu_\om(f)=\lim_{n\to\infty}\frac{\cL_\om^n f(x_n)}{\cL_\om^n\ind_\om (y_n)}
\end{align*}
for any pair of sequences $(x_n)_{n\in\NN}$ and $(y_n)_{n\in\NN}$ with $x_n,y_n\in D_{\sg^n(\om),\infty}$, which further implies that 
\begin{align}\label{eq: improved limit characterization}
\nu_\om(f)
=\lim_{n\to\infty}\frac{\norm{\cL_\om^n f}_\infty}{\inf\cL_\om^n\ind_\om}
=\lim_{n\to\infty}\frac{\inf\cL_\om^n f}{\norm{\cL_\om^n\ind_\om}_\infty}.
\end{align}
Furthermore, the same holds for $\nu_{\om,0}$; see Lemma 9.2 and Proposition 10.4 of \cite{AFGTV20}.
\end{remark}
We now address the uniqueness of the families of measures $\nu=(\nu_\om)_{\om\in\Om}$ and $\mu=(\mu_\om)_{\om\in\Om}$ as well as the invariant density $\phi$.
\begin{proposition}\label{prop: uniqueness of nu and mu}
\,\newline
\begin{enumerate}
\item 
The family $\nu=(\nu_\om)_{\om\in\Om}$ is a random probability measure which is uniquely determined by \eqref{eq: conformal measure property}. 

\item The global invariant density $\phi\in\BVT^+$ produced in Corollary~\ref{cor: exist of unique dens q} is the unique element of $L^1(\nu)$ (modulo $\nu$) such that 
\begin{align*}
\~\cL_\om \phi_\om=\phi_{\sg(\om)}.
\end{align*} 	
\item The family $\mu=(\mu_\om)_{\om\in\Om}$ is a unique random $T$-invariant probability measure which is absolutely continuous with respect to $\nu$.
\end{enumerate}
\end{proposition}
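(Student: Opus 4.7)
The plan is to prove the three claims in order, reducing (3) to (2) and treating step (2) as the main technical hurdle.

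For (1), measurability of $\om\mapsto\nu_\om(f)$ for $f\in\BV(I)$ comes from the limit formula in Remark~\ref{rem: strong lim char of nu}: each approximant $\|\cL_\om^n f\|_\infty/\inf_{D_{\sg^n\om,\infty}}\cL_\om^n\ind_\om$ is measurable in $\om$ by the measurability of the transfer operator cocycle, and a Dynkin class argument on the $\pi$-system of intervals (whose indicators lie in $\BV$) extends this to all Borel sets. For uniqueness, suppose $\nu'=(\nu'_\om)$ is another conformal family with associated eigenvalues $\lm'_\om:=\nu'_{\sg\om}(\cL_\om\ind_\om)$. Testing the conformal relation against $\ind_{H_\om}$ gives $\cL_\om\ind_{H_\om}=0$, hence $\nu'_\om(H_\om)=0$, $\nu'_\om(\ind_\om)=1$, and iteratively $\supp\nu'_\om\sub X_{\om,\infty}$. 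Iterating conformality yields, for every $n$,
\[
\nu'_\om(f)\;=\;\frac{\nu'_{\sg^n\om}(\cL_\om^n f)}{\nu'_{\sg^n\om}(\cL_\om^n\ind_\om)},
\]
with $\lm'$ cancelling. Both integrands are supported on $D_{\sg^n\om,n}$, which equals $D_{\sg^n\om,\infty}$ for all $n$ large by Proposition~\ref{prop: D sets stabilize} and the temperedness of $N_\infty$ under ergodic $\sg$. On $D_{\sg^n\om,\infty}$, Theorem~\ref{thm: exp convergence of tr op} combined with the lower bound of Proposition~\ref{prop: lower bound for density} gives that the ratio $\cL_\om^n f/\cL_\om^n\ind_\om$ converges uniformly to $\nu_\om(f)$, so the right-hand side converges to $\nu_\om(f)$; being equal to $\nu'_\om(f)$ for every $n$, this forces $\nu'=\nu$.

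For (2), let $\phi'\in L^1(\nu)$ satisfy $\tcL_\om\phi'_\om=\phi'_{\sg\om}$. Since $\phi_\om>0$ $\mu_\om$-a.e.\ by Proposition~\ref{prop: lower bound for density}, the ratio $G_\om:=\phi'_\om/\phi_\om$ is well-defined in $L^1(\mu)$, and a direct check using \eqref{eq: def fully norm tr op} shows $\sL_\om G_\om=G_{\sg\om}$. Using the adjoint relation $\int(\sL_\om h)\,k\,d\mu_{\sg\om}=\int h\,(k\circ T_\om)\,d\mu_\om$ together with $\sg$-invariance of $m$, the signed measure $G\,d\mu$ on $\cJ$ is $T$-invariant. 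Decomposing $G=G^+-G^-$, positivity of $\sL$ and monotonicity give $\sL G^+\ge\sL G=G$ and $\sL G^+\ge 0$, hence $\sL G^+\ge G^+$, and similarly $\sL G^-\ge G^-$; since the identity $\int\sL F\,d\mu=\int F\,d\mu$ holds on $L^1(\mu)$, this forces $\sL G^{\pm}=G^{\pm}$ $\mu$-a.e. Consequently $G^{\pm}\,d\mu$ are finite $T$-invariant positive measures absolutely continuous with respect to $\mu$. Ergodicity of $(T,\mu)$ then pins each to a scalar multiple of $\mu$, so $G^{\pm}=c^{\pm}$ $\mu$-a.e., yielding $\phi'=c\phi$ $\nu$-a.e.

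Ergodicity of $(T,\mu)$ we extract from Theorem~\ref{thm: dec of cor}: if $A\sub\cJ$ is $T$-invariant with $\mu(A)>0$, then $\mu(A\cap T^{-n}A)=\mu(A)$ for every $n$, while BV-approximation of $\ind_A$ in $L^2(\mu)$ combined with exponential decay of correlations forces $\mu(A\cap T^{-n}A)\to\mu(A)^2$, so $\mu(A)\in\{0,1\}$. Finally, for (3), any $T$-invariant random probability measure $\mu'\ll\nu$ has density $\phi':=d\mu'/d\nu\in L^1(\nu)$; rewriting $T$-equivariance of $\mu'$ through the conformal identity $\nu_{\sg\om}(\cL_\om g)=\lm_\om\nu_\om(g)$ gives $\tcL_\om\phi'_\om=\phi'_{\sg\om}$, so (2) yields $\phi'=c\phi$, and the probability normalization $\mu'(\cJ)=\mu(\cJ)=1$ forces $c=1$, hence $\mu'=\mu$. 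The main obstacle is step (2): lifting the clean $\BVT$-level convergence supplied by Theorem~\ref{thm: exp convergence of tr op} to an $L^1(\nu)$ uniqueness statement, which we accomplish by recasting the problem as uniqueness of $\sL$-fixed points in $L^1(\mu)$ via the Jordan-style $G^{\pm}$ decomposition and then invoking the ergodicity just derived from decay of correlations.
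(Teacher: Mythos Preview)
Your argument is essentially correct and fills in the details that the paper defers to \cite{AFGTV20}; the overall strategy for each part (limit characterization for measurability and uniqueness of $\nu$, the $G^{\pm}$ decomposition plus ergodicity for uniqueness of $\phi$ in $L^1(\nu)$, and the reduction of (3) to (2)) is the standard route and matches what those references do. Two small points are worth tightening. First, your appeal to ``temperedness of $N_\infty$'' is not justified by Proposition~\ref{prop: D sets stabilize}, which only gives $N_\infty$ measurable and $m$-a.e.\ finite; however you do not need $D_{\sg^n\om,n}=D_{\sg^n\om,\infty}$ for all large $n$. It suffices to sandwich $\nu'_{\sg^n\om}(\cL_\om^n f)/\nu'_{\sg^n\om}(\cL_\om^n\ind_\om)$ between $\inf_{D_{\sg^n\om,n}}$ and $\sup_{D_{\sg^n\om,n}}$ of the pointwise ratio $\cL_\om^n f/\cL_\om^n\ind_\om$, and both bounds converge to $\Lm_\om(f)=\nu_\om(f)$ by the calculation around \eqref{eq: calc to show Lm does not need inf}--\eqref{eq: lim characterization of Lm}; this avoids any need to pass to $D_{\sg^n\om,\infty}$ or to control $\nu'_{\sg^n\om}(\phi_{\sg^n\om})$ from below. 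Second, ergodicity of $\mu$ is stated separately in the paper (immediately after this proposition, via a reference to \cite{mayer_distance_2011}), so you may simply cite that rather than re-deriving it from Theorem~\ref{thm: dec of cor}; your derivation is fine but slightly circular in presentation since ergodicity appears just after the proposition you are proving.
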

\begin{proof}
The fact that the family $(\nu_\om)_{\om\in\Om}$ is a random measure as in Definition~\ref{def: random prob measures} follows from the limit characterization given in \eqref{eq: improved limit characterization}, as we have that $\nu_\om$ is a limit of measurable functions. 
Indeed, for every interval $J\subset I$, the measurability of the function $\om \mapsto \nu_\om(J)$ follows from the fact that it is given by the limit of measurable functions by \eqref{eq: improved limit characterization} applied to the characteristic function $f_\om=\ind_J$. 
Since $\sB$ is generated by intervals, $\om \mapsto \nu_\om(B)$ is measurable for every $B\in \sB$. Furthermore, $\nu_\om$ is a Borel probability measure for $m$-a.e. $\om \in \Om$ from Proposition~\ref{lem: Lm is conf meas}.

The remainder of the proof Proposition~\ref{prop: uniqueness of nu and mu} follows along exactly like the proofs of Propositions~9.4 and 10.4 of \cite{AFGTV20}, and is therefore left to the reader.
\end{proof}
The proof of the  following proposition is the same as the proof of Proposition 4.7 of \cite{mayer_distance_2011}, and so it is omitted.
\begin{proposition}
The random $T$-invariant probability measure $\mu$ defined in \eqref{eq: def of mu_om} is ergodic.
\end{proposition}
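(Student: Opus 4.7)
The plan is to prove that any $T$-invariant measurable set $B\sub \Om\times I$ satisfies $\mu(B)\in\{0,1\}$, leveraging the uniqueness of the invariant density asserted in Proposition~\ref{prop: uniqueness of nu and mu}(2), the strict positivity of $\phi_\om$ on the support of $\nu_\om$ from Proposition~\ref{prop: lower bound for density}, and the ergodicity of the base map $\sg$. I would disintegrate $B=\cup_{\om\in\Om}\{\om\}\times B_\om$ and introduce the candidate density $h_\om:=\ind_{B_\om}\phi_\om$, which lies in $L^1(\nu_\om)$ since $\phi\in L^1(\nu)$. The heart of the argument is to verify that $h$ is again an invariant density for the fiber cocycle, i.e.\ $\~\cL_\om h_\om=h_{\sg\om}$ in $L^1(\nu_{\sg\om})$.

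To establish this eigenequation I would test against an arbitrary $f\in\BV(I)$ and compute $\nu_{\sg\om}(f\cdot\cL_\om h_\om)$ using conformality (Lemma~\ref{lem: Lm is conf meas}) to obtain $\lm_\om\nu_\om((f\circ T_\om)\ind_{B_\om}\phi_\om)=\lm_\om\mu_\om((f\circ T_\om)\ind_{B_\om})$. The $T$-invariance hypothesis $T^{-1}B=B$ (mod $\mu$) lets me replace $\ind_{B_\om}$ by $\ind_{B_{\sg\om}}\circ T_\om$ inside this integral against $\mu_\om$, after which the $T$-invariance identity~\eqref{eq: mu_om T invar} pushes everything to the $\sg\om$-fiber and yields $\lm_\om\nu_{\sg\om}(f\cdot h_{\sg\om})$. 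I expect the main obstacle to be precisely this step: $T^{-1}B=B$ holds only modulo $\mu$, so pointwise manipulations inside $\cL_\om$ must be avoided and the substitution must be carried out inside an integral against $\mu_\om=\phi_\om\nu_\om$, which is exactly why passing to $L^1(\nu)$ is crucial.

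Once the eigenequation is in hand, I would note that $\nu_\om(h_\om)=\mu_\om(B_\om)$ and observe that $T$-invariance of $\mu$ and $B$ makes $c(\om):=\mu_\om(B_\om)$ a $\sg$-invariant measurable function, hence $m$-a.e.\ equal to a constant $c\in[0,1]$ by ergodicity of $\sg$. If $c\in(0,1)$, then $c^{-1}h$ is a non-negative element of $L^1(\nu)$ with $\nu_\om(c^{-1}h_\om)=1$ satisfying $\~\cL_\om(c^{-1}h_\om)=c^{-1}h_{\sg\om}$, so Proposition~\ref{prop: uniqueness of nu and mu}(2) forces $c^{-1}h=\phi$ modulo $\nu$; combined with Proposition~\ref{prop: lower bound for density}, which gives $\phi_\om>0$ on $X_{\om,\infty}=\supp(\nu_\om)$, this yields $\ind_{B_\om}\equiv c$ for $\nu_\om$-a.e.\ $x$, contradicting $c\in(0,1)$ since the indicator takes only the values $0$ and $1$. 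Therefore $c\in\{0,1\}$, which gives $\mu(B)\in\{0,1\}$ and completes the proof of ergodicity.
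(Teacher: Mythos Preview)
Your argument is correct and follows the standard ergodicity-via-uniqueness route: from a $T$-invariant set $B$ you build the candidate density $h_\om=\ind_{B_\om}\phi_\om$, verify $\~\cL_\om h_\om=h_{\sg\om}$ by duality and $T$-invariance, and then invoke the uniqueness in Proposition~\ref{prop: uniqueness of nu and mu}(2) to force $\mu_\om(B_\om)\in\{0,1\}$. This is precisely the approach the paper defers to (it omits the proof and cites Proposition~4.7 of \cite{mayer_distance_2011}, whose argument is the one you have written out).

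Two minor remarks on your write-up. First, Proposition~\ref{prop: lower bound for density} as stated gives $\inf_{D_{\om,\infty}}\phi_\om>0$, not positivity on $X_{\om,\infty}$; what you actually need is $\phi_\om>0$ $\nu_\om$-a.e., and this follows from \eqref{eq: q pos on X_n} in that proposition's proof together with $\supp(\nu_\om)\sub X_{\om,\infty}$ (Lemma~\ref{lem: Lm is conf meas}). Alternatively, you can bypass positivity entirely: from $\ind_{B_\om}\phi_\om=c\phi_\om$ $\nu_\om$-a.e.\ with $0<c<1$, restricting to $B_\om$ gives $(1-c)\phi_\om=0$, hence $\mu_\om(B_\om)=\int_{B_\om}\phi_\om\,d\nu_\om=0$, contradicting $c>0$. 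Second, Lemma~\ref{lem: Lm is conf meas} only yields $\supp(\nu_\om)\sub X_{\om,\infty}$, not equality; the inclusion is all you need.
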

In the following lemma we establish the existence of the unique random absolutely continuous conditionally invariant probability measure $\eta$ with density in $\BV_\Om$.
\begin{lemma}\label{lem: raccim eta}
The random measure $\eta\in\cP_\Om(\Om\times I)$, whose disintegrations are given by 
\begin{align*}
\eta_\om(f):=\frac{\nu_{\om,0}(f\cdot\ind_\om\cdot \phi_\om)}{\nu_{\om,0}(\ind_\om\cdot \phi_\om)},
\end{align*} 
is the unique random absolutely continuous (with respect to $\nu_0$) conditionally invariant probability measure supported on $\cI$ with fiberwise density of bounded variation. 
\end{lemma}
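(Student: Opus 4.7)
The plan is to verify that the stated $\eta$ satisfies the characterisation of Lemma \ref{LMD lem 1.1.1}, and then to deduce uniqueness by iterating the eigen-equation backwards and invoking Theorem \ref{thm: exp convergence of tr op}.

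The first task is to check that the defining formula makes sense, i.e.\ that $c_\om := \nu_{\om,0}(\ind_\om \phi_\om)$ is strictly positive and finite for $m$-a.e.\ $\om$. Finiteness is immediate from $\phi_\om\in\BV(I)\sub L^\infty(\nu_{\om,0})$. For positivity I would use Theorem \ref{thm: exp convergence of tr op} (with $f=\ind$, $p=-n$) to get $\~\cL_{\sg^{-n}\om}^n \ind_{\sg^{-n}\om} \to \phi_\om$ in $\|\cdot\|_\infty$ and integrate against $\ind_\om\, d\nu_{\om,0}$. Pushing $\ind_\om\circ T_{\sg^{-n}\om}^n$ inside $\cL_{\sg^{-n}\om,0}^n$ by closed conformality of $\nu_0$, and using $(\ind_\om\circ T_{\sg^{-n}\om}^n)\cdot\hat X_{\sg^{-n}\om,n-1} = \hat X_{\sg^{-n}\om,n}$, produces the identity
\begin{align*}
c_\om = \lim_{n\to\infty}\frac{\lm_{\sg^{-n}\om,0}^n}{\lm_{\sg^{-n}\om}^n}\,\nu_{\sg^{-n}\om,0}(X_{\sg^{-n}\om,n}),
\end{align*}
whose right-hand side has a strictly positive limit by Theorem \ref{main thm: escape rate}, which gives matching linear-in-$n$ asymptotics for the logarithms of the two factors. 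Since $\phi_\om\in\BV(I)$ and each $H_\om$ is a finite union of intervals (so $\ind_\om\in\BV(I)$), the fiberwise density $\ind_\om\phi_\om/c_\om$ lies in $\BV(I)$, and measurability of $\om\mapsto\eta_\om$ is inherited from that of $\phi$, $\lm$, $\lm_0$, and $\nu_0$. By construction $\supp(\eta_\om)\sub I_\om\sub\cI$ and $\eta_\om(I)=1$.

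Next I would verify the RACCIM property via Lemma \ref{LMD lem 1.1.1}. Setting $h_\om := \phi_\om/c_\om$, so that $\eta_\om = \ind_\om h_\om\,\nu_{\om,0}$, and using $\cL_\om f = \cL_{\om,0}(\ind_\om f)$ together with $\cL_\om\phi_\om = \lm_\om\phi_{\sg\om}$ from Corollary \ref{cor: exist of unique dens q} gives
\begin{align*}
\cL_\om h_\om
= \frac{\cL_{\om,0}(\ind_\om\phi_\om)}{c_\om}
= \frac{\lm_\om\phi_{\sg\om}}{c_\om}
= \lm_{\om,0}\,\al_\om\,h_{\sg\om},
\qquad
\al_\om := \frac{\lm_\om\,c_{\sg\om}}{\lm_{\om,0}\,c_\om}>0,
\end{align*}
so Lemma \ref{LMD lem 1.1.1} confirms that $\eta$ is a RACCIM with fiberwise survival probability $\al_\om$.

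Finally, for uniqueness, let $\eta' = \ind_\om h'_\om\,\nu_{\om,0}$ be any other RACCIM supported on $\cI$ with $\ind_\om h'_\om\in\BV(I)$ and $\nu_{\om,0}(\ind_\om h'_\om) = 1$. Lemma \ref{LMD lem 1.1.1} yields $\al'_\om > 0$ with $\cL_\om h'_\om = \lm_{\om,0}\al'_\om h'_{\sg\om}$. Applying $\Lm_{\sg\om}$ and invoking the equivariance \eqref{eq: Lam equivariance} gives $\lm_\om\Lm_\om(h'_\om) = \lm_{\om,0}\al'_\om\Lm_{\sg\om}(h'_{\sg\om})$, so the rescaling $\psi'_\om := h'_\om/\Lm_\om(h'_\om)$ satisfies $\~\cL_\om\psi'_\om = \psi'_{\sg\om}$ and $\Lm_\om(\psi'_\om) = \nu_\om(\psi'_\om) = 1$. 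Iterating the fixed-point equation backwards gives $\psi'_\om = \~\cL_{\sg^{-n}\om}^n\psi'_{\sg^{-n}\om}$, and applying Theorem \ref{thm: exp convergence of tr op} with $p=-n$ and $f=\psi'$ forces $\|\psi'_\om - \phi_\om\|_\infty \to 0$, hence $\psi'_\om = \phi_\om$ pointwise (so in particular $\nu_{\om,0}$-a.e.). The main obstacle is verifying $\psi'\in\BVT$ so that Theorem \ref{thm: exp convergence of tr op} genuinely applies along the backwards orbit $\sg^{-n}\om$; I plan to derive this by iterating $\cL_\om h'_\om = \lm_{\om,0}\al'_\om h'_{\sg\om}$ backwards and applying the Lasota-Yorke bound Proposition \ref{prop: LY ineq 2} to control $\var(\psi'_{\sg^{-n}\om})$ subexponentially in $n$, which combined with $\Lm\lt(\psi'_{\sg^{-n}\om}\rt)=1$ also bounds $\|\psi'_{\sg^{-n}\om}\|_\infty$ in tempered fashion. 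Once $\psi'_\om = \phi_\om$ $\nu_{\om,0}$-a.e., the normalisation $\nu_{\om,0}(\ind_\om h'_\om) = 1 = \nu_{\om,0}(\ind_\om h_\om)$ forces $h'_\om = h_\om$ $\nu_{\om,0}$-a.e.\ on $I_\om$, giving $\eta' = \eta$.
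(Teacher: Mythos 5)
Your construction of $\eta$ and the verification of the RACCIM property follow the paper exactly: both reduce to the characterisation in Lemma~\ref{LMD lem 1.1.1}, applied to $h_\om=\phi_\om/c_\om$ with $\al_\om=\lm_\om c_{\sg(\om)}/(\lm_{\om,0}c_\om)$, and that part of your argument is correct. Where you depart from the paper is uniqueness: the paper settles it in one line by invoking Proposition~\ref{prop: uniqueness of nu and mu}, whereas you rerun a backward-iteration argument through Theorem~\ref{thm: exp convergence of tr op}. That alternative route is reasonable in spirit (and has the advantage of producing equality of densities pointwise, not just modulo $\nu$), but as written it contains a genuine gap.

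The gap is the temperedness step. A competing RACCIM is only assumed to have fiberwise densities $h'_\om\in\BV(I)$; nothing forces $\om\mapsto\var(h'_\om)$, hence $\var(\psi'_\om)$, to be tempered, and Theorem~\ref{thm: exp convergence of tr op} (equivalently Lemma~\ref{lem: exp conv in C+ cone} with $p=-n$) requires $\var(\psi'_{\sg^{-n}(\om)})\leq e^{\ep n}V(\om)$. Your plan to extract this from Proposition~\ref{prop: LY ineq 2} runs in the wrong direction: the Lasota--Yorke inequality bounds $\var\lt(\~\cL^n_{\sg^{-n}(\om)}\psi'_{\sg^{-n}(\om)}\rt)=\var(\psi'_\om)$ from above in terms of $\var(\psi'_{\sg^{-n}(\om)})$, so it gives no upper control on $\var(\psi'_{\sg^{-n}(\om)})$ as $n\to\infty$. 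A repair is available: since $\psi'_\om=\~\cL^n_{\sg^{-n}(\om)}\psi'_{\sg^{-n}(\om)}$ holds for every $n$, convergence along a subsequence suffices, and Poincar\'e recurrence applied to the sets $\set{\om:\var(\psi'_\om)\leq K}$ (of positive measure for $K$ large, granting measurability of $\om\mapsto\var(h'_\om)$) yields, for $m$-a.e.\ $\om$, infinitely many $n$ with $\var(\psi'_{\sg^{-n}(\om)})\leq K$; alternatively one can simply quote Proposition~\ref{prop: uniqueness of nu and mu} as the paper does. Two smaller points also need attention. First, before rescaling you must know $\Lm_\om(h'_\om)>0$: if $\nu_\om(h'_\om)=0$ the function $\psi'_\om$ is undefined, and the equivariance identity only shows that this degeneracy propagates along the orbit, so it must be excluded by a separate argument. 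Second, your positivity proof for $c_\om$ is not conclusive: matching linear-in-$n$ asymptotics of $\log\lt(\lm^n_{\sg^{-n}(\om),0}/\lm^n_{\sg^{-n}(\om)}\rt)$ and of $\log\nu_{\sg^{-n}(\om),0}(X_{\sg^{-n}(\om),n})$ from Theorem~\ref{main thm: escape rate} does not prevent the product from tending to $0$ subexponentially. A cleaner reduction uses conformality: $\lm_{\om,0}\,c_\om=\nu_{\sg(\om),0}(\cL_\om\phi_\om)=\lm_\om\,\nu_{\sg(\om),0}(\phi_{\sg(\om)})$, so positivity of $c_\om$ is exactly positivity of $\nu_{\om,0}(\phi_\om)$, which the paper itself uses implicitly (e.g.\ in Proposition~\ref{prop: escape rate} and Corollary~\ref{cor: exp conv of eta}).
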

\begin{proof}
The fact that $\eta$ is an RACCIM follows from Lemma~\ref{LMD lem 1.1.1} and uniqueness follows from Proposition~\ref{prop: uniqueness of nu and mu}. 
\end{proof}

\begin{remark}
We note that \eqref{eq: def of al_om} and \eqref{eq al_om measure part 2} together with Lemma \ref{lem: Lm is a linear functional} implies the following useful property of an RACCIM $\eta$: for each $n\geq 1$ and $m$-a.e. $\oio$ we have 
\begin{align*}
    \eta_{\om}(X_{\om,n})
    =
    \frac{\lm_\om^n}{\lm_{\om,0}^n}.
\end{align*}
\end{remark}

As a corollary of Theorem~\ref{thm: exp convergence of tr op}, the following results gives the exponential convergence of the closed conformal measure $\nu_{\om,0}$ conditioned on the survivor set to the RACCIM $\eta_\om$. 
\begin{corollary}\label{cor: exp conv of eta}
There exist measurable, $m$-a.e. finite functions $C, D:\Om\to\RR$ and $\kp<1$ such that for $m$-a.e. every $\om\in\Om$, every $n\in\NN$, every $|p|\leq n$, every $f\in\BVT$, and every $A\sub[0,1]$ which is the union of finitely many intervals, we have 
\begin{align*}
\absval{
	\nu_{\sg^p(\om),0}\lt(T_{\sg^p(\om)}^{-n}(A) \, \rvert \, X_{\sg^p(\om),n}\rt)
	- 
	\eta_{\sg^{p+n}(\om)}(A)
}\leq \frac{D(\om)\var(\ind_A)^{C(\om)}}{\nu_{\sg^{p+n}(\om),0}\lt(\ind_{\sg^{p+n}(\om)}\phi_{\sg^{p+n}(\om)}\rt)}\kp^n
\end{align*}
and 
\begin{align*}
\absval{
	\frac{\eta_{\sg^p(\om)}\lt(f_{\sg^p(\om)} \rvert_{X_{\sg^p(\om),n}}\rt)}
	{\eta_{\sg^p(\om)}\lt(X_{\sg^p(\om),n}\rt)}
	- 
	\mu_{\sg^p(\om)}(f_{\sg^p(\om)})
}\leq D(\om)(\var f_\om)^{C(\om)}\lt\|f_{\sg^p\om}\rt\|_\infty\kp^n
\end{align*}
for $m$-a.e. $\om\in\Om$.
\end{corollary}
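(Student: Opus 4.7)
The plan is to deduce both inequalities from Theorem~\ref{thm: exp convergence of tr op} by rewriting each conditional quantity as a ratio of integrals against $\~\cL_\om^n$-images of suitable $\BVT$ functions, with Lemma~\ref{lem: useful identity} serving as the bridge between $\cL_\om^n$ and integrals over the survivor sets. Once the ratio expressions are in place, substituting the exponential convergence of $\~\cL_\om^n$ and invoking the elementary perturbation estimate
\begin{align*}
\left|\frac{a+\delta_1}{b+\delta_2}-\frac{a}{b}\right|\leq \frac{|\delta_1|+(a/b)|\delta_2|}{|b+\delta_2|},\qquad a/b\leq 1,
\end{align*}
yields the claimed exponential bounds.

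For the first inequality, two applications of Lemma~\ref{lem: useful identity} (with $h=\ind_A$ and then $h\equiv 1$, $f=\ind_{\sg^p\om}$) produce
\begin{align*}
\nu_{\sg^p\om,0}\bigl(T_{\sg^p\om}^{-n}(A)\bigm|X_{\sg^p\om,n}\bigr)
=
\frac{\nu_{\sg^{p+n}\om,0}\bigl(\ind_A\ind_{\sg^{p+n}\om}\,\~\cL_{\sg^p\om}^n\ind_{\sg^p\om}\bigr)}{\nu_{\sg^{p+n}\om,0}\bigl(\ind_{\sg^{p+n}\om}\,\~\cL_{\sg^p\om}^n\ind_{\sg^p\om}\bigr)},
\end{align*}
the factor $\lm_{\sg^p\om,0}^n$ cancelling. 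Theorem~\ref{thm: exp convergence of tr op} applied to $\ind_{\sg^p\om}$, using $\nu_{\sg^p\om}(\ind_{\sg^p\om})=\Lm_{\sg^p\om}(\ind_{\sg^p\om})=1$, gives $\~\cL_{\sg^p\om}^n\ind_{\sg^p\om}=\phi_{\sg^{p+n}\om}+\delta_n$ with $\|\delta_n\|_\infty\leq D_1(\om)\kp^n$ after absorbing the $\om$-measurable factor $(\var\ind_{\sg^p\om})^{C(\om)}$ into $D_1(\om)$. Plugging into the quotient with $b=\nu_{\sg^{p+n}\om,0}(\ind_{\sg^{p+n}\om}\phi_{\sg^{p+n}\om})$ and $a\leq b$ produces the stated bound; for small $n$ such that $|b+\delta_2|<b/2$ both sides are bounded by $1$ and the inequality is made trivial by enlarging $D(\om)$.

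For the second inequality, the definition of $\eta$ gives
\begin{align*}
\frac{\eta_{\sg^p\om}(f_{\sg^p\om}\ind_{X_{\sg^p\om,n}})}{\eta_{\sg^p\om}(X_{\sg^p\om,n})}
=
\frac{\nu_{\sg^p\om,0}\bigl(f_{\sg^p\om}\phi_{\sg^p\om}\hat X_{\sg^p\om,n}\bigr)}{\nu_{\sg^p\om,0}\bigl(\phi_{\sg^p\om}\hat X_{\sg^p\om,n}\bigr)}.
\end{align*}
Pushing both integrals forward $n$ steps via Lemma~\ref{lem: useful identity} (with $h=\ind_{\sg^{p+n}\om}$) and using $\cL_{\sg^p\om}^n\phi_{\sg^p\om}=\lm_{\sg^p\om}^n\phi_{\sg^{p+n}\om}$ in the denominator, the ratio becomes
\begin{align*}
\frac{\nu_{\sg^{p+n}\om,0}\bigl(\ind_{\sg^{p+n}\om}\,\~\cL_{\sg^p\om}^n(f_{\sg^p\om}\phi_{\sg^p\om})\bigr)}{\nu_{\sg^{p+n}\om,0}\bigl(\ind_{\sg^{p+n}\om}\phi_{\sg^{p+n}\om}\bigr)}.
\end{align*}
Since $\phi\in\BVT$ by Lemma~\ref{lem: BV norm q om growth bounds}, the product $f\phi$ is also in $\BVT$ (using $\var(f\phi)\leq \|f\|_\infty\var\phi+\|\phi\|_\infty\var f$, absorbing tempered $\phi$-factors into the constants), so Theorem~\ref{thm: exp convergence of tr op} applied to $f\phi$ gives $\~\cL_{\sg^p\om}^n(f_{\sg^p\om}\phi_{\sg^p\om})=\nu_{\sg^p\om}(f_{\sg^p\om}\phi_{\sg^p\om})\phi_{\sg^{p+n}\om}+\varepsilon_n=\mu_{\sg^p\om}(f_{\sg^p\om})\phi_{\sg^{p+n}\om}+\varepsilon_n$ with $\|\varepsilon_n\|_\infty\leq D_2(\om)(\var f_{\sg^p\om})^{C(\om)}\|f_{\sg^p\om}\|_\infty\kp^n$. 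The perturbation estimate then closes the bound.

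The main obstacle is that this second quotient estimate naturally produces an unwanted factor of $1/\nu_{\sg^{p+n}\om,0}(\ind_{\sg^{p+n}\om}\phi_{\sg^{p+n}\om})$, which is absent from the stated bound. Absorbing this into $D(\om)\kp^n$ requires showing that the sequence $k\mapsto \nu_{\sg^k\om,0}(\ind_{\sg^k\om}\phi_{\sg^k\om})$ is uniformly tempered along $\sg$-orbits of almost every $\om$. The upper bound is immediate from the tempered growth of $\|\phi_\om\|_\infty$ (Lemma~\ref{lem: BV norm q om growth bounds}), while the lower bound combines Proposition~\ref{prop: lower bound for density} (giving $\phi_\om\geq c_\om>0$ on $D_{\om,\infty}$) with a positive $m$-a.e.\ lower bound on $\nu_{\om,0}(D_{\om,\infty})$ arising from the standing nontriviality assumptions, and an ergodic-theoretic argument applied to $\log\nu_{\om,0}(\ind_\om\phi_\om)$. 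The resulting tempered control $\nu_{\sg^k\om,0}(\ind_{\sg^k\om}\phi_{\sg^k\om})^{-1}\leq C(\om)e^{\dl|k|}$ degrades $\kp$ slightly to some $\kp e^\dl<1$, which is the $\kp$ appearing in the corollary's statement.
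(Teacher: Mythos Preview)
Your treatment of the first inequality is essentially the paper's: rewrite the conditional probability via Lemma~\ref{lem: useful identity} as a ratio of $\nu_{\sg^{p+n}\om,0}$-integrals of $\~\cL_{\sg^p\om}^n\ind_{\sg^p\om}$ (with and without the factor $\ind_A$), then apply \eqref{eq: exp conv norm op} to $\ind$ and perturb the quotient.

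For the second inequality your route diverges from the paper's and creates the obstacle you identify. The paper avoids the unwanted denominator entirely by passing to the \emph{fully} normalized operator $\sL_{\om}$ rather than $\~\cL_{\om}$. After the same first two steps you carry out, one multiplies and divides by $\phi_{\sg^{p+n}\om}$ inside the numerator to recognize
\[
\frac{\nu_{\sg^{p+n}\om,0}\bigl(\ind_{\sg^{p+n}\om}\,\~\cL_{\sg^p\om}^n(f_{\sg^p\om}\phi_{\sg^p\om})\bigr)}{\nu_{\sg^{p+n}\om,0}\bigl(\ind_{\sg^{p+n}\om}\phi_{\sg^{p+n}\om}\bigr)}
=
\frac{\nu_{\sg^{p+n}\om,0}\bigl(\ind_{\sg^{p+n}\om}\,\sL_{\sg^p\om}^n(f_{\sg^p\om})\,\phi_{\sg^{p+n}\om}\bigr)}{\nu_{\sg^{p+n}\om,0}\bigl(\ind_{\sg^{p+n}\om}\phi_{\sg^{p+n}\om}\bigr)}
=
\eta_{\sg^{p+n}\om}\bigl(\sL_{\sg^p\om}^n f_{\sg^p\om}\bigr).
\]
Since $\eta_{\sg^{p+n}\om}$ is a probability measure, one has directly
\[
\bigl|\eta_{\sg^{p+n}\om}(\sL_{\sg^p\om}^n f_{\sg^p\om})-\mu_{\sg^p\om}(f_{\sg^p\om})\bigr|
\le
\bigl\|\sL_{\sg^p\om}^n f_{\sg^p\om}-\mu_{\sg^p\om}(f_{\sg^p\om})\ind\bigr\|_\infty,
\]
and now \eqref{eq: exp conv full norm op} of Theorem~\ref{thm: exp convergence of tr op} gives exactly the stated bound, with the same $\kp$ and no denominator factor to absorb.

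Your proposed fix via temperedness of $\nu_{\sg^k\om,0}(\ind_{\sg^k\om}\phi_{\sg^k\om})$ is not obviously available: positivity a.e.\ follows from $\eta_\om$ being a well-defined probability measure, but you would need $\log\nu_{\om,0}(\ind_\om\phi_\om)\in L^1(m)$ to invoke Birkhoff, and this is not among the standing hypotheses nor easily extracted from them. Even granting it, you would degrade $\kp$ and would still have to reconcile the bound's dependence on $\var(f_\om\phi_\om)$ (which involves an additive $\|f_\om\|_\infty\var\phi_\om$ term) with the claimed dependence on $(\var f_\om)^{C(\om)}$. The $\sL$-route bypasses all of this.
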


\begin{proof}
Since $A$ is the union of finitely many intervals, $\ind_A\in\BV(I)$, and thus Lemma~\ref{lem: useful identity} allows us to write 
\begin{align*}
\nu_{\sg^{p}(\om),0}\lt(T_{\sg^p(\om)}^{-n}(A)\cap X_{\sg^p(\om), n}\rt)
&=
\int_{X_{\sg^p(\om),n}}\ind_{\sg^p(\om)}\ind_A\circ T_{\sg^p(\om)}^n \, d\nu_{\sg^p(\om),0}
\\
&=\lt(\lm_{\sg^p(\om),0}^n\rt)^{-1}\int_{I_{\sg^{p+n}(\om)}}\ind_A\cL_{\sg^p(\om)}^n\ind_{\sg^p(\om)}\, d\nu_{\sg^{p+n}(\om),0}
\\
&=
\nu_{\sg^{p+n}(\om),0}\lt(\ind_A\ind_{\sg^{p+n}(\om)} \~\cL_{\sg^p(\om)}^n(\ind_{\sg^p(\om)})\rt).
\end{align*}
So, if $A=I$, then we have 
\begin{align*}
\nu_{\sg^{p}(\om),0}\lt(X_{\sg^p(\om),n}\rt)
=
\nu_{\sg^{p+n}(\om),0}\lt(\ind_{\sg^{p+n}(\om)}\~\cL_{\sg^p(\om)}^n(\ind_{\sg^p(\om)})\rt).
\end{align*}
Thus, we apply \eqref{eq: exp conv norm op} of Theorem~\ref{thm: exp convergence of tr op} together with elementary calculation to get that 
\begin{align*}
&\absval{
	\nu_{\sg^{p}(\om),0}\lt(T_{\sg^p(\om)}^{-n}(A)\, \rvert\, X_{\sg^p(\om),n}\rt)
	-
	\eta_{\sg^{p+n}(\om)}(A)	
}
\\
&\qquad
=
\absval{
	\frac{\nu_{\sg^{p+n}(\om),0}\lt(\ind_A\ind_{\sg^{p+n}(\om)}\~\cL_{\sg^p(\om)}^n(\ind_{\sg^p(\om)})\rt)}
	{\nu_{\sg^{p+n}(\om),0}\lt(\ind_{\sg^{p+n}(\om)}\~\cL_{\sg^p(\om)}^n(\ind_{\sg^p(\om)})\rt)}
	-
	\frac{\nu_{\sg^{p+n}(\om),0}\lt(\ind_A\ind_{\sg^{p+n}(\om)}\phi_{\sg^{p+n}(\om)}\rt)}{\nu_{\sg^{p+n}(\om),0}\lt(\ind_{\sg^{p+n}(\om)}\phi_{\sg^{p+n}(\om)}\rt)}
}
\\
&\qquad
\leq \frac{D(\om)\var(\ind_A)^{C(\om)}}{\nu_{\sg^{p+n}(\om),0}\lt(\ind_{\sg^{p+n}(\om)}\phi_{\sg^{p+n}(\om)}\rt)}\kp^n.
\end{align*}
To see the second claim we note that for $f\in\BVT$
\begin{align*}
\frac{\eta_{\sg^p(\om)}\lt(f_{\sg^p(\om)}\rvert_{X_{\sg^p(\om),n}}\rt)}{\eta_{\sg^p(\om)}\lt(X_{\sg^p(\om),n}\rt)}
&=
\frac
{\nu_{\sg^p(\om),0}\lt(f_{\sg^p(\om)} \hat X_{\sg^p(\om),n}\ind_{\sg^p(\om)}\phi_{\sg^p(\om)}\rt)}
{\nu_{\sg^p(\om),0}\lt(\hat X_{\sg^p(\om),n}\ind_{\sg^p(\om)}\phi_{\sg^p(\om)}\rt)}
\\
&=
\frac
{\nu_{\sg^{p+n}(\om),0}\lt(\ind_{\sg^{p+n}(\om)}\~\cL_{\sg^p(\om)}^n(f_{\sg^p(\om)} \phi_{\sg^p(\om)})\rt)}
{\nu_{\sg^{p+n}(\om),0}\lt(\ind_{\sg^{p+n}(\om)}\~\cL_{\sg^p(\om)}^n(\phi_{\sg^p(\om)})\rt)}
\\
&=
\frac
{\nu_{\sg^{p+n}(\om),0}\lt(\ind_{\sg^{p+n}(\om)}\sL_{\sg^p(\om)}^n(f_{\sg^p(\om)}) \phi_{\sg^{p+n}(\om)}\rt)}
{\nu_{\sg^{p+n}(\om),0}\lt(\ind_{\sg^{p+n}(\om)}\phi_{\sg^{p+n}(\om)}\rt)}
\\
&=
\eta_{\sg^{p+n}(\om)}\lt(\sL_{\sg^p(\om)}^n(f_{\sg^p(\om)})\rt).
\end{align*} 
Thus, applying \eqref{eq: exp conv full norm op} of Theorem~\ref{thm: exp convergence of tr op}, we have 
\begin{align*}
\absval{
	\frac{\eta_{\sg^p(\om)}\lt(f_{\sg^p(\om)}\rvert_{X_{\sg^p(\om),n}}\rt)}{\eta_{\sg^p(\om)}\lt(X_{\sg^p(\om),n}\rt)}
	- 
	\mu_{\sg^p(\om)}(f_{\sg^p(\om)})
}
&=
\absval{
	\eta_{\sg^{p+n}(\om)}\lt(\sL_{\sg^p(\om)}^n(f_{\sg^p(\om)})\rt)
	-
	\mu_{\sg^p(\om)}(f_{\sg^p(\om)})
}
\\
&\leq 
D(\om)(\var f_\om)^{C(\om)}\lt\|f_{\sg^p\om}\rt\|_\infty\kp^n,
\end{align*}
which finishes the proof.
\end{proof}
\begin{remark}
Note that the third claim of Corollary \ref{cor: exp conv of eta} differs from the third claim of Theorem A of \cite{LMD} where they substitute the function $f\in\BV$ with the function $\ind_A$ for $A$ a Borel set. As stated this is not true as this result can not hold for general $A\in\sB$; taking $A=X_{\infty}$ produces a counterexample. 
However, if $A$ is taken as the union of finitely many intervals (as stated in Corollary \ref{cor: exp conv of eta}), then $\ind_A\in\BV(I)$ and the third claim of Theorem A of \cite{LMD} holds. 
\end{remark}

We are now able to complete the proof of Theorem~\ref{main thm: quasicompact}. 
\begin{proof}[Proof of Theorem \ref{main thm: quasicompact}]
    Note that taking $f_\om=\~f$ for each $\oio$ for some $\~f\in\BV(I)$, we must have that $f\in\BVT$ since $\var(f_\om)$ is constant. Also, as noted in the proof of Corollary \ref{cor: exp conv of eta}, if $A\in\sB$ is the union of finitely many intervals, then $\ind_A\in\BV(I)$.  Thus, Theorem \ref{thm: exp convergence of tr op} and Corollary \ref{cor: exp conv of eta} complete the proof of Theorem~\ref{main thm: quasicompact} where we take 
\begin{align*}    
    D_f(\om):=D(\om)\var(f)^{C(\om)}
    \qquad\text{ and }\qquad
    D_A(\om):=\frac{D(\om)\var(\ind_A)^{C(\om)}}{\nu_{\sg^{p+n}(\om),0}\lt(\ind_{\sg^{p+n}(\om)}\phi_{\sg^{p+n}(\om)}\rt)}.
\end{align*}  
\end{proof}

\section{Expected pressures and escape rates}\label{sec: exp press}
We now establish the rate at which mass escapes through the hole with respect to the closed conformal measure $\nu_{\om,0}$ and the RACCIM $\eta_\om$ in terms of the open and closed expected pressures.
\begin{definition}\label{def: expected pressure}\index{expected pressure}
Given a potential $\vp_0$ on the closed system we define the expected pressure of the closed and open systems respectively by 
\begin{align*}
\cEP(\vp_0):=\int_\Om\log\lm_{\om,0}\, dm(\om)
\quad\text{ and }\quad
\cEP(\vp):=\int_\Om\log\lm_\om\, dm(\om). 
\end{align*}\index{$\cEP(\vp_0)$}\index{$\cEP(\vp)$}
\end{definition}
In light of \eqref{eq: lm log int} (and \eqref{cond C1}), we see that the definition of the expected pressures $\cEP(\vp_0)$, $\cEP(\vp)\in\RR$, are well defined.
Since $\log\lm_\om, \log\lm_{\om,0}\in L^1(m)$, Birkhoff's Ergodic Theorem gives that 
\begin{align}\label{eq: BET exp pres hole}
\cEP(\vp)
=\lim_{n\to\infty}\frac{1}{n}\log\lm_\om^n
=\lim_{n\to\infty}\frac{1}{n}\log\lm_{\sg^{-n}(\om)}^n
\end{align} 
and
\begin{align}\label{eq: BET exp pres closed}
\cEP(\vp_0)
=\lim_{n\to\infty}\frac{1}{n}\log\lm_{\om,0}^n
=\lim_{n\to\infty}\frac{1}{n}\log\lm_{\sg^{-n}(\om),0}^n.
\end{align}
The following lemma, which is the open analogue of Lemma 10.1 of \cite{AFGTV20}, gives an alternate method for calculating the expected pressure.
\begin{lemma}\label{lem: conv of pressure limits}
For $m$-a.e. $\om\in\Om$ we have that 
\begin{align}\label{eq: backward pressure limit}
\lim_{n\to\infty}\norm{\frac{1}{n}\log\cL_{\sg^{-n}(\om)}^n\ind_{\sg^{-n}(\om)} -\frac{1}{n}\log\lm_{\sg^{-n}(\om)}^n}_\infty=0
\end{align}
and 
\begin{align}\label{eq: forward pressure limit}
\lim_{n\to\infty}\norm{\frac{1}{n}\log\cL_{\om}^n\ind_{\om} -\frac{1}{n}\log\lm_{\om}^n}_\infty=0.
\end{align}	
Furthermore, for $m$-a.e. $\om\in\Om$ we have that 
\begin{align*}
\lim_{n\to\infty}\frac{1}{n}\log\inf_{D_{\sg^n(\om),\infty}} \phi_{\sg^n(\om)}
=
\lim_{n\to\infty}\frac{1}{n}\log\norm{\phi_{\sg^n(\om)}}_\infty
=
0.
\end{align*}
\end{lemma}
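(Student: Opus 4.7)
The plan is to derive all three limits from Theorem~\ref{thm: exp convergence of tr op} applied to the test function $f_\om=\ind_\om$. Under our standing hypotheses the number of components of $I_\om$ is log-integrable, so $\ind\in\BVT$; and $\supp(\nu_\om)\sub X_{\om,\infty}\sub I_\om$ gives $\nu_\om(\ind_\om)=1$. Consequently, the theorem yields exponential sup-norm convergence
\begin{align*}
\tilde\cL_\om^n\ind_\om=\phi_{\sg^n(\om)}+O(\kp^n)\qquad\text{and}\qquad \tilde\cL_{\sg^{-n}(\om)}^n\ind_{\sg^{-n}(\om)}=\phi_\om+O(\kp^n),
\end{align*}
equivalently $\cL_\om^n\ind_\om=\lm_\om^n(\phi_{\sg^n(\om)}+r_n)$ and $\cL_{\sg^{-n}(\om)}^n\ind_{\sg^{-n}(\om)}=\lm_{\sg^{-n}(\om)}^n(\phi_\om+r_n')$ with $\|r_n\|_\infty,\|r_n'\|_\infty\leq C(\om)\kp^n$.

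I will first establish the third claim, since it supplies the uniform bounds required for the first two. The upper bound on $\|\phi_{\sg^n(\om)}\|_\infty$ is immediate: Lemma~\ref{lem: BV norm q om growth bounds} gives $\|\phi_{\sg^n(\om)}\|_\BV\leq C(\om,\dl)e^{\dl|n|}$ for every $\dl>0$, while $\Lm_{\sg^n(\om)}(\phi_{\sg^n(\om)})=1$ forces $\|\phi_{\sg^n(\om)}\|_\infty\geq 1$, so that $n^{-1}\log\|\phi_{\sg^n(\om)}\|_\infty\to 0$. The main difficulty is the corresponding lower bound on $\inf_{D_{\sg^n(\om),\infty}}\phi_{\sg^n(\om)}$. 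The plan is as follows: the cones $\sC_{\om,a_*}$ are sup-norm closed (by lower semicontinuity of $\var$ and continuity of $\Lm_\om$ from \eqref{Lm prop2}), and $\phi_\om$ is obtained in Corollary~\ref{cor: exist of unique dens q} as the sup-norm limit of elements of $\sC_{\om,a_*/2}$ via Lemmas~\ref{lem: cone invariance for good om} and~\ref{lem: cone cont for coating blocks}, so $\phi_\om\in\sC_{\om,a_*}$ for $m$-a.e.\ $\om$. On each $\tau\in\Om_F$ (visited with Birkhoff frequency at least $1-\ep$ by \eqref{def v*3}), Lemma~\ref{lem: cone contraction for good om} gives $\Ta_{\sg^{\Sg_\tau}(\tau),a_*}(\tilde\cL_\tau^{\Sg_\tau}\ind_\tau,\phi_{\sg^{\Sg_\tau}(\tau)})\leq\Dl$, and combining this Hilbert-metric comparison with the uniform pointwise lower bound of condition~\eqref{F2} produces a constant $c_*>0$ depending only on $\Dl$, $a_*$, and $C_*$ with $\phi_{\sg^{\Sg_\tau}(\tau)}\geq c_*$ on $D_{\sg^{\Sg_\tau}(\tau),\infty}$. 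The uniform bound $\Sg_\tau\leq S_*$ on $\Om_F$, together with a Birkhoff/pigeonhole interpolation across the bounded gaps between successive returns to $\Om_F$, then propagates this into a tempered lower bound for $\inf_{D_{\sg^n(\om),\infty}}\phi_{\sg^n(\om)}$ along the $\sg$-orbit of $m$-a.e.\ $\om$.

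With the third claim in hand, \eqref{eq: backward pressure limit} and \eqref{eq: forward pressure limit} follow by uniform log-continuity. For the backward limit, on $D_{\om,\infty}$ the function $\phi_\om$ is bounded above by $\|\phi_\om\|_\infty<\infty$ and below by $\inf_{D_{\om,\infty}}\phi_\om>0$ (Proposition~\ref{prop: lower bound for density}), so taking logs in $\tilde\cL_{\sg^{-n}(\om)}^n\ind_{\sg^{-n}(\om)}=\phi_\om+O(\kp^n)$ gives
\begin{align*}
\log\cL_{\sg^{-n}(\om)}^n\ind_{\sg^{-n}(\om)}-\log\lm_{\sg^{-n}(\om)}^n=\log\phi_\om+O(\kp^n)
\end{align*}
uniformly on $D_{\om,\infty}$; dividing by $n$ sends both sides to $0$. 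The forward limit is identical, except that the $\om$-independent upper and lower bounds for $\phi$ are replaced by the $\om$-dependent tempered ones provided by the third claim, which is exactly why that claim must be proved first. The main obstacle throughout is the cone-based tempered lower bound in the third claim, since upgrading the pointwise positivity of $\phi_\om$ to orbit-wise temperedness is the only step that is not a direct consequence of the exponential convergence in Theorem~\ref{thm: exp convergence of tr op}.
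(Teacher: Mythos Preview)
Your overall strategy---derive everything from Theorem~\ref{thm: exp convergence of tr op} applied to the indicator, prove the backward limit first, and reduce the forward limit to the temperedness statement for $\phi_{\sg^n(\om)}$---is correct and is in line with the argument the paper defers to (Lemma~10.1 of \cite{AFGTV20}). Two points deserve attention.

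First, a simplification: there is no need to discuss components of $I_\om$. Since $\cL_\om\ind=\cL_\om\ind_\om$ by definition, take $f_\om\equiv\ind$ (the constant function), which is trivially in $\BVT$ with $\var(f_\om)=0$, and $\nu_\om(\ind)=1$ holds automatically.

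Second, the hardest step---the tempered lower bound on $\inf_{D_{\sg^n(\om),\infty}}\phi_{\sg^n(\om)}$---has two issues as written. Your justification that $\phi_\om\in\sC_{\om,a_*}$ via sup-norm limits of elements of $\sC_{\om,a_*/2}$ is not quite right: in the decomposition of Lemma~\ref{lem: exp conv in C+ cone} the terminal $\hat h(n)$ iterates only land in $\sC_{\om,+}$, not $\sC_{\om,a_*}$. One can repair this (e.g.\ via Proposition~\ref{prop: LY ineq 2} and Lemma~\ref{lem: BV norm q om growth bounds}, which give $\var(\phi_\om)\leq C_\ep(\om)\leq B_*$ on $\Om_1$), but in fact the cone machinery is unnecessary. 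A more direct route: since $\psi(\om):=\inf_{D_{\om,\infty}}\phi_\om$ is measurable and strictly positive $m$-a.e.\ by Proposition~\ref{prop: lower bound for density}, the set $B:=\{\psi\geq c_*\}$ has positive measure for small $c_*>0$. Next, for $x\in D_{\sg(\tau),\infty}$ a pigeonhole over the finite set $T_\tau^{-1}(x)\cap I_\tau$ and the nested family $(D_{\tau,n})_n$ produces a preimage in $D_{\tau,\infty}$, giving the one-step bound
\[
\inf_{D_{\sg(\tau),\infty}}\phi_{\sg(\tau)}\ \geq\ \lm_\tau^{-1}\,\inf g_{\tau,0}\cdot\inf_{D_{\tau,\infty}}\phi_\tau.
\]
Finally, your ``bounded gaps'' claim is incorrect: visits to $B$ need not have bounded gaps. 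What \emph{is} true, and suffices, is that for a.e.\ $\om$ the last visit time $j_n\leq n$ to $B$ satisfies $n-j_n=o(n)$ (this is a direct consequence of Birkhoff and $m(B)>0$), and then Birkhoff applied to the $L^1$ function $\log\lm_\tau-\log\inf g_{\tau,0}\geq 0$ shows the accumulated one-step loss over $[j_n,n)$ is $o(n)$. Combining gives $\log\inf_{D_{\sg^n(\om),\infty}}\phi_{\sg^n(\om)}\geq\log c_*-o(n)$, which is the desired temperedness.
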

As the proof of the previous lemma is exactly the same as the proof of Lemma 10.1 of \cite{AFGTV20} where infima are taken over the appropriate $D_{\om,\infty}$ sets, the proof is left to the reader. Now, in view of the fact that 
\begin{align*}
\cL_\om\ind_\om \leq \cL_{\om,0}\ind,
\end{align*}
Lemma~\ref{lem: conv of pressure limits} and Lemma 10.1 of \cite{AFGTV20}, together with \eqref{eq: BET exp pres hole} and \eqref{eq: BET exp pres closed} imply that 
\begin{align}\label{eq: open pres leq closed pres}
\cEP(\vp)\leq \cEP(\vp_0).
\end{align}
We now define the fiberwise escape rates of a random measure.
\begin{definition}\label{def: escape rate}
Given a random probability measure $\vrho$ on $\cI$, for each $\om\in\Om$, we define the lower and upper fiberwise escape rates respectively by the following:
\begin{align*}
\Ul R(\vrho_\om):=-\limsup_{n\to\infty}\frac{1}{n}\log \vrho_\om(X_{\om,n})
\quad \text{ and } \quad
\ol R(\vrho_\om):=-\liminf_{n\to\infty}\frac{1}{n}\log \vrho_\om(X_{\om,n}).
\end{align*}
If $\Ul R(\vrho_\om)=\ol R(\vrho_\om)$, we say the escape rate exists and denote the common value by $R(\vrho_\om)$. \index{fiberwise escape rate}\index{$\Ul R(\vrho_\om)$}\index{$\ol R(\vrho_\om)$}\index{$R(\vrho_\om)$}
\end{definition}
The previous results allow us to calculate the following escape rates, thus proving Theorem~\ref{main thm: escape rate}.
\begin{proposition}\label{prop: escape rate}
For $m$-a.e. $\om\in\Om$ we have that 
\begin{align*}
R(\nu_{\om,0})=R(\eta_\om)=\cEP(\vp_0)-\cEP(\vp).
\end{align*}
\end{proposition}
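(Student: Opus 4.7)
The plan is to prove both escape rates simultaneously by deriving an exact fiberwise identity for $\eta_\om(X_{\om,n})$ from the eigenfunction equation, then relating $\nu_{\om,0}(X_{\om,n})$ to $\eta_\om(X_{\om,n})$ via the spectral convergence established in Theorem~\ref{thm: exp convergence of tr op}, and finally converting all $\frac{1}{n}\log$ limits using Birkhoff's theorem on $(\Om,m,\sg)$.

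First I will apply Lemma~\ref{lem: useful identity} with $f=\phi_\om$ and $h\equiv 1$ and insert the equivariance $\cL_\om^n\phi_\om=\lm_\om^n\phi_{\sg^n\om}$ from Corollary~\ref{cor: exist of unique dens q}. Writing $c_\om:=\nu_{\om,0}(\ind_\om\phi_\om)$ for the normalization of $\eta$, this yields the clean identity
\begin{align*}
\eta_\om(X_{\om,n})
=\frac{1}{c_\om}\int_{X_{\om,n}}\phi_\om\,d\nu_{\om,0}
=\frac{\lm_\om^n}{\lm_{\om,0}^n}\cdot\frac{c_{\sg^n\om}}{c_\om}.
\end{align*}
Taking $\frac{1}{n}\log$ and invoking the Birkhoff ergodic theorem on $\log\lm_\om$ and $\log\lm_{\om,0}$ (both in $L^1(m)$ by Remark~\ref{rem: log integ lm and rho} and \eqref{cond C1}) converts the first factor to $\cEP(\vp)-\cEP(\vp_0)$. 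To dispatch the second factor, I will show $\log c_\om$ is tempered so that $\tfrac{1}{n}(\log c_{\sg^n\om}-\log c_\om)\to 0$ for $m$-a.e.\ $\om$; the upper bound $c_\om\le\|\phi_\om\|_\infty$ is tempered by Lemma~\ref{lem: BV norm q om growth bounds}, while for the lower bound I use $c_\om\ge(\inf_{D_{\om,\infty}}\phi_\om)\,\nu_{\om,0}(I_\om\cap D_{\om,\infty})$ together with Proposition~\ref{prop: lower bound for density} and the fact that $\eta$ is a random probability measure (so $c_\om>0$ for $m$-a.e.\ $\om$). This yields $R(\eta_\om)=\cEP(\vp_0)-\cEP(\vp)$.

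For the second equality I first use Lemma~\ref{lem: useful identity} with $f=h=\ind$ to rewrite
\begin{align*}
\nu_{\om,0}(X_{\om,n})=\frac{\nu_{\sg^n\om,0}\bigl(\ind_{\sg^n\om}\cL_\om^n\ind_\om\bigr)}{\lm_{\om,0}^n}.
\end{align*}
Then I will apply Lemma~\ref{lem: conv of pressure limits} (equivalently, Theorem~\ref{thm: exp convergence of tr op}) to sandwich $\cL_\om^n\ind_\om$ between $\lm_\om^n e^{\pm n\epsilon}$ on $D_{\sg^n\om,\infty}$. Combined with the temperedness (established in the previous paragraph) of $\nu_{\sg^n\om,0}(I_{\sg^n\om}\cap D_{\sg^n\om,\infty})\ge c_{\sg^n\om}/\|\phi_{\sg^n\om}\|_\infty$, this produces
\begin{align*}
\nu_{\om,0}(X_{\om,n})=\frac{\lm_\om^n}{\lm_{\om,0}^n}\,c_{\sg^n\om}(1+o(1)),
\end{align*}
and a second application of Birkhoff yields $R(\nu_{\om,0})=\cEP(\vp_0)-\cEP(\vp)$. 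Finally, the integral representation of the difference follows from $\int_\Om\log(\lm_{\om,0}/\lm_\om)\,dm=\cEP(\vp_0)-\cEP(\vp)$ by Definition~\ref{def: expected pressure}.

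The main obstacle is the tempered lower bound on $\log c_\om$: the inequality $\eta_\om(X_{\om,n})\le 1$ combined with the identity of Step 1 immediately gives a one-sided control $\frac{1}{n}(\log c_{\sg^n\om}-\log c_\om)\le (\cEP(\vp_0)-\cEP(\vp))+o(1)$, which is harmless because $\cEP(\vp_0)-\cEP(\vp)\ge 0$ by \eqref{eq: open pres leq closed pres}. The delicate direction is ruling out that $c_{\sg^n\om}$ decays faster than any exponential along $\sg$-orbits; my approach is to upgrade the lower bound $c_\om\ge(\inf_{D_{\om,\infty}}\phi_\om)\nu_{\om,0}(I_\om\cap D_{\om,\infty})$ to a tempered estimate by combining Proposition~\ref{prop: D sets stabilize} with the strict positivity from Proposition~\ref{prop: lower bound for density} and the $\BVT$-regularity from Lemma~\ref{lem: BV norm q om growth bounds}. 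With this in hand the rest of the proof becomes routine Birkhoff calculation.
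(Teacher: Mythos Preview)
Your approach is correct and closely parallels the paper's, with only a difference in ordering and presentation. The paper treats $\nu_{\om,0}$ first: it writes
\[
\nu_{\om,0}(X_{\om,n-1})=\frac{\lm_\om^n}{\lm_{\om,0}^n}\Bigl(\nu_{\sg^n\om,0}(\phi_{\sg^n\om})-\nu_{\sg^n\om,0}\bigl(\~\cL_\om^n\ind_\om-\phi_{\sg^n\om}\bigr)\Bigr),
\]
invokes Theorem~\ref{thm: exp convergence of tr op} to make the second term exponentially small, and then squeezes $\nu_{\sg^n\om,0}(\phi_{\sg^n\om})$ between $\inf_{D_{\sg^n\om,\infty}}\phi_{\sg^n\om}$ and $\|\phi_{\sg^n\om}\|_\infty$, both of which are tempered by Lemma~\ref{lem: conv of pressure limits}. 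The $\eta_\om$ case is then dismissed with ``follows similarly.'' Your exact identity $\eta_\om(X_{\om,n})=\tfrac{\lm_\om^n}{\lm_{\om,0}^n}\tfrac{c_{\sg^n\om}}{c_\om}$ is a clean alternative starting point for $\eta$ (it is essentially the remark after Lemma~\ref{lem: raccim eta}), and your reuse of the resulting temperedness of $c_\om$ to control $\nu_{\sg^n\om,0}(I_{\sg^n\om}\cap D_{\sg^n\om,\infty})$ in the $\nu_{\om,0}$ computation is a legitimate, if slightly more circuitous, substitute for the paper's direct spectral decomposition. The ``main obstacle'' you flag---a tempered lower bound on $c_\om$ or equivalently on $\nu_{\sg^n\om,0}(\phi_{\sg^n\om})$---is exactly the point the paper handles via Lemma~\ref{lem: conv of pressure limits}, so you have identified the right bottleneck and the right tool.
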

\begin{proof}
We begin by noting that 
\begin{align*}
\nu_{\om,0}(X_{\om,n-1})
&=
\lt(\lm_{\om,0}^n\rt)^{-1}\nu_{\sg^n(\om),0}\lt(\cL_{\om,0}^n(\hat X_{\om,n-1})\rt)
=\frac{\lm_\om^n}{\lm_{\om,0}^n}\nu_{\sg^n(\om),0}\lt(\~\cL_\om^n\ind_\om\rt)
\\
&=\frac{\lm_\om^n}{\lm_{\om,0}^n}\lt(\nu_{\sg^n(\om),0}(\phi_{\sg^n(\om)})-\nu_{\sg^n(\om),0}\lt(\~\cL_\om^n\ind_\om -\phi_{\sg^n(\om)}\rt)\rt).
\end{align*}
Using Theorem~\ref{thm: exp convergence of tr op} gives that
\begin{align*}
-R(\nu_{\om,0})
&=
\lim_{n\to\infty}\frac{1}{n}\log\frac{\lm_\om^n}{\lm_{\om,0}^n}
+
\lim_{n\to\infty}\frac{1}{n}\log\nu_{\sg^n(\om),0}(\phi_{\sg^n(\om)}).
\end{align*}
Thus, the temperedness of $\inf_{D_{\om,\infty}} \phi_\om$ and $\norm{\phi_\om}_\infty$, coming from Lemma~\ref{lem: conv of pressure limits}, imply that 
\begin{align*}
0=
\lim_{n\to\infty}\frac{1}{n}\log\inf_{D_{\sg^n(\om),\infty}} \phi_{\sg^n(\om)}
\leq 
\lim_{n\to\infty}\frac{1}{n}\log\nu_{\sg^n(\om),0}(\phi_{\sg^n(\om)})
\leq 
\lim_{n\to\infty}\frac{1}{n}\log\norm{\phi_{\sg^n(\om)}}_\infty
=0,
\end{align*}
which, when combined with \eqref{eq: BET exp pres hole} and \eqref{eq: BET exp pres closed}, completes the proof of the first claim. As the second equality follows similarly to the first, we are done. 	
\end{proof}
\begin{remark}
If there exists a $T$-invariant measure $\mu_0$ on the closed system which is absolutely continuous with respect to $\nu_0$ then the proof of Proposition~\ref{prop: escape rate}, with minor adjustments, also shows that for $m$-a.e. $\om\in\Om$ we have that 
\begin{align*}
R(\nu_{\om,0})=R(\mu_{\om,0})=\cEP(\vp_0)-\cEP(\vp).
\end{align*}
\end{remark}

\section{Equilibrium states}\label{sec: eq states}
In this short section we show that the invariant measures $\mu=\set{\mu_\om}_{\om\in\Om}$ constructed in Section \ref{sec: conf and inv meas}
are in fact the unique relative equilibrium states for the random open system.
\begin{definition}
Let $\cP_{T,m}^H(I)$\index{$\cP_{T,m}^H(I)$} denote the collection of $T$-invariant random probability measures $\zt$ on $\Om\times I$, 
such that the disintegration $\{\zt_\om\}$ satisfy $\zt_\om(H_\om)=0$ for $m$ a.e. $\om\in\Om$. 
We say that a measure $\zt\in\cP_{T,m}^H(\Om)$ is a \textit{relative equilibrium state}\index{relative equilibrium state} for the random open system (open) potential $\vp$ if
$$\cEP(\vp)=h_\zt(T)+\int_{\Om\times I} \vp \,d\zt,$$
where $h_\zt(T)$ denotes the entropy of $T$ with respect to $\zt$.
\end{definition}

The proof of the next result follows similarly to the proof of Theorem 2.23 in \cite{AFGTV20} (see also Remark 2.24, Lemma 12.2 and Lemma 12.3).
\begin{theorem}\label{thm:eqstates}
The random measure $\mu\in\cP_{T,m}^H(\Om)$ with disintegration $\{\mu_\om\}_{\om\in\Om}$ produced in \eqref{eq: def of mu_om} is the unique relative equilibrium state for the potential $\vp$. It satisfies the following variational principle:
\begin{align*}
\cEP(\vp)
= h_\mu(T)+\int_{\Om \times I} \vp \,d\mu
=
\sup_{\zt\in\cP_{T,m}^H(\Om\times I)} \lt(h_\zt(T)+\int_{\Om \times I} \vp \,d\zt\rt).
\end{align*}
Furthermore, for each $\zt\in\cP_{T,m}^H(\Om)$ different from $\mu$ we have that 
\begin{align*}
h_\zt(T)+\int_{\Om \times I} \vp \,d\zt
<
h_\mu(T)+\int_{\Om \times I} \vp \,d\mu.
\end{align*}
\end{theorem}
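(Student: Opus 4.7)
The plan is to follow the three-step strategy indicated in the paper (adapting Theorem 2.23 of \cite{AFGTV20}): first verify that $\mu$ realizes the expected pressure, then establish the upper variational inequality for arbitrary $\zt\in\cP_{T,m}^H$, and finally upgrade this inequality to a strict one whenever $\zt\neq\mu$.

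For the first step, I would compute the fiberwise Jacobian of $T_\om$ with respect to $\mu_\om=\phi_\om\nu_\om$. Because $\nu$ is conformal and, by Remark~\ref{rem: conformal meas prop}, satisfies $\nu_{\sg\om}(T_\om(A))=\lm_\om\int_A g_\om^{-1}\, d\nu_\om$ for any $A$ on which $T_\om$ is injective, the $\nu$-Jacobian of $T_\om$ is $\lm_\om g_\om^{-1}=\lm_\om e^{-\vp_\om}$. Writing $d\mu_\om=\phi_\om\, d\nu_\om$ then gives the $\mu$-Jacobian $J_{\mu_\om}T_\om=\lm_\om e^{-\vp_\om}\phi_\om/(\phi_{\sg\om}\circ T_\om)$. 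Since the partition $\cZ_\om$ of intervals of monotonicity is generating by \eqref{GP} and $T_\om$ is injective on each of its atoms, the Rokhlin formula for the skew-product $T$ with respect to $\mu$ yields
\begin{equation*}
h_\mu(T)=\int_{\Om\times I}\log J_{\mu_\om}T_\om\, d\mu
=\cEP(\vp)-\int_{\Om\times I}\vp\, d\mu+\int_{\Om\times I}\bigl(\log\phi_\om-\log\phi_{\sg\om}\circ T_\om\bigr)\, d\mu,
\end{equation*}
and the last integral vanishes by $T$-invariance of $\mu$ together with $\log\phi\in L^1(\mu)$ (using temperedness from Lemma~\ref{lem: BV norm q om growth bounds} and positivity from Proposition~\ref{prop: lower bound for density}), giving $\cEP(\vp)=h_\mu(T)+\int\vp\, d\mu$.

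For the second step, fix an arbitrary $\zt\in\cP_{T,m}^H(\Om\times I)$ (so $\zt_\om$ is supported in $X_{\om,\infty}$ for $m$-a.e.\ $\om$). Using the generating partition $\cZ_\om^{(n)}$ and the conformal identity
\begin{equation*}
\nu_{\sg^n\om}(T_\om^n(Z))=\lm_\om^n\int_Z e^{-S_{n,T}(\vp_\om)}\, d\nu_\om,\quad Z\in\cZ_\om^{(n)},
\end{equation*}
one obtains the Gibbs-type estimate $-\log\zt_\om(Z)\leq -\log\nu_\om(Z)+\log(\zt_\om(Z)/\nu_\om(Z))^{-1}$, and then via the usual convexity (Jensen) argument for $\sum_Z\zt_\om(Z)\log\bigl(\nu_\om(Z)/\zt_\om(Z)\bigr)\leq 0$ one arrives at
\begin{equation*}
H_{\zt_\om}(\cZ_\om^{(n)})+\int S_{n,T}(\vp_\om)\, d\zt_\om\leq n\log\lm_\om^{1/1}+ \text{tempered error terms}.
\end{equation*}
Dividing by $n$, integrating in $\om$, letting $n\to\infty$, and applying the Birkhoff ergodic theorem to $\log\lm_\om\in L^1(m)$ produces $h_\zt(T)+\int\vp\, d\zt\leq\cEP(\vp)$. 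The main technical obstacle here is handling the fact that atoms of $\cZ_\om^{(n)}$ may have zero $\nu$-mass (the ``bad'' intervals from Section~\ref{sec: tr op and Lm}), but since $\zt$ is concentrated on $\cX_\infty$ one may restrict the partition count to $\cZ_{\om,g}^{(n)}$, whose slow growth is guaranteed by \eqref{cond Q1}--\eqref{cond Q3}.

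For the third step (uniqueness with strict inequality), suppose $\zt$ achieves the supremum. The equality case in the Jensen inequality above forces $\zt_\om\ll\nu_\om$ with density $\psi_\om$ satisfying, $\nu_{\sg\om}$-a.e., the eigenfunction equation $\cL_\om\psi_\om=\lm_\om\psi_{\sg\om}$, i.e.\ $\~\cL_\om\psi_\om=\psi_{\sg\om}$. By the uniqueness part of Proposition~\ref{prop: uniqueness of nu and mu} (once measurability and integrability of $\psi$ are verified from the entropy-equality conditions together with Proposition~\ref{prop: lower bound for density}), $\psi=\phi$ modulo $\nu$, so $\zt=\mu$. The hardest part of the whole argument is this last step, because $\psi_\om$ produced from an abstract equilibrium state need not a priori belong to $\BVT$, so one cannot directly invoke the uniqueness of the fixed point of the normalized operator; the remedy is to first bootstrap regularity using the exponential convergence in Theorem~\ref{thm: exp convergence of tr op} applied to the sequence $\~\cL_{\sg^{-n}\om}^n\psi_{\sg^{-n}\om}=\psi_\om$, which forces $\psi_\om$ to coincide $\nu_\om$-a.e.\ with the $\BV$-regular density $\phi_\om$, and hence $\zt=\mu$.
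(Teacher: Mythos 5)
Your Step 1 is essentially the standard computation (note that the $\mu$-Jacobian is $\lm_\om e^{-\vp_\om}\,(\phi_{\sg\om}\circ T_\om)/\phi_\om$, not its reciprocal, although the inverted factor is harmless since the correction term cancels by invariance). The genuine gap is in Steps 2 and 3. Your upper bound rests on a Gibbs-type comparison of $\zt_\om(Z)$ with $\nu_\om(Z)$ over atoms $Z\in\cZ_\om^{(n)}$. To convert $-\log\nu_\om(Z)$ into $-\sup_Z S_{n,T}(\vp_\om)+\log\lm_\om^n$ plus a tempered error you need a bound of the form $\nu_\om(Z)\geq K_\om^{-1}(\lm_\om^n)^{-1}e^{\sup_Z S_{n,T}(\vp_\om)}\,\nu_{\sg^n\om}(T_\om^n(Z))$ together with a lower bound on $\nu_{\sg^n\om}(T_\om^n(Z))$; this is precisely the content of Lemma~\ref{lem: bf lem 2}, which requires bounded distortion and large images with respect to $H$ --- hypotheses of Theorem~\ref{main thm: Bowens formula}, not of Theorem~\ref{thm:eqstates}. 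Worse, an invariant $\zt\in\cP_{T,m}^H$ supported in $\cX_\infty$ may charge atoms of $\cZ_{\om,b}^{(n)}$: the definition \eqref{eq: Z_b} only says $\Lm_\om(\ind_Z)=0$, i.e. $\nu_\om(Z)=0$, not $Z\cap X_{\om,\infty}=\emptyset$. For such $\zt$ the comparison with $\nu_\om$ degenerates, and ``restricting the partition count to $\cZ_{\om,g}^{(n)}$'' is not legitimate, because both $H_{\zt_\om}(\cZ_\om^{(n)})$ and $\int S_{n,T}(\vp_\om)\,d\zt_\om$ still receive contributions from the discarded atoms, while the theorem asserts the inequality (indeed strict inequality) for \emph{every} such $\zt$.

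The same issue undermines Step 3: equality in your Jensen inequality over forward partition atoms does not yield $\zt_\om\ll\nu_\om$, much less that the density solves $\cL_\om\psi_\om=\lm_\om\psi_{\sg\om}$; equality there would rather force proportionality of $\zt_\om$ and $\nu_\om$ on atoms, which is not what occurs (the equilibrium state is $\mu$, not $\nu$). The route the paper takes (via Theorem 2.23 and Lemmas 12.2--12.3 of \cite{AFGTV20}) avoids Gibbs estimates entirely: one works with the fully normalized operator $\sL_\om$ of \eqref{eq: def fully norm tr op}, i.e. the normalized weight $\tilde g_\om=g_\om\phi_\om/(\lm_\om\,\phi_{\sg\om}\circ T_\om)$ with $\sL_\om\ind_\om=\ind_{\sg(\om)}$, proves a Ledrappier--Walters-type inequality $h_\zt(T)\leq-\int\log\tilde g\,d\zt$ by comparing the conditional information of $\zt$ with respect to $T^{-1}\sB$ against $-\log\tilde g$ (no distortion or large-image hypotheses are needed, and $\nu$-null atoms cause no trouble), and then rearranges using invariance to obtain $h_\zt(T)+\int\vp\,d\zt\leq\cEP(\vp)$. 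In the equality case the conditional expectations of $\zt$ must be given by $\tilde g$, i.e. $\sL_\om^*\zt_{\sg(\om)}=\zt_\om$, and uniqueness then follows by applying the exponential convergence of Theorem~\ref{thm: exp convergence of tr op} to get $\zt_\om(f)=\lim_{n\to\infty}\zt_{\sg^n(\om)}(\sL_\om^nf)=\mu_\om(f)$ for all $f\in\BV(I)$; no a priori regularity of an abstract density is needed, so the bootstrap you describe is both unjustified as stated and unnecessary once the argument is set up this way. As written, your Steps 2--3 do not prove the theorem under its stated hypotheses.
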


\section{Bowen's formula}\label{sec: bowen}
This section is devoted to proving a formula for the Hausdorff dimension of the survivor set in terms of the expected pressure function, which was first proven by Bowen in \cite{bowen_hausdorff_1979} in the setting quasi-Fuchsian groups. 
In this section we will consider geometric potentials of the form $\vp_{0,t}(\om,x)=-t\log |T_\om'|(x)$ ($t\in[0,1]$) for maps $T_\om$ which are expanding on average.
We denote the expected pressure of $\vp_{0,t}$ by $\cEP_0(t)$ and the expected pressure of the open potential $\vp_t$ by $\cEP(t)$. In the case that $t=1$, the fiberwise closed conformal measures $\nu_{\om,0,1}$ are equal to Lebesgue measure and $\lm_{\om,0,1}=1$. Furthermore, we note that for any $t\geq0$ we have that 
\begin{align}\label{eq: g_t in terms of g_1}
g_{\om,0,t}^{(n)}=\lt(g_{\om,0,1}^{(n)}\rt)^t=\frac{1}{|(T_\om^n)'|^t}. 
\end{align} 

\begin{definition}\label{def: bdd dist}
We will say that the weight function $g_{\om,0}$ has the \textit{Bounded Distortion Property} if for $m$-a.e. $\om\in\Om$ there exists $K_\om\geq 1$ such that for all $n\in\NN$, all $Z\in\cZ_{\om}^{(n)}$, and all $x,y\in Z$ we have that 
\begin{align*}
\frac{g_{\om,0}^{(n)}(x)}{g_{\om,0}^{(n)}(y)}\leq K_\om.
\end{align*}
\end{definition}
We now adapt the following definitions from \cite{LMD} to the random setting.
\begin{definition}\label{def: large images}
We will say that the map $T$ has \textit{large images} if for $m$-a.e. $\om\in\Om$ we have 
\begin{align*}
\inf_{n\in\NN}\inf_{Z\in\cZ_\om^{(n)}}\nu_{\sg^n(\om),0}\lt(T_\om^n(Z)\rt)>0.
\end{align*}
$T$ is said to have \textit{large images with respect to $H$} if for $m$-a.e. $\om\in\Om$, each $n\in\NN$, and each $Z\in\cZ_\om^{(n)}$ with $Z\cap X_{\om,\infty}\neq\emptyset$ we have 
\begin{align*}
T_\om^n(Z\cap X_{\om,n-1})\bus X_{\sg^n(\om),\infty}.
\end{align*}
\end{definition}
\begin{remark}\label{rem: full support}
If $T$ has large images with respect to $H$ then it follows from Remark~\ref{rem: conformal meas prop} that $\supp(\nu_{\om,t})=X_{\om,\infty}$ for any $t\in[0,1]$.
\end{remark}
We now prove a formula for the Hausdorff dimension of the surviving set, \`a la Bowen, proving Theorem~\ref{main thm: Bowens formula}.
\begin{theorem}\label{thm: Bowen's Formula}
Suppose that $\int_\Om\log\inf|T_\om'|\ dm(\om)>0$ and that $g_0=1/|T'|$ satisfies the bounded distortion property, then 
there exists a unique $h\in [0,1]$ such that $\cEP(t)>0$ for all $0\leq t<h$ and $\cEP(t)<0$ for all $h<t\leq 1$.
Furthermore, if $T$ has large images and large images with respect to $H$, then for $m$-a.e. $\om\in\Om$
\begin{align*}
\HD(X_{\om,\infty})=h.		
\end{align*}
\end{theorem}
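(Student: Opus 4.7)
The proof naturally splits into (a) existence and uniqueness of $h$ via an analysis of the function $t \mapsto \cEP(t)$, and (b) Bowen-style upper and lower bounds on $\HD(X_{\om,\infty})$. For (a) I will show that $\cEP$ is continuous and strictly decreasing on $[0,1]$ with $\cEP(0)>0\geq\cEP(1)$, after which strict monotonicity and the intermediate value theorem give the unique $h$. Continuity follows from the fact that for each $\om$ the map $t\mapsto\log\lm_{\om,t}$ is continuous (in fact convex, by H\"older applied to $\cL_{\om,t}^n$ with $t=\al s+(1-\al)u$), combined with the $t$-uniform $L^1(m)$ bound on $|\log\lm_{\om,t}|$ coming from \eqref{eq: rho leq lm leq a rho} and \eqref{eq: log int open weight and tr op}, via dominated convergence. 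Strict monotonicity follows from $\int\log\inf|T'_\om|\,dm>0$ together with the pressure-derivative identity $\tfrac{d}{dt}\cEP(t)=-\int_\Om\int_I\log|T'_\om|\,d\mu_{\om,t}\,dm<0$, using the equilibrium states produced in Theorem~\ref{thm:eqstates}. Finally $\cEP(1)\leq\cEP_0(1)=0$ by \eqref{eq: open pres leq closed pres} (since at $t=1$ the closed conformal measure is Lebesgue and $\lm_{\om,0,1}=1$), and $\cEP(0)>0$ follows from the large-images-with-respect-to-$H$ assumption together with Proposition~\ref{prop surv nonemp}, which force exponential growth on average of the number of surviving cylinders.

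For the upper bound, fix any $t>h$, so that $\cEP(t)<0$. For each $n$ cover $X_{\om,\infty}$ by the cylinders $Z\in\cZ_\om^{(n)}$ that meet the survivor set. Bounded distortion combined with the mean value theorem yields, for any $y_Z\in Z$,
\[
|Z|^t \leq K_\om\,|T_\om^n(Z)|^t\,g_{\om,0,t}^{(n)}(y_Z) \leq K_\om\,g_{\om,0,t}^{(n)}(y_Z).
\]
Using large images with respect to $H$, I fix an arbitrary $x_0\in X_{\sg^n(\om),\infty}$ and take $y_Z\in Z\cap X_{\om,n-1}$ to be the unique preimage of $x_0$ inside $Z$; summing gives
\[
\sum_{Z\cap X_{\om,\infty}\neq\emptyset}|Z|^t \leq K_\om\,\cL_{\om,t}^n\ind_\om(x_0),
\]
and the right-hand side is $e^{n\cEP(t)+o(n)}\to 0$ by Lemma~\ref{lem: conv of pressure limits}. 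Since $\max_Z|Z|\to 0$ by the expansion assumption, this shows the $t$-Hausdorff measure of $X_{\om,\infty}$ vanishes, hence $\HD(X_{\om,\infty})\leq t$; letting $t\downarrow h$ gives the upper bound.

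For the lower bound I apply the mass distribution principle to $\nu_{\om,h}$, which by Remark~\ref{rem: full support} is fully supported on $X_{\om,\infty}$. By Remark~\ref{rem: conformal meas prop} and bounded distortion,
\[
\nu_{\om,h}(Z) \asymp \frac{\nu_{\sg^n(\om),h}(T_\om^n(Z))}{\lm_{\om,h}^n\,|T_\om^n(Z)|^h}\,|Z|^h \leq \frac{C_\om}{\lm_{\om,h}^n}\,|Z|^h,
\]
where the last inequality uses the large-images assumption for the potential $\vp_{0,h}$ to bound $|T_\om^n(Z)|$ below. Since $\cEP(h)=0$, Birkhoff's theorem gives $\lm_{\om,h}^n=e^{o(n)}$, so $\nu_{\om,h}(Z)\leq C_\ep(\om)e^{\ep n}|Z|^h$ for every $\ep>0$. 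A ball $B(x,r)$ with $x\in X_{\om,\infty}$ meets $O_\om(1)$ cylinders of $\cZ_\om^{(n)}$ for $n$ the smallest integer with $\max_Z|Z|\leq r$, which by the expansion hypothesis satisfies $n\leq C_\om|\log r|$. Combining, $\nu_{\om,h}(B(x,r))\leq C_\ep'(\om)\,r^{h-\ep'}$, and the mass distribution principle yields $\HD(X_{\om,\infty})\geq h-\ep'$; sending $\ep'\downarrow 0$ completes the lower bound. The main obstacle I anticipate is controlling the $\om$-dependent constants $K_\om$, the large-image lower bound, and $\lm_{\om,h}^n$; none is uniform in $\om$, but log-integrability (\eqref{eq: lm log int}, bounded distortion, and Birkhoff) forces each to be tempered, which is exactly enough to pass to the almost-sure identity $\HD(X_{\om,\infty})=h$.
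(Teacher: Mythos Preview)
Your overall architecture matches the paper's: monotonicity of $t\mapsto\cEP(t)$, then an upper bound via a cylinder cover and a lower bound via the mass distribution principle. The upper bound is essentially the same as the paper's (the paper sums to $K_\om^{2t}\lm_{\om,t}^n$ via the conformal measure rather than to $\cL_{\om,t}^n\ind_\om(x_0)$, but this is cosmetic). There are, however, two points worth flagging.

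\emph{Part (a).} The paper obtains strict monotonicity directly from the elementary inequality $\cL_{\om,t}^n\ind_\om\le \|g_{\om,1}^{(n)}\|_\infty^{t-s}\cL_{\om,s}^n\ind_\om$ together with the hypothesis $\int\log\inf|T_\om'|\,dm>0$; no continuity, convexity, or derivative formula is needed. Your route via $\frac{d}{dt}\cEP(t)=-\int\log|T_\om'|\,d\mu_t$ is correct in spirit but relies on differentiability of the random open pressure, which is not developed in the paper and would itself require a nontrivial argument. Also, your claim $\cEP(0)>0$ invokes large images with respect to $H$, which is \emph{not} assumed in the first part of the theorem; the paper only uses $\cEP(0)\ge 0$ and $\cEP(1)\le 0$, allowing $h\in\{0,1\}$.

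\emph{Lower bound: a genuine gap.} Your key step is that a ball $B(x,r)$ meets $O_\om(1)$ cylinders of $\cZ_\om^{(n)}$ when $n$ is the smallest integer with $\max_Z|Z|\le r$. This is false in general: once \emph{all} level-$n$ cylinders have diameter at most $r$, a ball of radius $r$ can intersect arbitrarily many of them, since bounded distortion only controls ratios \emph{within} a single cylinder, not the ratio of sizes of adjacent cylinders. The paper circumvents this by going in the opposite direction: for each $y\in B(x,\ep)$ it picks the level $n_{\om}(y)$ at which the cylinder containing $y$ has diameter just \emph{exceeding} $2\ep$; because such cylinders are disjoint and each overlaps the ball, at most two of them can occur, and these two cover $B(x,\ep)$. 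The $\nu_{\om,t}$-measure of each is then estimated via the conformal property and bounded distortion, and temperedness of $\inf g_{\sg^n\om,0}$ absorbs the level discrepancy. A related point: you work with $\nu_{\om,h}$ and invoke $\cEP(h)=0$, which needs continuity of $\cEP$; the paper avoids this by applying Young's criterion with $\nu_{\om,t}$ for each $t<h$ (where $\cEP(t)>0$) and then letting $t\uparrow h$.
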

\begin{proof}
We will prove this theorem in a series of lemmas.
\begin{lemma}\label{lem: bf lem 1}
The function $\cEP(t)$ is strictly decreasing and there exists $h\in[0,1]$ such that $\cEP(t)>0$ for all $0\leq t<h$ and $\cEP(t)<0$ for all $h<t\leq 1$.
\end{lemma}
\begin{proof}
We first note that, using \eqref{eq: g_t in terms of g_1}, for any $n\in\NN$ and $s<t\in[0,1]$ we can write  
\begin{align*}
\cL_{\om,t}^n\ind_\om
\leq 
\norm{g_{\om,1}^{(n)}}_\infty^{t-s}\cL_{\om,s}^n\ind_\om.  
\end{align*}
This immediately implies that 
\begin{align*}
\cEP(t)< \cEP(s)
\end{align*}
since for $m$-a.e. $\om\in\Om$ we have 
\begin{align*}
\lim_{n\to \infty}\frac{1}{n}\log\norm{g_{\om,1}^{(n)}}_\infty<0.
\end{align*}
Now since $\cEP(0)\geq 0$ and $\cEP(1)\leq \cEP_0(1)=0$, there must exist some $h\in[0,1]$ such that for all $s<h<t$ we have 
\begin{align*}
\cEP(t)<0<\cEP(s).
\end{align*} 
\end{proof}
To prove the remaining claim of Theorem~\ref{thm: Bowen's Formula}, we now suppose that $T$ has large images and large images with respect to $H$.
\begin{lemma}\label{lem: bf lem 2}
If $T$ has large images and large images with respect to $H$, and $\nu_{\om,t}(Z)>0$
for all $t\in[0,1]$, all $n\in\NN$, and all $Z\in\cZ_\om^{(n)}$, then for all $x\in Z$ we have 
\begin{align*}
K_\om^{-1}\leq\frac{\lt(g_{\om,0,1}^{(n)}\rt)^t(x)}{\lm_{\om,t}^n\nu_{\om,t}(Z)} \leq K_\om
\quad \text{ and } \quad
K_\om^{-1}\leq\frac{g_{\om,0,1}^{(n)}(x)}{\Leb(Z)} \leq K_\om.
\end{align*}
\end{lemma}
\begin{proof}
In light of Remark~\ref{rem: full support}, $\supp(\nu_{\om,t})=X_{\om,\infty}$, and thus for any $Z\in\cZ_\om^{(n)}$ for any $n\geq 1$, $\nu_{\om,t}(Z)>0$ if and only if $Z\cap X_{\om,\infty}\neq\emptyset$. 
Furthermore, since $T$ has large images with respect to $H$, we have that 
\begin{align}\label{eq: measur cons of LI wrt H}
\nu_{\sg^n(\om),t}(T_\om^n(Z))=1
\end{align}
for any $Z\in\cZ_\om^{(n)}$ with $Z\cap X_{\om,\infty}\neq\emptyset$.
Thus, we may write 
\begin{align}
\nu_{\om,t}(Z)
&=
\int_{X_{\om,\infty}} \ind_Z \,d\nu_{\om,t}
=
\lt(\lm_{\om,t}^n\rt)^{-1}\int_{X_{\sg^n(\om),\infty}}\cL_{\om,t}^n\ind_Z\, d\nu_{\sg^n(\om),t}
\nonumber\\
&=
\lt(\lm_{\om,t}^n\rt)^{-1}\int_{X_{\sg^n(\om),\infty}}\cL_{\om,0,t}^n\left(\ind_Z \hat X_{\om,n-1}\right)\, d\nu_{\sg^n(\om),t}
\nonumber\\
&=
\lt(\lm_{\om,t}^n\rt)^{-1}\int_{T_\om^n(Z)} \lt(\lt(g_{\om,0,1}^{(n)}\rt)^t \hat X_{\om,n-1}\rt)\circ T_{\om,Z}^{-n}\, d\nu_{\sg^n(\om),t}
\nonumber\\
&=
\lt(\lm_{\om,t}^n\rt)^{-1}\int_{T_\om^n(Z)} \lt(g_{\om,0,1}^{(n)}\rt)^t \circ T_{\om,Z}^{-n}\, d\nu_{\sg^n(\om),t}.
\label{eq: LMD 5.3}
\end{align}
The Bounded Distortion Property implies that for $x\in Z$ we have 
\begin{align*}
K_\om^{-1}\nu_{\sg^n(\om),t}(T_\om^n(Z))\lt(g_{\om,0,1}^{(n)}\rt)^t(x)
&\leq 
\int_{T_\om^n(Z)} \lt(g_{\om,0,1}^{(n)}\rt)^t \circ T_{\om,Z}^{-n}\, d\nu_{\sg^n(\om),t}
\\
&\leq 
K_\om\nu_{\sg^n(\om),t}(T_\om^n(Z))\lt(g_{\om,0,1}^{(n)}\rt)^t(x).
\end{align*} 
Thus 
\begin{align*}
K_\om^{-1}\frac{\lt(g_{\om,0,1}^{(n)}\rt)^t(x)}{\lm_{\om,t}^n\nu_{\om,t}(Z)}
\leq
\frac{1}{\nu_{\sg^n(\om),t}(T_\om^n(Z))}
\leq
K_\om\frac{\lt(g_{\om,0,1}^{(n)}\rt)^t(x)}{\lm_{\om,t}^n\nu_{\om,t}(Z)},
\end{align*}
which then implies that 
\begin{align*}
K_\om^{-1}\frac{1}{\nu_{\sg^n(\om),t}(T_\om^n(Z))}
\leq 
\frac{\lt(g_{\om,0,1}^{(n)}\rt)^t(x)}{\lm_{\om,t}^n\nu_{\om,t}(Z)}
\leq
K_\om\frac{1}{\nu_{\sg^n(\om),t}(T_\om^n(Z))}.
\end{align*}
The first claim follows from \eqref{eq: measur cons of LI wrt H}.
The proof of the second claim involving the Lebesgue measure follows similarly noting that  $\nu_{\om,0,1}=\Leb$ and $\lm_{\om,0,1}=1$.
\end{proof}

Let $\ep>0$ and $n\in\NN$ such that $\diam(Z)<\ep$ for all $Z\in\cZ_\om^{(n)}$. Denote 
\begin{align*}
\cF_\om:=\set{Z\in\cZ_\om^{(n)}: Z\cap X_{\om,\infty}\neq\emptyset},
\end{align*}
which is a cover of $X_{\om,\infty}$ by sets of diameter less than $\ep$. Then, letting $x_Z$ be any element of $Z$ and using Lemma~\ref{lem: bf lem 2} twice (first with respect to $\Leb$ and then with respect to $\nu_{\om,t}$), we have 
\begin{align}
\sum_{Z\in\cF_\om}\diam^t(Z)
&\leq 
\sum_{Z\in\cF_\om}\Leb^t(Z)
\leq 
K_\om^t \sum_{Z\in\cF_\om}\lt(g_{\om,0,1}^{(n)}\rt)^t(x_Z)
\nonumber\\
&\leq 
K_\om^{2t}\sum_{Z\in\cF_\om} \lm_{\om,t}^n\nu_{\om,t}(Z)
=
K_\om^{2t} \lm_{\om,t}^n\nu_{\om,t}(X_{\om,\infty})
=
K_\om^{2t} \lm_{\om,t}^n.
\label{eq: HD up bd 1}	
\end{align}
Now, if $t>h$ we have 
\begin{align*}
\lim_{n\to\infty}\frac{1}{n}\log\lm_{\om,t}^n
=
\cEP(t)<0,
\end{align*}
and thus, for $\dl>0$ sufficiently small and all $n\in\NN$ sufficiently large, 
\begin{align*}
\lm_{\om,t}^n\leq e^{-n\dl}.
\end{align*}
Consequently, we see that the right-hand side of \eqref{eq: HD up bd 1} must go to zero, and thus we must have that $\HD(X_{\om,\infty})\leq h$.

For the lower bound we turn to the following result of Young. 
\begin{proposition}[Proposition, \cite{young_dimension_1982}]\label{prop: Young prop}
Let $X$ be a metric space and $Z\sub X$. Assume there exists a probability measure $\mu$ such that $\mu(Z)>0$. For any $x\in Z$ we define 
\begin{align*}
\Ul{d_\mu}(x):=\liminf_{\ep\to 0}\frac{\log \mu(B(x,\ep))}{\log\ep}.
\end{align*}
If $\Ul{d_\mu}(x)\geq d$ for each $x\in Z$, then $\HD(Z)\geq d$.
\end{proposition}
We will use this result to prove a lower bound for the dimension, thus completing the proof of Theorem~\ref{thm: Bowen's Formula}. 
Let $x\in X_{\om,\infty}$, let $\ep>0$, and in light of Lemma~\ref{lem: bf lem 2}, let $n_{\om,0}+1$ be the least positive integer such that there exists $y_0\in B(x,\ep)$ such that 
\begin{align*}
g_{\om,0}^{(n_{\om,0}+1)}(y_0)\leq 2\ep K_{\om}.
\end{align*}
Note that as $\ep\to 0$ we must have that $n_{\om,0}\to\infty$.
So we must have
\begin{align}\label{eq: bf lb eq 1}
g_{\om,0}^{(n_{\om,0})}(y_0)g_{\sg^{n_{\om,0}}(\om),0}(T_\om^{n_{\om,0}}(y_0))=g_{\om,0}^{(n_{\om,0}+1)}(y_0)
\leq
2\ep K_\om.
\end{align}
Thus, using \eqref{eq: bf lb eq 1} and the definition of $n_{\om,0}$ we have that 
\begin{align*}
2\ep K_\om
<
g_{\om,0}^{(n_{\om,0})}(y_0)
\leq 
\frac{2\ep K_\om}{\inf g_{\sg^{n_{\om,0}}(\om),0}}.
\end{align*}
Now let $Z_{0}\in\cZ_\om^{(n_{\om,0})}$ be the partition element containing $y_0$. Then Lemma~\ref{lem: bf lem 2} gives that
\begin{align}\label{eq: bf lb eq 2}
\diam(Z_0)
\leq
K_\om g_{\om,0}^{(n_{\om,0})}(y_0)
\leq 
\frac{2\ep K_\om^2}{\inf g_{\sg^{n_{\om,0}}(\om),0}},	
\end{align}
and 
\begin{align}\label{eq: bf lb eq 3}
\diam(Z_0)
\geq 
K_\om^{-1} g_{\om,0}^{(n_{\om,0})}(y_0)
>2\ep.
\end{align}
Combining \eqref{eq: bf lb eq 2} and \eqref{eq: bf lb eq 3} gives 
\begin{align}\label{eq: bf lb eq 4}
2\ep < \diam(Z_0) \leq \frac{2\ep K_\om^2}{\inf g_{\sg^{n_{\om,0}}(\om),0}}. 
\end{align}
Now, we define 
\begin{align*}
B_{\om,1}:=B(x,\ep)\bs Z_0,
\end{align*}
which may be empty. If $B_{\om,1}\neq\emptyset$, then we let $n_{\om,1}+1$ be the least positive integer such that there exists $y_1\in B_{\om,1}$ such that 
\begin{align*}
g_{\om,0}^{(n_{\om,1}+1)}(y_1)\leq 2\ep K_{\om}.
\end{align*}
Following the same line of reasoning to derive \eqref{eq: bf lb eq 4}, we see that  
\begin{align}\label{eq: bf lb eq 5}
2\ep
<
\diam(Z_1)
\leq
\frac{2\ep K_\om^2}{\inf g_{\sg^{n_{\om,1}}(\om),0}},
\end{align}  
where $Z_1\in\cZ_\om^{(n_{\om,1})}$ is the partition element containing $y_1$. Note that by definition we have that $n_{\om,1}\geq n_{\om,0}$ and $Z_0\cap Z_1=\emptyset$. This immediately implies that 
$$
B(x,\ep)\sub Z_0\cup Z_1,
$$
as otherwise using the same construction we could find some $y_2\in B_{\om,1}\bs Z_1$, some $n_{\om,2}\geq n_{\om,1}$ and a partition element $Z_2\in\cZ_\om^{(n_{\om,2})}$ containing $y_2$ with diameter greater than $2\ep$. But this would produce three disjoint intervals each with diameter greater than $2\ep$ all of which intersect $B(x,\ep)$, which would obviously be a contradiction.  

Now, using \eqref{eq: LMD 5.3} and Lemma~\ref{lem: bf lem 2} gives that 
\begin{align*}
\nu_{\om,t}(Z_j)
=
\lt(\lm_{\om,t}^{n_{\om,j}}\rt)^{-1}\int_{T_\om^{n_{\om,j}}(Z)} \lt(g_{\om,0,1}^{(n_{\om,j})}\rt)^t \circ T_{\om,Z}^{-n_{\om,j}}\, d\nu_{\sg^{n_{\om,j}}(\om),t}
\leq 
K_\om^t\lt(\lm_{\om,t}^{n_{\om,j}}\rt)^{-1}\diam^t(Z_j)
\end{align*}
for $j\in\set{0,1}$.
Using this we see that 
\begin{align}
&\log\nu_{\om,t}(B(x,\ep))
\leq 
\log\lt(\nu_{\om,t}(Z_0)+\nu_{\om,t}(Z_1)\rt)
\nonumber\\
&\,\,
\leq 
t\log K_\om
+
\log\lt(\lt(\lm_{\om,t}^{n_{\om,0}}\rt)^{-1}\diam^t(Z_0)
+
\lt(\lm_{\om,t}^{n_{\om,1}}\rt)^{-1}\diam^t(Z_1)\rt)
\nonumber\\
&\,\,
\leq
t\log K_\om
+
\log\lt(\lt(\lm_{\om,t}^{n_{\om,0}}\rt)^{-1}\left(\frac{2\ep K_\om^2}{\inf g_{\sg^{n_{\om,0}}(\om),0}}\right)^t
+
\lt(\lm_{\om,t}^{n_{\om,1}}\rt)^{-1}\left(\frac{2\ep K_\om^2}{\inf g_{\sg^{n_{\om,1}}(\om),0}}\right)^t\rt)
\nonumber\\
&\,\,
=
t\log K_\om
+
t\log 2\ep K_\om^2
+
\log\lt(\lt(\lm_{\om,t}^{n_{\om,0}}\rt)^{-1}\left(\inf g_{\sg^{n_{\om,0}}(\om),0}\right)^{-t}
+
\lt(\lm_{\om,t}^{n_{\om,1}}\rt)^{-1}\left(\inf g_{\sg^{n_{\om,1}}(\om),0}\right)^{-t}\rt).		
\label{eq: bowen lb eq 0}
\end{align}
Now since $\log\inf g_{\om,0}\in L^1(m)$, $\inf g_{\om,0}$ is tempered and thus for each $\dl>0$ and all $n\in\NN$ sufficiently large we have that 
\begin{align}\label{eq: bowen lb eq 1}
e^{-nt\dl}\leq \inf \lt(g_{\sg^n(\om),0}\rt)^t.
\end{align}
From \eqref{eq: BET exp pres hole} we get that there for all $n\in\NN$ sufficiently large 
\begin{align}\label{eq: bowen lb eq 2}
\lm_{\om,t}^n\geq e^{n(\cEP(t)-\dl)}.
\end{align}
Thus combining \eqref{eq: bowen lb eq 1} and \eqref{eq: bowen lb eq 2} with \eqref{eq: bowen lb eq 0} gives 
\begin{align}
\log\nu_{\om,t}(B(x,\ep))
&\leq 
t\log K_\om
+
t\log 2\ep K_\om^2
+
\log\lt(e^{n_{\om,0}\dl(t+1)-n_{\om,0}\cEP(t)}+e^{n_{\om,1}\dl(t+1)-n_{\om,1}\cEP(t)}\rt)
\nonumber\\
&\leq 
t\log K_\om
+
t\log 2\ep K_\om^2
+
\log\lt(e^{2n_{\om,0}\dl-n_{\om,0}\cEP(t)}+e^{2n_{\om,1}\dl-n_{\om,1}\cEP(t)}\rt),
\label{eq: bowen lb eq 3}
\end{align}
where we have used the fact that $t\in[0,1]$.
Then for $\dl>0$ sufficiently small and $n_{\om,0}$ and $n_{\om,1}$ sufficiently large (which requires $\ep>0$ to be sufficiently small) we have that 
\begin{align}\label{eq: bowen lb eq 3*}
\log\lt(e^{2n_{\om,0}\dl-n_{\om,0}\cEP(t)}+e^{2n_{\om,1}\dl-n_{\om,1}\cEP(t)}\rt)<0,
\end{align} 
since for all $t<h$ we have that $\cEP(t)>0$.
Dividing both sides of \eqref{eq: bowen lb eq 3} by $\log\ep<0$ and using \eqref{eq: bowen lb eq 3*} yields
\begin{align}
\frac{\log\nu_{\om,t}(B(x,\ep))}{\log\ep}
&\geq 
t\frac{\log K_\om}{\log\ep}
+
t\frac{\log 2\ep K_\om^2}{\log\ep}
+
\frac{\log\lt(e^{2n_{\om,0}\dl-n_{\om,0}\cEP(t)}+e^{2n_{\om,1}\dl-n_{\om,1}\cEP(t)}\rt)}{\log\ep}
\nonumber\\
&\geq
t\frac{\log K_\om}{\log\ep}
+
t\frac{\log 2 K_\om^2}{\log\ep}
+t.	
\label{eq: bowen lb eq 4}
\end{align}
Taking a liminf of \eqref{eq: bowen lb eq 4} as $\ep$ goes to $0$ gives that $\Ul{d_{\nu_{\om,t}}}(x)\geq t$ for all $x\in X_{\om,\infty}$. As this holds for all $t<h$, we must in fact have that $\Ul{d_{\nu_{\om,t}}}(x)\geq h$. In light of Proposition~\ref{prop: Young prop}, we have proven Theorem~\ref{thm: Bowen's Formula}.
\end{proof}

\section{Examples}\label{sec: examples}
In this section we present several examples of our general theory of open random systems.
In particular, we show that our results hold for a large class of random $\bt$-transformations with random holes which have uniform covering times as well as a large class of random Lasota-Yorke maps with random holes. However, we note that in principle any of the finitely branched classes of maps treated in \cite{AFGTV20} will satisfy our assumptions given a suitable choice of hole. This includes random systems where we allow non-uniformly expanding maps, or even maps with contracting branches to appear with positive probability. We also note that the examples we present allow for both random maps and random hole, which, to the authors' knowledge, has not appeared in literature until now. 
Before presenting our examples, we first give alternate hypotheses (to our assumptions \eqref{cond Q1}-\eqref{cond Q3}) that are more restrictive but simpler to check. 

We begin by recalling the definitions of the various partitions constructed in Section~\ref{sec: tr op and Lm} which are used in producing our main Lasota-Yorke inequality (Lemma~\ref{ly ineq}) and are implicitly a part of our main assumptions \eqref{cond Q1}-\eqref{cond Q3}.
Recall that $\cZ_\om^{(n)}$ denotes the partition of monotonicity of $T_\om^n$, and $\sA_\om^{(n)}$ denotes the collection of all finite partitions of $I$ such that
\begin{align}\label{eq: def A partition 2}
\var_{A_i}(g_{\om}^{(n)})\leq 2\norm{g_{\om}^{(n)}}_{\infty}
\end{align}
for each $\cA=\set{A_i}\in\sA_{\om}^{(n)}$.
Given $\cA\in\sA_\om^{(n)}$, $\widehat\cZ_\om^{(n)}(\cA)$ denotes the coarsest partition amongst all those finer than $\cA$ and $\cZ_\om^{(n)}$ such that all elements of $\widehat\cZ_\om^{(n)}(\cA)$ are either disjoint from $X_{\om,n-1}$ or contained in $X_{\om,n-1}$. From $\widehat\cZ_\om^{(n)}$ we recall the subcollections $\cZ_{\om,*}^{(n)}$, $\cZ_{\om,b}^{(n)}$, and $\cZ_{\om,g}^{(n)}$ defined in \eqref{eq: Z_*}-\eqref{eq: Z_g}.

For the purposes of showing that examples easily satisfy our conditions, we take the more general approach to partitions found in Section~2.2 of \cite{AFGTV20}, and instead now set, for $\hat\al\geq 0$,
$\ol\sA_\om^{(n)}(\hat\al)$ to be the collection of all finite partitions of $I$ such that
\begin{align}\label{eq: def A partition 3}
\var_{A_i}(g_{\om}^{(n)})\leq \hat\al\norm{g_{\om}^{(n)}}_{\infty}
\end{align}
for each $\cA=\set{A_i}\in\ol\sA_{\om}^{(n)}(\hat\al)$. Note that for some $\hat\al\leq 1$ the collection $\ol\sA_\om^{(n)}(\hat\al)$ may be empty, but such partitions always exist for any $\hat\al>1$, and may exist even with $\hat\al=0$ if the weight function $g_\om$ is constant; see \cite{rychlik_bounded_1983} Lemma~6. 
We now suppose that we can find $\hat\al\geq 0$ sufficiently large such that 
\begin{enumerate}
\item[(\Gls*{Z})]\myglabel{Z}{cond Z} $\cZ_\om^{(n)}\in\ol\sA_\om^{(n)}(\hat\al)$ for each $n\in\NN$ and each $\om\in\Om$.  
\end{enumerate}
Now we set $\ol\cZ_\om^{(n)}$ be the coarsest partition such that all elements of $\cZ_\om^{(n)}$ are either disjoint from $X_{\om,n-1}$ or contained in $X_{\om,n-1}$. Note that $\ol\cZ_\om^{(n)}=\widehat\cZ_\om^{(n)}(\cZ_\om^{(n)})$.\index{$\ol\cZ_\om^{(n)}$} 
Now, define the following subcollections:
\begin{align*}
\ol\cZ_{\om,*}^{(n)}&:=\set{Z\in \ol\cZ_\om^{(n)}: Z\sub X_{\om,n-1} },
\\
\ol\cZ_{\om,b}^{(n)}&:=\set{Z\in \ol\cZ_\om^{(n)}: Z\sub X_{\om,n-1} \text{ and } \Lm_{\om}(\ind_Z)=0 },
\\
\ol\cZ_{\om,g}^{(n)}&:=\set{Z\in \ol\cZ_\om^{(n)}: Z\sub X_{\om,n-1} \text{ and } \Lm_{\om}(\ind_Z)>0}.
\end{align*}\index{$\ol\cZ_{\om,*}^{(n)}$}\index{$\ol\cZ_{\om,b}^{(n)}$}\index{$\ol\cZ_{\om,g}^{(n)}$}

Consider the collection $\ol\cZ_{\om,F}^{(n)}\sub\ol\cZ_{\om,*}^{(n)}$ such that for $Z\in\ol\cZ_{\om,F}^{(n)}$ we have $T_\om^n(Z)=I$. We will elements of $\ol\cZ_{\om,F}^{(n)}$ ``full intervals''. We let $\ol\cZ_{\om,U}^{(n)}:=\ol\cZ_{\om,*}^{(n)}\bs\ol\cZ_{\om,F}^{(n)}$. Since for any $Z\in\ol\cZ_{\om,F}^{(n)}$ we have that $T_\om^n(Z)=I$, and hence 
\begin{align}\label{eq: check tilde Q3}
\Lm_\om(\ind_Z)
\geq 
\frac{\inf_{D_{\sg^n(\om),n}}\cL_\om^n\ind_Z}{\sup_{D_{\sg^n(\om),n}}\cL_\om^n\ind_\om}	
\geq 
\frac{\inf g_{\om,0}^{(n)}}{\norm{\cL_{\om,0}^n\ind}_\infty}>0.
\end{align}
Consequently, we have that $\ol\cZ_{\om,F}^{(n)}\sub\ol\cZ_{\om,g}^{(n)}$, and thus 
\begin{align}\label{eq: bad interv cont in non-full interv}
\ol\cZ_{\om,b}^{(n)}\sub\ol\cZ_{\om,U}^{(n)}.
\end{align} 
We let $\zt_\om^{(n)}\geq 0$ denote the maximum number of contiguous non-full intervals for $T_\om^n$ in $\ol\cZ_{\om,*}^{(n)}$ for each $\om\in\Om$ and $n\in\NN$. Note that $\zt_\om^{(1)}$ may be equal to $0$, but $\zt_\om^{(n)}\geq 1$ for all $n\geq 2$, and so it follows from \eqref{eq: bad interv cont in non-full interv} that 
\begin{align}\label{eq: xi leq zt}
0\leq \log \xi_\om^{(n)}\leq \log \zt_\om^{(n)}
\end{align}
for all $n\geq 2$.
In the interest of having assumptions that are easier to check than \eqref{cond Q1}-\eqref{cond Q3} we introduce the following simpler assumptions which use the collections $\ol\cZ_{\om,F}^{(n)}$ and $\ol\cZ_{\om,U}^{(n)}$\index{$\ol\cZ_{\om,F}^{(n)}$}\index{$\ol\cZ_{\om,U}^{(n)}$} rather than $\cZ_{\om,g}^{(n)}$ and $\cZ_{\om,b}^{(n)}$. We assume the following:
\begin{enumerate}
\item[(\Gls*{Q0hat})]\myglabel{$\widehat{\textrm{Q0}}$}{cond tilde Q0} $\ol\cZ_{\om,F}^{(1)}\neq\emptyset$ for $m$-a.e. $\om\in\Om$, 

\item[(\Gls*{Q1hat})]\myglabel{$\widehat{\textrm{Q1}}$}{cond tilde Q1} 
\begin{align*}
\lim_{n\to\infty}\frac{1}{n}\log\norm{g_{\om}^{(n)}}_\infty
+
\limsup_{n\to\infty}\frac{1}{n}\log\zt_{\om}^{(n)}
<
\lim_{n\to\infty}\frac{1}{n}\log\rho_{\om}^n
=
\int_\Om \log\rho_{\om}\, dm(\om),
\end{align*}

\item[(\Gls*{Q2hat})]\myglabel{$\widehat{\textrm{Q2}}$}{cond tilde Q2} for each $n\in\NN$ we have $\log^+\zt_\om^{(n)}\in L^1(m)$,

\item[(\Gls*{Q3hat})]\myglabel{$\widehat{\textrm{Q3}}$}{cond tilde Q3} for each $n\in\NN$, $\log\hat\dl_{\om,n}\in L^1(m)$, where
\begin{align}\label{eq: def of tilde dl_om,n}
\hat\dl_{\om,n}:=\min_{Z\in\ol\cZ_{\om,F}^{(n)}}\Lm_\om(\ind_Z).
\end{align}
\end{enumerate}
Our assumptions \eqref{cond Q1}-\eqref{cond Q3} are used exclusively in Section~\ref{sec: LY ineq}, and primarily in Lemma \ref{ly ineq}. In the proof of Lemma~\ref{ly ineq}, the good and bad interval collections $\cZ_{\om,g}^{(n)}$ and $\cZ_{\om,b}^{(n)}$ are used only to estimate the variation of a function and can easily be replaced by the collections $\ol\cZ_{\om,F}^{(n)}$ and $\ol\cZ_{\om,U}^{(n)}$. Therefore, we can easily replace the assumptions \eqref{cond Q1}-\eqref{cond Q3} with \eqref{cond tilde Q0}-\eqref{cond tilde Q3} without any changes. In particular, we are still able to construct the number $N_*$ which is defined in \eqref{eq: def of N}. Note that by replacing the $2$ in \eqref{eq: def A partition 2} with the $\hat\al\geq 0$ that appears in \eqref{eq: def A partition 3}, the constant coefficients which appear in the definitions of $A_\om^{(n)}$ and $B_\om^{(n)}$ in \eqref{eq: def of A and B in ly ineq} at the end of Lemma~\ref{ly ineq} may be different, consequently changing the value of $N_*$. This ultimately does not affect our general theory as we only care that such a value $N_*$ satisfying \eqref{eq: def of N} exists.
\begin{remark}
As in Remark~\ref{rem: alternate hypoth}, we again note that checking \eqref{cond Z}, \eqref{cond tilde Q2}, and \eqref{cond tilde Q3} for all $n\in\NN$ could be difficult and that it suffices to instead check these conditions only for $n=N_*$. Thus we may replace \eqref{cond Z}, \eqref{cond tilde Q2}, and \eqref{cond tilde Q3} with the following: 
\begin{enumerate}
\item[(\Gls*{Z'})]\myglabel{Z'}{cond Z'} there exists $\hat\al\geq 0$ such that $\cZ_\om^{(N_*)}\in\ol\sA_\om^{(N_*)}(\hat\al)$ for each $\om\in\Om$,

\item[(\Gls*{Q2hat'})]\myglabel{$\widehat{\textrm{Q2}}$'}{cond tilde Q2'} $\log^+\zt_\om^{(N_*)}\in L^1(m)$,

\item[(\Gls*{Q3hat'})]\myglabel{$\widehat{\textrm{Q3}}$'}{cond tilde Q3'} $\log\hat\dl_{\om,N_*}\in L^1(m)$.
\end{enumerate}
\end{remark}
The following proposition gives that assumption \eqref{cond tilde Q3} is always satisfied, and thus that we really only need to assume \eqref{cond tilde Q1} and \eqref{cond tilde Q2}.
\begin{proposition}\label{prop: tilde Q3 trivial}
Assumption \eqref{cond tilde Q3} is trivially satisfied.
\end{proposition}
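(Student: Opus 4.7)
The plan is straightforward and uses only facts already established. I will show that $\log \hat\dl_{\om,n}$ is bounded above by $0$ and below by an $L^1(m)$ function, which together imply integrability.

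\textbf{Upper bound.} For every $Z \in \ol\cZ_{\om,F}^{(n)}$ we have $\ind_Z \leq \ind$, so property \eqref{Lm prop3} of Observation~\ref{obs: properties of Lm}, combined with \eqref{Lm prop1}, gives $\Lm_\om(\ind_Z)\leq \Lm_\om(\ind)=1$. Taking the minimum over $Z \in \ol\cZ_{\om,F}^{(n)}$ yields $\hat\dl_{\om,n}\leq 1$, hence $\log\hat\dl_{\om,n}\leq 0$.

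\textbf{Lower bound.} This is exactly the content of the estimate already carried out in \eqref{eq: check tilde Q3}: for every $Z\in\ol\cZ_{\om,F}^{(n)}$,
\begin{align*}
\Lm_\om(\ind_Z)\ \geq\ \frac{\inf g_{\om,0}^{(n)}}{\norm{\cL_{\om,0}^n\ind}_\infty}\ >\ 0,
\end{align*}
so taking the minimum over $Z \in \ol\cZ_{\om,F}^{(n)}$ gives
\begin{align*}
\log\hat\dl_{\om,n}\ \geq\ \log\inf g_{\om,0}^{(n)}-\log\norm{\cL_{\om,0}^n\ind}_\infty.
\end{align*}
By \eqref{eq: log sup inf g integ} and \eqref{eq: log sup inf L integ}, both $\log\inf g_{\om,0}^{(n)}$ and $\log\norm{\cL_{\om,0}^n\ind}_\infty$ are in $L^1(m)$ for each fixed $n$, so the right-hand side is in $L^1(m)$.

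Combining the two bounds, $\log\hat\dl_{\om,n}\in L^1(m)$ for every $n\in\NN$, which is exactly \eqref{cond tilde Q3}. There is no serious obstacle; everything needed has been prepared in Sections~\ref{sec: prelim} and \ref{sec: tr op and Lm}. Measurability of $\om\mapsto\hat\dl_{\om,n}$ (a minimum of finitely many measurable quantities $\Lm_\om(\ind_Z)$ over a measurable family of partitions) is standard and can be mentioned in one line.
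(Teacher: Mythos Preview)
Your proof is correct and takes essentially the same approach as the paper: both use the lower bound from \eqref{eq: check tilde Q3} together with the $\log$-integrability statements \eqref{eq: log sup inf g integ} and \eqref{eq: log sup inf L integ}. You are simply more explicit than the paper in spelling out the upper bound $\hat\dl_{\om,n}\leq 1$, which the paper leaves implicit.
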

\begin{proof}
As the right hand side of \eqref{eq: check tilde Q3} is $\log$-integrable by \eqref{eq: log sup inf g integ} and \eqref{eq: log sup inf L integ}, we must also have $\log\hat\dl_{\om,n}\in L^1(m)$.
\end{proof}
Recall that two elements $W, Z\in\cZ_{\om,*}^{(n)}$ are said to be contiguous if either $W$ and $Z$ are contiguous in the usual sense, i.e. they share a boundary point, or if they are separated by a connected component of $\cup_{j=0}^{n-1} T_\om^{-j}(H_{\sg^j(\om)})$.
The following proposition gives an upper bound for the exponential growth of the number $\zt_\om^{(n)}$ which will be useful in checking our assumption \eqref{cond tilde Q1},  which implies \eqref{cond Q1}.
\begin{proposition} 
The following inequality holds for $\zt_{\om}^{(n)}$, the largest number of contiguous
non-full intervals for $T_\om^n$: 
\begin{align}\label{eq: zt submultish}
\zt_{\om}^{(n)}
\leq n \prod_{j=0}^{n-1} (\zt^{(1)}_{\sg^j(\om)}+2).
\end{align}
Consequently, using \eqref{eq: xi leq zt} and the ergodic theorem, we have that 
\begin{align}\label{eq: zt bet}
\lim_{n\to\infty}\frac{1}{n}\log\xi_\om^{(n)}
\leq
\lim_{n\to\infty}\frac{1}{n}\log\zt_\om^{(n)}
\leq 
\int_\Om\log(\zt_\om^{(1)}+2)\, dm(\om).
\end{align}
\end{proposition}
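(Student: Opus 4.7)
The plan is to establish the bound
\[
\zt_\om^{(n)} \leq n \prod_{j=0}^{n-1}(\zt_{\sg^j(\om)}^{(1)}+2)
\]
by induction on $n$. The base case $n=1$ is immediate since $1\cdot(\zt_\om^{(1)}+2)\ge \zt_\om^{(1)}$. The ergodic-theoretic consequence \eqref{eq: zt bet} then follows from \eqref{eq: xi leq zt}, Birkhoff's Ergodic Theorem, and the log-integrability provided by \eqref{cond tilde Q2}.

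For the inductive step, fix $\om\in\Om$ and select a maximal contiguous chain $C$ of non-full elements in $\ol\cZ_{\om,*}^{(n+1)}$ realizing $|C|=\zt_\om^{(n+1)}$. Each element of $C$ sits inside a unique element of $\ol\cZ_\om^{(n)}$; let $Y_1,\dots,Y_m\in\ol\cZ_{\om,*}^{(n)}$ be the level-$n$ parents, ordered from left to right. Since level-$(n+1)$ hole components are either preimages of level-$n$ hole components or components of $T_\om^{-n}(H_{\sg^n(\om)})$, the chain structure of $C$ induces a contiguous arrangement of $Y_1,\dots,Y_m$ in the sense of Definition~\ref{def: contiguous}.

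The key estimate is a bound on the number of elements of $C$ contained in a single $Y_i$. Writing $T_\om^n(Y_i)=[a_i,b_i]\subseteq I$, the monotonicity of $T_\om^n|_{Y_i}$ gives a homeomorphism between $Y_i$ and $[a_i,b_i]$, under which the refinement of $Y_i$ at level $n+1$ corresponds to the restriction to $[a_i,b_i]$ of the partition of $I$ by $\cZ_{\sg^n(\om)}$ and the removal of $H_{\sg^n(\om)}$. An element of this refinement is non-full at level $n+1$ iff the image under $T_{\sg^n(\om)}$ is a proper subinterval of $I$. Hence elements of $C\cap Y_i$ correspond bijectively to non-full elements in $\ol\cZ_{\sg^n(\om),*}^{(1)}$ whose representative lies in $[a_i,b_i]$, possibly augmented by the (at most) two branches of $\cZ_{\sg^n(\om)}$ that straddle the endpoints $a_i$ and $b_i$ (which are necessarily non-full due to truncation). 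Since contiguity in $\ol\cZ_{\sg^n(\om),*}^{(1)}$ permits crossing of hole components, the chain length within $Y_i$ is controlled by $\zt_{\sg^n(\om)}^{(1)}$, giving
\[
|C\cap Y_i|\le \zt_{\sg^n(\om)}^{(1)}+2.
\]

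Next I bound $m$. For any interior index $2\le i\le m-1$, the maximality of $C$ forces every surviving subpiece of $Y_i$ at level $n+1$ to belong to $C$, since a full surviving subpiece in the interior of $Y_i$ would break the contiguity of $C$. Absorbing the possible exceptional ``full but totally non-fully refined'' interior elements into the two boundary slots (first and last $Y_i$), the remaining middle $Y_i$'s form a contiguous chain of non-full elements in $\ol\cZ_{\om,*}^{(n)}$, so $m\le \zt_\om^{(n)}+2$. Combining the two estimates yields
\[
\zt_\om^{(n+1)}=|C|\le m\,(\zt_{\sg^n(\om)}^{(1)}+2)\le (\zt_\om^{(n)}+2)(\zt_{\sg^n(\om)}^{(1)}+2).
\]
Since each factor $\zt_{\sg^j(\om)}^{(1)}+2\ge 2$, the inductive hypothesis then produces
\[
\zt_\om^{(n+1)} \le \Bigl(n\prod_{j=0}^{n-1}(\zt_{\sg^j(\om)}^{(1)}+2)+2\Bigr)(\zt_{\sg^n(\om)}^{(1)}+2)\le (n+1)\prod_{j=0}^{n}(\zt_{\sg^j(\om)}^{(1)}+2),
\]
closing the induction.

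The main technical obstacle is the $m$-bound: one must handle the subtle scenario in which an interior $Y_i$ is full at level $n$ yet none of its level-$(n+1)$ subpieces are full (which can happen precisely when every full branch of $T_{\sg^n(\om)}$ meets $H_{\sg^n(\om)}$). The proof plan absorbs these cases into the boundary ``$+2$'' slack, which is what permits the clean inductive comparison above.
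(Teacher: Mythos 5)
Your decomposition is the mirror image of the paper's: the paper peels off the \emph{first} map, writing $T_\om^{n+1}=T_{\sg(\om)}^n\circ T_\om$ and using the level-one intervals of $T_\om$ as parents, whereas you peel off the \emph{last} map and use level-$n$ parents $Y_i\in\ol\cZ_{\om,*}^{(n)}$; your first estimate $|C\cap Y_i|\le \zt^{(1)}_{\sg^n(\om)}+2$ is sound. The genuine gap is in the bound $m\le\zt_\om^{(n)}+2$, specifically in the step where you ``absorb'' interior parents that are full at level $n$ but have no full level-$(n+1)$ child into the two boundary slots. There can be arbitrarily many such parents: this situation occurs exactly when $\ol\cZ_{\sg^n(\om),F}^{(1)}=\emptyset$, and then \emph{every} full interior parent is exceptional, so two slots cannot accommodate them. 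Moreover the case cannot be argued away combinatorially: if $T_{\sg^n(\om)}$ has no full surviving branch while the maps $T_{\sg^j(\om)}$, $j<n$, have many full branches and small holes, then every element of $\ol\cZ_{\om,*}^{(n+1)}$ is non-full and all of them are pairwise contiguous (all gaps are hole pull-backs), so $\zt_\om^{(n+1)}$ is comparable to the total number of level-$(n+1)$ surviving intervals, which exceeds $(n+1)\prod_{j=0}^{n}(\zt^{(1)}_{\sg^j(\om)}+2)$. In other words, the inequality really uses the standing assumption \eqref{cond tilde Q0} of this section (present implicitly in the paper's sketch as well): under \eqref{cond tilde Q0}, a full level-$n$ parent $Y_i$ contains the pull-back of some element of $\ol\cZ_{\sg^n(\om),F}^{(1)}$, which is a \emph{full surviving} level-$(n+1)$ piece and would break the contiguity of $C$; hence interior parents are automatically non-full, and no absorption is needed. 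This is the argument you should make, citing \eqref{cond tilde Q0}, in place of the absorption step.

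A secondary soft spot: the claim that $Y_1,\dots,Y_m$ form a contiguous arrangement in $\ol\cZ_{\om,*}^{(n)}$ is not automatic, because a level-$n$ element lying between two parents may be entirely contained in $T_\om^{-n}(H_{\sg^n(\om)})$, contribute no surviving level-$(n+1)$ piece, and be skipped by $C$; such an element is not part of $\cup_{j=0}^{n-1}T_\om^{-j}(H_{\sg^j(\om)})$, so the two parents flanking it are not contiguous in the level-$n$ sense. This is repairable: any such swallowed element $Y'$ satisfies $T_\om^n(Y')\sub H_{\sg^n(\om)}\neq I$ and is therefore itself non-full, so inserting all swallowed elements between the interior parents yields a genuine contiguous chain of non-full level-$n$ elements of length at least $m-2$, whence $m\le\zt_\om^{(n)}+2$. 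With these two repairs your recursion $\zt_\om^{(n+1)}\le(\zt_\om^{(n)}+2)(\zt^{(1)}_{\sg^n(\om)}+2)$ and the induction close correctly, in parallel with the paper's recursion $\zt_\om^{(n+1)}\le \zt^{(n)}_{\sg(\om)}(\zt^{(1)}_\om+2)+2\zt^{(1)}_\om$ obtained from the opposite splitting.
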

\begin{proof}
This is  a random version of \cite[Lemma 6.3]{LMD}. We sketch the argument here.
To upper bound $\zt_{\om}^{(n+1)}$, we observe that the largest number of contiguous non-full intervals for $T_{\om}^{n+1}$ is given by 
\begin{equation}\label{eq:nonfullrec}
\zt_{\om}^{(n+1)}\leq \zt_{\om}^{(1)} (\zt^{(n)}_{\sg(\om)}+2)+ 2 \zt^{(n)}_{\sg(\om)}.
\end{equation}
Indeed, the first term on the right hand side accounts for the (worst case) scenario that all non-full branches of $T_{\sg(\om)}^n$ are pulled back inside contiguous non-full intervals for $T_\om$. 
For each non-full interval of $T_\om$, there at most $\zt^{(n)}_{\sg(\om)}+2$ contiguous non-full intervals for $T_\om^{n+1}$, as in addition to the $\zt^{(n)}_{\sg(\om)}$ non-full intervals pulled back from $T_{\sg(\om)}^n$, there may be full branches of $T_{\sg(\om)}^n$ to the left and right of these which are only partially pulled back inside the corresponding branch of  $T_\om$.
The second term in \eqref{eq:nonfullrec} accounts for an extra (at most) $\zt^{(n)}_{\sg(\om)}$ non full branches of $T_{\sg(\om)}^n$ pulled back inside the full branches of $T_\om$ neighboring the cluster of $\zt_{\om}^{(1)}$ non-full branches.

Rearranging \eqref{eq:nonfullrec} yields $\zt_{\om}^{(n+1)}\leq  \zt^{(n)}_{\sg(\om)}(\zt_{\om}^{(1)}+2)+ 2 \zt^{(1)}_{\om}.$
The claim follows directly by induction.
\end{proof}
Let $\pzh_\om$\index{$\pzh_\om$} denote the number of connected components of $H_\om$. The following lemma shows that the conditions 
\begin{lemma}\label{lem: check Q2' with zt}
If assumption \eqref{cond Z'} holds as well as 
\begin{align}\myglabel{CCH}{CCH}
\log \pzh_\om\in L^1(m),
\tag{\Gls*{CCH}}
\end{align}
then $\log\zt_\om^{(1)}\in L^1(m)$. Consequently, we have that \eqref{cond Q2'} and \eqref{cond tilde Q2} hold.
\end{lemma}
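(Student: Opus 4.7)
The plan is to reduce the lemma to an elementary combinatorial bound on $\zt_\om^{(1)}$, after which everything follows from the hypotheses \eqref{LIP}, \eqref{CCH} and the sub-multiplicative inequality \eqref{eq: zt submultish}. First I would unpack the definition: $\ol\cZ_\om^{(1)}$ is the coarsest refinement of $\cZ_\om^{(1)}=\cZ_\om$ whose elements lie either entirely inside or entirely outside $I_\om=I\setminus H_\om$, which is the same as the common refinement of $\cZ_\om$ with the partition of $I$ into the connected components of $I_\om$ and $H_\om$. Since $H_\om$ has exactly $\pzh_\om$ connected components, their boundary inside $I$ contributes at most $2\pzh_\om$ new cut points to the $\#\cZ_\om+1$ endpoints of $\cZ_\om$, so the refinement has at most $\#\cZ_\om+2\pzh_\om$ atoms. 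Bounding the number of contiguous non-full blocks by the total cardinality of $\ol\cZ_{\om,*}^{(1)}$ then yields
\begin{align*}
\zt_\om^{(1)}\;\leq\;\#\ol\cZ_{\om,*}^{(1)}\;\leq\;\#\ol\cZ_\om^{(1)}\;\leq\;\#\cZ_\om+2\pzh_\om.
\end{align*}

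Next, taking logarithms and using the elementary inequality $\log(a+b)\leq\log 2+\log a+\log b$ for $a,b\geq 1$, one gets $\log\zt_\om^{(1)}\leq \log 2+\log\#\cZ_\om+\log(2\pzh_\om)$, whose right-hand side is $m$-integrable by \eqref{LIP} and \eqref{CCH}. This gives $\log\zt_\om^{(1)}\in L^1(m)$, which is the first assertion of the lemma.

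Finally, to pass to general $n$---in particular to $n=N_*$---I would invoke \eqref{eq: zt submultish} in the form
\begin{align*}
\log\zt_\om^{(n)}\;\leq\;\log n+\sum_{j=0}^{n-1}\log\bigl(\zt_{\sg^j(\om)}^{(1)}+2\bigr).
\end{align*}
Since $m$ is $\sg$-invariant, each summand on the right has the same $m$-integral as $\log(\zt_\om^{(1)}+2)$, which is finite by the previous step, so $\log\zt_\om^{(n)}\in L^1(m)$ for every $n\in\NN$. This is precisely \eqref{cond tilde Q2}, and specializing to $n=N_*$ recovers \eqref{cond Q2'}. I do not foresee any serious obstacle: the entire argument is elementary, the only non-trivial ingredient being the counting estimate on $\#\ol\cZ_\om^{(1)}$, which is immediate from the definitions of the refinement and of $\pzh_\om$.
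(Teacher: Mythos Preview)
Your proposal is correct and follows essentially the same route as the paper: bound $\zt_\om^{(1)}$ by the cardinality of $\ol\cZ_{\om,*}^{(1)}$, use \eqref{LIP} and \eqref{CCH} to obtain integrability, and then invoke the sub-multiplicative estimate \eqref{eq: zt submultish} to pass to general $n$. Your version is in fact more explicit than the paper's, which simply asserts that \eqref{LIP} together with \eqref{CCH} gives $\log\#\ol\cZ_{\om,*}^{(1)}\in L^1(m)$ without writing out the counting bound $\#\ol\cZ_\om^{(1)}\le\#\cZ_\om+2\pzh_\om$; note also that the hypothesis \eqref{cond Z'} plays no genuine role in either argument beyond setting the context in which the $\ol\cZ$ partitions are used.
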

\begin{proof}
Since condition \eqref{cond Z'} holds, we see that \eqref{LIP} in conjunction with \eqref{CCH} gives that $\log\#\ol\cZ_{\om,*}^{(1)}\in L^1(m)$, and thus we must have that $\log\zt_\om^{(1)}\in L^1(m)$. 	
In light of \eqref{eq: zt submultish} we see that \eqref{cond Q2'} and \eqref{cond tilde Q2} hold if 
$$
\log(\zt_\om^{(1)}+2)\in L^1(m),
$$
and thus we are done.
\end{proof}
For each $n\in\NN$ define 
\begin{align}\label{eq: def of F_om^n}
F_\om^{(n)}:=\min_{y\in [0,1]}\#\{T^{-n}_\omega (y)\}. 
\end{align}
Since the sequences $\{\om \mapsto \|g_\om^{(n)}\|_\infty\}_{n\in\NN}$ and $\set{\om \mapsto \inf \cL_\om^n \ind_\om}_{n\in\NN}$  are submultiplicative and supermultiplicative, respectively, the subadditive ergodic theorem implies that the assumption that 
\begin{align*}
\lim_{n\to\infty}\frac{1}{n}\log\norm{g_\om^{(n)}}_\infty 
<
\lim_{n\to\infty}\frac{1}{n}\log\inf_{D_{\sg^n(\om),n}}\cL_\om^n\ind_\om
\end{align*}
is equivalent to the assumption that there exist $N \in \NN$ such that
\begin{align}\label{eq: on avg CPN1N2}
\int_\Om\log \norm{g_\om^{(N)}}_\infty\, dm(\om) 
<  
\int_\Om\log \inf_{D_{\sg^{N}(\om),N}} \cL_\om^{N}\ind_\om\, dm(\om).
\end{align}
A useful lower bound for the right hand side is the following:
\begin{align}\label{eq: example calc inf up bound}
\inf_{D_{\sg^{N}(\om),N}} \cL_\om^{N}\ind_\om
\geq
\inf_{X_{\om,N-1}}g_\om^{(N)}F_\om^{(N)}
\geq 
\inf g_{\om,0}^{(N)} F_\om^{(N)}.
\end{align}
The next lemma, which offers a sufficient condition to check assumptions \eqref{cond Q1} and \eqref{cond tilde Q1}, follows from \eqref{eq: zt bet}, \eqref{eq: on avg CPN1N2}, \eqref{eq: example calc inf up bound}, and the calculations from the proof of Lemma~13.18 in \cite{AFGTV20}. 
\begin{lemma}\label{lem: LY example Q1 Lemma}
If there exists $N\in\NN$ such that 
\begin{align*}
\frac{1}{N}\int_\Om \sup S_{N, T}(\vp_{\om,0})-\inf S_{N, T}(\vp_{\om,0})+ N\log(\zt_\om^{(1)}+2)\, dm(\om)
<
\frac{1}{N}\int_\Om \log F_\om^{(N)}\, dm(\om),
\end{align*}	 
then \eqref{cond tilde Q1} (and thus \eqref{cond Q1}) holds.
\end{lemma}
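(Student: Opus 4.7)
The plan is to prove \eqref{cond tilde Q1} by separately bounding its LHS from above and its RHS from below, and showing that the hypothesis of the lemma is precisely the strict inequality between the two bounds. Kingman's subadditive ergodic theorem is the key tool, together with the pointwise estimate \eqref{eq: example calc inf up bound} on $\cL_\om^N\ind_\om$.

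For the LHS of \eqref{cond tilde Q1}, I would first observe that $g_\om\leq g_{\om,0}$, hence $\log\|g_\om^{(n)}\|_\infty \leq \sup S_{n,T}(\vp_{\om,0})$. The cocycle $\om\mapsto \sup S_{n,T}(\vp_{\om,0})$ is subadditive along $\sg$ and integrable by \eqref{A1}, so Kingman's theorem yields, for the $N$ appearing in the hypothesis,
\[
\lim_{n\to\infty}\frac{1}{n}\log\|g_\om^{(n)}\|_\infty \;\leq\; \frac{1}{N}\int_\Om \sup S_{N,T}(\vp_{\om,0})\,dm(\om).
\]
Combining with the estimate $\limsup_n\tfrac{1}{n}\log\zt_\om^{(n)}\leq \int_\Om \log(\zt_\om^{(1)}+2)\,dm(\om)$ from \eqref{eq: zt bet} bounds the LHS of \eqref{cond tilde Q1} above by $\tfrac{1}{N}\int_\Om \sup S_{N,T}(\vp_{\om,0})\,dm(\om) + \int_\Om \log(\zt_\om^{(1)}+2)\,dm(\om)$.

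For the RHS, I would iterate \eqref{eq: Lam equivariance} to obtain $\rho_\om^N=\lm_\om^N=\Lm_{\sg^N(\om)}(\cL_\om^N\ind_\om)$, then adapt the reasoning of Remark \ref{rem: further props of Q assumps} applied to the $N$th iterate to conclude $\rho_\om^N\geq \inf_{D_{\sg^N(\om),\infty}}\cL_\om^N\ind_\om$. Since $D_{\sg^N(\om),\infty}\subset D_{\sg^N(\om),N}$, this is in particular $\geq \inf_{D_{\sg^N(\om),N}}\cL_\om^N\ind_\om$, and feeding in \eqref{eq: example calc inf up bound} gives $\rho_\om^N\geq e^{\inf S_{N,T}(\vp_{\om,0})}F_\om^{(N)}$. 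Taking logarithms, integrating, and using $\int_\Om \log\rho_\om^N\,dm(\om)=N\int_\Om \log\rho_\om\,dm(\om)$ (from $\sg$-invariance of $m$) produces
\[
\int_\Om \log\rho_\om\,dm(\om)\;\geq\;\frac{1}{N}\int_\Om\bigl[\inf S_{N,T}(\vp_{\om,0})+\log F_\om^{(N)}\bigr]\,dm(\om).
\]
A direct rearrangement then identifies the hypothesis of the lemma as exactly the statement that the upper bound from the previous paragraph is strictly smaller than this lower bound, yielding \eqref{cond tilde Q1}.

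The main obstacle will be justifying the $N$-step analog of the inequality $\inf_{D_{\sg(\om),\infty}}\cL_\om\ind_\om \leq \rho_\om$ of Remark \ref{rem: further props of Q assumps}. The one-step case is an unwinding of the limit definition \eqref{eq: def of Lm} of $\Lm$ together with the support properties of $\cL_\om\ind_\om$; replacing $\cL_\om$ by $\cL_\om^N$ in the same argument requires tracking which preimages under $\cL_{\sg^N(\om)}^k$ remain in the appropriate $D$-sets as $k\to\infty$, but the structure is otherwise identical.
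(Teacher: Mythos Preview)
Your treatment of the left-hand side (Kingman on $\sup S_{N,T}(\vp_{\om,0})$ together with \eqref{eq: zt bet}) is correct and matches the paper. The gap is on the right-hand side: you invoke \eqref{eq: Lam equivariance} to obtain $\rho_\om^N=\Lm_{\sg^N(\om)}(\cL_\om^N\ind_\om)$, but that identity is part of Lemma~\ref{lem: Lm is a linear functional}, whose proof (via Lemma~\ref{lem: exp conv in C+ cone} and the machinery of Sections~\ref{sec:good}--\ref{sec: fin diam}) already assumes \eqref{cond Q1}--\eqref{cond Q3}. Without those hypotheses, Lemma~\ref{LMD l3.6} only gives $\rho_\om^N\le\Lm_{\sg^N(\om)}(\cL_\om^N\ind_\om)$, which is the wrong direction. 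Your fallback in the final paragraph---mimicking the one-step argument for the $N$th iterate---does not close this gap either: unwinding the definition of $\Lm$ yields a lower bound on $\Lm_{\sg^N(\om)}(\cL_\om^N\ind_\om)$, not on the product $\rho_\om^N=\prod_{j=0}^{N-1}\rho_{\sg^j(\om)}$. The obstacle is not in tracking preimages through $D$-sets but in the identification of $\rho_\om^N$ with $\Lm_{\sg^N(\om)}(\cL_\om^N\ind_\om)$ itself.

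The paper avoids any pointwise comparison of $\rho_\om^N$ with $\inf\cL_\om^N\ind_\om$. It instead uses the supermultiplicativity of $\om\mapsto\inf\cL_\om^n\ind_\om$ and the subadditive ergodic theorem to pass between the integral condition \eqref{eq: on avg CPN1N2} at a fixed $N$ and its limit version, and then appeals to the elementary one-step inequality $\inf_{D_{\sg(\om),\infty}}\cL_\om\ind_\om\le\rho_\om$ recorded in Remark~\ref{rem: further props of Q assumps}(2) (which is stated before any of \eqref{cond Q1}--\eqref{cond Q3} are used) to deduce that the limit condition with $\inf\cL_\om^n\ind_\om$ on the right already implies \eqref{cond tilde Q1}. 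Feeding in \eqref{eq: example calc inf up bound} and your LHS bound then gives the lemma.
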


The following definition will be useful in checking our measurability assumptions for examples. 
\begin{definition}
We say that a function $f: \Omega \to \RR_+$ is $m$-continuous function if there is a partition of $\Om \pmod{m}$  into at most countably many Borel sets $\Om_1, \Om_2, \dots$ such that $f$ is constant on each $\Om_j$, say $f|_{\Om_j}=f_j$.
\end{definition}

We now give specific classes of random maps with holes which meet our assumptions. In principle, any of the classes of finitely branched maps discussed in Section~13 of \cite{AFGTV20} (including random non-uniformly expanding maps) will fit our current assumptions given a suitable hole $H$. 
\subsection{Random $\bt$-Transformations With Holes}\label{sec: example bt transform}

For this first example we consider the class of maps described in Section~13.2 in \cite{AFGTV20}. These are $\bt$-transformations for which the last (non-full) branch is not too small so that each branch in the random closed system has a uniform covering time. In particular we assume there is some $\dl>0$ such that for $m$-a.e. $\om\in\Om$ we have 
$$	
\bt_\om\in 
\bigcup_{k=1}^\infty [k+\delta,k+1].
$$
Further suppose that the map $\om\mapsto \bt_\om$ is $m$-continuous. 
We consider the random $\bt$-transformation
$T_\om : [0,1] \to [0,1]$ given by
\[
T_\om(x) = \beta_\omega x \pmod{1}
\]
and the potential 
$$
\vp_{\om,0}=-t\log|T'_\om|=-t\log\bt_\om
$$ 
for $t\geq 0$. 
In addition, we assume that 
\begin{equation}
\label{log9}
\int_\Om \log\lfloor\beta_\omega\rfloor\ dm(\omega)>\log 3
\end{equation}
and
\begin{equation}
\label{blb}
\int_\Om\log\lceil\beta_\omega\rceil\ dm(\omega)<\infty.
\end{equation}
Note that we allow $\bt_\om$ arbitrarily large. It follows from Lemma~13.6 of \cite{AFGTV20} that our assumptions \eqref{T1}-\eqref{T3}, \eqref{LIP}, \eqref{GP}, \eqref{A1}-\eqref{A2}, \eqref{cond M1}, \eqref{cond C1}, and \eqref{cond Z} are satisfied. 

To check the remainder of our assumptions we must now describe the choice of hole $H_\om$. For our holes $H_\om$ we will consider intervals of length at most $1/\bt_\om$ so that $H_\om$ may not intersect more than two monotonicity partition elements. To ensure that \eqref{cond tilde Q0} is satisfied we assume there is a  full-branched element $Z\in\cZ_{\om,F}^{(1)}$ such that $Z\cap H_\om=\emptyset$ for each $\om\in\Om$, and thus, in light of Remark~\ref{rem: check cond D}, we also have that assumption \eqref{cond D} is satisfied with $D_{\om,\infty}=I$ for each $\om\in\Om$. 

Now, we note that since \eqref{blb} implies our assumption \eqref{LIP}, Lemma~\ref{lem: check Q2' with zt} implies that assumption \eqref{cond tilde Q2} is satisfied. Thus, we have only to check the condition \eqref{cond tilde Q1}. 
Depending on $H_\om$ we may have that 
\begin{align*}
\inf\cL_\om\ind_\om=\frac{\floor{\bt_\om}-1}{\bt_\om^t},
\end{align*}
for example if $H_\om$ is the last full branch.
To ensure that \eqref{cond tilde Q1} holds, note that \eqref{eq: on avg CPN1N2} holds with $N=1$, and thus it suffices to have 
\begin{align*}
\int_\Om \log(\floor{\bt_\om}-1)\, dm(\om)
> 
\int_\Om \log(\zt_\om^{(1)}+2)\, dm(\om),
\end{align*}
since 
\begin{align*}
\int_{\Om}\log\inf\cL_\om\ind_\om 
-
\log\norm{g_\om}_\infty\, dm(\om)
\geq 
\int_\Om \log\frac{\floor{\bt_\om}-1}{\bt_\om^t}+\log\bt_\om^t \, dm(\om).
\end{align*}
Depending on the placement of $H_\om$ we may have $\zt_\om^{(1)}=i$ for any $i\in\set{0,1,2,3}$.
Thus, we obtain the following lemma assuming the worst case scenario, i.e. assuming $\zt_\om^{(1)}=3$ for $m$-a.e. $\om\in\Om$. 
\begin{lemma}
Theorems~\ref{main thm: existence}-\ref{main thm: escape rate} hold if 
\begin{align*}
\int_\Om \log(\floor{\bt_\om}-1)\, dm(\om)
> 
\log 5.
\end{align*}
\end{lemma}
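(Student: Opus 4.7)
The plan is to verify each of the hypotheses of Theorems~\ref{main thm: existence}--\ref{main thm: escape rate} in turn, leveraging the closed-system setup of Section~\ref{sec: example bt transform} together with the specific structure of the random holes. The conditions pertaining to the closed dynamics---\eqref{T1}--\eqref{T3}, \eqref{LIP}, \eqref{GP}, \eqref{A1}--\eqref{A2}, \eqref{cond M1}, \eqref{cond C1}, and \eqref{cond Z}---are supplied by Lemma~13.6 of \cite{AFGTV20}, which applies verbatim under the standing assumptions on $(\beta_\om)_{\om\in\Om}$ and is independent of the presence of the hole.

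Next I would verify the open-system structural conditions. The standing hypothesis that some full branch $Z\in \cZ_{\om,F}^{(1)}$ avoids $H_\om$ for $m$-a.e.\ $\om$ gives \eqref{cond tilde Q0} immediately. Since $T_\om(Z)=I$, we have $T_\om(I_\om)\supseteq I$, so Remark~\ref{rem: check cond D} yields \eqref{cond D} with $D_{\om,\infty}=I$, while Proposition~\ref{prop surv nonemp} (applied with $U_{\om,1}$ equal to such a surviving full branch) supplies \eqref{cond X}. Condition \eqref{cond tilde Q3} is automatic by Proposition~\ref{prop: tilde Q3 trivial}. Because each hole $H_\om$ consists of a single interval we have $\pzh_\om\equiv 1$, so \eqref{CCH} is trivial and Lemma~\ref{lem: check Q2' with zt} delivers \eqref{cond tilde Q2}.

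The main work is the verification of \eqref{cond tilde Q1}. Since $H_\om$ is an interval of length at most $1/\beta_\om$ while every full branch of $T_\om$ has length exactly $1/\beta_\om$ and the unique non-full last branch is strictly shorter, $H_\om$ meets at most two monotonicity intervals. Combined with the preexisting non-full last branch, the coarsest refinement $\ol\cZ_{\om,*}^{(1)}$ then contains at most three contiguous non-full intervals, giving $\zt_\om^{(1)}\leq 3$. Invoking \eqref{eq: zt bet} yields
\begin{align*}
\limsup_{n\to\infty}\frac{1}{n}\log\zt_\om^{(n)}\leq \int_\Om \log(\zt_\om^{(1)}+2)\,dm(\om)\leq \log 5.
\end{align*}

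For the lower bound on $\rho_\om$, a case analysis on the placement of $H_\om$ shows that every $x\in[0,1]$ has at least $\lfloor\beta_\om\rfloor-1$ preimages in $I_\om$: if the hole sits inside a single full branch, the $\lfloor\beta_\om\rfloor-1$ other full branches remain intact; if it straddles two full branches those branches become non-full fragments whose images nevertheless together cover $[0,1]$ (since the total removed length is at most $1/\beta_\om$), so at every point there is one additional preimage coming from these fragments; the case of straddling the last full branch and the non-full last branch is handled identically. Each surviving preimage carries weight $\beta_\om^{-t}$, so $\inf\cL_\om\ind_\om\geq(\lfloor\beta_\om\rfloor-1)\beta_\om^{-t}$. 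Since $\rho_\om\geq \inf\cL_\om\ind_\om$ by \eqref{eq: Lm bdd in sup norm} and $\|g_\om\|_\infty=\beta_\om^{-t}$, the ergodic theorem produces
\begin{align*}
\int_\Om \log\rho_\om\,dm(\om)-\int_\Om\log\|g_\om\|_\infty\,dm(\om)\geq \int_\Om \log(\lfloor\beta_\om\rfloor-1)\,dm(\om),
\end{align*}
and combined with the previous bound on $\zt_\om^{(n)}$ the hypothesis $\int_\Om \log(\lfloor\beta_\om\rfloor-1)\,dm(\om)>\log 5$ closes the inequality in \eqref{cond tilde Q1}. The principal obstacle in this plan is the combinatorial case analysis for the worst-case placement of $H_\om$, which simultaneously controls $\zt_\om^{(1)}$ and the minimum number of surviving full preimages; once those worst cases are correctly identified, the rest of the verification reduces to an integration of uniform fiberwise bounds.
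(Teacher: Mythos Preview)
Your argument is correct and follows essentially the same route as the paper: verify the closed-system hypotheses via Lemma~13.6 of \cite{AFGTV20}, obtain \eqref{cond D}, \eqref{cond X}, \eqref{cond tilde Q0}, \eqref{cond tilde Q2}, \eqref{cond tilde Q3} from the existence of a surviving full branch and the single-interval hole, and then check \eqref{cond tilde Q1} by combining the worst-case bounds $\zt_\om^{(1)}\le 3$ and $\inf\cL_\om\ind_\om\ge(\lfloor\beta_\om\rfloor-1)\beta_\om^{-t}$. Your case analysis for the latter bound is more explicit than the paper's, which simply asserts the worst case without spelling it out.
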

On the other hand, if we have that $H_\om$ is equal to the monotonicity partition element which contains $1$, then $\zt_\om^{(1)}=0$ and 
\begin{align*}
\inf\cL_\om\ind_\om=\frac{\floor{\bt_\om}}{\bt_\om^t}.
\end{align*}
Furthermore, the additional hypotheses necessary for Theorem~\ref{main thm: Bowens formula} are satisfied. In particular, the fact that $T$ has large images follows from the fact that these maps have a uniform covering time; see Lemma~13.5 of \cite{AFGTV20}.
Thus, we thus have the following lemma. 
\begin{lemma}
If $H_\om=Z_1$, where $1\in Z_1\in\cZ_\om$, for $m$-a.e. $\om\in\Om$ then Theorems~\ref{main thm: existence}-\ref{main thm: Bowens formula} hold.
\end{lemma}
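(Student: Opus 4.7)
The plan is to verify that every hypothesis of Theorems~\ref{main thm: existence}--\ref{main thm: Bowens formula} is met in the specialization $H_\om = Z_1$. The discussion opening Section~\ref{sec: example bt transform} together with the preceding lemma already supplies \eqref{T1}--\eqref{T3}, \eqref{LIP}, \eqref{GP}, \eqref{A1}--\eqref{A2}, \eqref{cond M1}, \eqref{cond C1}, \eqref{cond Z}, \eqref{cond tilde Q0}, and \eqref{cond D} for any hole of length at most $1/\bt_\om$ disjoint from a full branch; the interval $Z_1$ clearly meets this description. Condition \eqref{cond X} then follows from Proposition~\ref{prop surv nonemp} applied to one of the remaining full branches, \eqref{cond tilde Q3} is automatic by Proposition~\ref{prop: tilde Q3 trivial}, and since $\pzh_\om = 1$ satisfies \eqref{CCH}, \eqref{cond tilde Q2} follows from Lemma~\ref{lem: check Q2' with zt} together with \eqref{blb}.

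The main new input is verifying \eqref{cond tilde Q1}. First I would note that removing $Z_1$ leaves all $\lfloor\bt_\om\rfloor$ remaining branches full, so $\zt_\om^{(1)} = 0$ and $F_\om^{(1)} = \lfloor\bt_\om\rfloor$. Because the geometric potential $\vp_{\om,0} = -t\log\bt_\om$ is constant on $I$, the oscillation $\sup S_{1,T}(\vp_{\om,0}) - \inf S_{1,T}(\vp_{\om,0})$ vanishes identically, and Lemma~\ref{lem: LY example Q1 Lemma} with $N=1$ reduces \eqref{cond tilde Q1} to the single inequality
$$\int_\Om \log(\zt_\om^{(1)} + 2)\, dm(\om) = \log 2 < \int_\Om \log\lfloor\bt_\om\rfloor\, dm(\om),$$
which is immediate from \eqref{log9} since $\log 3 > \log 2$. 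This secures Theorems~\ref{main thm: existence}--\ref{main thm: escape rate}.

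For Theorem~\ref{main thm: Bowens formula} three additional ingredients remain. The on-average expansion $\int_\Om \log\inf|T_\om'|\, dm(\om) = \int_\Om \log\bt_\om\, dm(\om) > 0$ follows from \eqref{log9} since $\bt_\om \geq \lfloor\bt_\om\rfloor$, and bounded distortion of $g_0 = 1/|T'|$ is trivial because $|T_\om'| \equiv \bt_\om$ is constant on each monotonicity element, making $g_{\om,0}^{(n)}$ constant on every $Z \in \cZ_\om^{(n)}$ and allowing one to take $K_\om = 1$. The item requiring the most care is verifying large images and large images with respect to $H$. The hard part will be tracking the interaction between the monotonicity partition $\cZ_\om^{(n)}$ and the survivor set $X_{\om,n-1}$, but the structural feature that removing $Z_1$ leaves only full branches will afford the following clean argument: any $Z \in \cZ_\om^{(n)}$ meeting $X_{\om,n-1}$ must correspond to a length-$n$ symbolic code consisting exclusively of full branches along the $\sg$-orbit, so $Z \subset X_{\om,n-1}$ and $T_\om^n(Z) = I$. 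Hence $\nu_{\sg^n\om,0}(T_\om^n(Z)) = 1$ uniformly in $n$ and $Z$, while $T_\om^n(Z \cap X_{\om,n-1}) = I \supset X_{\sg^n\om,\infty}$, completing the verification of Theorem~\ref{main thm: Bowens formula}.
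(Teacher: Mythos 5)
Most of your verification runs along the same lines as the paper's: with $H_\om=Z_1$ the surviving branches are all full, so $\zt_\om^{(1)}=0$, $\inf\cL_\om\ind_\om=\lfloor\bt_\om\rfloor/\bt_\om^t$, and (since the potential $-t\log\bt_\om$ has zero oscillation) the sufficient condition for \eqref{cond tilde Q1} collapses to $\log 2<\int_\Om\log\lfloor\bt_\om\rfloor\,dm$, which is exactly what \eqref{log9} gives; the checks of \eqref{cond tilde Q2}, \eqref{cond tilde Q3}, \eqref{cond D}, \eqref{cond X}, the expansion integral, bounded distortion with $K_\om=1$, and large images with respect to $H$ are all fine and essentially the paper's argument.

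The genuine gap is in your verification of the \emph{large images} hypothesis for Theorem~\ref{main thm: Bowens formula}. Definition~\ref{def: large images} requires $\inf_{n\in\NN}\inf_{Z\in\cZ_\om^{(n)}}\nu_{\sg^n(\om),0}(T_\om^n(Z))>0$ over \emph{all} elements of the monotonicity partition of the closed map, whereas your argument only treats cylinders meeting $X_{\om,n-1}$, i.e.\ those whose codes avoid the hole branch; what you have actually proved is large images with respect to $H$ together with $T_\om^n(Z)=I$ for surviving cylinders. Cylinders whose itinerary passes through the partial branch containing $1$ are left untouched, and their images are proper intervals of the form $[0,T^j(1))$ whose measure is not automatically bounded below uniformly in $n$. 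This is not a cosmetic omission: in the proof of Theorem~\ref{thm: Bowen's Formula} the Lebesgue estimate of Lemma~\ref{lem: bf lem 2} is applied to the cylinders $Z_0,Z_1$ containing arbitrary points $y_0,y_1\in B(x,\ep)$, which need not intersect the survivor set, and the lower bound $\diam(Z_0)\geq K_\om^{-1}g_{\om,0}^{(n_{\om,0})}(y_0)$ in \eqref{eq: bf lb eq 3} requires a uniform lower bound on $\Leb(T_\om^{n_{\om,0}}(Z_0))$ for such cylinders. The paper closes this by using the structural assumption $\bt_\om\in\cup_k[k+\dl,k+1]$, which yields a uniform covering time for these $\bt$-maps (Lemma~13.5 of \cite{AFGTV20}): every cylinder covers $[0,1]$ after a uniformly bounded number of further iterates, and conformality then converts this into the required uniform lower bound on $\nu_{\sg^n(\om),0}(T_\om^n(Z))$ for every $Z\in\cZ_\om^{(n)}$. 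You need to add this (or an equivalent) argument; the full-branch observation alone does not deliver large images for the closed system.
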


More generally, we can consider general potentials, non-linear maps, and holes which are unions of finitely many intervals so that condition \eqref{CCH} holds.

\subsection{Random Open Lasota-Yorke Maps}\label{sec: p1 ly map example}
We now present an example of a large class of random Lasota-Yorke maps with holes. The following lemma summarizes the closed setting for this particular class of random maps was treated in Section 13.6 of \cite{AFGTV20}.

\begin{lemma}\label{lem: closed LY example}
Let $\vp_0:\Om \to \BV(I)$ be an $m$-continuous function, and let
$\vp_0:\Om \times I \to \RR$ be given by $\vp_{\om,0}:=-t\log|T_\om'|=\vp_0(\om)$ for $t\geq 0$. Then 
$g_{\om,0} = e^{\vp_{\om,0}}=1/|T_\om'|^t\in \BV(I)$ for \maeom. We further suppose the system satisfies the following: 
\begin{enumerate} 
\item[\mylabel{1}{hyp 1}] $\log\#\cZ_\om\in L^1(m)$,
\item[\mylabel{2}{hyp 2}] there exists $M(n)\in\NN$ such that for any $\om\in\Om$ and any $Z\in\cZ_\om^{(n)}$ we have that $T_\om^{M(n)}(Z)=I$,	
\item[\mylabel{3}{hyp 3}] for each $\om\in\Om$, $Z\in\cZ_\om$, and $x\in Z$ 
\begin{enumerate}
\item $T_\om\rvert_Z\in C^2$, 
\item there exists $K\geq 1$ such that
\begin{align*}
\frac{|T_\om''(x)|}{|T_\om'(x)|}\leq K,
\end{align*}
\end{enumerate}	
\item[\mylabel{4}{hyp 4}] there exist $1< \lm\leq \Lm<\infty$ and $n_0\in\NN$ such that 
\begin{enumerate}
\item[\mylabel{a}{hyp 4a}] $|T_\om'|\leq \Lm$ for $m$-a.e. $\om\in\Om$,
\item[\mylabel{b}{hyp 4b}] $|(T_\om^{n_0})'|\geq \lm^{n_0}$ for $m$-a.e. $\om\in\Om$
\item[\mylabel{c}{hyp 4c}] $\frac{1}{n_0}\int_\Om \log F_\om^{(n_0)} \,dm(\om)>t\log\frac{\Lm}{\lm}$,
\end{enumerate}
\item[\mylabel{5}{hyp 5}] for each $n\in\NN$ there exists 
$$
\ep_n:=\inf_{\om\in\Om}\min_{Z\in\cZ_\om^{(n)}}\diam(Z) >0.
$$
\end{enumerate}
Then Theorems~ 2.19-2.23  of \cite{AFGTV20} hold, and in particular, our assumptions \eqref{T1}-\eqref{T3}, \eqref{LIP}, \eqref{GP}, \eqref{A1}-\eqref{A2}, \eqref{cond M1}, and \eqref{cond C1} hold. 
\end{lemma}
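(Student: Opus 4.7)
The plan is to observe that hypotheses \ref{hyp 1}--\ref{hyp 5} correspond precisely to the framework of Section~13.6 of \cite{AFGTV20}, which handles random Lasota--Yorke interval maps with a general contracting geometric potential. Once this identification is made, Theorems~2.19--2.23 of \cite{AFGTV20} apply verbatim, and what remains is the routine task of verifying that each of the listed assumptions \eqref{T1}--\eqref{T3}, \eqref{LIP}, \eqref{GP}, \eqref{A1}--\eqref{A2}, \eqref{cond M1}, and \eqref{cond C1} holds in the present setup.

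First I would dispatch the structural assumptions \eqref{T1}--\eqref{T3}: continuity and strict monotonicity of $T_\om\rvert_Z$ for each $Z\in\cZ_\om$ come from the $C^2$ regularity in \ref{hyp 3} together with the uniform expansion in \ref{hyp 4b}, while surjectivity of $T_\om$ onto $I$ follows from the iterated full-branch covering in \ref{hyp 2} applied with $n=1$ (or, if needed, via the standard argument that connectedness plus the full image $T_\om^{M(n)}(Z)=I$ forces each $T_\om$ to be surjective). Assumption \eqref{LIP} is exactly \ref{hyp 1}. The generator property \eqref{GP} follows from \ref{hyp 4b} and \ref{hyp 5}: the expansion $|(T_\om^{n_0})'|\geq \lm^{n_0}$ combined with the positive lower bound $\ep_n$ on partition diameters forces $\diam(Z)\to 0$ as $n\to\infty$ for $Z\in\cZ_\om^{(n)}$, so the sequence of monotonicity partitions generates $\sB$. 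For \eqref{A1}, the bound $\lm\leq |T_\om'|\leq \Lm$ (provided by \ref{hyp 4a}--\ref{hyp 4b}, the latter after extracting a uniform pointwise lower bound on $|T'|$ from the $n_0$-step estimate via \ref{hyp 3b}) gives $\inf\vp_{\om,0},\sup\vp_{\om,0}\in[-t\log\Lm,-t\log\lm]$, hence both in $L^1(m)$. For \eqref{A2}, the $C^2$ regularity and the bounded distortion $|T_\om''|/|T_\om'|\leq K$ in \ref{hyp 3b} yield $g_{\om,0}=|T_\om'|^{-t}\in\BV(I)$ with a bound independent of $\om$.

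Finally, \eqref{cond M1} follows from the $m$-continuity of $\om\mapsto \vp_0(\om)\in\BV(I)$ together with the induced $m$-continuity of $T$, both of which yield joint measurability of $T:\Om\times I\to\Om\times I$. The deepest statement, \eqref{cond C1}---existence of a random closed conformal probability measure $\nu_0$, measurable leading multiplier $\lm_{\om,0}$, and a positive BV eigenfunction $\phi_0$ with the required cohomology equations---is precisely the content of Theorem~2.19 of \cite{AFGTV20} applied in the LY setting of its Section~13.6, with $\log$-integrability of $\lm_{\om,0}$ and non-atomicity of $\nu_{\om,0}$ coming out of the same construction. The main ``obstacle'' is not technical but bookkeeping: one must check that the hypotheses \ref{hyp 1}--\ref{hyp 5} listed here match (up to notation) the exact assumptions invoked in Section~13.6 of \cite{AFGTV20}, in particular that the on-average expansion condition \ref{hyp 4c} is the form in which the contracting-potential hypothesis of \cite{AFGTV20} is verified for $\vp_{\om,0}=-t\log|T_\om'|$. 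Once this matching is confirmed, the lemma follows by direct citation.
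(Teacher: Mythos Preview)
Your proposal is correct and matches the paper's approach: the paper states this lemma as a direct summary of Section~13.6 of \cite{AFGTV20} and provides no separate proof, since the hypotheses \ref{hyp 1}--\ref{hyp 5} are precisely the Lasota--Yorke setup treated there. Your write-up is in fact more explicit than the paper in checking off \eqref{T1}--\eqref{T3}, \eqref{LIP}, \eqref{GP}, \eqref{A1}--\eqref{A2}, \eqref{cond M1}, and \eqref{cond C1} individually, but the substance is identical---identify the setting with \cite{AFGTV20} and cite.
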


The following lemma gives a large class of random Lasota-Yorke maps with holes for which our results apply. In particular, we allow our hole to be composed of finitely many intervals which may change depending on the fiber $\om$, provided the number of connected components of the hole is $\log$-integrable over $\Om$ \eqref{CCH}. 

\begin{lemma}\label{lem: open LY example}
Let $\vp_{\om,0}=-t\log|T_\om'|$ and suppose the hypotheses of Lemma~\ref{lem: closed LY example} hold. Additionally we suppose that $H\sub\Om\times I$ such that \eqref{CCH} holds as well as the following:
\begin{enumerate}
\item[\mylabel{6}{open 1}] for $m$-a.e. $\om\in\Om$ there exists $Z\in\cZ_\om$ with $Z\cap H_\om=\emptyset$ such that $T_\om(Z)=I$,	
\item[\mylabel{7}{open 2}] $\frac{1}{n_0}\int_\Om \log F_\om^{(n_0)} \,dm(\om)>t\log\frac{\Lm}{\lm}+\int_\Om \log(\zt_\om^{(1)}+2)\, dm(\om)$.
\end{enumerate}
Then the hypotheses of Theorems~\ref{main thm: existence}-\ref{main thm: escape rate} hold. If in addition we have that 
\begin{enumerate}
\item[\mylabel{8}{open 3}] there exists $M:\NN\to\NN$ such that $T_\om^{M(n)}(Z)=I$ for $m$-a.e. $\om\in\Om$  and each $Z\in\cZ_\om^{(n)}$, i.e. there is a uniform covering time,
\item[\mylabel{9}{open 4}] for $m$-a.e. $\om\in\Om$ there exists $Z_1,\dots,Z_{k}\in\cZ_\om$ such that $H_\om=\cup_{j=1}^{k}Z_j$ and $T_\om(Z)=I$ for all $Z\in\cZ_\om$ with 
$Z\cap H_\om=\emptyset$,
\end{enumerate}
then the hypotheses of Theorem~\ref{main thm: Bowens formula} also hold.
\end{lemma}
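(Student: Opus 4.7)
The plan is to verify each assumption needed for Theorems~\ref{main thm: existence}--\ref{main thm: escape rate}, then separately those for Theorem~\ref{main thm: Bowens formula}, invoking Lemma~\ref{lem: closed LY example} for all closed-system hypotheses and the simplified equivalents \eqref{cond tilde Q0}--\eqref{cond tilde Q3} for the Q-conditions.

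First, Lemma~\ref{lem: closed LY example} applied to $\vp_{\om,0}=-t\log|T_\om'|$ yields \eqref{T1}--\eqref{T3}, \eqref{LIP}, \eqref{GP}, \eqref{A1}, \eqref{A2}, \eqref{cond M1}, and \eqref{cond C1}. Hypothesis~\ref{open 1} produces a full branch $Z_\om\in\cZ_\om$ disjoint from $H_\om$, so $T_\om(I_\om)\supseteq T_\om(Z_\om)=I\supseteq I_{\sg\om}$; Remark~\ref{rem: check cond D} then gives $D_{\om,\infty}=I$ and hence \eqref{cond D}, while Proposition~\ref{prop surv nonemp} applied with $V_\om=I$ and $U_{\om,1}=\overline{Z_\om}$ delivers \eqref{cond X}. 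To verify \eqref{cond Q1}--\eqref{cond Q3} I establish the simpler conditions \eqref{cond tilde Q0}--\eqref{cond tilde Q3}: \eqref{cond tilde Q0} is hypothesis~\ref{open 1}; \eqref{cond tilde Q3} is automatic by Proposition~\ref{prop: tilde Q3 trivial}; \eqref{cond tilde Q2} follows from Lemma~\ref{lem: check Q2' with zt} using \eqref{CCH} together with \eqref{cond Z} (which holds with some $\hat\al\geq 0$ since $g_{\om,0}^{(n)}\in\BV(I)$ is monotonic on each element of $\cZ_\om^{(n)}$); and \eqref{cond tilde Q1} follows from Lemma~\ref{lem: LY example Q1 Lemma} applied with $N=n_0$, since hypotheses~\ref{hyp 4a}--\ref{hyp 4b} give
\begin{align*}
\sup S_{n_0,T}(\vp_{\om,0})-\inf S_{n_0,T}(\vp_{\om,0})
=t\log\frac{\sup|(T_\om^{n_0})'|}{\inf|(T_\om^{n_0})'|}\leq t n_0\log(\Lm/\lm),
\end{align*}
reducing the required integrated inequality to precisely hypothesis~\ref{open 2}.

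For Theorem~\ref{main thm: Bowens formula} under the additional hypotheses~\ref{open 3}--\ref{open 4}: the assumption $\int_\Om\log\inf|T_\om'|\,dm>0$ is used in Lemma~\ref{lem: bf lem 1} only to produce $\lim_n\tfrac{1}{n}\log\|g_{\om,1}^{(n)}\|_\infty<0$, and this limit is already available from hypothesis~\ref{hyp 4b} via Kingman's subadditive ergodic theorem applied to the subadditive cocycle $\om\mapsto-\log\inf|(T_\om^n)'|$, yielding $\lim_n\tfrac{1}{n}\log\inf|(T_\om^n)'|\geq\log\lm>0$. Bounded distortion of $g_0=1/|T'|$ is the standard $C^2$ argument: a telescoping estimate over $Z\in\cZ_\om^{(n)}$, combined with $|T_{\sg^j\om}''|/|T_{\sg^j\om}'|\leq K$ from \ref{hyp 3} and the geometric contraction of backward iterates coming from \ref{hyp 4a}--\ref{hyp 4b}, furnishes a finite distortion constant $K_\om$ independent of $n$. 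Large images follows from hypothesis~\ref{open 3}: $T_{\sg^n\om}^{M(n)-n}(T_\om^n(Z))=I$ forces $T_\om^n(Z)$ to contain a subinterval of length at least $\Lm^{-(M(n)-n)}$, producing a uniform lower bound on $\nu_{\sg^n\om,0}(T_\om^n(Z))$. Finally, large images with respect to $H$ is immediate from hypothesis~\ref{open 4}: if $Z\in\cZ_\om^{(n)}$ meets $X_{\om,\infty}$ then each coordinate $Z_j^{\om}\in\cZ_{\sg^j\om}$ of $Z$ must be a full branch avoiding $H_{\sg^j\om}$, so $Z\subseteq X_{\om,n-1}$ and $T_\om^n(Z\cap X_{\om,n-1})=T_\om^n(Z)=I\supseteq X_{\sg^n\om,\infty}$.

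The main technical obstacle is \eqref{cond tilde Q1}, where the delicate interplay between the weight, the expansion rate, and the accumulation of non-full partition elements must be balanced; the other verifications are essentially geometric consequences of hypotheses~\ref{open 1}, \eqref{CCH}, \ref{open 3}, and \ref{open 4}, facilitated by Proposition~\ref{prop: tilde Q3 trivial} and the packaging in Lemmas~\ref{lem: check Q2' with zt} and \ref{lem: LY example Q1 Lemma}.
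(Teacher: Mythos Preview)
Your overall structure matches the paper's proof: invoke Lemma~\ref{lem: closed LY example} for the closed-system hypotheses, then verify \eqref{cond D} and \eqref{cond tilde Q0}--\eqref{cond tilde Q3} for the open ones, with \eqref{cond tilde Q1} via Lemma~\ref{lem: LY example Q1 Lemma} and \eqref{cond tilde Q3} via Proposition~\ref{prop: tilde Q3 trivial}. Your treatment of the Bowen hypotheses is also in line with the paper, and in places slightly more careful (e.g.\ noting that hypothesis~\ref{hyp 4b} yields only eventual expansion, so one should invoke what the proof of Lemma~\ref{lem: bf lem 1} actually uses).

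There is one genuine gap: your justification of \eqref{cond Z} is incorrect. The claim that $g_{\om,0}^{(n)}=1/|(T_\om^n)'|^t$ is monotonic on each $Z\in\cZ_\om^{(n)}$ fails for general $C^2$ maps, since $(T_\om^n)'$ can have interior extrema on a monotonicity interval. The paper does not establish \eqref{cond Z} with a single $\hat\al$; instead it verifies only \eqref{cond Z'} at a specific level $N_*=k_*n_0$. Using hypothesis~\ref{hyp 3} (the bound $|T_\om''|/|T_\om'|\le K$) together with the bounds $\Lm^{-kn_0t}\le g_{\om,0}^{(kn_0)}\le\lm^{-kn_0t}$ from~\ref{hyp 4}, one obtains via a standard distortion estimate (equation~(13.20) of \cite{AFGTV20})
\[
\var_Z\bigl(g_{\om,0}^{(kn_0)}\bigr)\le\hat\al_k\,\bigl\|g_{\om,0}^{(kn_0)}\bigr\|_\infty,
\qquad\hat\al_k:=2+\tfrac{tK}{\Lm-1}\bigl(\Lm^{2t}/\lm^t\bigr)^{kn_0},
\]
for each $Z\in\cZ_\om^{(kn_0)}$. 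Since $\hat\al_k$ grows with $k$, \eqref{cond Z} does not hold uniformly in $n$; but choosing $k_*$ large enough that $\int_\Om\log Q_\om^{(k_*n_0)}\,dm<0$ and setting $N_*=k_*n_0$ gives $\cZ_\om^{(N_*)}\in\ol\sA_\om^{(N_*)}(\hat\al_{k_*})$, which is precisely \eqref{cond Z'}. Lemma~\ref{lem: check Q2' with zt} requires only \eqref{cond Z'}, so this suffices for \eqref{cond tilde Q2}. A smaller point: your large-images argument bounds $|T_\om^n(Z)|\ge\Lm^{-(M(n)-n)}$, which is not obviously uniform in $n$ since $M(n)-n$ may grow; the paper does not supply details here either, deferring to \cite{AFGTV20}.
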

\begin{proof}
The conclusion of Lemma~\ref{lem: closed LY example} leaves only to check assumptions \eqref{cond D} and  \eqref{cond tilde Q0}-\eqref{cond tilde Q3}. But in light of Proposition~\ref{prop: tilde Q3 trivial} we see that \eqref{cond tilde Q3} holds, and hypothesis \eqref{open 1} implies \eqref{cond D} (by Remark~\ref{rem: check cond D}) and \eqref{cond tilde Q0} hold.

To check our remaining hypotheses on the open system we first show that \eqref{cond Z'} holds. To see this we note that equation (13.20) of \cite{AFGTV20}, together with the fact that hypothesis \eqref{hyp 2} of Lemma~\ref{lem: closed LY example} implies that
$\Lm^{-kn_0t}\leq g_{\om,0}^{(kn_0)}\leq \lm^{-kn_0t}<1$,  gives that for any $\om\in\Om$ and $Z\in\cZ_\om^{(kn_0)}$ we have
\begin{align*}
\var_Z(g_{\om,0}^{(kn_0)})
&\leq 
2\norm{g_{\om,0}^{(kn_0)}}_\infty + \frac{tK}{\Lm-1}\cdot\lt(\frac{\Lm}{\lm^t}\rt)^{kn_0}
\\
&\leq 
2\norm{g_{\om,0}^{(kn_0)}}_\infty + \frac{tK}{\Lm-1}\cdot\lt(\frac{\Lm}{\lm^t}\rt)^{kn_0}\cdot \Lm^{kn_0t}\norm{g_{\om,0}^{(kn_0)}}_\infty
\\
&\leq
\hat\al_k\norm{g_{\om,0}^{(kn_0)}}_\infty,
\end{align*}
where
\begin{align*}
\hat\al_k:=2+\frac{tK}{\Lm-1}\cdot\lt(\frac{\Lm^{2t}}{\lm^t}\rt)^{kn_0}.
\end{align*}
Taking $k_*$ so large that
\begin{align*}
\int_\Om\log Q_{\om}^{(k_*n_0)}\, dm(\om)<0,
\end{align*}
where $Q_\om^{(k_*n_0)}$ is defined as in \eqref{eq: def of Q and K},
and setting $N_*=k_*n_0$, we then see that \eqref{cond Z'} holds, that is we have that $\cZ_\om^{(N_*)}\in\ol\sA_\om^{(N_*)}(\hat\al_{k_*})$ for each $\om\in\Om$. Thus, Lemma~\ref{lem: check Q2' with zt} together with \eqref{CCH} ensures that \eqref{cond tilde Q2} holds.  
Now taking \eqref{open 2} in conjunction with Lemma~\ref{lem: LY example Q1 Lemma} implies assumption \eqref{cond tilde Q1}, and thus the hypotheses of Theorems~\ref{main thm: existence}-\ref{main thm: escape rate} hold.

The hypotheses of Theorem~\ref{main thm: Bowens formula} hold since the assumptions \eqref{open 3} and \eqref{open 4} together imply that $T$ has large images and large images with respect to $H$, and assumptions \eqref{hyp 3} and \eqref{hyp 4}\eqref{hyp 4b} give the bounded distortion condition for $g_{\om,0}$. 

\end{proof}

\begin{remark}
If one wishes to work with general potentials rather than the geometric potentials in Lemmas~\ref{lem: closed LY example} and \ref{lem: open LY example} then one could replace \eqref{hyp 4} of Lemma~\ref{lem: closed LY example} with \eqref{eq: on avg CPN1N2} and \eqref{open 2} of Lemma~\ref{lem: open LY example} with Lemma~\ref{lem: LY example Q1 Lemma}.
\end{remark}

\chapter[Perturbation formulae and quenched extreme value  theory]{Perturbation formulae for quenched random dynamics with applications to open systems and extreme value theory}\label{part 2}	
\normalsize
In this second chapter we first develop a perturbation theory for the quasi-compact linear operators cocycle $\cL_{\om}^n$ and its perturbed version $\cL_{\om, \eps}^n$ By defining $\lambda_{\om, \eps}$ as the leading Lyapunov multiplier of $\cL_{\om, \eps},$ we will  get a first order formula for $\lambda_{\om, \eps}$ in terms of $\lambda_{\om, 0}$ and in the size of the perturbation $\cL_{\om, 0}-\cL_{\om, \eps}.$ Whenever $\cL_{\om, \eps}$ is a transfer operator cocycle for a random map cocycle $T^n_{\om},$ it will be defined by the introduction of small random holes $H_{\om, \eps}.$ The first-order perturbation for the Lyapunov multiplier will therefore been used to obtain a quenched extreme value theory, by using a suitable spectral approach. By pursuing with the perturbative scheme, we will establish the existence of equilibrium states and conditionally invariant measures and we finally prove quenched limit theorems for equilibrium states arising from contracting potentials.
\section{Sequential perturbation theorem}\label{Sec: Gen Perturb Setup}
In this section we briefly depart from the setting of random dynamical systems to prove a general perturbation result for sequential operators acting on sequential Banach spaces. In particular, we will not require any measurability or notion of randomness in this section.  

Suppose that $\Om$ is a set and that the map $\sg:\Om\to\Om$ is invertible.
Furthermore, we suppose that there is a family of (fiberwise) normed vector spaces  $\set{\cB_\om, \norm{\spot}_{\cB_\om}}_{\om\in\Om}$ and dual spaces $\set{\cB_\om^*,\norm{\spot}_{\cB_\om^*}}_{\om\in\Om}$ such that for each $\om\in\Om$ and each $0\leq \ep\leq \ep_0$ there are operators $\cL_{\om,\ep}:\cB_\om\to\cB_{\sg\om}$ such that the following hold.\index{$\cL_{\om,\ep}$}
\begin{enumerate}[align=left,leftmargin=*,labelsep=\parindent]
\item[(\Gls*{P1})]\myglabel{P1}{P1}There exists a
function $C_1:\Om\to\RR_+$ such that for $f\in\cB_\om$  we have
\begin{align*}
\sup_{\ep\geq 0}\norm{\cL_{\om,\ep}(f)}_{\cB_{\sg\om}}\leq C_1(\om)\norm{f}_{\cB_\om}.
\end{align*}
\item[(\Gls*{P2})]\myglabel{P2}{P2} For each $\om\in\Om$ and $\ep\geq 0$ there is a functional $\nu_{\om,\ep}\in\cB_\om^*$, the dual space of $\cB_\om$, $\lm_{\om,\ep}\in\CC\bs\set{0}$, and $\phi_{\om,\ep}\in\cB_\om$ such that
\begin{align*}
\cL_{\om,\ep}(\phi_{\om,\ep})=\lm_{\om,\ep}\phi_{\sg\om,\ep}
\quad\text{ and }\quad
\nu_{\sg\om,\ep}(\cL_{\om,\ep}(f))=\lm_{\om,\ep}\nu_{\om,\ep}(f)
\end{align*}
for all $f\in\cB_\om$. Furthermore we assume that
$$
\nu_{\om,0}(\phi_{\om,\ep})=1.
$$
\item[(\Gls*{P3})]\myglabel{P3}{P3} There is an operator $Q_{\om,\ep}:\cB_\om\to\cB_{\sg\om}$ such that for each $f\in\cB_\om$ we have
\begin{align*}
\lm_{\om,\ep}^{-1}\cL_{\om,\ep}(f)=\nu_{\om,\ep}(f)\cdot\phi_{\sg\om,\ep}+Q_{\om,\ep}(f).
\end{align*}
Furthermore, we have
\begin{align*}
Q_{\om,\ep}(\phi_{\om,\ep})=0
\quad\text{ and }\quad
\nu_{\sg\om,\ep}(Q_{\om,\ep}(f))=0.
\end{align*}
Note that assumptions \eqref{P2} and \eqref{P3} together imply that
\begin{align*}
\nu_{\om,\ep}(\phi_{\om,\ep})=1.
\end{align*}
\item[(\Gls*{P4})]\myglabel{P4}{P4} There exists a function $C_2:\Om\to\RR_+$ such that
\begin{align*}
\sup_{\ep\geq 0}\norm{\phi_{\om,\ep}}_{\cB_{\om}}=C_2(\om)<\infty.
\end{align*}

For each $\om\in\Om$ and $\ep\geq 0$  we define the quantities
\begin{align}\label{def: DL_om}
\Dl_{\om,\ep}:=\nu_{\sg\om,0}\left((\cL_{\om,0}-\cL_{\om,\ep})(\phi_{\om,0})\right)
\end{align}\index{$\Dl_{\om,\ep}$}
and
\begin{align}\label{def: eta_om}
\eta_{\om,\ep}:=\norm{\nu_{\sg\om,0}(\cL_{\om,0}-\cL_{\om,\ep})}_{\cB_\om^*}.
\end{align}\index{$\eta_{\om,\ep}$}
\item[(\Gls*{P5})]\myglabel{P5}{P5} For each $\om\in\Om$ we have
\begin{align*}
\lim_{\ep\to 0}\eta_{\om,\ep}=0.
\end{align*}
\item[(\Gls*{P6})]\myglabel{P6}{P6} For each $\om\in\Om$ such that $\Dl_{\om,\ep}>0$ for every $\ep>0$, we have that there exists a  function $C_3:\Om\to\RR_+$ such that
\begin{align*}
\limsup_{\ep\to 0}\frac{\eta_{\om,\ep}}{\Dl_{\om,\ep}}:=C_3(\om) <\infty.
\end{align*}
Given $\om\in\Om$, if there is $\ep_0>0$ such that for each $\ep\leq \ep_0$ we have that $\Dl_{\om,\ep}=0$ then we also have that $\eta_{\om,\ep}=0$ for each $\ep\leq \ep_0$.
\item[(\Gls*{P7})]\myglabel{P7}{P7}  For each $\om\in\Om$ we have
\begin{align*}
\lim_{\ep \to 0}\nu_{\om,\ep}(\phi_{\om,0})=1.
\end{align*}	
\item[(\Gls*{P8})]\myglabel{P8}{P8} For each $\om\in\Om$ with $\Dl_{\om,\ep}>0$ for all $\ep>0$ we have
\begin{flalign*}
\lim_{n\to\infty}\limsup_{\ep \to 0} \Dl_{\om,\ep}^{-1}\nu_{\sg\om,0}\lt(\lt(\cL_{\om,0}-\cL_{\om,\ep}\rt)\lt(Q_{\sg^{-n}\om,\ep}^n\phi_{\sg^{-n}\om,0}\rt)\rt)=0.
\end{flalign*}
\item[(\Gls*{P9})]\myglabel{P9}{P9} For each $\om\in\Om$ with $\Dl_{\om,\ep}>0$ for all $\ep>0$ we have the limit
\begin{align*}
q_{\om,0}^{(k)}:=\lim_{\ep\to 0} \frac{\nu_{\sg\om,0}\left((\cL_{\om,0}-\cL_{\om,\ep})(\cL_{\sg^{-k}\om,\ep}^k)(\cL_{\sg^{-(k+1)}\om,0}-\cL_{\sg^{-(k+1)}\om,\ep})(\phi_{\sg^{-(k+1)}\om,0})\right)}{\nu_{\sg\om,0}\left((\cL_{\om,0}-\cL_{\om,\ep})(\phi_{\om,0})\right)}
\end{align*}\index{$q_{\om,0}^{(k)}$}
exists for each $k\geq 0$.
\end{enumerate}

Consider the identity
\begin{align}
\lm_{\om,0}-\lm_{\om,\ep}&=\lm_{\om,0}\nu_{\om,0}(\phi_{\om,\ep})-\nu_{\sg\om,0}(\lm_{\om,\ep}\phi_{\sg\om,\ep})\nonumber\\
&=\nu_{\sg\om,0}(\cL_{\om,0}(\phi_{\om,\ep}))-\nu_{\sg\om,0}(\cL_{\om,\ep}(\phi_{\om,\ep}))\nonumber\\
&=\nu_{\sg\om,0}\left((\cL_{\om,0}-\cL_{\om,\ep})(\phi_{\om,\ep})\right).\label{diff eigenvalues identity unif}
\end{align}
It then follows from \eqref{diff eigenvalues identity unif}, together with \eqref{def: eta_om} and assumption \eqref{P4}, that
\begin{align}\label{conv eigenvalues unif}
\absval{\lm_{\om,0}-\lm_{\om,\ep}}\leq C_2(\om)\eta_{\om,\ep}.
\end{align}
In particular, given assumption \eqref{P5}, \eqref{conv eigenvalues unif} implies
\begin{align}\label{limit of eigenvalues}
\lim_{\ep\to 0}\lm_{\om,\ep}=\lm_{\om,0}
\end{align}
for each $\om\in\Om$.
\begin{remark}
Note that \eqref{P6} and \eqref{conv eigenvalues unif} imply that
\begin{align*}
\limsup_{\ep\to 0}\frac{\absval{\lm_{\om,0}-\lm_{\om,\ep}}}{\Dl_{\om,\ep}}\leq C_2(\om)C_3(\om)<\infty.
\end{align*}
\end{remark}
For $n\geq 1$ we define the normalized operator $\~\cL_{\om,\ep}^n:\cB_\om\to\cB_{\sg^n\om}$ by
\begin{align*}
\~\cL_{\om,\ep}^n:=(\lm_{\om,\ep}^n)^{-1}\cL_{\om,\ep}^n
\end{align*}
where
\begin{align*}
\lm_{\om,\ep}^n:=\lm_{\om,\ep}\cdot\dots\cdot\lm_{\sg^{n-1}\om,\ep}.
\end{align*}
In view of assumption \eqref{P3}, induction gives
\begin{align*}
\~\cL_{\om,\ep}^n(f)=\nu_{\om,\ep}(f)\cdot \phi_{\sg^n\om,\ep}+Q_{\om,\ep}^n(f)
\end{align*}
for each $n\geq 1$ and all $f\in\cB_\om$.
Similarly with \eqref{diff eigenvalues identity unif}, we have that
\begin{align*}
\lm_{\om,0}^n-\lm_{\om,\ep}^n=\nu_{\sg^n\om,0}\left(\left(\cL_{\om,0}^n-\cL_{\om,\ep}^n\right)(\phi_{\om,\ep})\right).
\end{align*}
We now arrive at the main result of this section.
We prove a differentiability result for the perturbed quantities $\lm_{\om,\ep}$ as $\ep\to 0$ in the spirit of Keller and Liverani \cite{keller_rare_2009}.

\begin{theorem}\label{thm: GRPT}
Suppose that assumptions \eqref{P1}-\eqref{P8} hold.
If there is some $\ep_0>0$ such that $\Dl_{\om,\ep}=0$ for $\ep\leq \ep_0$ then
\begin{align*}
\lm_{\om,0}=\lm_{\om,\ep},
\end{align*}
or if \eqref{P9} holds then
\begin{align*}
\lim_{\ep\to 0}\frac{\lm_{\om,0}-\lm_{\om,\ep}}{\Dl_{\om,\ep}}=1-\sum_{k=0}^{\infty}(\lm_{\sg^{-(k+1)}\om,0}^{k+1})^{-1}q_{\om,0}^{(k)}.
\end{align*}
\end{theorem}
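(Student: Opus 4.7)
The first case is immediate: if $\Dl_{\om,\ep}=0$ for all $\ep\le\ep_0$, then the second clause of \eqref{P6} forces $\eta_{\om,\ep}=0$, and \eqref{conv eigenvalues unif} then gives $\lm_{\om,0}=\lm_{\om,\ep}$. The interesting case is when $\Dl_{\om,\ep}>0$ for every $\ep>0$, and my plan is to start from the identity \eqref{diff eigenvalues identity unif}, substitute a backward-iteration formula for $\phi_{\om,\ep}$ coming from \eqref{P3}, and then expand the resulting perturbed cocycle through a telescoping identity in which the numerators defining $q^{(k)}_{\om,0}$ in \eqref{P9} appear naturally.

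Concretely, I would fix $n\ge 1$ and apply \eqref{P3} at fiber $\sg^{-n}\om$ to the function $\phi_{\sg^{-n}\om,0}$, which yields
\begin{align*}
\phi_{\om,\ep}=\frac{\cL_{\sg^{-n}\om,\ep}^{n}\phi_{\sg^{-n}\om,0}}{\lm_{\sg^{-n}\om,\ep}^{n}\,\nu_{\sg^{-n}\om,\ep}(\phi_{\sg^{-n}\om,0})}-\frac{Q_{\sg^{-n}\om,\ep}^{n}\phi_{\sg^{-n}\om,0}}{\nu_{\sg^{-n}\om,\ep}(\phi_{\sg^{-n}\om,0})}.
\end{align*}
Plugging this into \eqref{diff eigenvalues identity unif}, the cocycle factor $\cL_{\sg^{-n}\om,\ep}^{n}\phi_{\sg^{-n}\om,0}$ is then expanded via the telescoping identity
\begin{align*}
\cL_{\sg^{-n}\om,\ep}^{n}=\cL_{\sg^{-n}\om,0}^{n}-\sum_{k=0}^{n-1}\cL_{\sg^{-k}\om,\ep}^{k}\,\bigl(\cL_{\sg^{-(k+1)}\om,0}-\cL_{\sg^{-(k+1)}\om,\ep}\bigr)\,\cL_{\sg^{-n}\om,0}^{n-k-1},
\end{align*}
after which the eigenrelation from \eqref{P2} collapses $\cL_{\sg^{-n}\om,0}^{n-k-1}\phi_{\sg^{-n}\om,0}$ to $\lm_{\sg^{-n}\om,0}^{n-k-1}\phi_{\sg^{-(k+1)}\om,0}$.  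Applying $\nu_{\sg\om,0}\circ(\cL_{\om,0}-\cL_{\om,\ep})$ to the resulting identity and dividing by $\Dl_{\om,\ep}$, while noting from \eqref{def: DL_om} that $\nu_{\sg\om,0}((\cL_{\om,0}-\cL_{\om,\ep})\phi_{\om,0})=\Dl_{\om,\ep}$, produces
\begin{align*}
\frac{\lm_{\om,0}-\lm_{\om,\ep}}{\Dl_{\om,\ep}}=\frac{1}{\nu_{\sg^{-n}\om,\ep}(\phi_{\sg^{-n}\om,0})}\left[\frac{\lm_{\sg^{-n}\om,0}^{n}}{\lm_{\sg^{-n}\om,\ep}^{n}}-\sum_{k=0}^{n-1}\frac{\lm_{\sg^{-n}\om,0}^{n-k-1}}{\lm_{\sg^{-n}\om,\ep}^{n}}\cdot\frac{N_{k}(\om,\ep)}{\Dl_{\om,\ep}}\right]-R_{n}(\om,\ep),
\end{align*}
where $N_{k}(\om,\ep)$ is exactly the numerator appearing in the definition of $q_{\om,0}^{(k)}$ in \eqref{P9} and $R_{n}(\om,\ep)$ collects the $Q$-remainder coming from the second piece of $\phi_{\om,\ep}$.

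Finally, for fixed $n$ I would pass to the limit $\ep\to 0$: \eqref{limit of eigenvalues} forces $\lm_{\sg^{-n}\om,\ep}^{n}\to\lm_{\sg^{-n}\om,0}^{n}$, \eqref{P7} makes the factor $\nu_{\sg^{-n}\om,\ep}(\phi_{\sg^{-n}\om,0})$ tend to $1$, and \eqref{P9} makes $N_{k}(\om,\ep)/\Dl_{\om,\ep}\to q_{\om,0}^{(k)}$, while the elementary telescoping $\lm_{\sg^{-n}\om,0}^{n-k-1}/\lm_{\sg^{-n}\om,0}^{n}=(\lm_{\sg^{-(k+1)}\om,0}^{k+1})^{-1}$ places the correct weights in front.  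Thus for every fixed $n$ the $\ep\to 0$ limit of $(\lm_{\om,0}-\lm_{\om,\ep})/\Dl_{\om,\ep}$ equals $1-\sum_{k=0}^{n-1}(\lm_{\sg^{-(k+1)}\om,0}^{k+1})^{-1}q_{\om,0}^{(k)}$ modulo an error bounded by $\limsup_{\ep\to 0}|R_{n}(\om,\ep)|$; letting $n\to\infty$ and invoking \eqref{P8} to kill this remainder produces the stated formula.  The main obstacle is precisely this interchange of limits, since for each fixed $n$ one only controls the $\ep\to 0$ behaviour up to the $R_n$-error, and \eqref{P8} is the hypothesis engineered to drive that error to zero as $n\to\infty$ uniformly near $\ep=0$, which simultaneously establishes convergence of the infinite series on the right and existence of the full limit on the left.
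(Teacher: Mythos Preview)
Your proposal is correct and follows essentially the same route as the paper: insert the formula $\phi_{\om,\ep}=\nu_{\sg^{-n}\om,\ep}(\phi_{\sg^{-n}\om,0})^{-1}\bigl(\~\cL_{\sg^{-n}\om,\ep}^{n}\phi_{\sg^{-n}\om,0}-Q_{\sg^{-n}\om,\ep}^{n}\phi_{\sg^{-n}\om,0}\bigr)$ into \eqref{diff eigenvalues identity unif}, telescope, and pass to the double limit using \eqref{P7}--\eqref{P9}. The only noteworthy difference is that you telescope the \emph{unnormalized} cocycle $\cL_{\sg^{-n}\om,\ep}^{n}$ directly, whereas the paper telescopes the normalized difference $\~\cL_{\sg^{-n}\om,0}^{n}-\~\cL_{\sg^{-n}\om,\ep}^{n}$ and then splits $\~\cL_{\sg^{-(k+1)}\om,0}-\~\cL_{\sg^{-(k+1)}\om,\ep}$ into an unnormalized piece plus a $(\lm_{\sg^{-k}\om,0}-\lm_{\sg^{-k}\om,\ep})$ correction; this generates an extra sum (the paper's \eqref{final calc second summand}) that must be shown to vanish via \eqref{P1}, \eqref{P4}--\eqref{P6}, a step your cleaner decomposition bypasses at the cost of carrying the harmless ratio $\lm_{\sg^{-n}\om,0}^{n}/\lm_{\sg^{-n}\om,\ep}^{n}\to 1$ in front.
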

\begin{proof}

Fix $\om\in\Om$.
First, we note that  assumption \eqref{P6} implies that if there is some $\ep_0$ such that $\Dl_{\om,\ep}=0$ for all $\ep\leq \ep_0$ (which implies, by assumption, that $\eta_{\om,\ep}=0$), then \eqref{conv eigenvalues unif} immediately implies that
\begin{align*}
\lm_{\om,0}=\lm_{\om,\ep}.
\end{align*}
Now we suppose that $\Dl_{\om,\ep}>0$ for all $\ep>0$. 
Using \eqref{P2}, \eqref{P3}, and \eqref{diff eigenvalues identity unif}, for each $n\geq 1$ and all $\om\in\Om$ we have
\begin{align}
&\nu_{\sg^{-n}\om,\ep}(\phi_{\sg^{-n}\om,0})(\lm_{\om,0}-\lm_{\om,\ep})=\nu_{\sg^{-n}\om,\ep}(\phi_{\sg^{-n}\om,0})\nu_{\sg\om,0}\left((\cL_{\om,0}-\cL_{\om,\ep})(\phi_{\om,\ep})\right)
\nonumber\\
&\quad
=\nu_{\sg\om,0}\left((\cL_{\om,0}-\cL_{\om,\ep})(\nu_{\sg^{-n}\om,\ep}(\phi_{\sg^{-n}\om,0})\cdot\phi_{\om,\ep})\right)
\nonumber\\
&\quad
=\nu_{\sg\om,0}\left((\cL_{\om,0}-\cL_{\om,\ep})(\nu_{\sg^{-n}\om,\ep}(\phi_{\sg^{-n}\om,0})\cdot\phi_{\om,\ep}+Q_{\sg^{-n}\om,\ep}^n(\phi_{\sg^{-n}\om,0})-Q_{\sg^{-n}\om,\ep}^n(\phi_{\sg^{-n}\om,0}))\right)
\nonumber\\
&\quad
=\nu_{\sg\om,0}\left((\cL_{\om,0}-\cL_{\om,\ep})(\~\cL_{\sg^{-n}\om,\ep}^n-Q_{\sg^{-n}\om,\ep}^n)(\phi_{\sg^{-n}\om,0})\right)
\nonumber\\
&\quad
=\nu_{\sg\om,0}\left((\cL_{\om,0}-\cL_{\om,\ep})(\~\cL_{\sg^{-n}\om,0}^n-\~\cL_{\sg^{-n}\om,0}^n+\~\cL_{\sg^{-n}\om,\ep}^n-Q_{\sg^{  -n}\om,\ep}^n)(\phi_{\sg^{-n}\om,0})\right)
\nonumber\\
&\quad
=\nu_{\sg\om,0}\left((\cL_{\om,0}-\cL_{\om,\ep})(\phi_{\om,0})\right)
\nonumber\\
&\qquad
-\nu_{\sg\om,0}\left((\cL_{\om,0}-\cL_{\om,\ep})(\~\cL_{\sg^{-n}\om,0}^n-\~\cL_{\sg^{-n}\om,\ep}^n)(\phi_{\sg^{-n}\om,0})\right)
\nonumber\\
&\qquad\qquad
-\nu_{\sg\om,0}\left((\cL_{\om,0}-\cL_{\om,\ep})(Q_{\sg^{-n}\om,\ep}^n(\phi_{\sg^{-n}\om,0}))\right)
\nonumber\\
&\quad
=\Dl_{\om,\ep}-\nu_{\sg\om,0}\left((\cL_{\om,0}-\cL_{\om,\ep})(\~\cL_{\sg^{-n}\om,0}^n-\~\cL_{\sg^{-n}\om,\ep}^n)(\phi_{\sg^{-n}\om,0})\right)
\label{D_2 estimate unif}\\
&\qquad
-\nu_{\sg\om,0}\left((\cL_{\om,0}-\cL_{\om,\ep})(Q_{\sg^{-n}\om,\ep}^n(\phi_{\sg^{-n}\om,0}))\right).
\label{Big O estimate unif}
\end{align}
Since
\begin{align*}
(\~\cL_{\sg^{-n}\om,0}^n-\~\cL_{\sg^{-n}\om,\ep}^n)(\phi_{\sg^{-n}\om,0})
=
\phi_{\om,0} - \~\cL_{\sg^{-n}\om,\ep}^n(\phi_{\sg^{-n}\om,0}),
\end{align*}
using a telescoping argument, the second term of \eqref{D_2 estimate unif}
$$
D_2:=-\nu_{\sg\om,0}\left((\cL_{\om,0}-\cL_{\om,\ep})(\~\cL_{\sg^{-n}\om,0}^n-\~\cL_{\sg^{-n}\om,\ep}^n)(\phi_{\sg^{-n}\om,0})\right)
$$
can be rewritten as
\begin{align}
D_2&=-\sum_{k=0}^{n-1}\nu_{\sg\om,0}\left((\cL_{\om,0}-\cL_{\om,\ep})(\~\cL_{\sg^{-k}\om,\ep}^k)(\~\cL_{\sg^{-(k+1)}\om,0}-\~\cL_{\sg^{-(k+1)}\om,\ep})(\phi_{\sg^{-(k+1)}\om,0})\right)
\nonumber\\
&
=-\sum_{k=0}^{n-1}\nu_{\sg\om,0}\left((\cL_{\om,0}-\cL_{\om,\ep})(\~\cL_{\sg^{-k}\om,\ep}^k)(\~\cL_{\sg^{-(k+1)}\om,0}\right.
\nonumber\\
&\qquad
\left.-\lm_{\sg^{-(k+1)}\om,0}^{-1}\cL_{\sg^{-(k+1)}\om,\ep}+\lm_{\sg^{-(k+1)}\om,0}^{-1}\cL_{\sg^{-(k+1)}\om,\ep}-\~\cL_{\sg^{-(k+1)}\om,\ep})(\phi_{\sg^{-(k+1)}\om,0})\right)
\nonumber\\
&
=-\sum_{k=0}^{n-1}\nu_{\sg\om,0}\left((\cL_{\om,0}-\cL_{\om,\ep})(\~\cL_{\sg^{-k}\om,\ep}^k)(\~\cL_{\sg^{-(k+1)}\om,0}\right.
\label{first summand in D2}	\\
&\qquad
\left.-\lm_{\sg^{-(k+1)}\om,0}^{-1}\cL_{\sg^{-(k+1)}\om,\ep}+\lm_{\sg^{-(k+1)}\om,0}^{-1}\cL_{\sg^{-(k+1)}\om,\ep})(\phi_{\sg^{-(k+1)}\om,0})\right)
\nonumber\\
&\qquad\qquad
+\sum_{k=0}^{n-1}\nu_{\sg\om,0}\left((\cL_{\om,0}-\cL_{\om,\ep})(\~\cL_{\sg^{-k}\om,\ep}^k)(\~\cL_{\sg^{-(k+1)}\om,\ep})(\phi_{\sg^{-(k+1)}\om,0})\right).
\label{second summand in D2}
\end{align}
Reindexing, the second summand of the above calculation, and multiplying by $1$, \eqref{second summand in D2}, can be rewritten as
\begin{align}
\sum_{k=0}^{n-1}&\nu_{\sg\om,0}\left((\cL_{\om,0}-\cL_{\om,\ep})(\~\cL_{\sg^{-(k+1)}\om,\ep}^{k+1})(\phi_{\sg^{-(k+1)}\om,0})\right)
\nonumber\\
&\quad
=\sum_{k=1}^{n}\nu_{\sg\om,0}\left((\cL_{\om,0}-\cL_{\om,\ep})(\~\cL_{\sg^{-k}\om,\ep}^{k})(\phi_{\sg^{-k}\om,0})\right)
\nonumber\\
&\quad
=\sum_{k=1}^{n}\lm_{\sg^{-k}\om,0}^{-1}\lm_{\sg^{-k}\om,0}\nu_{\sg\om,0}\left((\cL_{\om,0}-\cL_{\om,\ep})(\~\cL_{\sg^{-k}\om,\ep}^{k})(\phi_{\sg^{-k}\om,0})\right).
\label{est1 of D2}
\end{align}
And \eqref{first summand in D2} from above can again be broken into two parts and then rewritten as
\begin{align}
&-\sum_{k=0}^{n-1}\lm_{\sg^{-(k+1)}\om,0}^{-1}\nu_{\sg\om,0}\left((\cL_{\om,0}-\cL_{\om,\ep})(\~\cL_{\sg^{-k}\om,\ep}^k)(\cL_{\sg^{-(k+1)}\om,0}-\cL_{\sg^{-(k+1)}\om,\ep})(\phi_{\sg^{-(k+1)}\om,0})\right)
\nonumber\\
&\quad
-\sum_{k=1}^{n}\lm_{\sg^{-k}\om,0}^{-1}\lm_{\sg^{-k}\om,\ep}\nu_{\sg\om,0}\left((\cL_{\om,0}-\cL_{\om,\ep})(\~\cL_{\sg^{-k}\om,\ep}^{k})(\phi_{\sg^{-k}\om,0})\right),
\label{est2 of D2}
\end{align}
where in the second sum we have used the fact that $\cL_{\sg^{-k}\om,\ep}=\lm_{\sg^{-k}\om,\ep}\~\cL_{\sg^{-k}\om,\ep}$.
Altogether using \eqref{est1 of D2} and \eqref{est2 of D2}, $D_2$ we can be written as
\begin{align}
D_2&=
-\sum_{k=0}^{n-1}\lm_{\sg^{-(k+1)}\om,0}^{-1}\nu_{\sg\om,0}\left((\cL_{\om,0}-\cL_{\om,\ep})(\~\cL_{\sg^{-k}\om,\ep}^k)(\cL_{\sg^{-(k+1)}\om,0}-\cL_{\sg^{-(k+1)}\om,\ep})(\phi_{\sg^{-(k+1)}\om,0})\right)
\nonumber\\
&\qquad
-\sum_{k=1}^{n}\lm_{\sg^{-k}\om,0}^{-1}\lm_{\sg^{-k}\om,\ep}\nu_{\sg\om,0}\left((\cL_{\om,0}-\cL_{\om,\ep})(\~\cL_{\sg^{-k}\om,\ep}^{k})(\phi_{\sg^{-k}\om,0})\right)
\nonumber\\
&\qquad\quad
+\sum_{k=1}^{n}\lm_{\sg^{-k}\om,0}^{-1}\lm_{\sg^{-k}\om,0}\nu_{\sg\om,0}\left((\cL_{\om,0}-\cL_{\om,\ep})(\~\cL_{\sg^{-k}\om,\ep}^{k})(\phi_{\sg^{-k}\om,0})\right)
\nonumber\\
&=
-\sum_{k=0}^{n-1}\lm_{\sg^{-(k+1)}\om,0}^{-1}\nu_{\sg\om,0}\left((\cL_{\om,0}-\cL_{\om,\ep})(\~\cL_{\sg^{-k}\om,\ep}^k)(\cL_{\sg^{-(k+1)}\om,0}-\cL_{\sg^{-(k+1)}\om,\ep})(\phi_{\sg^{-(k+1)}\om,0})\right)
\nonumber\\
&\quad
=+\sum_{k=1}^{n}\lm_{\sg^{-k}\om,0}^{-1}\left(\lm_{\sg^{-k}\om,0}-\lm_{\sg^{-k}\om,\ep}\right)\nu_{\sg\om,0}\left((\cL_{\om,0}-\cL_{\om,\ep})(\~\cL_{\sg^{-k}\om,\ep}^{k})(\phi_{\sg^{-k}\om,0})\right).\label{final est D2}
\end{align}
Now for each $k\geq 0$ we let
\begin{align}\label{eq: def of q_ep}
q_{\om,\ep}^{(k)}:=\frac{\nu_{\sg\om,0}\left((\cL_{\om,0}-\cL_{\om,\ep})(\cL_{\sg^{-k}\om,\ep}^k)(\cL_{\sg^{-(k+1)}\om,0}-\cL_{\sg^{-(k+1)}\om,\ep})(\phi_{\sg^{-(k+1)}\om,0})\right)}{\nu_{\sg\om,0}\left((\cL_{\om,0}-\cL_{\om,\ep})(\phi_{\om,0})\right)}.
\end{align}\index{$q_{\om,\ep}^{(k)}$}
Using \eqref{final est D2} we can continue our rephrasing of $\nu_{\sg^{-n}\om,\ep}(\phi_{\sg^{-n}\om,0})(\lm_{\om,0}-\lm_{\om,\ep})$ from \eqref{D_2 estimate unif} to get
\begin{align}
&\nu_{\sg^{-n}\om,\ep}(\phi_{\sg^{-n}\om,0})(\lm_{\om,0}-\lm_{\om,\ep})\nonumber
\\
&\quad=\Dl_{\om,\ep}-\sum_{k=0}^{n-1}\lm_{\sg^{-(k+1)}\om,0}^{-1}\nu_{\sg\om,0}\left((\cL_{\om,0}-\cL_{\om,\ep})(\~\cL_{\sg^{-k}\om,\ep}^k)(\cL_{\sg^{-(k+1)}\om,0}-\cL_{\sg^{-(k+1)}\om,\ep})(\phi_{\sg^{-(k+1)}\om,0})\right)
\nonumber\\
&\qquad
+\sum_{k=1}^{n}\lm_{\sg^{-k}\om,0}^{-1}\left(\lm_{\sg^{-k}\om,0}-\lm_{\sg^{-k}\om,\ep}\right)\nu_{\sg\om,0}\left((\cL_{\om,0}-\cL_{\om,\ep})(\~\cL_{\sg^{-k}\om,\ep}^{k})(\phi_{\sg^{-k}\om,0})\right)
\nonumber\\
&\quad\qquad -\nu_{\sg\om,0}\left((\cL_{\om,0}-\cL_{\om,\ep})(Q_{\sg^{-n}\om,\ep}^n(\phi_{\sg^{-n}\om,0}))\right)
\nonumber\\
&\quad=\Dl_{\om,\ep}\left(1-\sum_{k=0}^{n-1}\lm_{\sg^{-(k+1)}\om,0}^{-1}(\lm_{\sg^{-k}\om,\ep}^k)^{-1}q_{\om,\ep}^{(k)}\right)
\nonumber\\
&\qquad
+\sum_{k=1}^{n}\lm_{\sg^{-k}\om,0}^{-1}\left(\lm_{\sg^{-k}\om,0}-\lm_{\sg^{-k}\om,\ep}\right)\nu_{\sg\om,0}\left((\cL_{\om,0}-\cL_{\om,\ep})(\~\cL_{\sg^{-k}\om,\ep}^{k})(\phi_{\sg^{-k}\om,0})\right)
\nonumber\\
&\quad\qquad 	-\nu_{\sg\om,0}\left((\cL_{\om,0}-\cL_{\om,\ep})(Q_{\sg^{-n}\om,\ep}^n(\phi_{\sg^{-n}\om,0}))\right).\label{main thm long calc}
\end{align}
Dividing the calculation culminating in \eqref{main thm long calc} by $\Dl_{\om,\ep}$ on both sides gives
\begin{align}
\label{maineq}
&\nu_{\sg^{-n}\om,\ep}(\phi_{\sg^{-n}\om,0})\frac{\lm_{\om,0}-\lm_{\om,\ep}}{\Dl_{\om,\ep}}
\\
&\quad=1-\sum_{k=0}^{n-1}\lm_{\sg^{-(k+1)}\om,0}^{-1}(\lm_{\sg^{-k}\om,\ep}^k)^{-1}q_{\om,\ep}^{(k)}
\nonumber\\
&\quad
+\Dl_{\om,\ep}^{-1}\sum_{k=1}^{n}\lm_{\sg^{-k}\om,0}^{-1}\left(\lm_{\sg^{-k}\om,0}-\lm_{\sg^{-k}\om,\ep}\right)\nu_{\sg\om,0}\left((\cL_{\om,0}-\cL_{\om,\ep})(\~\cL_{\sg^{-k}\om,\ep}^{k})(\phi_{\sg^{-k}\om,0})\right)
\label{final calc second summand}\\
&\qquad -	\Dl_{\om,\ep}^{-1}\nu_{\sg\om,0}\left((\cL_{\om,0}-\cL_{\om,\ep})(Q_{\sg^{-n}\om,\ep}^n(\phi_{\sg^{-n}\om,0}))\right).
\label{final calc third summand}
\end{align}
Assumption \eqref{P8} ensures that \eqref{final calc third summand} goes to zero as $\ep\to 0$ and $n\to\infty$. Now, using \eqref{def: eta_om}, \eqref{conv eigenvalues unif}, \eqref{P1}, and \eqref{P4} we bound \eqref{final calc second summand} by
\begin{align}
&\Dl_{\om,\ep}^{-1}\sum_{k=1}^n\absval{\lm_{\sg^{-k}\om,0}}^{-1}\absval{\lm_{\sg^{-k}\om,0}-\lm_{\sg^{-k}\om,\ep}}\eta_{\om,\ep}\norm{\~\cL_{\sg^{-k}\om,\ep}^k(\phi_{\sg^{-k}\om,0})}_{\cB_{\om}}
\nonumber\\
&\qquad\qquad
\leq
\frac{\eta_{\om,\ep}}{\Dl_{\om,\ep}}
\sum_{k=1}^nC_2(\sg^{-k}\om)\absval{\lm_{\sg^{-k}\om,0}}^{-1}\eta_{\sg^{-k}\om,\ep}\absval{\lm_{\sg^{-k}\om,\ep}^k}^{-1}\norm{\cL_{\sg^{-k}\om,\ep}^k(\phi_{\sg^{-k}\om,0})}_{\cB_{\om}}
\nonumber\\
&\qquad\qquad
\leq
\frac{\eta_{\om,\ep}}{\Dl_{\om,\ep}}
\sum_{k=1}^n(C_2(\sg^{-k}\om))^2C_1^k(\sg^{-k}\om)\absval{\lm_{\sg^{-k}\om,0}}^{-1}\eta_{\sg^{-k}\om,\ep}\absval{\lm_{\sg^{-k}\om,\ep}^k}^{-1}.
\label{theorem sum est}
\end{align}
In view of \eqref{P5}, \eqref{P6}, and \eqref{limit of eigenvalues}, for fixed $n$, we may continue from \eqref{theorem sum est} and let $\ep\to 0$ to see that
\begin{align}\label{second summand goes to 0 for final calc}
\lim_{\ep \to 0}
\frac{\eta_{\om,\ep}}{\Dl_{\om,\ep}}
\sum_{k=1}^n(C_2(\sg^{-k}\om))^2C_1^k(\sg^{-k}\om)\absval{\lm_{\sg^{-k}\om,0}}^{-1}\eta_{\sg^{-k}\om,\ep}\absval{\lm_{\sg^{-k}\om,\ep}^k}^{-1}=0.
\end{align}
In light of \eqref{P6}, \eqref{P7}, 
\eqref{second summand goes to 0 for final calc}, and \eqref{maineq}--\eqref{final calc third summand} together with \eqref{P8} and \eqref{P9}, we see that first letting $\ep\to 0$ and then $n\to\infty$ gives
\begin{align*}
\lim_{\ep\to 0}\frac{\lm_{\om,0}-\lm_{\om,\ep}}{\Dl_{\om,\ep}}
= 1-\sum_{k=0}^{\infty}(\lm_{\sg^{-(k+1)}\om,0}^{k+1})^{-1}q_{\om,0}^{(k)}
\end{align*}
as desired.
\end{proof}
In the sequel we will refer to the quantity on the right hand side of the last equation in the proof of the previous theorem by $\ta_{\om,0}$, i.e. we set
\begin{align}\label{eq: def of theta_0}
\ta_{\om,0}:=1-\sum_{k=0}^{\infty}(\lm_{\sg^{-(k+1)}\om,0}^{k+1})^{-1}q_{\om,0}^{(k)}.
\end{align}\index{$\ta_{\om,0}$}

\section{Random open systems}\label{sec:ROS}
We now return to the general random setting of Section \ref{sec:IntroPrelims}. Suppose that $(\mathlist{\bcomma}{\Om, m, \sg, \cJ_0, T, \cB, \cL_0, \nu_0, \phi_0})$ is a closed random dynamical system as in Definition \ref{def CRS}.
That is we have a base dynamical system $(\Om,\sF,m,\sg)$, complete metrisable spaces $\cJ_{\om,0}$ such that the map $\Om\ni \om\mapsto\cJ_{\om,0}$ is a closed random set, and maps $T_\om:\cJ_{\om,0}\to\cJ_{\sg\om,0}$. In addition, we assume that conditions \eqref{M1}, \eqref{M2}, and \eqref{CCM} hold, and that the transfer operator $\cL_{\om,0}$ acting on the family of Banach spaces $\set{\cB_\om,\norm{\spot}_{\cB_\om}}_{\om\in\Om}$ is given by
\begin{align*}
\cL_{\om,0}(f)(x):=\sum_{y\in T_\om^{-1}(x)}f(y)g_{\om,0}(y), \quad f\in\cB_\om, \, x\in\cJ_{\sg\om,0},
\end{align*}
where $g_{\om,0}(x)=e^{\vp_{\om,0}(x)}$ for a suitably chosen random potential $\vp_{\om,0}$.
We will also assume that the fiberwise Banach spaces $\cB_\om\sub L^\infty(\nu_{\om,0})$, where $\nu_0=(\nu_{\om,0})_{\om\in\Om}$ is the random probability measure given by \eqref{CCM}. We denote the norm on $L^\infty(\nu_{\om,0})$ by $\|\cdot\|_{\infty,\om}$.

\, 

Now for each $\ep>0$ we let $H_\ep\sub \cJ_0$ be measurable with respect to the product $\sg$-algebra $\sF\otimes\sB$ on $\cJ_0$ such that 
\begin{enumerate}[align=left,leftmargin=*,labelsep=\parindent]
\item[(\Gls*{A})]\myglabel{A}{A}
$H_\ep'\sub H_\ep$ for each $0<\ep'\leq \ep$.
\end{enumerate}
Then the sets $H_{\om,\ep}\sub \cJ_{\om,0}$ are uniquely determined by the condition that 
\begin{align*}
\set{\om}\times H_{\om,\ep}=H_\ep\cap\lt(\set{\om}\times \cJ_{\om,0}\rt),
\end{align*}
or equivalently that 
\begin{align*}
H_{\om,\ep}=\pi_2(H_\ep\cap(\set{\om}\times \cJ_{\om,0})),
\end{align*}
where $\pi_2:\cJ_0\to \cJ_{\om,0}$ is the projection onto the second coordinate. The sets $H_{\om,\ep}$ are then $\nu_{\om,0}$-measurable, and \eqref{A} implies that 
\begin{enumerate}[align=left,leftmargin=*,labelsep=\parindent]
\item[(\Gls*{A'})]\myglabel{A'}{A'} $H_{\om,\ep'}\sub H_{\om,\ep}$ for each $\ep'\leq \ep$ and each $\om\in\Om$.
\end{enumerate}
For each $\ep>0$ we set 
$$
\Om_{+,\ep}:=\set{\om\in\Om:\mu_{\om,0}(H_{\om,\ep})>0}
$$ 
and then define
\begin{align}
\label{def Om+}
\Om_+:=\bigcap_{\ep>0}\Om_{+,\ep}.
\end{align}\index{$\Om_+$}
\begin{remark}
Note that the set $\Om_+$ is measurable as it is the intersection of a decreasing family of measurable sets and it is not necessarily $\sg$-invariant.
\end{remark}

For each $\om$ and each $\ep> 0$ we define the fibers $\cJ_{\om,\ep}:=\cJ_{\om,0}\bs H_{\om,\ep}$ and 
\begin{align*}
\cJ_\ep:=\cJ_0\bs H_\ep=\union_{\om\in\Om}\{\om\}\times\cJ_{\om,\ep}.
\end{align*}
We define the surviving sets $X_{\om,n,\ep}$, $X_{\om,\infty,\ep}$, $\cX_{n,\ep}$, and $\cX_{\infty,\ep}$ as in Section \ref{sec:IntroPrelims}.\index{$X_{\om,n,\ep}$}\index{$\cX_{n,\ep}$}\index{$X_{\om,\infty,\ep}$}\index{$\cX_{\infty,\ep}$}
\begin{remark}\label{rem check X for ROS}
Note that since \eqref{A} implies that $X_{\om,\infty,\ep}\sub X_{\om,\infty,\ep'}$ for all $\ep'<\ep$, \eqref{cond X} holds, i.e. $X_{\om,\infty,\ep}\neq\emptyset$ for $m$-a.e. $\om\in\Om$,  if there exists $\ep>0$ such that $X_{\om,\infty,\ep}\neq\emptyset$ for $m$-a.e. $\om\in\Om$. Furthermore, since $T_\om(X_{\om,\infty,\ep})\sub X_{\sg\om,\infty,\ep}$, if $X_{\om,\infty,\ep}\neq\emptyset$ then $X_{\sg^N\om,\infty,\ep'}\neq\emptyset$ for each $N\geq 1$ and $\ep'\leq \ep$. As $\cX_{\infty,\ep}$ is forward invariant we have that $X_{\om,\infty,\ep}\neq\emptyset$ not only implies that $\cX_{\infty,\ep}\neq\emptyset$, but also that $\cX_{\infty,\ep}$ is infinite. 
\end{remark}
Now for each $\ep>0$ define the open transfer operator $\cL_{\om,\ep}:\cB_\om\to\cB_{\sg\om}$ by 
\begin{align*}
\cL_{\om,\ep}(f):=\cL_{\om,0}(\ind_{\cJ_{\om,\ep}} f), \qquad f\in\cB_\om.
\end{align*}\index{$\cL_{\om,\ep}$}
Iterates of the perturbed operator $\cL_{\om,\ep}^n:\cB_{\om}\to\cB_{\sg^n\om}$ are given by
\begin{align*}
\cL_{\om,\ep}^n:=\cL_{\sg^{n-1}\om,\ep}\circ\dots\circ\cL_{\om,\ep},
\end{align*}
which, using induction, we may write as
\begin{align*}
\cL_{\om,\ep}^n(f)=\cL_{\om,0}^n\left(f\cdot\hat{X}_{\om,n-1,\ep}\right), \qquad f\in\cB_\om.
\end{align*}
For every $\ep\geq 0$ we let 
$$
\~\cL_{\om,\ep}:=\lm_{\om,\ep}^{-1}\cL_{\om,\ep}.
$$\index{$\~\cL_{\om,\ep}$}

For the remainder of the manuscript we will suppose that $(\mathlist{\bcomma}{\Om, m, \sg, \cJ_0, T, \cB, \cL_0, \nu_0, \phi_0})$ is a closed random dynamical system as Definition \ref{def CRS}, for each $\ep>0$ $H_\ep\sub\cJ_0$ such that condition \eqref{A} holds, and that for each $\ep>0$, $(\mathlist{\bcomma}{\Om, m, \sg, \cJ_0, T, \cB, \cL_0, \nu_0, \phi_0, H_\ep})$ is a random open system as in Definition \ref{def ROS prelim}. In summary, we assume that condition \eqref{A} holds in addition to the assumptions \eqref{M1}, \eqref{M2}, \eqref{CCM}, and \eqref{cond X} from Section \ref{sec:IntroPrelims}.

\subsection{Some of the Terms from Sequential Perturbation Theorem}

In this short section we develop a more thorough understanding of some of the vital terms in the general sequential perturbation theorem of Section \ref{Sec: Gen Perturb Setup} in the setting of random open systems.
We begin by calculating the quantities $\eta_{\om,\ep}$ and $\Dl_{\om,\ep}$ from Section~\ref{Sec: Gen Perturb Setup}. In particular, we have that 
\begin{align}
\Dl_{\om,\ep}&:=\nu_{\sg\om,0}\left((\cL_{\om,0}-\cL_{\om,\ep})(\phi_{\om,0})\right)
\nonumber\\
&=\nu_{\sg\om,0}\left(\cL_{\om,0}(\phi_{\om,0}\cdot\ind_{H_{\om,\ep}})\right)
\nonumber\\
&=\lm_{\om,0}\cdot\nu_{\om,0}(\phi_{\om,0}\cdot\ind_{H_{\om,\ep}})
\nonumber\\
&=\lm_{\om,0}\cdot\mu_{\om,0}(H_{\om,\ep})
\label{eq: Dl = mu H}
\end{align}\index{$\Dl_{\om,\ep}$}
and
\begin{align}
\eta_{\om,\ep}:&=\norm{\nu_{\sg\om,0}\left(\cL_{\om,0}-\cL_{\om,\ep}\right)}_{\cB_\om}
\nonumber
\\
&=\sup_{\norm{\psi}_{\cB_\om}\leq 1}\nu_{\sg\om,0}\left(\cL_{\om,0}(\psi\cdot\ind_{H_{\om,\ep}})\right)
\nonumber
\\
&=\lm_{\om,0}\cdot\sup_{\norm{\psi}_{\cB_\om}\leq 1}\nu_{\om,0}\left(\psi\cdot\ind_{H_{\om,\ep}}\right).
\label{etaineq0}
\end{align}\index{$\eta_{\om,\ep}$}

For $\ep>0$ and $\mu_{\om,0}(H_{\om,\ep})>0$, calculating $q_{\om,\ep}^{(k)}$ in this setting gives
\begin{align*}
&q_{\om,\ep}^{(k)}
:=\frac
{\nu_{\sg\om,0}\left((\cL_{\om,0}-\cL_{\om,\ep})(\cL_{\sg^{-k}\om,\ep}^k)(\cL_{\sg^{-(k+1)}\om,0}-\cL_{\sg^{-(k+1)}\om,\ep})(\phi_{\sg^{-(k+1)}\om,0})\right)}
{\nu_{\sg\om,0}\left((\cL_{\om,0}-\cL_{\om,\ep})(\phi_{\om,0})\right)}
\\
&=\frac{\lm_{\sg^{-(k+1)}\om,0}^{k+1}\cdot\mu_{\sg^{-(k+1)}\om,0}\left(
	T_{\sg^{-(k+1)}\om}^{-(k+1)}(H_{\om,\ep})
	\cap\left(\bigcap_{j=1}^k T_{\sg^{-(k+1)}\om}^{-(k+1)+j} (H_{\sg^{-j}\om,\ep}^c)\right)
	\cap H_{\sg^{-(k+1)}\om,\ep}
	\right)
}
{\mu_{\om,0}(H_{\om,\ep})}
\\
&=\frac{\lm_{\sg^{-(k+1)}\om,0}^{k+1}\cdot\mu_{\sg^{-(k+1)}\om,0}\left(
	T_{\sg^{-(k+1)}\om}^{-(k+1)}(H_{\om,\ep})
	\cap\left(\bigcap_{j=1}^k T_{\sg^{-(k+1)}\om}^{-(k+1)+j} (H_{\sg^{-j}\om,\ep}^c)\right)
	\cap H_{\sg^{-(k+1)}\om,\ep}
	\right)
}
{\mu_{\sg^{-(k+1)}\om,0}\left(
	T_{\sg^{-(k+1)}\om}^{-(k+1)}(H_{\om,\ep})\right)}.		
\end{align*}
For notational convenience we define the quantity $\hat q_{\om,\ep}^{(k)}$ by 
\begin{align}\label{def of hat q}
\hat q_{\om,\ep}^{(k)}:=
\frac{\mu_{\sg^{-(k+1)}\om,0}\left(
	T_{\sg^{-(k+1)}\om}^{-(k+1)}(H_{\om,\ep})
	\cap\left(\bigcap_{j=1}^k T_{\sg^{-(k+1)}\om}^{-(k+1)+j} (H_{\sg^{-j}\om,\ep}^c)\right)
	\cap H_{\sg^{-(k+1)}\om,\ep}
	\right)
}
{\mu_{\sg^{-(k+1)}\om,0}\left(
	T_{\sg^{-(k+1)}\om}^{-(k+1)}(H_{\om,\ep})\right)},
\end{align}
and thus we have that 
\begin{align*}
\hat q_{\om,\ep}^{(k)}=\left(\lm_{\sg^{-(k+1)}\om,0}^{k+1}\right)^{-1} q_{\om,\ep}^{(k)}.
\end{align*}\index{$\hat q_{\om,\ep}^{(k)}$}
In light of \eqref{def of hat q}, one can think of $\hat q_{\om,\ep}^{(k)}$ as the conditional probability (on the fiber $\sg^{-(k+1)}\om$) of a point starting in the hole $H_{\sg^{-(k+1)}\om,\ep}$, leaving and avoiding holes for $k$ steps, and finally landing in the hole $H_{\om,\ep}$ after exactly $k+1$ steps conditioned on the trajectory of the point landing in $H_{\om,\ep}$.
Similarly, for $\om\in\Om_+$, we set 
\begin{align*}
\hat q_{\om,0}^{(k)}:=\left(\lm_{\sg^{-(k+1)}\om,0}^{k+1}\right)^{-1} q_{\om,0}^{(k)}.
\end{align*}\index{$\hat q_{\om,0}^{(k)}$}

\section[Quenched perturbation theorem and escape rate asymptotics]{Quenched perturbation theorem and escape rate asymptotics for random open systems}
\label{sec:goodrandom}
In this section we introduce versions of the assumptions \eqref{P1}--\eqref{P9} tailored to random open systems. Under these assumptions we then prove a derivative result akin to Theorem~\ref{thm: GRPT} for random open systems as well as a similar derivative result for the escape rate.

Suppose $(\mathlist{\bcomma}{\Om, m, \sg, \cJ_0, T, \cB, \cL_0, \nu_0, \phi_0, H_\ep})$ is a random open system. 
We assume the following conditions hold:
\begin{enumerate}[align=left,leftmargin=*,labelsep=\parindent]
\item[(\Gls*{C1})]\myglabel{C1}{C1}There exists a measurable $m$-a.e. finite function $C_1:\Om\to\RR_+$ such that for $f\in\cB_\om$ we have
\begin{align*}
\sup_{\ep\geq 0}\norm{\cL_{\om,\ep}(f)}_{\cB_{\sg\om}}\leq C_1(\om)\norm{f}_{\cB_\om}.
\end{align*}
\item[(\Gls*{C2})]\myglabel{C2}{C2} For each $\ep\geq 0$ there is a random measure  $\set{\nu_{\om,\ep}}_{\om\in\Om}$ supported in $\cJ_{0}$
and measurable functions $\lm_{\ep}:\Om\to(0,\infty)$ 
with $\log\lm_{\om,\ep}\in L^1(m)$ and $\phi_{\ep}:\cJ_0\to \RR$ such that 
\begin{align*}
\cL_{\om,\ep}(\phi_{\om,\ep})=\lm_{\om,\ep}\phi_{\sg\om,\ep}
\quad\text{ and }\quad
\nu_{\sg\om,\ep}(\cL_{\om,\ep}(f))=\lm_{\om,\ep}\nu_{\om,\ep}(f)
\end{align*}
for all $f\in\cB_\om$. Furthermore we assume that for $m$-a.e. $\om\in\Om$
$$
\nu_{\om,0}(\phi_{\om,\ep})=1
\quad\text{ and } \quad
\nu_{\om,0}(\ind)=1.
$$
\item[(\Gls*{C3})]\myglabel{C3}{C3} There is an operator $Q_{\om,\ep}:\cB_\om\to\cB_{\sg\om}$ such that for $m$-a.e. $\om\in\Om$ and each $f\in\cB_\om$ we have
\begin{align*}
\lm_{\om,\ep}^{-1}\cL_{\om,\ep}(f)=\nu_{\om,\ep}(f)\cdot\phi_{\sg\om,\ep}+Q_{\om,\ep}(f).
\end{align*}
Furthermore, for $m$-a.e. $\om\in\Om$ we have
\begin{align*}
Q_{\om,\ep}(\phi_{\om,\ep})=0
\quad\text{ and }\quad
\nu_{\sg\om,\ep}(Q_{\om,\ep}(f))=0.
\end{align*}
\item[(\Gls*{C4})]\myglabel{C4}{C4} 
For each $f\in\cB$ there exist measurable functions $C_f:\Om\to(0,\infty)$ and $\al:\Om\times \NN\to(0,\infty)$ with $\al_\om(N)\to 0$ as $N\to\infty$ such that for $m$-a.e. $\om\in\Om$ and all $N\in\NN$
\begin{align*}
\sup_{\ep\geq 0}\norm{Q_{\om,\ep}^N f_{\om}}_{\infty,\sg^N\om}
&\leq 
C_f(\om)\al_\om(N)\norm{f_{\om}}_{\cB_{\om}}, 
\\
\sup_{\ep\geq 0}\norm{Q_{\sg^{-N}\om,\ep}^Nf_{\sg^{-N}\om}}_{\infty,\om}
&\leq 
C_f(\om)\al_\om(N)\norm{f_{\sg^{-N}\om}}_{\cB_{\sg^{-N}\om}}, 
\end{align*} 
and $\norm{\phi_{\sg^{N}\om,0}}_{\infty,\sg^N\om}\al_\om(N)\to 0$, $\norm{\phi_{\sg^{-N}\om,0}}_{\infty,\sg^{-N}\om}\al_\om(N)\to 0$ as $N\to\infty$.
\item[(\Gls*{C5})]\myglabel{C5}{C5} 
There exists a measurable $m$-a.e. finite function $C_2:\Om\to[1,\infty)$ such that 
\begin{align*}
\sup_{\ep\geq 0}\norm{\phi_{\om,\ep}}_{\infty,\om}\leq C_2(\om) 
\quad\text{ and }\quad
\norm{\phi_{\om,0}}_{\cB_\om}\leq C_2(\om).
\end{align*}

\item[(\Gls*{C6})]\myglabel{C6}{C6} 
For $m$-a.e. $\om\in\Om$ we have
\begin{align*}
\lim_{\ep\to 0}\eta_{\om,\ep}=0. 
\end{align*}

\item[(\Gls*{C7})]\myglabel{C7}{C7} 
There exists a measurable $m$-a.e. finite function $C_3:\Om\to[1,\infty)$ such that  for all $\ep>0$ sufficiently small we have
\begin{align*}
\inf\phi_{\om,0}\geq C_3^{-1}(\om)>0
\qquad\text{ and }\qquad
\essinf_\om\inf\phi_{\om,\ep}\geq 0.
\end{align*}

\item[(\Gls*{C8})]\myglabel{C8}{C8} 
For $m$-a.e. $\om\in\Om_+$ 
we have that the limit $\hat{q}_{\om,0}^{(k)}:=\lim_{\ep\to 0} \hat{q}_{\om,\ep}^{(k)}$ exists for each $k\geq 0$, where $\hat{q}_{\om,\ep}^{(k)}$ is as in \eqref{def of hat q}.
\end{enumerate}

\begin{remark}\label{revision 1}

Note that if there exists a measurable $m$-a.e. finite function $K:\Om\to[1,\infty)$ such that 
\begin{align}\label{B}
\norm{f}_{\infty,\om}\leq K_\om\norm{f}_{\cB_\om} 
\end{align}
for all $f\in\cB_\om$ and each $\om\in\Om$, where $\|\cdot\|_{\infty,\om}$\index{$\|\cdot\|_{\infty,\om}$} denotes the supremum norm with respect to $\nu_{\om,0}$, then it follows from \eqref{etaineq0} that
\begin{align}
\eta_{\om,\ep}
&\leq K_\om\lm_{\om,0}\cdot\nu_{\om,0}\left(H_{\om,\ep}\right).
\label{etaineq}
\end{align}
In particular, assuming \eqref{B} we have that \eqref{C6} holds if 
\begin{align*}
\lim_{\ep\to 0}\nu_{\om,0}(H_{\om,\ep})=0.
\end{align*}
Furthermore, we note that \eqref{B} holds for many Banach spaces including the space of bounded variation functions, the space of H\"older continuous functions, and the space of generalized bounded variation functions, see \cite{kellerBV}. 
\end{remark}

\begin{remark}\label{rem:scaling}
To obtain the scaling required in \eqref{C2} and \eqref{C3}, in particular to obtain the assumption that $\nu_{\om,0}(\phi_{\om,\ep})=1$, suppose that $(\mathlist{\bcomma}{\Om, m, \sg, \cJ_0, T, \cB, \cL_0, \nu_0, \phi_0, H_\ep})$ is a random open system satisfying the following properties:
\begin{enumerate}[align=left,leftmargin=*,labelsep=\parindent]
\item[\mylabel{O1}{O1}] For each $\ep\geq 0$ there is a random measure $\set{\nu_{\om,\ep}'}_{\om\in\Om}$ with $\nu'_{\om,\ep}\in\cB_\om^*$, the dual space of $\cB_\om$, $\lm'_{\om,\ep}\in\CC\bs\set{0}$, and $\phi'_{\om,\ep}\in\cB_\om$ such that
\begin{align*}
\cL_{\om,\ep}(\phi'_{\om,\ep})=\lm'_{\om,\ep}\phi'_{\sg\om,\ep}
\quad\text{ and }\quad
\nu_{\sg\om,\ep}'(\cL_{\om,\ep}(f))=\lm'_{\om,\ep}\nu'_{\om,\ep}(f)
\end{align*}
for all $f\in\cB_\om$. 
\item[\mylabel{O2}{O2}] There is an operator $Q_{\om,\ep}':\cB_\om\to\cB_{\sg\om}$ such that for $m$-a.e. $\om\in\Om$ and each $f\in\cB_\om$ we have
\begin{align*}
(\lm'_{\om,\ep})^{-1}\cL_{\om,\ep}(f)=\nu'_{\om,\ep}(f)\cdot \phi'_{\sg\om,\ep}+Q'_{\om,\ep}(f).
\end{align*}
Furthermore, for $m$-a.e. $\om\in\Om$ we have
\begin{align*}
Q'_{\om,\ep}(\phi'_{\om,\ep})=0
\quad\text{ and }\quad
\nu_{\sg\om,\ep}'(Q'_{\om,\ep}(f))=0.
\end{align*}
\end{enumerate}
Then, for every $\ep>0$ and each $f\in\cB_\om$, by setting 
\begin{align*}
\phi_{\om,0}&:=\phi'_{\om,0},
&
\phi_{\om,\ep}&:=\frac{1}{\nu_{\om,0}(\phi'_{\om,\ep})}\cdot \phi'_{\om,\ep},
\\
\nu_{\om,0}(f)&:=\nu'_{\om,0}(f),
&
\nu_{\om,\ep}(f)&:=\nu_{\om,0}(\phi'_{\om,\ep})\nu'_{\om,\ep}(f),
\\
\lm_{\om,0}&:=\lm'_{\om,0},
&
\lm_{\om,\ep}&:=\frac{\nu_{\sg\om,0}(\phi'_{\sg\om,\ep})}{\nu_{\om,0}(\phi'_{\om,\ep})}\lm'_{\om,\ep},
\\
Q_{\om,0}(f)&:=Q'_{\om,0}(f),
&
Q_{\om,\ep}(f)&:=\frac{\nu_{\om,0}(\phi'_{\om,\ep})}{\nu_{\sg\om,0}(\phi'_{\sg\om,\ep})}Q'_{\om,\ep}(f)
\end{align*}
we see that \eqref{C2} and \eqref{C3} hold, and in particular, $\nu_{\om,0}(\phi_{\om,\ep})=1$. Furthermore, \eqref{O1} and \eqref{O2} together imply that $\nu'_{\om,\ep}(\phi'_{\om,\ep})=\nu_{\om,\ep}(\phi_{\om,\ep})=1$ for $m$-a.e. $\om\in\Om$. Note that the measures $\nu_{\om,\ep}$ (for $\ep>0$) described in \eqref{C2} are not necessarily probability measures and should not be confused with the conformal measures for the open systems.  
\end{remark}

Now given $N\in\NN$ and $\om\in\Om$, for $\psi_{\om}\in\cB_{\om}$ such that $\nu_{\om,0}(\psi_{\om})=1$ we have
\begin{align*}
&\int_{X_{\om,N-1,\ep}}
\psi_{\om}\, d\nu_{\om,0}
=
\int_{\cJ_{\om,0}} \psi_{\om}\cdot
\prod_{j=0}^{N-1}\ind_{\cJ_{\sg^{j-N}\om,\ep}}\circ T_{\om}^j \, d\nu_{\om,0}\\
&\qquad=
\left(\lm_{\om,0}^N\right)^{-1}
\int_{\cJ_{\sg^N\om,0}}\cL_{\om,0}^N\left(\psi_{\om}\cdot\prod_{j=0}^{N-1}\ind_{\cJ_{\sg^{j-N}\om,\ep}}\circ T_{\om}^j \right)\, d\nu_{\sg^N\om,0}\\
&\qquad=
\frac{\lm_{\om,\ep}^N}{\lm_{\om,0}^N}
\int_{\cJ_{\sg^N\om,0}}\~\cL_{\om,\ep}^N\left( \psi_{\om} \right)\, d\nu_{\sg^N\om,0}\\
&\qquad=
\frac{\lm_{\om,\ep}^N}{\lm_{\om,0}^N}\int_{\cJ_{\sg^N\om,0}}
\nu_{\om,\ep}(\psi_{\om})\cdot\phi_{\sg^N\om,\ep}\, d\nu_{\sg^N\om,0}
+
\frac{\lm_{\om,\ep}^N}{\lm_{\om,0}^N}
\int_{\cJ_{\sg^N\om,0}}Q_{\om,\ep}^N\left( \psi_{\om} \right)\, d\nu_{\sg^N\om,0}.	
\end{align*}
Thus if $\psi_{\om}=\ind$ we have
\begin{equation}
\label{NulamN}
\nu_{\om,0}(X_{\om,N-1,\ep})= \frac{\lm_{\om,\ep}^N}{\lm_{\om,0}^N}\nu_{\om, \ep}(\ind)
+\frac{\lm_{\om,\ep}^N}{\lm_{\om,0}^N}\int_{\cJ_{\sg^N\om,0}}Q_{\om,\ep}^N\left( \ind \right)\, d\nu_{\sg^N\om,0},
\end{equation}
and if $\psi_{\om}=\phi_{\om,0}$, then we have
\begin{align}
\label{eq: mu0 of n survivor}
\mu_{\om,0}(X_{\om,N-1,\ep})= \frac{\lm_{\om,\ep}^N}{\lm_{\om,0}^N}
\nu_{\om,\ep}(\phi_{\om,0})
+\frac{\lm_{\om,\ep}^N}{\lm_{\om,0}^N}\int_{\cJ_{\sg^N\om,0}}Q_{\om,\ep}^N\left( \phi_{\om,0} \right)\, d\nu_{\sg^N\om,0}.
\end{align}
\begin{remark}\label{rem lm_0>lm_ep}
Using \eqref{diff eigenvalues identity unif} and \eqref{C7} we see that 
\begin{align*}
\lm_{\om,0}-\lm_{\om,\ep}
&=\nu_{\sg\om,0}\left((\cL_{\om,0}-\cL_{\om,\ep})(\phi_{\om,\ep})\right)
=\nu_{\sg\om,0}\left((\cL_{\om,0}(\ind_{H_{\om,\ep}}\phi_{\om,\ep})\right)
\\
&=\lm_{\om,0}\nu_{\om,0}(\ind_{H_{\om,\ep}}\phi_{\om,\ep})
\geq \lm_{\om,0}\nu_{\om,0}(H_{\om,\ep})\inf\phi_{\om,\ep}\geq 0.
\end{align*}
Thus, we have that $\lm_{\om,0}\geq \lm_{\om,\ep}$. Note that if 
$\nu_{\om,0}(\ind_{H_{\om,\ep}}\phi_{\om,\ep})>0$ then we have that $\lm_{\om,0}>\lm_{\om,\ep}$. Consequently, if
$\nu_{\om,0}(H_{\om,\ep})=0$ then we have that  $\lm_{\om,0}=\lm_{\om,\ep}$.
\end{remark}
Recall from Definition \ref{def: escape rate} that the upper and lower fiberwise escape rate of a random probability measure $\zt$ on $\cJ_{0}$, for each $\ep>0$, is given by the following:
\begin{align*}
\Ul{R}_\ep(\zt_\om):=-\limsup_{N\to\infty}\frac{1}{N}\log \zt_\om(X_{\om,N,\ep})
\quad \text{ and } \quad
\ol{R}_\ep(\zt_\om):=-\liminf_{N\to\infty}\frac{1}{N}\log \zt_\om(X_{\om,N,\ep}).
\end{align*}\index{$\Ul{R}_\ep(\zt_\om)$}\index{$\ol{R}_\ep(\zt_\om)$}
If $\Ul{R}_\ep(\zt_\om)=\ol{R}_\ep(\zt_\om)$ then the fiberwise escape rate exist is denoted the common value by $R_\ep(\zt_\om)$. \index{$R_\ep(\zt_\om)$}\index{fiberwise escape rate}
As an immediate consequence of \eqref{NulamN}, \eqref{eq: mu0 of n survivor}, and assumptions \eqref{C4}, \eqref{C5}, and \eqref{C7} 
for each (fixed) $\ep>0$ we have that 
\begin{align}\label{cons of NulamN}
\lim_{N\to\infty}\frac{1}{N}\log \nu_{\om,0}(X_{\om,N,\ep})=
\lim_{N\to\infty}\frac{1}{N}\log \mu_{\om,0}(X_{\om,N,\ep})=
\lim_{N\to\infty}\frac{1}{N}\log \lm_{\om,\ep}^N-\lim_{N\to\infty}\frac{1}{N}\log \lm_{\om,0}^N.
\end{align}
Since $\log\lm_{\om,\ep}\in L^1(m)$ for all $\ep\geq 0$ by \eqref{C2}, the following proposition follows directly from Birkhoff's Ergodic Theorem.
\begin{proposition}\label{prop: escape rates}
Given a random open system $(\mathlist{\bcomma}{\Om, m, \sg, \cJ_0, T, \cB, \cL_0, \nu_0, \phi_0, H_\ep})$ satisfying conditions \eqref{C1}-\eqref{C7}, for $m$-a.e. $\om\in\Om$ we have that 
\begin{align}\label{eq: lem escape rate}
R_\ep(\nu_{\om,0})=R_\ep(\mu_{\om,0})=\int_\Om \log\lm_{\om,0} \, dm(\om) - \int_\Om\log\lm_{\om,\ep}\, dm(\om).
\end{align}
\end{proposition}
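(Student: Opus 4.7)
The key observation is that equation \eqref{cons of NulamN} has already done nearly all the work: it identifies the fiberwise escape rates of both $\nu_{\om,0}$ and $\mu_{\om,0}$ with the difference of the two Birkhoff-type averages $\tfrac{1}{N}\log\lm_{\om,\ep}^N - \tfrac{1}{N}\log\lm_{\om,0}^N$. So the plan is simply to combine \eqref{cons of NulamN} with Birkhoff's Ergodic Theorem applied to the measurable functions $\log\lm_{\om,0}$ and $\log\lm_{\om,\ep}$ on $(\Om,\sF,m,\sg)$.

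First I would briefly recall why \eqref{cons of NulamN} holds, since it underlies everything: dividing \eqref{NulamN} and \eqref{eq: mu0 of n survivor} by $N$ and taking logarithms, the contributions from $\nu_{\om,\ep}(\ind)$, $\nu_{\om,\ep}(\phi_{\om,0})$, and the correction terms $\int Q_{\om,\ep}^N(\ind)\,d\nu_{\sg^N\om,0}$, $\int Q_{\om,\ep}^N(\phi_{\om,0})\,d\nu_{\sg^N\om,0}$ all become negligible as $N\to\infty$. Indeed, by \eqref{C4} these $Q$-terms are bounded in supremum norm by $C(\om)\al_\om(N)\|\ind\|_{\cB_\om}$ or $C(\om)\al_\om(N)\|\phi_{\om,0}\|_{\cB_\om}$, both of which are $o(1)$ in $N$; combined with $\nu_{\om,\ep}(\phi_{\om,0})$ staying bounded away from $0$ and $\infty$ (using \eqref{C5} and \eqref{C7}), this lets one drop those factors inside the $\tfrac{1}{N}\log$, leaving only $\tfrac{1}{N}\log\lm_{\om,\ep}^N - \tfrac{1}{N}\log\lm_{\om,0}^N$.

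Next, since $\sg$ is invertible, ergodic, and $m$-preserving, and $\log\lm_{\om,\ep}\in L^1(m)$ for every $\ep\ge 0$ by assumption \eqref{C2}, Birkhoff's Ergodic Theorem gives
\begin{align*}
\lim_{N\to\infty}\frac{1}{N}\log\lm_{\om,\ep}^N
=\lim_{N\to\infty}\frac{1}{N}\sum_{j=0}^{N-1}\log\lm_{\sg^j\om,\ep}
=\int_\Om \log\lm_{\om,\ep}\,dm(\om)
\end{align*}
for $m$-a.e.\ $\om\in\Om$, and similarly for $\ep=0$. Combining the two almost-sure limits with \eqref{cons of NulamN} on the full measure set where both apply yields the stated formula simultaneously for $R_\ep(\nu_{\om,0})$ and $R_\ep(\mu_{\om,0})$.

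There is no real obstacle here beyond bookkeeping: the main subtlety is verifying that the $Q$-correction term in \eqref{NulamN}--\eqref{eq: mu0 of n survivor} does not spoil the logarithmic asymptotics. This is exactly what \eqref{C4} is designed to handle. Once that point is addressed, the proof reduces to two applications of Birkhoff's Theorem and the elementary fact that an exceptional $m$-null set is preserved under finite unions.
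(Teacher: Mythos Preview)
Your proposal is correct and follows exactly the approach the paper takes: the paper states that the proposition ``follows directly from Birkhoff's Ergodic Theorem'' once \eqref{cons of NulamN} is in hand, and you have simply unpacked that one-line justification in more detail, correctly invoking \eqref{C4}, \eqref{C5}, and \eqref{C7} to dispose of the $Q$-correction and the prefactors before applying Birkhoff to $\log\lm_{\om,0}$ and $\log\lm_{\om,\ep}$.
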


\begin{remark}
We remark that if we were to replace the supremum norm (with respect to $\nu_{\om,0}$) $\|\spot\|_{\infty,\om}$ everywhere in our assumption \eqref{C4} with the norm $\|\spot\|_\infty:=\sup(|\spot|)$, then, using an argument similar to that of Example 7.4 of \cite{AFGTV-IVC}, we would be able to prove a stronger result than is given in Proposition \ref{prop: escape rates}. Namely we could show that for every $0\leq\ep'<\ep$ and $m$-a.e. $\om\in\Om$ we would have 
\begin{align*}
R_\ep(\nu_{\om,\ep'})=R_\ep(\mu_{\om,\ep'})=\int_\Om \log\lm_{\om,\ep'} \, dm(\om) - \int_\Om\log\lm_{\om,\ep}\,dm(\om).
\end{align*}
\end{remark}

We now begin to work toward an application of Theorem \ref{thm: GRPT} to random open systems.
The following implications are immediate: \eqref{C1}$\implies$\eqref{P1}, \eqref{C2}$\implies$\eqref{P2}, \eqref{C3}$\implies$\eqref{P3}, \eqref{C5}$\implies$\eqref{P4}, \eqref{C6}$\implies$\eqref{P5}, and in light of \eqref{eq: Dl = mu H} we also have that  and \eqref{C7}$\implies$\eqref{P6}.
Thus, in order to ensure that Theorem~\ref{thm: GRPT} applies for the random open dynamical setting we need only to check assumptions \eqref{P7} and \eqref{P8}. We now prove the following lemma showing that \eqref{P7} and \eqref{P8} follow from assumptions \eqref{C1}-\eqref{C7}.

Recall that the set $\Om_+$, defined in \eqref{def Om+}, is the set of all fibers $\om$ such that $\mu_{\om,0}(H_{\om,\ep})>0$ for all $\ep>0$.
\begin{lemma}\label{lem: checking P7 and P8}
Given a random open system $(\mathlist{\bcomma}{\Om, m, \sg, \cJ_0, T, \cB, \cL_0, \nu_0, \phi_0, H_\ep})$ satisfying conditions \eqref{C1}-\eqref{C7}, for $m$-a.e. $\om\in\Om_+$  we have that 
\begin{align}\label{eq: lem check P7}
\lim_{\ep \to 0}\nu_{\om,\ep}(\phi_{\om,0})=1
\end{align}
and 
\begin{align}\label{eq: lem check P8}
\lim_{N\to\infty}\limsup_{\ep \to 0}\Dl_{\om,\ep}^{-1} \nu_{\sg\om,0}\lt((\cL_{\om,0}-\cL_{\om,\ep})(Q_{\sg^{-N}\om,\ep}^N\phi_{\sg^{-N}\om,0})\rt)=0.
\end{align}
\end{lemma}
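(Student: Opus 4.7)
The strategy for both claims is to iterate the spectral decomposition in \eqref{C3} and exploit conformality of $\nu_0$. Applying \eqref{C3} inductively yields $\cL_{\om,\ep}^N(f) = \lm_{\om,\ep}^N[\nu_{\om,\ep}(f)\phi_{\sg^N\om,\ep} + Q_{\om,\ep}^N(f)]$. Integrating against $\nu_{\sg^N\om,0}$ with $f=\phi_{\om,0}$, using $\nu_{\sg^N\om,0}(\phi_{\sg^N\om,\ep})=1$ from \eqref{C2}, and noting that conformality gives
\begin{equation*}
\nu_{\sg^N\om,0}(\cL_{\om,\ep}^N\phi_{\om,0}) = \nu_{\sg^N\om,0}\!\left(\cL_{\om,0}^N(\phi_{\om,0}\hat X_{\om,N-1,\ep})\right) = \lm_{\om,0}^N\mu_{\om,0}(X_{\om,N-1,\ep}),
\end{equation*}
produces the master identity
\begin{equation*}
\nu_{\om,\ep}(\phi_{\om,0}) = \frac{\lm_{\om,0}^N}{\lm_{\om,\ep}^N}\mu_{\om,0}(X_{\om,N-1,\ep}) - \nu_{\sg^N\om,0}(Q_{\om,\ep}^N\phi_{\om,0}).
\end{equation*}

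To prove \eqref{eq: lem check P7}, I fix $N$ and let $\ep\to 0$. The eigenvalue ratio tends to $1$ by \eqref{limit of eigenvalues}; meanwhile the relation $\mu_{\om,0}(H_{\om,\ep}) = \Dl_{\om,\ep}/\lm_{\om,0}$ together with $\Dl_{\om,\ep}\leq C_2(\om)\eta_{\om,\ep}$ (from \eqref{conv eigenvalues unif} and \eqref{C5}) and \eqref{C6} implies $\mu_{\sg^j\om,0}(H_{\sg^j\om,\ep})\to 0$ for each $j\in\{0,\dots,N-1\}$ on a full-measure set, so by $T$-invariance of $\mu_0$ and subadditivity, $\mu_{\om,0}(X_{\om,N-1,\ep})\geq 1-\sum_{j=0}^{N-1}\mu_{\sg^j\om,0}(H_{\sg^j\om,\ep})\to 1$. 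The remainder term is controlled uniformly in $\ep$ by \eqref{C4} and \eqref{C5}:
\begin{equation*}
|\nu_{\sg^N\om,0}(Q_{\om,\ep}^N\phi_{\om,0})|\leq \|Q_{\om,\ep}^N\phi_{\om,0}\|_{\infty,\sg^N\om}\leq C_{\phi_0}(\om)\al_\om(N)\|\phi_{\om,0}\|_{\cB_\om}\leq C_{\phi_0}(\om)C_2(\om)\al_\om(N).
\end{equation*}
Hence $\limsup_{\ep\to 0}|\nu_{\om,\ep}(\phi_{\om,0})-1|\leq C_{\phi_0}(\om)C_2(\om)\al_\om(N)$ for every $N$, and the proof of (P7) closes by sending $N\to\infty$.

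For \eqref{eq: lem check P8}, I use $(\cL_{\om,0}-\cL_{\om,\ep})(g)=\cL_{\om,0}(\ind_{H_{\om,\ep}}g)$ and conformality to rewrite
\begin{equation*}
\Dl_{\om,\ep}^{-1}\nu_{\sg\om,0}\!\left((\cL_{\om,0}-\cL_{\om,\ep})(Q_{\sg^{-N}\om,\ep}^N\phi_{\sg^{-N}\om,0})\right) = \frac{\nu_{\om,0}\!\left(\ind_{H_{\om,\ep}}Q_{\sg^{-N}\om,\ep}^N\phi_{\sg^{-N}\om,0}\right)}{\mu_{\om,0}(H_{\om,\ep})}.
\end{equation*}
The lower bound $\inf\phi_{\om,0}\geq C_3^{-1}(\om)$ from \eqref{C7} yields $\mu_{\om,0}(H_{\om,\ep})\geq C_3^{-1}(\om)\nu_{\om,0}(H_{\om,\ep})$, which precisely absorbs the $\nu_{\om,0}(H_{\om,\ep})$ factor coming from the trivial supremum-norm bound of the numerator. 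The modulus of the quotient is therefore at most $C_3(\om)\|Q_{\sg^{-N}\om,\ep}^N\phi_{\sg^{-N}\om,0}\|_{\infty,\om}$, and the backward estimate in \eqref{C4} then dominates it uniformly in $\ep$ by $C_3(\om)C_{\phi_0}(\om)\al_\om(N)\|\phi_{\sg^{-N}\om,0}\|_{\cB_{\sg^{-N}\om}}$. Taking $\limsup_{\ep\to 0}$ followed by $N\to\infty$ completes the proof.

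The subtlest technical point, which I anticipate as the main obstacle, is verifying the convergence $\al_\om(N)\|\phi_{\sg^{-N}\om,0}\|_{\cB_{\sg^{-N}\om}}\to 0$ along the full-measure orbit, since (C4) literally encodes decay only against the weaker $\|\phi_{\sg^{-N}\om,0}\|_{\infty,\sg^{-N}\om}$. This is handled by combining the temperedness of the bound $\|\phi_{\om,0}\|_{\cB_\om}\leq C_2(\om)$ from \eqref{C5} with the exponential-type decay of $\al_\om(N)$ built into \eqref{C4}, which together force the product to vanish on a $\sg$-invariant set of full $m$-measure. The restriction to $\om\in\Om_+$ is used solely to ensure $\Dl_{\om,\ep}>0$ so that the quotient remains well-defined throughout the limiting argument.
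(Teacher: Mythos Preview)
Your argument is essentially identical to the paper's: you derive the same master identity \eqref{eq: solve for nu eps} for \eqref{eq: lem check P7}, and for \eqref{eq: lem check P8} you perform the same reduction to $C_3(\om)\|Q_{\sg^{-N}\om,\ep}^N\phi_{\sg^{-N}\om,0}\|_{\infty,\om}$ via \eqref{C7}. Your route to $\mu_{\om,0}(X_{\om,N-1,\ep})\to 1$ (using $\Dl_{\om,\ep}\le C_2(\om)\eta_{\om,\ep}\to 0$ and subadditivity over the finitely many fibres) is in fact more explicit than the paper's one-line invocation of non-atomicity.

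Regarding the concern you flag in your final paragraph: the paper is equally terse here---it simply writes ``the second claim follows from \eqref{C4}'' after arriving at the bound $C_3(\om)\|Q_{\sg^{-N}\om,\ep}^N\phi_{\sg^{-N}\om,0}\|_{\infty,\om}$, without addressing the norm mismatch you identify. Your proposed resolution (temperedness of $C_2$ and exponential decay of $\al_\om$) is \emph{not} guaranteed by \eqref{C1}--\eqref{C7} as literally stated, so you should not claim it is. The honest reading is that \eqref{C4} is intended as a package assumption ensuring $\sup_{\ep\ge 0}\|Q_{\sg^{-N}\om,\ep}^N\phi_{\sg^{-N}\om,0}\|_{\infty,\om}\to 0$; in every application in the paper (Lemmas~\ref{DFGTV18Alemma} and~\ref{harrylemma2}) this is verified directly with $\al(N)=\gamma^N$ and uniformly bounded $\|\phi_{\om,0}\|_{\cB_\om}$, so the issue does not arise. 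Your proof is at the same level of rigour as the paper's.
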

\begin{proof}
First, using \eqref{eq: Dl = mu H}, we note that if $\mu_{\om,0}(H_{\om,\ep})>0$ then so is $\Dl_{\om,\ep}$.
To prove \eqref{eq: lem check P7}, we note that for fixed $N\in\NN$ we have 
\begin{align}\label{eq: lim ratio times measure is 1}
\lim_{\ep \to 0}\frac{\lm_{\om,0}^N}{\lm_{\om,\ep}^N}\mu_{\om,0}(X_{\om,N-1,\ep})=1
\end{align}
since $\lm_{\om,0}^N/\lm_{\om,\ep}^N\to 1$ (by \eqref{conv eigenvalues unif}, \eqref{etaineq}, \eqref{C5}, and \eqref{C6}) and non-atomicity of $\nu_{\om,0}$ \eqref{CCM} together with \eqref{C3} imply that $\mu_{\om,0}(X_{\om,N-1,\ep})\to 1$ as $\ep\to 0$. 
Using  \eqref{eq: mu0 of n survivor} we can write 
\begin{align}\label{eq: solve for nu eps}
\nu_{\om,\ep}(\phi_{\om,0})
= \frac{\lm_{\om,0}^N}{\lm_{\om,\ep}^N}\mu_{\om,0}(X_{\om,N-1,\ep})
- 
\nu_{\sg^N\om,0}(Q_{\om,\ep}^N(\phi_{\om,0})),
\end{align}
and thus using \eqref{eq: lim ratio times measure is 1} and \eqref{C4}, 
for each $\om\in\Om$ and each $N\in\NN$ we can write 
\begin{align*}
\lim_{\ep\to 0}\absval{1-\nu_{\om,\ep}(\phi_{\om,0})}
&\leq
\lim_{\ep\to 0}
\absval{ 1-\frac{\lm_{\om,0}^N}{\lm_{\om,\ep}^N}\mu_{\om,0}(X_{\om,N-1,\ep})}
+
\norm{Q_{\om,\ep}^N(\phi_{\om,0})}_{\infty,\sg^N\om}
\\
&\leq
C_{\phi_0}( \om)\al_\om(N)\norm{\phi_{\om,0}}_{\cB_\om}.
\end{align*}
As this holds for each $N\in\NN$ and as the right-hand side of the previous equation goes to zero as $N\to\infty$, we must in fact have that  
\begin{align*}
\lim_{\ep\to 0}\absval{1-\nu_{\om,\ep}(\phi_{\om,0})}=0,
\end{align*}
which yields the first claim.

Now, for the second claim, using \eqref{eq: Dl = mu H}, we note that \eqref{C7} implies 
\begin{align*}
\Dl_{\om,\ep}^{-1} \nu_{\sg\om,0}\lt((\cL_{\om,0}-\cL_{\om,\ep})(Q_{\sg^{-N}\om,\ep}^N\phi_{\sg^{-N}\om,0})\rt)
&=
\frac{\nu_{\om,0}\lt(\ind_{H_{\om,\ep}}\cdot Q_{\sg^{-N}\om,\ep}^N(\phi_{\sg^{-N}\om,0}) \rt) }{\mu_{\om,0}(H_{\om,\ep})}
\\
&\leq
\frac{\nu_{\om,0}(H_{\om,\ep})}{\mu_{\om,0}(H_{\om,\ep})}\norm{Q_{\sg^{-N}\om,\ep}^N(\phi_{\sg^{-N}\om,0})}_{\infty,\om}
\\
&\leq C_3(\om)\norm{Q_{\sg^{-N}\om,\ep}^N(\phi_{\sg^{-N}\om,0})}_{\infty,\om}.
\end{align*}
Thus, letting $\ep\to 0$ first and then $N\to\infty$, the second claim follows from \eqref{C4}.
\end{proof}

Now recall from \eqref{eq: def of theta_0} that if $\mu_{\om,0}(H_{\om,\ep})>0$ for each $\ep>0$, then $\ta_{\om,0}$ is given by 
\begin{align*}
\ta_{\om,0}:=
1-\sum_{k=0}^{\infty}(\lm_{\sg^{-(k+1)}\om,0}^{k+1})^{-1}q_{\om,0}^{(k)}
=
1-\sum_{k=0}^{\infty}\hat q_{\om,0}^{(k)}.
\end{align*}\index{$\ta_{\om,0}$}
In light of Lemma~\ref{lem: checking P7 and P8}, we see that \eqref{C1}-\eqref{C8} imply \eqref{P1}-\eqref{P9}, and thus we have the following Theorem and first main result of this section.

\begin{theorem}\label{thm: dynamics perturb thm}
Suppose that \eqref{C1}-\eqref{C7} hold for a random open system $(\mathlist{\bcomma}{\Om, m, \sg, \cJ_0, T, \cB, \cL_0, \nu_0, \phi_0, H_\ep})$. 
For $m$-a.e. $\om\in\Om$ if there is some $\ep_0>0$ such that $\mu_{\om,0}(H_{\om,\ep})=0$ for $\ep\leq \ep_0$ then
\begin{align*}
\lm_{\om,0}=\lm_{\om,\ep},
\end{align*}
or if \eqref{C8} holds, then 
\begin{align*}
\lim_{\ep\to 0}
\frac{\lm_{\om,0}-\lm_{\om,\ep}}{\lm_{\om,0}\mu_{\om,0}(H_{\om,\ep})}
=\ta_{\om,0}.
\end{align*}
Furthermore, the map $\Om_+\ni\om\mapsto\ta_{\om,0}$ is measurable. 
\end{theorem}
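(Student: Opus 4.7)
The plan is to reduce the theorem to an application of the abstract sequential result Theorem~\ref{thm: GRPT}. First I would record the implications \eqref{C1}$\Rightarrow$\eqref{P1}, \eqref{C2}$\Rightarrow$\eqref{P2}, \eqref{C3}$\Rightarrow$\eqref{P3}, \eqref{C5}$\Rightarrow$\eqref{P4}, \eqref{C6}$\Rightarrow$\eqref{P5}, which are all immediate. For \eqref{P6}, the key observation is the identity \eqref{eq: Dl = mu H}, which gives $\Delta_{\om,\ep}=\lm_{\om,0}\mu_{\om,0}(H_{\om,\ep})$, so $\Delta_{\om,\ep}>0$ exactly when $\mu_{\om,0}(H_{\om,\ep})>0$. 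Combining with \eqref{etaineq} from Remark~\ref{revision 1}\footnote{or the corresponding estimate coming from \eqref{etaineq0} and the embedding $\cB_\om\hookrightarrow L^\infty(\nu_{\om,0})$} and the lower bound $\inf\phi_{\om,0}\geq C_3^{-1}(\om)$ from \eqref{C7} yields
\begin{equation*}
\frac{\eta_{\om,\ep}}{\Dl_{\om,\ep}}\leq \frac{K_\om\lm_{\om,0}\nu_{\om,0}(H_{\om,\ep})}{\lm_{\om,0}\mu_{\om,0}(H_{\om,\ep})}\leq K_\om C_3(\om),
\end{equation*}
and the second part of \eqref{P6} (the case $\Dl_{\om,\ep}\equiv 0$ for small $\ep$) follows because $\nu_{\om,0}(H_{\om,\ep})\to 0$ together with $\inf\phi_{\om,\ep}\geq 0$ from \eqref{C7} force $\eta_{\om,\ep}=0$ as well via Remark~\ref{rem lm_0>lm_ep}. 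Assumptions \eqref{P7} and \eqref{P8} are exactly the content of Lemma~\ref{lem: checking P7 and P8}. Finally, since $\hat q_{\om,\ep}^{(k)}=(\lm_{\sg^{-(k+1)}\om,0}^{k+1})^{-1}q_{\om,\ep}^{(k)}$ by definition, hypothesis \eqref{C8} is literally \eqref{P9}.

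With all nine hypotheses of Theorem~\ref{thm: GRPT} verified, applying that theorem fiber-by-fiber yields the dichotomy: on the complement of $\Om_+$, for those $\om$ with $\mu_{\om,0}(H_{\om,\ep})=0$ (equivalently $\Dl_{\om,\ep}=0$) for all sufficiently small $\ep$, we obtain $\lm_{\om,0}=\lm_{\om,\ep}$; and on $\Om_+$, the limit
\begin{equation*}
\lim_{\ep\to 0}\frac{\lm_{\om,0}-\lm_{\om,\ep}}{\Dl_{\om,\ep}}=1-\sum_{k=0}^\infty \hat q_{\om,0}^{(k)}=\ta_{\om,0}
\end{equation*}
exists. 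Substituting $\Dl_{\om,\ep}=\lm_{\om,0}\mu_{\om,0}(H_{\om,\ep})$ into the denominator gives the stated formula for the derivative.

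It remains to address measurability of the map $\Om_+\ni\om\mapsto\ta_{\om,0}$. For each fixed $k\geq 0$ and each fixed $\ep>0$, the numerator and denominator in the definition \eqref{def of hat q} of $\hat q_{\om,\ep}^{(k)}$ are measurable in $\om$: they are built from $\mu_{\sg^{-(k+1)}\om,0}$ (a disintegration of the random measure $\mu_0$), the preimages of $T_{\sg^{-(k+1)}\om}^j$, and the measurable sets $H_{\sg^{-j}\om,\ep}$, so the quotient is a measurable function of $\om$ on the measurable set $\Om_{+,\ep}$, and hence on $\Om_+=\bigcap_\ep\Om_{+,\ep}$. By \eqref{C8} the pointwise limit $\hat q_{\om,0}^{(k)}=\lim_{\ep\to 0}\hat q_{\om,\ep}^{(k)}$ exists on $\Om_+$, which can be taken along a countable sequence $\ep_n\downarrow 0$, so $\hat q_{\om,0}^{(k)}$ is measurable as a limit of measurable functions. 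The series defining $\ta_{\om,0}$ is then a countable sum of measurable functions, and is therefore measurable on $\Om_+$.

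The only genuinely subtle ingredient is the verification of \eqref{P8} inside Lemma~\ref{lem: checking P7 and P8}, where the ratio $\Dl_{\om,\ep}^{-1}\nu_{\om,0}(\ind_{H_{\om,\ep}}Q_{\sg^{-N}\om,\ep}^N\phi_{\sg^{-N}\om,0})$ must be controlled uniformly in $\ep$ as $N\to\infty$; this is handled by bounding the ratio $\nu_{\om,0}(H_{\om,\ep})/\mu_{\om,0}(H_{\om,\ep})$ by $C_3(\om)$ via \eqref{C7} and then using the decay of $\|Q_{\sg^{-N}\om,\ep}^N\phi_{\sg^{-N}\om,0}\|_{\infty,\om}$ from \eqref{C4}. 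Everything else is essentially bookkeeping that translates the abstract sequential framework of Section~\ref{Sec: Gen Perturb Setup} into the concrete language of random open systems.
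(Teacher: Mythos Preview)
Your proposal is correct and follows essentially the same approach as the paper: verify that \eqref{C1}--\eqref{C8} imply \eqref{P1}--\eqref{P9} (with \eqref{P7} and \eqref{P8} handled by Lemma~\ref{lem: checking P7 and P8}), apply Theorem~\ref{thm: GRPT}, substitute $\Dl_{\om,\ep}=\lm_{\om,0}\mu_{\om,0}(H_{\om,\ep})$, and deduce measurability of $\ta_{\om,0}$ as a limit of measurable functions. One small imprecision: in the second part of \eqref{P6} you want $\nu_{\om,0}(H_{\om,\ep})=0$ (not merely $\to 0$), which follows from $\mu_{\om,0}(H_{\om,\ep})=0$ and $\inf\phi_{\om,0}\ge C_3^{-1}(\om)>0$, and the reference to Remark~\ref{rem lm_0>lm_ep} is extraneous here.
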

\begin{proof}
All statements follow directly from Theorem \ref{thm: GRPT}, except measurability of $\theta_{\omega,0}$, which follows from its construction as a limit of measurable objects.
\end{proof}
\begin{remark}
As Theorem \ref{thm: GRPT} does not require any measurability, one could restate
Theorem~\ref{thm: dynamics perturb thm} to hold for sequential open systems satisfying the sequential versions of hypotheses \eqref{C1}-\eqref{C8}.
\end{remark}
The following lemma will be useful for bounding $\ta_{\om,0}$.

\begin{lemma}
\label{qbound2}
Suppose $m(\Om\bs\Om_+)=0$. 
Then for each $\ep>0$ and $m$-a.e. $\om\in\Om$ we have
\begin{align*}
\sum_{k=0}^\infty (\lm_{\sg^{-(k+1)}\om,0}^{k+1})^{-1}q_{\om,\ep}^{(k)}=\sum_{k=0}^\infty \hat q^{(k)}_{\om,\ep}= 1.	
\end{align*}
\end{lemma}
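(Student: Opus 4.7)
The plan is to realize each $\hat{q}^{(k)}_{\om,\ep}$ as the relative weight of a piece in a partition of $T^{-(k+1)}_{\sg^{-(k+1)}\om}(H_{\om,\ep})$ indexed by a ``last-hit'' time, derive a telescoping identity for the partial sums, and then use a Fubini argument to show the leftover term vanishes. Fix $\ep>0$ and $\om\in\Om_+$, and set
$$A^{(k)}:=H_{\sg^{-(k+1)}\om,\ep}\cap T_{\sg^{-(k+1)}\om}^{-(k+1)}(H_{\om,\ep})\cap \bigcap_{i=1}^{k} T_{\sg^{-(k+1)}\om}^{-i}\bigl(H_{\sg^{i-(k+1)}\om,\ep}^c\bigr),$$
so that, after using the $T$-invariance \eqref{eq: mu_om,0 T invar} to identify the denominator with $\mu_{\om,0}(H_{\om,\ep})$, we have $\hat{q}^{(k)}_{\om,\ep}=\mu_{\sg^{-(k+1)}\om,0}(A^{(k)})/\mu_{\om,0}(H_{\om,\ep})$. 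For $y\in T^{-(k+1)}(H_{\om,\ep})$ I introduce the last-hit time
$$\tau(y):=\max\{\,0\leq i\leq k : T^i(y)\in H_{\sg^{i-(k+1)}\om,\ep}\,\},$$
with the convention $\tau(y)=-\infty$ when no such $i$ exists, so that $\{\tau=0\},\ldots,\{\tau=k\},\{\tau=-\infty\}$ form a disjoint partition of $T^{-(k+1)}(H_{\om,\ep})$.

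The key identification is $\{y:\tau(y)=i\}=T^{-i}_{\sg^{-(k+1)}\om}(A^{(k-i)})$ for $0\leq i\leq k$: substituting $z=T^i(y)$, the conditions ``$T^i(y)\in H$, $T^j(y)\notin H$ for $i<j\leq k$, and $T^{k+1}(y)\in H_{\om,\ep}$'' translate exactly into the conditions defining $A^{(k-i)}$, while no constraint is imposed on $T^j(y)$ for $j<i$. Iterating $T$-invariance then gives
$$\mu_{\sg^{-(k+1)}\om,0}(\{\tau=i\})=\mu_{\sg^{-(k-i+1)}\om,0}(A^{(k-i)})=\mu_{\om,0}(H_{\om,\ep})\,\hat{q}^{(k-i)}_{\om,\ep}.$$
Summing over $i=0,\ldots,k$, reindexing $j=k-i$, and using $\mu_{\sg^{-(k+1)}\om,0}(T^{-(k+1)}(H_{\om,\ep}))=\mu_{\om,0}(H_{\om,\ep})$ produces the partial-sum identity
$$\sum_{j=0}^{k}\hat{q}^{(j)}_{\om,\ep}=1-\frac{\mu_{\sg^{-(k+1)}\om,0}(\{\tau=-\infty\})}{\mu_{\om,0}(H_{\om,\ep})}.$$

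The main obstacle is showing the remainder vanishes as $k\to\infty$ for $m$-a.e.\ $\om$. I would observe that the fiber set $\{\tau=-\infty\}\subseteq \cJ_{\sg^{-(k+1)}\om,0}$ is precisely the $\sg^{-(k+1)}\om$-slice of the global ``first entry at exactly time $k+1$'' set
$$\tilde{W}_{k+1}:=\bigl\{(\om',y)\in\Om\times\cJ_0 : T^j(\om',y)\notin H_\ep,\ 0\leq j\leq k,\ T^{k+1}(\om',y)\in H_\ep\bigr\}.$$
The sets $\{\tilde{W}_n\}_{n\geq 0}$ are pairwise disjoint (every orbit has a unique first entry time) with $\bigsqcup_{n\geq 0}\tilde{W}_n=(\cX_{\infty,\ep})^c$, so $\sum_n \mu_0(\tilde{W}_n)\leq 1$. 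Using the $\sg$-invariance of $m$ to relabel the fiber coordinate,
$$\sum_{k=0}^{\infty}\int_\Om \mu_{\sg^{-(k+1)}\om,0}(\{\tau=-\infty\})\,dm(\om)=\sum_{k=0}^{\infty}\mu_0(\tilde{W}_{k+1})\leq 1,$$
and Tonelli forces $\sum_{k\geq 0}\mu_{\sg^{-(k+1)}\om,0}(\{\tau=-\infty\})<\infty$ for $m$-a.e.\ $\om$, so in particular $\mu_{\sg^{-(k+1)}\om,0}(\{\tau=-\infty\})\to 0$. Combined with $\mu_{\om,0}(H_{\om,\ep})>0$ on $\Om_+$ (guaranteed by $m(\Om\setminus\Om_+)=0$), passing $k\to\infty$ in the partial-sum identity yields $\sum_{k\geq 0}\hat{q}^{(k)}_{\om,\ep}=1$ for $m$-a.e.\ $\om$.
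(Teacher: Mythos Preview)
Your proof is correct, but it takes a genuinely different route from the paper's argument.

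The paper works with the \emph{first return time} $\tau_{H_\ep}$ of points in $H_\ep$ back to $H_\ep$. It defines $B_k=\{(\om,x)\in H_\ep:\tau_{H_\ep}=k+1\}$, uses $T$-invariance of $\mu_0$ to identify $\mu_{\sg^{-(k+1)}\om,0}(B_{k,\sg^{-(k+1)}\om})=\hat q^{(k)}_{\om,\ep}\,\mu_{\om,0}(H_{\om,\ep})$, and then invokes the Poincar\'e recurrence theorem to get $\sum_k\mu_0(B_k)=\mu_0(H_\ep)$. Disintegrating and using Tonelli, together with the trivial bound $\sum_k\hat q^{(k)}_{\om,\ep}\le 1$, forces equality $m$-a.e.

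You instead decompose the preimage $T^{-(k+1)}(H_{\om,\ep})$ by a \emph{last-hit} time, which yields the explicit partial-sum identity $\sum_{j=0}^{k}\hat q^{(j)}_{\om,\ep}=1-\mu_{\sg^{-(k+1)}\om,0}(\{\tau=-\infty\})/\mu_{\om,0}(H_{\om,\ep})$. The remainder is then identified with the fiber slice of the global \emph{first-entry} set $\tilde W_{k+1}$, and the disjointness of the $\tilde W_n$ (with total $\mu_0$-mass $\le 1$) plus Tonelli gives summability and hence decay of the remainder for $m$-a.e.\ $\om$. This avoids any explicit appeal to Poincar\'e recurrence, relying only on finiteness of $\mu_0$, and along the way produces a quantitative partial-sum formula that the paper's argument does not. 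The paper's proof, on the other hand, is somewhat shorter and more conceptual, packaging the whole recurrence step into a single classical theorem.
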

\begin{proof}
Recall from Section~\ref{sec: open systems} that the hole $H_\ep\sub\cJ_0$ is given by  
$$
H_\ep:=\bigcup_{\om\in \Om}\set{\om}\times H_{\om,\ep},
$$
and the skew map $T:\cJ_0\to\cJ_0$ is given by
$$
T(\om, x)=(\sg\om, T_{\om}(x)).
$$
Let $\tau_{H_\ep}(\om,x)$ denote the first return time of the point $(\om, x)\in H_\ep$ into $H_\ep.$  For each $k\geq 0$ let $B_k:=\set{(\om,x)\in H_\ep: \tau_{H_\ep}(\om,x)=k+1}$ be the set of points $(\om,x)$ that remain outside of the hole $H_\ep$ for exactly $k$ iterates, and for each $\om\in\Om$ we set $B_{k,\om}:=\pi_2(B_k\cap\set{\om}\times\cJ_{\om,0})$. Then $B_{k,\om}$ is precisely the set 
$$
B_{k,\om}=\set{x\in H_{\om,\ep}: T_\om^{k+1}(x)\in H_{\sg^{k+1}\om,\ep} \text{ and } T_\om^j(x)\not\in H_{\sg^j\om,\ep} \text{ for all } 1\leq j\leq k}.
$$
For each $k\geq 0$ we can disintegrate $\mu_0$ as
\begin{align*}
\mu_0\left(B_k\right)
&=\int_\Om\mu_{\om,0}(B_{k,\om})\ dm(\om),
\end{align*}
Note that for $\om\in\Om_+$ and each $k\geq 0$ we have
\begin{align}\label{qsum1}
&\mu_{\sg^{-(k+1)}\om,0}(B_{k,\sg^{-(k+1)}\om})
=
\mu_{\sg^{-(k+1)}\om,0}\left(
T_{\sg^{-(k+1)}\om}^{-(k+1)}(H_{\om,\ep})
\cap\left(\bigcap_{j=1}^k T_{\sg^{-(k+1)}\om}^{-(k+1)+j} (H_{\sg^{-j}\om,\ep}^c)\right)
\cap H_{\sg^{-(k+1)}\om,\ep}
\right).
\end{align}
Recall from \eqref{def of hat q} that $\hat q_{\om,\ep}^{(k)}$ is given by
\begin{align*}
\hat q_{\om,\ep}^{(k)}:=
\frac{\mu_{\sg^{-(k+1)}\om,0}\left(
T_{\sg^{-(k+1)}\om}^{-(k+1)}(H_{\om,\ep})
\cap\left(\bigcap_{j=1}^k T_{\sg^{-(k+1)}\om}^{-(k+1)+j} (H_{\sg^{-j}\om,\ep}^c)\right)
\cap H_{\sg^{-(k+1)}\om,\ep}
\right)
}
{\mu_{\sg^{-(k+1)}\om,0}\left(
T_{\sg^{-(k+1)}\om}^{-(k+1)}(H_{\om,\ep})\right)},
\end{align*}
which is well defined since $\om\in\Om_+$ by assumption. Note that since $\sum_{k=0}^\infty\mu_{\sg^{-(k+1)}\om,0}(B_{k,\sg^{-(k+1)}\om})\leq \mu_{\om,0}(H_{\om,\ep})$ we must have that $\sum_{k=0}^{\infty}\hat{q}^{(k)}_{\om,\ep}\leq1$. 
Now, as the measure $\mu_0$ is $T$-invariant, the right-hand side of \eqref{qsum1} is equal to 
$\hat{q}^{(k)}_{\om,\ep}\mu_{\om,0}(H_{\om,\ep})$, and therefore 
\begin{align}\label{B_k measure}
\int_\Om \hat{q}^{(k)}_{\om,\ep}\ \mu_{\om,0}(H_{\om,\ep})\ dm(\om)
=\int_\Om \mu_{\om,0}(B_{k,\om})\ dm(\om)
=\mu_0(B_k).
\end{align}
By the Poincar\'e recurrence theorem we have
$$
\sum_{k=0}^{\infty}\mu_0(B_k)=\mu_0(H_\ep)=\int_\Om\mu_{\om,0}(H_{\om,\ep})\, dm(\om).
$$
By interchanging the sum with the integral above (possible by Tonelli's Theorem) and using \eqref{B_k measure}, we have
$$
\int_\Om \left(\sum_{k=0}^{\infty}\hat{q}^{(k)}_{\om,\ep}\ \mu_{\om,0}(H_{\om,\ep})\right)\ dm(\om)
=
\int_\Om \mu_{\om,0}(H_{\om,\ep})\ dm(\om),
$$
which implies
$$
\int_\Om \lt(\mu_{\om,0}(H_{\om,\ep})\left(\sum_{k=0}^{\infty}\hat{q}^{(k)}_{\om,\ep}-1\right)\rt) dm(\om)=0.
$$
Since we already have that $\sum_{k=0}^{\infty}\hat{q}^{(k)}_{\om,\ep}\leq1$ for $m$-a.e. $\om\in\Om$, we must in fact have $\sum_{k=0}^{\infty}\hat{q}^{(k)}_{\om,\ep}=1$, which completes the proof.
\end{proof}
By definition, we have that $\hat q_{\om,\ep}^{(k)}\in[0,1]$ and thus \eqref{C8} implies that $\hat q_{\om,0}^{(k)}\in[0,1]$ for each $k\geq 0$ as well. In light of the previous lemma, dominated convergence implies that  
\begin{align}\label{theta in [0,1]}
\ta_{\om,0}\in[0,1].
\end{align}

\subsection{Escape Rate Asymptotics}

If we have some additional $\om$-control on the size of the holes we obtain the following corollary of Theorem~\ref{thm: dynamics perturb thm}, which provides a formula for the escape rate asymptotics for small random holes.
The scaling of the holes (\ref{EVT style cond}) takes a similar form to 
the scaling we will shortly use in the next section for our quenched extreme value theory.

\begin{corollary}\label{esc rat cor}
Suppose that \eqref{C1}--\eqref{C8} hold for a random open system $(\mathlist{\bcomma}{\Om, m, \sg, \cJ_0, T, \cB, \cL_0, \nu_0, \phi_0, H_\ep})$ and there exists $A\in L^1(m)$ such that for $m$-a.e. $\om\in\Om$ and all $\ep>0$ sufficiently small we have 
\begin{align}\label{DCT cond 1*}
\frac{\log\lm_{\om,0}-\log\lm_{\om,\ep}}{\mu_{\om,0}(H_{\om,\ep})}\leq A(\om).
\end{align}
Further suppose that there is some $\kappa(\ep)$ with $\kp(\ep)\to\infty$ as $\ep\to 0$, $t\in L^\infty(m)$ with $t_\om>0$, and $\xi_{\om,\ep}\in L^\infty(m)$ with $\xi_{\om,\ep}\to 0$ as $\ep\to 0$ for $m$-a.e. $\om\in\Om$ such that
\begin{align}\label{EVT style cond}
\mu_{\om,0}(H_{\om,\ep})=\frac{t_\om+\xi_{\om,\ep}}{\kp(\ep)}
\end{align}
and $|\xi_{\om,\ep}|<Ct_\om$ for all $\ep>0$ sufficiently small for some $C>0$.
Then for $m$-a.e. $\om\in\Om$ we have
\begin{align*}
\lim_{\ep\to 0}\frac{R_\ep(\mu_{\om,0})}{\mu_{\om,0}(H_{\om,\ep})}
=
\frac{\int_\Om \ta_{\om,0}t_\om\, dm(\om)}{t_\om}.
\end{align*}
In particular, if $t_\om$ is constant $m$-a.e. then 
\begin{align*}
\lim_{\ep\to 0}\frac{R_\ep(\mu_{\om,0})}{\mu_{\om,0}(H_{\om,\ep})}
=
\int_\Om \ta_{\om,0}\, dm(\om).
\end{align*}
\end{corollary}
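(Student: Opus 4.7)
The plan is to reduce the escape-rate ratio to an integral of fiberwise quantities that converge pointwise to $\theta_{\omega',0}$, and then conclude by dominated convergence. First, by Proposition~\ref{prop: escape rates} we have
\begin{align*}
\frac{R_\ep(\mu_{\om,0})}{\mu_{\om,0}(H_{\om,\ep})}
=\int_\Om \frac{\log\lm_{\om',0}-\log\lm_{\om',\ep}}{\mu_{\om,0}(H_{\om,\ep})}\, dm(\om'),
\end{align*}
and using the scaling \eqref{EVT style cond} to insert the denominator $\mu_{\om',0}(H_{\om',\ep})$ we rewrite this as
\begin{align*}
\frac{R_\ep(\mu_{\om,0})}{\mu_{\om,0}(H_{\om,\ep})}
=\int_\Om \frac{\log\lm_{\om',0}-\log\lm_{\om',\ep}}{\mu_{\om',0}(H_{\om',\ep})}\cdot\frac{t_{\om'}+\xi_{\om',\ep}}{t_\om+\xi_{\om,\ep}}\, dm(\om').
\end{align*}

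Next I would establish pointwise convergence of the integrand. Since $t_\om>0$ and $\xi_{\om,\ep}\to 0$, the monotonicity \eqref{A'} of the holes together with \eqref{EVT style cond} forces $m(\Om\bs\Om_+)=0$, so Theorem~\ref{thm: dynamics perturb thm} applies at $m$-a.e.\ $\om'$. Writing $x_\ep:=(\lm_{\om',0}-\lm_{\om',\ep})/\lm_{\om',0}$, which tends to $0$ as $\ep\to 0$ by \eqref{DCT cond 1*} (or directly by \eqref{conv eigenvalues unif} and \eqref{C6}), the elementary expansion $-\log(1-x_\ep)=x_\ep+O(x_\ep^2)$ gives
\begin{align*}
\frac{\log\lm_{\om',0}-\log\lm_{\om',\ep}}{\mu_{\om',0}(H_{\om',\ep})}
=\frac{\lm_{\om',0}-\lm_{\om',\ep}}{\lm_{\om',0}\mu_{\om',0}(H_{\om',\ep})}+O(x_\ep)\cdot\frac{x_\ep}{\mu_{\om',0}(H_{\om',\ep})}\xrightarrow[\ep\to 0]{}\theta_{\om',0}.
\end{align*}
Similarly, $(t_{\om'}+\xi_{\om',\ep})/(t_\om+\xi_{\om,\ep})\to t_{\om'}/t_\om$ for $m$-a.e.\ $\om'$, so the product of the two factors converges pointwise to $\theta_{\om',0}\,t_{\om'}/t_\om$.

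To apply the dominated convergence theorem I would bound the integrand uniformly in $\ep$: the first factor is bounded by $A(\om')\in L^1(m)$ by \eqref{DCT cond 1*}, while the numerator of the second factor satisfies $|t_{\om'}+\xi_{\om',\ep}|\le (1+C)\|t\|_{L^\infty(m)}$, and for the fixed reference fiber $\om$, since $\xi_{\om,\ep}\to 0$, for all sufficiently small $\ep$ we have $t_\om+\xi_{\om,\ep}\ge t_\om/2>0$. Consequently the integrand is dominated by $A(\om')\cdot 2(1+C)\|t\|_{L^\infty(m)}/t_\om$, which lies in $L^1(m)$ as a function of $\om'$ (the factor involving $\om$ is constant in $\om'$). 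Passing to the limit under the integral yields
\begin{align*}
\lim_{\ep\to 0}\frac{R_\ep(\mu_{\om,0})}{\mu_{\om,0}(H_{\om,\ep})}
=\int_\Om \theta_{\om',0}\,\frac{t_{\om'}}{t_\om}\, dm(\om')
=\frac{\int_\Om \theta_{\om',0}\,t_{\om'}\, dm(\om')}{t_\om},
\end{align*}
and the special case of $m$-a.e.\ constant $t_\om$ follows by pulling $t$ out. The main obstacle I anticipate is verifying the integrable domination, in particular handling the fact that both $\xi_{\om,\ep}$ in the denominator (depending on the fixed fiber $\om$) and $\xi_{\om',\ep}$ in the numerator (varying with the integration variable) must simultaneously be controlled; the assumption $|\xi_{\om,\ep}|<Ct_\om$ with $t\in L^\infty(m)$ is precisely what is needed to do this uniformly in $\ep$.
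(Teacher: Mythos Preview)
Your proposal is correct and follows essentially the same approach as the paper: both arguments use the scaling condition~\eqref{EVT style cond} to factor the integrand into the fiberwise ratio $(\log\lambda_{\omega',0}-\log\lambda_{\omega',\epsilon})/\mu_{\omega',0}(H_{\omega',\epsilon})$ times $(t_{\omega'}+\xi_{\omega',\epsilon})/(t_\omega+\xi_{\omega,\epsilon})$, establish pointwise convergence to $\theta_{\omega',0}\,t_{\omega'}/t_\omega$ via Theorem~\ref{thm: dynamics perturb thm}, and conclude by dominated convergence using~\eqref{DCT cond 1*} and $|\xi_{\omega,\epsilon}|<Ct_\omega$. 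The only cosmetic difference is that the paper first writes the escape rate as a Birkhoff average $\lim_{N\to\infty}\frac{1}{N}\sum_{j=0}^{N-1}(\log\lambda_{\sigma^j\omega,0}-\log\lambda_{\sigma^j\omega,\epsilon})$ and then invokes Birkhoff's theorem to convert back to the integral, whereas you go straight to the integral via Proposition~\ref{prop: escape rates}; your route is slightly more direct but the substance is identical.
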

\begin{proof}
First, we note that \eqref{EVT style cond} implies that $\mu_{\om,0}(H_{\om,\ep})>0$ for $m$-a.e. $\om\in\Om$ and all $\ep>0$ sufficiently small, i.e. $m(\Om\bs\Om_+)=0$. 
Using Theorem \ref{thm: dynamics perturb thm} we have that
\begin{align}\label{fiber esc rat deriv}
\lim_{\ep\to 0}\frac{\log\lm_{\om,0}-\log\lm_{\om,\ep}}{\mu_{\om,0}(H_{\om,\ep})}=\ta_{\om,0}
\end{align}
since
\begin{align*}
\frac{\log\lm_{\om,0}-\log\lm_{\om,\ep}}{\mu_{\om,0}(H_{\om,\ep})}
&=
\frac{\log\lm_{\om,0}-\log\lm_{\om,\ep}}{\lm_{\om,0}-\lm_{\om,\ep}}
\cdot
\frac{\lm_{\om,0}-\lm_{\om,\ep}}{\mu_{\om,0}(H_{\om,\ep})}
\lra
\frac{1}{\lm_{\om,0}}\cdot\lm_{\om,0}\ta_{\om,0}=\ta_{\om,0}
\end{align*}
as $\ep\to 0$. 
In light of \eqref{cons of NulamN} and (\ref{eq: lem escape rate}), we use \eqref{EVT style cond} to get that 
\begin{align*}
\lim_{\ep\to 0}\frac{R_\ep(\mu_{\om,0})}{\mu_{\om,0}(H_{\om,\ep})}
&=
\lim_{\ep\to 0}\lim_{N\to\infty}\frac{1}{N}\frac{\log\lm_{\om,0}^N-\log\lm_{\om,\ep}^N}{\mu_{\om,0}(H_{\om,\ep})}
\\
&=
\lim_{\ep\to 0}\lim_{N\to\infty}\frac{1}{N}\sum_{j=0}^{N-1}\frac{\log\lm_{\sg^j\om,0}-\log\lm_{\sg^j\om,\ep}}{\mu_{\sg^j\om,0}(H_{\sg^j\om,\ep})}\cdot\frac{\mu_{\sg^j\om,0}(H_{\sg^j\om,\ep})}{\mu_{\om,0}(H_{\om,\ep})}
\\
&=
\lim_{\ep\to 0}\lim_{N\to\infty}\frac{1}{N}\sum_{j=0}^{N-1}\frac{\log\lm_{\sg^j\om,0}-\log\lm_{\sg^j\om,\ep}}{\mu_{\sg^j\om,0}(H_{\sg^j\om,\ep})}\cdot\frac{t_{\sg^j\om}+\xi_{\sg^j\om,\ep}}{t_{\om}+\xi_{\om,\ep}}
\\
&=
\lim_{\ep\to 0}\frac{1}{t_\om+\xi_{\om,\ep}}\lim_{N\to\infty}\frac{1}{N}\sum_{j=0}^{N-1}\frac{\log\lm_{\sg^j\om,0}-\log\lm_{\sg^j\om,\ep}}{\mu_{\sg^j\om,0}(H_{\sg^j\om,\ep})}\cdot (t_{\sg^j\om}+\xi_{\sg^j\om,\ep})
\\
&=
\lim_{\ep\to 0}\frac{1}{t_\om+\xi_{\om,\ep}}\int_\Om\frac{\log\lm_{\om,0}-\log\lm_{\om,\ep}}{\mu_{\om,0}(H_{\om,\ep})}\cdot (t_{\om}+\xi_{\om,\ep})\, dm(\om),
\end{align*}
where the last line follows from Birkhoff (which is applicable thanks to \eqref{DCT cond 1*} and the fact that $t_\om,\xi_{\om,\ep}\in L^\infty(m)$).
As $\xi_{\om,\ep}\to 0$ by assumption, \eqref{DCT cond 1*} allows us to apply Dominated Convergence which, in view of \eqref{fiber esc rat deriv}, implies
\begin{align*}
\lim_{\ep\to 0}\frac{R_\ep(\mu_{\om,0})}{\mu_{\om,0}(H_{\om,\ep})}
=
\frac{1}{t_\om}\lim_{\ep\to 0}\int_\Om \frac{\log\lm_{\om,0}-\log\lm_{\om,\ep}}{\mu_{\om,0}(H_{\om,\ep})}\cdot (t_{\om}+\xi_{\om,\ep})\, dm(\om)
=
\frac{\int_\Om\ta_{\om,0}t_\om\, dm(\om)}{t_\om},
\end{align*}
completing the proof.
\end{proof}
In the next section we will present easily checkable assumptions which will imply the hypotheses of Corollary~\ref{esc rat cor}.

\section{Quenched extreme value law}\label{EEVV}

\subsection{Gumbel's law in the quenched regime}
Suppose that $h_\om:\cJ_{\om,0}\to\RR$ is a continuous function for each $\om\in\Om$.
In our quenched random extreme value theory, $h_\omega$ is an observation function that is allowed to depend on $\omega$.
For each $\om\in\Om$ let $\ol{z}_\om$ be the essential supremum of $h_\om$ with respect to $\nu_{\om,0}$, that is
\begin{align*}
\ol{z}_\om:=\sup\set{z\in\RR\; :\nu_{\om,0}(\set{x\in\cJ_{\om,0}:h_\om(x)\geq z})>0}.
\end{align*}\index{$\ol z_\om$}
Similarly we define $\Ul z_\om$\index{$\Ul z_\om$} to be the essential infimum of $h_\om$ with respect to $\nu_{\om,0}$. 
Suppose $\Ul{z}_\om<\ol{z}_\om$ for $m$-a.e. $\om\in\Om$, 
and for each $z\in[\Ul{z}_\om,\ol{z}_\om]$ we define the set
\begin{align*}
V_{\om,z_\omega}:=\set{x\in\cJ_{\om,0}:h_\om(x)-z_\omega>0},
\end{align*}\index{$V_{\om,z_\omega}$}
which represents points $x$ in our phase space where the observation $h_\omega$ exceeds a random threshold $z_\omega$ at base configuration $\omega$.
Our theory allows for random thresholds $z_\omega$ so that we may consider both ``anomalous'' and ``absolute'' exceedances.
For example in a real-world application, $h_\omega(x)$ may represent the surface ocean temperature at a spatial location $x$ for an ocean system configuration $\omega$.
Random temperature exceedances above $z_\omega$ allow one to describe extreme value statistics for temperature anomalies, e.g.\ those above a climatological seasonal mean (on average the surface ocean is warmer in summer and colder in winter). 
On the other hand, non-random absolute temperature exceedances above $z$ are more relevant for marine life.
We suppose that
\begin{align*}
\nu_{\om,0}(V_{\om,\ol{z}_\om})=0.
\end{align*}

As is standard in extreme value theory, to develop an exponential law we will consider an increasing sequence of thresholds $z_{\omega,0}<z_{\omega,1}<\cdots$.
For each $N\geq 0$ we take  $z_{\om,N}\in[\Ul{z}_\om,\ol{z}_\om]$\index{$z_{\om,N}$} and the choice will be made explicit in a moment. 
For each $k,N\in\NN$ define $G_{\om,N}^{(k)}:\cJ_{\om,0}\to\RR$ by
\begin{align*}
G_{\om,N}^{(k)}(x):=h_{\sg^N\om}(T_\om^N(x))-z_{\sg^N\om,k}.
\end{align*}
Thus $G_{\om,N}^{(k)}>0$\index{$G_{\om,N}^{(k)}$} if and only if $h_{\sg^N\om}(T_\om^N(x))>z_{\sg^N\om,k}$.
Our extreme value law concerns the large $N$ limit of the likelihood of continued threshold non-exceedances:
\begin{equation}
\label{evtexpression}
\nu_{\om,0}\left(\set{x\in\cJ_{\om,0}:\max\left(G_{\om,0}^{(N)}(x), \dots,G_{\om,N-1}^{(N)}(x)\right)\leq 0}\right).
\end{equation}
We may easily transform (\ref{evtexpression}) into the language of random open systems:
one immediately has that $G_{\om,N}^{(k)}>0$ is equivalent to $T_\om^N(x)\in V_{\sg^N\om,z_{\sg^N\om,k}}$.
Therefore, for each $N\geq 0$ we have
\begin{equation}
\label{evtexpression2}
(\ref{evtexpression})=
\nu_{\om,0}\left(\set{x\in\cJ_{\om,0}:T_\om^j(x)\notin V_{\sg^j\om,z_{\sg^j\om,N}} \text{ for }j=0,\dots, N-1}\right)\\
=\nu_{\om,0}\left(X_{\om,N-1,\ep_N}\right),
\end{equation}
where to obtain the second equality we 
identify the sets $V_{\om,z_{\om,N}}$ with holes $H_{\om,\ep_N}\subset \cJ_{\omega,0}$\index{$H_{\om,\ep_N}$} for each $N\in\NN$
\footnote{
In Section~\ref{EEVV} we consider a decreasing sequence of holes, whereas in previous sections we considered a decreasing family of holes $H_{\om,\ep}$ parameterised by $\ep>0$. For the sake of notational continuity, in this section, and in the sequel, we denote a decreasing sequence of holes by $H_{\om,\ep_N}$. 
Note that $\ep_N$ here is just a parameter and should not be thought of as a real number, but instead as an index and that $H_{\om,\ep_N}$ serves as an alternative notation to $H_{\om,N}$. Furthermore, note that the measure of the hole depends on the fiber $\om$ with $\mu_{\om,0}(H_{\om,\ep_N})\to 0$ as $N\to\infty$. }.
Now using (\ref{NulamN}) and (\ref{evtexpression2}) we may convert  (\ref{evtexpression}) into the spectral expression:
\begin{equation}
\label{evtexpression3}
(\ref{evtexpression})=\frac{\lm_{\om,\ep_N}^N}{\lm_{\om,0}^N}
\lt(\nu_{\om,\ep_N}(\ind)+\nu_{\sg^N\om,0}\lt(Q_{\om,\ep_N}^N( \ind)\rt)\rt).
\end{equation}
Similarly, using \eqref{eq: mu0 of n survivor}, we can write 
\begin{align}
&\mu_{\om,0}\left(\set{x\in\cJ_{\om,0}:T_\om^j(x)\notin V_{\sg^j\om,z_{\sg^j\om,N}} \text{ for }j=0,\dots, N-1}\right)
=
\mu_{\om,0}\left(X_{\om,N-1,\ep_N}\right)
\nonumber\\
&\qquad=\frac{\lm_{\om,\ep_N}^N}{\lm_{\om,0}^N}
\lt(\nu_{\om,\ep_N}(\phi_{\om,0})+\nu_{\sg^N\om,0}\lt(Q_{\om,\ep_N}^N( \phi_{\om,0})\rt)\rt).
\label{evtexpression2mu}
\end{align}
Before we state our main result concerning the $N\to\infty$ limit, in addition to \eqref{C2}, \eqref{C3}, and \eqref{C8}, 
we make the following uniform adjustments to some of the assumptions in Section \ref{sec:goodrandom} as well as an assumption 
on the choice of the sequence of thresholds, condition \eqref{xibound}. 
At the end of this section, we will compare it with the H\"usler condition, which is the usual prescription for non stationary processes, as we anticipated in the Introduction.

\begin{enumerate}[align=left,leftmargin=*,labelsep=\parindent]
\item[(\Gls*{S})]\myglabel{S}{xibound}
For any fixed random scaling function $t\in L^\infty(m)$ with $t>0$, we may find sequences of functions $z_{N},\xi_N\in L^\infty(m)$  and a constant $W<\infty$ satisfying 
$$\mu_{\omega,0}(\{h_\omega(x)-z_{\omega,N}>0\})=(t_\omega+\xi_{\omega,N})/N, \mbox{ for a.e.\ $\omega$ and each $N\ge 1$}
$$\index{$t_\om$}\index{$\xi_{\om,N}$}
where:\\
(i)
$\lim_{N\to\infty} \xi_{\omega,N}=0$ for a.e.\ $\omega$ and \\
(ii) $|\xi_{\omega,N}|\le W$ for a.e.\ $\omega$ and all $N\ge 1$.
\end{enumerate}

\begin{enumerate}[align=left,leftmargin=*,labelsep=\parindent]
\item[(\Gls*{C1'})]\myglabel{C1'}{C1'}There exists $C_1\geq 1$ such that for $m$-e.a. $\om\in\Om$ we have 
\begin{align*}
C_1^{-1}\leq \cL_{\om,0}\ind\leq C_1.
\end{align*}

\item[(\Gls*{C4'})]\myglabel{C4'}{C4'} 
For each $f\in\cB$ and each $N\in\NN$ there exists $C_f>0$ and $\al(N)>0$ (independent of $\om$) with $\al:=\sum_{N=1}^\infty\al(N)<\infty$ such that for $m$-a.e. $\om\in\Om$, all $N\in\NN$
\begin{align*}
\sup_{\ep\geq 0}\norm{Q_{\om,\ep}^N f_{\om}}_{\infty,\sg^N\om}\leq C_f\al(N)\norm{f_{\om}}_{\cB_{\om}}.
\end{align*}
\item[(\Gls*{C5'})]\myglabel{C5'}{C5'} There exists $C_2\geq 1$ such that
\begin{align*}
\sup_{\ep\geq 0}\norm{\phi_{\om,\ep}}_{\infty,\om}\leq C_2
\quad\text{ and }\quad
\norm{\phi_{\om,0}}_{\cB_\om}\leq C_2
\end{align*}
for $m$-a.e. $\om\in\Om$.

\item[(\Gls*{C7'})]\myglabel{C7'}{C7'} 
There exists $C_3\geq 1$ such that for all $\ep>0$ sufficiently small we have
\begin{align*}
\essinf_\om\inf\phi_{\om,0}\geq C_3^{-1} >0
\qquad\text{ and }\qquad 
\essinf_\om\inf\phi_{\om,\ep}\geq 0.
\end{align*}

\end{enumerate}
\begin{remark}\label{zero hole}
Note that 
\eqref{xibound} implies that $\mu_{\om,0}(H_{\om,\ep_N})>0$ for each $N\in\NN$ since $t_\om>0$, and in particular we have $m(\Om\bs\Om_+)=0$. 
\end{remark}
\begin{remark}
\label{rem61}
Note that since $\lm_{\om,0}=\nu_{\sg\om,0}(\cL_{\om,0}\ind)$, \eqref{C1'} implies that 
\begin{align*}
C_1^{-1}\leq \lm_{\om,0}\leq C_1
\end{align*}
for $m$-a.e. $\om$.
\end{remark}
\begin{remark}\label{rem checking esc cor cond}
Note that conditions \eqref{xibound}, \eqref{C5'}, and \eqref{C7'} together imply \eqref{C6}, thus Theorem~\ref{thm: dynamics perturb thm} applies. 
Furthermore, these same conditions along with Remark~\ref{rem61} imply that there exists $A\geq 1$ such that 
\begin{align*}
\frac{\log\lm_{\om,0}-\log\lm_{\om,\ep}}{\mu_{\om,0}(H_{\om,\ep})}\leq A
\end{align*}
for $m$-a.e. $\om\in\Om$ and all $\ep>0$ sufficiently small, and thus \eqref{DCT cond 1*} and \eqref{EVT style cond} hold, meaning that  Corollary~\ref{esc rat cor} applies as well. 
\end{remark}

The following lemma shows that $\nu_{\om,\ep_N}(\phi_{\om,0})$ converges to $1$ uniformly in $\om$ under our assumptions \eqref{C1'}, \eqref{C4'}, \eqref{C5'}, \eqref{C7'}, and \eqref{xibound}.
\begin{lemma}\label{lemC9'}
For each $N\in\NN$ there exists $C_{\ep_N}\geq 1$ with $C_{\ep_N}\to 1$ as $N\to\infty$ such that
\begin{align}\label{C9'ineq}
C_{\ep_N}^{-1}\leq \nu_{\om,\ep_N}(\ind),\, \nu_{\om, \ep_N}(\phi_{\om,0}) \leq C_{\ep_N}
\end{align}
for $m$-a.e.\ $\omega\in\Omega$.
\end{lemma}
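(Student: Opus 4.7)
The strategy is to exploit the exact identities \eqref{NulamN} and \eqref{eq: mu0 of n survivor}, which, after solving for $\nu_{\om,\ep_N}(\ind)$ and $\nu_{\om,\ep_N}(\phi_{\om,0})$, may be applied with any iteration count $k\in\NN$ (not only $k=N$). Choosing $k=k(N)$ with $k(N)\to\infty$ but $k(N)/N\to 0$ (for instance $k(N)=\lfloor\sqrt{N}\rfloor$), we will simultaneously control three uniform-in-$\omega$ quantities: the eigenvalue ratio $(\lm_{\om,0}/\lm_{\om,\ep_N})^{k(N)}$, the survival probabilities $\nu_{\om,0}(X_{\om,k(N)-1,\ep_N})$ and $\mu_{\om,0}(X_{\om,k(N)-1,\ep_N})$, and the remainder terms $\nu_{\sg^{k(N)}\om,0}(Q_{\om,\ep_N}^{k(N)}(\cdot))$.

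For the eigenvalue ratio, we use the identity \eqref{diff eigenvalues identity unif} together with \eqref{C1'}, \eqref{C5'}, \eqref{C7'} and \eqref{xibound} to estimate, uniformly in $\omega$,
\begin{align*}
0\le \lm_{\om,0}-\lm_{\om,\ep_N}
=\lm_{\om,0}\,\nu_{\om,0}(\ind_{H_{\om,\ep_N}}\phi_{\om,\ep_N})
\le C_1 C_2 C_3\,\mu_{\om,0}(H_{\om,\ep_N})
\le \frac{K}{N},
\end{align*}
where $K:=C_1 C_2 C_3(\|t\|_\infty+W)$. Since $\lm_{\om,\ep_N}\ge\lm_{\om,0}-K/N\ge C_1^{-1}-K/N$ by Remark~\ref{rem61}, for all $N$ large we obtain $1\le \lm_{\om,0}/\lm_{\om,\ep_N}\le 1+K'/N$, so $(\lm_{\om,0}/\lm_{\om,\ep_N})^{k(N)}\to 1$ uniformly in $\omega$ because $k(N)/N\to 0$.

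For the survival probabilities, $T$-invariance of $\mu_{\om,0}$ and \eqref{xibound} give
\begin{align*}
\mu_{\om,0}(X_{\om,k-1,\ep_N})
\ge 1-\sum_{j=0}^{k-1}\mu_{\sg^j\om,0}(H_{\sg^j\om,\ep_N})
\ge 1-(\|t\|_\infty+W)\frac{k}{N},
\end{align*}
and similarly, using \eqref{C5'}, \eqref{C7'} to compare $\nu_{\om,0}$ and $\mu_{\om,0}$ on the holes, $\nu_{\om,0}(X_{\om,k-1,\ep_N})\ge 1-C_3(\|t\|_\infty+W)k/N$. Both tend to $1$ uniformly in $\omega$ with the choice $k=k(N)$. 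For the remainders, \eqref{C4'} together with \eqref{C5'} and the (uniform) boundedness of $\|\ind\|_{\cB_\om}$ give
\begin{align*}
|\nu_{\sg^{k}\om,0}(Q_{\om,\ep_N}^{k}(\phi_{\om,0}))|
\le \|Q_{\om,\ep_N}^{k}(\phi_{\om,0})\|_{\infty,\sg^{k}\om}
\le C_{\phi_0} C_2\,\alpha(k),
\end{align*}
and analogously for $\ind$; since $\alpha(k(N))\to 0$ as $N\to\infty$, these contributions vanish uniformly in $\omega$.

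Combining the three estimates in the rearranged forms of \eqref{NulamN} and \eqref{eq: mu0 of n survivor}, there exists $C_{\ep_N}\ge 1$ with $C_{\ep_N}\to 1$ as $N\to\infty$ such that $C_{\ep_N}^{-1}\le \nu_{\om,\ep_N}(\ind),\,\nu_{\om,\ep_N}(\phi_{\om,0})\le C_{\ep_N}$ for $m$-a.e.\ $\omega$, as required. The only mild obstacle is the balancing of the three competing constraints on $k(N)$; the polynomial rate $1/N$ from \eqref{xibound} against the (possibly slow) rate $\alpha(k)$ makes any intermediate growth rate admissible.
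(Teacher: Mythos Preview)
Your approach is correct and reaches the same conclusion, but by a genuinely different route than the paper.

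The paper proceeds via a telescoping identity: it writes $1-\nu_{\om,\ep_N}(\phi_{\om,0})=\nu_{\sg^n\om,0}\bigl(\phi_{\sg^n\om,0}-\~\cL_{\om,\ep_N}^n\phi_{\om,0}+Q_{\om,\ep_N}^n\phi_{\om,0}\bigr)$, then splits $\phi_{\sg^n\om,0}-\~\cL_{\om,\ep_N}^n\phi_{\om,0}$ into a sum of one-step differences $(\~\cL_{\sg^{n-(k+1)}\om,0}-\~\cL_{\sg^{n-(k+1)}\om,\ep_N})\phi_{\sg^{n-(k+1)}\om,0}$, each of which is controlled by the hole-mass estimate $E_N=C_2C_3(|t|_\infty+W)/N$. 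This yields an explicit bound of the form $n(1-E_N)^{-n}\cdot O(E_N)+C_2C_{\phi_0}\al(n)$, and the proof concludes by first fixing $n$ large (so $\al(n)$ is small) and then sending $N\to\infty$ (so $nE_N$ is small). The $\ind$ case is then claimed to follow from \eqref{C5'} and \eqref{C7'}.

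Your argument is more economical: you simply re-use the already established identities \eqref{NulamN} and \eqref{eq: mu0 of n survivor} with an iteration count $k=k(N)$ decoupled from the hole parameter $\ep_N$, and your diagonal choice $k(N)\to\infty$, $k(N)/N\to 0$ plays exactly the same balancing role as the paper's ``first $n$, then $N$'' limit order. Your estimates on the eigenvalue ratio, the survival probabilities, and the $Q$-remainder are the same ones the paper derives (compare \eqref{EVTeta2}, \eqref{EVTlamdiff2}, \eqref{LmUniErr} with your three steps). One small advantage of your route is that it handles $\nu_{\om,\ep_N}(\ind)$ directly via \eqref{NulamN}, whereas the paper's reduction of the $\ind$ case to the $\phi_{\om,0}$ case via \eqref{C5'} and \eqref{C7'} only gives uniform boundedness of $\nu_{\om,\ep_N}(\ind)$, not convergence to $1$, without further argument. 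The paper's telescoping approach, on the other hand, gives a completely explicit bound valid for every pair $(n,N)$, which is slightly more informative if one wanted quantitative rates.
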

\begin{proof}
Note that 
\begin{equation}
\label{opdiff}
\nu_{\sg\om,0}\lt((\cL_{\om,0}-\cL_{\om,\ep_N})(\~\cL_{\sg^{-k}\om,\ep_N}^k \phi_{\sg^{-k}\om,0})	\rt)
=
\lm_{\om,0}\nu_{\om,0}\lt(\ind_{H_{\om,\ep_N}}\~\cL_{\sg^{-k}\om,\ep_N}^k\phi_{\sg^{-k}\om,0}\rt).
\end{equation}
Further, by (\ref{C7'}) we have
\begin{equation}
\label{EVTeta2}
\nu_{\om,0}(H_{\om,\ep_N})
\leq
C_3
\mu_{\om,0}(H_{\om,\ep_N})
=
\frac{C_3
	(t_\om+\xi_{\om,N})}{N}
\end{equation}
for $m$-a.e. $\om\in\Om$.
Following the same derivation of \eqref{diff eigenvalues identity unif}, and using \eqref{C2} gives that 
\begin{align}
\lm_{\om,0}-\lm_{\om,\ep_N}
&=\nu_{\sg\om,0}\left((\cL_{\om,0}-\cL_{\om,\ep_N})(\phi_{\om,\ep_N})\right)
=\lm_{\om,0}\nu_{\om,0}(\ind_{H_{\om,\ep_N}}\phi_{\om,\ep_N}).
\label{diff eigenvalues identity unif unif}
\end{align}
Thus it follows from Remark~\ref{rem lm_0>lm_ep}, \eqref{diff eigenvalues identity unif unif}, \eqref{C5'}, and \eqref{EVTeta2} that
\begin{equation}
\label{EVTlamdiff2}
0\leq \lm_{\om,0}-\lm_{\om,\ep_N}
\leq \frac{C_2C_3\lm_{\om,0}(t_\om+\xi_{\om,N})}{N}.
\end{equation}    
Using Remark~\ref{rem61} and \eqref{EVTlamdiff2}, for all $N$ sufficiently large, we see that 
\begin{align}\label{LmUniErr}
1\leq\frac{\lm_{\om,0}}{\lm_{\om,\ep_N}}
=
\frac{\lm_{\om,0}}{\lm_{\om,0}-\lm_{\om,0}\nu_{\om,0}(\ind_{H_{\om,\ep_N}}\phi_{\om,\ep_N})}
\leq 
\frac{1}{1-C_2\nu_{\om,0}(H_{\om,\ep_N})}
\leq
\frac{1}{1-E_N},
\end{align}
where
\begin{align*}
E_N:=\frac{C_2C_3(|t|_\infty +W)}{N}\to 0
\end{align*}
as $N\to\infty$.

\begin{claim}\label{claim Lemma 6.1.a}
For every $n$, $\ep_N$, and $m$-a.e. $\om\in\Om$ we have
\begin{align*}
\absval{1-\nu_{\om,\ep_N}(\phi_{\om,0})}
\leq
n\left(\frac{1}{1-E_N}\right)^n\cdot
\left(C_1E_N+\frac{C_1C_2E_N}{1-E_N}\right)
+
C_2C_{\phi_0}\al(n).
\end{align*}
\end{claim}
\begin{subproof}
Using \eqref{C2} and a telescoping argument, we can write
\begin{align}
\absval{1-\nu_{\om,\ep_N}(\phi_{\om,0})}
&=\absval{\nu_{\sg^n\om,0}(\phi_{\sg^n\om,0})-\nu_{\om,\ep_N}(\phi_{\om,0})}
\nonumber\\
&
=\absval{\nu_{\sg^n\om,0}(\phi_{\sg^n\om,0})-\nu_{\sg^n\om,0}\big(\nu_{\om,\ep_N}(\phi_{\om,0})\cdot \phi_{\sg^n\om,\ep_N}\big)}
\nonumber\\	
&
=\absval{\nu_{\sg^n\om,0}\left(\phi_{\sg^n\om,0}-\~\cL_{\om,\ep_N}^n(\phi_{\om,0})+Q_{\om,\ep_N}^n(\phi_{\om,0})\right)}
\nonumber\\
&
\leq\sum_{k=0}^{n-1}\absval{\nu_{\sg^n\om,0}\left(\~\cL_{\sg^{n-k}\om,\ep_N}^k(\phi_{\sg^{n-k}\om,0})-\~\cL_{\sg^{n-(k+1)}\om,\ep_N}^{k+1}(\phi_{\sg^{n-(k+1)}\om,0})\right)}
\nonumber\\ 	
&\qquad
+\absval{\nu_{\sg^n\om,0}\left(Q_{\om,\ep_N}^n(\phi_{\om,0})\right)}
\nonumber\\
& 
=\sum_{k=0}^{n-1}\absval{\nu_{\sg^n\om,0}\left(\left(\~\cL_{\sg^{n-k}\om,\ep_N}^{k}\right)\left(\~\cL_{\sg^{n-(k+1)}\om,0}-\~\cL_{\sg^{n-(k+1)}\om,\ep_N}\right)(\phi_{\sg^{n-(k+1)}\om,0})\right)}
\label{lemma a sum}\\
&\qquad
+\absval{\nu_{\sg^n\om,0}\left(Q_{\om,\ep_N}^n(\phi_{\om,0})\right)}.
\label{lemma a error term}
\end{align}
First we note that we can estimate \eqref{lemma a error term} as
\begin{align}
\absval{\nu_{\sg^n\om,0}\left(Q_{\om,\ep_N}^n(\phi_{\om,0})\right)}\leq \norm{Q_{\om,\ep_N}^n(\phi_{\om,0})}_{\infty,\sg^n\om}
\leq 
C_2C_{\phi_0}\al(n)
.
\label{unifeq0}
\end{align}	
Now recall that
\begin{align*}
\hat{X}_{\sg^{-k}\om,k,\ep_N}:=\ind_{\bigcap_{j=0}^k T_{\sg^{-k}\om}^{-j}(\cJ_{\sg^{j-k}\om,\ep_N})}.
\end{align*}
Using \eqref{C2}, we can write \eqref{lemma a sum} as 
\begin{align}
&\sum_{k=0}^{n-1}\absval{\nu_{\sg^n\om,0}\left(\left(\~\cL_{\sg^{n-k}\om,\ep_N}^{k}\right)\left(\~\cL_{\sg^{n-(k+1)}\om,0}-\~\cL_{\sg^{n-(k+1)}\om,\ep_N}\right)(\phi_{\sg^{n-(k+1)}\om,0})\right)}
\nonumber\\
&\quad
=\sum_{k=0}^{n-1}\absval{\nu_{\sg^n\om,0}\left(\left(\lm_{\sg^{n-k}\om,\ep_N}^k\right)^{-1}
	\cL_{\sg^{n-k}\om,0}^k\left(\hat{X}_{\sg^{n-k}\om,k,\ep_N}\cdot\left(\~\cL_{\sg^{n-(k+1)}\om,0}-\~\cL_{\sg^{n-(k+1)}\om,\ep_N}\right)(\phi_{\sg^{n-(k+1)}\om,0})\right)\right)}
\nonumber\\	
&\quad
=\sum_{k=0}^{n-1}\frac{\lm_{\sg^{n-k}\om,0}^k}{\lm_{\sg^{n-k}\om,\ep_N}^k}\absval{\nu_{\sg^{n-k}\om,0}\left(\hat{X}_{\sg^{n-k}\om,k,\ep_N}\cdot\left(\~\cL_{\sg^{n-(k+1)}\om,0}-\~\cL_{\sg^{n-(k+1)}\om,\ep_N}\right)(\phi_{\sg^{n-(k+1)}\om,0})\right)}.
\label{unifeq1.0}
\end{align}	
Now, since $\sfrac{\lm_{\om,0}}{\lm_{\om,\ep_N}}\geq1$ (by Remark~\ref{rem lm_0>lm_ep}) for $m$ a.e. $\om$ and $\ep\geq 0$, and thus $\sfrac{\lm_{\sg^{n-k}\om,0}^k}{\lm_{\sg^{n-k}\om,\ep_N}^k}\geq1$ for each $k\geq 1$, we can write
\begin{align}
\eqref{unifeq1.0}&\leq
\frac{\lm_{\om,0}^n}{\lm_{\om,\ep_N}^n}\sum_{k=0}^{n-1}\absval{\nu_{\sg^{n-k}\om,0}\left(\hat{X}_{\sg^{n-k}\om,k,\ep_N}\cdot\left(\~\cL_{\sg^{n-(k+1)}\om,0}-\~\cL_{\sg^{n-(k+1)}\om,\ep_N}\right)(\phi_{\sg^{n-(k+1)}\om,0})\right)}
\nonumber\\
&
\leq 
\left(\frac{1}{1-E_N}\right)^n\cdot
\sum_{k=0}^{n-1}\absval{\nu_{\sg^{n-k}\om,0}\left(\hat{X}_{\sg^{n-k}\om,k,\ep_N}\cdot\left(\~\cL_{\sg^{n-(k+1)}\om,0}-\~\cL_{\sg^{n-(k+1)}\om,\ep_N}\right)(\phi_{\sg^{n-(k+1)}\om,0})\right)},
\label{unifeq1} 		
\end{align}
Using 
\begin{align*}
\~\cL_{\om,0}-\~\cL_{\om,\ep}&=\~\cL_{\om,0}-\lm_{\om,0}^{-1}\cL_{\om,\ep}+\lm_{\om,0}^{-1}\cL_{\om,\ep}-\~\cL_{\om,\ep}\nonumber\\
&=\lm_{\om,0}^{-1}\left(\cL_{\om,0}-\cL_{\om,\ep}\right)+(\lm_{\om,0}^{-1}-\lm_{\om,\ep}^{-1})\cL_{\om,\ep}\nonumber\\
&=\lm_{\om,0}^{-1}\left(\left(\cL_{\om,0}-\cL_{\om,\ep}\right)+\lm_{\om,0}\lm_{\om,\ep}(\lm_{\om,0}^{-1}-\lm_{\om,\ep}^{-1})\~\cL_{\om,\ep}\right)\nonumber\\
&=\lm_{\om,0}^{-1}\left(\left(\cL_{\om,0}-\cL_{\om,\ep}\right)+(\lm_{\om,\ep}-\lm_{\om,0})\~\cL_{\om,\ep}\right),
\end{align*}
the fact that
\begin{align*}
\cL_{\om,\ep_N}\left((f_{\sg\om}\circ T_\om)\cdot h_{\om}\right)=f_{\sg\om}\cdot\cL_{\om,\ep_N}(h_{\om}),
\end{align*}
for all $\ep_N\geq 0$ and $\om\in\Om$, and Remark~\ref{rem61}, we may estimate the sum in \eqref{unifeq1} by 
\begin{align}
&\sum_{k=0}^{n-1}\absval{\nu_{\sg^{n-k}\om,0}\left(\hat{X}_{\sg^{n-k}\om,k,\ep_N}\cdot\left(\~\cL_{\sg^{n-(k+1)}\om,0}-\~\cL_{\sg^{n-(k+1)}\om,\ep_N}\right)(\phi_{\sg^{n-(k+1)}\om,0})\right)}
\nonumber\\
&\quad
\leq\sum_{k=0}^{n-1}\lm_{\sg^{n-(k+1)}\om,0}^{-1}\absval{\nu_{\sg^{n-k}\om,0}\left(\hat{X}_{\sg^{n-k}\om,k,\ep_N}\cdot\left(\cL_{\sg^{n-(k+1)}\om,0}-\cL_{\sg^{n-(k+1)}\om,\ep_N}\right)(\phi_{\sg^{n-(k+1)}\om,0})\right)}
\nonumber\\
&\qquad
+\sum_{k=0}^{n-1}\lm_{\sg^{n-(k+1)}\om,0}^{-1}\absval{(\lm_{\sg^{n-(k+1)}\om,\ep_N}-\lm_{\sg^{n-(k+1)}\om,0})\cdot \nu_{\sg^{n-k}\om,0}\left(\hat{X}_{\sg^{n-k}\om,k,\ep_N}\cdot\~\cL_{\sg^{n-(k+1)}\om,\ep_N}(\phi_{\sg^{n-(k+1)}\om,0})\right)} 
\nonumber\\
&\quad
= \sum_{k=0}^{n-1}\lm_{\sg^{n-(k+1)}\om,0}^{-1}
\absval{\nu_{\sg^{n-k}\om,0}\left(\left(\cL_{\sg^{n-(k+1)}\om,0}-\cL_{\sg^{n-(k+1)}\om,\ep_N}\right)\left(\left(\hat{X}_{\sg^{n-k}\om,k,\ep_N}\circ T_{\sg^{n-(k+1)}\om}\right)\cdot\phi_{\sg^{n-(k+1)}\om,0}\right)\right)}
\label{unifeq2.1}\\
&\qquad
+\sum_{k=0}^{n-1}\bigg(\lm_{\sg^{n-(k+1)}\om,0}^{-1}\absval{1-\frac{\lm_{\sg^{n-(k+1)}\om,0}}{\lm_{\sg^{n-(k+1)}\om,\ep_N}}}
\nonumber\\
&\quad\qquad
\cdot\absval{ \nu_{\sg^{n-k}\om,0}\left(\cL_{\sg^{n-(k+1)}\om,\ep_N}\left(\left(\hat{X}_{\sg^{n-k}\om,k,\ep_N}\circ T_{\sg^{n-(k+1)}\om}\right)\cdot\phi_{\sg^{n-(k+1)}\om,0}\right)\right)}\bigg).
\label{unifeq2.2}
\end{align}
Using \eqref{diff eigenvalues identity unif unif}, \eqref{EVTlamdiff2}, and Remark~\ref{rem61} we can estimate 
\eqref{unifeq2.1} to get 
\begin{align}
&\sum_{k=0}^{n-1}\lm_{\sg^{n-(k+1)}\om,0}^{-1}\absval{\nu_{\sg^{n-k}\om,0}\left(\left(\cL_{\sg^{n-(k+1)}\om,0}-\cL_{\sg^{n-(k+1)}\om,\ep_N}\right)\left(\left(\hat{X}_{\sg^{n-k}\om,k,\ep_N}\circ T_{\sg^{n-(k+1)}\om}\right)\cdot\phi_{\sg^{n-(k+1)}\om,0}\right)\right)}
\nonumber\\
&\qquad
=\sum_{k=0}^{n-1}\absval{\nu_{\sg^{n-(k+1)}\om,0}\left(\ind_{H_{\sg^{n-(k+1)}\om,\ep_N}}\cdot\left(\left(\hat{X}_{\sg^{n-k}\om,k,\ep_N}\circ T_{\sg^{n-(k+1)}\om}\right)\cdot\phi_{\sg^{n-(k+1)}\om,0}\right)\right)}
\nonumber\\
&\qquad
\leq \sum_{k=0}^{n-1}\nu_{\sg^{n-(k+1)}\om,0}\left(\ind_{H_{\sg^{n-(k+1)}\om,\ep_N}}\right)\norm{\phi_{\sg^{n-(k+1)}\om,0}}_{\infty,\sg^{n-(k+1)}\om}
\leq
\frac{C_2C_3(|t|_\infty +W)\cdot n}{N}\leq nE_N.
\label{unifeq3}
\end{align}
Since $\cL_{\om,\ep}(f)=\cL_{\om,0}(\hat X_{\om,0,\ep}f)$ we can rewrite the second product in the sum in \eqref{unifeq2.2} so that we have 
\begin{align}
&\nu_{\sg^{n-k}\om,0}\left(\cL_{\sg^{n-(k+1)}\om,\ep_N}\left(\left(\hat{X}_{\sg^{n-k}\om,k,\ep_N}\circ T_{\sg^{n-(k+1)}\om}\right)\cdot\phi_{\sg^{n-(k+1)}\om,0}\right)\right)
\nonumber\\
&\qquad=
\nu_{\sg^{n-k}\om,0}\left(\cL_{\sg^{n-(k+1)}\om,0}\left(\hat{X}_{\sg^{n-(k+1)}\om,0,\ep_N}\left(\hat{X}_{\sg^{n-k}\om,k,\ep_N}\circ T_{\sg^{n-(k+1)}\om}\right)\cdot\phi_{\sg^{n-(k+1)}\om,0}\right)\right)
\nonumber\\
&\qquad
\leq \norm{\cL_{\sg^{n-(k+1)}\om,0}\ind}_{\infty,\sg^{n-k}\om}\norm{\phi_{\sg^{n-(k+1)}\om,0}}_{\infty,\sg^{n-(k+1)}\om}
\leq C_1C_2.
\label{unifeq4}
\end{align}
Inserting \eqref{unifeq4} into \eqref{unifeq2.2} 
and using \eqref{LmUniErr} yields 
\begin{align}
&
\sum_{k=0}^{n-1}\absval{1-\frac{\lm_{\sg^{n-(k+1)}\om,0}}{\lm_{\sg^{n-(k+1)}\om,\ep_N}}}\cdot\absval{ \nu_{\sg^{n-k}\om,0}\left(\cL_{\sg^{n-(k+1)}\om,\ep_N}\left(\left(\hat{X}_{\sg^{n-k}\om,k,\ep_N}\circ T_{\sg^{n-(k+1)}\om}\right)\cdot\phi_{\sg^{n-(k+1)}\om,0}\right)\right)}
\nonumber\\
&\qquad
\leq 
\sum_{k=0}^{n-1}\frac{C_1C_2E_N}{1-E_N} 
= n\cdot \frac{C_1C_2E_N}{1-E_N}. 
\label{unifeq5}
\end{align}
Thus, collecting the estimates \eqref{unifeq2.1}-\eqref{unifeq5} together with \eqref{unifeq0} and inserting into \eqref{unifeq1} yields
\begin{align}
\absval{1-\nu_{\om,\ep_N}(\phi_{\om,0})}
\leq
\left(\frac{1}{1-E_N}\right)^n\cdot
\left(nE_N+\frac{C_1C_2E_N\cdot n}{1-E_N}\right)
+
C_2C_{\phi_0}\al(n),
\label{nuepsphi0control}
\end{align}
which finishes the proof of the claim.
\end{subproof}

To finish the proof of Lemma~\ref{lemC9'}, we note that \eqref{nuepsphi0control} holds for $m$-a.e.\ $\omega$, every $N$ sufficiently large, and each $n\ge 1$.
Given a $\delta>0$, choose and fix $n$ so that $C_2C_{\phi_0}\alpha(n)<\delta/2$.
Because $\lim_{N\to\infty}E_N=0$, 
we may choose $N$ large enough so that first summand in (\ref{nuepsphi0control}) is also smaller than $\delta/2$.
Thus, $\lim_{N\to \infty}\nu_{\omega,\epsilon_N}(\phi_{\omega,0})=1$, uniformly in $\omega$.
This proves \eqref{C9'ineq} for $\nu_{\om,\ep_N}(\phi_{\omega,0})$;  we immediately obtain the other inequality using (\ref{C5'}) and (\ref{C7'}), and thus the proof of Lemma~\ref{lemC9'} is complete. 

\end{proof}

We now obtain a formula for the explicit form of Gumbel law for the extreme value distribution.
\begin{theorem}
\label{evtthm}
Given a random open system $(\mathlist{\bcomma}{\Om, m, \sg, \cJ_0, T, \cB, \cL_0, \nu_0, \phi_0, H_\ep})$ satisfying \eqref{C1'}, \eqref{C2}, \eqref{C3}, \eqref{C4'}, \eqref{C5'}, \eqref{C7'}, \eqref{C8},  and \eqref{xibound}, 
for almost every $\omega\in\Omega$ one has
\begin{equation}
\label{evtthmeqn}
\lim_{N\to\infty}\nu_{\om,0}\left(X_{\om,N-1,\ep_N}\right)
=
\lim_{N\to\infty}\mu_{\om,0}\left(X_{\om,N-1,\ep_N}\right)
=
\lim_{N\to\infty}\frac{\lm_{\om,\ep_N}^N}{\lm_{\om,0}^N}
=
\exp\left(-\int_\Om t_\om\ta_{\om,0}\, dm(\om)\right).
\end{equation}
\end{theorem}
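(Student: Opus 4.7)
The plan is to reduce all three limits to $\lim_N \lm_{\om,\ep_N}^N/\lm_{\om,0}^N$ and then evaluate this quantity explicitly via Theorem \ref{thm: dynamics perturb thm} combined with an ergodic theorem argument. Using the spectral identities \eqref{evtexpression3} and \eqref{evtexpression2mu}, I would first argue that the bracketed factors tend to $1$ uniformly in $\om$: Lemma \ref{lemC9'} gives $\nu_{\om,\ep_N}(\ind),\,\nu_{\om,\ep_N}(\phi_{\om,0})\to 1$ uniformly, while \eqref{C4'} combined with \eqref{C5'} bounds $\|Q_{\om,\ep_N}^N f\|_{\infty,\sg^N\om}\le C_f\,\alpha(N)\|f\|_{\cB_\om}\to 0$ uniformly for $f=\ind$ and $f=\phi_{\om,0}$. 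This reduces the theorem to evaluating $\lim_N \lm_{\om,\ep_N}^N/\lm_{\om,0}^N$.

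Next I would take logarithms and Taylor expand. By \eqref{EVTlamdiff2}, each ratio $\lm_{\sg^j\om,\ep_N}/\lm_{\sg^j\om,0}=1-a_{j,N}$ satisfies $0\le a_{j,N}\le C_2C_3(|t|_\infty+W)/N$ uniformly in $\om$ and $j$, so
\begin{align*}
-\log\frac{\lm_{\om,\ep_N}^N}{\lm_{\om,0}^N}=\sum_{j=0}^{N-1}a_{j,N}+O(N^{-1}),
\end{align*}
with the remainder controlled by $\sum_{j<N}a_{j,N}^2\le N(C/N)^2$. Setting $b_{\om,N}:=(\lm_{\om,0}-\lm_{\om,\ep_N})/(\lm_{\om,0}\mu_{\om,0}(H_{\om,\ep_N}))$, Theorem \ref{thm: dynamics perturb thm} (applicable thanks to Remark \ref{rem checking esc cor cond}) yields $b_{\om,N}\to\ta_{\om,0}$ pointwise $m$-a.e., while \eqref{EVTlamdiff2} provides the uniform bound $0\le b_{\om,N}\le C_2C_3$. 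Substituting $\mu_{\sg^j\om,0}(H_{\sg^j\om,\ep_N})=(t_{\sg^j\om}+\xi_{\sg^j\om,N})/N$ from \eqref{xibound}, the main sum becomes
\begin{align*}
\sum_{j=0}^{N-1}a_{j,N}=\frac{1}{N}\sum_{j=0}^{N-1}b_{\sg^j\om,N}(t_{\sg^j\om}+\xi_{\sg^j\om,N}).
\end{align*}

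The main obstacle is showing this doubly-indexed Birkhoff-type average converges $m$-a.e. to $\int_\Om \ta_{\om,0}t_\om\,dm(\om)$; classical Birkhoff does not apply directly because the summand depends on $N$. My plan is to split the sum as $\frac{1}{N}\sum_{j<N}F(\sg^j\om)+\frac{1}{N}\sum_{j<N}G_N(\sg^j\om)$, where $F(\om):=\ta_{\om,0}t_\om\in L^\infty(m)$ and $G_N(\om):=b_{\om,N}(t_\om+\xi_{\om,N})-F(\om)$ satisfies $|G_N|\le M$ for a fixed constant $M$ and $G_N\to 0$ $m$-a.e. The first sum converges to $\int_\Om F\,dm$ by Birkhoff's ergodic theorem. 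For the second, Egorov's theorem produces, for each $\delta>0$, a set $A_\delta\sub\Om$ with $m(A_\delta^c)<\delta$ on which $G_N\to 0$ uniformly; consequently
\begin{align*}
\frac{1}{N}\sum_{j=0}^{N-1}|G_N(\sg^j\om)|\le\sup_{A_\delta}|G_N|+M\cdot\frac{\#\set{0\le j<N:\sg^j\om\in A_\delta^c}}{N},
\end{align*}
and the right-hand side tends to $M\,m(A_\delta^c)<M\delta$ by Birkhoff applied to $\ind_{A_\delta^c}$. Letting $\delta\to 0$ along a countable sequence on a full-measure set yields $m$-a.e. convergence to zero. Assembling the pieces, $\log(\lm_{\om,\ep_N}^N/\lm_{\om,0}^N)\to-\int_\Om\ta_{\om,0}t_\om\,dm(\om)$, which exponentiates to the claimed Gumbel law.
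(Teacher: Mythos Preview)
Your approach is correct and is in fact more direct than the paper's. Both proofs reduce to evaluating $\lim_N \lm_{\om,\ep_N}^N/\lm_{\om,0}^N$ via the same spectral identities and Lemma~\ref{lemC9'}, and both Taylor expand the logarithm; the Taylor remainder is dispatched identically. The divergence is in how the main doubly-indexed ergodic average is handled.

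You invoke Theorem~\ref{thm: dynamics perturb thm} as a black box to get $b_{\om,N}\to\ta_{\om,0}$ pointwise with a uniform bound, and then treat the sum $\frac{1}{N}\sum_{j<N}b_{\sg^j\om,N}(t_{\sg^j\om}+\xi_{\sg^j\om,N})$ by splitting into a fixed function $F=\ta_{\om,0}t_\om$ (standard Birkhoff) plus a uniformly bounded remainder $G_N\to 0$ a.e., which you kill via Egorov plus Birkhoff on $\ind_{A_\delta^c}$. This Egorov argument is exactly the proof of Maker's nonstandard ergodic theorem in the bounded case. The paper instead cites Maker's theorem (Lemma~\ref{ptwiselemma2}) directly, but---more significantly---it does \emph{not} use Theorem~\ref{thm: dynamics perturb thm} as a black box. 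Rather, it reopens the proof of Theorem~\ref{thm: GRPT}, carrying along the auxiliary truncation parameter $n$ and the three-term decomposition $\ta_{\om,\ep,n}+\ta'_{\om,\ep,n}+\ta''_{\om,\ep,n}$ from \eqref{def:ta_ep,n}--\eqref{def:ta''_ep,n}; it applies Maker's theorem separately to each of $g^{(1)}_{N,n},g^{(2)}_{N,n},g^{(3)}_{N,n}$ with $n$ fixed, and only at the very end lets $n\to\infty$ using dominated convergence on $\ta_{\om,0,n}\to\ta_{\om,0}$.

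Your route is cleaner: once Theorem~\ref{thm: dynamics perturb thm} is established there is no need to redo its internals, and a single ergodic average suffices. The paper's route is slightly more self-contained (it never uses the existence of $\ta_{\om,0}$ until the final line, working only with the manifestly bounded truncations $\ta_{\om,0,n}$), which may explain the choice, but for the statement as given your argument is a genuine simplification.
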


\begin{proof}
\quad

\textbf{Step 1: Estimating $\lambda_{\omega,\epsilon_N}/\lambda_{\omega,0}$.}
To work towards constructing an estimate for $\lm^N_{\om,\ep_N}/\lm^N_{\om,0}$, we first estimate $\lm_{\om,\ep_N}/\lm_{\om,0}$.
For brevity, in this step we drop the $N$ subscripts on $\ep$.
Following the proof of Theorem~\ref{thm: GRPT} up to equation (\ref{maineq}), which uses assumptions \eqref{C2} and \eqref{C3}, we have 
\begin{eqnarray}
\lefteqn{\nu_{\sg^{-n}\om,\ep}(\phi_{\sg^{-n}\om,0})\frac{\lm_{\om,0}-\lm_{\om,\ep}}{\Dl_{\om,\ep}}}
\nonumber\\
\label{def:ta_ep,n}		
\qquad&=&\underbrace{1-\sum_{k=0}^{n-1} \lm_{\sg^{-(k+1)}\om,0}^{-1}(\lm^k_{\sg^{-k}\om,\ep})^{-1}q^{(k)}_{\om,\ep}}_{=:\ta_{\om,\ep,n}}
\\
\label{def:ta'_ep,n}		
\qquad&+&\Dl^{-1}_{\om,\ep}\underbrace{\sum_{k=1}^n \lm^{-1}_{\sg^{-k}\om,0}(\lm_{\sg^{-k}\om,0}-\lm_{\sg^{-k}\om,\ep})\nu_{\sg\om,0}((\cL_{\om,0}-\cL_{\om,\ep})(\~\cL_{\sg^{-k}\om,\ep}^k)(\phi_{\sg^{-k}\om,0}))}_{=:\ta'_{\om,\ep,n}}
\\
\label{def:ta''_ep,n}	
\qquad&+&\Dl^{-1}_{\om,\ep}\underbrace{\nu_{\sg\om,0}((\cL_{\om,0}-\cL_{\om,\ep})(Q^n_{\sg^{-n}\om,\ep}(\phi_{\sg^{-n}\om,0})))}_{=:\ta''_{\om,\ep,n}}.
\end{eqnarray}
By first rearranging to solve for $\lm_{\om,\ep}$ we have
\begin{eqnarray*}
\lm_{\om,\ep}&=&\lm_{\om,0}-\frac{\ta_{\om,\ep,n}\Dl_{\om,\ep}+\ta'_{\om,\ep,n}+\ta''_{\om,\ep,n}}{\nu_{\sg^{-n}\om,\ep}(\phi_{\sg^{-n}\om,0})},
\end{eqnarray*}
and thus
\begin{align}
\frac{\lm_{\om,\ep}}{\lm_{\om,0}}=1-
\underbrace{
	\frac{\ta_{\om,\ep,n}\Dl_{\om,\ep}}{\lm_{\om,0}\nu_{\sg^{-n}\om,\ep}(\phi_{\sg^{-n}\om,0})}}_{=:Y^{(1)}_{\om,\ep,n}}
-\underbrace{\frac{\ta'_{\om,\ep,n}}{\lm_{\om,0}\nu_{\sg^{-n}\om,\ep}(\phi_{\sg^{-n}\om,0})}}_{Y^{(2)}_{\om,\ep,n}}
-\underbrace{\frac{\ta''_{\om,\ep,n}}{\lm_{\om,0}\nu_{\sg^{-n}\om,\ep}(\phi_{\sg^{-n}\om,0})}}_{=:Y^{(3)}_{\om,\ep,n}}.
\label{Y^ieqn}
\end{align}
Setting $Y_{\om,\ep,n}:=Y^{(1)}_{\om,\ep,n}+Y^{(2)}_{\om,\ep,n}+Y^{(3)}_{\om,\ep,n}$ applying Taylor to $\log(1-\cdot)$, we obtain
\begin{equation}
\label{Yeqn}
\frac{\lm_{\om,\ep}}{\lm_{\om,0}}
=\exp\left(-Y_{\om,\ep,n}-\frac{Y_{\om,\ep,n}^2}{2(1-y)^2}\right),
\end{equation}
where $0\le y\le Y_{\om,\ep,n}$.
Setting $\ep=\ep_N$ in (\ref{Yeqn}) we obtain
\begin{eqnarray}
\label{82}
\frac{\lm^N_{\om,\ep_N}}{\lm_{\om,0}^N}
&=&
\exp\left(
-\sum_{i=0}^{N-1}Y_{\sg^i\om,\ep_N,n} -\sum_{i=0}^{N-1}\frac{Y_{\sg^i\om,\ep_N,n}^2}{2(1-y)^2}
\right)\\
\label{82a}	&=&	\exp\left(
-\frac{1}{N}\sum_{i=0}^{N-1}\left(g^{(1)}_{N,n}(\sg^i\om)+g^{(2)}_{N,n}(\sg^i\om)+g^{(3)}_{N,n}(\sg^i\om)\right) -\sum_{i=0}^{N-1}\frac{Y_{\sg^i\om,\ep_N,n}^2}{2(1-y)^2}
\right),
\end{eqnarray}
where $g^{(j)}_{N,n}(\omega)=NY^{(j)}_{\omega,\epsilon_N,n}$ for $j=1,2,3$ and $0\le y\le Y_{\omega,\epsilon_N,n}$.

\textbf{Step 2: A sequential ergodic lemma.}
In preparation for estimating the products along orbits contained in $\lm^N_{\om,\ep_N}/\lm^N_{\om,0}$, we state 
the following result of Maker \cite{Maker}:
\begin{lemma}
\label{ptwiselemma2}
For $N\ge 0$, let $g_N\in L^1(m)$.
Suppose that as $N\to\infty$, $g_N\to g$ $m$-almost everywhere for some $g\in L^1(m)$ and that there exists $g^*\in L^1(m)$ such that $|g_N|\leq g^*$ for each $N\in\NN$.
Then for $m$-a.e. $\om\in\Om$, $\lim_{N\to\infty}\frac{1}{N}\sum_{i=0}^{N-1} g_N(\sg^i\om)$ exists and equals $\mathbb{E}(g)=\int_\Om g\ dm$.
\end{lemma}

\textbf{Step 3: Estimating $g^{(1)}$.}

In this step we construct estimates of $g^{(1)}$ that are required to apply Lemma \ref{ptwiselemma2}.
In preparation for the first use of Lemma \ref{ptwiselemma2} we recall that 
$$
g_{N,n}^{(1)}(\omega):=\frac{N\theta_{\omega,\epsilon_N,n}\Delta_{\omega,\epsilon_N}}{\lambda_{\omega,0}\nu_{\sigma^{-n}\omega,\epsilon_N}(\phi_{\sigma^{-n}\omega,0})}\\
=\frac{N\theta_{\omega,\epsilon_N,n}\mu_{\omega,0}(H_{\omega,\epsilon_N})}{\nu_{\sigma^{-n}\omega,\epsilon_N}(\phi_{\sigma^{-n}\omega,0})}
=\frac{\theta_{\omega,\epsilon_N,n}(t_\omega+\xi_{\omega,N})}{\nu_{\sigma^{-n}\omega,\epsilon_N}(\phi_{\sigma^{-n}\omega,0})},$$
where  $\lim_{N\to\infty}\xi_{\omega,N}=0$ for a.e.\ $\omega$ and $|\xi_{\om,N}|\le W$ by (\ref{xibound}).
We also set $g_n^{(1)}(\omega):=\theta_{\omega,0,n}t_\omega$, where 
\begin{align}\label{theta_0,n def}
\ta_{\om,0,n}:=1-\sum_{k=0}^{n-1}\left(\lm_{\sg^{-(k+1)}\om,0}^{k+1}\right)^{-1}q_{\om,0}^{(k)}
=1-\sum_{k=0}^{n-1}\hat q_{\om,0}^{(k)}.
\end{align}\index{$\ta_{\om,0,n}$}

By Lemma \ref{qbound2} and \eqref{C8} we see that $0\le \hat q_{\om,0}^{(k)},\hat q_{\om,\ep_N}^{(k)}\le 1$ for each $k$, $N$, and $m$-a.e.\ $\omega$. 
From \eqref{def:ta_ep,n} we have that 
\begin{align}
\ta_{\om,\ep_N,n}
:=1-\sum_{k=0}^{n-1}\lm_{\sg^{-(k+1)}\om,0}^{-1}(\lm^k_{\sg^{-k}\om,\ep_N})^{-1}q_{\om,\ep_N}^{(k)}
=1-\sum_{k=0}^{n-1}\frac{\lm_{\sg^{-k}\om,0}^k}{\lm_{\sg^{-k}\om,\ep_N}^k}\hat q_{\om,\ep_N}^{(k)},
\end{align}\index{$\ta_{\om,\ep_N,n}$}
and thus that $\ta_{\om,0,n}\in[0,1]$ and $\ta_{\om,\ep_N,n}\leq 1$ for each $n$ and $m$-a.e.\ $\omega$.
Using \eqref{LmUniErr} and the fact that $\sfrac{\lm_{\om,0}}{\lm_{\om,\ep_N}}\geq 1$ (by Remark~\ref{rem lm_0>lm_ep}), we have that 
\begin{align*}
\ta_{\om,\ep_N,n}
\geq 1-\frac{\lm_{\sg^{-(n-1)}\om,0}^{n-1}}{\lm_{\sg^{-(n-1)}\om,\ep_N}^{n-1}}\sum_{k=0}^{n-1}\hat q_{\om,\ep_N}^{(k)}
\geq
1-\left(1+\frac{C_1E_N}{C_1-E_N}\right)^n.
\end{align*}
Using Lemma~\ref{lemC9'} and \eqref{xibound} we see that $g_{N,n}^{(1)},g_n^{(1)}\in L^1(m)$ for each $N,n$.
Moreover using these same facts	we have that $g_{N,n}^{(1)}$, and thus $|g_{N,n}^{(1)}-g_n^{(1)}|$, is dominated in $L^1(m)$.

Referring to the first term in the $Y$-sum in (\ref{82}), we may now apply Lemma \ref{ptwiselemma2} to conclude that 
$$\sum_{i=0}^{N-1}\frac{\theta_{\sigma^i\omega,\epsilon_N,n}\Delta_{\sigma^i\omega,\epsilon_N}}{\lambda_{\sigma^i\omega,0}\nu_{\sigma^{-n+i}\omega,\epsilon_N}(\phi_{\sigma^{-n+i}\omega,0})}=\frac{1}{N}\sum_{i=0}^{N-1}g_{N,n}^{(1)}(\sigma^i\omega)\to \int_\Omega g_n^{(1)}(\omega)\ dm(\omega)=\int_\Omega \theta_{\omega,0,n}t_\omega\ dm(\omega),$$
as $N\to\infty$ for each $n$ and $m$-a.e.\ $\omega$.

\textbf{Step 4: Estimating $g^{(2)}$.} 
We now perform a similar analysis to the previous step to control the terms in the sum (\ref{82}) corresponding to $\theta'$.
Again in preparation for applying Lemma \ref{ptwiselemma2}, recall that
$$g_{N,n}^{(2)}(\omega)=\frac{N\theta'_{\omega,\epsilon_N,n}}{\lambda_{\omega,0}\nu_{\sigma^{-n}\omega,\epsilon_N}(\phi_{\sigma^{-n}\omega,0})}$$
and set $g_n^{(2)}(\omega)\equiv 0$ for each $n$.
Using \eqref{def:ta'_ep,n}, (\ref{opdiff}), (\ref{EVTlamdiff2}), and (\ref{C7'}), for sufficiently large $N$ we have  
\begin{eqnarray*}
\ta_{\om,\ep_N,n}'
&:=&\sum_{k=1}^n \lm^{-1}_{\sg^{-k}\om,0}(\lm_{\sg^{-k}\om,0}-\lm_{\sg^{-k}\om,\ep})\nu_{\sg\om,0}((\cL_{\om,0}-\cL_{\om,\ep})(\~\cL_{\sg^{-k}\om,\ep}^k)(\phi_{\sg^{-k}\om,0}))
\\
&\leq& 
\frac{C_2C_3\lambda_{\omega,0}}{N}\sum_{k=1}^n\lambda_{\sigma^{-k}\omega,0}^{-1}\left(\lambda_{\sigma^{-k}\omega,0}(t_{\sg^{-k}\om}+\xi_{\sg^{-k}\om,N})\right)
\nu_{\om,0}\lt(\ind_{H_{\om,\ep_N}}\~\cL_{\sg^{-k}\om,\ep_N}^k\phi_{\sg^{-k}\om,0}\rt)
\\
&\leq&
\frac{C_2C_3\lambda_{\omega,0}}{N}\sum_{k=1}^n(t_{\sg^{-k}\om}+\xi_{\sg^{-k}\om,N})
\nu_{\om,0}(H_{\om,\ep_N})\norm{\~\cL_{\sg^{-k}\om,\ep_N}^k\phi_{\sg^{-k}\om,0}}_{\infty,\om}
\\
&\leq& 
\frac{C_2C_3\lambda_{\omega,0}}{N}\cdot \frac{C_3(t_\om+\xi_{\om,N})}{N}
\sum_{k=1}^n(t_{\sg^{-k}\om}+\xi_{\sg^{-k}\om,N})
\norm{\~\cL_{\sg^{-k}\om,\ep_N}^k\phi_{\sg^{-k}\om,0}}_{\infty,\om}.
\end{eqnarray*}
Using Lemma~\ref{lemC9'}, (\ref{C5'}), and (\ref{C4'}) we note that
\begin{eqnarray}
\nonumber		\norm{\~\cL_{\sg^{-k}\om,\ep_N}^k\phi_{\sg^{-k}\om,0}}_{\infty,\om}
&=&
\norm{\nu_{\sg^{-k}\om,\ep_N}(\phi_{\sg^{-k}\om,0})\phi_{\om,\ep_N}+Q_{\sg^{-k}\om,\ep_N}^k\phi_{\sg^{-k}\om,0}}_{\infty,\om}
\\
\nonumber		&\leq &
\norm{C_{\ep_N}C_2+C_{\phi_0}C_2\al(k)}_{\infty,\om}
\\
\label{Lpowerinfbound}		&\leq&
C_2C_{\ep_N}+C_2C_{\phi_0}\al.
\end{eqnarray}
Finally, we have for $N$ sufficiently large and by (\ref{xibound}) 
\begin{eqnarray*}
g_{N,n}^{(2)}(\omega)&\le& \frac{NC_2C_3\lambda_{\omega,0}(C_2C_{\ep_N}+C_2C_{\phi_0}\al)}{N\lambda_{\omega,0}\nu_{\sg^{-n}\om,\ep_{N}}(\phi_{\sg^{-n}\om,0})}\cdot \frac{C_3(t_\om+\xi_{\om,N})}{N}
\sum_{k=1}^n(t_{\sg^{-k}\om}+\xi_{\sg^{-k}\om,N}).
\end{eqnarray*}

Integrability and domination of $g_{N,n}^{(2)}$ follows from the the fact that $t\in L^\infty(m)$ and $|\xi_{\om,N}|\le W,$ for almost all $\om.$ 
Moreover by  (\ref{xibound}), for each $n$ we have that $g_{N,n}^{(2)}\to 0$ almost everywhere as $N\to\infty$.
Referring to $Y_{\om,\ep_N,n}^{(2)}$ in \eqref{Y^ieqn}, we may now apply Lemma \ref{ptwiselemma2} to conclude that 
$$
\sum_{i=0}^{N-1}\frac{\theta'_{\sigma^i\omega,\epsilon_N,n}}{\lambda_{\sigma^i\omega,0}\nu_{\sigma^{-n+i}\omega,\epsilon_N}(\phi_{\sigma^{-n+i}\omega,0})}=\frac{1}{N}\sum_{i=0}^{N-1}g_{N,n}^{(2)}(\sigma^i\omega)\to \int_\Omega g_n^{(2)}(\omega)\ dm(\omega)=0,
$$
as $N\to\infty$ for each $n$ and a.e.\ $\omega$.

\textbf{Step 5: Estimating $g^{(3)}$.}
We repeat a similar analysis to control the terms in the sum (\ref{82}) corresponding to $\theta''$.
Again in preparation for applying Lemma \ref{ptwiselemma2}, recall that
$$g_{N,n}^{(3)}(\omega)=\frac{N\theta''_{\omega,\epsilon_N,n}}{\lambda_{\omega,0}\nu_{\sigma^{-n}\omega,\epsilon_N}(\phi_{\sigma^{-n}\omega,0})}.$$
We begin developing an upper bound for $|g_{N,n}^{(3)}|$.
Using \eqref{def:ta''_ep,n}, (\ref{C2}), (\ref{C4'}), and  (\ref{C7'}) we have for sufficiently large $N$ that  
\begin{align*}
|\ta_{\om,\ep_N,n}''|&:=
\nu_{\sg\om,0}(\cL_{\om,0}(\ind_{H_{\om,\ep_N}}Q^n_{\sg^{-n}\om,\ep}(\phi_{\sg^{-n}\om,0})))
\\
&=
\lambda_{\omega,0}\nu_{\om,0}(\ind_{H_{\om,\ep_N}}Q_{\sg^{-n}\om,\ep_N}^n\phi_{\sg^{-n}\om,0})		\\
&\leq 
\lambda_{\omega,0}\nu_{\om,0}(H_{\om,\ep_N})\norm{Q_{\sg^{-n}\om,\ep_N}^n\phi_{\sg^{-n}\om,0}}_{\infty,\om}
\\
&\leq
\lambda_{\omega,0}\nu_{\om,0}(H_{\om,\ep_N})C_{\phi_0}\norm{\phi_{\sg^{-n}\om,0}}_{\cB_{\sg^{-n}\om}}\al(n)
\\
&\leq
\lambda_{\omega,0}C_{\phi_0}C_2C_3\al(n)\mu_{\om,0}(H_{\om,\ep_N})
\\
&=
\frac{\lambda_{\omega,0}C_{\phi_0}C_2C_3\al(n) (t_\om+\xi_{\om,N})}{N}.
\end{align*}
Therefore, using Lemma~\ref{lemC9'}
$$|g_{N,n}^{(3)}(\omega)|\le \frac{N\lm_{\om,0}C_{\phi_0}C_2C_3\al(n) (t_\om+\xi_{\om,N})}{N\lm_{\om,0}\nu_{\sg^{-n}\om,\ep_{N}}(\phi_{\sg^{-n}\om,0}))}\le C_{\epsilon_N}C_{\phi_0}C_2C_3\al(n) (t_\om+\xi_{\om,N})=:\tilde{g}^{(3)}_{N,n}(\omega).
$$
We set $\tilde{g}^{(3)}_n(\omega)=C_{\phi_0}C_2C_3\alpha(n)t_\omega$. 
Integrability of $\tilde{g}_{N,n}^{(3)}$ and $\tilde{g}_{n}^{(3)}$ follows from (\ref{xibound}) and the fact $t\in L^\infty(m)$ and $|\xi_{\om,N}|\le W,$ for almost all $\om.$ 
Similarly, (recalling that $C_{\ep_N}\to 1$ as $N\to\infty$ by Lemma~\ref{lemC9'}) for each $n$, $\tilde{g}_{N,n}^{(3)}\to \tilde{g}_{n}^{(3)}$  almost everywhere as $N\to\infty$.

Referring to $Y_{\om,\ep_N,n}^{(3)}$ in \eqref{Y^ieqn}, we may now apply Lemma \ref{ptwiselemma2} to $\tilde{g}_{N,n}^{(3)}$ and $\tilde{g}_n^{(3)}$ to conclude that 
$$\frac{1}{N}\sum_{i=0}^{N-1}g_{N,n}^{(3)}(\sigma^i\omega)\le\frac{1}{N}\sum_{i=0}^{N-1}\tilde{g}_{N,n}^{(3)}(\sigma^i\omega)
\to \int_\Omega \tilde{g}_n^{(3)}(\omega)\ dm(\omega)
=C_{\phi_0}C_2C_3\alpha(n)\int_\Omega t_\omega\ dm(\omega)$$
as $N\to\infty$ for each $n$ and a.e.\ $\omega$.

\textbf{Step 6: Finishing up.}\label{step6}
Recall from (\ref{evtexpression3}) that
\begin{equation*}
(\ref{evtexpression})=\frac{\lm_{\om,\ep_N}^N}{\lm_{\om,0}^N}
\lt(\nu_{\om,\ep_N}(\ind)+\nu_{\sg^N\om,0}\lt(Q_{\om,\ep_N}^N( \ind)\rt)\rt).
\end{equation*}
Using (\ref{C4'}) and Lemma~\ref{lemC9'} we see that $\lim_{N\to\infty}\lt(\nu_{\om,\ep_N}(\ind)+\nu_{\sg^N\om,0}\lt(Q_{\om,\ep_N}^N( \ind)\rt)\rt)=1$ for $m$-a.e.\ $\omega\in\Omega$.
By (\ref{82a}) and Steps 3, 4, and 5 we see that for any $n$ 
\begin{eqnarray}
\label{ratiobound}	\lefteqn{\lim_{N\to\infty}\exp\left(	-\frac{1}{N}\sum_{i=0}^{N-1}\left(g^{(1)}_{N,n}(\sg^i\om)+g^{(2)}_{N,n}(\sg^i\om)+g^{(3)}_{N,n}(\sg^i\om)\right)\right)\exp\left( -\sum_{i=0}^{N-1}\frac{Y_{\sg^i\om,\ep_N,n}^2}{2(1-y)^2}
	\right)}\\
\nonumber	&&\le \lim_{N\to\infty} \frac{\lambda_{\omega,\epsilon_N}^N}{\lambda_{\omega,\epsilon_N}^N}\\
\nonumber	&&\le
\lim_{N\to\infty}\exp\left(
-\frac{1}{N}\sum_{i=0}^{N-1}\left(g^{(1)}_{N,n}(\sg^i\om)+g^{(2)}_{N,n}(\sg^i\om)-\tilde{g}^{(3)}_{N,n}(\sg^i\om)\right)\right)\exp\left( -\sum_{i=0}^{N-1}\frac{Y_{\sg^i\om,\ep_N,n}^2}{2(1-y)^2}
\right),
\end{eqnarray}
where $0\le y\le Y_{\sg^i\om,\ep_N,n}$.
We now treat the Taylor remainder terms.
From Steps 3, 4, and 5 for all $N$ sufficiently large we have and for almost all $\om$: 
\begin{eqnarray}
\label{g1bound}|g^{(1)}_{N,n}|&\le& C_{\epsilon_N}(|t|_\infty+W), 
\\
\label{g2bound}|g^{(2)}_{N,n}|&\le& {C_{\epsilon_N}C_2C_3(C_2C_{\ep_N}+C_2C_{\phi_0}\al)}\cdot \frac{C_3n(|t|_\infty+W)^2}{N},
\\
\label{g3bound}|g^{(3)}_{N,n}|&\le& \~ g^{(3)}_{N,n}:= C_{\phi_0}C_2C_3\al(|t|_\infty+W).
\end{eqnarray}
Further,
\begin{equation}
\label{remainderbound}\sum_{i=0}^{N-1}\frac{Y_{\sg^i\om,\ep_N,n}^2}{2(1-y)^2}\le \sum_{i=0}^{N-1}\frac{Y_{\sg^i\om,\ep_N,n}^2}{2(1-Y_{\sg^i\om,\ep_N,n})^2}=\sum_{i=0}^{N-1}\frac{(G_{\sg^i\om,\ep_N,n}/N)^2}{2(1-(G_{\sg^i\om,\ep_N,n}/N))^2},
\end{equation}
where $G_{\omega,\ep_N,n}:=g^{(1)}_{N,n}+g^{(2)}_{N,n}+g^{(3)}_{N,n}$.
Using the bounds (\ref{g1bound})--(\ref{g3bound}) we see that (\ref{remainderbound}) approaches $0$ for almost all $\omega$ for each $n$ as $N\to\infty$.
Therefore, combining the expressions developed in Steps 3, 4, and 5 for the $N\to\infty$ limits with (\ref{ratiobound}) we have
\begin{eqnarray*}
\exp\lt(-
\int_{\Omega}\theta_{\omega,0,n}t_\omega\ dm(\omega)
\rt)
\le 
\lim_{N\to\infty} \frac{\lambda_{\omega,\epsilon_N}^N}{\lambda_{\omega,0}^N}
\le 
\exp\lt(-
\int_{\Omega}\theta_{\omega,0,n}t_\omega\ dm(\omega)+C_2C_3C_{\phi_0}\alpha(n)\int_\Omega t_\omega\ dm(\omega)
\rt).
\end{eqnarray*}
Recalling the definitions of $\theta_{\omega,0,n}$ \eqref{theta_0,n def} and $\theta_{\omega,0}$ \eqref{eq: def of theta_0} and the fact that $0\le\theta_{\omega,0,n}\le 1$ (by \eqref{theta in [0,1]}), we may use dominated convergence to take the $n\to\infty$ limit to obtain
$$
\exp\lt(-
\int_{\Omega}\theta_{\omega,0}t_\omega\ dm(\omega) \rt)
\le \lim_{N\to\infty} \frac{\lambda_{\omega,\epsilon_N}^N}{\lambda_{\omega,0}^N}
\le \exp\lt(-
\int_{\Omega}\theta_{\omega,0}t_\omega\ dm(\omega)\rt),$$
thus completing the proof that 
\begin{equation*}
\lim_{N\to\infty}\nu_{\om,0}\left(X_{\om,N-1,\ep_N}\right)
=
\lim_{N\to\infty}\frac{\lm_{\om,\ep_N}^N}{\lm_{\om,0}^N}
=
\exp\left(-\int_\Om t_\om\ta_{\om,0}\, dm(\om)\right).
\end{equation*}
To see that $\lim_{N\to\infty}\mu_{\om,0}\left(X_{\om,N-1,\ep_N}\right)$ is also equal to this value, we simply recall that \eqref{evtexpression2mu} gives that
\begin{align*}
\mu_{\om,0}(X_{\om,N-1,\ep_N})
=
\frac{\lm_{\om,\ep_N}^N}{\lm_{\om,0}^N}
\lt(\nu_{\om,\ep_N}(\phi_{\om,0})+\nu_{\sg^N\om,0}\lt(Q_{\om,\ep_N}^N( \phi_{\om,0})\rt)\rt).
\end{align*}
Now since (\ref{C4'}) and Lemma~\ref{lemC9'} together give that $$\lim_{N\to\infty}\lt(\nu_{\om,\ep_N}(\phi_{\om,0})+\nu_{\sg^N\om,0}\lt(Q_{\om,\ep_N}^N( \phi_{\om,0})\rt)\rt)=1$$ for $m$-a.e.\ $\omega\in\Omega$, we must in fact have that 
\begin{equation*}
\lim_{N\to\infty}\mu_{\om,0}\left(X_{\om,N-1,\ep_N}\right)
=
\lim_{N\to\infty}\frac{\lm_{\om,\ep_N}^N}{\lm_{\om,0}^N}
=
\exp\left(-\int_\Om t_\om\ta_{\om,0}\, dm(\om)\right),
\end{equation*}
which completes the proof of Theorem~\ref{evtthm}.

\end{proof}

\subsection{The relationship between condition \eqref{os} and the H\"usler condition}
We now return to the discussion initiated in the Introduction to compare our assumption (\ref{os}) for the thresholds $z_{\om, N}$ with the H\"usler type condition (\ref{BLH}). 
We show that in the more general situation considered in our paper with random boundary level $t_{\omega}$, the limit (\ref{BLH}) will follow from  the simpler assumption (\ref{os}),  provided we replace $t$ in (\ref{BLH}) with the expectation of $t_{\omega}.$ 

Recall our  assumption for the choice of the thresholds (\ref{xibound}) is $\mu_{\omega,0}(h_{\omega}(x)> z_{\omega, N})=(t_{\omega}+\xi_{\omega, N})/N$,
where $\xi_{\om, N}$ goes to zero almost surely when $N\rightarrow \infty$ and $\xi_{\omega,N}\le W$ for a.e. $\omega$.  It is immediate to see by dominated convergence that: 
\begin{equation}\label{dc}
\lim_{N\rightarrow \infty}\int_\Om |N\mu_{\omega,0}(h_{\omega}(x)> z_{\omega, N})-t_{\omega}|\ dm(\omega)=0.
\end{equation}
Applying our non-standard ergodic  Lemma \ref{ptwiselemma2} with $g_N(\om):=N\mu_{\om,0}(h_{\omega}(x)> z_{\omega, N})$ and $g(\om):=t_\om$, 
one may transform the sum in  (\ref{BLH}) as follows: 
$$
\lim_{N\rightarrow \infty}\frac{1}{N} \sum_{i=0}^{N-1}N\mu_{\omega,0}(h_{\sigma^j\omega}(T^j_{\omega}(x))> z_{\sigma^j\omega, N})
=\int_\Om t_{\omega}\ dm(\omega),
$$
which is the condition (\ref{BLH}) with $t$ replaced by the expectation of $t_{\omega}.$ 
\subsection{Hitting time statistics}
\label{sec:hts}
It is well known that in the deterministic setting there is a close relationship between extreme value theory and the statistics of first hitting time, see for instance \cite{FFT10, book}. We now show how our Theorem \ref{evtthm}, with a slight modification,  can be interpreted in terms of a suitable definition of (quenched) first hitting time distribution.
Let us consider as in the previous sections, a sequence of small random holes $\mathcal{H}_{\om,N}:=\{H_{\sigma^j\om, \epsilon_N}\}_{j\ge 0},$ and define
the first random hitting time as
$$
\tau_{\om, \mathcal{H}_{\om,N}}(x)=\inf\{k\ge 1, T^k_{\om}(x)\in H_{\sigma^k\om, \epsilon_N}\}.
$$\index{$\tau_{\om, \mathcal{H}_{\om,N}}$}

We recall that the usual statistics of hitting times is written in the form
$
\mu_{\om,0}\left(\tau_{\om, \mathcal{H}_{\om,N}}>t\right),
$
for nonnegative values of $t.$ 
Since the sets $H_{\sigma^j\om, \epsilon_N}$ have measure tending to zero when $N\rightarrow \infty,$ and therefore the first  hitting times could eventually grow to infinity, one needs a rescaling in order to get a meaningful limit distribution. This is achieved in the next Proposition. 
In our current setting, condition (\ref{xibound}) reads: 	
$
\mu_{\om,0}(H_{\om, \epsilon_N})=\frac{t_{\om}+\xi_{\om, N}}{N}, 
$
with 
$\lim_{N\to\infty} \xi_{\omega,N}=0$ for a.e.\ $\omega$ and 
$|\xi_{\omega,N}|\le W$ for a.e.\ $\omega$ and all $N\ge 1$.

\begin{proposition}\label{hve}
If our random open system $(\mathlist{\bcomma}{\Om, m, \sg, \cJ_0, T, \cB, \cL_0, \nu_0, \phi_0, H_\ep})$ satisfies the assumptions of Theorem \ref{evtthm} with  the sequence $H_{\om,\ep_N}$ verifying condition (\ref{xibound}),
then the first random hitting time satisfies the limit, for $\om$ $m$-a.e.
\begin{equation}
\label{eqhit}
\lim_{N\to\infty}\mu_{\om,0}\left(\tau_{\om, \mathcal{H}_{\om,N}} \mu_{\om,0}(H_{\om, \epsilon_N})>t_{\om}\right) = \exp\left(-{\int_\Om t_{\om} \theta_{\om,0}dm}\right).
\end{equation}
\end{proposition}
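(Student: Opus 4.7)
The plan is to reduce the hitting time statistics to the extreme value law established in Theorem~\ref{evtthm} via two standard comparisons. First, I will compare the hitting time event with a survivor set: by definition $\{\tau_{\om, \mathcal{H}_{\om, N}} > k\}$ requires $T^j_\om(x) \notin H_{\sg^j\om, \ep_N}$ for $j = 1, \ldots, k$, whereas $X_{\om, k, \ep_N}$ additionally requires $x \notin H_{\om, \ep_N}$. Hence $X_{\om, k, \ep_N} \subseteq \{\tau_{\om, \mathcal{H}_{\om, N}} > k\}$ with symmetric difference contained in $H_{\om, \ep_N}$, giving
\begin{align*}
|\mu_{\om, 0}(\tau_{\om, \mathcal{H}_{\om, N}} > k) - \mu_{\om, 0}(X_{\om, k, \ep_N})| \leq \mu_{\om, 0}(H_{\om, \ep_N}) \leq (|t|_\infty + W)/N,
\end{align*}
which vanishes uniformly in $k$ as $N \to \infty$ by \eqref{xibound}.

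Next, setting $M_N(\om) := \lfloor t_\om / \mu_{\om, 0}(H_{\om, \ep_N}) \rfloor$ and using integer-valuedness of $\tau$, the event in \eqref{eqhit} coincides with $\{\tau_{\om, \mathcal{H}_{\om, N}} > M_N(\om)\}$. Condition \eqref{xibound} gives $M_N(\om) = \lfloor Nt_\om/(t_\om + \xi_{\om, N}) \rfloor$; since $t_\om > 0$ and $\xi_{\om, N} \to 0$ almost surely, $M_N(\om) \to \infty$ and $|M_N(\om) - N|/N \to 0$ for $m$-a.e.\ $\om$. Applying \eqref{eq: mu0 of n survivor} with index $M_N(\om) + 1$ in place of $N$ yields
\begin{align*}
\mu_{\om, 0}(X_{\om, M_N(\om), \ep_N}) = \frac{\lm_{\om, \ep_N}^{M_N(\om)+1}}{\lm_{\om, 0}^{M_N(\om)+1}} \Bigl(\nu_{\om, \ep_N}(\phi_{\om, 0}) + \nu_{\sg^{M_N(\om)+1}\om, 0}(Q_{\om, \ep_N}^{M_N(\om)+1}\phi_{\om, 0})\Bigr),
\end{align*}
and the bracketed factor converges to $1$ by Lemma~\ref{lemC9'} and \eqref{C4'}, since $M_N(\om) \to \infty$.

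The main obstacle will be passing from the deterministic exponent $N$ in Theorem~\ref{evtthm} to the random exponent $M_N(\om)+1$. My plan is to write
\begin{align*}
\frac{\lm_{\om, \ep_N}^{M_N(\om)+1}}{\lm_{\om, 0}^{M_N(\om)+1}} = \frac{\lm_{\om, \ep_N}^N}{\lm_{\om, 0}^N} \cdot R_N(\om),
\end{align*}
where $R_N(\om)$ is a telescoping product of $|M_N(\om)+1-N|$ fiberwise ratios of the form $(\lm_{\sg^i\om, \ep_N}/\lm_{\sg^i\om, 0})^{\pm 1}$. By Remark~\ref{rem lm_0>lm_ep} and the uniform bound \eqref{LmUniErr}, each such ratio lies in $[1-E_N, (1-E_N)^{-1}]$ with $E_N = O(1/N)$ independent of $\om$. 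Since $|M_N(\om)+1 - N| = o(N)$ almost surely, an elementary estimate gives $|R_N(\om) - 1| \leq \exp(2E_N \cdot |M_N(\om)+1-N|) - 1 \to 0$ as $N \to \infty$. Theorem~\ref{evtthm} then produces the limit $\exp\bigl(-\int_\Om t_\om \theta_{\om, 0}\, dm\bigr)$, which combined with the initial comparison yields \eqref{eqhit}.
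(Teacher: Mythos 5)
Your argument is correct, but it handles the key difficulty by a different mechanism than the paper. The paper first uses equivariance of $\mu_0$ to rewrite the hitting-time tail \emph{exactly} as a survivor-set probability on the next fiber, $\mu_{\om,0}(\tau_{\om,\mathcal{H}_{\om,N}}>N)=\mu_{\sg\om,0}(X_{\sg\om,N-1,\ep_N})$, and then deals with the scaling mismatch ($N\mu_{\om,0}(H_{\om,\ep_N})=t_\om+\xi_{\om,N}$ rather than $t_\om$) by the standard threshold-shift trick: the symmetric difference between $\{\tau\mu_{\om,0}(H_{\om,\ep_N})>t_\om\}$ and $\{\tau\mu_{\om,0}(H_{\om,\ep_N})>t_\om+\xi_{\om,N}\}$ is covered by roughly $\lceil |\xi_{\om,N}|N/(t_\om-|\xi_{\om,N}|)\rceil$ pulled-back holes, each of $\mu$-measure at most $(|t|_\infty+W)/N$, so it vanishes; Theorem \ref{evtthm} is then invoked only through its statement. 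You instead keep the threshold $t_\om$ exact by passing to the integer cutoff $M_N(\om)=\lfloor t_\om/\mu_{\om,0}(H_{\om,\ep_N})\rfloor$, compare the hitting-time event with the survivor set on the same fiber (difference contained in $H_{\om,\ep_N}$, which replaces the paper's pullback identity and is equally valid), and then re-run the spectral formula \eqref{eq: mu0 of n survivor} at the random exponent $M_N(\om)+1$, correcting the multiplier ratio back to exponent $N$ via the uniform bound \eqref{LmUniErr} and $|M_N(\om)+1-N|=o(N)$. This is sound: the uniform constants in \eqref{LmUniErr}, Lemma \ref{lemC9'} and \eqref{C4'} are exactly what is needed, and the limit $\lm_{\om,\ep_N}^N/\lm_{\om,0}^N\to\exp(-\int_\Om t_\om\theta_{\om,0}\,dm)$ is part of the statement of Theorem \ref{evtthm}, so you may cite it. The trade-off is that the paper's route stays purely measure-theoretic and uses Theorem \ref{evtthm} as a black box, whereas yours re-enters the spectral machinery of its proof; in exchange, your argument effectively proves the extreme value limit along the $\om$-dependent integer times $M_N(\om)$, a slightly more robust (random-subsequence) statement.
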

\begin{proof}
For $N\ge 1$ the event, 
\begin{equation*}
\{\tau_{\om,  \mathcal{H}_{\om,N}}>N\}=\{x\in I; T_{\om}(x)\in H_{\sigma\om, \epsilon_N}^c, \dots, T^N_{\om}(x)\in H_{\sigma^N\om, \epsilon_N}^c\}
\end{equation*}
is also equal to
$$
T^{-1}_{\om}\left(x\in I; x\in H_{\sigma\om, \epsilon_N}^c, T_{\sigma\om}(x)\in H_{\sigma^2\om, \epsilon_N}^c,\dots, T_{\sigma\om}^{N-1}(x)\in H_{\sigma^N\om, \epsilon_N}^c\right).
$$
Then, by  equivariance of $\mu_0$ we obtain the link between the statistics of hitting time and extreme value theory:
\begin{eqnarray}\label{li}
\mu_{\om,0}\left(\tau_{\om, \mathcal{H}_{\om,N}}>N\right)
\nonumber&=&
\mu_{\om,0}\left(T^{-1}_{\om}\left(x\in I: x\in H_{\sigma\om, \epsilon_N}^c, T_{\sigma\om}(x)\in H_{\sigma^2\om, \epsilon_N}^c,\dots, T_{\sigma\om}^{N-1}(x)\in H_{\sigma^N\om, \epsilon_N}^c\right)\right)\\
\nonumber&=&
\mu_{\sigma\om,0}\left(x\in H_{\sigma\om, \epsilon_N}^c, T_{\sigma\om}(x)\in H_{\sigma^2\om, \epsilon_N}^c,\dots, T_{\sigma\om}^{N-1}(x)\in H_{\sigma^N\om, \epsilon_N}^c\right)\\
\label{X}&=&\mu_{\sigma\omega,0}(X_{\sigma\om,N-1,\ep_N}).
\end{eqnarray}

In order to rescale the eventually growing first random  hitting times, we invoke the condition  \eqref{xibound};  by substituting $N=(t_{\om}+\xi_{\om, N})/\mu_{\om,0}(H_{\om, \epsilon_N})$ in the LHS of the inequality above
we have
\begin{equation}\label{ev}
\mu_{\om,0}\left(\tau_{\om, \mathcal{H}_{\om,N} }>N\right)=\mu_{\om,0}\left(\tau_{\om, \mathcal{H}_{\om,N}} \mu_{\om,0}(H_{\om, \epsilon_N})>t_{\om}+\xi_{\om, N}\right).
\end{equation}

Our final preparation before applying Theorem \ref{evtthm} is to show that 
\begin{equation}\label{ee}
|\mu_{\om,0}\left(\tau_{\om, \mathcal{H}_{\om,N}} \mu_{\om,0}(H_{\om, \epsilon_N})>t_{\om}+\xi_{\om, N}\right)-\mu_{\om,0}\left(\tau_{\om, \mathcal{H}_{\om,N}} \mu_{\om,0}(H_{\om, \epsilon_N})>t_{\om}\right)|\rightarrow 0, \ N\rightarrow \infty.
\end{equation}
Since by a standard trick, see for instance eq. 5.3.6 in \cite{book},
$$
\{\tau_{\om, \mathcal{H}_{\om,N}} \mu_{\om,0}(H_{\om, \epsilon_N})>t_{\om}\}\backslash \{\tau_{\om, \mathcal{H}_{\om,N}} \mu_{\om,0}(H_{\om, \epsilon_N})>t_{\om}+\xi_{\om, N}\}\subset \bigcup_{j=\left\lceil{\frac{t_{\om}}{\mu_{\omega,0}(H_{\om, \epsilon_N})}}\right\rceil}^{\left\lceil{\frac{t_{\om}+\xi_{\omega,n}}{\mu_{\omega,0}(H_{\om, \epsilon_N})}}\right\rceil}
T_{\om}^{-j}(H_{\sigma^j\om, \epsilon_N})
$$
we have by equivariance
$$
|\mu_{\om,0}\left(\tau_{\om, \mathcal{H}_{\om,N}} \mu_{\om,0}(H_{\om, \epsilon_N})>t_{\om}+\xi_{\om, N}\right)-\mu_{\om,0}\left(\tau_{\om, \mathcal{H}_{\om,N}} \mu_{\om,0}(H_{\om, \epsilon_N})>t_{\om}\right)|\le \sum_{j=\left\lceil{\frac{t_{\om}}{\mu_{\omega,0}(H_{\om, \epsilon_N})}}\right\rceil}^{\left\lceil{\frac{t_{\om}+\xi_{\omega,n}}{\mu_{\omega,0}(H_{\om, \epsilon_N})}}\right\rceil}
\mu_{\sigma^j\om,0}({H}_{\sigma^j\om,N}).
$$
For $N$ large enough:
$$
\sum_{j=\left\lceil{\frac{t_{\om}}{\mu_{\omega,0}(H_{\om, \epsilon_N})}}\right\rceil}^{\left\lceil{\frac{t_{\om}+\xi_{\omega,n}}{\mu_{\omega,0}(H_{\om, \epsilon_N})}}\right\rceil}
\mu_{\sigma^j\om,0}({H}_{\sigma^j\om,N})
\le \left\lceil{\frac{|\xi_{\om, N}|}{\mu_{\omega,0}(H_{\om,\epsilon_N})}}\right\rceil\frac{|t|_\infty+W}
{N}
\le \left\lceil{\frac{|\xi_{\om, N}|\ N}{t_{\om}-|\xi_{\om, N}|}}\right\rceil\frac{|t|_\infty+W}{N},
$$
which goes to zero by \eqref{xibound}.
Recalling $t_\om>0$ for a.e.\ $\om$, the final expression above goes to zero as $N\rightarrow \infty$  for $\om$ $m$-a.e. 

Using \eqref{X}--\eqref{ee} and noting that $\lim_{N\to\infty}\mu_{\sigma\om,0}(X_{\sigma\om,N-1,\ep_N})$ is nonrandom, applying Theorem \ref{evtthm} yields
\begin{equation}\label{hts}
\lim_{N\to\infty}\mu_{\om,0}\left(\tau_{\om, \mathcal{H}_{\om,N}} \mu_{\om,0}(H_{\om, \epsilon_N})>t_{\om}\right) = \exp\left(-\int_\Om t_{\om} \theta_{\om,0}\ dm\right).
\end{equation}
\end{proof}

\section[thermodynamic formalism for random open interval maps via perturbation]{Quenched thermodynamic formalism for random open interval maps via perturbation}\label{sec: existence}
In this section we present an explicit class of random piecewise-monotonic interval maps for which our Theorem~\ref{thm: dynamics perturb thm}, Corollary~\ref{esc rat cor},  and Theorem~\ref{evtthm}  apply. 
Using a perturbative approach, we introduce a family of small random holes parameterised by $\epsilon>0$ into a random closed dynamical system, and for every small $\ep$ we prove (i) the existence of a unique random conformal measure $\set{\nu_{\om,\ep}}_{\om\in\Om}$ with fiberwise support in $X_{\om,\infty,\ep}$ and (ii) a unique random absolutely continuous invariant measure $\set{\mu_{\om,\ep}}_{\om\in\Om}$ which satisfies an exponential decay of correlations and is the unique relative equilibrium state for the random open system $(\mathlist{\bcomma}{\Om, m, \sg, \cJ_0, T, \cB, \cL_0, \nu_0, \phi_0, H_\ep})$. In addition, we prove the existence of a random absolutely continuous (with respect to $\nu_{\om,0}$) conditionally invariant probability measure $\set{\vrho_{\om,\ep}}_{\om\in\Om}$ with fiberwise support in $[0,1]\bs H_{\om,\ep}$.

We now suppose that the spaces $\cJ_{\om,0}=[0,1]$ for each $\om\in\Om$ and the maps $T_\om:[0,1]\to[0,1]$ are surjective, finitely-branched, piecewise monotone, nonsingular (with respect to Lebesgue), and that there exists $C\geq 1$ such that 
\begin{align}
\tag{\Gls*{E1}}\myglabel{E1}{E1}
\esssup_\om |T_\om'|\leq C
\qquad\text{ and }\qquad
\esssup_\om D(T_\om)
\leq C,
\end{align}
where $D(T_\om):=\sup_{y\in[0,1]}\# T_\om^{-1}(y)$.
We let $\cZ_{\om,0}$ denote the (finite) monotonicity partition of $T_\om$ and for each $n\geq 2$ we let $\cZ_{\om,0}^{(n)}$ denote the partition of monotonicity of $T_\om^n$.
\begin{enumerate}[align=left,leftmargin=*,labelsep=\parindent]
\item[(\Gls*{MC})]\myglabel{MC}{M}
The map $\sg:\Om\to\Om$ is a homeomorphism, the skew-product map $T:\Om\times [0,1]\to\Om\times [0,1]$ is measurable, and $\omega\mapsto T_\omega$ has countable range. 
\end{enumerate}
\begin{remark}\label{M2 holds}
Under assumption (\ref{M}), the family of transfer operator cocycles $\{\mathcal{L}_{\omega,\epsilon}\}_{\epsilon\ge 0}$ satisfies the conditions of Theorem 17 \cite{FLQ2} ($m$-continuity and $\sigma$ a homeomorphism). Note that condition \eqref{M}
implies that $T$ satisfies \eqref{M1} and the cocycle generated by $\cL_0$ satisfies condition \eqref{M2}.
\end{remark}

Recall that the \emph{variation} of $f:[0,1]\to\mathbb{R}_+$ on $Z\subset [0,1]$ be
\begin{align*}
\var_{Z}(f)=\underset{x_0<\dots <x_k, \, x_j\in Z}{\sup}\sum_{j=0}^{k-1}\absval{f(x_{j+1})-f(x_j)}, 
\end{align*}
and  $\var(f):=\var_{[0,1]}(f)$. We let $\BV=\BV([0,1])$ denote the set of functions on $[0,1]$ that have bounded variation. 
Given a non-atomic and fully supported measure $\nu$ (i.e.\ for any nondegenerate interval $J\sub [0,1]$ we have $\nu(J)>0$) we let $\BV_\nu\sub L^\infty(\nu)$\index{$\BV_\nu$} be the set of (equivalence classes of) functions of \emph{bounded variation} on $[0,1]$, with norm given by
\begin{align*}
\|f\|_{\BV_\nu}:=\underset{\tilde{f}=f \ \nu\text{ a.e.}}{\inf} \var(\tilde{f})+\nu(|f|).    
\end{align*} 
If we require to emphasise that elements of $\BV_\nu$ are equivalence classes, we denote these by $[f]_\nu$ (resp. $[f]_1$).
Note that if $f\in\BV$ is a function of bounded variation, then it is always possible to choose a representative of minimal variation from the equivalence class $[f]_\nu$.
We define $\BV_{1}\sub L^\infty(\Leb)$\index{$\BV_1$} and $\|\spot\|_{\BV_1}$ similarly, with the measure $\nu$ replaced with Lebesgue measure.  We denote the supremum norm on $L^\infty(\Leb)$ by $\|\spot\|_{\infty,1}$. 
It follows from Rychlik \cite{rychlik_bounded_1983} that $\BV_\nu$ and $\BV_1$ are Banach spaces. 
The following proposition gives the equivalence of the norms $\|\spot\|_{\BV_\nu}$ and $\|\spot\|_{\BV_1}$.
\begin{proposition}\label{prop norm equiv}
Given a fully supported and non-atomic measure $\nu$ on $[0,1]$ and $f\in\BV$ we have that 
\begin{equation*}
(1/2) \|f\|_{\BV_1}\le \|f\|_{\BV_\nu}\le 2\|f\|_{\BV_1}.
\end{equation*}
\end{proposition}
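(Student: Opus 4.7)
The plan is to identify a single canonical pointwise representative that achieves the minimum variation in both the Lebesgue and the $\nu$ equivalence classes, and then to bound the difference of the two $L^1$-norms by this common variation. Throughout I treat $\nu$ and $\Leb$ as probability measures on $[0,1]$, both of which are fully supported and non-atomic.

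First I would work with the left-continuous representative $f^-(x) := \lim_{y\to x^-} f(y)$, extended by $f^-(0) := \lim_{y\to 0^+} f(y)$; these left limits exist everywhere because $f \in \BV$. The key pointwise claim is that for any non-atomic, fully supported Borel probability measure $\mu$ on $[0,1]$, whenever $g \in \BV$ satisfies $g = f$ $\mu$-a.e., then $g^- = f^-$ pointwise. Indeed, writing $E := \{g = f\}$, the set $E^c$ is $\mu$-null, so for every $x$ and every $\delta > 0$ full support gives $\mu((x-\delta, x) \cap E) = \mu((x-\delta, x)) > 0$ and in particular this intersection is non-empty. I can therefore pick $y_n \to x^-$ with $y_n \in E$, and passing to the limit yields $f^-(x) = \lim f(y_n) = \lim g(y_n) = g^-(x)$. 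Combined with the standard bound $\var(\tilde f^-) \le \var(\tilde f)$ (which follows by approximating any partition for $\tilde f^-$ by a partition for $\tilde f$ with points slightly to the left), this shows that both infima
\begin{equation*}
V_\nu := \inf_{\tilde f = f\ \nu\text{-a.e.}} \var(\tilde f) \qquad \text{and} \qquad V_1 := \inf_{\tilde f = f\ \Leb\text{-a.e.}} \var(\tilde f)
\end{equation*}
are attained by $f^-$, so that $V_\nu = V_1 = \var(f^-) =: V$.

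Next I would bound the $L^1$-discrepancy by $V$. For all $x,y \in [0,1]$ the two-point estimate $|f^-(x)| \le |f^-(y)| + V$ holds. Integrating first in $y$ against a probability measure $\mu_1$ and then in $x$ against a probability measure $\mu_2$ yields $\mu_2(|f^-|) \le \mu_1(|f^-|) + V$. Applying this with $(\mu_1, \mu_2) = (\nu, \Leb)$ and then $(\Leb, \nu)$ gives
\begin{equation*}
\bigl| \nu(|f|) - \Leb(|f|) \bigr| \le V.
\end{equation*}
Since $\|f\|_{\BV_\nu} = V + \nu(|f|)$ and $\|f\|_{\BV_1} = V + \Leb(|f|)$, it follows that
\begin{equation*}
\|f\|_{\BV_\nu} \le V + \Leb(|f|) + V \le 2 \bigl( V + \Leb(|f|) \bigr) = 2\|f\|_{\BV_1},
\end{equation*}
and the symmetric argument gives $\|f\|_{\BV_1} \le 2\|f\|_{\BV_\nu}$, i.e.\ $\|f\|_{\BV_\nu} \ge (1/2)\|f\|_{\BV_1}$. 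The main obstacle is the pointwise identification of canonical representatives across the two equivalence classes in Step 1; once that is in place, the $L^1$ comparison is a one-line consequence of the variation bound, and the factor $2$ appears naturally.
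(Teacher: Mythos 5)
Your proof is correct, and it reaches the two-sided bound by a route that differs in its two key steps from the paper's. The paper first shows that $[f]_\nu\cap\BV=[f]_1\cap\BV$ (any BV function agreeing with $f$ $\nu$-a.e.\ can differ from $f$ only on a countable set, by full support and continuity off a countable set), so the two variation infima coincide; it then compares the integral parts via the identities $\Lebessinf |f|=\nuessinf |f|$ and $\Lebesssup|f|=\nuesssup|f|$, obtained by decomposing $[0,1]$ into intervals of continuity, and the elementary bound $\Leb(|f|)\le \var +\Lebessinf|f|$. You instead exhibit a single canonical competitor, the left-continuous modification $f^-$, show it is determined by the $\mu$-a.e.\ class for any fully supported non-atomic $\mu$, and compare the integral parts by integrating the two-point estimate $|f^-(x)|\le |f^-(y)|+\var(f^-)$ against the two (probability) measures; this packages the paper's ess-inf/ess-sup step into one clean inequality and makes the common minimizer explicit, at the cost of carrying the convention at the endpoint $x=0$. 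One small step you should make explicit: to say the infima are \emph{attained} by $f^-$, and to pass from $\nu(|f^-|),\Leb(|f^-|)$ back to $\nu(|f|),\Leb(|f|)$ in the final display, you need that $f^-=f$ off the (countable) set of discontinuities of $f$, which is null for both $\nu$ and $\Leb$ by non-atomicity; your density argument only gives the lower bound $\var(f^-)\le\var(\tilde f)$ for competitors $\tilde f$, not membership of $f^-$ in the equivalence classes. With that one-line addition the argument is complete and, like the paper's, uses full support only to identify representatives across the two classes and non-atomicity to discard countable sets.
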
 

\begin{proof}
We first show that for $f\in\BV$ we have 
\begin{align}\label{eq class cap BV}
[f]_\nu\cap \BV = [f]_1\cap\BV.
\end{align}
To see this let $\~f\in[f]_\nu\cap \BV$. As $\nu$ is a fully supported and non-atomic measure, we must have that the set $\{x: f(x)\neq\~f(x)\}$ is countable. Thus $\~f\in[f]_1\cap \BV$. As $\Leb$ is also fully supported and non-atomic the same reasoning implies that the reverse inclusion also holds, proving \eqref{eq class cap BV}. As a direct consequence of \eqref{eq class cap BV} we have that 
\begin{align}\label{equiv cl var eq}
\underset{\tilde{f}=f \ \nu\text{ a.e.}}{\inf} \var(\tilde{f})
=
\underset{\tilde{f}=f \ \Leb\text{ a.e.}}{\inf} \var(\tilde{f}).
\end{align}
Since $f$ is continuous everywhere except on a set of at most countably many points, letting $\cC$ denote the set of intervals of continuity for $f$,  we have 
\begin{align}\label{nu Leb ineq 1}
\Lebessinf_{[0,1]} f 
&=\inf_{J \in\cC } \Lebessinf_J f
=\inf_{J \in\cC } \inf_J f
=\inf_{J \in\cC } \nuessinf_J f
= \nuessinf_{[0,1]} f. 
\end{align}
Using similar reasoning we must also have 
\begin{align}\label{nu Leb ineq 2}
\Lebesssup f = \nuesssup f.
\end{align}
Combining \eqref{equiv cl var eq} and \eqref{nu Leb ineq 1} we have 
\begin{align*}
\|f\|_{\BV_1}=\inf_{\tilde{f}=f\ \Leb\ a.e.} \var(\tilde{f})+\Leb(|f|) &\leq 
2\inf_{\tilde{f}=f\ \Leb\ a.e.} \var(\tilde{f})+\Lebessinf |f|
\\
&= 2\inf_{\tilde{f}=f\ \nu\ a.e.} \var(\tilde{f})+\nuessinf |f|
\\
&\leq 2\inf_{\tilde{f}=f\ \nu\ a.e.} \var(\tilde{f})+\nu(|f|)
\leq 2\|f\|_{\BV_\nu}.
\end{align*}
Similarly, using \eqref{equiv cl var eq} and \eqref{nu Leb ineq 2} we have $\|f\|_{\BV_\nu}\leq 2 \|f\|_{\BV_1}$, and thus the proof is complete. 

\end{proof}
Proposition \ref{prop norm equiv} will be used later to provide (non-random) equivalence of $\|\spot\|_{\BV_{\nu_{\omega,0}}}$ and $\|\spot\|_{\BV_1}$ for each $\omega\in\Omega$.
We set $J_\om:=|T_\om'|$\index{$J_\om$} and define the random Perron--Frobenius operator, acting on functions in $BV$
$$
P_\om(f)(x):=\sum_{y\in T_\om^{-1}(x)}\frac{f(y)}{J_\om(y)}.
$$
The operator $P$ satisfies the well-known property that 
\begin{align}\label{PF prop}
\int_{[0,1]}P_\om(f)\,d\Leb=\int_{[0,1]}f\,d\Leb
\end{align}\index{$P_\om$}
for $m$-a.e. $\om\in\Om$ and all $f\in\BV$.
Recall from Section~\ref{sec:ROS} that $g_0=\set{g_{\om,0}}_{\om\in\Om}$ and that
$$\cL_{\om,0}(f)(x):=\sum_{y\in T_\om^{-1}(x)}g_{\om,0}(y)f(y),\quad f\in\BV.$$
We assume that the weight function $g_{\om,0}$ lies in $\BV$ for each $\om\in\Om$ and satisfies
\begin{equation}
\tag{\Gls*{E2}}\myglabel{E2}{E2}
\esssup_\omega \| g_{\omega,0}\|_{\infty,1}<\infty,
\end{equation}
and 
\begin{equation}
\tag{\Gls*{E3}}\myglabel{E3}{E3}
\essinf_\omega \inf g_{\omega,0}>0.
\end{equation}
Note that \eqref{E1} and \eqref{E2} together imply 
\begin{align}\label{fin sup L1}
\esssup_\om\norm{\cL_{\om,0}\ind}_{\infty,1}\leq \esssup_\om D(T_\om)\norm{g_{\om,0}}_{\infty,1}<\infty
\end{align}
and 
\begin{align}\label{fin gJ}
\esssup_\om\norm{g_{\om,0}J_\om}_{\infty,1}<\infty.
\end{align}
We also assume a uniform covering\index{covering} condition\footnote{We could replace the covering condition with the assumption of a strongly contracting potential. See \cite{AFGTV-IVC} for details.}
:
\begin{enumerate}[align=left,leftmargin=*,labelsep=\parindent]
\item[(\Gls*{E4})]\myglabel{E4}{E4}
For every subinterval $J\subset [0,1]$ there is a $k=k(J)$ such that for a.e.\ $\omega$ one has $T^k_\omega(J)=[0,1]$.
\end{enumerate}
Concerning the open system we assume that the holes $H_{\om,\ep}\sub [0,1]$ are chosen so that assumption
\eqref{A} 
holds. We also assume for each $\om\in\Om$ and each $\ep>0$ that $H_{\om,\ep}$ is composed of a finite union of intervals such that 
\begin{enumerate}[align=left,leftmargin=*,labelsep=\parindent]
\item[(\Gls*{E5})]\myglabel{E5}{E5}
There is a uniform-in-$\epsilon$ and uniform-in-$\omega$ upper bound on the number of connected components of $H_{\om,\ep}$,
\end{enumerate}
and
\begin{align}
\tag{\Gls*{E6}}\myglabel{E6}{E6}
\lim_{\ep\to 0}\esssup_\om \Leb(H_{\om,\ep})=0,
\end{align}
and 

\begin{enumerate}[align=left,leftmargin=*,labelsep=\parindent]
\item[(\Gls*{EX})]\myglabel{EX}{EX}
There exists an $\ep>0$ and an open neighborhood $\~H_{\om,\ep}\bus H_{\om,\ep}$ such that  
$T_\om(U_\om)\bus \~H_{\sg\om,\ep}^c$, where $U_\om:=\cup_{Z\in\cZ_{\om,0}}\ol{Z}_{\ep}$ and $\ol{Z_{\ep}}$ denotes the closure of $Z_\ep\in A_{\om,\ep}:=\{Z\cap \~H_{\om,\ep}^c:Z\in\cZ_{\om,0}\}$ with $m(\set{\om\in\Om: \#A_{\om,\ep}\geq 2})>0$.
\end{enumerate}
\begin{remark}\label{rem check cond X}
Assumption \eqref{EX} is satisfied for any random open system such that each map contains at least two intervals of monotonicity, the holes are contained in the interior of exactly one interval of monotonicity, and the image of the complement of the hole is the full interval, i.e. $T_\om(H_{\om,\ep}^c)=[0,1]$. In particular, \eqref{EX} is satisfied if there exists a full branch outside of the hole. 
\end{remark}
Recall that condition \eqref{cond X} states 
\begin{enumerate}
\item[\mylabel{X}{cond X restatement}]
For $m$-a.e. $\om\in\Om$ we have $X_{\om,\infty,\ep}\neq\emptyset$.
\end{enumerate}
\begin{remark}\label{rem check X}
Note that since \eqref{A} (i.e. $H_{\ep'}\sub H_{\ep}$ for $\ep'<\ep$) implies that $X_{\om,\infty,\ep'}\bus X_{\om,\infty,\ep}$ for all $\ep'<\ep$, \eqref{cond X} holds if there exists $\ep>0$ such that $X_{\om,\infty,\ep}\neq\emptyset$ for $m$-a.e. $\om\in\Om$. Furthermore, since $T_\om(X_{\om,\infty,\ep})\sub X_{\sg\om,\infty,\ep}$, if $X_{\om,\infty,\ep}\neq\emptyset$ then $X_{\sg^N\om,\infty,\ep'}\neq\emptyset$ for each $N\geq 1$ and $\ep'\leq \ep$. As $\cX_{\infty,\ep}$ is forward invariant we have that $X_{\om,\infty,\ep}\neq\emptyset$ not only implies that $\cX_{\infty,\ep}\neq\emptyset$, but also that $\cX_{\infty,\ep}$ is infinite. 
\end{remark}

The following proposition ensures that condition \eqref{cond X} holds.
\begin{proposition}\label{prop check cond X}

The assumption \eqref{EX} implies \eqref{cond X}.
\end{proposition}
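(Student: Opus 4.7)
The plan is to reduce Proposition \ref{prop check cond X} to a direct application of Proposition \ref{prop surv nonemp} from Chapter 0, combined with Remark \ref{rem check X} to promote the resulting nonemptiness from a single value of $\ep$ to the full family of holes. Fix the parameter $\ep > 0$ and the open neighborhoods $\~H_{\om,\ep} \supseteq H_{\om,\ep}$ furnished by \eqref{EX}. I will define, for each $\om \in \Om$,
\begin{align*}
V_\om := \~H_{\om,\ep}^c \quad \text{and} \quad U_{\om,j} := \ol{Z_\ep} \text{ for each } Z_\ep \in A_{\om,\ep},
\end{align*}
so that $U_\om := \cup_j U_{\om,j}$ agrees with the set appearing in \eqref{EX}.

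The next step is verification of the four hypotheses of Proposition \ref{prop surv nonemp} for these choices. Compactness of $V_\om$ and each $U_{\om,j}$ is immediate, since each is a closed subset of $[0,1]$. The inclusion $U_\om \subseteq V_\om$ follows because $Z_\ep \subseteq \~H_{\om,\ep}^c$ and $\~H_{\om,\ep}^c$ is closed, so $\ol{Z_\ep} \subseteq \~H_{\om,\ep}^c = V_\om$. To see that $V_\om \subseteq \cJ_{\om,\ep}$ (so the construction takes place inside the perturbed phase space), note that $H_{\om,\ep} \subseteq \~H_{\om,\ep}$ gives $\~H_{\om,\ep}^c \subseteq H_{\om,\ep}^c = \cJ_{\om,\ep}$. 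Continuity of $T_\om|_{U_{\om,j}}$ follows from \eqref{T3}: since $T_\om|_Z$ is continuous and strictly monotone on each $Z \in \cZ_{\om,0}$, it admits a unique continuous extension to $\ol{Z}$, and hence to the closed subset $\ol{Z_\ep} \subseteq \ol{Z}$. Finally, the covering condition $T_\om(U_\om) \supseteq V_{\sg\om} = \~H_{\sg\om,\ep}^c$ is exactly the content of \eqref{EX}.

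Applying Proposition \ref{prop surv nonemp} then yields $X_{\om,\infty,\ep} \neq \emptyset$ for $m$-a.e. $\om \in \Om$, and consequently $\cX_{\infty,\ep} \neq \emptyset$. To obtain \eqref{cond X} uniformly in the parameter family, I invoke Remark \ref{rem check X}: under assumption \eqref{A}, $H_{\ep'} \subseteq H_\ep$ for every $\ep' \leq \ep$, so $X_{\om,\infty,\ep'} \supseteq X_{\om,\infty,\ep} \neq \emptyset$ for $m$-a.e. $\om$, which is exactly \eqref{cond X} for the full range of small parameters. (The auxiliary condition $m(\{\om : \#A_{\om,\ep} \geq 2\}) > 0$ in \eqref{EX} is not needed for nonemptiness itself, but combined with ergodicity of $\sg$ it upgrades the conclusion via the second part of Proposition \ref{prop surv nonemp} to give uncountability of $X_{\om,\infty,\ep}$ almost surely, which is a useful byproduct.)

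The argument is essentially bookkeeping, so no step presents a serious obstacle; the only subtlety worth articulating carefully is the continuous extension of $T_\om$ from $\mathrm{int}(Z)$ to $\ol{Z}$ at possible discontinuities on $\partial Z$, which is standard for monotone maps on intervals but should be explicitly recorded so that the hypothesis of Proposition \ref{prop surv nonemp} is cleanly verified.
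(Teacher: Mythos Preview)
Your overall strategy matches the paper's, but there is a genuine gap in how you pass from the continuous extension back to the original map. When you take $U_{\om,j}=\ol{Z_\ep}$, the hypothesis of Proposition \ref{prop surv nonemp} requires $T_\om\rvert_{\ol{Z_\ep}}$ itself to be continuous. In general $T_\om$ is discontinuous at the endpoints of monotonicity intervals, so what you are really applying Proposition \ref{prop surv nonemp} to is the \emph{extended} map $\~T_\om$, and the survivor set you obtain is $\~X_{\om,\infty,\ep}$ for the modified dynamics, not $X_{\om,\infty,\ep}$ for $T_\om$. Since $\~T_\om$ and $T_\om$ may disagree at boundary points of the $Z_\ep$, a point in $\~X_{\om,\infty,\ep}$ whose $\~T$-orbit passes through such a boundary need not survive under $T_\om$.

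The paper closes exactly this gap, and it does so using the hypothesis you set aside as ``auxiliary''. The condition $m(\{\om:\#A_{\om,\ep}\ge 2\})>0$, together with the second clause of Proposition \ref{prop surv nonemp}, forces $\~X_{\om,\infty,\ep}$ to be \emph{uncountable}. The paper then introduces the set
\[
\cD_\om:=\bigcup_{j\ge 0}\~T_\om^{-j}\bigl(\cup_{Z_\ep\in A_{\sg^j\om,\ep}}\ol{Z}_\ep\setminus Z_\ep\bigr),
\]
which is at most countable, and observes that any point of $\~X_{\om,\infty,\ep}\setminus\cD_\om$ has its entire $\~T$-orbit avoiding the boundary points, so that $\~T_\om^n$ and $T_\om^n$ agree along this orbit and the point lies in $X_{\om,\infty,\ep}$. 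Uncountability versus countability then gives $X_{\om,\infty,\ep}\neq\emptyset$. Without the branching condition $\#A_{\om,\ep}\ge 2$ this fails: $\~X_{\om,\infty,\ep}$ could be a single fixed point sitting on $\partial Z_\ep$, where $\~T_\om\neq T_\om$, and nothing would survive for the original map. Your final paragraph flags the extension issue but does not resolve it; the missing piece is precisely this uncountable-minus-countable argument.
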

\begin{proof}
In light of Remark \ref{rem check X}, to show that \eqref{cond X} is satisfied, it suffices to show that there is some $\ep>0$ such that  $X_{\om,\infty,\ep}\neq\emptyset$ for $m$-a.e. $\om\in\Om$.

Let $\~T_{\om,Z}$ denote the continuous extension of $T_\om$ onto $\ol{Z}_{\ep}$ for each $Z_\ep\in A_{\om,\ep}$, and let $\~X_{\om,\infty,\ep}$ denote the survivor set for the open system consisting of the maps $\~T_\om$ and holes $\~H_{\om,\ep}$. By Proposition~\ref{prop surv nonemp} (taking $V_\om=[0,1]\bs\~H_{\om,\ep}$ and $U_{\om,j}=\ol{Z}_{\ep}$ for each $1\leq j\leq \#A_{\om,\ep}$) we see that $\~X_{\om,\infty,\ep}$ is uncountable. 
Let 
$$
\cD_\om:=\bigcup_{j\geq 0}\~T_\om^{-j}(\cup_{Z_\ep\in A_{\sg^j\om,\ep}}\ol{Z}_{\ep}\bs Z_{\ep}).
$$
Since the survivor set for the original (unmodified) open system 
$X_{\om,\infty,\ep}\sub \~X_{\om,\infty,\ep}\bs\cD_\om$,
and since $\cD_\om$ is at most countable, we must in fact have that $X_{\om,\infty,\ep}\neq\emptyset$, thus satisfying \eqref{cond X}.
\end{proof}

Because we are considering small holes shrinking to zero measure, it is natural to  assume that
\begin{align}
\tag{\Gls*{E7}}\myglabel{E7}{E7}
T_\om(\cJ_{\om,\ep})=[0,1]
\end{align}
for $m$-a.e. $\om\in\Om$ and all $\ep>0$ sufficiently small.
Further we suppose that there exists $n'\geq1$ and $\epsilon_0>0$ such that
\footnote{Note that the $9$ appearing in \eqref{E8} is not optimal. See Section \ref{sec: examples} and \cite{AFGTV20} for how this assumption may be improved. }
\begin{equation}
\tag{\Gls*{E8}}\myglabel{E8}{E8}
9\cdot\esssup_\om\|g_{\omega,0}^{(n')}\|_{\infty,1}
<
\essinf_\om\inf_{0\leq\ep\le \ep_0}\inf\cL_{\om,\ep}^{n'}\ind,
\end{equation}
where 
$$
\cL_{\om,\ep}(f)(x):=\cL_{\om,0}(\ind_{H_{\om,\ep}^c}f)(x)
=
\sum_{y\in T_{\om}^{-1}(x)}g_{\om,\ep}(y)f(y)
,\quad f\in\BV
$$
and $g_{\om,\ep}:=g_{\om,0}\ind_{H_{\om,\ep}^c}$\index{$g_{\om,\ep}$} as in Section~\ref{sec: open systems}.

Note that \eqref{E7} and \eqref{E3} together imply that $\cL_{\om,\ep}\ind(x)>0$   
for all $x\in [0,1]$:
\begin{align}\label{uniflbLeps*}
\essinf_\om\inf_{\ep\le \ep_0}\inf\cL_{\om,\ep}\ind
>\essinf_\om\inf g_{\om,0}>0,
\end{align}
and since $\norm{g_{\om,\ep}}_{\infty,1}\leq \norm{g_{\om,0}}_{\infty,1}$ for all $\ep>0$, \eqref{E8} is equivalent to the following 
\begin{align}\label{strong cont pot}
9\cdot\sup_{0\leq \ep\leq \ep_0}\esssup_\om\|g_{\omega,\ep}^{(n')}\|_{\infty,1}
<
\essinf_\om\inf_{0\leq\ep\le \ep_0}\inf\cL_{\om,\ep}^{n'}\ind.
\end{align}

\begin{remark}
Note that the assumption \eqref{E7} is equivalent to there existing
$N'\geq 1$ such that for $m$-a.e. $\om\in\Om$ and all $\ep\geq0$ sufficiently small 
$$
T_\om^{N'}(X_{\om,N'-1,\ep})=[0,1].
$$
Indeed, since the surviving sets are forward invariant \eqref{surv set forw inv}, we have that 
$T_\om^{N'-1}(X_{\om,N'-1,\ep})\sub X_{\sg^{N'-1}\om,0,\ep}=\cJ_{\sg^{N'-1}\om,\ep}$, and thus, 
$$
[0,1]=T_\om^{N'}(X_{\om,N'-1,\ep})\bus T_{\sg^{N'-1}\om}(\cJ_{\sg^{N'-1}\om,\ep}).
$$
Furthermore, we note that without assumption \eqref{E7}, it is easy to construct examples for which $X_{\om,\infty,\ep}=\emptyset$ for $m$-a.e. $\om\in\Om$. 
\end{remark}

For each $n\in\NN$ and $\om\in\Om$ we let $\sA_{\om,0}^{(n)}$ be the collection of all finite partitions of $[0,1]$ such that
\begin{align}\label{eq: def A partition}
\var_{A_i}(g_{\om,0}^{(n)})\leq 2\norm{g_{\om,0}^{(n)}}_{\infty,1}
\end{align}
for each $\cA=\set{A_i}\in\sA_{\om,0}^{(n)}$.
Given $\cA\in\sA_{\om,0}^{(n)}$, let $\widehat\cZ_{\om,\ep}^{(n)}(\cA)$ be the coarsest partition amongst all those finer than $\cA$ and $\cZ_{\om,0}^{(n)}$ such that all elements of $\widehat\cZ_{\om,\ep}^{(n)}(\cA)$ are either disjoint from $X_{\om,n-1,\ep}$ or contained in $X_{\om,n-1,\ep}$.
\begin{remark}
Note that if $\var_Z(g_{\om,0})\leq 2\norm{g_{\om,0}^{(n)}}_{\infty,1}$ for each $Z\in\cZ_{\om,0}^{(n)}$ then we can take the partition $\cA=\cZ_{\om,0}^{(n)}$. Furthermore, the $2$ above can be replaced by some $\hat\al\geq 0$ (depending on $g_{\om,0}$) following the techniques of Section~\ref{sec: examples} and \cite{AFGTV20}. 
\end{remark}
Define the subcollection 
\begin{align}\label{def of Z_*}
\cZ_{\om,*,\ep}^{(n)}:=\{Z\in \widehat\cZ_{\om,\ep}^{(n)}(\cA): Z\sub X_{\om,n-1,\ep} \}.    
\end{align}\index{$\cZ_{\om,*,\ep}^{(n)}$}
Recalling that $g_{\om,\ep}^{(n)}:=g_{\om,0}^{(n)}\ind_{X_{\om,n-1,\ep}}$, \eqref{eq: def A partition} implies that 
\begin{align}\label{eq: def A partition for g_ep}
\var_{Z}(g_{\om,\ep}^{(n)})\leq 2\norm{g_{\om,0}^{(n)}}_{\infty,1}
\end{align}
for each $Z\in \cZ_{\om,*,\ep}^{(n)}$.    
We assume the following covering condition for the open system 
\begin{enumerate}[align=left,leftmargin=*,labelsep=\parindent]
\item[(\Gls*{E9})]\myglabel{E9}{E9}
There exists $k_o(n')\in\NN$ such that for $m$-a.e. $\om\in\Om$, all $\ep>0$ sufficiently small, and all $Z\in\cZ_{\om,*,\ep}^{(n')}$ we have $T_\om^{k_o(n')}(Z)=[0,1]$, where $n'$ is the number coming from \eqref{E8}.
\end{enumerate}
\begin{remark}
Note that the uniform open covering time assumption \eqref{E9} clearly holds if \eqref{E4} holds and if there are only finitely many maps $T_\om$. In Remark~\ref{Alt E9 Remark} we present an alternative assumption to \eqref{E9}.
\end{remark}

The following lemma extends several results in \cite{DFGTV18A} from the specific weight $g_{\omega,0}=1/|T'_\omega|$ to general weights satisfying the conditions just outlined.

\begin{lemma}
\label{DFGTV18Alemma}
Assume that a family of random piecewise-monotonic interval maps $\{T_{\omega}\}$ satisfies (\ref{E2}), (\ref{E3}), and (\ref{E4}), as well as (\ref{E8}) and \eqref{M} for $\epsilon=0$.
Then 
\eqref{C1'} and	the $\epsilon=0$ parts of \eqref{C2}, \eqref{C3}, \eqref{C4'}, \eqref{C5'}, and  (\ref{C7'}) as well as \eqref{CCM} hold. 
Further, $\nu_{\om,0}$ is fully supported, condition \eqref{C4'} holds with $C_f=K$, for some $K<\infty$, and with $\alpha(N)=\gamma^N$ for some $\gamma<1$.
\end{lemma}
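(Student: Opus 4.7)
The strategy is to verify that the hypotheses of the closed-system thermodynamic formalism of \cite{AFGTV20} are satisfied under \eqref{E1}--\eqref{E4}, \eqref{E8}, and \eqref{M} at $\ep=0$, and then to refine the resulting conclusions to the uniform-in-$\om$ form required. Specifically, \eqref{E1} gives $\#\cZ_\om \leq D(T_\om) \leq C$ so that \eqref{LIP} is trivial; \eqref{E2}--\eqref{E3} yield admissibility \eqref{A1}--\eqref{A2}; \eqref{E4} gives the generating-partition property \eqref{GP}; and \eqref{M} supplies measurability. The contracting potential assumption required in \cite{AFGTV20} is implied by the uniform version \eqref{E8}.

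For \eqref{C1'}, surjectivity of $T_\om$ and \eqref{E3} give $\cL_{\om,0}\ind(x) \geq \essinf_\om \inf g_{\om,0} > 0$, while \eqref{E1}--\eqref{E2} give $\cL_{\om,0}\ind(x) \leq C \cdot \esssup_\om \|g_{\om,0}\|_{\infty,1} < \infty$. The $\ep=0$ parts of \eqref{C2}, \eqref{C3} and condition \eqref{CCM} then follow from \cite{AFGTV20} after the scaling of Remark \ref{rem:scaling}. Full support of $\nu_{\om,0}$ is a direct consequence of conformality and \eqref{E4}: for any interval $J \subset [0,1]$ one has $T_\om^k(J) = [0,1]$ for some $k$, so $\cL_{\om,0}^k \ind_J > 0$ everywhere by \eqref{E3}, and conformality forces $\nu_{\om,0}(J) > 0$.

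For the quasi-compactness statement \eqref{C4'}, we invoke the closed-system analog of Theorem \ref{main thm: quasicompact}. To upgrade it to uniform constants $C_f = K$ and $\al(N) = \gm^N$ independent of $\om$, we retrace the Lasota-Yorke inequality of Lemma \ref{ly ineq} and the cone contraction of Lemma \ref{lem: cone contraction for good om}, observing that under the strengthened (uniform rather than $\log$-integrable) hypotheses \eqref{E1}--\eqref{E3}, \eqref{E8}, the Lasota-Yorke coefficients, the cone parameter $a_*$, and the diameter bound $\Dl$ all become deterministic. The coating-block machinery of Sections \ref{sec:good}--\ref{sec: fin diam} therefore collapses---every fiber is ``good''---and the Hilbert-metric contraction rate $\tanh(\Dl/4)$ is uniform, giving $\gm<1$ and $K<\infty$ independent of $\om$.

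The uniform density bounds \eqref{C5'} and \eqref{C7'} follow from the limit characterisation $\phi_{\om,0} = \lim_{n\to\infty} \~\cL_{\sg^{-n}\om,0}^n\ind$ of Corollary \ref{cor: exist of unique dens q}, together with the uniform two-sided control on $\cL_{\om,0}^n\ind/\lm_{\om,0}^n$ afforded by \eqref{E3}, \eqref{E4}, \eqref{E8}; the uniform Lasota-Yorke bound simultaneously controls $\|\phi_{\om,0}\|_{\BV}$. The main obstacle throughout is bookkeeping rather than anything conceptual: each constant appearing in the proofs of \cite{AFGTV20} must be retraced under our uniform hypotheses to confirm that it can indeed be chosen independently of $\om$.
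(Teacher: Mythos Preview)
Your approach is correct and would work, but the paper takes a somewhat different route. Rather than invoking the general (non-uniform) theory of \cite{AFGTV20} and Chapter~\ref{part 1} and then retracing the constants to uniformize, the paper adapts directly the cone arguments of \cite{DFGTV18A}, which were already designed for uniform-in-$\om$ bounds (there for the specific Perron--Frobenius weight $1/|T_\om'|$). Existence of $\nu_{\om,0}$ and $\phi_{\om,0}$ is still drawn from Theorem~2.19 of \cite{AFGTV20}, but the uniform lower bound $\essinf_\om\inf\phi_{\om,0}>0$ (for \eqref{C7'}), the uniform upper bound $\esssup_\om\|\phi_{\om,0}\|_{\cB_\om}<\infty$ (for \eqref{C5'}), and the uniform exponential decay of $Q_{\om,0}$ (for \eqref{C4'}) are obtained from generalized-weight versions of Lemmas~1, 2, 5 and A.1--A.4 of \cite{DFGTV18A}, now run in the random Banach spaces $\cB_\om=\BV_{\nu_{\om,0}}$ with random cones $\mathcal{C}_{a,\omega}=\{\psi\ge 0:\var\psi\le a\,\nu_{\om,0}(\psi)\}$. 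Your observation that under the uniform hypotheses the good/bad-fiber and coating-block machinery of Sections~\ref{sec:good}--\ref{sec: fin diam} collapses (every fiber is ``good'') is exactly right and would lead to the same uniform rate; the paper's route simply sidesteps that machinery altogether, which shortens the bookkeeping since the arguments of \cite{DFGTV18A} are uniform by design.
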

\begin{proof}
See Appendix \ref{appB}.
\end{proof}

In what follows we consider transfer operators acting on the Banach spaces $\mathcal{B}_\omega=\BV_{\nu_{\om,0}}$ for a.e. $\om\in\Om$.
The norm we will use is $\|\cdot\|_{\mathcal{B}_\omega}=\|\cdot\|_{\BV_{\nu_{\om,0}}}:=\var(\cdot)+\nu_{\omega,0}(|\cdot|)$.
As $\nu_{\om,0}$ is fully supported and non-atomic (Lemma \ref{DFGTV18Alemma}), Proposition \ref{prop norm equiv} implies that 
\begin{equation}
\label{normequiv}
(1/2) \|f\|_{\BV_1}\le \|f\|_{\mathcal{B}_\omega}\le 2\|f\|_{\BV_1}
\end{equation}
for $m$-a.e. $\om\in\Om$ and $f\in\BV$.
Furthermore, applying \eqref{normequiv} twice, we see that 
\begin{equation}
\label{normequiv2}
(1/4) \|f\|_{\cB_\om}\le \|f\|_{\mathcal{B}_{\sg^n\omega}}\le 4\|f\|_{\cB_\om}
\end{equation}
for $m$-a.e. $\om\in\Om$ and all $n\in\ZZ$.
It follows from the proof of Proposition \ref{prop norm equiv} that 
\begin{equation}
\label{sup norm equal}
\|f\|_{\infty,\om} = \|f\|_{\infty,1}
\end{equation}
for all $f\in\BV$ and $m$-a.e. $\om\in\Om$, where $\|\spot\|_{\infty,\om}$ denotes the supremum norm with respect to $\nu_{\om,0}$.
From \eqref{normequiv} we see that \eqref{B} is clearly satisfied, and thus we note that Remark \ref{revision 1} implies that \eqref{C6} holds if $\lim_{\ep\to 0}\nu_{\om,0}(H_{\om,\ep})=0$.

From Lemma~\ref{DFGTV18Alemma} we have that $\lm_{\om,0}:=\nu_{\sg\om,0}(\cL_{\om,0}\ind)$ and thus we may update \eqref{uniflbLeps*} to get 
\begin{align}\label{uniflbLeps}
\essinf_\om\lm_{\om,0}^{n'}
\geq \essinf_\om\inf_{\ep\le \ep_0}\inf\cL_{\om,\ep}^{n'}\ind
\geq \essinf_\om\inf g_{\om,0}^{(n')}>0.
\end{align}
Note that since the conditions \eqref{cond X} and \eqref{CCM} have been verified and we have assumed \eqref{A} and \eqref{M}, 
we see that $(\Om, m, \sg, [0,1], T, \BV, \cL_0, \phi_0, H_\ep)$ forms a random open system as defined in Section~\ref{sec: open systems} for all $\ep>0$ sufficiently small. 
We now use hyperbolicity of the $\epsilon=0$ transfer operator cocycle to guarantee that we have hyperbolic cocycles for small $\epsilon>0$, which will yield \eqref{C2}, \eqref{C3}, \eqref{C4'}, \eqref{C5'}, \eqref{C6}, and \eqref{C7'} for small positive $\epsilon$.
\begin{lemma}
\label{harrylemma2}
Assume that the conditions (\ref{E1})--(\ref{E9}) hold 
for the random open system 
$(\mathlist{\bcomma}{\Om, m, \sg, [0,1], T, \BV_1, \cL_0, \nu_0, \phi_0, H_\ep})$.
Then for sufficiently small $\epsilon>0$, conditions \eqref{C2}, \eqref{C3}, \eqref{C4'}, \eqref{C5'}, \eqref{C6}, and \eqref{C7'} hold.
Furthermore, the functionals $\nu_{\om,\ep}\in\BV_1^*$ can be identified with non-atomic Borel measures.
\end{lemma}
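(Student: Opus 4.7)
The plan is to combine the hyperbolicity of the unperturbed cocycle established in Lemma~\ref{DFGTV18Alemma} with a quenched perturbation theorem (the result of Crimmins \cite{C19} referenced in the introduction) to transfer the spectral structure to the small-hole regime, and then verify each of the required uniform bounds by direct estimates that exploit the uniform nature of (E1)--(E9). First, I would observe that Lemma~\ref{DFGTV18Alemma} supplies a hyperbolic Oseledets splitting for the closed cocycle $\cL_{\om,0}$ on $\BV_{\nu_{\om,0}}$: a simple top Lyapunov multiplier $\lm_{\om,0}$ with equivariant density $\phi_{\om,0}$ and conformal functional $\nu_{\om,0}$, complemented by an exponentially contracted subspace carrying the projection $Q_{\om,0}$. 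Thus the $\epsilon=0$ parts of (C2)--(C7') and condition (C1') are already at our disposal.

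Next I would quantify the size of the perturbation. Writing $(\cL_{\om,0}-\cL_{\om,\epsilon})f = \cL_{\om,0}(\ind_{H_{\om,\epsilon}} f)$, the operator norm from $\BV_{\nu_{\om,0}}$ into $L^1(\nu_{\sg\om,0})$ is bounded by $C\cdot \esssup_\om \nu_{\om,0}(H_{\om,\epsilon})$, which by (E5), (E6), the equivalence of norms \eqref{normequiv}, and the absolute continuity of $\nu_{\om,0}$ with respect to Lebesgue tends to zero uniformly in $\omega$ as $\epsilon\to 0$. At the same time, the open operators $\cL_{\om,\epsilon}$ satisfy a Lasota--Yorke type inequality with constants uniform in $\omega$ and in $\epsilon\in[0,\epsilon_0]$; here (E1), (E2), (E8) (through the rewrite \eqref{strong cont pot}) and the partition estimate \eqref{eq: def A partition for g_ep} play the decisive role, much as in Lemma~\ref{ly ineq} but with all constants uniform thanks to (E1)--(E3).

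With these inputs, the Crimmins quenched perturbation theorem \cite{C19} can be applied to conclude that for all sufficiently small $\epsilon>0$ the perturbed cocycle admits a simple top Oseledets direction with a measurable dependence $\omega\mapsto(\lm_{\om,\epsilon},\phi_{\om,\epsilon},\nu_{\om,\epsilon},Q_{\om,\epsilon})$ converging, as $\epsilon\to 0$, to the unperturbed objects in the appropriate operator topology. After renormalizing via the recipe in Remark~\ref{rem:scaling} so that $\nu_{\om,0}(\phi_{\om,\epsilon})=1$, this directly furnishes (C2) (with $\log\lm_{\om,\epsilon}\in L^1(m)$ from the convergence and the uniform two-sided bounds on $\lm_{\om,0}$ via (E8), \eqref{uniflbLeps}) and the decomposition (C3); condition (C6) is then immediate from the small-hole estimate above via \eqref{etaineq}.

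The main obstacle, which occupies the bulk of the argument, will be upgrading the $\omega$-wise control provided by \cite{C19} to the genuinely uniform-in-$\omega$ bounds demanded by (C4'), (C5'), and (C7'). For (C5') I would iterate the uniform Lasota-Yorke inequality and use the normalization together with the uniform upper bound \eqref{fin sup L1} on $\|\cL_{\om,0}\ind\|_{\infty,1}$. For (C4') the argument reduces to establishing a uniform spectral gap for the cocycle $\{\cL_{\om,\epsilon}\}$: this follows from a Birkhoff cone contraction argument analogous to Section~\ref{sec: fin diam}, where the open covering assumption (E9) and the closed covering (E4), together with the uniform potential bounds (E2)--(E3), guarantee that the cone $\sC_{\om,a}$ is contracted to a set of uniformly bounded diameter in a uniform number of steps; the decay rate $\alpha(N)=\gamma^N$ and the constant $C_f$ are therefore uniform in $\omega$ and $\epsilon$. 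For (C7'), the uniform lower bound on $\inf\phi_{\om,0}$ is already part of Lemma~\ref{DFGTV18Alemma}, while the non-negativity of $\inf\phi_{\om,\epsilon}$ follows from positivity of $\~\cL_{\om,\epsilon}$ on the cone $\sC_{\om,+}$ together with the limit characterization of $\phi_{\om,\epsilon}$ as in Corollary~\ref{cor: exist of unique dens q}. Finally, non-atomicity of $\nu_{\om,\epsilon}$ is inherited from that of $\nu_{\om,0}$ via the same limit-of-ratios identification used in Lemma~\ref{lem: Lm is conf meas}, since the open conformal functional is constructed from uniform limits of the normalized transfer operators which already annihilate atoms at the level $\epsilon=0$.
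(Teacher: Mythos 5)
Your skeleton (use Lemma~\ref{DFGTV18Alemma} for $\ep=0$, invoke the quenched stability theorem of \cite{C19}, rescale as in Remark~\ref{rem:scaling}) is the paper's route, but the step where you quantify the perturbation is where the argument breaks. The stability theorem of \cite{C19} is formulated for a cocycle on a \emph{fixed} Banach/Saks space and needs (i) a uniform Lasota--Yorke inequality whose weak norm is the fixed weak norm of that space and (ii) uniform smallness of $\esssup_\om\trinorm{\cL_{\om,0}-\cL_{\om,\ep}}$ in the corresponding strong-to-weak norm. You instead estimate the perturbation from $\BV_{\nu_{\om,0}}$ into $L^1(\nu_{\sg\om,0})$ and claim it is of order $\esssup_\om\nu_{\om,0}(H_{\om,\ep})\to 0$ ``by absolute continuity of $\nu_{\om,0}$ with respect to Lebesgue''. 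Nothing in \eqref{E1}--\eqref{E9} gives $\nu_{\om,0}\ll\Leb$ (for a general weight $g_{\om,0}$ the conformal measure is typically singular), and \eqref{E6} controls only $\Leb(H_{\om,\ep})$; moreover fiberwise-varying weak norms are not admissible in the theorem you want to apply. The repair is to pass to the fixed pair $(\BV_1,\|\cdot\|_1)$ via Proposition~\ref{prop norm equiv}, bound $\trinorm{\hat\cL_{\om,0}-\hat\cL_{\om,\ep}}$ by $\esssup_\om\big(\|g_{\om,0}J_\om\|_{\infty,1}/\lm_{\om,0}\big)\cdot\esssup_\om\Leb(H_{\om,\ep})$ through the Perron--Frobenius operator, and to convert the $\nu_{\om,0}(|\cdot|)$ term of the uniform Lasota--Yorke inequality into a $\|\cdot\|_1$ term using non-atomicity of $\nu_{\om,0}$ (Lemma 5.2 of \cite{BFGTM14}); the lower bound $\essinf_\om\min_Z\nu_{\om,0}(Z)>0$ needed there is exactly where \eqref{E9}, \eqref{E3} and conformality enter, a step your sketch never uses. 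Note also that \eqref{C6} only requires $\nu_{\om,0}(H_{\om,\ep})\to 0$ for a.e.\ fixed $\om$, which non-atomicity already gives; uniformity in $\om$ is neither available nor needed for that condition.

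A second cluster of problems concerns what you call the ``main obstacle''. Theorem 4.4 of \cite{C19} already delivers essentially uniform-in-$\om$ conclusions ($\esssup_\om\|\hat\phi_{\om,\ep}-\phi_{\om,0}\|_{\BV_1}<\delta$, uniform control of the multipliers, and uniform exponential decay for $\hat Q_{\om,\ep}$), so \eqref{C4'}, \eqref{C5'} and \eqref{C7'} follow directly by combining these with the $\ep=0$ bounds of Lemma~\ref{DFGTV18Alemma}; your proposed Birkhoff-cone rederivation of a uniform gap \`a la Section~\ref{sec: fin diam} is both unnecessary and not actually carried out (that machinery is built on measurable, non-uniform constants and good/bad fibers, and making it uniform under \eqref{E1}--\eqref{E9} is a separate project). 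Likewise, your justifications for nonnegativity of $\phi_{\om,\ep}$ via Corollary~\ref{cor: exist of unique dens q} and for non-atomicity of $\nu_{\om,\ep}$ via the limit-of-ratios functional of Lemma~\ref{lem: Lm is conf meas} import Chapter~\ref{part 1} constructions that are not available under the present hypotheses: in the perturbative setting $\phi_{\om,\ep}$ and $\nu_{\om,\ep}$ arise from the Oseledets splitting (the dual family via Corollary 2.5 and Lemma 2.6 of \cite{dragicevic_spectral_2018}), nonnegativity of $\phi_{\om,\ep}$ follows from its uniform $\BV_1$-closeness to $\phi_{\om,0}\ge C_3^{-1}$, and identifying $\nu_{\om,\ep}$ with a Borel measure requires a positivity argument on continuous functions plus Riesz--Markov, not the Chapter~\ref{part 1} limit construction. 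Finally, you do not address the measurability issue (BV is not separable; the paper handles this through assumption \eqref{M} and $m$-continuity), which is needed before \cite{C19} can be quoted at all.
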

\begin{proof}
For each $\om$ and $\ep>0$ we define $\hat\cL_{\om,\ep}:=\lm_{\om,0}^{-1}\cL_{\om,\ep}$;\index{$\hat\cL_{\om,\ep}$}  note that $\hat\cL_{\om,0}=\~\cL_{\om,0}$.\index{$\hat\cL_{\om,0}$}
Our strategy is to apply Theorem 4.4 \cite{C19} (Theorem \ref{HarryThm4.4}), to conclude that for small $\epsilon$ the cocycles $\{\hat\cL_{\omega,\epsilon}\}$ are uniformly hyperbolic when considered as cocycles on the Banach space  $(\BV_1,\|\cdot\|_{\BV_1})$. 
Because of (\ref{normequiv}), we will conclude the existence of a uniformly hyperbolic splitting in $\|\cdot\|_{\cB_\om}$ for a.e. $\om$. For the convenience of the reader, in Appendix \ref{appA} we have compiled the necessary definitions, assumptions, and results of \cite{C19} which we will use here.

First, we note that Theorem 4.4 \cite{C19} (Theorem \ref{HarryThm4.4}) assumes that the Banach space on which the transfer operator cocycle acts is separable.
A careful check of the proof of Theorem 4.4 \cite{C19} (Theorem \ref{HarryThm4.4}) shows that it holds  for the Banach space $(\BV_1,\|\cdot\|_{\BV_1})$ under the alternative condition \eqref{M}
(see Appendix \ref{appA}).
To apply Theorem 4.4 \cite{C19} (Theorem \ref{HarryThm4.4}) we require, in our notation, that:

\begin{enumerate}
\item\label{harry assum 1 hat} $\hat\cL_{\omega,0}$ is a hyperbolic transfer operator cocycle on $\BV_1$ with norm $\|\cdot\|_{\BV_1}$ and a one-dimensional leading Oseledets space (see Definition \ref{harry def 3.1} \cite[Definition 3.1]{C19}), and slow and fast growth rates $0<\gamma<\Gamma$, respectively. We will construct $\gamma$ and $\Gamma$ shortly.
\item\label{harry assum 2 hat} The family of cocycles $\{\hat\cL_{\omega,\epsilon}\}_{0\le \epsilon\le\epsilon_0}$ satisfy a uniform Lasota--Yorke inequality 
\begin{equation*}
\|\hat\cL^k_{\omega,\epsilon}f\|_{\BV_1}\le A\alpha^k\|f\|_{\BV_1}+B^k\|f\|_{1}
\end{equation*}
for a.e.\ $\omega$ and $0\le\epsilon\le\epsilon_0$, where $\alpha\le \gamma<\Gamma\le B$.
\item\label{harry assum 3 hat} $\lim_{\epsilon\to 0}\esssup_{\omega}\trinorm{\hat\cL_{\omega,0}-\hat\cL_{\omega,\epsilon}}= 0$, where $\vertiii{\spot}$ is the $\BV-L^1(\Leb)$ triple norm.
\end{enumerate}
By Lemma \ref{DFGTV18Alemma}
we obtain a unique measurable family of equivariant functions $\{\phi_{\omega,0}\}$ satisfying (\ref{C7'}) and (\ref{C5'}) for $\epsilon=0$.
We have the equivariant splitting $\spn\{\phi_{\omega,0}\}\oplus V_\om$, where $V_\om=\{f\in \BV_1: \nu_{\omega,0}(f)=0\}$.
We claim that this splitting is hyperbolic in the sense of Definition \ref{harry def 3.1} \cite[Definition 3.1]{C19};  this will yield item \eqref{harry assum 1 hat} above.
To show this, we verify conditions \eqref{H1}--\eqref{H3} in \cite{C19} (See Appendix \ref{appA}).
In our setting, Condition \eqref{H1} \cite{C19} requires the norm of the projection onto the top space spanned by $\phi_{\omega,0}$, along the annihilator of $\nu_{\omega,0}$, to be uniformly bounded in $\omega$.
This is true because this projection acting on $f\in \BV_1$ is $\nu_{\omega,0}(f)\phi_{\omega,0}$ and therefore 
$$
\|\nu_{\omega,0}(f)\phi_{\omega,0}\|_{\BV_1}\le\esssup_\omega\|\phi_{\omega,0}\|_{\BV_1}\cdot \nu_{\omega,0}(f)\le 2\esssup_\omega\|\phi_{\omega,0}\|_{\BV_1}\cdot\|f\|_{\BV_1},
$$
using (\ref{C5'}) and equivalence of $\|\cdot\|_{\BV_1}$ and $\|\cdot\|_{\cB_\om}$ \eqref{normequiv}.
Next, we define 
$$
\alpha^{n'}:=\frac{9\esssup_\om \|g_{\om,0}^{( n')}\|_{\infty,1}}{\essinf_\om\inf_{\ep\geq 0} \inf\mathcal{L}^{ n'}_{\om,\ep}\ind}
<1,
$$ 
which is possible by (\ref{E8}).
Condition \eqref{H2} requires $\|\hat\cL^n_{\omega,0}\phi_{\omega,0}\|_{\BV_1}\ge C\Gamma^n\|\phi_{\omega,0}\|_{\BV_1}$ for some $C>0$, $\Gamma>0$, all $n$ and a.e.\ $\omega$.
By (\ref{C7'}) one has 
$$
\|\hat\cL^n_{\omega,0}\phi_{\omega,0}\|_{\BV_1}=\|\phi_{\sigma^n\omega,0}\|_{\BV_1}\ge \essinf_\omega \inf\phi_{\sigma^n\omega,0}>0,
$$
and thus we obtain \eqref{H2} with $C=\Gm=1$.
Condition \eqref{H3} requires $\|\hat\cL^n_{\omega,0}|_{V_\om}\|_{\BV_1}\le  K\gamma^n$ for some $K<\infty$, $\al\leq \gamma<1$, all $n$ and a.e.\ $\omega$.
This is provided by the  $\epsilon=0$ part of (\ref{C4'})---specifically the stronger exponential version guaranteed by Lemma \ref{DFGTV18Alemma}---and the equivalence of $\|\cdot\|_{\BV_1}$ and $\|\cdot\|_{\cB_{\sg^n\om}}$.

For item \eqref{harry assum 2 hat} we begin with the Lasota--Yorke inequality for $\var(\cdot)$ and $\nu_{\omega,0}(|\cdot|)$ provided by the final line of the proof of Lemma \ref{closed ly ineq App} (equation (\ref{ORLYLY})).
Dividing through by $\lm_{\om,0}^{n'}$ we obtain 
\begin{align}
\var(\hat\cL_{\omega,\epsilon}^{n'} f)
&\le 
\frac{9\|g_{\omega,0}^{({n'})}\|_{\infty,1}}{\lm_{\om,0}^{n'}}\var(f)
+\frac{8\|g_{\omega,0}^{(n')}\|_{\infty,1}}{\lm_{\om,0}^{n'}\min_{Z\in \mathcal{Z}_{\omega,*,\ep}^{(n')}(\mathcal{A})}\nu_{\omega,0}(Z)}\nu_{\omega,0}(|f|)
\nonumber\\
&\le 
\alpha^{n'}\var(f)+\frac{\al^{n'}}{\min_{Z\in \mathcal{Z}_{\omega,*,\ep}^{(n')}(\mathcal{A})}\nu_{\omega,0}(Z)}\nu_{\omega,0}(|f|).
\label{LYineqorly hat}
\end{align}
Note that $\essinf_\om\min_{Z\in\mathcal{Z}_{\omega,*,\ep}^{(n')}(\mathcal{A})}\nu_{\omega,0}(Z)>0$, since the uniform open covering assumption \eqref{E9} together with \eqref{E3}, \eqref{uniflbLeps}, 
and equivariance of the backward adjoint cocycle together imply that 
for $Z\in\cZ_{\om,*,\ep}^{(n')}$ we have
\begin{align}\label{LY LB calc}
\nu_{\om,0}(Z)
=
\nu_{\sg^{k_o(n')}\om,0}\left(\left(\lm_{\om,0}^{k_o(n')}\right)^{-1}\cL_{\om,0}^{k_o(n')}\ind_Z\right)
\geq
\frac{\inf g_{\om,0}^{k_o(n')}}{\lm_{\om,0}^{k_o(n')}}>0.
\end{align}
As the holes $H_{\om,\ep}$ are composed of finite unions of disjoint intervals, assumption \eqref{E6} implies that the radii of each of these intervals must go to zero as $\ep\to 0$. Thus, using \eqref{E6} together with Remark \ref{revision 1} and the fact that $\nu_{\om,0}$ is fully supported and non-atomic,
we see that \eqref{C6} must hold.

We construct a uniform Lasota--Yorke inequality for all $n$ in the usual way by using blocks of length $jn'$;  we write this as
\begin{equation}
\label{LYfull hat}
\var(\hat\cL_{\omega,\epsilon}^n f)\le A_1\alpha^n\var(f)+A_2^n\nu_{\omega,0}(|f|)
\end{equation}
for some $A_2>\alpha$.
We now wish to convert this to an inequality
\begin{equation}
\label{targetinequality hat}
\var(\hat\cL_{\omega,\epsilon}^n f)\le A_1'\alpha^n\var(f)+(A_2')^n\|f\|_{1}.
\end{equation}
In light of \eqref{E2}, \eqref{uniflbLeps}, \eqref{E1}, and \eqref{PF prop}, we see that there is a constant $B$ so that for $m$-a.e.\ $\omega\in\Om$ we have
\begin{align*}
\norm{\hat\cL_{\om,\ep}^{n'}f}_1
&=
\left(\lm_{\om,0}^{n'}\right)^{-1}\int_{[0,1]} \left|\sum_{y\in T_\om^{-n'}x} g_{\om,0}^{(n')}(y) \hat X_{\om,n'-1,\ep}(y)f(y)\right| d\Leb(x)
\\
&\leq
\frac{\norm{g_{\om,0}^{(n')} J_\om^{(n')}}_{\infty,1}}{\lm_{\om,0}^{n'}} \int_{[0,1]} \left|\sum_{y\in T_\om^{-n'}(x)} \frac{f(y)}{J_\om^{(n')}(y)} \right| d\Leb(x)
\\
&=
\frac{\norm{g_{\om,0}^{(n')} J_\om^{(n')}}_{\infty,1}}{\lm_{\om,0}^{n'}} \int_{[0,1]} \left| P_\om^{n'}(f) \right| d\Leb(x)
\\
&\leq
\frac{\norm{g_{\om,0}^{(n')} J_\om^{(n')}}_{\infty,1}}{\lm_{\om,0}^{n'}}\norm{f}_1
\leq B^{n'}\norm{f}_1.
\end{align*}
Using the non-atomicicty of $\nu_{\om,0}$ from \eqref{CCM} (shown in Lemma \ref{DFGTV18Alemma}) and the fact that $\var(|f|)\le \var(f)$, we may apply Lemma 5.2 \cite{BFGTM14} to $\nu_{\omega,0}$ to obtain that for each $\zeta>0$, there is a $B_\zeta<\infty$ such that $\nu_{\omega,0}(|f|)\le \zeta\var(f)+B_\zeta\|f\|_1$.
Now using (\ref{LYfull hat}) we see that
\begin{align*}
\|\hat{\mathcal{L}}_{\omega,\epsilon}^nf\|_{\BV_1}
&=\var(\hat{\mathcal{L}}_{\omega,\epsilon}^nf)+\|\hat{\mathcal{L}}_{\omega,\epsilon}^n(f)\|_1
\\
&\le A_1\alpha^n\var(f)+A_2^n\nu_{\omega,0}(|f|)+B^n\|f\|_1
\\
&\le(A_1\alpha^n+A_2^n\zeta)\var(f)+(B^n+A_2^nB_\zeta)\|f\|_1
\\
&\le(A_1\alpha^n+A_2^n\zeta)\|f\|_{\BV_1}+(B^n+A_2^n(B_\zeta-\zeta)-A_1\alpha^n))\|f\|_1.
\end{align*}
Selecting $\zeta$ sufficiently small and $n''$ sufficiently large so that $C(\alpha^{n''}+\zeta)<1$ we again (by proceeding in blocks of $n''$) arrive at a uniform Lasota--Yorke inequality of the form (\ref{targetinequality hat}) for all $n\ge 0$.

For item \eqref{harry assum 3 hat} we note that 
\begin{align*}
\vertiii{\hat\cL_{\omega,0}-\hat\cL_{\omega,\epsilon}}
&:=\sup_{\|f\|_{\BV_1}=1}\|(\hat\cL_{\omega,0}-\hat\cL_{\omega,\epsilon})f\|_1
=\sup_{\|f\|_{\BV_1}=1}\|\hat\cL_{\omega,0}(f\ind_{H_{\om,\ep}})\|_1
\le 
\|\hat\cL_{\omega,0}(\ind_{H_{\om,\ep}})\|_1
\\
&=
\lm_{\om,0}^{-1}\int_{[0,1]} \left|\sum_{y\in T_\om^{-1}(x)} g_{\om,0}(y) \ind_{H_{\om,\ep}}(y)\right| d\Leb(x)
\\
&\leq
\frac{\norm{g_{\om,0} J_\om}_{\infty,1}}{\lm_{\om,0}} \int_{[0,1]} \left|\sum_{y\in T_\om^{-1}(x)} \frac{\ind_{H_{\om,\ep}}(y)}{J_\om(y)} \right| d\Leb(x)
\\
&=
\frac{\norm{g_{\om,0} J_\om}_{\infty,1}}{\lm_{\om,0}}\int_{[0,1]} \left| P_\om(\ind_{H_{\om,\ep}}) \right| d\Leb(x)
\\
&\le 
\esssup_\omega \frac{\norm{g_{\om,0} J_\om}_{\infty,1}}{\lm_{\om,0}}\cdot\esssup_\omega\mathrm{Leb}(H_{\omega,\epsilon}).
\end{align*}
Because \eqref{E2}, \eqref{E1} and \eqref{uniflbLeps} imply $\esssup_\omega\sfrac{\norm{g_{\om,0} J_\om}_{\infty,1}}{\lm_{\om,0}}<\infty$, and since \eqref{E6} implies that $\lim_{\epsilon\to 0}\esssup_\omega\mathrm{Leb}(H_{\omega,\epsilon})=0$, we obtain item \eqref{harry assum 3 hat}.

We may now  apply Theorem 4.4 \cite{C19} (Theorem \ref{HarryThm4.4}) to conclude that given $\delta>0$ there is an $\epsilon_0>0$ such that for all $\epsilon\le \epsilon_0$ the cocycle generated by $\hat\cL_\epsilon$ is hyperbolic, with 
\begin{enumerate}[i]
\item[\mylabel{i}{h1}] the existence of an equivariant family $\hat\phi_{\om,\ep}\in\BV$ with $\esssup_\omega\|\hat\phi_{\om,\ep}-\phi_{\om,0}\|_{\BV_1}<\delta$,
\item[\mylabel{ii}{h2}] existence of corresponding Lyapunov multipliers $\hat\lambda_{\om,\ep}$ satisfying $|\hat\lambda_{\om,\ep}-1|<\delta$,
\item[\mylabel{iii}{h3}] operators $\hat Q_{\om,\ep}$ satisfying $\|(\hat Q_{\om,\ep})^n\|_{\BV_1}\le K'(\gamma+\delta)^n$, where $\gamma$ is the decay rate for $Q_{\om,0}$ from the proof of Lemma \ref{DFGTV18Alemma}.
\end{enumerate}
To obtain an $\hat\cL_{\om,\ep}^*$-equivariant family of linear functionals $\hat\nu_{\omega,\epsilon}\in\BV_1^*$  we apply Corollary 2.5 \cite{dragicevic_spectral_2018}.
Using the one-dimensionality of the leading Oseledets space for the forward cocycle, this result shows
that the leading Oseledets space for the backward adjoint cocycle  is also one-dimensional.
This leading Oseledets space is spanned by some $\hat\nu_{\omega,\epsilon}\in \BV_1^*$, satisfying $\hat\nu_{\sigma\om,\ep}(\hat\cL_{\om,\ep}(f))=\hat\vta_{\om,\ep}\hat\nu_{\om,\ep}(f)$, for Lyapunov multipliers $\hat\vta_{\om,\ep}$.
By  Lemma 2.6 \cite{dragicevic_spectral_2018}, we may scale the $\hat\nu_{\om,\ep}$ so that $\hat\nu_{\om,\ep}(\hat\phi_{\om,\ep})=1$ for a.e.\ $\omega$.
We show that in fact $\hat\vta_{\om,\ep}=\hat\lambda_{\om,\ep}$ $m$-a.e.
Indeed, 
$$
1=\hat\nu_{\sg\om,\ep}(\hat\phi_{\sg\om,\ep})=\hat\nu_{\sg\om,\ep}(\hat\cL_{\om,\ep}\hat\phi_{\om,\ep}/\hat\lambda_{\om,\ep})=(\hat\vta_{\om,\ep}/\hat\lambda_{\om,\ep})\hat\nu_{\om,\ep}(\hat\phi_{\om,\ep})=\hat\vta_{\om,\ep}/\hat\lambda_{\om,\ep}.
$$
Note that $\cL_{\om,\ep} = \lm_{\om,0}\hat\cL_{\om,\ep}$, and we now define $\phi_{\om,\ep}, \lambda_{\om,\ep}, Q_{\om,\ep}$, and $\nu_{\om,\ep}$ by the following:
\begin{align*}
\phi_{\om,\ep}&:=\frac{1}{\nu_{\om,0}(\hat\phi_{\om,\ep})}\cdot \hat\phi_{\om,\ep},
&
\nu_{\om,\ep}(f)&:=\nu_{\om,0}(\hat\phi_{\om,\ep})\hat\nu_{\om,\ep}(f),
\\
\lm_{\om,\ep}&:=\lm_{\om,0}\frac{\nu_{\sg\om,0}(\hat\phi_{\sg\om,\ep})}{\nu_{\om,0}(\hat\phi_{\om,\ep})}\hat\lm_{\om,\ep},
&
Q_{\om,\ep}(f)&:=\frac{\nu_{\om,0}(\hat\phi_{\om,\ep})}{\nu_{\sg\om,0}(\hat\phi_{\sg\om,\ep})}\hat Q_{\om,\ep}(f)
\end{align*}
Clearly all of the properties of \eqref{C2} and \eqref{C3} are now satisfied except for the log-integrability of $\lambda_{\om,\ep}$ in \eqref{C2}.
To demonstrate this last point, we note that by uniform hyperbolicity of the perturbed cocycles, $\lambda'_{\om,\ep}$ are uniformly bounded below and are therefore log-integrable.
Since 
\begin{align*}
|1-\nu_{\omega,0}(\hat\phi_{\omega,\epsilon})|=|\nu_{\omega,0}(\phi_{\omega,0})-\nu_{\omega,0}(\hat\phi_{\omega,\epsilon})|\le \|\hat\phi_{\om,\ep}-\phi_{\om,0}\|_{\BV_1}<\delta,
\end{align*}
the $\lambda_{\omega,\epsilon}$ are uniformly small perturbations of the $\hat\lambda_{\omega,\epsilon}$, and since $\lm_{\om,0}$ is $\log$-integrable by \eqref{E2} and \eqref{uniflbLeps}, we must therefore have that the log integrability condition on $\lambda_{\om,\ep}$ in \eqref{C2} is satisfied. 
Point \eqref{h1} above, combined with the $\ep=0$ part of \eqref{C5'} (resp. \eqref{C7'}) and the uniform estimate for $|1-\nu_{\om,0}(\hat\phi_{\om,\ep})|$, immediately yields the $\ep>0$ part of \eqref{C5'} (resp. \eqref{C7'}).
Point \eqref{h3} above combined with the same estimates also ensures that the norm of $\|Q_{\om,\ep}^n\|_{\infty,1}$ decays exponentially fast, uniformly in $\om$ and $\ep$, satisfying the stronger exponential version of \eqref{C4'}. In fact point \eqref{h3} implies the stronger statement that $\|Q_{\om,\ep}^n\|_{\BV_1}$ decays exponentially fast, uniformly in $\om$ and $\ep$.

Finally we show that $\nu_{\om,\ep}:C^0([0,1])\to\mathbb{C}$ is a positive linear functional with $\nu_{\om,\ep}(f)\in \mathbb{R}$ if $f$ is real.
From this fact it will follow by Riesz-Markov (e.g.\ Theorem A.3.11 \cite{viana_oliveira}) that $\nu_{\om,\ep}$ can be identified with a real finite Borel measure on $[0,1]$.
By linearity we may consider the two cases: (i) $f=\phi_{\om,\ep}>0$ (the generator of the leading Oseledets space) and (ii) $f\in F_{\omega,\ep}$, where $F_{\omega,\ep}$ is the Oseledets space complementary to $\mathrm{span\{\phi_{\om,\ep}\}}$.
In case (i)  $\nu_{\om,\ep}(\phi_{\om,\ep})=1>0$.
In case (ii), Lemma 2.6 \cite{dragicevic_spectral_2018} implies $\nu_{\om,\ep}(f)=0$. 
In summary we see that $\nu_{\om,\ep}$ is positive.
\end{proof}

\begin{remark}
As we have just shown that assumptions \eqref{C1'}, \eqref{C2}, \eqref{C3}, \eqref{C4'}, \eqref{C5'}, \eqref{C6}, \eqref{C7'} (Lemmas~\ref{DFGTV18Alemma} and \ref{harrylemma2}), we see that Proposition~\ref{prop: escape rates} holds as well as Theorem~\ref{thm: dynamics perturb thm} for the random open system $(\Om, m, \sg, [0,1], T, \BV, \cL_0, \nu_0, \phi_0, H_\ep)$ under the additional assumption of \eqref{C8}.
In light of Remark~\ref{rem checking esc cor cond}, if we assume \eqref{xibound} in addition to \eqref{C8} then both Corollary~\ref{esc rat cor} and Theorem~\ref{evtthm} apply. 
\end{remark}

The following theorem is the main result of this section and elaborates on the dynamical significance of the perturbed objects produced in Lemma~\ref{harrylemma2}.
\begin{theorem}\label{EXISTENCE THEOREM}
Suppose $(\Om, m, \sg, [0,1], T, \cB, \cL_0, \nu_0, \phi_0, H_\ep)$ is a random open system and that the assumptions of Lemma~\ref{harrylemma2} hold. 
Then there exists $\ep_0>0$ sufficiently small such that for every $0\leq\ep<\ep_0$ we have the following:
\begin{enumerate}
\item\label{item 1}
There exists a unique random probability measure $\zt_\ep=\set{\zt_{\om,\ep}}_{\om\in\Om}$ on $[0,1]$ such that, for $\ep>0$, $\zt_{\om,\ep}$ is supported in $X_{\om,\infty,\ep}$ and 
\begin{align*}
\zt_{\sg\om,\ep}(\cL_{\om,\ep} f)=\rho_{\om,\ep}\zt_{\om,\ep}(f),
\end{align*}
for $m$-a.e. $\om\in\Om$ and each $f\in\BV$, where 
\begin{align*}
\rho_{\om,\ep}:=\zt_{\sg\om,\ep}(\cL_{\om,\ep}\ind).
\end{align*} 	
Furthermore, for $\ep=0$ we have $\zt_{\om,0}=\nu_{\om,0}$ and $\rho_{\om,0} = \lm_{\om,0}$ and for $\ep>0$ we have that $C_1^{-1}\leq \rho_{\om,\ep}\leq C_1$ for $m$-a.e. $\om\in\Om$. 

\

\item\label{item 2}
There exists a measurable function $\psi_\ep:\Om\times[0,1]\to(0,\infty)$ 
such that $\zt_{\om,\ep}(\psi_{\om,\ep})=1$ and  
\begin{align*}
\cL_{\om,\ep} \psi_{\om,\ep}=\rho_{\om,\ep} \psi_{\sg\om,\ep}
\end{align*}
for $m$-a.e. $\om\in\Om$.
Moreover, $\psi_\ep$ is unique modulo $\zt_\ep$, and there exists $C\geq 1$ such that $C^{-1}\leq \psi_{\om,\ep}\leq C$ for $m$-a.e. $\om\in\Om$. Furthermore, for $m$-a.e. $\om\in\Om$ we have that $\rho_{\om,\ep}\to\rho_{\om,0}=\lm_{\om,0}$ and 
$\psi_{\om,\ep}\to \psi_{\om,0}=\phi_{\om,0}$ (in $\cB_\om$) as $\ep\to 0$, where $\phi_{\om,0}$ and $\lm_{\om,0}$ are defined in Lemma~\ref{DFGTV18Alemma}.

\

\item\label{item 3} 
The random measure $\mu_\ep=\set{\mu_{\om,\ep}:=\psi_{\om,\ep}\zt_{\om,\ep}}_{\om\in\Om}$ is a $T$-invariant and ergodic random probability measure whose fiberwise support, for $\ep>0$, is contained in $X_{\om,\infty,\ep}$. 
Furthermore, $\mu_\ep$ is the unique relative equilibrium state, i.e. 
\begin{align*}
\int_\Om\log\rho_{\om,\ep}\ dm(\om)&=:\cEP_\ep(\log g_0)
=
h_{\mu_\ep}(T)+\int_{\cJ_0} \log g_0 \,d{\mu_\ep}
=
\sup_{\eta_\ep\in\cP_{T,m}^{H_\ep}(\cJ_0)} \lt(h_{\eta_\ep}(T)+\int_{\cJ_0} \log g_0 \,d\eta_\ep\rt),
\end{align*}
where $h_{\eta_\ep}(T)$ is the entropy of the measure $\eta_\ep$, $\cEP_\ep(\log g_0)$ is the expected pressure of the weight function $g_0=\set{g_{\om,0}}_{\om\in\Om}$, 
and $\cP_{T,m}^{H_\ep}(\cJ_0)$ denotes the collection of $T$-invariant random probability measures $\eta_\ep$ on $\cJ_0$ whose disintegration $\{\eta_{\om,\ep}\}_{\om\in\Om}$ satisfies $\eta_{\om,\ep}(H_{\om,\ep})=0$ for $m$-a.e. $\om\in\Om$. Furthermore, $\lim_{\ep\to 0}\cEP_\ep(\log g_0)=\cEP_0(\log g_0):=\int_\Om\log \lm_{\om,0}\,dm.$

\

\item\label{item 4}
For $\ep>0$, let $\vrho_\ep=\set{\varrho_{\om,\ep}}_{\om\in\Om}$ be the random probability measure with fiberwise support in $[0,1]\bs H_{\om,\ep}$ whose disintegrations are given by
\begin{align*}
\varrho_{\om,\ep}(f):=\frac{\nu_{\om,0}\lt(\ind_{H_{\om,\ep}^c} \psi_{\om,\ep} f\rt)}{\nu_{\om,0}\lt(\ind_{H_{\om,\ep}^c} \psi_{\om,\ep}\rt)}
\end{align*} 
for all $f\in\BV$.  $\vrho_{\om,\ep}$ is the unique random conditionally invariant probability measure that is absolutely continuous (with respect to $\set{\nu_{\om,0}}_{\om\in\Om}$) with density of bounded variation.

\

\item\label{item 5}
For each $f\in\BV$ there exists $D>0$ and $\kp_\ep\in(0,1)$ such that for $m$-a.e. $\om\in\Om$ and all $n\in\NN$ we have
\begin{align*}
\norm{\lt(\rho_{\om,\ep}^n\rt)^{-1}\cL_{\om,\ep}^n f - \zt_{\om,\ep}(f)\psi_{\sg^n\om,\ep}}_{\cB_{\sg^n\om}}\leq D\norm{f}_{\cB_\om}\kp_\ep^n.
\end{align*}
Furthermore, for all $A\in\sB$ and $f\in\BV$ we have 
\begin{align*}
\absval{
\nu_{\om,0}\lt(T_\om^{-n}(A)\,\rvert\, X_{\om,n,\ep}\rt)
-
\varrho_{\sg^n\om,\ep}(A)	
}
\leq 
D\kp_\ep^n,
\end{align*}
and 
\begin{align*}
\absval{
\frac{\varrho_{\om,\ep}\lt(f\rvert_{X_{\om,n,\ep}}\rt)}
{\varrho_{\om,\ep}(X_{\om,n,\ep})}
-
\mu_{\om,\ep}(f)	
}
\leq 
D\norm{f}_{\cB_\om}\kp_\ep^n	.		
\end{align*}
In addition, we have $\lim_{\ep\to 0}\kp_\ep=\kp_0$, where $\kp_0$ is defined in Lemma~\ref{DFGTV18Alemma}.

\

\item\label{ITEM 6}
There exists $C>0$ such that for every $f,h\in \BV$,
every $n\in\NN$ sufficiently large, and for $m$-a.e. $\om\in\Om$ we have 
\begin{align*}
\absval{
\mu_{\om,\ep}
\lt(\lt(f\circ T_{\om}^n\rt)h \rt)
-
\mu_{\sg^{n}\om,\ep}(f)\mu_{\om,\ep}(h)
}
\leq C
\norm{f}_{\infty,\om}\norm{h}_{\cB_\om}\kp_\ep^n.
\end{align*} 

\end{enumerate}

\end{theorem}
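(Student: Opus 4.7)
\medskip

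\noindent\textbf{Proof Plan for Theorem \ref{EXISTENCE THEOREM}.} The plan is to deploy Lemma \ref{harrylemma2} to produce the spectral data $(\lm_{\om,\ep},\phi_{\om,\ep},\nu_{\om,\ep},Q_{\om,\ep})$ and then reinterpret it dynamically. First I set $\rho_{\om,\ep}:=\lm_{\om,\ep}$, $\psi_{\om,\ep}:=\phi_{\om,\ep}$, and $\zt_{\om,\ep}:=\nu_{\om,\ep}$. Items \ref{item 1} and \ref{item 2} are then essentially the conclusions of Lemma \ref{harrylemma2}, with two points needing attention. The uniform bounds $C_1^{-1}\le \rho_{\om,\ep}\le C_1$ and $C^{-1}\le \psi_{\om,\ep}\le C$ follow from \eqref{C7'} and Remark \ref{rem61} together with the perturbative estimate $|1-\nu_{\om,0}(\hat\phi_{\om,\ep})|<\dl$ derived in the proof of Lemma \ref{harrylemma2}. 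The support statement $\mathrm{supp}(\zt_{\om,\ep})\sub X_{\om,\infty,\ep}$ is proven exactly as in Lemma \ref{lem: Lm is conf meas}: if $f\equiv 0$ on $X_{\om,n-1,\ep}$, then $\zt_{\sg^n\om,\ep}(\cL_{\om,\ep}^n f)=0$ by conformality, but also $\cL_{\om,\ep}^n f=\cL_{\om,0}^n(\hat X_{\om,n-1,\ep}f)=0$. Uniqueness of $\psi_{\om,\ep}$ modulo $\zt_{\om,\ep}$ follows from the one-dimensionality of the leading Oseledets space established in item \eqref{harry assum 1 hat} of the proof of Lemma \ref{harrylemma2}. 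Convergence $\rho_{\om,\ep}\to\lm_{\om,0}$ and $\psi_{\om,\ep}\to\phi_{\om,0}$ in $\cB_\om$ as $\ep\to 0$ is built into the application of Theorem \ref{HarryThm4.4}.

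\medskip

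For item \ref{item 5}, quasi-compactness with uniform spectral gap is precisely the content of Lemma \ref{harrylemma2}: the operators $Q_{\om,\ep}^n$ satisfy $\|Q_{\om,\ep}^n\|_{\BV_1}\le K'(\gm+\dl)^n$ uniformly in $\om$ and $\ep$. Setting $\kp_\ep:=\gm+\dl(\ep)$ with $\dl(\ep)\to 0$ as $\ep\to 0$, and using the equivalence of $\|\cdot\|_{\BV_1}$ and $\|\cdot\|_{\cB_\om}$ from \eqref{normequiv2}, the first inequality is immediate from the decomposition in \eqref{C3}. The second and third inequalities (convergence of conditioned closed measures to $\vrho_{\sg^n\om,\ep}$ and $\mu_{\om,\ep}$ respectively) follow from the first by the same manipulations as in the proof of Corollary \ref{cor: exp conv of eta}, using Lemma \ref{lem: useful identity} and the identity $\vrho_{\om,\ep}(f\ind_{X_{\om,n,\ep}})/\vrho_{\om,\ep}(X_{\om,n,\ep})=\vrho_{\sg^n\om,\ep}(\sL_{\om,\ep}^n f)$ with $\sL_{\om,\ep}$ the fully normalized operator $\sL_{\om,\ep}f=\psi_{\sg\om,\ep}^{-1}\~\cL_{\om,\ep}(\psi_{\om,\ep}f)$.

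\medskip

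For item \ref{item 4}, the definition of $\vrho_{\om,\ep}$ together with $\cL_{\om,\ep}\psi_{\om,\ep}=\rho_{\om,\ep}\psi_{\sg\om,\ep}$ and Lemma \ref{LMD lem 1.1.1} (applied with $h_{\om}=\psi_{\om,\ep}/\nu_{\om,0}(\ind_{H_{\om,\ep}^c}\psi_{\om,\ep})$ and $\al_\om=\rho_{\om,\ep}/\lm_{\om,0}$) immediately shows $\vrho_\ep$ is an RACCIM; uniqueness in the class of RACCIMs with $\BV$ densities follows from uniqueness of $\psi_\ep$ in item \ref{item 2}. For item \ref{item 3}, $T$-invariance of $\mu_\ep=\psi_\ep\zt_\ep$ is a direct calculation using conformality and $\cL_{\om,\ep}\psi_{\om,\ep}=\rho_{\om,\ep}\psi_{\sg\om,\ep}$ (identical to Proposition \ref{prop: mu_om T invar}). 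Ergodicity is proven exactly as in Proposition 4.7 of \cite{mayer_distance_2011}, using the exponential decay in item \ref{item 5}. The fiberwise support in $X_{\om,\infty,\ep}$ is inherited from $\zt_{\om,\ep}$. Convergence of the expected pressures $\cEP_\ep(\log g_0)\to \cEP_0(\log g_0)$ follows from $\log\rho_{\om,\ep}\to\log\lm_{\om,0}$ pointwise and dominated convergence (using the uniform bound $C_1^{-1}\le\rho_{\om,\ep}\le C_1$).

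\medskip

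The main obstacle is the variational principle and uniqueness of the equilibrium state in item \ref{item 3}. The plan is to follow the scheme of Theorem \ref{thm:eqstates}: identify $\log\rho_{\om,\ep}=\log\lm_{\om,\ep}$ as the relative fiberwise pressure, express $h_{\mu_\ep}(T)$ via the Abramov--Rokhlin formula for skew products, and use the Jacobian identity $\rho_{\om,\ep}\psi_{\sg\om,\ep}\circ T_\om=g_{\om,\ep}\psi_{\om,\ep}\cdot\frac{d\mu_{\sg\om,\ep}\circ T_\om}{d\mu_{\om,\ep}}$ (derived from $T$-invariance and conformality) to compute $h_{\mu_\ep}(T)+\int\log g_0\,d\mu_\ep=\int\log\rho_{\om,\ep}\,dm$. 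Strict inequality for any other $\eta_\ep\in\cP^{H_\ep}_{T,m}(\cJ_0)$ follows from the strict concavity argument in Lemmas 12.2--12.3 of \cite{AFGTV20}, with the key ingredient being exponential decay of correlations from item \ref{ITEM 6}. Finally, item \ref{ITEM 6} follows from item \ref{item 5} by the standard computation
\begin{align*}
\mu_{\om,\ep}((f\circ T_\om^n)h)&=\zt_{\sg^n\om,\ep}(f\cdot \~\cL_{\om,\ep}^n(h\psi_{\om,\ep}))\\
&=\zt_{\om,\ep}(h\psi_{\om,\ep})\mu_{\sg^n\om,\ep}(f)+\zt_{\sg^n\om,\ep}(f\cdot Q_{\om,\ep}^n(h\psi_{\om,\ep})),
\end{align*}
where the second term is bounded by $C\|f\|_{\infty,\sg^n\om}\|h\psi_{\om,\ep}\|_{\cB_\om}\kp_\ep^n\le C'\|f\|_{\infty,\om}\|h\|_{\cB_\om}\kp_\ep^n$ using \eqref{sup norm equal}, the uniform bound on $\psi_{\om,\ep}$, and the fact that $\BV$ is a Banach algebra under $\|\cdot\|_{\BV_1}$.
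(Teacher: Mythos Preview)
Your overall strategy matches the paper's: invoke Lemma~\ref{harrylemma2} to produce the spectral data, then reinterpret dynamically by citing Lemma~\ref{lem: Lm is conf meas} for the support statement, Lemma~\ref{LMD lem 1.1.1} for item~\ref{item 4}, Corollary~\ref{cor: exp conv of eta} for the second and third bounds in item~\ref{item 5}, and Theorem~2.23 with Lemmas~12.2--12.3 of \cite{AFGTV20} for the variational principle in item~\ref{item 3}. One normalization point needs care: the functionals $\nu_{\om,\ep}$ produced in Lemma~\ref{harrylemma2} are finite non-atomic Borel measures but \emph{not} probability measures (see Remark~\ref{rem:scaling}). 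Hence your direct identification $\zt_{\om,\ep}=\nu_{\om,\ep}$ fails item~\ref{item 1}'s requirement that $\zt_\ep$ be a random probability measure, and with this choice the defining identity $\rho_{\om,\ep}=\zt_{\sg\om,\ep}(\cL_{\om,\ep}\ind)=\lm_{\om,\ep}\nu_{\om,\ep}(\ind)$ does not reduce to $\lm_{\om,\ep}$. The paper handles this by rescaling: $\zt_{\om,\ep}(f):=\nu_{\om,\ep}(f)/\nu_{\om,\ep}(\ind)$, $\rho_{\om,\ep}:=\lm_{\om,\ep}\,\nu_{\om,\ep}(\ind)/\nu_{\sg\om,\ep}(\ind)$, with a matching adjustment to $\psi_{\om,\ep}$. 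Since $\nu_{\om,\ep}(\ind)$ is uniformly close to $1$ (Lemma~\ref{lemC9'}), this does not affect any of your bounds or subsequent arguments.

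Your argument for item~\ref{ITEM 6} is more direct than the paper's. The paper (Appendix~\ref{appDec}) passes to the fully normalized operator $\sL_{\om,\ep}$, writes the correlation as $\mu_{\sg^n\om,\ep}(|f\,\sL_{\om,\ep}^n\hat h|)$, and then---flagging an incompatibility between $\|\cdot\|_{\cB_\om}$ and $\mu_{\om,\ep}$---detours through the RACCIM $\vrho_{\sg^n\om,\ep}$ using the third inequality of item~\ref{item 5} to control this quantity. You instead bound $\zt_{\sg^n\om,\ep}\bigl(f\cdot Q_{\om,\ep}^n(h\psi_{\om,\ep})\bigr)$ directly by $\|f\|_{\infty,\om}\|Q_{\om,\ep}^n(h\psi_{\om,\ep})\|_{\infty,\sg^n\om}$, which is legitimate because $\zt_{\sg^n\om,\ep}$ is non-atomic (Lemma~\ref{harrylemma2}) and $\|\cdot\|_{\infty,\tau}=\|\cdot\|_{\infty,1}$ on $\BV$ by \eqref{sup norm equal}. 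This avoids the detour and yields the same conclusion.
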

\begin{proof}

First we note that the claims of items \eqref{item 1} -- \eqref{item 3} and \eqref{item 5} -- \eqref{ITEM 6} above for $\ep=0$ follow immediately from Lemma~\ref{DFGTV18Alemma}. Now we are left to prove each of the claims for $\ep>0$.

Claims \eqref{item 1} -- \eqref{item 3} follow from Lemma~\ref{harrylemma2} with the scaling:
\begin{align*}
\psi_{\om,\ep}&:=\nu_{\om,\ep}(\phi_{\om,\ep}) \phi_{\om,\ep},
&
\zt_{\om,\ep}(f)&:=\frac{\nu_{\om,\ep}(f)}{\nu_{\om,\ep}(\ind)},
&
\rho_{\om,\ep}&:=\frac{\nu_{\om,\ep}(\ind)}{\nu_{\sg\om,\ep}(\ind)}\lm_{\om,\ep}.
\end{align*}
The uniform boundedness on $\rho_{\om,\ep}$ and $\psi_{\om,\ep}$ follows from the uniform boundedness on $\lm_{\om,\ep}$ and $\phi_{\om,\ep}$ coming from Lemma~\ref{harrylemma2}.
The fact that $\mu_\ep$ is the unique relative equilibrium state follows similarly to the proof of Theorem 2.23 in \cite{AFGTV20} (see also Remark 2.24, Lemma 12.2 and Lemma 12.3).
The claim that $\supp(\zt_{\om,\ep})\sub X_{\om,\infty,\ep}$ follows similarly to Lemma \ref{lem: Lm is conf meas}. Noting that $\|f\|_{\infty,\om}=\|f\|_{\infty,1}$ for $f\in\BV$ by the proof of Proposition \ref{prop norm equiv}, we now proof Claims \eqref{item 4} -- \eqref{ITEM 6}.

Claim \eqref{item 4} follows from Lemma \ref{LMD lem 1.1.1}  and the uniqueness of the density $\psi_{\om,\ep}\in\BV$.

The first item of Claim \eqref{item 5} follows from Lemma \ref{harrylemma2} and the remaining items are proved similarly to Corollary \ref{cor: exp conv of eta}.

Claim \eqref{ITEM 6} 
follows from Claim \eqref{item 5} and is proven in Appendix \ref{appDec}.
\end{proof}
\begin{remark}\label{rem seq exist}
If one considers a two-sided (bi-infinite) sequential analogue of random open systems, then because Theorem 4.4 \cite{C19} (Theorem \ref{HarryThm4.4}) also applies to two-sided sequential systems,  one could prove similar results to items \eqref{item 1}, \eqref{item 2}, \eqref{item 4}, \eqref{item 5}, and \eqref{ITEM 6} of Theorem \ref{EXISTENCE THEOREM}.
\end{remark}

\section{Limit theorems}\label{sec: limit theorems}
In this section we prove  a few  limit theorems   for the {\em closed} systems  discussed in Section~\ref{sec: existence} $(\Om, m, \sg, [0,1], T, \BV, \cL_0, \nu_0, \phi_0)$. We will in fact show that such systems   are  {\em admissible} in the sense of \cite{dragicevic_spectral_2018}. This will allow us to adapt  to our setting the spectral approach {\em \`a la Nagaev-Guivarc'h} developed in \cite{dragicevic_spectral_2018} and get a quenched  central limit theorem, a quenched large deviation theorem and a quenched local central limit theorem. We will also present an alternative approach based on martingale techniques \cite{DFGTV18A,atnipASIP,DHasip}, which produces an  almost sure invariance principle (ASIP) for the random measure $\mu_0=\set{\mu_{\om,0}}_{\om\in\Om}$. Moreover, the ASIP implies that  $\mu_{0}$ satisfies the central limit theorem as well as the law of the iterated logarithm. The martingale approach will also give an upper bound for any (large) deviation from the expected value and a Borel-Cantelli dynamical lemma. At the moment we could not extend the previous limit theorems to the {\em open} systems investigated in Section~\ref{sec: existence}. There are  a few  reasons for that which concern the Banach space  $\mathcal{B}_{\om,\ep}$ associated to those systems and  defined by the norm: $||\cdot||_{\mathcal{B}_{\om,\ep}}=\text{var}(\cdot)+\zeta_{\om,\ep}(|\cdot|).$ First of all,  we do not know if the  random cocycle $\mathcal{R}_{\eps}=(\Omega, m, \sigma, \mathcal{B}_{\om,\ep}, \tcL_{\om, \epsilon})$ is quasi-compact which is an essential requirement for admissibility. 
Second, the results of Theorem \ref{EXISTENCE THEOREM} are not particularly compatible with the Banach spaces  $\mathcal{B}_{\om,\ep}$, $\ep>0$, since the inequalities of items \eqref{item 5} and \eqref{ITEM 6} are in terms of the norms $\|\spot\|_{\infty,\om}$ and $\|\spot\|_{\cB_\om}$ which are defined modulo $\nu_{\om,0}$ and the $\mathcal{B}_{\om,\ep}$ norm is defined via $\zt_{\om,\ep}$, a measure which is supported on a $\nu_{\om,0}$-null set.

\subsection {The Nagaev-Guivarc'h approach}\label{NG approach} The paper \cite{dragicevic_spectral_2018} developed a general scheme to adapt  the  Nagaev-Guivarc'h approach to random quenched dynamical systems,  allowing one to prove limit theorems by exploring the connection between a {\em twisted} operator cocycle and the distribution of the Birkhoff sums. The results in \cite{dragicevic_spectral_2018} were confined to the geometric potential $|\det(DT_{\om})|^{-1}$ and the associated conformal measure, Lebesgue measure. We now show how  to extend those results to the systems verifying the assumptions stated in Section \ref{sec: existence} and the results of Theorem \ref{EXISTENCE THEOREM}, whenever $\epsilon=0,$ that is we will consider random closed systems  for a larger class of potentials. 

The starting point is to replace the linear operator $\cL_{\om,0}$ associated to the geometric potential and the (conformal) Lebesgue measure introduced in \cite{dragicevic_spectral_2018}, with our operator $\cL_{\om, 0}$ and the associated conformal measures $\nu_{\om, 0}.$ In particular, if we work with the normalized operator $\tcL_{\om, 0} := \lm_{\om,0}^{-1}\cL_{\om,0}$ the results in \cite{dragicevic_spectral_2018} are reproducible almost verbatim with a few precautions which we are going to explain. 
We now let $\mathcal{B}_{\om}$ denote the Banach space of complex-valued bounded variation functions with norm defined by $||\cdot||_{\mathcal{B}_{\om}}=\text{var}(\cdot)+\nu_{\om,0}(|\cdot|),$ where the variation is defined using equivalence classes mod-$\nu_{\om, 0}.$ 
In order to apply the theory in \cite{dragicevic_spectral_2018} we must show that our random cocycle is {\em admissible}. This  reduces to check two sets of properties which were listed in  \cite{dragicevic_spectral_2018} respectively as conditions (V1) to (V9) and conditions  (C0) up to (C4).  The first set of conditions reproduces the classical properties of the total variation of a function and its relationship with the $L^1(\Leb)$ norm. We should emphasize that in our case the variation is defined using equivalence classes mod-$\nu_{\om, 0}.$ 
It is easy to  check that properties (V1, V2, V3, V5, V8, V9) hold  with respect to this variation. In particular (V3) asserts that for any $f\in \mathcal{B}_{\om}$ we have $||f||_{L^{\infty}(\nu_{\om, 0})}\le ||f||_{\mathcal{B}_{\om}};$ we will refer to it in the following just as the (V3) property.
{\em Notation}: recall that given an element $f\in \mathcal{B}_{\om}$, the $L^{\infty}$ norm of $f$ with respect to the measure $\nu_{\om, 0}$ is denoted by $||f||_{\infty,\om}$ in the rest of this section.

Property (V7) is not used in the current paper; (V6) is a general density embedding result proved in  Hofbauer and Keller (Lemma 5 \cite{HHKK}).
We elaborate on property (V4). To obtain (V4) in our situation one needs to prove that  the unit ball of $\mathcal{B}_{\om}$ is compactly injected  into $L^{1}(\nu_{\om, 0})$. As we will see, this is used  to get the quasi-compactness of the random cocycle. The result follows easily by adapting Proposition 2.3.4 in \cite{gora} to our conformal measure $\nu_{\om, 0},$ which is fully supported and non-atomic. 
We now rename the other set of  properties (C0)--(C4) in \cite{dragicevic_spectral_2018} as  ($\mathcal{C}$0) to ($\mathcal{C}$4) to distinguish them from our (C) properties stated earlier in Sections \ref{sec:goodrandom} and \ref{EEVV}.  \begin{itemize}
\item 
Assumption ($\mathcal{C}$0) coincides with our condition \eqref{M}.
\item 
Condition ($\mathcal{C}$1)  requires us to prove in our case that \begin{equation}\label{C1CMP}
||\tilde{\mathcal{L}}_{\om,0}f||_{\mathcal{B}_{\sigma\om}}\le K ||f||_{\mathcal{B}_{\om}},
\end{equation}
for every $f\in\mathcal{B}_{\om}$ and for $m$-a.e. $\omega$, with $\om$-independent $K$.
\item 
Condition ($\mathcal{C}$2) asks that there exists $N\in \mathbb{N}$ and measurable $\tilde{\alpha}^N, \tilde{\beta}^N:\Omega\rightarrow (0, \infty),$ with $\int_{\Omega}\log\tilde{\alpha}^N(\om)d m(\om)<0,$ such that for every $f\in \mathcal{B}_{\om}$ and $m$-a.e. $\om\in \Omega,$
\begin{equation}\label{C2CMP}
||\tilde{\mathcal{L}}^N_{\om,0}f||_{\mathcal{B}_{\sigma^N\om}}\le \tilde{\alpha}^N(\om)||f||_{\mathcal{B}_{\om}}+\tilde{\beta}^N(\om)||f||_{L^1(\nu_{\om, 0})}.
\end{equation}
\item Condition ($\mathcal{C}$3)  is  the content of the first display equation in item  \ref{item 5} in the statement of Theorem \ref{EXISTENCE THEOREM}. 
\item Condition ($\mathcal{C}$4) is only used to obtain Lemma 2.11 in \cite{dragicevic_spectral_2018}. There are three results in that Lemma which we now compare with our situation. The third result is the decay of correlations stated in item \ref{ITEM 6} of Theorem \ref{EXISTENCE THEOREM}. The second result  is the almost-sure strictly positive lower bound for the density $\phi_{\om, 0}$ stated in item \ref{item 2} of Theorem \ref{EXISTENCE THEOREM}. The first result requires that $\esssup_{\om\in \Omega} ||\phi_{\om,0}||_{\mathcal{B}_{\om}}<\infty.$ This follows by checking that the proof of Proposition 1 in \cite{DFGTV18A} works in our current setting with the obvious modifications;  we note that Proposition 1 \cite{DFGTV18A} only assumes conditions  ($\mathcal{C}$1) and ($\mathcal{C}$3).
\end{itemize}
We are thus left with showing conditions ($\mathcal{C}$1) and ($\mathcal{C}$2) in our setting. We will get both at the same time as a consequence of the following argument, which consists in adapting to our current situation  the final part 
of Lemma \ref{closed ly ineq App}. Our starting point will be the inequality (\ref{closed var ineq over partition}). 
Since we are working with the normalized operator cocycle $\tilde{\mathcal{L}}_{\omega,0}^N$, we have to divide the expression in  (\ref{closed var ineq over partition}) by $\lm^{n'}_{\om, 0};$ moreover  we have to replace the measure $\nu_{\om, 0}$  with the equivalent conformal probability measure $\nu_{\om, 0}.$
Therefore (\ref{closed var ineq over partition}) now becomes
\begin{equation}\label{zero}
(\lm_{\om, 0}^n)^{-1}\var\lt(\ind_{T_\om^n(Z)} \lt((f g_{\om, 0}^{(n)})\circ T_{\om,Z}^{-n}\rt)\rt)
\le 9\frac{\norm{g_{\om, 0}^{(n)}}_{\infty,\om}}{\lm^n_{\om, 0 }}\text{var}_Z(f)+
\frac{8\norm{g_{\om, 0}^{(n)}}_{\infty,\om}}
{\lm^n_{\om, 0}\nu_{\om, 0}(Z)}\nu_{\om, 0}(|f|_Z),
\end{equation}
where $Z$ is an element of $\mathcal{Z}^{(n)}_{\om, 0}.$ Since each  element  of the partition  $\mathcal{Z}^{(n)}_{\om, 0}$ has nonempty interior and the measure $\nu_{\om, 0}$  charges open intervals, if we set
$$
\overline{\nu}^{(n)}_{\om,0}:=\min_{Z\in\cZ_{\om,0}^{(n)}}
\nu_{\om,0}(Z)>0,
$$
and we take the sum over the $Z\in \mathcal{Z}^{(n)}_{\om, 0}$ we finally get
\begin{equation}\label{p}
\text{var}(\tilde{\mathcal{L}}^n_{\om,0}f)\le 9\frac{\norm{g_{\om,0}^{(n)}}_{\infty,\om}}{\lm^{n}_{\om, 0}}\text{var}(f)+8\frac{\norm{g_{\om,0}^{(n)}}_{\infty,\om}\nu_{\om,0}(|f|)}{\lm^n_{\om, 0}\overline{\nu}_{\om,0}^{(n)}}.
\end{equation}Notice that  condition ($\mathcal{C}$2) requires that  $\tilde{\alpha}^N<1$, which in our case becomes
\begin{equation}\label{fgr}
9\frac{\esssup_{\om}\norm{g_{\om,0}^{(n)}}_{\infty,\om}}{\essinf_{\om}\lm^{n}_{\om, 0}}<1, 
\end{equation}
or equivalently 
$$
9\frac{\esssup_{\om}\norm{g_{\om,0}^{(n)}}_{\infty,\om}}{\essinf_{\om}\inf\mathcal{L}^n_{\om,0}\ind }<1,
$$
which is guaranteed by (E8).
We now move on to check condition ($\mathcal{C}$1).
From (\ref{p}), setting $n=1$ we  see that a sufficient condition for ($\mathcal{C}$1) is
\begin{equation}\label{dd}
\frac{\esssup_{\om}\norm{g_{\om, 0}^{(1)}}_{\infty,\om}}{\essinf_{\om}\lm^1_{\om, 0}\overline{\nu}_{\om,0}}<\infty.
\end{equation}
or equivalently
\begin{equation}\label{d}
\frac{\esssup_{\om}\norm{g_{\om,0}^{(1)}}_{\infty,\om}}{\essinf_{\om}\inf\mathcal{L}_{\omega,0}\ind\overline{\nu}_{\om,0}}<\infty
\end{equation}
Condition (\ref{d}) is in principle checkable ($n=1$) and we could {\em assume} it as a part of the admissibility condition for our random cocycle $\mathcal{R}=(\Omega, m, \sigma, \mathcal{B}_{\om}, \tcL_{\om,0}).$  

The admissibility conditions stated in \cite{dragicevic_spectral_2018}, in particular ($\mathcal{C}$2) and  ($\mathcal{C}$3), were sufficient to prove the quasi-compactness of the random cocycle introduced in  \cite{dragicevic_spectral_2018},  but they rely on another assumption, which was a part of the Banach space construction, namely that  $\BV_1$ was compactly injected into $L^1(\text{Leb}).$ We saw above that the same result holds for our Banach space $\mathcal{B}_{\om}$ and our measure $\nu_{\om, 0}.$ In order to prove quasi-compactness following Lemma 2.1 in \cite{dragicevic_spectral_2018}, we use condition  
($\mathcal{C}$2) with the almost-sure bound  $\tilde{\alpha}^N(\om)<1.$
We must additionally prove that the top Lyapunov exponent\index{Lyapunov exponent} $\Lambda(\mathcal{R})$\index{$\Lambda(\mathcal{R})$} defined by the limit 
$\Lambda(\mathcal{R})=\lim_{n\rightarrow \infty}\frac1n \log||\tcL_{\om,0}^n||_{\mathcal{B}_{\om}}$ is not less than zero.
This will follow by applying item \ref{item 5} and the bound on the density $\phi_{\om,0}$ in item \ref{item 2}, both in Theorem \ref{EXISTENCE THEOREM}.
Since the density $\phi_{\om, 0}$ is for $m$-a.e. $\om\in \Omega$ bounded from below by the constant $C^{-1},$ we have that $\limsup_{n\rightarrow \infty}\frac{1}{n}\log \|\phi_{\sigma^n\om,0}\|_{\mathcal{B}_{\sg^n\om}}\ge\limsup_{n\rightarrow \infty}\frac{1}{n}\log C^{-1}=0$.
Now we continue as in the proof of Theorem 3.2 in \cite{dragicevic_spectral_2018}. Let $N$ be the integer for which 
$\tilde{\alpha}^N(\om)<1$  $\om$ a.s. We then consider the cocycle $\mathcal{R}_N$  generated by the map $\om\rightarrow \cL_{\om,0}^N.$ It is easy to verify that
$\Lambda(\mathcal{R}_N)=N\Lambda(\mathcal{R})$ and the index of compactness
$\kappa$ (see section 2.1 in \cite{dragicevic_spectral_2018} for the definition) 
of the two cocycles also satisfies
$\kappa(\mathcal{R}_N)=N\kappa(\mathcal{R}).$ Because $\int \log \tilde{\alpha}^N(\om)\ 
dm < 0 \le \Lambda(\mathcal{R}_N)$, Lemma 2.1 in \cite{dragicevic_spectral_2018} guarantees that $\kappa(\mathcal{R}_N) \le \int \log \tilde{\alpha}^N(\om)\ 
dm$, 
proving
quasicompactness of $\mathcal{R}.$
\begin{remark}
    We just checked that assumptions (V1)--(V9) and ($\cC$0)--($\cC$4) of \cite{dragicevic_spectral_2018} continue to hold for fiberwise Banach spaces. The next step is to invoke the implicit function theorem of Sections 3.3 and 3.4 in \cite{dragicevic_spectral_2018}, which instead uses the same Banach space. We believe that such a theorem could be adapted to the setting of fiberwise norms and we plan to develop it in the near future. In the meanwhile we adapt to that theorem by taking $\|\cdot\|_{\cB_\om}=\|\cdot\|_{\BV_1}$ for each $\oio$ to be the same equivalent norm on each fiber, where $\|\cdot\|_{\BV_1}=\var(\cdot)+\|\cdot\|_{L^1(\Leb)}$ as in Section \ref{sec: existence}. Notice however that in Section \ref{sec: martingale} we will adopt the martingale approach, and in this case there is no obstruction to using fiberwise norms. 
\end{remark}
As we said at the beginning of this section, we could now follow almost verbatim the proofs in \cite{dragicevic_spectral_2018} by using  our operators $\cL_{\om,0}$ and by replacing the Lebesgue measure with  the conformal measures $\nu_{\om, 0}.$ We briefly sketch the main steps of the approach; we first define the  {\em observable} $v$ as a measurable map $v:\Omega\times [0,1] \rightarrow \mathbb{R}$ with the additional properties 
that $||v(\omega, x)||_{L^1(\nu_0)}<\infty$ and 
$\text{ess}\sup_{\om}||v_{\om}||_{\infty,\om}<\infty,$ where we set  $v_{\om}(x)=v(\om, x).$ 
Moreover we assume $v_\omega$ is fibrewise centered: $\int v_{\om}(x)d\mu_{\om,0}(x)=0,$ for $m$-a.e. $\om\in \Omega.$ We then define the twisted (normalized) transfer operator cocycle as:
$$
\tcL_{\om,0}^{\theta}(f)=\tcL_{\om,0}(e^{\theta v_{\om}(\cdot)}f), \ f\in\mathcal{B}_{\om}. 
$$
The link with the limit theorems is provided by the following equality which follows easily by the duality property of the operator:
$$
\int \tcL_{\om,0}^{\theta,n}(f)d\nu_{\sigma^n\om, 0}=\int e^{\theta S_n v_{\om}(\cdot)}f d\nu_{\om, 0},
$$
where $\tcL_{\om,0}^{\theta,n}=\tcL_{\sigma^{n-1}\om,0}^{\theta}\circ \cdots \circ \tcL_{\om,0}^{\theta}.$ The adaptation of Theorem 3.12 in \cite{dragicevic_spectral_2018} to our case, shows that the twisted operator is quasi-compact (in $\mathcal{B}_{\om}$) for $\theta$ close to zero. Moreover, by denoting with 
$$
\Lambda(\theta)=\lim_{n\rightarrow \infty}\frac1n\log ||\tcL_{\om,0}^{\theta,n}||_{\mathcal{B}_{\om}}
$$ 
the top Lyapunov exponent of the cocycle, the map $\theta\rightarrow \Lambda(\theta)$ is of class $C^2$ and strictly convex in a neighborhood of zero. We have also the analog of Lemma 4.3 in \cite{dragicevic_spectral_2018}, linking the asymptotic behavior of characteristic functions associated to Birkhoff sums with $\Lambda(\theta):$
$$
\lim_{n\rightarrow \infty}\frac1n\log\left|\int e^{\theta S_nv_{\om}(x)}d\mu_{\om, 0}\right|=\Lambda(\theta).
$$
We are now ready to collect our results on a few limit theorems; we first define the variance 
$$
\Sigma^2=\int v_{\om}(x)^2\ d\mu_{\om, 0}\ dm+2\sum_{n=1}^{\infty}\int v_{\om}(x)v(T_\om^n(x))\ d\mu_{\om, 0}\ dm.
$$
We also define the {\em aperiodicity condition} by asking that for $m$-a.e. $\om\in \Omega$ and for every compact interval $J\subset \mathbb{R}\setminus \{0\},$ there exists $C(\om)>0$ and $\rho\in(0,1)$ such that $||\tcL_{\om,0}^{it,n}||_{\mathcal{B}_{\om}}\le C(\omega)  \rho^n,$ for $t\in J$ and $n\ge 0.$
\begin{theorem}
Suppose that our random  cocycle $\mathcal{R}=(\Omega, m, \sigma, \mathcal{B}_{\om}, \tcL_{\om,0})$ is admissible and take the centered observable $v$ verifying  $\text{ess}\sup_{\om}||v_{\om}||_{\infty,\om}<\infty.$ Then:
\begin{itemize}
\item ({\bf Large deviations}). There exists $\varkappa_0>0$ and a non-random function $c:(-\varkappa_0, \varkappa_0)\rightarrow\mathbb{R}$ which is nonnegative, continuous, strictly convex, vanishing only at $0$ and such that
$$
\lim_{n\rightarrow \infty} \frac1n\log \mu_{\om, 0}(S_nv_{\om}(\cdot)>n\varkappa)=-c(\varkappa), \ \text{for}\ 0<\varkappa<\varkappa_0, \ \text{and}\ m-\text{a.e.} \ \om\in \Omega.
$$
\item ({\bf Central limit theorem}). Assume that $\Sigma^2>0.$ Then for every bounded and continuous function $\phi:\mathbb{R}\rightarrow \mathbb{R}$ and $m$-\text{a.e.}  $\om\in \Omega$, we have
$$
\lim_{n\rightarrow \infty}\int \phi\left(\frac{S_nv_{\om}(x)}{\sqrt{n}}\right)d\mu_{\om, 0}=\int \phi\ d\mathcal{N}(0, \Sigma^2).
$$
\item ({\bf Local central limit theorem}). Suppose the aperiodicity condition holds. Then for $m$-a.e. $\om\in \Omega$  and every bounded interval $J\subset \mathbb{R},$ we have
$$
\lim_{n\rightarrow \infty}\sup_{s\in \mathbb{R}}\left|\Sigma\sqrt{n}\mu_{\om,0}(s+S_nv_{\om}(\cdot)\in J)-\frac{1}{\sqrt{2\pi}}e^{-\frac{s^2}{2n\Sigma^2}}|J|\right|=0.
$$

\end{itemize}
\end{theorem}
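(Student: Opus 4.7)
The plan is to apply directly the Nagaev--Guivarc'h machinery developed in \cite{dragicevic_spectral_2018} (Theorems 2.1, 2.2 and 2.3 there), since the prerequisites have already been verified in the preceding paragraphs of this section. More precisely, admissibility of the cocycle $\mathcal{R}=(\Omega,m,\sigma,\mathcal{B}_\omega,\tcL_{\om,0})$ has been established by checking (V1)--(V9) and ($\mathcal{C}$0)--($\mathcal{C}$4), quasi-compactness was obtained via Lemma 2.1 of \cite{dragicevic_spectral_2018} from the fact that $\int \log\tilde\alpha^N(\omega)\,dm<0\leq \Lambda(\mathcal{R})$, and the twisted cocycle $\tcL_{\om,0}^{\theta,n}$ has been shown to be quasi-compact for $\theta$ in a neighborhood of $0$ with top Lyapunov exponent $\theta\mapsto \Lambda(\theta)$ of class $C^2$ and strictly convex near $0$. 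Since $v$ is fibrewise centered and $\tcL_{\om,0}\phi_{\om,0}=\phi_{\sigma\om,0}$, one has $\Lambda(0)=0$, $\Lambda'(0)=0$, and $\Lambda''(0)=\Sigma^2$ (the latter by the standard computation identifying the second derivative of the pressure with the asymptotic variance of the Birkhoff sums).

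For the large deviations statement, I would invoke the G\"artner--Ellis theorem fibrewise. Setting $\Lambda_n(\theta,\om):=\tfrac{1}{n}\log\int e^{\theta S_n v_\om}\,d\mu_{\om,0}$, the identification $\lim_{n\to\infty}\Lambda_n(\theta,\om)=\Lambda(\theta)$ for $m$-a.e.\ $\om$ and $\theta$ in a neighborhood of $0$ (which comes from the adaptation of Lemma 4.3 of \cite{dragicevic_spectral_2018}) together with the strict convexity and $C^2$-regularity of $\Lambda$ allows one to define $c(\varkappa)$ as the Legendre transform $c(\varkappa):=\sup_\theta(\theta\varkappa-\Lambda(\theta))$ on a small interval $(-\varkappa_0,\varkappa_0)$. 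Standard Chernoff/G\"artner--Ellis arguments (as in the proof of Theorem 2.1 of \cite{dragicevic_spectral_2018}) then yield the stated upper and lower bounds.

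For the central limit theorem, I would Taylor-expand the characteristic function. Writing $\theta=it/\sqrt{n}$ and using the spectral decomposition of the twisted operator (one-dimensional leading eigenspace with eigenvalue $e^{n\Lambda(it/\sqrt n)}$ plus an exponentially small remainder uniform in $t$ on compact sets), one obtains
\begin{equation*}
\int e^{it S_n v_\om/\sqrt n}\,d\mu_{\om,0} = e^{n\Lambda(it/\sqrt n)}\bigl(1+o(1)\bigr)\longrightarrow e^{-t^2\Sigma^2/2}
\end{equation*}
for $m$-a.e.\ $\om$, from $\Lambda(0)=\Lambda'(0)=0$ and $\Lambda''(0)=\Sigma^2$. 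L\'evy's continuity theorem then gives the convergence against every bounded continuous $\phi$. This is exactly the argument of Theorem 2.2 of \cite{dragicevic_spectral_2018}, transported verbatim to our fibrewise Banach space $\mathcal{B}_\om$ and conformal measure $\nu_{\om,0}$.

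For the local central limit theorem, the idea is to use the Fourier inversion formula to write $\mu_{\om,0}(s+S_n v_\om\in J)$ as an integral of $\int e^{it S_n v_\om}\,d\mu_{\om,0}$ against an appropriate kernel, and to split the integral into a small-$t$ region (handled by the CLT-type expansion above) and a large-$t$ region (handled by the aperiodicity condition, which forces exponential decay of $\|\tcL_{\om,0}^{it,n}\|_{\mathcal{B}_\om}$). Putting these together as in the proof of Theorem 2.3 of \cite{dragicevic_spectral_2018} produces the uniform-in-$s$ limit.

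The main potential obstacle lies in transporting faithfully the implicit-function argument of Sections 3.3--3.4 of \cite{dragicevic_spectral_2018} that yields the $C^2$-regularity, strict convexity, and spectral decomposition of the twisted cocycle, because that argument was originally formulated on a single Banach space whereas here the norms $\|\cdot\|_{\mathcal{B}_\om}$ depend on $\om$ through $\nu_{\om,0}$. As noted in the remark preceding this theorem, we circumvent this difficulty by working with the $\om$-independent equivalent norm $\|\cdot\|_{\BV_1}$ (which is equivalent to $\|\cdot\|_{\mathcal{B}_\om}$ uniformly in $\om$ by Proposition~\ref{prop norm equiv}), so that the implicit function theorem applies on a single Banach space and then the conclusions transfer back to the $\mathcal{B}_\om$ setup with no loss.
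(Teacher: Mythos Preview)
Your proposal is correct and follows the same approach as the paper: the theorem is not proved from scratch but is obtained by invoking the results of \cite{dragicevic_spectral_2018} once admissibility and quasi-compactness of the (twisted) cocycle have been verified in the preceding paragraphs. In fact you supply more detail on the underlying mechanisms (G\"artner--Ellis, Taylor expansion of the characteristic function, Fourier inversion under aperiodicity) than the paper does; the paper simply states that one may ``follow almost verbatim the proofs in \cite{dragicevic_spectral_2018}'' after the preparatory checks, and you have also correctly flagged the fibrewise-norm issue and its resolution via the equivalent $\|\cdot\|_{\BV_1}$ norm.
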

\subsection{The martingale approach}\label{sec: martingale}
The  deviation result quoted above allows to control, asymptotically, deviations of order $\varkappa,$ for $\varkappa$ in a sufficiently small bounded interval around $0.$ We now show how to extend that result to   any $\varkappa$ by getting an exponential bound on the deviation of the distribution function instead of an asymptotic expansion; in particular our bound shows that deviation probability stays small for finite $n$,which is a typical concentration property.  
We now derive this result since it will provide us with the martingale that is used to obtain the ASIP. 
Recall that the equivariant measure $\mu_{\om,0}$ is equivalent to $\nu_{\om, 0}.$ We consider again the fibrewise centered observable $v$ from the previous section, and 
we wish to estimate
$$\mu_{\om,0}\left(\left|\frac1n\sum_{k=0}^{n-1}v_{\sigma^k\om}\circ
T^k_{\om}\right|>\varkappa\right).
$$
We will use the following result (Azuma, \cite{AZ}):
Let $\{M_i\}_{i\in \mathbb{N}}$ be a sequence of martingale differences.
If there is $a>0$ such that $||M_i||_{\infty}<a$ for all $i,$ then we have
for all $b\in \mathbb{R}:$
$$
\mu_{\om,0}\left(\sum_{i=1}^nM_i\ge nb\right)\le e^{-n\frac{b^2}{2a^2}}.
$$

If we denote  $\mathcal{F}^k_{\om}:=(T^k_{\om})^{-1}(\mathcal{F}),$
we can easily prove that for a measurable map
$\phi:[0,1]\rightarrow \mathbb{{R}},$ we have (the expectations $\mathbb{E}_{\om}$
will be
taken with respect to $\mu_{\om,0}$):
\begin{equation}\label{one}
\mathbb{E}_{\om}(\phi\circ T^l_{\o}|\mathcal{F}^n_{\om})=
\left(\frac{\tcL_{\sigma^l\om,0}^{n-l}(\lm_{\sigma^l\om,0}\phi)}{\lm_{\sigma^n\om,0}}\right)
\circ T_{\om}^n
\end{equation}
We now set:
$$M_n:= v_{\sigma^n\om}+G_n-G_{n+1}\circ T_{\sigma^n\om},$$ with $G_0=0$ and
\begin{equation}\label{two}
G_{n+1}=\frac{\tcL_{\sigma^n\om,0}(v_{\sigma^n\om}\lm_{\sigma^n\om,0}+G_n \lm_{\sigma^n\om,0})} {\lm_{\sigma^{k+1}\om,0}}
\end{equation}
It is easy to check that
$$
\mathbb{E}_{\om}(M_n\circ T_{\om}^n| \mathcal{F}^{n+1}_{\om} )=0,
$$
which means that the sequence $(M_n\circ T_{\om}^n)$ is a reversed martingale difference with respect to the filtration $\mathcal{F}^{n}_{\om}.$
By iterating (\ref{two}) we get
\begin{equation}\label{three}
G_n=\frac{1}{\lm_{\sigma^n\om,0}}\sum_{j=0}^{n-1}
\tcL^{(n-j)}_{\sigma^j\om,0}
(v_{\sigma^j\om}
\lm_{\sigma^j\om,0}),
\end{equation}
and by a telescopic trick we have
$$\sum_{k=0}^{n-1}M_k\circ T_{\om}^k=\sum_{k=0}^{n-1}v_{\sigma^k\om}T_{\om}^n-G_n\circ T_{\om}^n.$$
Suppose for the moment we could bound $G_n$ uniformly in $n$, but not necessarily in $\om$, by $||G_n||_{\infty,\om}\le C_1(\om).$ Since by assumption there exists a constant $C_2$ such that $\text{ess}\sup_{\om}||v_{\om}||_{\infty,\om}\le C_2,$  we have $||M_n||_{\infty,\om}\le C_2+2C_1({\om}),$ and by Azuma:
$$
\mu_{\om,0}\left(\left|\frac1n\sum_{k=0}^{n-1}M_k\circ
T^k_{\om}\right|>\frac{\varkappa}{2}\right)\le 2\exp\left\{-\frac{\varkappa^2}{8(C_2+2C_1({\om}))^2}n\right\}.
$$
Therefore
\begin{align*}
\mu_{\om,0}\left(\left|\frac1n\sum_{k=0}^{n-1}v_{\sigma^k\om}\circ
T^k_{\om}\right|>\varkappa\right)
&\le 
\mu_{\om,0}\left(\left|\frac1n\sum_{k=0}^{n-1}M_k\circ
T^k_{\om}\right|+\frac1n C_1(\om)>\varkappa\right)
\\
&\le
\mu_{\om,0}\left(\left|\frac1n\sum_{k=0}^{n-1}M_k\circ
T^k_{\om}\right|>\frac{\varkappa}{2}\right)
\\
&\le 
2\exp\left\{-\frac{\varkappa^2}{8(C_2+2C_1({\om}))^2}n\right\},
\end{align*}
provided $n>n_0,$ where $n_0$ verifies $\frac1n_0 C_1(\om)\le\frac{\varkappa}{2}.$\\
In order to estimate $C_1,$ we proceed in the following manner. We have
\begin{equation}\label{four}
||G_n||_{\infty}\le \frac{1}{\lm_{\sigma^n\om,0}}\sum_{j=0}^{n-1}
||\tcL^{(n-j)}_{\sigma^j\om,0}
(v_{\sigma^j\om}
\lm_{\sigma^j\om,0})||_{\infty,\om}.
\end{equation}
The multiplier $\lm_{\om,0}$ 
is bounded from above  and  from below $\om$ a.s. respectively by, say, $U$ and $1/U$ by  conditions $(\mathcal{C}1)$ and item \ref{item 1} Theorem \ref{EXISTENCE THEOREM}, respectively. Then we use item  \ref{item 5} of Theorem \ref{EXISTENCE THEOREM} and property (V3) to bound the term into the sum. In conclusion we get:

$$
||G_n||_{\infty,\om}\le U^2DC_2 \sum_{j=0}^{\infty} \kappa^{n-j}:=C_1.
$$
We summarize  this result in the following
\begin{theorem}({\bf Large deviations bound})\\
Suppose that our random  cocycle $\mathcal{R}=(\Omega, m, \sigma, \mathcal{B}_{\om}, \tcL_{\om,0})$ is admissible and take the fibrewise centered observable $v$ satisfying  $\text{ess}\sup_{\om}||v_{\om}||_{\infty,\om}=C_2<\infty.$ Then there exists $C_1>0$ such that for $m$-a.e. $\om\in \Omega$
and  for any $\varkappa>0:$
$$
\mu_{\om,0}\left(\left|\frac1n\sum_{k=0}^{n-1}v_{\sigma^k\om}\circ
T^k_{\om}\right|>\varkappa\right)\le2\exp\left\{-\frac{\varkappa^2}{8(C_2+2C_1)^2}n\right\},
$$
provided $n>n_0,$ where $n_0$ is any integer number satisfying $\frac{1}{n_0} C_1\le\frac{\varkappa}{2}.$  
\end{theorem}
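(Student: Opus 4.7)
The plan is to obtain the concentration inequality by constructing a martingale difference decomposition of the Birkhoff sums and then applying Azuma's inequality. Specifically, I would set $M_n := v_{\sigma^n\om} + G_n - G_{n+1}\circ T_{\sigma^n\om}$, where the coboundary corrector $G_n$ is chosen so that $(M_n \circ T^n_\om)$ becomes a reversed martingale difference sequence with respect to the filtration $\mathcal{F}^n_\om = (T^n_\om)^{-1}(\mathcal{F})$. The key identity to exploit is the duality formula $\mathbb{E}_\om(\phi\circ T^l_\om \mid \mathcal{F}^n_\om) = \left(\tcL^{n-l}_{\sigma^l\om,0}(\lm_{\sigma^l\om,0}\phi)/\lm_{\sigma^n\om,0}\right)\circ T^n_\om$, which follows from equivariance of $\mu_{\om,0}$ under $T$ and the conformality of $\nu_{\om,0}$. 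Setting $G_0=0$ and requiring the martingale property will recursively determine $G_{n+1}$ in terms of $G_n$ and $v_{\sigma^n\om}$.

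Second, I would iterate the recursion for $G_n$ to obtain the closed form
\[
G_n = \frac{1}{\lm_{\sigma^n\om,0}}\sum_{j=0}^{n-1}\tcL^{n-j}_{\sigma^j\om,0}\bigl(v_{\sigma^j\om}\lm_{\sigma^j\om,0}\bigr),
\]
and then use the telescoping identity $\sum_{k=0}^{n-1}M_k\circ T^k_\om = \sum_{k=0}^{n-1}v_{\sigma^k\om}\circ T^k_\om - G_n\circ T^n_\om$ to separate the Birkhoff sum into a martingale part and a bounded error. The core technical step is to show that $\|G_n\|_{\infty,\om}$ is uniformly bounded in $n$. For this I use admissibility assumption $(\mathcal{C}1)$ together with item \ref{item 1} of Theorem \ref{EXISTENCE THEOREM} to bound $\lm_{\om,0}$ above and below by a constant $U$ almost surely, then invoke item \ref{item 5} of Theorem \ref{EXISTENCE THEOREM} combined with property (V3) to obtain exponential contraction $\|\tcL^{n-j}_{\sigma^j\om,0}(v_{\sigma^j\om}\lm_{\sigma^j\om,0})\|_{\infty,\om}\le D\|v_{\sigma^j\om}\lm_{\sigma^j\om,0}\|_{\cB_{\sigma^j\om}}\kp^{n-j}$. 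Summing a geometric series gives the uniform bound $\|G_n\|_{\infty,\om}\le U^2 D C_2\sum_{j\ge 0}\kp^j =: C_1 < \infty$, which is the main obstacle since it requires using the fiberwise contraction together with fiberwise centering of $v$.

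Third, once $\|M_n\|_{\infty,\om}\le C_2+2C_1$ is established, Azuma's inequality applied to the bounded martingale differences $(M_k\circ T^k_\om)$ yields
\[
\mu_{\om,0}\!\left(\left|\tfrac{1}{n}\sum_{k=0}^{n-1}M_k\circ T^k_\om\right|>\tfrac{\varkappa}{2}\right)\le 2\exp\!\left\{-\frac{\varkappa^2}{8(C_2+2C_1)^2}n\right\}.
\]
Finally, because
\[
\left|\tfrac{1}{n}\sum_{k=0}^{n-1}v_{\sigma^k\om}\circ T^k_\om\right|\le \left|\tfrac{1}{n}\sum_{k=0}^{n-1}M_k\circ T^k_\om\right|+\tfrac{1}{n}\|G_n\|_{\infty,\om},
\]
choosing $n_0$ large enough that $C_1/n_0 \le \varkappa/2$ ensures the error term is absorbed, and the announced bound follows for every $n>n_0$. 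The only substantive difficulty in this program is the uniform control of $\|G_n\|_{\infty,\om}$, and this is precisely where the exponential quasi-compactness provided by Theorem \ref{EXISTENCE THEOREM} item \ref{item 5} plays the decisive role; everything else is bookkeeping.
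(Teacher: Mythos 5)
Your proposal is correct and follows essentially the same route as the paper: the same martingale--coboundary decomposition $M_n = v_{\sigma^n\om}+G_n-G_{n+1}\circ T_{\sigma^n\om}$ with the closed form for $G_n$, the uniform bound $\|G_n\|_{\infty,\om}\le U^2DC_2\sum_{j\ge 0}\kappa^j=:C_1$ obtained from item \eqref{item 5} of Theorem \ref{EXISTENCE THEOREM} together with property (V3) and the two-sided bound on $\lm_{\om,0}$, followed by Azuma's inequality and absorption of the coboundary error for $n>n_0$. No gaps to report.
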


We point out that whenever our random cocycle  $\mathcal{R}=(\Omega, m, \sigma, \mathcal{B}_{\om}, \tcL_{\om,0})$ is admissible, we satisfy all the assumptions in the paper \cite{DFGTV18A}, which allows us to get the almost sure invariance principle. We  should simply replace the (conformal) Lebesgue measure  with our random conformal measure $\nu_{\om, 0},$ and use our operators $\tcL_{\om,0},$ as we already did in Section \ref{NG approach} for the Nagaev-Guivarc'h approach. It is worthwhile to observe that \cite{DFGTV18A} relies on the construction of a suitable martingale, which in our case is precisely  the reversed martingale difference $M_n\circ T^n_{\om}$ obtained above in the proof of the large deviation bound. We denote with $\Sigma^2_n$ the variance $\Sigma^2_n=\mathbb{E}_{\om}\left(\sum_{k=0}^{\infty}v_{\sigma^k\om}\circ T_{\om}^k\right)^2,$ where $v_{\om}$ is a centered observable, and with $\Sigma^2$ the quantity introduced above, in the statement of the central limit theorem. We have the following:
\begin{theorem}({\bf Almost sure invariance principle})

Suppose that our random  cocycle $\mathcal{R}=(\Omega, m, \sigma, \mathcal{B}_{\om}, \tcL_{\om,0})$ is admissible and consider  a centered   observable $v_{\om}.$ Then $\lim_{n\rightarrow \infty}\frac1n\Sigma^2_n=\Sigma^2.$ Moreover one of the following cases hold:

(i) $\Sigma=0,$ and this is equivalent to the existence of $\psi\in L^2(\mu_0)$
$$
v=\psi-\psi\circ T,
$$
where $T$ is the induced skew product map and $\mu$ its invariant measure, see Definition \ref{def: random prob measures}.

(ii) $\Sigma^2>0$ and in this case for $m$ a.e. $\om\in \Omega,$ $\forall \varrho>\frac54$, by enlarging the probability space $(X, \mathcal{F}, \mu_{\om,0})$ if necessary, it is possible to find a sequence $(B_k)_k$ of independent centered Gaussian random variables such that
$$
\sup_{1\le k \le n} \left|\sum_{i=1}^k(v_{\sigma^i\om}\circ T_{\om}^i)-\sum_{i=1}^k B_i\right|=O(n^{1/4}\log^{\varrho}(n)),\ \mu_{\om,0} \ \text{a.s.}.
$$

\end{theorem}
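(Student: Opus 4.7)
The plan is to reduce the statement to the abstract quenched ASIP machinery of \cite{DFGTV18A}, whose hypotheses amount to (i) admissibility of the cocycle (already verified in Section~\ref{NG approach}), (ii) availability of a reverse martingale difference approximation of the Birkhoff sums with uniformly bounded increments, and (iii) sufficiently fast decay of correlations. Step (ii) is the content of the computation culminating in the large deviations bound above: setting $M_n = v_{\sigma^n\om} + G_n - G_{n+1}\circ T_{\sigma^n\om}$ with $G_n$ given by \eqref{three}, the identity $\mathbb{E}_\om(M_n\circ T_\om^n\mid \mathcal{F}_\om^{n+1})=0$ shows $(M_n\circ T_\om^n)$ is a reverse martingale difference on the filtration $(\mathcal{F}_\om^n)$, and the explicit bound $\|G_n\|_{\infty,\om}\le C_1$ (from item~\ref{item 5} of Theorem~\ref{EXISTENCE THEOREM} and boundedness of $\lm_{\om,0}$) gives $\|M_n\|_{\infty,\om}\le C_2+2C_1$ uniformly in $n$ and $m$-a.e. $\om$. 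This is exactly the kind of martingale increment control required by \cite{DFGTV18A}.

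Next I would handle the variance convergence $\Sigma_n^2/n\to\Sigma^2$. Writing $S_n v_\om:=\sum_{k=0}^{n-1}v_{\sigma^k\om}\circ T_\om^k$, one expands $\mathbb{E}_\om(S_n v_\om)^2 = \sum_{k=0}^{n-1}\mu_{\sigma^k\om,0}(v_{\sigma^k\om}^2) + 2\sum_{0\le i<j<n}\mu_{\sigma^i\om,0}(v_{\sigma^i\om}\cdot v_{\sigma^j\om}\circ T_{\sigma^i\om}^{j-i})$. Dividing by $n$, the exponential decay of correlations (item~\ref{ITEM 6} of Theorem~\ref{EXISTENCE THEOREM}) together with $\esssup_\om\|v_\om\|_{\infty,\om}<\infty$ makes the double sum absolutely convergent after integrating over $\Om$, and Birkhoff's theorem applied to the ergodic base $\sg$ identifies the limit with $\Sigma^2$. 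The same decay-of-correlations argument shows that the martingale CLT variance $\lim_n\frac{1}{n}\sum_{k=0}^{n-1}\mathbb{E}_\om(M_k\circ T_\om^k)^2$ coincides with $\Sigma^2$, because $S_n v_\om$ and $\sum_{k<n}M_k\circ T_\om^k$ differ by the bounded coboundary $G_n\circ T_\om^n$.

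For the dichotomy I would argue as follows. If $\Sigma=0$, then the telescoping identity $S_n v_\om = \sum_{k=0}^{n-1}M_k\circ T_\om^k + G_n\circ T_\om^n$ together with vanishing asymptotic variance of the martingale forces the martingale part to be identically zero $\mu_0$-a.e., and the uniform bound on $G_n$ lets one extract an $L^2(\mu_0)$ limit $\psi$ such that $v=\psi-\psi\circ T$; this is case~(i). If instead $\Sigma^2>0$, I would apply the quenched ASIP of \cite{DFGTV18A} (Theorem in Section~5 there) directly to the reverse martingale $(M_k\circ T_\om^k)$: bounded increments and convergence of the normalised variance to $\Sigma^2>0$ yield, on an enlarged probability space, independent centered Gaussians $(B_k)$ with
\begin{equation*}
\sup_{1\le k\le n}\left|\sum_{i=1}^k M_i\circ T_\om^i - \sum_{i=1}^k B_i\right|=O(n^{1/4}\log^{\varrho}n) \quad \mu_{\om,0}\text{-a.s.}
\end{equation*}
for every $\varrho>5/4$. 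Transferring back to Birkhoff sums adds the error $|G_n\circ T_\om^n|\le C_1$, which is absorbed into the stated rate, giving case~(ii).

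The main obstacle, I expect, will be checking carefully that the hypotheses of the abstract ASIP in \cite{DFGTV18A}---originally stated for the geometric potential and Lebesgue reference measure---transfer cleanly to the present setting where the reference measure is the random conformal $\nu_{\om,0}$ and the fiberwise norms $\|\cdot\|_{\cB_\om}$ depend on $\om$. The non-trivial point is that several estimates in \cite{DFGTV18A} use property~(V3) and the decay of correlations \eqref{ITEM 6}; one must verify that these still provide the uniform-in-$\om$ control needed to bound the martingale approximation error and to apply the Skorokhod-type embedding scheme on each fiber without introducing $\om$-dependent constants that spoil the rate $n^{1/4}\log^\varrho n$. Modulo this bookkeeping, the construction is a direct adaptation of \cite{DFGTV18A} to the present admissible cocycle.
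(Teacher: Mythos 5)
Your proposal matches the paper's argument: the paper also proves this theorem by observing that admissibility supplies all hypotheses of \cite{DFGTV18A} (with Lebesgue replaced by the random conformal measures $\nu_{\om,0}$ and the operators $\tcL_{\om,0}$), and that the required reverse martingale is exactly the sequence $M_n\circ T_\om^n$ built from $G_n$ in the large deviations bound, with the uniform bound on $G_n$ giving bounded increments. Your additional detail on variance convergence, the coboundary dichotomy, and transferring the ASIP from the martingale back to the Birkhoff sums is consistent with how \cite{DFGTV18A} proceeds, so this is essentially the same route the paper takes, just spelled out more explicitly.
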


We show in Section \ref{hts} that the distribution of the first hitting random time in a decreasing sequence of holes follows an exponential law, see Proposition \ref{hve}. 
We now prove a recurrence result by giving a quantitative estimate of the {\em asymptotic number of entry times in a descending sequence of balls}. This is known as the {\em shrinking target problem}. 
\begin{proposition}\label{shri}
Suppose that our random cocycle $\mathcal{R}=(\Omega, m, \sigma, \mathcal{B}_{\om}, \tcL_{\om,0})$ is admissible.
For each $\om$ let $B_{\om,\xi_{k,\om}}(p)$ be a  descending sequence of balls centered at the point $p$ and with radii $\xi_{k+1,\om}<\xi_{k,\om},$  such that 
$$
E_{\om, n}:=\sum_{k=0}^{n-1}\mu_{\sigma^k_\om,0}(B_{\sg^k\om,\xi_{k,\sigma^k\om}}(p))\rightarrow \infty 
\quad \mbox{ for $m$-a.e.\ $\omega$}.
$$
Then for $m$-a.e. $\om$ and $\mu_{\om,0}$-almost all $x,$ $T^k_{\om}(x)\in B_{\sg^k\om,\xi_{k,\sigma^k\om}}(p)$ for infinitely many $k$ and
$$
\frac{1}{E_{\om, n}}\#\{0\leq k< n: T^k_{\om}(x)\in B_{\sg^k\om,\xi_{k,\sigma^k\om}}(p)\}\rightarrow 1.
$$
\end{proposition}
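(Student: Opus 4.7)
The plan is to recognise this as a quenched dynamical Borel--Cantelli statement for the sequence of events
\[
A_{k,\om}:=T_\om^{-k}\bigl(B_{\sg^k\om,\xi_{k,\sg^k\om}}(p)\bigr)\sub[0,1], \qquad k\ge 0,
\]
on the probability space $([0,1],\mu_{\om,0})$, and to deduce it from Sprindzuk's (Philipp's) quantitative Borel--Cantelli lemma. By the $T$-invariance of the family $\mu_0=\{\mu_{\om,0}\}$ (i.e.\ $T_\om^*\mu_{\om,0}=\mu_{\sg\om,0}$) one has $\phi_{k,\om}:=\mu_{\om,0}(A_{k,\om})=\mu_{\sg^k\om,0}(B_{\sg^k\om,\xi_{k,\sg^k\om}}(p))$, so the hypothesis becomes $E_{\om,n}=\sum_{k=0}^{n-1}\phi_{k,\om}\to\infty$ for $m$-a.e.\ $\om$. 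Once a Sprindzuk-type estimate
\[
\sum_{k=0}^{n-1}\ind_{A_{k,\om}}(x)=E_{\om,n}+o(E_{\om,n}), \qquad \mu_{\om,0}\text{-a.e. } x,
\]
is established for $m$-a.e.\ $\om$, the divergence $E_{\om,n}\to\infty$ immediately yields both the infinitely-often recurrence and the precise asymptotic counting claimed.

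The key analytic input will be the exponential decay of correlations for $\mu_{\om,0}$ provided by item \eqref{ITEM 6} of Theorem~\ref{EXISTENCE THEOREM}. For $j<k$, I would write, using $T$-equivariance,
\[
\mu_{\om,0}(A_{k,\om}\cap A_{j,\om})=\mu_{\sg^j\om,0}\bigl(\ind_{B_{\sg^j\om,\xi_{j,\sg^j\om}}(p)}\cdot\ind_{B_{\sg^k\om,\xi_{k,\sg^k\om}}(p)}\circ T_{\sg^j\om}^{k-j}\bigr),
\]
and apply item \eqref{ITEM 6} with $f=\ind_{B_{\sg^k\om,\xi_{k,\sg^k\om}}(p)}$ and $h=\ind_{B_{\sg^j\om,\xi_{j,\sg^j\om}}(p)}$. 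Since indicators of intervals have $\var\le 2$ and $\nu_{\cdot,0}$-mass $\le 1$ (so that $\|h\|_{\cB_{\sg^j\om}}\le 3$ and $\|f\|_{\infty,\sg^k\om}\le 1$), this yields, for all sufficiently large $k-j$,
\[
\bigl|\mu_{\om,0}(A_{k,\om}\cap A_{j,\om})-\phi_{k,\om}\phi_{j,\om}\bigr|\le 3C\,\kp^{k-j},
\]
with $C,\kp$ independent of $\om$.

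To verify the Sprindzuk--Philipp variance hypothesis I would estimate, for $0\le M<N$,
\[
\mathrm{Var}_{\mu_{\om,0}}\Bigl(\sum_{k=M}^{N-1}\ind_{A_{k,\om}}\Bigr)=\sum_{k=M}^{N-1}\phi_{k,\om}(1-\phi_{k,\om})+2\sum_{M\le j<k\le N-1}\bigl(\mu_{\om,0}(A_{k,\om}\cap A_{j,\om})-\phi_{k,\om}\phi_{j,\om}\bigr),
\]
and split the cross-term sum according to a truncation parameter $L=L(N-M)$. For pairs with $k-j\le L$ I would use the trivial bound $\mu_{\om,0}(A_{k,\om}\cap A_{j,\om})\le\min(\phi_{k,\om},\phi_{j,\om})$, which contributes at most $(2L+1)E_{\om,M,N}$ where $E_{\om,M,N}=\sum_{k=M}^{N-1}\phi_{k,\om}$. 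For pairs with $k-j>L$ the decay estimate above contributes at most $3C(N-M)\kp^{L+1}/(1-\kp)$. Choosing $L=\lceil\alpha\log(N-M)\rceil$ with $\alpha>1/|\log\kp|$ makes the far-diagonal contribution bounded, leaving a variance bound of the form $\mathrm{Var}\le C'\log(N-M)\,E_{\om,M,N}+O(1)$, which is the form of hypothesis under which the quantitative Borel--Cantelli lemma of Sprindzuk--Philipp still delivers $\sum_k\ind_{A_{k,\om}}=E_{\om,n}(1+o(1))$ with error $O(E_{\om,n}^{1/2}\log^{3/2+\ep}E_{\om,n})$, $\mu_{\om,0}$-a.s.

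The main obstacle will be precisely this variance bound: the correlation decay rate $\kp^{k-j}$ is independent of $\phi_{k,\om},\phi_{j,\om}$ and so is not, per se, small compared with $\phi_{k,\om}\phi_{j,\om}$; the two-scale decomposition described above is what makes the argument work, and the resulting logarithmic correction forces invocation of the strong form of Sprindzuk's lemma (Philipp 1967) rather than its simplest incarnation. All constants being uniform in $\om$, the final quenched conclusion (for $m$-a.e.\ $\om$ and $\mu_{\om,0}$-a.e.\ $x$) follows by applying the lemma on each fibre in the full-$m$-measure set on which the hypothesis $E_{\om,n}\to\infty$ holds.
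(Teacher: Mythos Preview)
Your overall strategy---reduce to a quenched dynamical Borel--Cantelli lemma and verify the Sprindzuk variance hypothesis via decay of correlations---is exactly the paper's. The gap is in the correlation estimate you invoke. You use item~\eqref{ITEM 6} of Theorem~\ref{EXISTENCE THEOREM}, which is a $\BV\times L^\infty$ bound and yields
\[
\bigl|\mu_{\om,0}(A_{k,\om}\cap A_{j,\om})-\phi_{k,\om}\phi_{j,\om}\bigr|\le C\kappa^{k-j},
\]
with no factor of $\phi_{k,\om}$ or $\phi_{j,\om}$ on the right. This forces your two-scale splitting and leaves you with $\mathrm{Var}\lesssim \log(N-M)\cdot E_{\om,M,N}$, which is \emph{not} of the Sprindzuk form $C\sum_{M<k\le N} h_k$ for any fixed sequence $h_k$. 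Your appeal to a ``strong form of Sprindzuk's lemma (Philipp 1967)'' does not obviously rescue this: when $\phi_{k,\om}\asymp 1/k$ one has $E_{\om,0,N}\asymp\log N$ while $\log(N-M)\asymp\log N$, so your variance bound becomes $\asymp (E_{\om,0,N})^2$, which is too large for any quantitative Borel--Cantelli conclusion.

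The paper avoids this entirely by using the sharper $\BV\times L^1(\mu_{\om,0})$ decay of correlations (Theorem~2.21 of \cite{AFGTV20}; cf.\ Theorem~\ref{thm: dec of cor} in Chapter~\ref{part 1}), and in fact explicitly remarks that this improves on item~\eqref{ITEM 6}. That bound gives
\[
\bigl|\mu_{\om,0}(A_{k,\om}\cap A_{j,\om})-\phi_{k,\om}\phi_{j,\om}\bigr|\le CM\,\kappa^{k-j}\,\phi_{k,\om},
\]
with the crucial extra factor $\phi_{k,\om}=\int v_{k,\sg^k\om}\,d\mu_{\sg^k\om,0}$. Summing over $j<k$ then yields $S_\om\le M\bigl(\tfrac{C}{1-\kappa}+1\bigr)E_{\om,M,N}$ directly, with no logarithmic loss and no two-scale argument, and Sprindzuk applies in its standard form. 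The fix to your argument is therefore simply to swap the correlation input: use the $L^1$ bound available for the closed system rather than the $L^\infty$ bound from item~\eqref{ITEM 6}.
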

The proposition is a simple consequence of the following {\em Borel-Cantelli} like property, whenever we put the observable $v$ in the theorem below as $v_{k,\sigma^k\om}=\ind_{B_{\sg^k\om,\xi_{k,\sigma^k\om}}(p)}.$
\begin{theorem}\label{BC}
Suppose that our random cocycle $\mathcal{R}=(\Omega, m, \sigma, \mathcal{B}_{\om}, \tcL_{\om,0})$ is admissible. Take a sequence of nonnegative  observables $v_n,$  satisfying  $\sup_n\text{ess}\sup_{\om}||v_{n,\om}||_{\mathcal{B}_{\om}}\le M$.
Suppose that for $m$ a.e. $\om\in \Omega$
$$
E_{\om, n}:=\sum_{k=0}^{n-1} \int v_{k,\sigma^k\om}(x)d\mu_{\sigma^k_{\om},0}(x)\rightarrow \infty, \ n\rightarrow \infty.
$$
Then for $m$ a.e. $\om\in \Omega$
$$
\lim_{n\rightarrow\infty}\frac{1}{E_{\om, n}}\sum_{k=0}^{n-1}v_{k,\sigma^k\om}(T^k_{\om}x)=1,
$$
for $\mu_{\om,0}$-almost all $x$.
\end{theorem}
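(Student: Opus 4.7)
\textbf{Proof plan for Theorem \ref{BC}.}
The plan is to apply the Sprindzuk (Gál--Koksma) theorem fibrewise. For $m$-a.e.\ fixed $\om\in\Om$ and $k\ge 0$, set
\[
f_{k}(x) := v_{k,\sg^k\om}(T_\om^k x), \qquad \phi_k := \mu_{\sg^k\om,0}(v_{k,\sg^k\om}),
\]
so that $\int f_k\,d\mu_{\om,0}=\phi_k$ by $T$-invariance, $0\le f_k\le M$, and $E_{\om,n}=\sum_{k=0}^{n-1}\phi_k\to\infty$. Sprindzuk's theorem will give $\sum_{k<n}f_k=(1+o(1))E_{\om,n}$ $\mu_{\om,0}$-a.s.\ once we establish the second-moment bound
\begin{equation}\label{pptarget}
\int\!\Bigl(\sum_{k=m}^{n-1}(f_k-\phi_k)\Bigr)^{\!2}\!d\mu_{\om,0}\;\le\;C\sum_{k=m}^{n-1}\phi_k
\end{equation}
for some $C$ independent of $m<n$ (and of $\om$ in an $m$-conull set).

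The diagonal terms are immediate: since $0\le v_{k,\sg^k\om}\le M$,
\[
\int(f_k-\phi_k)^2\,d\mu_{\om,0}\le \int f_k^2\,d\mu_{\om,0}\le M\phi_k.
\]
For the off-diagonal terms with $m\le j<k\le n-1$, the key step is to rewrite the covariance using the spectral decomposition from Section \ref{NG approach} (equivalently, item \eqref{item 5} of Theorem \ref{EXISTENCE THEOREM}). Using $T$-invariance and $d\mu_{\sg^j\om,0}=\phi_{\sg^j\om,0}\,d\nu_{\sg^j\om,0}$, together with the duality $\nu_{\sg^k\om,0}(\cL_{\sg^j\om,0}^{k-j}u)=\lm^{k-j}_{\sg^j\om,0}\,\nu_{\sg^j\om,0}(u)$, one finds
\[
\mathrm{Cov}_{\om}(f_j,f_k)=\nu_{\sg^k\om,0}\!\Bigl(v_{k,\sg^k\om}\cdot Q_{\sg^j\om,0}^{\,k-j}\bigl(v_{j,\sg^j\om}\phi_{\sg^j\om,0}\bigr)\Bigr),
\]
where $Q_{\om,0}^n f=\tcL_{\om,0}^n f-\nu_{\om,0}(f)\,\phi_{\sg^n\om,0}$. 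The admissibility of the cocycle guarantees a uniform exponential bound $\|Q_{\om,0}^n f\|_{\infty,\sg^n\om}\le K\gamma^n\|f\|_{\cB_\om}$ for some $K<\infty$, $\gamma<1$ independent of $\om$, and the uniform bounds on $\phi_{\om,0}$ from \eqref{C5'} and \eqref{C7'} give $\|v_{j,\sg^j\om}\phi_{\sg^j\om,0}\|_{\cB_{\sg^j\om}}\le K' M$. Hence
\[
|\mathrm{Cov}_{\om}(f_j,f_k)|\;\le\;KK'M\,\gamma^{k-j}\,\nu_{\sg^k\om,0}(v_{k,\sg^k\om})\;\le\;C'\,\gamma^{k-j}\,\phi_k,
\]
where the last inequality uses the uniform lower bound $\phi_{\om,0}\ge C_3^{-1}$ to compare $\nu_{\sg^k\om,0}(v_{k,\sg^k\om})$ with $\mu_{\sg^k\om,0}(v_{k,\sg^k\om})=\phi_k$.

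Summing the off-diagonal estimate yields
\[
\sum_{m\le j<k\le n-1}|\mathrm{Cov}_\om(f_j,f_k)|\;\le\;C'\sum_{k=m}^{n-1}\phi_k\sum_{j<k}\gamma^{k-j}\;\le\;\frac{C'}{1-\gamma}\sum_{k=m}^{n-1}\phi_k,
\]
which, combined with the diagonal estimate, establishes \eqref{pptarget} with a uniform $C$. Sprindzuk's theorem (e.g.\ in the form given in \cite{SPR}) now yields $S_n(x)/E_{\om,n}\to 1$ for $\mu_{\om,0}$-a.e.\ $x$ on the $m$-conull set of $\om$ where the above constants and the hypothesis $E_{\om,n}\to\infty$ hold. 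The main technical obstacle is precisely the covariance estimate: one must make the bound linear in $\phi_k$ rather than merely $M^2$, and this is what forces the use of the operator splitting together with the $\om$-uniform exponential decay and uniform two-sided bounds on $\phi_{\om,0}$.
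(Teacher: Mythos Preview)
Your proof is correct and follows essentially the same route as the paper: reduce to Sprindzuk's theorem via a second-moment bound, control the diagonal by $M\phi_k$, and control the covariances by an exponentially decaying factor times $\phi_k$. The only cosmetic difference is that the paper cites a pre-established decay-of-correlations result (Theorem~2.21 of \cite{AFGTV20}) to obtain $|\mathrm{Cov}_\om(f_j,f_k)|\le CM\kappa^{k-j}\phi_k$ directly, whereas you re-derive this bound by writing the covariance as $\nu_{\sg^k\om,0}(v_{k,\sg^k\om}\cdot Q^{k-j}_{\sg^j\om,0}(v_{j,\sg^j\om}\phi_{\sg^j\om,0}))$ and invoking the uniform spectral gap and the two-sided bounds on $\phi_{\om,0}$; this is exactly how such a correlation bound is proven in the first place.
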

\begin{proof}
Let us write 
$$
S_{\om}:=\int \left(\sum_{m<k\le n}\left(v_{k,\sigma^k\om}(T^k_{\om}y)-\int v_{k,\sigma^k\om}d\mu_{\sigma^k_{\om},0}\right)\right)^2d\mu_{\om,0}(y).
$$
If we could prove that $S_{\om}\le \text{constant}\sum_{m<k\le n} \int v_{k,\sigma^k\om}d\mu_{\sigma^k\om,0},$
then the result will follow by applying Sprindzuk theorem \cite{Spri}\footnote{We recall here the Sprindzuk theorem. Let $(\Omega, \mathcal{B}, \mu_0)$ be a probability space and let $(f_k)_k$ be a sequence of nonnegative and measurable functions on $\Omega.$ Moreover, let $(g_k)_k$ and $(h_k)_k$ be bounded sequences of real numbers such that $0\le g_k\le h_k.$ Assume that there exists $C>0$ such that
$$
\int \left(\sum_{m<k\le n}(f_k(x)-g_k)\right)^2d\mu_0(x)\le C\sum_{m<k\le n}h_k
$$
for $m<n.$ Then, for every $\eps>0$ we have
$$
\sum_{1\le k\le n}f_k(x)=\sum_{1\le k\le n}g_k+O(\Theta^{1/2}(n)\log^{3/2+\eps}\Theta(n)),
$$
for $\mu$ a.e. $x\in \Omega,$ where $\Theta(n)=\sum_{1\le k\le n}h_k.$
}, where we identify $\int v_{k,\sigma^k\om}d\mu_{\sigma^k\om,0}$ with the functions $g_k$ and $h_k$ in the previous footnote. So we have
\begin{align*}
S_{\om}&=\int \left[\sum_{m<k\le n}v_{k,\sigma^k\om}(T^k_{\om}y)\right]^2d\mu_{\om,0}(y)-\left[\sum_{m<k\le n}\int v_{k,\sigma^k\om}d\mu_{\sigma^k_{\om},0}\right]^2
\\
&=
\sum_{m<k\le n}\int v_{k,\sigma^k\om}(T^k_{\om}y)^2d\mu_{\om,0}+2\sum_{m<i<j\le n}\int v_{i,\sigma^i\om}(T^i_{\om}y)v_{j,\sigma^j\om}(T^j_{\om}y)d\mu_{\om,0}
\\
&\qquad-
\sum_{m<k\le n}\left[\int v_{k,\sigma^k\om}d\mu_{\sigma^k\om,0}\right]^2-2\sum_{m<i<j\le n}\int v_{i,\sigma^i\om}d\mu_{\sigma^i\om,0}\int v_{j,\sigma^j\om}d\mu_{\sigma^j\om,0}.
\end{align*}
We now discard the third negative piece and bound the first by H\"older inequality as
$$
\sum_{m<k\le n}\int v_{k,\sigma^k\om}(T^k_{\om}y)^2d\mu_{\om,0}\le \sum_{m<k\le n}M \int v_{k,\sigma^k\om}d\mu_{\sigma^k\om,0}.
$$
Then for the remaining two pieces we use the decay of correlations given in Theorem 2.21 of our paper \cite{AFGTV20}, where the observables are taken in $\mathcal{B}_{\om}$ and in $L^1(\mu_{\om,0}).$  We can apply it to our case since the two observables coincide with $v_{\om}$, and the measures $\mu_{\om,0}$ and $\nu_{\om,0}$ are equivalent. We point out that this result improves the decay bound given in item \eqref{ITEM 6} of Theorem \ref{EXISTENCE THEOREM} where the presence of holes for $\epsilon>0$ forced us to use $L^{\infty}(\nu_{\om,\eps})$ instead of $L^1(\nu_{\om,\eps}).$ 
Hence:
$$
\left|\int v_{i,\sigma^i\om}(T^i_{\om}y)v_{j,\sigma^j\om}(T^j_{\om}y)d\mu_{\om,0}-\int v_{i,\sigma^i\om}d\mu_{\sigma^i\om,0}\int v_{j,\sigma^j\om}d\mu_{\sigma^j\om,0}\right|\le
CM \kappa^{j-i}\int v_{j,\sigma^j\om}d\mu_{\sigma^j\om,0}.
$$
In conclusion, we get
\begin{align*}
S_{\om}&\le \sum_{m<k\le n}M \int v_{k,\sigma^k\om}d\mu_{\sigma^k\om,0}+C M\sum_{m<i<j\le n}\kappa^{j-i}\int v_{j,\sigma^j\om}d\mu_{\sigma^j\om,0}
\\
&\le
M\left(\frac{C}{1-\kappa}+1\right) \sum_{m<k\le n} \int v_{k,\sigma^k\om}d\mu_{\sigma^k\om,0},
\end{align*}
which satisfies Sprindzuk.
\end{proof}

\section{Examples}
\label{sec:examples}
In this section we present explicit examples of random dynamical systems to illustrate the quenched extremal index formula in Theorem \ref{evtthm}.
In all cases, 
Theorem \ref{EXISTENCE THEOREM} can be applied to guarantee the existence of all objects therein for the perturbed random open system (in brief, the thermodynamic formalism for the closed random system implies a thermodynamic formalism for the perturbed random open system).
In Examples \ref{example1}--\ref{example3} we derive explicit expressions for the quenched extreme value laws. 
Our examples are piecewise-monotonic interval map cocycles 
whose transfer operators act on $\mathcal{B}_\omega=\BV:=\BV([0,1])$ for a.e.\ $\omega$.
The norm we will use is $\|\cdot\|_{\mathcal{B}_\omega}=\|\cdot\|_{\BV_{\nu_{\om,0}}}:=\var(\cdot)+\nu_{\omega,0}(|\cdot|)$.
In Section \ref{sec: existence} we noted that (\ref{B}) is automatically satisfied.
Lemma \ref{DFGTV18Alemma} shows that (\ref{CCM}) holds under assumptions (\ref{E2}), (\ref{E3}), (\ref{E4}), and (\ref{E8}).
To obtain a random open system, in addition to (\ref{B}) and (\ref{CCM}) we need (\ref{M}) (implying \eqref{M1} and  (\ref{M2})), (\ref{A}), and (\ref{cond X}).
Therefore, in each example we verify conditions (\ref{A}), (\ref{cond X}) (Section~\ref{sec: open systems})  and (\ref{M}) and (\ref{E1})--(\ref{E9}) (Section~\ref{sec: existence});  and where required (\ref{xibound}). 
\begin{example}\label{example1}
\textbf{Random maps and random holes centered on a non-random fixed point (random observations with non-random maximum)}

We show that nontrivial quenched extremal indices occur even for holes centered on a non-random fixed point $x_0$.
To define a random (open) map, let $(\Om, m)$ be a complete probability space,
and $\sg:(\Om, m)\to (\Om, m)$ be an ergodic, invertible, probability-preserving transformation. For example, one could consider an irrational rotation on the circle. 
Let $\Om=\cup_{j=1}^\infty \Om_j$ be a countable partition of $\Om$ into measurable sets on which $\omega\mapsto T_\omega$ is constant. 
This ensures that \eqref{M} is satisfied.
For each $\om \in \Om$ let $\cJ_{\om,0} = [0,1]$ and let
$T_\omega:\mathcal{J}_{\omega,0}\to \mathcal{J}_{\sigma\omega,0}$ be random, with all maps fixing $x_0\in [0,1]$.
The observation functions $h_\omega:\mathcal{J}_{\omega,0}\to\mathbb{R}$ have a unique maximum at $x_0$ for a.e.\ $\omega$.

To make all of the objects and calculations as simple as possible and illustrate some of the underlying mechanisms for nontrivial extremal indices we use the following specific family of maps $\{T_\omega\}$:
\begin{equation}
\label{eg1}
T_\omega(x)=\left\{
	\begin{array}{ll}
	1-2x/(1-1/s_\omega), & \hbox{$0\le x\le (1-1/s_\omega)/2$;} \\
	s_\omega x-(s_\omega-1)/2, & \hbox{$(1-1/s_\omega)/2\le x\le (1+1/s_\omega)/2$;} \\ 1-(2x-(1+1/s_\omega))/(1-1/s_\omega)& \hbox{$(1+1/s_\omega)/2\le x\le 1$,}
	\end{array}
	\right.
	\end{equation}
	where $1<s\le s_\omega\le S<\infty$ and $s\rvert_{\Om_j}$ is constant for each $j\geq 1$;  thus (\ref{M1}) holds. 
	These maps have three full linear branches, and therefore Lebesgue measure is preserved by all maps $T_\omega$;  i.e.\ $\mu_{\omega,0}=\mathrm{Leb}$ for a.e. $\omega$.
	The central branch has slope $s_\omega$ and passes through the fixed point $x_0=1/2$, which lies at the center of the central branch;  see Figure \ref{fig:map}.
	\begin{figure}[hbt]
	\centering
	\includegraphics[width=6cm]{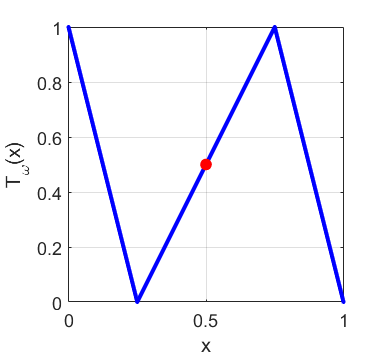}
	\caption{Graph of map $T_\omega$, with $s_\omega=2$.}\label{fig:map}
	\end{figure}
	A specific random driving could be $\sigma:S^1\to S^1$ given by $\sigma(\omega)=\omega+\alpha$ for some $\alpha\notin\mathbb{Q}$ and $s_\omega=s^0+s^1\cdot\omega$ for $1< s^0<\infty$ and $0<s^1<\infty$, but only the ergodicity of $\sigma$ will be important for us.
	Setting $g_{\omega,0}=1/|T'_\omega|$ it is immediate that (\ref{E1}), (\ref{E2}), (\ref{E3}), and (\ref{E4}) hold. 
	
	We select a measurable family of  observation functions $h:\Omega\times [0,1]\to\mathbb{R}$.
	For a.e.\ $\omega\in\Omega$, $h_\omega$ is $C^1$, has a unique maximum at $x_0=1/2$,  and is a locally even function about $x_0$.
	For small $\epsilon_N$ we will then have that $H_{\omega,\epsilon_N}=\{x\in [0,1]:h_\omega(x)>z_{\omega,N}\}$ is a small neighbourhood of $x_0$, satisfying (\ref{E5}) and (\ref{A}). 
	In light of Remark~\ref{rem check cond X} and Proposition \ref{prop check cond X} we see that \eqref{cond X} holds.
	The corresponding cocycle of open operators $\mathcal{L}_{\om,\ep}$ satisfy (\ref{M}).
	Given an essentially bounded function $t$, the $z_{\omega,N}$ are chosen to satisfy (\ref{xibound}). 
	Because the $h_\omega$ are $C^1$ with unique maxima and $\mu_{\omega,0}$ is Lebesgue for all $\omega$ we can make such a choice of $z_{\omega,N}$ with\footnote{In the situation where $z_{\omega,N}$ is constant a.e.\ for each $N\ge 1$, in general we cannot satisfy (\ref{xibound}) for a given fixed scaling function $t$.  In the case where $t$ is also constant a.e.\ if $h_\omega$ is random but sufficiently close to a non-random observation $h$ for each $\omega$, we can satisfy (\ref{xibound}) by incorporating the error term $\xi_{\omega,N}$. This situation corresponds to a decreasing family of holes whose length fluctuates slightly in $\omega$ with the fluctuation decreasing to zero sufficiently fast.  A similar situation can occur if the scaling function is a general measurable function.  These situations motivate the use of the error term $\xi_{\omega,N}$ in condition (\ref{xibound}).} $\xi_{\omega,N}\equiv 0$. 
	Furthermore, as $\mu_{\om,0}$ is Lebesgue we have that \eqref{xibound} implies that assumption \eqref{E6} is automatically satisfied.
	With the above choices,  (\ref{E7}) is clearly satisfied for $n'=1$.
	To show condition (\ref{E8}) with $n'=1$ we note that 
	$\|g_{\omega,0}\|_\infty=(1-1/s_\omega)/2<(1-1/S)/2$, while
	$\inf\mathcal{L}_{\omega,\ep}\ind\ge 2\cdot (1-1/s_\omega)/2\ge 1-1/s$.
	Therefore we require $(1-1/S)/2<1-1/s$, which by elementary rearrangement is always true for $0<s<S<\infty$.
	Because there is a full branch outside the branch with the hole, (\ref{E9}) is satisfied with $n'=1$ and $k_o(n')=1$.
	
	At this point we have checked all of the hypotheses of Theorem \ref{EXISTENCE THEOREM} and we obtain that for sufficiently small holes as defined above, the corresponding random open dynamical system has a quenched thermodynamic formalism and quenched decay of correlations. 
	We note that this result does not require (\ref{xibound});  the holes $H_{\epsilon_N}$ do not need to scale in any particular way with $N$, they simply need to be sufficiently small.
	In order to next apply Theorem \ref{evtthm} 
	we do require (\ref{xibound}) and additionally \eqref{C8}.

	To finish the example, we verify (\ref{C8}).
	We claim that for each fixed $k>0$, one has $\hat{q}_{\omega,\epsilon_N}^{(k)}=0$ for sufficiently sufficiently large $N$ (sufficiently small $\epsilon_N$).
	If this were not the case, there must be a positive $\mu_{\omega,0}$-measure set of points that (i) lie in $H_{\sigma^{-k}\omega,\epsilon_N}$, (ii)  land outside the sequence of holes $H_{\sigma^{-(k-1)}\omega,\epsilon_N},\ldots,H_{\sigma^{-1}\omega,\epsilon_N}$ for the next $k-1$ iterations, and then (iii) land in $H_{\omega,\epsilon_N}$ on the $k^{\mathrm{th}}$ iteration.
	In this example, because all $H_{\omega,\epsilon_N}$ are neighbourhoods of $x_0$ of diameter smaller than $(|t|_\infty+\gm_N)/N$, and the maps are locally expanding about $x_0$, for fixed $k$ one can find a large enough $N$ so that it is impossible to leave a small neighbourhood of $x_0$ and return after $k$ iterations.
	For $k=0$, by definition \begin{equation}
	\label{q0}
	\hat{q}_{\omega,\epsilon_N}^{(0)}=\frac{\mu_{\sigma^{-1}\omega,0}(T^{-1}_{\sigma^{-1}\omega} H_{\omega,\epsilon_N}\cap H_{\sigma^{-1}\omega,\epsilon_N})}{\mu_{\sigma^{-1}\omega,0}(T^{-1}_{\sigma^{-1}\omega}H_{\omega,\epsilon_N})}=\frac{\mu_{\sigma^{-1}\omega,0}(T^{-1}_{\sigma^{-1}\omega} H_{\omega,\epsilon_N}\cap H_{\sigma^{-1}\omega,\epsilon_N})}{\mu_{\omega,0}(H_{\omega,\epsilon_N})}.
	\end{equation}
	There are two cases to consider:
	
	\textit{Case 1:} $H_{\sigma^{-1}\omega,\epsilon_N}$ is larger than the local preimage of $H_{\omega,\epsilon_N}$;  that is, 
    $T^{-1}_{\sigma^{-1}\omega} H_{\omega,\epsilon_N}\cap H_{\sigma^{-1}\omega,\epsilon_N}\subset  H_{\sigma^{-1}\omega,\epsilon_N}$. 
    Because of the linearity of the branch containing $x_0$, one has $\hat{q}_{\omega,\epsilon_N}^{(0)}=1/T'_{\sigma^{-1}\omega}(x_0)$.
	
	\textit{Case 2:} $H_{\sigma^{-1}\omega,\epsilon_N}$ is smaller than the local preimage of $H_{\omega,\epsilon_N}$;  that is, 
    $T^{-1}_{\sigma^{-1}\omega} H_{\omega,\epsilon_N}\cap H_{\sigma^{-1}\omega,\epsilon_N}=  H_{\sigma^{-1}\omega,\epsilon_N}$.
	By (\ref{q0}) and (\ref{xibound}) we then have $$\frac{t_{\sigma^{-1}\omega}-\gm_N}{t_\omega+\gm_N}\le \hat{q}_{\omega,\epsilon_N}^{(0)}\le \frac{t_{\sigma^{-1}\omega}+\gm_N}{t_\omega-\gm_N}$$ and thus for such an $\omega$, $\lim_{N\to\infty}\hat{q}_{\omega,\epsilon_N}^{(0)}=t_{\sigma^{-1}\omega}/t_{\omega}$.
	Thus, combining the two cases,
	$$\hat{q}^{(0)}_{\omega,0}:=\lim_{N\to\infty}\hat{q}^{(0)}_{\omega,\epsilon_N}=\min\left\{\frac{t_{\sigma^{-1}\omega}}{t_\omega},\frac{1}{|T'_{\sigma^{-1}\omega}(x_0)|}\right\}$$ exists for a.e.\ $\omega$, verifying $\eqref{C8}$.
	
	Recalling that $\theta_{\omega,0}=1-\sum_{k=0}^\infty \hat{q}^{(k)}_{\omega,0}=1-\hat{q}^{(0)}_{\omega,0}$, we may now apply Theorem \ref{evtthm} to obtain the quenched extreme value law:
	\begin{eqnarray}
	\nonumber
	\lim_{N\to\infty}\nu_{\om,0}\left(X_{\om,N-1,\ep_N}\right)
	&	=&
	\exp\left(-\int_\Om t_\om\ta_{\om,0}\, dm(\om)\right)\\
	\label{evtlaweg1}&=&\exp\left(-\int_\Om t_\omega\left(1- \min\left\{\frac{t_{\sigma^{-1}\omega}}{t_\omega},\frac{1}{|T'_{\sigma^{-1}\omega}(x_0)|}\right\}\right)\, dm(\om)\right)
	\end{eqnarray}
	
	This formula is a generalization of the formula in Remark 8 \cite{keller_rare_2012}, where we may create nontrivial laws from either the random dynamics $T_\omega$, or the random scalings $t_\omega$, or both.
	The following two special cases consider these mechanisms separately.
	\begin{enumerate}
	\item \textit{Random maps, non-random scaling ($t_\omega$ takes a constant value $t>0$):}  Since $|T_\omega'|>1$, in this case (\ref{evtlaweg1}) becomes
	\begin{equation}
	\label{fixedscale}
	\lim_{N\to\infty}\nu_{\om,0}\left(X_{\om,N-1,\ep_N}\right)=\exp\left(-t\left[1-\int_\Omega \frac{1}{|T_\omega'(x_0)|}\ dm(\omega)\right]\right),
	\end{equation}
	and we see that we can interpret $\theta=1-\int_\Omega \frac{1}{|T_\omega'(x_0)|}\ dm(\omega)$ as an extremal index.
	
	\item \textit{Fixed map, random scaling:}
	Suppose $T_\omega\equiv T$;  then we may replace $T_\omega'(x_0)$ with $T'(x_0)$ in (\ref{evtlaweg1}),  
	and we see we the extremal index  depends on the choice of random scalings $t_\omega$ alone;  of course the thresholds $z_{\omega,N}$ depend on the chosen $t_\omega$.
	\end{enumerate}

	Similar results could be obtained with the $T_\omega$ possessing nonlinear branches.
	The arguments above can also be extended to the case where $x_0$ is a periodic point of prime period $p$ for all maps;   the formula (\ref{evtlaweg1}) now includes $(T_\omega^p)'(x_0)$ and $t_\omega,   t_{\sigma^{1}\omega},\ldots,t_{\sigma^{-(p-1)}\omega}$.
	If the scaling $t_\omega=t$ is non-random, one would simply replace $T'_\omega(x_0)$ in (\ref{fixedscale}) with $(T_\omega^p)'(x_0)$.
	
	We recall that for deterministic $T$ (including non-uniformly hyperbolic maps), 
	the extremal index enjoys a dichotomy, in the sense that it is equal to $1$ when a single hole $H_{\ep_N}$ shrinks to an aperiodic point, and it is strictly smaller than $1$ when the hole shrinks to a periodic point.
	In the latter case the extremal index can be expressed in terms of the period; see \cite{book} for a general account of the above facts. 
	Example \ref{example1} shows that in a simple random setting there are many more ways to obtain nontrivial exponential limit laws, e.g.\ by random scaling, or by the existence of periodic orbits for only a positive measure set of $\omega$.
	\end{example} 
	\begin{example}\label{example2}
	\textbf{Random $\beta$-maps, random holes containing a non-random fixed point (random observations with non-random maximum), general geometric potential}

	We show how a nontrivial extremal index can arise from random $\beta$-maps where statistics are generated by an equilibrium state of a general geometric potential.
	Consider the ``no short branches'' random $\beta$-map example of Section 13.2 \cite{AFGTV20}, where $\beta_\omega\in \{2\}\cup\bigcup_{2\le k\le K}[k+\delta,k+1]$ for a.e.\ $\omega$, and some $\delta>0$ and $K<\infty$.
	The measurability of $\omega\mapsto\beta_\omega$ yields (\ref{M1}).
	We use the weight $g_\omega=1/|T'_\omega|^r$, $r\geq 0$.
	To obtain (\ref{M}) the base dynamics is driven by an ergodic homeomorphism $\sigma$ on a Borel subset $\Omega$ of a separable, complete metric space, assuming that $\omega\mapsto \mathcal{
	L}_{\omega,0}$ has countable range, as in Example~\ref{example1}.
	Our random observation function $h:\Omega\times [0,1]\to \mathbb{R}$ is measurable and for a.e.\ $\omega$, $h_\omega$ is $C^1$ with a unique maximum at $x=0$.
	We select a measurable scaling function $t_\omega$;  either or both of $h_\omega$ and $t_\omega$ could be $\omega$-independent.
	By assigning thresholds $z_{\omega,N}$ (which could also be $\omega$-independent) we obtain a decreasing family of holes $H_{\om,\ep_N}$, which are decreasing intervals with left endpoint at $x=0$.
	
	Clearly (\ref{E2}) and (\ref{E3}) hold and Lemma 13.5 \cite{AFGTV20} provides (\ref{E4}).
	Conditions (\ref{E5}) and (\ref{E6}), and (\ref{A}) are immediate, as is (\ref{E7}) with $n'=1$.	
	Condition (\ref{EX}) holds because there is at least one full branch outside the branch with the hole.
	Regarding (\ref{E8}), 
	$\|g_{\omega,\epsilon}\|_\infty\le \beta_\omega^{-r}$ and arguing as in the previous example, 
	since $\inf\mathcal{L}_{\omega,\epsilon}\ind\ge \lfloor\beta_\omega\rfloor/\beta_\omega^r$, we see (\ref{E8}) holds with $n'=1$ because $\beta_\omega\ge 2$.
	Because there is at least one full branch outside the branch with the hole, (\ref{E9}) holds with $n'=1$ and $k_o(n')=1$.
	We have now checked all of the hypotheses of Theorem \ref{EXISTENCE THEOREM}.
	By this theorem, for sufficiently small holes $H_\epsilon$, the corresponding random open dynamical system has a quenched thermodynamic formalism and quenched decay of correlations. 
	As in the previous example, we emphasise that this result does not require (\ref{xibound}).
	
	Because $\phi_{\omega,0}$ is uniformly bounded above and below, as long as $z_{\omega,N}$ is random, we may adjust $z_{\omega,N}$ to obtain (\ref{xibound}).
	We now verify (\ref{C8}) for our holes, which are of the form $H_{\omega,\epsilon_N}=[0,r_{\omega,N}]$, 
	in preparation to apply Theorem \ref{evtthm}. 
	The same arguments from Case 1 and Case 2 of the previous example apply.
	Case 2 is unchanged.
	In Case 1 we have
	$$\hat{q}^{(0)}_{\omega,\epsilon_N}=\frac{\mu_{\sigma^{-1}\omega,0}(T^{-1}_{\sigma^{-1}\omega} H_{\omega,\epsilon_N}\cap H_{\sigma^{-1}\omega,\epsilon_N})}{\mu_{\sigma^{-1}\omega,0}(T^{-1}_{\sigma^{-1}\omega}H_{\omega,\epsilon_N})}.$$
	Because $T^{-1}_{\sigma^{-1}\omega} H_{\omega,\epsilon_N}$ is a finite union of left-closed intervals, using the fact that $\phi_{\sigma^{-1}\omega,0}\in \BV$, and therefore has left- and right-hand limits everywhere, we may redefine $\phi_{\sigma^{-1}\omega,0}(y)$ at the finite collection of points $y\in T^{-1}_{\sigma^{-1}\omega}(0)$ so that 
	$$\lim_{N\to\infty}\frac{\mu_{\sigma^{-1}\omega,0}(T^{-1}_{\sigma^{-1}\omega} H_{\omega,\epsilon_N}\cap H_{\sigma^{-1}\omega,\epsilon_N})}{\mu_{\sigma^{-1}\omega,0}(T^{-1}_{\sigma^{-1}\omega}H_{\omega,\epsilon_N})}=\frac{\phi_{\sigma^{-1}\omega,0}(0)}{\sum_{y\in T^{-1}_{\sigma^{-1}\omega}(0)} \phi_{\sigma^{-1}\omega,0}(y)},$$
	recalling that in Case 1, 
	$T^{-1}_{\sigma^{-1}\omega} H_{\omega,\epsilon_N}\subset H_{\sigma^{-1}\omega,\epsilon_N}$.
	Taking the minimum of Cases 1 and 2 as in Example \ref{example1}, we see that
	$\hat{q}_{\omega,0}^{(0)}=\lim_{N\to\infty} \hat{q}_{\omega,\epsilon_N}^{(0)}$ exists, which verifies \eqref{C8}.
	Finally,
	\begin{equation}
	\label{evtlaweg2}
	\lim_{N\to\infty}\nu_{\om,0}\left(X_{\om,N-1,\ep_N}\right)
	=\exp\left(-\int_\Om t_\omega\left(1- \min\left\{\frac{t_{\sigma^{-1}\omega}}{t_\omega},\frac{\phi_{\sigma^{-1}\omega,0}(0)}{\sum_{y\in T^{-1}_{\sigma^{-1}\omega}(0)} \phi_{\sigma^{-1}\omega,0}(y)}\right\}\right)\, dm(\om)\right)
\end{equation}
\end{example}

\begin{example}\label{example3}
\textbf{A fixed map with random holes containing a fixed point (random observations with a non-random maximum)}

Let us now consider more closely the case of a fixed map and holes moving randomly around a fixed point $z$, a situation previously considered in \cite{BahsounVaienti13} in the annealed framework. 

We take a piecewise uniformly expanding map $T$ of the unit interval $I$ of class $C^2$, and such that $T$ is surjective on the domains of local injectivity. Moreover $T$  preserves a mixing measure $\mu$ equivalent to the Lebesgue measure $\text{Leb},$ with strictly positive density $\rho$.  
We moreover assume that $T$ and $\rho$ are continuous at a fixed point $x_0$. 
The observation functions $h_\omega$ have a common maximum at $x_0$, leading to holes $H_{\omega,\epsilon_N}$ that are closed intervals  containing the point $x_0$ for any $N$.
In Example \ref{example1} the holes were centered on $x_0$ but could vary dramatically in diameter between $\omega$-fibers.
In this example the holes need not be centered on $x_0$ but must become more identical as they shrink.
Specifically, we assume
\begin{equation}\label{rf}
\sup_k\frac{\text{Leb}(H_{\omega,\epsilon_N}\Delta H_{\sigma^{-k}\omega,\epsilon_N})}{\text{Leb}(H_{\omega,\epsilon_N})}\rightarrow 0, \ N\rightarrow \infty,
\end{equation} 
where the use of Lebesgue is for simplicity.

Since the sample measures $\mu_{\omega,0}$ coincide with $\mu,$ 
we may easily verify  condition (\ref{xibound}) by choosing conveniently the size of the hole $H_{\omega,\epsilon_N}$.  
Moreover, by choosing the holes to be contained entirely within exactly one interval of monotonicity, we see that \eqref{EX} holds, and thus \eqref{cond X} holds via Remark \ref{rem check cond X} and Proposition \ref{prop check cond X}. 
Since the weights $g_\omega$ are nonrandom and equal to $1/|T'|,$ conditions (\ref{E1}) to (\ref{E7}) clearly hold. 
Since $T$ is continuous in $x_0,$, the holes will belong to one branch of $T.$  Therefore if $D(T)$ will denote the number of branches of $T$ and $\lambda_m:=\min_{I} |T'|,$ $\lambda_M:=\max_{I} |T'|,$ it will be enough to have $D(T)-1>\frac{\lambda_M}{\lambda_m}$ in order to satisfy (\ref{E9}) with $n'=1.$ Moreover, still keeping $n'=1$ and since we have finitely many branches, condition (\ref{E9a}) in Remark \ref{Alt E9 Remark} is satisfied with $k_o(n')=1$.
We may now apply Theorem \ref{EXISTENCE THEOREM} to obtain a quenched thermodynamic formalism for our fixed map with sufficiently small random holes.

Whenever the point $x_0$ is not periodic, one obtains that all the $\hat{q}^{(k)}_{\om,\epsilon_N}$ are zero by repeating the argument given in Example \ref{example1} for $k\ge 0$ and using the continuity of $T$ at $x_0$. 
We now take $x_0$ as a periodic point of minimal period $p$ and we assume that $T^p$ and $\rho$ are continuous at $x_0$.
We now begin to evaluate 
\begin{equation}\label{fed}
\hat{q}_{\omega,\epsilon_N}^{(p-1)}=\frac{\mu(T^{-p} H_{\omega,\epsilon_N}\cap H_{\sigma^{-p}\omega,\epsilon_N})}{\mu(H_{\omega,\epsilon_N})}. 
\end{equation}
Since $T^p$ is continuous and expanding in the neighborhood of  $x_0$, by taking $N$ large enough, the set $T^{-p} H_{\omega,\epsilon_N}\cap H_{\sigma^{-p}\omega,\epsilon_N}$ 
has only one connected component.
Denote the local branch of $T$ through $x_0$ by $T_{x_0}$.
Therefore by a local change of variable  we have for the upper bound of the numerator 
\begin{eqnarray}
\label{eg3eq1}
\mu(T^{-p} H_{\omega,\epsilon_N}\cap H_{\sigma^{-p}\omega,\epsilon_N})&=& \int_{T^p(T^{-p} H_{\omega,\epsilon_N}\cap H_{\sigma^{-p}\omega,\epsilon_N})}\rho(T^{-p}_{x_0}y)|DT^p(T^{-p}_{x_0}y)|^{-1}d\text{Leb}(y)\\
\nonumber &\le& \sup_{H_{\omega,\epsilon_N}}|DT^p|^{-1}\sup_{H_{\omega,\epsilon_N}}\rho\ \text{Leb}(H_{\omega,\epsilon_N}).
\end{eqnarray}
For the lower bound of the numerator, since $T^p$ is locally expanding, $T^p(H_{\sigma^{-p}\omega,\epsilon_N})\supset H_{\sigma^{-p}\omega,\epsilon_N}$, and by (\ref{eg3eq1})
\begin{align*}
&\mu(T^{-p} H_{\omega,\epsilon_N}\cap H_{\sigma^{-p}\omega,\epsilon_N})
\ge\int_{H_{\omega,\epsilon_N}\cap H_{\sigma^{-p}\omega,\epsilon_N}}\rho(T^{-p}_{x_0}y)|DT^p(T^{-p}_{x_0}y))|^{-1}d\text{Leb}(y)
\\
& \quad
\ge\inf_{H_{\omega,\epsilon_N}}|DT^p|^{-1}\inf_{H_{\omega,\epsilon_N}}\rho\ \text{Leb}(H_{\omega,\epsilon_N})-\int_{H_{\omega,\epsilon_N}\setminus H_{\sigma^{-p}\omega,\epsilon_N}}\rho(T^{-p}_{x_0}y)|DT^p(T^{-p}_{x_0}y)|^{-1}d\text{Leb}(y)    
\end{align*}

We can bound the second negative term as
$$
\int_{H_{\omega,\epsilon_N}\setminus H_{\sigma^{-p}\omega,\epsilon_N}}\rho(T^{-p}_{x_0}y)|DT^p(T^{-p}_{x_0}y)|^{-1}(y)d\text{Leb}(y)\le \sup_{H_{\omega,\epsilon_N}}\rho \  \sup_{H_{\omega,\epsilon_N}}|DT^p|^{-1}\ \text{Leb}(H_{\omega,\epsilon_N}\Delta H_{\sigma^{-p}\omega,\epsilon_N}).
$$

We are now in a position to verify the existence of the limit $\lim_{\epsilon_N\to 0} \hat{q}_{\omega,\epsilon_N}^{(p-1)}$.
Using the above bounds we have 
$$ \frac{\inf_{H_{\omega,\epsilon_N}}|DT^p|^{-1}\inf_{H_{\omega,\epsilon_N}}\rho\ \text{Leb}(H_{\omega,\epsilon_N})}{\sup_{H_{\omega,\epsilon_N}}\rho\text{Leb}(H_{\omega,\epsilon_N})}-\frac{\  \sup_{H_{\omega,\epsilon_N}}|DT^p|^{-1}\ \text{Leb}(H_{\omega,\epsilon_N}\Delta H_{\sigma^{-p}\omega,\epsilon_N})}{\text{Leb}(H_{\omega,\epsilon_N})}
$$
$$
\le \hat{q}_{\omega,\epsilon_N}^{(p-1)}\le \frac{\sup_{H_{\omega,\epsilon_N}}|DT^p|^{-1}\sup_{H_{\omega,\epsilon_N}}\rho\ \text{Leb}(H_{\omega,\epsilon_N})}{\inf_{H_{\omega,\epsilon_N}}\rho\text{Leb}(H_{\omega,\epsilon_N})}.$$

Since the map $T^p$ and the density $\rho$  are continuous at ${x_0},$  and by using the assumption (\ref{rf}), we will finally obtain the $\omega$-independent value
$$
\theta_{\omega, 0}=1-\frac{1}{|DT^p({x_0})|}.
$$
Theorem \ref{evtthm}  may now be applied to obtain the  quenched extreme value law. 
\end{example}

\begin{example}\label{example4}
\textbf{Random maps  with random holes}

We saw in Examples \ref{example1} and \ref{example3} that an extremal index less than one could be obtained for observables reaching their maximum around a point which was periodic for all the random maps, or, for a fixed map, when the holes shrink around the periodic point.
We now produce an example where periodicity is not responsible for getting an extremal index less than one. 
This example is the quenched version of the annealed cases investigated  in sections 4.1.2 and  4.2.1 of the paper \cite{Caby_et_al_2020}.

Let $\Omega=\{0,\dots,l-1\}^{\mathbb{Z}},$ with $\sigma$ the bilateral shift map, and $m$ an invariant ergodic measure.
To each letter $j=0,\dots,l-1$ we associate a point $v(j)$ in the unit circle  $S^1$ and we consider the well-known observable in the extreme value theory literature:
$$
h_{\omega}(x)=-\log|x-v(\omega_0)|, \ x\in S
$$
where $\omega_0$ denotes the $0$-th  coordinate of $\omega\in \Omega.$

In this case the hole $H_{\omega, \epsilon_N}$ will be the ball $B(v(\omega_0), e^{-z_N(\om)}),$ of center $v(\omega_0)$ and radius $e^{-z_N(\om)};$.
For each $\omega\in \Omega$ we associate a map $T_{\omega_0},$ where $T_0,\dots,T_{l-1}$ are maps  of the circle which we will take as $\beta$-maps of the form\footnote{The reason for this choice is that,  in order to compute the quantities  $\hat{q}_{\omega,0}^{(k)}$, we have to follow the itinerary of the points $v(i)$ under the composition of the maps $T_i$ and compare with their predecessors.  
	As it will be clear in the computation below for $k=0$ and $1,$ 
	the task will be relatively easy
	and generalizable to any $k > 0$, if all the maps are at least $C^1$ which in particular means that the image of the point $T_i(v_j), i,j=0,\dots,l-1$ is not a discontinuous point of the $T_i, i=0,\dots, l-1,$. 
	It will also be true that all the maps $T_i, 0=1,\dots, l-1$ are differentiable with bounded derivative in small neighbourhoods of any $v_i, i=0,\dots, l-1$.
	If these conditions are relaxed, it could be that the  limit defining the  $\hat{q}_{\omega,0}^{(k)}$ for some $k$ is not immediately computable, we defer to section 3.3 and Proposition 3.4 in \cite{AFV15} for a detailed discussion of the computation of the extremal index in presence of discontinuities. Another advantage of our choice is that, as in Example \ref{example1}, all the sample measure $\mu_{\om,0}$ are equivalent to Lebesgue, $\text{Leb}.$} $T_i(x)=\beta_ix+r_i\pmod 1$, with $\beta_i\in \mathbb{N}$, $\bt_i\geq 3$, and $0\le r_i<1$. Since the range of $\om\mapsto \mathcal{L}_{\omega,0}$ is finite and the shift is a homeomorphism on $\Omega$ with respect to the usual metrics for $\Omega$,  assumption (\ref{M}) is verified.
Since the potential is equal to $1/|T_\om'|,$ conditions (\ref{E1}) to (\ref{E7}) clearly hold.
Condition (\ref{E8}) holds as in Example \ref{example1}, which uses similar piecewise linear expanding maps. Condition (\ref{E9}) is a consequence of  the fact that we have finitely many maps each of which is full branches; it will be therefore enough to invoke (\ref{E9a}) with $n'=k_o(n')=1.$
As we have chosen $\bt_i\geq 3$ we have that \eqref{EX} holds and thus \eqref{cond X} follows via Proposition \ref{prop check cond X}.

At this point we may apply Theorem \ref{EXISTENCE THEOREM} to obtain a quenched thermodynamic formalism for sufficiently small holes.

As a more concrete example, we will consider an alphabet of four letters $\mathcal{A}:=\{0,1,2,3\}$  and we set the associations
$$
i\rightarrow v_i:=v(i), \  i=0,1,2,3,
$$
where the points $v_i\in (0,1)$, are chosen on the unit interval  according to the following prescriptions:
\begin{equation}\label{grammar}
T_1(v_1)=T_2(v_2)=v_0; \ T_0(v_0)=v_3; \ T_3(v_i)\neq v_3, i=0,1,2,3.
\end{equation}
Since the sample measures coincide with the Lebesgue measure, condition (\ref{xibound}) reduces to
$2e^{-z_N(\om)}=\frac{t_{\om}+\xi_{\om, N}}{N}$ and we can solve for $z_N(\om)$ by setting $\xi_{\om,N}\equiv0$. 
The prescription (\ref{grammar}) clearly avoids any sort of periodicity when we take the first iteration of the random maps needed to compute $\hat{q}_{\omega,0}^{(0)};$ nevertheless we will show that  $\hat{q}_{\omega,0}^{(0)}>0$ and this is sufficient to conclude that $\theta_{\om,0}$ is smaller than $1$. 
Of course we need to prove that the limits defining  all the other 
$\hat{q}_{\omega,0}^{(k)}$ for any $k>0$ exist.   We will show it for $k=1$ and the same arguments could be generalized to any $k>1.$

Using $T$-invariance of Lebesgue, we have (for simplicity we write $z_N$ instead of $z_N(\om)$):
$$
\hat{q}_{\omega,0}^{(0)}= \lim_{N\rightarrow \infty}\frac{\text{Leb}\left(T^{-1}_{\sigma^{-1}\om}
	B(v(\omega_0), e^{-z_N})\cap B(v((\sigma^{-1}\omega)_0), e^{-z_N})\right)}{\text{Leb}(B(v(\omega_0), e^{-z_N}))},
$$
provided the limit  exists, which we are going to establish.
Consider first a point $\omega$ in the cylinder $[\omega_{-1}=0, \omega_0=3]$; the quantity we have to compute is therefore
\begin{equation}\label{ratio}
\hat{q}_{\omega,0}^{(0)}=\frac{\text{Leb}\left(T^{-1}_{0}
	B(v_3, e^{-z_N})\cap B(v_0, e^{-z_N})\right)}{\text{Leb}(B(v_3, e^{-z_N}))}
\end{equation}
If $N$ is large enough, by the prescription (\ref{grammar}), $T_0(v_0)= v_3,$ we see that the local preimage of $B(v_3, e^{-z_N})$ under $T_0^{-1}$ will be strictly included into $B(v_0, e^{-z_N})$ and its length will be contracted by a factor $\beta_0^{-1}.$ Therefore the ratio (\ref{ratio}) will be simply $\beta_0^{-1}.$ The same happens with the cylinders $[\omega_{-1}=1, \omega_0=0]$ and $[\omega_{-1}=2, \omega_0=0],$  producing  respectively the quantities $\beta_1^{-1}$ and $\beta_2^{-1}.$ For all the other cylinders the ratio will be zero since the sets entering the numerator of (\ref{ratio}) will be disjoint for $N$ large enough. In conclusion we have
$$
\int_{\Omega}t_{\om}\hat{q}_{\omega,0}^{(0)}dm=\int_{[\omega_{-1}=0, \omega_0=3]}t_{\om}\beta_0^{-1}dm+\int_{[\omega_{-1}=1, \omega_0=0]}t_{\om}\beta_1^{-1}dm+\int_{[\omega_{-1}=2, \omega_0=0]}t_{\om}\beta_2^{-1}dm.
$$
If we now take $t_{\om}=t$, $\om$-independent, and we choose $m$ as the Bernoulli measure with equal weights $1/4$, the preceding expression assumes the simpler form
$$
\int_{\Omega}t\hat{q}_{\omega,0}^{(0)}dm=\frac{t}{16}(\beta_0^{-1}
+\beta_1^{-1}+\beta_2^{-1}).
$$
To compute $\hat{q}_{\omega,0}^{(1)}$ we have to split the integral over cylinders of length three. As a concrete example let us consider the cylinder $[\om_{-2}=i, \om_{-1}=j, \om_0=k],$ where $i,j,k\in \mathcal{A}.$ By using the preceding notations, we have to control the set
$$
T^{-1}_{i}T^{-1}_{j}
B(v_k, e^{-z_N})\cap T^{-1}_{i}B^c(v_j, e^{-z_N})\cap B(v_i, e^{-z_N})=
$$
$$
T^{-1}_{i}\left(T^{-1}_{j}
B(v_k, e^{-z_N})\cap B^c(v_j, e^{-z_N})\right)\cap B(v_i, e^{-z_N}).
$$
Let us first consider the intersection $T^{-1}_{j}
B(v_k, e^{-z_N})\cap B^c(v_j, e^{-z_N}).$ Call $u$ one of the preimages of $T^{-1}_{j}(v_k)$ and $T^{-1}_{j,u}$ the inverse branch giving $T^{-1}_{j,u}(v_k)=u$. If $u=v_j$ then the intersection $T^{-1}_{j,u}
B(v_k, e^{-z_N})\cap B^c(v_j, e^{-z_N})$ is  empty. Otherwise, 
by taking $N$ large enough, the set  $T^{-1}_{j,u}
B(v_k, e^{-z_N})$ will be completely included in $B^c(v_j, e^{-z_N})$ and moreover we have, by the linearity of the maps, $T^{-1}_{j,u}
B(v_k, e^{-z_N})=B(u, \beta_j^{-1}e^{-z_N}).$ We are therefore left with the computation of $T^{-1}_{i}B(u, \beta_j^{-1}e^{-z_N})\cap B(v_i, e^{-z_N}).$ If $u\in \mathcal{A}$ we proceed as in the computation of $\hat{q}_{\omega,0}^{(0)}$ by using the prescriptions (\ref{grammar}); otherwise such an intersection will be empty for $N$ large enough. For instance, if, in the example we are considering, we take $k=0, j=2, i=0$ and we suppose that among the $\beta_2$ preimages of $v_0$ there is, besides $v_0,$ $v_3$ too, namely $T_2(v_3)=v_0,$ and no other element of $\mathcal{A},$  \footnote{This condition does not intervene to compute $\hat{q}_{\omega,0}^{(0)}$ and shows that it is strictly less than one.} then we get the contribution for $\hat{q}_{\omega,0}^{(1)}:$
$$
\int_{[\om_{-2}=0, \om_{-1}=2, \om_0=0]}t_{\om}\beta_2^{-1}\beta_0^{-1}dm.
$$
From the above it follows that $\theta_{\om,0}$ exists for a.e.\ $\omega$, and that we may apply Theorem \ref{evtthm}.\\

It is not difficult to give an example where all the $\hat{q}_{\omega,0}^{(k)}$ can be explicitly  computed. Let us take our  beta maps  $T_i(x)=\beta_ix+r_i$-$\text{mod}\ 1$  in the particular case where all the $r_i$ are  equal to the irrational number $r.$ Then  take a sequence of random balls  $B(v((\sigma^{k}\om)_0), e^{-z_N}), k\ge 0$ with the centers $v((\sigma^{k}\om)_0), k\ge 0$ which are rational numbers. From what we discussed above, it follows that  a necessary condition to get a $\hat{q}_{\om,0}^{(k)}\neq0$ is that the center $v((\sigma^{-(k+1)}\om)_0)$ will be sent to the center $v(\om_0).$  Let $z$ one of this rational centers; the iterate $T^n_{\om}(z)$ has the form $T^n_{\om}(z)=\beta_{\om_{n-1}}\cdots \beta_{\om_0}z+k_nr,$-mod$1,$ where $k_n$ is an integer number.
Therefore such an iterate will be never a rational number which 
shows that all the $\hat{q}_{\om,0}^{(k)}=0$ for any $k\ge 0$ and $\om,$ and therefore $\theta_{\om,0}=1.$

\end{example}

\begin{subappendices}
\section{A version of \cite{DFGTV18A} with general weights}
\label{appB}

In this appendix we outline how to extend relevant results from \cite{DFGTV18A} from the Perron--Frobenius weight $g_{\omega,0}=1/|T_\omega'|$ to the  general class of weights $g_{\omega,0}$ in Section \ref{sec: existence}.
To begin, we note that there is a unique measurable equivariant family of functions $\{\phi_{\omega,0}\}_{\omega\in\Omega}$ guaranteed by Theorem 2.19 \cite{AFGTV20} (called $q_\omega$ there).
We wish to obtain uniform control on the essential infimum and essential supremum of $\phi_{\omega,0}$ for a suitable class of maps.

In \cite{DFGTV18A} we work with the space $\BV_1=\{h\in L^1(\Leb) :\var(h)<\infty\}$ and use the norm $\|\cdot\|_{\BV_1}=\var(\cdot)+\|\cdot\|_1$.
Here we have a measurable family of random conformal probability measures $\nu_{\omega,0}$ (guaranteed by Theorem 2.19 \cite{AFGTV20}) and we work with the random spaces $\cB_\om=\BV_{\nu_{\om,0}}=\{h\in L^1(\nu_{\omega,0}):\var(h)<\infty\}$ and the random norms $\|\cdot\|_{\cB_\om}=\var(\cdot)+\|\cdot\|_{L^1(\nu_{\omega,0})}$.
We also work with the normalised transfer operator $\tilde{\mathcal{L}}_{\omega,0}(f):=\mathcal{L}_{\omega,0}(f)/\nu_{\omega,0}(\mathcal{L}_{\omega,0}\ind)$.
All of the variation axioms (V1)--(V8) in \cite{DFGTV18A} hold with the obvious replacements.

\begin{proof}[Proof of Lemma \ref{DFGTV18Alemma}]
\mbox{ }

\textbf{C1':}
Since $\essinf_\om\inf\cL_{\om,0}\ind\geq \essinf_\om\inf g_{\om,0}$, \eqref{E3} and \eqref{fin sup L1} together imply that \eqref{C1'} holds.
\vspace{.25cm}

\textbf{C7' ($\epsilon=0$):} We show $\essinf_\omega\inf\phi_{\om,0}>0$; this will give us \eqref{C7'}.
The statement $\essinf_\omega\inf\phi_{\om,0}>0$ is a generalized version of Lemmas 1 and 5 \cite{DFGTV18A} and we follow the strategy in \cite{DFGTV18A}.  The result follows from Lemma \ref{lemma1} below, which in turn depends on (\ref{LY20}) and Lemma \ref{lemmaA1}.
\vspace{.25cm}

\textbf{C4' ($\epsilon=0$):}  It will be sufficient to show that there is a $K<\infty$ and $0<\gamma<1$ such that for all $f\in \cB_\om$ with $\nu_{\om,0}(f)=0$ and a.e.\ $\omega$, one has
\begin{equation}
\label{DECgeneral}
\|\tilde{\mathcal{L}}_{\omega,0}^n f\|_{\cB_{\sigma^n\omega}}\le K\gamma^n\|f\|_{\cB_\om}\mbox{ for all $n\ge 0$}.
\end{equation}
This is a generalized version of Lemma 2 \cite{DFGTV18A}, which has an identical proof, making the replacements outlined in the proof of Lemma \ref{lemmaA1} below, and using Lemmas \ref{lemmaA1}--\ref{lemmaA4} and (\ref{LY20}).
We also use the non-random equivalence (\ref{normequiv}) of the $\cB_\om$ norm to the usual $\BV_1$ norm.
\vspace{.25cm}

\textbf{C2 ($\epsilon=0$), C3 ($\epsilon=0$), C5' ($\epsilon=0$):}
We wish to show that 
there is a unique measurable, nonnegative family $\phi_{\om,0}$ with the property that $\phi_{\om,0}\in \cB_\om$, $\int \phi_{\om,0}\ d\nu_{\omega,0}=1$, $\tilde{\mathcal{L}}_{\omega,0} \phi_{\om,0}=\phi_{\sigma\omega,0}$ for a.e. $\omega$, $\esssup_\omega \|\phi_{\om,0}\|_{\cB_\om}<\infty$, and
\begin{equation}
\label{unifeqnphi2}
\esssup_\omega \|\phi_{\om,0}\|_{\cB_\om}.
\end{equation} 
We note that again the norm equivalence (\ref{normequiv}).
This is a generalized version of Proposition 1 \cite{DFGTV18A}.
To obtain this generalization, in the proof of Proposition 1 \cite{DFGTV18A}, one modifies the space $Y$ to become 
$$
Y=\{v:\Omega\times X\to\mathbb{R}: v\mbox{ measurable, } v_\omega:=v(\omega,\cdot)\in \cB_\om\mbox{ and }\esssup_\omega\|v_\omega\|_{\cB_\om}<\infty\}.
$$
All of the arguments go through as per \cite{DFGTV18A} with the appropriate substitutions.
Our modified proof of Proposition 1 \cite{DFGTV18A} will also use the modified Lemmas \ref{lemmaA1}--\ref{lemmaA4}, and inequality (\ref{LY20}) below.

\textbf{CCM:} Finally, we note that \eqref{CCM} follows from \eqref{C2} ($\ep=0$) together with non-atomicity of $\nu_{\om,0}$. But non-atomicity of $\nu_{\om,0}$ follows from the random covering assumption as in the proof of Proposition 3.1 \cite{AFGTV20}

\textbf{Full Support:}
Following the proof of Claim 3.1.1 of \cite{AFGTV20}, we are able to show that $\nu_{\om,0}$ is fully supported on $[0,1]$, i.e. $\nu_{\om,0}(J)>0$ for any non-degenerate interval $J\sub [0,1]$.
\end{proof}

We note that by (\ref{LYfull hat}) 
with $\epsilon=0$, we have our uniform Lasota--Yorke equality.
\begin{equation}
\label{LY20}
\var(\tilde{\mathcal{L}}^{n}_{\omega,0}\psi)\le A \alpha^n\var(\psi)+B\nu_{\om,0}(|\psi|),
\end{equation}
for all $n\ge 1$ and a.e.\ $\omega$.
This immediately provides a suitable general version of \eqref{H2} \cite{DFGTV18A}, which is that there is a $C<\infty$ such that
\begin{equation}
\label{H2'}
\|\tilde{\mathcal{L}}_{\omega,0}\psi\|_{\cB_{\sigma\omega}}\le C\|\psi\|_{\cB_{\omega}}\mbox{ for a.e.\ $\omega$.}
\end{equation}
Define random cones $\mathcal{C}_{a,\omega}=\{\psi\in \cB_\om: \psi\ge 0, \var\psi\le a\int \psi\ d\nu_{\om,0}\}$.

\begin{lemma}[General weight version of Lemma A.1 \cite{DFGTV18A}]
\label{lemmaA1} For sufficiently large $a>0$ we have that $\tilde{\mathcal{L}}_{\omega,0}^{RN}\mathcal{C}_{a,\omega}\subset \mathcal{C}_{a/2,\sigma^{RN}\omega}$ for sufficiently large $R$ and a.e.\ $\omega$.
\end{lemma}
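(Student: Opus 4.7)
The plan is to combine positivity of $\tilde{\mathcal{L}}_{\omega,0}$ with the uniform Lasota--Yorke inequality (\ref{LY20}) and the equivariance of the conformal family $\{\nu_{\omega,0}\}$ under $\tilde{\mathcal{L}}_{\omega,0}$. Concretely, for $\psi\in\mathcal{C}_{a,\omega}$, positivity gives $\tilde{\mathcal{L}}^{RN}_{\omega,0}\psi\ge 0$, and the normalization property $\nu_{\sigma^k\omega,0}(\tilde{\mathcal{L}}^k_{\omega,0}\psi)=\nu_{\omega,0}(\psi)$ (which follows from $\lambda_{\omega,0}=\nu_{\sigma\omega,0}(\mathcal{L}_{\omega,0}\ind)$ in \eqref{CCM}) lets me rewrite $\nu_{\omega,0}(\psi)$ on the right-hand side of the Lasota--Yorke estimate as $\nu_{\sigma^{RN}\omega,0}(\tilde{\mathcal{L}}^{RN}_{\omega,0}\psi)$.

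First, I would apply (\ref{LY20}) with $n=RN$ to get
\begin{equation*}
\var(\tilde{\mathcal{L}}^{RN}_{\omega,0}\psi)\le A\alpha^{RN}\var(\psi)+B\,\nu_{\omega,0}(|\psi|).
\end{equation*}
Since $\psi\ge 0$ and $\psi\in\mathcal{C}_{a,\omega}$, the first summand is bounded by $A\alpha^{RN}a\,\nu_{\omega,0}(\psi)$, and $\nu_{\omega,0}(|\psi|)=\nu_{\omega,0}(\psi)$. Using equivariance of $\{\nu_{\omega,0}\}$ under $\tilde{\mathcal{L}}_{\omega,0}$, this yields
\begin{equation*}
\var(\tilde{\mathcal{L}}^{RN}_{\omega,0}\psi)\le \bigl(A\alpha^{RN}a+B\bigr)\,\nu_{\sigma^{RN}\omega,0}\bigl(\tilde{\mathcal{L}}^{RN}_{\omega,0}\psi\bigr).
\end{equation*}

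To conclude, I would choose $a>0$ and $R$ so that $A\alpha^{RN}a+B\le a/2$. Taking $a\ge 4B$ handles the $B$ term, and then because $0<\alpha<1$ and $A,N$ are fixed by the Lasota--Yorke constants, choosing $R$ large enough forces $A\alpha^{RN}\le 1/4$, so that $A\alpha^{RN}a\le a/4$. Adding these bounds gives $A\alpha^{RN}a+B\le a/2$ as required, and hence $\tilde{\mathcal{L}}^{RN}_{\omega,0}\psi\in\mathcal{C}_{a/2,\sigma^{RN}\omega}$.

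I do not anticipate a real obstacle here since all ingredients are already in place: positivity of $\tilde{\mathcal{L}}_{\omega,0}$, the $\omega$-uniform Lasota--Yorke bound (\ref{LY20}) (obtained in the proof of Lemma \ref{harrylemma2} by passing to blocks), and the equivariance of the conformal family under the normalized operator. The only care needed is that the constants $A$, $B$, and $\alpha$ in (\ref{LY20}) are $\omega$-independent, which is guaranteed by \eqref{E2}, \eqref{E3}, \eqref{E8}, and the uniform lower bound $\essinf_{\omega}\min_{Z\in\mathcal{Z}^{(n')}_{\omega,*,0}}\nu_{\omega,0}(Z)>0$ established in \eqref{LY LB calc}; once those are in hand the cone contraction is routine.
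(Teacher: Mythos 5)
Your proof is correct and follows essentially the same route as the paper, which simply imports the argument of Lemma A.1 of \cite{DFGTV18A} with $\nu_{\om,0}$ in place of Lebesgue: positivity of $\tilde{\mathcal{L}}_{\omega,0}$, the $\omega$-uniform Lasota--Yorke bound (\ref{LY20}), and conformality of $\{\nu_{\om,0}\}$ (so that $\nu_{\sg^{RN}\om,0}(\tilde{\mathcal{L}}^{RN}_{\om,0}\psi)=\nu_{\om,0}(\psi)$), followed by the choice $a\geq 4B$ and $R$ large enough that $A\alpha^{RN}\leq 1/4$. No gaps; the uniformity in $\omega$ of $A$, $B$, $\alpha$ is exactly what makes the choice of $R$ admissible almost everywhere.
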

\begin{proof}Identical to \cite{DFGTV18A}, substituting $\mathcal{C}_{a,\omega}$ for $\mathcal{C}_{a}$, $\tilde{\mathcal{L}}_{\omega,0}$ for $\mathcal{L}_{\omega,0}$, and $\nu_{\om,0}$ for Lebesgue.
\end{proof}

\begin{lemma}[General weight version of Lemma 1 \cite{DFGTV18A}]
\label{lemma1}
If one has uniform covering in the sense of (11) \cite{DFGTV18A}, then there is an $N$ such that for each $a>0$ and sufficiently large $n$, there exists $c>0$ such that
\begin{equation}
\label{l1eqn}
\essinf_\omega \tilde{\mathcal{L}}_\omega^{Nn}h\ge (c/2)\nu_{\om,0}(|h|)\mbox{ for every $h\in \mathcal{C}_{\omega,a}$ and a.e.\ $\omega$}.
\end{equation}
\end{lemma}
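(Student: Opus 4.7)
The strategy is to adapt the proof of Lemma 1 of \cite{DFGTV18A} to the current setting of general weights $g_{\om,0}$ and random conformal measures $\nu_{\om,0}$, replacing the specific weight $1/|T'_\om|$ and Lebesgue measure used there. The argument has two main phases: first use iterated cone contraction (Lemma~\ref{lemmaA1}) to reduce the cone parameter to a fixed value $a_*$, and then use the uniform covering assumption (E4) together with uniform lower bounds on $\inf g_\om^{(k)}$ (from \eqref{E3}) and uniform upper bounds on $\lm_\om^{k}$ (from \eqref{E1}, \eqref{E2} and \eqref{fin sup L1}) to produce a uniform positive lower bound on $\tilde{\mathcal L}_\om^{Nn}h$.

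For the first phase, given $a>0$, I would iterate the cone contraction from Lemma~\ref{lemmaA1}: starting with $h\in\cC_{a,\om}$, each application of $\tilde{\mathcal L}^{RN}$ sends $\cC_{a,\om}$ into $\cC_{a/2,\sg^{RN}\om}$ provided $a$ exceeds the threshold $a_0$ of Lemma~\ref{lemmaA1}. Because $\cC_{a/2}\subset\cC_a$, the contraction can be iterated, so after $K_1=K_1(a)$ steps we obtain $\tilde{\mathcal L}^{K_1 RN}h\in\cC_{a_*,\sg^{K_1 RN}\om}$ for a fixed $a_*$ depending only on $a_0$. For the second phase, fix a finite Lebesgue partition $\{I_1,\ldots,I_M\}$ of $[0,1]$ with $M$ chosen in terms of $a_*$, and set $K_2:=\max_j k(I_j)$, where $k(I_j)$ is the covering time of $I_j$ provided by (E4); by surjectivity of the $T_\om$, we have $T^{K_2}_\om(I_j)=[0,1]$ for every $j$ and a.e.\ $\om$.

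Writing $\tau=\sg^{K_1 RN}\om$ and $h'=\tilde{\mathcal L}^{K_1 RN}h\in\cC_{a_*,\tau}$, the cone inequality gives $\sup h'\le (1+a_*)\nu_{\tau,0}(h')$, so the super-level set $F=\{h'\ge \nu_{\tau,0}(h')/2\}$ has $\nu_{\tau,0}$-measure at least $1/(2(1+a_*))$. Using a combined pigeonhole argument on integrals $\int_{I_j}h'\,d\nu_{\tau,0}$ and local variations $\var_{I_j}(h')$ (both controlled via $\var(h')\le a_*\nu_{\tau,0}(h')$ and $\sum_j\int_{I_j}h'\,d\nu_{\tau,0}=\nu_{\tau,0}(h')$), for $M$ large enough (depending on $a_*$) I would select an index $j^*=j^*(h',\tau)$ such that $\inf_{I_{j^*}}h'\ge c_1\nu_{\tau,0}(h')$ for some $c_1=c_1(a_*,M)>0$. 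Then for every $x\in [0,1]$, picking a preimage $y_{j^*}(x)\in I_{j^*}\cap T_\tau^{-K_2}(x)$ (available by uniform covering) yields
\begin{equation*}
\tilde{\mathcal L}_\tau^{K_2}h'(x) \;\ge\; \bigl(\lm_\tau^{K_2}\bigr)^{-1}\,g_\tau^{(K_2)}(y_{j^*}(x))\,h'(y_{j^*}(x)) \;\ge\; \bigl(\lm_\tau^{K_2}\bigr)^{-1}\bigl(\inf g_\tau^{(K_2)}\bigr)\,c_1\,\nu_{\tau,0}(h'),
\end{equation*}
which by \eqref{E1}--\eqref{E3} and \eqref{fin sup L1} is at least $c\,\nu_{\tau,0}(h')=c\,\nu_{\om,0}(h)$ uniformly in $\tau$ (using equivariance of the conformal measure). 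Setting $Nn=K_1 RN+K_2$, this delivers the required essential infimum bound.

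The main obstacle will be the pigeonhole step selecting $I_{j^*}$: because the total variation $\var(h')$ can redistribute across intervals and the sup of $h'$ on $I_{j^*}$ can occur at one endpoint while the inf occurs at the other, a naive choice of $I_{j^*}$ fails. I expect the right combinatorial device is to select $I_{j^*}$ that is simultaneously ``heavy'' (has integral $\ge\nu_{\tau,0}(h')/(2M)$) and ``smooth'' (has $\var_{I_{j^*}}(h')\le 2a_*\nu_{\tau,0}(h')/M$); at least $M/2$ intervals are smooth, so by taking $M$ large enough in terms of $a_*$ one guarantees that at least one smooth interval is also heavy, and on such an interval one can combine $\sup_{I_{j^*}}h'\ge\nu_{\tau,0}(h')/(2M\nu_{\tau,0}(I_{j^*}))$ with the local variation bound to obtain a uniform positive lower bound for $\inf_{I_{j^*}}h'$. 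A secondary subtlety is that the uniformity in $\om$ must survive the substitution from Lebesgue to $\nu_{\om,0}$, which is handled because $\nu_{\om,0}$ is fully supported and non-atomic (Lemma~\ref{DFGTV18Alemma}) and the ingredients $\inf g_\om^{(K_2)}$, $\norm{\cL_{\om,0}^{K_2}\ind}_\infty$ are uniformly bounded by \eqref{E1}--\eqref{E3} and \eqref{fin sup L1}.
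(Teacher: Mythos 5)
The covering-plus-weight estimate in your second phase is fine, but the pigeonhole that is supposed to produce the cell $I_{j^*}$ is genuinely broken, and that step is the heart of the lemma. You take the $M$ cells to be a \emph{Lebesgue} equipartition while measuring ``heaviness'' in $\nu_{\tau,0}$-mass. Nothing in the standing hypotheses makes $\nu_{\tau,0}$ comparable to Lebesgue (it is only fully supported and non-atomic, and for a general weight it may concentrate), so the $\nu_{\tau,0}$-masses of the cells are uncontrolled and the \emph{number} of heavy cells need not grow with $M$. Concretely, nothing rules out a cell $I_1$ of Lebesgue length $1/M$ with $\nu_{\tau,0}(I_1)\geq 1/2$; taking $h'=\ind_{I_1'}$ with $I_1'$ a closed subinterval of the interior of $I_1$ of almost full $\nu_{\tau,0}$-mass, one has $h'\in\cC_{a_*,\tau}$ as soon as $a_*\geq 4$, the unique heavy cell is $I_1$, and $\var_{I_1}(h')=\var(h')=2$, which exceeds your smoothness threshold $2a_*\nu_{\tau,0}(h')/M\leq 2a_*/M$ once $M>a_*$. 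So for large $M$ no cell is both heavy and smooth, and enlarging $M$ only makes this worse. Moreover, even when a heavy-and-smooth cell exists your constants do not close: they give $\inf_{I_{j^*}}h'\geq\nu_{\tau,0}(h')\lt(\tfrac{1}{2M}-\tfrac{2a_*}{M}\rt)$, which is positive only if $a_*<1/4$, while Lemma \ref{lemmaA1} forces $a_*$ to be large; the ratio of the two thresholds is $4a_*$, independent of $M$, so no choice of $M$ repairs it.

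The paper's proof (following Lemma 1 of \cite{DFGTV18A}) fixes exactly this point by partitioning $[0,1]$ into $n$ intervals of equal $\nu_{\om,0}$-mass $1/n$, which is possible because $\nu_{\om,0}$ is non-atomic. With equal $\nu$-mass cells the counting works: normalising $\nu_{\om,0}(h)=1$, one has $\sum_j\sup_{J_j}h\geq n$ while $\sup_{J_j}h\leq 1+a$, so at least $n/(1+4a)$ cells satisfy $\sup_{J_j}h\geq 3/4$; if every cell also had $\inf_{J_j}h<1/2$, the total variation would be at least $n/(4(1+4a))>a$ for $n$ large, contradicting $h\in\cC_{\om,a}$. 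This yields a cell $J$ with $\inf_J h\geq\tfrac12\nu_{\om,0}(h)$, after which uniform covering together with $\essinf_\om\inf g_{\om,0}>0$, $\esssup_\om\norm{g_{\om,0}}_\infty<\infty$ and $\esssup_\om D(T_\om)<\infty$ gives the uniform lower bound, exactly as in your final display. Two further remarks: your phase 1 (contracting the cone parameter to a fixed $a_*$ via Lemma \ref{lemmaA1}) is unnecessary, since the paper treats arbitrary $a$ directly by letting $n$ grow with $a$; and your iterate count $K_1RN+K_2$ is a single $a$-dependent number, which does not match the statement's quantifiers ($N$ independent of $a$, with the bound required for all sufficiently large $n$) — this can be patched by padding iterates using $\essinf_\om\inf\tilde{\mathcal{L}}_{\om,0}\ind>0$, but the heavy/smooth selection itself cannot be salvaged with a Lebesgue-uniform partition.
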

\begin{proof}
Making all of the obvious substitutions, as per Lemma \ref{lemmaA1} and its proof, we subdivide the unit interval into an equipartition according to $\nu_{\om,0}$ mass.
This is possible because $\nu_{\om,0}$ is non-atomic (Proposition 3.1 \cite{AFGTV20}).
We conclude, as in the proof of Lemma 1 \cite{DFGTV18A}, that there is an interval $J$ of $\nu_{\om,0}$-measure $1/n$ such that for each $f\in\mathcal{C}_{\omega,a},$ one has $\inf_J f\ge (1/2)\nu_{\om,0}(f)$.
Then using uniform covering and the facts that $\essinf_\omega\inf g_{\omega,0}>0$, $\esssup_\omega g_{\omega,0}<\infty$, and $\esssup_\omega D(T_\omega)<\infty$, we obtain $\essinf_\omega\inf \tilde{\mathcal{L}}_{\omega,0}^{k}f\ge\alpha^*_0>0$, where $k$ is the uniform covering time for the interval $J$.
The rest of the proof follows as in \cite{DFGTV18A}.
\end{proof}

\begin{lemma}[General weight version of Lemma A.2 \cite{DFGTV18A}]
\label{lemmaA2}
Assume that $\psi,\psi'\in \mathcal{C}_{a,\omega}$ and  $\int \psi\ d\nu_{\om,0}=\int \psi'\ d\nu_{\om,0}=1$. Then $\|\psi-\psi'\|_{\cB_\om}\le 2(1+a)\Theta_{a,\omega}(\psi,\psi')$.
\end{lemma}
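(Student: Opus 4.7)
The plan is to leverage the standard dictionary between the partial order defining the cone $\sC_{\om,a}$ and the Hilbert pseudo-metric $\Ta_{a,\om}$, together with the specific affine constraint $\nu_{\om,0}(\psi)=\nu_{\om,0}(\psi')=1$ which synchronizes the two representatives. First, I would set $\al:=\sup\{s>0:\psi'-s\psi\in\sC_{a,\om}\}$ and $\bt:=\inf\{t>0:t\psi-\psi'\in\sC_{a,\om}\}$, so that $\Ta_{a,\om}(\psi,\psi')=\log(\bt/\al)$, and observe that $\psi'-\al\psi\in\sC_{a,\om}$ and $\bt\psi-\psi'\in\sC_{a,\om}$ implies both the pointwise inequalities $\al\psi\le\psi'\le\bt\psi$ (from nonnegativity in the cone) and the variation estimates $\var(\psi'-\al\psi)\le a(1-\al)$ and $\var(\bt\psi-\psi')\le a(\bt-1)$, after integrating against $\nu_{\om,0}$ and using $\int\psi\,d\nu_{\om,0}=\int\psi'\,d\nu_{\om,0}=1$. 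In particular $\al\le 1\le\bt$.

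Next, I would bound the two parts of the $\cB_\om$ norm separately. For the variation, the identities $\psi-\psi'=(\bt\psi-\psi')-(\bt-1)\psi$ and $\psi-\psi'=(\al-1)\psi-(\psi'-\al\psi) + (something that also works)$---more precisely, decomposing $\psi-\psi' = (\bt\psi-\psi')-(\bt-1)\psi$ gives $\var(\psi-\psi')\le a(\bt-1)+(\bt-1)\var(\psi)\le 2a(\bt-1)$ using $\psi\in\sC_{a,\om}$ and $\nu_{\om,0}(\psi)=1$; the symmetric decomposition via $\al$ gives $\var(\psi-\psi')\le 2a(1-\al)$, hence $\var(\psi-\psi')\le 2a\min(\bt-1,1-\al)$. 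For the $L^1(\nu_{\om,0})$ part, set $A=\{\psi'\ge\psi\}$; because $\int(\psi'-\psi)\,d\nu_{\om,0}=0$, the positive and negative parts of $\psi'-\psi$ have equal $\nu_{\om,0}$-mass, and using $\psi'-\psi\le(\bt-1)\psi$ on $A$ together with $\psi-\psi'\le(1-\al)\psi$ on $A^c$ yields $\nu_{\om,0}(|\psi-\psi'|)\le 2\min(\bt-1,1-\al)$. Adding the two bounds produces
\[
\|\psi-\psi'\|_{\cB_\om}\le 2(1+a)\min(\bt-1,1-\al).
\]

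The remaining step, which I expect to be the only non-cosmetic one, is the elementary inequality
\[
\min(\bt-1,1-\al)\le \log(\bt/\al)=\Ta_{a,\om}(\psi,\psi')\qquad \text{for }0<\al\le 1\le\bt.
\]
I would argue this by case analysis: if $\bt-1\le 1-\al$, set $x=\bt-1\in[0,1-\al]\sub[0,1)$ and use $\log(\bt/\al)\ge \log((1+x)/(1-x))=2\,\mathrm{artanh}(x)\ge 2x\ge x$; the case $1-\al\le\bt-1$ is symmetric, using $\log(\bt/\al)\ge\log((1+y)/(1-y))\ge 2y\ge y$ with $y=1-\al$. Combining with the previous display completes the proof. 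No obstacle is serious here; the argument is a direct cone computation, and the only subtlety is recognizing that the normalization $\nu_{\om,0}(\psi)=\nu_{\om,0}(\psi')=1$ is what converts the $L^\infty$-flavored pointwise bounds coming from the cone inclusions into an $L^1$ bound of the right order, and that the elementary calculus lemma above is precisely what is needed to pass from $\bt-1$ and $1-\al$ to $\log(\bt/\al)$.
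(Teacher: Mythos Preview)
Your proposal is correct and is precisely the standard cone argument that the paper invokes by citing \cite{DFGTV18A} (the paper's own ``proof'' merely states that the argument there goes through verbatim after substituting $\nu_{\om,0}$ for Lebesgue). The one harmless edge case you should mention explicitly is that if $\Ta_{a,\om}(\psi,\psi')=\infty$ (i.e.\ $\al=0$ or $\bt=\infty$) the claimed inequality is vacuous, so one may assume $0<\al\le 1\le\bt<\infty$ from the outset.
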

\begin{proof}
Identical to \cite{DFGTV18A}, substituting $\nu_{\om,0}$ for Lebesgue.
The randomness of the Hilbert metric $\Theta_{a,\omega}$ only appears because the functions lie in $\mathcal{C}_{a,\omega}$.
\end{proof}

\begin{lemma}[General weight version of Lemma A.3 \cite{DFGTV18A}]
\label{lemmaA3}
For any $a\ge 2\var(\ind_X)$, we have that $\tilde{\mathcal{L}}_{\omega,0}^{RN}$ is a contraction on $\mathcal{C}_{\omega,a}$ for any sufficiently large $R$ and a.e.\ $\omega\in\Omega$.
\end{lemma}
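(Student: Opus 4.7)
The plan is to execute the standard Birkhoff cone-contraction scheme, combining cone invariance into a strictly smaller cone with a uniform finite-diameter estimate. The invariance side is already handed to us by Lemma~\ref{lemmaA1}: for $a\geq 2\var(\ind_X)$ (in particular $a$ large) and $R$ sufficiently large, $\tilde{\mathcal{L}}_{\omega,0}^{RN}\mathcal{C}_{a,\omega}\subset \mathcal{C}_{a/2,\sigma^{RN}\omega}$. Once we also show that $\tilde{\mathcal{L}}_{\omega,0}^{RN}\mathcal{C}_{a,\omega}$ has uniformly finite diameter in the ambient cone $\mathcal{C}_{a,\sigma^{RN}\omega}$ with respect to the Hilbert metric $\Theta_{a,\sigma^{RN}\omega}$, Theorem~\ref{thm: cone distance contraction} immediately yields contraction by the common factor $\tanh(\Delta/4)<1$, where $\Delta$ is the uniform diameter bound.

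The bulk of the work is therefore the finite-diameter estimate. For $\psi,\psi'$ lying in the image, after rescaling we may assume $\nu_{\sigma^{RN}\omega,0}(\psi)=\nu_{\sigma^{RN}\omega,0}(\psi')=1$. Membership in $\mathcal{C}_{a/2,\sigma^{RN}\omega}$ yields the sup-norm bound $\|\psi\|_\infty, \|\psi'\|_\infty \leq 1+a/2$. Applying Lemma~\ref{lemma1} (increasing $R$ if needed to absorb the extra iterates), we also obtain a uniform-in-$\omega$ pointwise lower bound $\inf\psi,\inf\psi'\geq c/2$ for an $\omega$-independent constant $c>0$. With $\alpha:=\frac{c/2}{1+a/2}$ and $\beta:=\frac{1+a/2}{c/2}$, pointwise positivity of $\psi-\alpha\psi'$ and $\beta\psi'-\psi$ is immediate. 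To verify the variation side of the cone inclusion, one computes
\begin{equation*}
\var(\psi-\alpha\psi') \leq \var\psi+\alpha\var\psi' \leq (1+\alpha)\tfrac{a}{2},\qquad \nu_{\sigma^{RN}\omega,0}(\psi-\alpha\psi')=1-\alpha,
\end{equation*}
and membership in $\mathcal{C}_{a,\sigma^{RN}\omega}$ reduces to $(1+\alpha)\tfrac{a}{2}\leq a(1-\alpha)$, which is satisfied once $\alpha\leq 1/3$; a symmetric argument treats $\beta\psi'-\psi$. Choosing $R$ large and $a$ sufficiently large guarantees that $\alpha$ and $1/\beta$ are small enough for both inequalities to hold uniformly in $\omega$. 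Thus $\Theta_{a,\sigma^{RN}\omega}(\psi,\psi')\leq \log(\beta/\alpha)=:\Delta<\infty$, uniformly in $\omega$.

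The main obstacle is ensuring that every constant appearing above—especially the lower bound from Lemma~\ref{lemma1} and the ratio $\beta/\alpha$—is genuinely uniform in $\omega$. This relies on the uniform hypotheses \eqref{E1}--\eqref{E4} and \eqref{E8}, together with the uniform covering condition invoked in Lemma~\ref{lemma1}, all of which are available by assumption. With uniformity secured, an application of Theorem~\ref{thm: cone distance contraction} yields the claimed contraction of $\tilde{\mathcal{L}}_{\omega,0}^{RN}$ on $\mathcal{C}_{a,\omega}$ for a.e.\ $\omega\in\Omega$.
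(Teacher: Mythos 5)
Your proposal is correct and follows essentially the same route as the paper's proof (which defers to Lemma A.3 of \cite{DFGTV18A}): cone invariance into $\mathcal{C}_{a/2,\sigma^{RN}\omega}$ from Lemma~\ref{lemmaA1}, the uniform sup-norm bound $1+C_{var}a/2$ via the (V3)-type inequality, the uniform lower bound from Lemma~\ref{lemma1}, a uniform finite Hilbert-diameter estimate, and then Birkhoff's Theorem~\ref{thm: cone distance contraction}. The only cosmetic difference is that you bound the diameter by a direct $\alpha$--$\beta$ comparison of two normalized image elements rather than via the distance to the constant function; note also that your requirements $\alpha\le 1/3$ and $\beta\ge 3$ hold automatically once $a$ is large (as Lemma~\ref{lemmaA1} already demands), or after trivially replacing $\alpha$ by $\min\{\alpha,1/3\}$ and $\beta$ by $\max\{\beta,3\}$.
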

\begin{proof}
The proof in \cite{DFGTV18A} may be followed, making the substitutions as in Lemma \ref{lemmaA1} and its proof.
The first inequality reads 
$$
\esssup_\omega \tilde{\mathcal{L}}^{RN}_{\omega,0} f\le \nu_{\sigma^{RN}\omega}(|\tilde{\mathcal{L}}^{RN}_{\omega,0} f|)+C_{var}(\tilde{\mathcal{L}}^{RN}_{\omega,0} f)\le (1+C_{var}a/2)\nu_{\om,0}(|f|)=1+C_{var}a/2,
$$
where we have used axiom (V3) \cite{DFGTV18A} and the weak contracting property of $\tilde{\mathcal{L}}^{RN}_{\omega,0}$ in the $\nu_{\om,0}$ norm.
The rest of the proof follows as in \cite{DFGTV18A}.
\end{proof}

Let $\cB_{\omega,0}=\{\psi\in \cB_\om: \int \psi\ d\nu_{\om,0}=0\}$.

\begin{lemma}[General weight version of Lemma A.4 \cite{DFGTV18A}]
\label{lemmaA4}

\end{lemma}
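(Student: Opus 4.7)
The plan is to prove the analogue of Lemma~A.4 of \cite{DFGTV18A}, namely that there exist constants $K<\infty$ and $\gamma\in(0,1)$ such that $\|\tilde{\mathcal{L}}_{\omega,0}^n\psi\|_{\cB_{\sigma^n\omega}}\le K\gamma^n\|\psi\|_{\cB_\omega}$ for every $\psi\in\cB_{\omega,0}$, every $n\ge 0$, and a.e.\ $\omega$. The approach mirrors that of Lemma~A.4 in \cite{DFGTV18A}, but with all constants rendered $\omega$-independent using the uniform comparison \eqref{normequiv} between $\|\cdot\|_{\cB_\omega}$ and $\|\cdot\|_{\BV_1}$. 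First I would fix a cone parameter $a\ge 2\var(\ind_X)$ large enough for Lemma~\ref{lemmaA1} and Lemma~\ref{lemmaA3} to apply, together with the corresponding uniform contraction time $RN$.

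Next, given $\psi\in\cB_{\omega,0}$, I would decompose $\psi=\psi_+-\psi_-$ with $\psi_\pm:=\max(\pm\psi,0)+a^{-1}\var(\psi)$, so that $\var(\psi_\pm)\le \var(\psi)\le a\nu_{\omega,0}(\psi_\pm)$ and hence $\psi_\pm\in\mathcal{C}_{a,\omega}$; since $\nu_{\omega,0}(\psi)=0$ we also have $\nu_{\omega,0}(\psi_+)=\nu_{\omega,0}(\psi_-)$, and both are bounded by $\|\psi\|_{\cB_\omega}$. After normalising to unit $\nu_{\omega,0}$-mass, Lemma~\ref{lemmaA3} provides a uniform Hilbert-metric contraction factor $\kappa<1$ per block of length $RN$, which via Lemma~\ref{lemmaA2} translates into a $\cB_{\sigma^{kRN}\omega}$-norm bound of the form
\begin{equation*}
\|\tilde{\mathcal{L}}_{\omega,0}^{kRN}\psi\|_{\cB_{\sigma^{kRN}\omega}}\le 2(1+a)\kappa^k\,\nu_{\omega,0}(\psi_+)\le K_0\gamma_0^{kRN}\|\psi\|_{\cB_\omega},
\end{equation*}
with $\gamma_0:=\kappa^{1/(RN)}\in(0,1)$ and a uniform $K_0<\infty$.

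For an intermediate time $n=kRN+r$ with $0\le r<RN$, I would interpolate by writing $\tilde{\mathcal{L}}_{\omega,0}^n=\tilde{\mathcal{L}}_{\sigma^{kRN}\omega,0}^r\circ\tilde{\mathcal{L}}_{\omega,0}^{kRN}$ and applying the uniform Lasota--Yorke inequality \eqref{LY20} together with \eqref{H2'} a bounded number of times to the already-contracted function. This loses only a fixed factor $C^{RN}$ in the norm, which is absorbed into an enlarged constant $K$, yielding the advertised bound with $\gamma=\gamma_0$.

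The main obstacle is bookkeeping in the random setting: the cones $\mathcal{C}_{a,\omega}$, the Hilbert pseudo-metrics $\Theta_{a,\omega}$, and the norms $\|\cdot\|_{\cB_\omega}$ all depend on $\omega$ through $\nu_{\omega,0}$, whereas the single-space argument of \cite{DFGTV18A} uses Lebesgue throughout. The key observation that makes this go through uniformly is that cone membership only involves $\var$ and $\nu_{\omega,0}$-integrals (which for $\psi\in\BV$ are comparable to Lebesgue integrals via \eqref{normequiv}, uniformly in $\omega$), so the cone parameter $a$, the contraction time $RN$, and the contraction factor $\kappa$ can all be chosen independently of $\omega$. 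With this in hand the proofs of Lemmas~\ref{lemmaA1}--\ref{lemmaA3} and inequality \eqref{LY20} combine exactly as in \cite{DFGTV18A} to deliver the claim.
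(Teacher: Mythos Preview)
Your proposal is correct and follows essentially the same approach as the paper: the paper's proof simply states that the argument is identical to \cite{DFGTV18A} with the substitutions of $\mathcal{C}_{a,\omega}$, $\nu_{\omega,0}$, and $\|\cdot\|_{\cB_\omega}$ for their non-random counterparts, together with the uniform bound \eqref{H2'}, which is precisely the route you have spelled out in detail.
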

\begin{proof}
Identical to \cite{DFGTV18A}, substituting as per Lemma \ref{lemmaA1} and Lemma \ref{lemmaA2} and their proofs above, and using (\ref{H2'}).
\end{proof}

\begin{lemma}[General weight version of Lemma 5 \cite{DFGTV18A}]
\label{lemma5}
$\essinf_\omega \phi_{\om,0}\ge c/2$ for a.e.\ $\omega$.
\end{lemma}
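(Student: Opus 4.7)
The plan is to show that the equivariant density $\phi_{\om,0}$ lies in one of the invariant cones $\mathcal{C}_{a,\omega}$ for a uniform choice of $a$, and then to apply Lemma~\ref{lemma1} after pulling back along the driving. The uniform lower bound $c/2$ will then be inherited directly from the one produced by Lemma~\ref{lemma1}.

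First I would verify cone membership. By the general weight version of Proposition~1 of \cite{DFGTV18A} (proved in the course of establishing \textbf{C2 ($\epsilon=0$), C3 ($\epsilon=0$), C5' ($\epsilon=0$)} above), we have $\phi_{\om,0}\geq 0$, $\nu_{\om,0}(\phi_{\om,0})=1$, and an essential supremum bound $M:=\esssup_\omega \|\phi_{\om,0}\|_{\cB_\om}<\infty$. In particular $\var(\phi_{\om,0})\leq M=M\cdot \nu_{\om,0}(\phi_{\om,0})$ for $m$-a.e.\ $\omega$. Choose $a\geq\max\{M,2\var(\ind_X)\}$ so that, simultaneously, $\phi_{\om,0}\in\mathcal{C}_{a,\omega}$ for $m$-a.e.\ $\omega$ and Lemmas~\ref{lemmaA1}, \ref{lemmaA3}, and~\ref{lemma1} apply on $\mathcal{C}_{a,\omega}$.

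Next I would use equivariance to transport the lower bound of Lemma~\ref{lemma1} onto $\phi_{\om,0}$. Pick $N$ as in Lemma~\ref{lemma1} and $n$ large enough that the constant $c=c(a,n)>0$ in \eqref{l1eqn} is produced; the lemma then gives, for $m$-a.e.\ $\omega$,
\begin{equation*}
\essinf \tilde{\mathcal{L}}_{\sigma^{-Nn}\omega,0}^{Nn} h \;\geq\; \tfrac{c}{2}\,\nu_{\sigma^{-Nn}\omega,0}(|h|) \qquad\text{for every } h\in\mathcal{C}_{a,\sigma^{-Nn}\omega}.
\end{equation*}
Applying this to $h=\phi_{\sigma^{-Nn}\omega,0}\in\mathcal{C}_{a,\sigma^{-Nn}\omega}$, invariance $\tilde{\mathcal{L}}_{\sigma^{-Nn}\omega,0}^{Nn}\phi_{\sigma^{-Nn}\omega,0}=\phi_{\om,0}$, and $\nu_{\sigma^{-Nn}\omega,0}(\phi_{\sigma^{-Nn}\omega,0})=1$ yields
\begin{equation*}
\inf \phi_{\om,0} \;\geq\; \tfrac{c}{2}
\end{equation*}
for $m$-a.e.\ $\omega$. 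Taking essential infimum over $\omega$ gives the claim.

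The only subtle point is that the constant $c$ produced by Lemma~\ref{lemma1} is a priori $\omega$-independent, which is exactly why this argument yields a uniform lower bound rather than merely a fiberwise positive one; I would double-check that the uniform-in-$\omega$ conclusion of Lemma~\ref{lemma1}, combined with the uniform cone membership guaranteed by $M<\infty$, survives the replacement of Lebesgue by the random conformal family $\nu_{\om,0}$ (non-atomicity and full support of $\nu_{\om,0}$ established in the proof of Lemma~\ref{DFGTV18Alemma} are what is needed). That verification is the main obstacle, but it is essentially bookkeeping once the cone constant $a$ has been fixed uniformly using $M$.
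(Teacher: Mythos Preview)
Your proposal is correct and matches the paper's approach: the paper simply says ``Identical to \cite{DFGTV18A} with the appropriate substitutions,'' and what you have written is precisely the standard argument from \cite{DFGTV18A} carried out with those substitutions (uniform cone membership from $\esssup_\omega\|\phi_{\om,0}\|_{\cB_\om}<\infty$, then pull back via equivariance and apply Lemma~\ref{lemma1}). Your care in noting that the constant $c$ in Lemma~\ref{lemma1} is $\omega$-independent, and that the relevant properties of $\nu_{\om,0}$ (non-atomicity, full support) have already been established, is exactly the bookkeeping the paper intends by ``appropriate substitutions.''
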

\begin{proof}
Identical to \cite{DFGTV18A} with the appropriate substitutions.
\end{proof}

\section{A summary of checks that relevant results from \cite{C19} can be applied to $\BV_1$}
\label{appA}

The stability result Theorem 4.4 \cite{C19} assumes that the underlying Banach space is separable, however this separability assumption is only used to obtain measurability of various objects (and in fact Theorem 3.4 \cite{C19} may be applied to sequential dynamics).
We use these results for the non-separable space $\BV_1$ in the proof of Lemma \ref{harrylemma2}, and we therefore need to check that all relevant results in \cite{C19} hold for $\BV_1$, under the $m$-continuity assumption on $\omega\mapsto\mathcal{L}_{\omega,0}$;  the latter will provide the required measurability.
All section, theorem, proposition, and lemma numbers below refer to numbering in \cite{C19}.

There are no issues of measurability in Section 3, including Theorem 3.4, until Section 4, so we begin our justifications from Section 4.
\vspace{.5cm}

We now fix some notation from \cite{C19} which will be used only in this section. 
For each $\oio$, let$(X_\om,\|\cdot\|_\om)$ be a Banach space and $(X_\om,\|\cdot\|_\om,|\cdot|_\om)$ a normal Saks space\footnote{A Saks space is a Banach space $(E,\|\cdot\|)$ equipped with a second locally convex topology $\tau$ that is coarser than the norm topology such that the closed unit ball $B_E$ in the norm topology is closed and bounded in the $\tau$-topology. In our setting we take the $\tau$-topology to be the one generated by the second norm $|\cdot|$.}. For a thorough treatment of Saks spaces see the seminal work of Cooper \cite{Cooper}, and for an introduction into Saks spaces for dynamical systems see \cite{C19}. 
Let $\XX=\bigsqcup_{\oio}\{\om\}\times X_\om$, and let $\pi:\XX\to\Om$ denote the projection onto $\Om$. Let $\LL(X_\om,X_{\sg\om})$ denote the collection of bounded linear operators from $X_\om$ to $X_{\sg\om}$ and let $\End(\XX,\sg)$ denote the set of bounded linear endomorphisms of $\XX$ covering $\sg$, i.e. 
\begin{align*}
    \End(\XX,\sg)=\lt\{L:\XX\to\XX: \pi\circ L=\sg\circ\pi \text{ and } f\mapsto\tau_{\sg\om}(L(\om,f))\in\LL(X_\om,X_{\sg\om})\rt\},
\end{align*}
where $\tau_\om:\pi^{-1}(\om)\to X_\om$ is given by $\tau_\om(\om,f)=f$.
We now introduce the notion of a hyperbolic splitting from \cite{C19}. 
We denote the norm on $\LL((X_\om,|\cdot|_\om),(X_{\sg\om}, |\cdot|_{\sg\om}))$ by $|\cdot|$, the norm on $\LL(X_\om,X_{\sg\om})$ by $\|\cdot\|$, and the norm on $\LL((X_\om,\|\cdot\|_\om),(X_{\sg\om},|\cdot|_{\sg\om}))$ by $\trinorm{\cdot}$.
\begin{definition}[Definition 3.1 of \cite{C19}]\label{harry def 3.1}
    Suppose that $L\in\End(\XX,\sg)$, $d\in\NN$, $0\leq \mu<\lm$, $(E_\om)_{\oio}\in\prod_{\oio}\cG_d(X_\om)$ and $(F_\om)_{\oio}\in\prod_{\oio}\cG^d(X_\om)$. We say that $(E_\om)_{\oio}$ and $(F_\om)_{\oio}$ form  a $(\mu,\lm,d)$-\textit{hyperbolic splitting} for $L$, and that $L$ has a \textit{hyperbolic splitting index} $d$, if there exists constants $C_\lm,C_\mu,\Ta>0$ such that:
    \begin{enumerate}
        \item[(\Gls*{H1})]\myglabel{H1}{H1} For every $\oio$ we have $E_\om\bigoplus F_\om=X_\om$ and 
        \begin{align*}
            \max\{\|\Pi_{F_\om\|E_\om}\|, \|\Pi_{E_\om\|F_\om}\|\}\leq \Ta.
        \end{align*}
        \item[(\Gls*{H2})]\myglabel{H2}{H2} For each $\oio$ we have $L_\om F_\om=E_{\sg\om}$. Moreover, for every $n\in\NN$ and $f\in E_\om$ we have 
        \begin{align*}
            \|L_\om^n f\|\geq C_\lm \lm^n\|f\|.
        \end{align*}
        \item[(\Gls*{H3})]\myglabel{H3}{H3} For each $\oio$ we have $L_\om F_\om \sub F_{\sg\om}$ and for every $n\in\NN$ we have 
        \begin{align*}
            \|L_\om^n\rvert_{F_\om}\|\leq C_\mu \mu^n.
        \end{align*}
    \end{enumerate}
    We call $(E_\om)_{\oio}$ and $(F_\om)_{\oio}$ the \textit{equivariant fast} and \textit{slow} spaces for $L$, respectively. 
\end{definition}
Let $\LL\YY(C_1,C_2,r,R)$ denote the collection of operators $L\in\End(\XX,\sg)$ which satisfy the following uniform Lasota-Yorke inequality: for each $\oio$, $f\in X_\om$, and $n\in\NN$ we have 
\begin{align*}
    \|L_\om^n f\|\leq C_1 r^n\|f\|+C_2R^n|f|.
\end{align*}
We let $\End_S(\XX,\sg)$ denote the set of all Saks space equicontinuous endomorphisms meaning that $\sup_{\oio}\|L_\om\|<\infty$ and for each $\eta>0$ there exists $C_\eta>0$ such that for every $\oio$ and $f\in X_\om$
\begin{align*}
    |L_\om f|\leq \eta\|f\|+C_\eta|f|.
\end{align*}
Finally, if $L\in\End_S(\XX,\sg)$, then for each $\ep>0$ set 
\begin{align*}
    \cO_\ep(L)=\lt\{S\in \End(\XX,\sg): \sup_{\oio}\trinorm{L_\om-S_\om}<\ep\rt\}.
\end{align*}
\,
\\
For the convenience of the reader, we restate Theorem 4.4 of \cite{C19} here. 
\begin{theorem}\label{HarryThm4.4}
    Suppose that $(X,\|\cdot\|,|\cdot|)$ is a Saks space, with $(X,\|\cdot\|)$ a Banach space, that $\cQ=(\Om,\cF,m,\sg,X,Q)$ is a separable strongly measurable random linear system with ergodic invertible base and a uniform hyperbolic Oseledets splitting of dimension $d\in\NN$, and that $Q\in \LL\YY(C_1,C_2,r,R)\cap \End_S(\XX,\sg)$ for some $C_1,C_2,R>0$ and $r\in[0,e^{\mu_Q})$. There exists $\ep_0>0$ such that if $\cP=(\Om,\cF,m,\sg,X,Q)$ is a separable strongly measurable random linear system with $P\in\LL\YY(C_1,C_2,r,R)\cap \cO_{\ep_0}(\cQ)$, then $\cP$ also has an Oseledets splitting of dimension $d$. In addition, letting $\Lm_{Q,i}$ ($1\leq i\leq k_Q$) be the Lyapunov exponents of $\cQ$, 
    there exists $c_0<2^{-1}\min_{1\leq i\leq k_Q}\{\Lm_{i,Q}-\Lm_{i+1,Q}\}$ such that each $I_i=(\Lm_{i,Q}-c_0), \max\{\Lm_{i,Q}, \log(\dl_{1i}R)\}+c_0)$ ($1\leq i\leq k_Q$) separates the Lyapunov spectrum of $\cP$, and the corresponding projections satisfy 
    \begin{align*}
        \forall i\in\{1,\dots,k_Q\},\, m\text{-a.e. }\oio \quad \rank(\Pi_{I_i, P}(\om)=d_{i,Q},
    \end{align*}
    and 
    \begin{align*}
        \sup\lt\{\esssup_{\oio}\|\Pi_{I_i, P}(\om)\|: P\in\LL\YY(C_1,C_2,r,R)\cap \cO_{\ep_0}(\cQ), 1\leq i\leq k_Q\rt\}<\infty.
    \end{align*}
    Moreover, for every $\nu>0$ there exists $\ep_\nu\in(0,\ep_0)$ so that if $P\in$, then 
    \begin{align*}
        \sup_{1\leq i\leq d}|\gm_{i,Q}-\gm_{i,P}|\leq \nu,
    \end{align*}
    \begin{align*}
        \sup_{1\leq i\leq k_\cQ}\esssup_{\oio}\trinorm{\Pi_{I_i, Q}(\om),\Pi_{I_i, P}(\om)}\leq \nu,
    \end{align*}
    and 
    \begin{align*}
        \esssup_{\oio} d_H(F_{k_Q, Q}(\om), F_{k_P, P}(\om))\leq \nu.
    \end{align*}
\end{theorem}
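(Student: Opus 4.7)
The plan is to follow the strategy of Crimmins adapted from the Keller--Liverani perturbation framework to quasi-compact operator cocycles on Saks spaces. I would organize the argument into three stages: (i) uniform quasi-compactness for the whole family $\LL\YY(C_1,C_2,r,R)\cap \cO_{\ep_0}(\cQ)$, (ii) construction of an equivariant splitting for $\cP$ by a fibered graph-transform near the splitting of $\cQ$, and (iii) extraction of Lyapunov exponent and projection stability from the splitting data.

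First I would fix separating intervals $I_i$ of width less than some $c_0$ chosen smaller than half the smallest Lyapunov gap of $\cQ$, and intersecting the spectrum only at $\Lambda_{i,Q}$. The uniform Lasota--Yorke inequality, combined with a Hennion/Nussbaum-type bound on the index of compactness via the Saks-space equicontinuity furnished by $\End_S(\XX,\sg)$, gives $\kappa(\cP)\leq \log r < \mu_Q\leq \Lambda_{k_Q,Q}$ for \emph{every} $P\in \LL\YY(C_1,C_2,r,R)$. This puts us in the regime where the Froyland--Lloyd--Quas/Gonz\'alez-Tokman--Quas multiplicative ergodic theorem applies on each fiber and produces a well-defined hyperbolic Oseledets splitting above the essential spectral radius for both $\cQ$ and any admissible $\cP$; separability and strong measurability ensure the resulting splittings are measurable. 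Next, using the splitting $(E_{i,Q}(\om))\oplus(F_{i,Q}(\om))$ given by (H1)--(H3), I parameterize nearby candidate fast subspaces of the correct dimension $d_{i,Q}$ as graphs of tempered linear maps $\psi_\om:E_{i,Q}(\om)\to F_{i,Q}(\om)$. The equivariance condition $P_\om\circ(\mathrm{id}+\psi_\om)=(\mathrm{id}+\psi_{\sg\om})\circ A_\om$ for some $A_\om\in\LL(E_{i,Q}(\om),E_{i,Q}(\sg\om))$ rearranges into a graph-transform operator $\Gamma_P$ acting on the space of measurable families $(\psi_\om)$ bounded in the $\|\cdot\|$-induced operator norm. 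For $\sup_\om \trinorm{P_\om-Q_\om}<\ep_0$ small enough (the size of $\ep_0$ chosen in terms of the Lasota--Yorke constants, the projection bound $\Theta$ from (H1), and the gap $\mu_Q/\lambda_Q$), $\Gamma_P$ is a contraction on a small closed ball, its fixed point being $(\psi_\om^\ast)$. The graph of $\psi_\om^\ast$ defines the equivariant fast space $E_{i,P}(\om)$ and its complement along $F_{i,Q}(\om)$, hence the projection $\Pi_{I_i,P}(\om)$, with the uniform norm bound $\esssup_\om \|\Pi_{I_i,P}(\om)\|<\infty$ inherited from the uniform control on $\|\psi_\om^\ast\|$.

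With the splitting in hand, the stated stability estimates follow: the Hausdorff distance $d_H(F_{k_Q,Q}(\om),F_{k_P,P}(\om))$ and the triple-norm projection difference $\trinorm{\Pi_{I_i,Q}(\om)-\Pi_{I_i,P}(\om)}$ are both controlled by $\sup_\om \|\psi_\om^\ast\|$, which contracts to $0$ with $\ep_0$ by standard contraction-mapping estimates. For the Lyapunov exponents, I would restrict $P_\om^n$ to the finite-dimensional $E_{i,P}(\om)$ and apply Kingman's subadditive theorem together with continuity of $\log\lvert\det\rvert$ on bounded sets of invertible linear maps between close subspaces; the closeness of $E_{i,P}(\om)$ to $E_{i,Q}(\om)$ and of $P_\om$ to $Q_\om$ on these pieces yields $|\gm_{i,Q}-\gm_{i,P}|\leq \nu$, refining the initial coarse bound obtained from choosing $I_i$ of width $<c_0$. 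The hardest step will be stage (ii): verifying that the graph-transform $\Gamma_P$ is a genuine contraction in an appropriate fibered norm when the perturbation is measured only in the weaker triple norm $\trinorm{\cdot}=\|\cdot\|_{X\to(X,|\cdot|)}$. This requires interpolating iterates of $P$ between the $\|\cdot\|$ and $|\cdot|$ norms using the Lasota--Yorke inequality and the Saks-space equicontinuity, so that the ``mixed'' error $\trinorm{P_\om-Q_\om}$ can be promoted, after finitely many iterations of a block of length $N$, to a true operator-norm estimate on the images of $\Pi_{I_i,Q}$, at the cost of constants that remain uniform in $\om$ due to the ergodic base.
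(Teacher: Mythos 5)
You should first note what the paper actually does with this statement: it does not prove Theorem \ref{HarryThm4.4} at all. The theorem is quoted verbatim from Crimmins \cite{C19} (Theorem 4.4 there), and the paper's only original contribution concerning it is Appendix \ref{appA}, which checks that the separability hypothesis of \cite{C19} (used there only to obtain measurability) can be replaced by the $m$-continuity condition \eqref{M}, so that the result can be invoked for the non-separable space $\BV_1$ in the proof of Lemma \ref{harrylemma2}. So your proposal is an attempt to reprove the cited theorem rather than to reproduce anything argued in this paper. Your high-level outline --- uniform Lasota--Yorke control and quasi-compactness for the whole family $\LL\YY(C_1,C_2,r,R)\cap\cO_{\ep_0}(\cQ)$, construction of perturbed equivariant fast spaces near those of $\cQ$, then stability of exponents and projections --- does have the same overall shape as Crimmins' argument.

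However, there is a genuine gap at your central step (ii). You claim the graph transform $\Gamma_P$ contracts in the strong ($\|\cdot\|$-induced) fibered norm and that $\sup_\om\|\psi^*_\om\|$ tends to $0$ with $\ep_0$, and you then feed this quantity into the projection estimates, the Hausdorff-distance estimate, and (via continuity of $\log|\det|$ on the restricted maps) the exponent estimate. But the perturbation is only small in the mixed norm $\trinorm{\cdot}$, and in this Keller--Liverani-type setting the perturbed fast spaces are in general \emph{not} close to the unperturbed ones in the strong norm: one only gets weak-norm closeness together with uniform boundedness of the perturbed projections in $\|\cdot\|$, which is precisely why the conclusions of the theorem are stated with $\trinorm{\Pi_{I_i,Q}(\om)-\Pi_{I_i,P}(\om)}$ and not with the strong operator norm. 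Your proposed remedy --- promoting the mixed-norm error to a strong-norm estimate on the images of $\Pi_{I_i,Q}$ after a block of length $N$ --- exploits only the finite dimensionality of the fast spaces; norm equivalence there controls the fast-space components, but it cannot make the graph maps $\psi_\om$ into the infinite-dimensional slow spaces $F_{i,Q}(\om)$ small in the strong target norm, so the contraction in your chosen norm is not available, and everything downstream that relies on ``$\sup_\om\|\psi^*_\om\|$ small'' (including the determinant-continuity argument for $|\gm_{i,Q}-\gm_{i,P}|\leq\nu$) is unjustified as written. The route actually taken in \cite{C19} measures the splitting perturbation in the weak norm, proves only uniform strong-norm bounds for the perturbed projections, and recovers the Lyapunov exponents through volume-growth estimates on the perturbed fast spaces rather than through strong-norm continuity of the splitting; if you want an independent proof you would need to reorganize stage (ii) and stage (iii) along those lines.
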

We now describe the necessary changes in order to apply Theorem 4.4 to our setting. Theorem, Proposition, Lemma, and equation numbers below refer to numbers in \cite{C19}.

\textit{Theorem 4.4 (Theorem \ref{HarryThm4.4}):}
This is the main stability theorem.
We would substitute ``separable strongly measurable random dynamical system'' with ``$m$-continuous random dynamical system''. This theorem relies on Propositions 4.6 and 4.7, Lemma 4.11, Proposition 4.12, and Lemma 4.13.
\vspace{.5cm}

\emph{Proposition 4.6:} There is no measurability involved.
\vspace{.5cm}

\emph{Proposition 4.7:} Uses Lemmas 4.8 and 4.10. We note that the bounds (93)--(95) in the proof are simpler in our application of this result as our top space is one-dimensional.
\vspace{.5cm}

\emph{Lemma 4.8:} This may be replaced by Theorem 17 \cite{FLQ2}, which treats the $m$-continuous setting.  This removes any use of Proposition B.1 and Lemma B.7.
\vspace{.5cm}

\emph{Lemma 4.9:} This uses Lemma 4.8 and the fact that compositions of $m$-continuous maps are $m$-continuous. The latter replaces the use of Lemma A.5 \cite{GTQ14}, which is used in several results.  This replacement will not be mentioned further.
\vspace{.5cm}

\emph{Lemma 4.10:} We will assume that $\omega\mapsto\Pi_\omega$ is $m$-continuous in the statement of the lemma.  At the start of the proof we would now instead have $\omega\mapsto \Pi_\omega(X)$ is $m$-continuous by the definition of $m$-continuity (see e.g.\ (4) in \cite{FLQ2}); this removes the use of Lemma B.2 in the proof. 
Lian's thesis is quoted regarding measurable maps/bases connected with a measurable space $\Pi_\omega(X)$. 
In our application, $\Pi_\omega$ has rank 1 and therefore stating that there is a (in our case $m$-continuous) map $e:\Omega\to \Pi_\omega(X)$ is trivial.  One proceeds similarly for the dual basis.
\vspace{.5cm}

\emph{Lemma 4.11:} This concerns measurability and integrability of $\omega\mapsto \det(\mathcal{L}_\omega^n|E_{i,\omega})$ and $\omega\mapsto \log\|\mathcal{L}_\omega^n|E_{i,\omega}\|$ where $E_{i,\omega}$ is an Oseledets space.
$m$-continuity of $\omega\mapsto E_{i,\omega}$ is provided by Theorem 17 \cite{FLQ2}, removing the need for Lemma B.2.
The $m$-continuity of $\omega\mapsto \log\|\mathcal{L}_\omega^n|E_{i,\omega}\|$ follows from Lemma 7 \cite{FLQ2}.
Because in our application setting we only require one-dimensional $E_{i,\omega}$, the determinants are given by norms and there is nothing more to do concerning determinants.
This removes the need for Proposition B.8.
Lemma B.16 \cite{GTQ14} may be replaced with Lemma 7 \cite{FLQ2} to cover the $m$-continuous setting.
Proposition B.6 is not required in the $P$-continuity setting.
\vspace{.5cm}

\emph{Proposition 4.12:} There is no measurability involved.
\vspace{.5cm}

\emph{Lemma 4.13:} There is no measurability involved.

\section{A $\var$--$\nu_{\omega,0}(|\cdot|)$ Lasota--Yorke inequality}\label{appC}

Recall from Section~\ref{sec: existence} that $\cZ_{\om,0}^{(n)}$ denotes the partition of monotonicity of $T_\om^n$ and that $\sA_{\om,0}^{(n)}$ is the collection of all finite partitions of $[0,1]$ such that
\begin{align}\label{eq: def A partition App}
\var_{A_i}(g_{\om,0}^{(n)})\leq 2\norm{g_{\om,0}^{(n)}}_{\infty}
\end{align}
for each $\cA=\set{A_i}\in\sA_{\om,0}^{(n)}$.
Given $\cA\in\sA_{\om,0}^{(n)}$, we set $\cZ_{\om,*,\ep}^{(n)}:=\set{Z\in \widehat\cZ_{\om,\ep}^{(n)}(\cA): Z\sub X_{\om,n-1,\ep} }$ where $\widehat\cZ_{\om,\ep}^{(n)}(\cA)$ is the coarsest partition amongst all those finer than $\cA$ and $\cZ_{\om,0}^{(n)}$ such that all elements of $\widehat\cZ_{\om,\ep}^{(n)}(\cA)$ are either disjoint from $X_{\om,n-1,\ep}$ or contained in $X_{\om,n-1,\ep}$. Then \eqref{eq: def A partition App} implies that 
\begin{align}\label{eq: def A partition for g_ep App}
\var_{Z}(g_{\om,\ep}^{(n)})\leq 2\norm{g_{\om,0}^{(n)}}_{\infty}
\end{align}
for each $Z\in \cZ_{\om,*,\ep}^{(n)}$.
We now prove a Lasota--Yorke inequality inspired by Lemma \ref{ly ineq}.
\begin{lemma}\label{closed ly ineq App} 
For any $f\in\BV_{\nu_{\om,0}}$ we have 
\begin{align*}
\var(\cL_{\om,\ep}^n(f))\leq 9\norm{g_{\om,\ep}^{(n)}}_{\infty}\var(f)+
\frac{8\norm{g_{\om,\ep}^{(n)}}_{\infty}}{\min_{Z\in\cZ_{\om,*,\ep}^{(n)}(A)}\nu_{\om,0}(Z)}\nu_{\om,0}(|f|).
\end{align*}
\end{lemma}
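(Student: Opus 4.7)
The plan is to mimic the proof of Lemma~\ref{ly ineq}, but simplified because we now bound the ``infimum sum'' using the reference measure $\nu_{\om,0}$ directly (rather than via the functional $\Lm_\om$), which makes the accounting of good versus bad intervals unnecessary. First I would write
\begin{align*}
\cL_{\om,\ep}^n(f)=\cL_{\om,0}^n\lt(f\cdot\hat X_{\om,n-1,\ep}\rt)
=\sum_{Z\in\cZ_{\om,*,\ep}^{(n)}}\ind_{T_\om^n(Z)}\cdot\lt((fg_{\om,\ep}^{(n)})\circ T_{\om,Z}^{-n}\rt),
\end{align*}
using that $Z\cap X_{\om,n-1,\ep}=\emptyset$ for every $Z\in\widehat\cZ_{\om,\ep}^{(n)}(\cA)\bs\cZ_{\om,*,\ep}^{(n)}$, and that $T_{\om,Z}^{-n}$ is the well-defined inverse branch on $T_\om^n(Z)$ for each $Z\in\cZ_{\om,*,\ep}^{(n)}$. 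Then $\var(\cL_{\om,\ep}^n f)$ is at most the sum of the variations of each summand.

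Next I would carry out the standard Rychlik-type variation estimate on a single $Z\in\cZ_{\om,*,\ep}^{(n)}$: using that adding a characteristic function of an interval contributes at most $2\sup$ of the multiplying function, together with $\sup_Z|h|\leq \inf_Z|h|+\var_Z(h)$ applied once, I would get
\begin{align*}
\var\lt(\ind_{T_\om^n(Z)}\cdot(fg_{\om,\ep}^{(n)})\circ T_{\om,Z}^{-n}\rt)
&\leq 3\var_Z\lt(fg_{\om,\ep}^{(n)}\rt)+2\inf_Z\absval{fg_{\om,\ep}^{(n)}}\\
&\leq 3\norm{g_{\om,\ep}^{(n)}}_{\infty}\var_Z(f)+3\sup_Z|f|\var_Z\lt(g_{\om,\ep}^{(n)}\rt)+2\norm{g_{\om,\ep}^{(n)}}_{\infty}\inf_Z|f|.
\end{align*}
Plugging in \eqref{eq: def A partition for g_ep App} to replace $\var_Z(g_{\om,\ep}^{(n)})$ by $2\norm{g_{\om,\ep}^{(n)}}_\infty$ and applying $\sup_Z|f|\leq\inf_Z|f|+\var_Z(f)$ once more yields the single-interval bound
\begin{align*}
\var\lt(\ind_{T_\om^n(Z)}\cdot(fg_{\om,\ep}^{(n)})\circ T_{\om,Z}^{-n}\rt)
\leq 9\norm{g_{\om,\ep}^{(n)}}_{\infty}\var_Z(f)+8\norm{g_{\om,\ep}^{(n)}}_{\infty}\inf_Z|f|.
\end{align*}

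Summing over $Z\in\cZ_{\om,*,\ep}^{(n)}$, the first term telescopes to $9\norm{g_{\om,\ep}^{(n)}}_{\infty}\var(f)$, giving the variation coefficient claimed. The key simplification (compared to Lemma~\ref{ly ineq}) occurs in the second term: for each $Z$ with $\nu_{\om,0}(Z)>0$ we have the trivial bound
\begin{align*}
\inf_Z|f|\cdot\nu_{\om,0}(Z)\leq\int_Z|f|\,d\nu_{\om,0},
\end{align*}
so
\begin{align*}
\sum_{Z\in\cZ_{\om,*,\ep}^{(n)}}\inf_Z|f|
\leq\frac{1}{\min_{Z\in\cZ_{\om,*,\ep}^{(n)}(\cA)}\nu_{\om,0}(Z)}\sum_{Z\in\cZ_{\om,*,\ep}^{(n)}}\nu_{\om,0}(|f|\ind_Z)
\leq\frac{\nu_{\om,0}(|f|)}{\min_{Z\in\cZ_{\om,*,\ep}^{(n)}(\cA)}\nu_{\om,0}(Z)},
\end{align*}
and combining with the variation term produces the stated inequality.

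The main thing to justify carefully is that the minimum $\min_{Z\in\cZ_{\om,*,\ep}^{(n)}(\cA)}\nu_{\om,0}(Z)$ is strictly positive, so that the bound is non-vacuous. This is precisely the point where the full support and non-atomicity of $\nu_{\om,0}$ (guaranteed by Lemma~\ref{DFGTV18Alemma}) enter: each $Z\in\cZ_{\om,*,\ep}^{(n)}$ is a non-degenerate interval and the partition is finite, so the minimum is a finite strictly positive number. The rest of the computation is bookkeeping essentially identical to Section~\ref{sec: LY ineq}, so I would not grind through it again.
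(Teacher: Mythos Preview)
Your proposal is correct and follows essentially the same argument as the paper: the same decomposition of $\cL_{\om,\ep}^n f$ as a sum over $\cZ_{\om,*,\ep}^{(n)}$, the same single-interval Rychlik-type estimate yielding $9\norm{g_{\om,0}^{(n)}}_\infty\var_Z(f)+8\norm{g_{\om,0}^{(n)}}_\infty\inf_Z|f|$, and the same bound $\inf_Z|f|\le \nu_{\om,0}(|f|\ind_Z)/\nu_{\om,0}(Z)$ before summing. The only difference is cosmetic: the paper carries $\norm{g_{\om,0}^{(n)}}_\infty$ through the computation (since \eqref{eq: def A partition for g_ep App} bounds $\var_Z(g_{\om,\ep}^{(n)})$ by $2\norm{g_{\om,0}^{(n)}}_\infty$, not $2\norm{g_{\om,\ep}^{(n)}}_\infty$), whereas you write $\norm{g_{\om,\ep}^{(n)}}_\infty$; this is harmless given $\norm{g_{\om,\ep}^{(n)}}_\infty\le\norm{g_{\om,0}^{(n)}}_\infty$ and matches the lemma statement.
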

\begin{proof}
Since $\cL_{\om,\ep}^n(f)=\cL_{\om,0}^n(f\cdot\hat X_{\om,n-1,\ep})$, if $Z\in\widehat\cZ_{\om,\ep}^{(n)}(\cA)\bs\cZ_{\om,*,\ep}^{(n)}$, then  $Z\cap X_{\om,n-1,\ep}=\emptyset$, and thus, we have $\cL_{\om,\ep}^n(f\ind_Z)=0$ for each $f\in\BV_{\nu_{\om,0}}$. Thus, considering only the intervals $Z$ in $\cZ_{\om,*,\ep}^{(n)}$, we are able to write 
\begin{align}\label{eq: ly ineq 1}
\cL_{\om,\ep}^nf=\sum_{Z\in\cZ_{\om,*,\ep}^{(n)}}(\ind_Z f g_{\om,\ep}^{(n)})\circ T_{\om,Z}^{-n}
\end{align} 
where 
$$	
T_{\om,Z}^{-n}:T_\om^n(I_{\om,\ep})\to Z
$$ 
is the inverse branch which takes $T_\om^n(x)$ to $x$ for each $x\in Z$. Now, since 
$$
\ind_Z\circ T_{\om,Z}^{-n}=\ind_{T_\om^n(Z)},
$$
we can rewrite \eqref{eq: ly ineq 1} as 
\begin{align}\label{eq: closed ly ineq 2}
\cL_{\om,\ep}^nf=\sum_{Z\in\cZ_{\om,*,\ep}^{(n)}}\ind_{T_\om^n(Z)} \lt((f g_{\om,\ep}^{(n)})\circ T_{\om,Z}^{-n}\rt).
\end{align}
So,
\begin{align}\label{closed var tr op sum}
\var(\cL_{\om,\ep}^nf)\leq \sum_{Z\in\cZ_{\om,*,\ep}^{(n)}}\var\lt(\ind_{T_\om^n(Z)} \lt((f g_{\om,\ep}^{(n)})\circ T_{\om,Z}^{-n}\rt)\rt).
\end{align}
Now for each $Z\in\cZ_{\om,*,\ep}^{(n)}$, using the fact that $\|g_{\om,\ep}^{(n)}\|_\infty\leq \|g_{\om,0}^{(n)}\|_\infty$ and \eqref{eq: def A partition for g_ep App} we have 
\begin{align}
&\var\lt(\ind_{T_\om^n(Z)} \lt((f g_{\om,\ep}^{(n)})\circ T_{\om,Z}^{-n}\rt)\rt)
\leq \var_Z(f g_{\om,\ep}^{(n)})+2\sup_Z\absval{f g_{\om,\ep}^{(n)}}
\nonumber\\
&\qquad\qquad\leq 3\var_Z(f g_{\om,\ep}^{(n)})+2\inf_Z\absval{f g_{\om,\ep}^{(n)}}
\nonumber\\
&\qquad\qquad\leq 3\norm{g_{\om,\ep}^{(n)}}_{\infty}\var_Z(f)+3\sup_Z|f|\var_Z(g_{\om,\ep}^{(n)})+2\norm{g_{\om,\ep}^{(n)}}_{\infty}\inf_Z|f|
\nonumber\\
&\qquad\qquad\leq 
3\norm{g_{\om,0}^{(n)}}_{\infty}\var_Z(f)+6\norm{g_{\om,0}^{(n)}}_{\infty}\sup_Z|f|+2\norm{g_{\om,0}^{(n)}}_{\infty}\inf_Z|f|
\nonumber\\
&\qquad\qquad\leq 
9\norm{g_{\om,0}^{(n)}}_{\infty}\var_Z(f)+8\norm{g_{\om,0}^{(n)}}_{\infty}\inf_Z|f|
\nonumber\\
&\qquad\qquad\leq
9\norm{g_{\om,0}^{(n)}}_{\infty}\var_Z(f)+8\norm{g_{\om,0}^{(n)}}_{\infty}\frac{\nu_{\om,0}(|f\rvert_Z|)}{\nu_{\om,0}(Z)}.
\label{closed var ineq over partition}
\end{align}
Using \eqref{closed var ineq over partition}, we may further estimate \eqref{closed var tr op sum} as
\begin{eqnarray}
\nonumber	\var(\cL_{\om,\ep}^nf)
&\leq& 
\sum_{Z\in\cZ_{\om,*,\ep}^{(n)}} \lt(9\norm{g_{\om,0}^{(n)}}_{\infty}\var_Z(f)+8\norm{g_{\om,0}^{(n)}}_{\infty}\frac{\nu_{\om,0}(|f\rvert_Z|)}{\nu_{\om,0}(Z)}\rt)
\\
\label{ORLYLY}
&\leq& 
9\norm{g_{\om,0}^{(n)}}_{\infty}\var(f)+
\frac{8\norm{g_{\om,0}^{(n)}}_{\infty}}{\min_{Z\in\cZ_{\om,*,\ep}^{(n)}(A)}\nu_{\om,0}(Z)}\nu_{\om,0}(|f|),
\end{eqnarray}
and thus we are done.
\end{proof}
\begin{remark}\label{Alt E9 Remark}
Note that we could have used $\Leb$ or any probability measure in \eqref{closed var ineq over partition} rather than $\nu_{\om,0}$. Furthermore, Lemma~\ref{closed ly ineq App} could be applied to Section~\ref{sec: existence} with a measure other than $\nu_{\om,0}$ if the appropriate changes are made to the assumption \eqref{E9} so that a uniform-in-$\om$ lower bound similar to \eqref{LY LB calc} may be calculated. 

In particular if we replace \eqref{E9} with the following:
\begin{enumerate}[align=left,leftmargin=*,labelsep=\parindent]
\item[(\Gls*{E9a})]\myglabel{E9a}{E9a}
There exists $k_o(n')\in\NN$ and $\dl>0$ such that for $m$-a.e. $\om\in\Om$, all $\ep>0$ sufficiently small, we have $\Leb(Z)>\dl$ for all $Z\in\cZ_{\om,*,\ep}^{(n')}(\cA)$,
\item[(\Gls*{E9b})]\myglabel{E9b}{E9b}
There exists $c>0$ such that $\essinf_\om |T_\om'|>c$,
\end{enumerate}
then the claims of Section~\ref{sec: existence} hold with $\nu_{\om,0}$ in \eqref{closed var ineq over partition} replaced with $\Leb$.

Indeed, to obtain a replacement for \eqref{LY LB calc} one could use \eqref{E9a} and \eqref{E9b} to get 
\begin{align*}
\Leb(Z)
&=
\Leb(\~\cL_{\om,0}^{k_o(n')}\ind_Z)
\geq 
\frac{\inf g_{\om,0}^{(k_o(n'))}\inf J_\om^{(k_o(n'))}}{\lm_{\om,0}^{k_o(n')}}\Leb(P_\om^{k_o(n')}\ind_Z)
\\
&\geq \essinf_\om \frac{\inf g_{\om,0}^{(k_o(n'))}\inf J_\om^{(k_o(n'))}}{\lm_{\om,0}^{k_o(n')}}\Leb(Z)
>0
\end{align*}
for all $Z\in\cZ_{\om,*,\ep}^{(n')}(\cA)$, where we have also used \eqref{E3} for the final inequality.
As \eqref{E9} is only used to prove \eqref{LY LB calc}, the remainder of Section~\ref{sec: existence} can be carried out with the appropriate notational changes. In fact, the proof of Lemma~\ref{harrylemma2} can be simplified by replacing $\nu_{\om,0}$ with $\Leb$ as Lemma 5.2 of \cite{BFGTM14} would no longer be needed. 

\end{remark}

\begin{remark}
Note that the $2$ appearing in \eqref{eq: def A partition App}, and thus the $9$ and $8$ appearing in \eqref{ORLYLY}, are not optimal. See \cite{AFGTV20} and Section \ref{sec: examples} for how these estimates can be improved. 
\end{remark}

\section{Proof of Claim \protect{\eqref{ITEM 6}} of Theorem \protect{\ref{EXISTENCE THEOREM}}}\label{appDec}
\begin{proof}[Proof of Claim \eqref{ITEM 6} of Theorem \ref{EXISTENCE THEOREM}:]
Set $\cB_\om=\BV_{\nu_{\om,0}}$.
Define the fully normalized operator $\sL_{\om,\ep}:\cB_\om\to\cB_{\sg\om}$ given by
$$
\sL_{\om,\ep}(f):=\frac{1}{\rho_{\om,\ep}\psi_{\sg\om,\ep}}\cL_{\om,\ep}(f\cdot \psi_{\om,\ep}).
$$
Then we have that $\sL_{\om,\ep}\ind = \ind$ and $\mu_{\sg\om,\ep}(\sL_{\om,\ep}f)=\mu_{\om,\ep}(f)$.
Following the proof of Theorem \ref{thm: exp convergence of tr op}, we can prove the following similar statement to Claim \eqref{item 5} of Theorem \ref{EXISTENCE THEOREM}: 	For each $h\in\cB_\om$, $m$-a.e. $\om\in\Om$ and all $n\in\NN$ we have
\begin{align}\label{exp conv fn trop}
\norm{\sL_{\om,\ep}^n h - \mu_{\om,\ep}(h)\ind}_{\cB_{\sg^n\om}}
=\norm{\sL_{\om,\ep}^n \hat h}_{\cB_{\sg^n\om}}
\leq D\norm{h}_{\cB_\om}\kp_\ep^n,
\end{align}
where $\hat h := h - \mu_{\om,\ep}(h)$ and $D$, $\kp_\ep$ are as in Claim \eqref{item 5} of Theorem \ref{EXISTENCE THEOREM}.
Using standard arguments (see Theorem 11.1 \cite{AFGTV20}) we have that
\begin{align*}
\absval{
	\mu_{\om,\ep}
	\lt(\lt(f\circ T_{\om}^n\rt)h \rt)
	-
	\mu_{\sg^{n}\om,\ep}(f)\mu_{\om,\ep}(h)
}
= \mu_{\sg^n\om,\ep}\lt(\lt|f\sL_{\om,\ep}^n\hat h\rt|\rt).
\end{align*}
Note that at this stage we are unable to apply \eqref{exp conv fn trop} as the $\|\cdot\|_{\cB_\om}$ norm and the measure $\mu_{\om,\ep}$ are incompatible.
Now from the third statement of Claim \eqref{item 5} of Theorem \ref{EXISTENCE THEOREM} we have that 
\begin{align*}
\lt|\mu_{\sg^n\om,\ep}\lt(\lt|f\sL_{\om,\ep}^n\hat h\rt|\rt)
- 
\frac{\vrho_{\sg^n\om,\ep}\lt(\lt|f \sL_{\om,\ep}^n\hat h\rt|\hat X_{\sg^n\om,n,\ep}\rt)}
{\vrho_{\sg^n\om,\ep}\lt( X_{\sg^n\om,n,\ep}\rt)}
\rt|
\leq D\norm{f\sL_{\om,\ep}^n\hat h}_{\cB_{\sg^n\om}}\kp_\ep^n, 
\end{align*}
and thus we must have that 
\begin{align}
\mu_{\sg^n\om,\ep}\lt(\lt|f\sL_{\om,\ep}^n\hat h\rt|\rt)
\leq 
D\norm{f\sL_{\om,\ep}^n\hat h}_{\cB_{\sg^n\om}}\kp_\ep^n +
\frac{\vrho_{\sg^n\om,\ep}\lt(\lt|f\sL_{\om,\ep}^n\hat h\rt|\hat X_{\sg^n\om,n,\ep}\rt)}
{\vrho_{\sg^n\om,\ep}(X_{\sg^n\om,n,\ep})}. \label{dec of corr proof final ineq1}
\end{align}
Using \eqref{exp conv fn trop} and \eqref{normequiv2}, we have that 
\begin{align}
\frac{\vrho_{\sg^n\om,\ep}\lt(\lt|f\sL_{\om,\ep}^n\hat h\rt|\hat X_{\sg^n\om,n,\ep}\rt)}
{\vrho_{\sg^n\om,\ep}(X_{\sg^n\om,n,\ep})}
&\leq \norm{f\sL_{\om,\ep}^n\hat h}_{\sg^n\om,\infty}
\leq \norm{f}_{\sg^n\om,\infty}\norm{\sL_{\om,\ep}^n\hat h}_{\cB_{\sg^n\om}}
\leq D\norm{f}_{\infty,\om}\|h\|_{\cB_\om}\kp_\ep^n.
\label{dec of corr proof final ineq2}
\end{align}
Combining \eqref{dec of corr proof final ineq1} and \eqref{dec of corr proof final ineq2} and using \eqref{exp conv fn trop} again we see that 
\begin{align*}
&\absval{
	\mu_{\om,\ep}
	\lt(\lt(f\circ T_{\om}^n\rt)h \rt)
	-
	\mu_{\sg^{n}\om,\ep}(f)\mu_{\om,\ep}(h)
}
\leq  \mu_{\sg^n\om,\ep}\lt(\lt|f\sL_{\om,\ep}^n\hat h\rt|\rt)
\\
&\qquad\qquad\leq
D\norm{f\sL_{\om,\ep}^n\hat h}_{\cB_{\sg^n\om}}\kp_\ep^n +
\frac{\vrho_{\sg^n\om,\ep}\lt(\lt|f\sL_{\om,\ep}^n\hat h\rt|\hat X_{\sg^n\om,n,\ep}\rt)}
{\vrho_{\sg^n\om,\ep}(X_{\sg^n\om,n,\ep})}
\\
&\qquad\qquad\leq 
D\norm{f\sL_{\om,\ep}^n\hat h}_{\cB_{\sg^n\om}}\kp_\ep^n +
D\norm{f}_{\infty,\om}\|h\|_{\cB_\om}\kp_\ep^n
\\
&\qquad\qquad\leq 
D^2\norm{f}_{\cB_\om}\norm{h}_{\cB_\om}\kp_\ep^{2n} +
D\norm{f}_{\infty,\om}\|h\|_{\cB_\om}\kp_\ep^n
\\
&\qquad\qquad\leq 
\~D\norm{f}_{\infty,\om}\|h\|_{\cB_\om}\kp_\ep^n
\end{align*}
for all $n$ sufficiently large,
and thus the proof of Claim \eqref{ITEM 6} of Theorem \ref{EXISTENCE THEOREM} is complete.

\end{proof}
\end{subappendices}

\chapter{Open Questions and Future Directions}\label{part 3}
In this final chapter we present several open questions and directions that have yet to be explored. 
\section[Questions from Chapter 1]{Questions from Chapter \ref{part 1}}
\begin{question}
    Can the hypotheses \eqref{cond Q1} on the maximum number of contiguous ``bad'' intervals be weakened or removed for ``large'' holes?  
\end{question}
Chapter \ref{part 2} answered this question in the affirmative for ``small'' holes, meaning holes that are taken to be sufficiently small such that a perturbation theory applies. However, it is not clear at all whether this assumption can be removed or weakened in the case of ``large'' holes. Furthermore, the assumptions on the contiguous ``bad'' intervals does not translate well to higher dimensional systems, so it seems necessary to develop a better assumption in order to develop the theory of ``large hole'' open dynamical systems for higher dimensions.
\,
\begin{question}
    Can the hypotheses of large images and large images with respect to $H$ be removed from Theorem \ref{thm: Bowen's Formula}?
\end{question}
This question is answered in the affirmative for a large class of  expanding random interval maps with holes in an upcoming work of the first author and Nathan Dalaklis. 
\,
\begin{question}
    Are finer statistical limit laws applicable to open dynamical systems?
\end{question}
In \cite{CM99} (see also \cite[Theorem 8.34]{CMM}) the authors prove a ``conditioned'' Central Limit Theorem for ACCIMs (referred to there as quasi-stationary distributions) in the context of finite state Markov chains, where the relevant measure is conditioned on the surviving sets $X_n$. 
Within the language Chapter \ref{part 1}, Collet and Mart\'inez prove that for sufficiently ``nice'' observables $h$ and ACCIMs $\eta$
\begin{align*}
\lim_{n\to\infty}\frac{\eta\lt(X_n\cap\lt\{x\in [0,1]:\frac{1}{\sqrt n}\sum_{j=0}^{n-1}h \circ T^j(x) \le a\rt\}\rt)}{\eta(X_n)}
=
\frac{1}{\sqrt{2\pi \sigma ^2(f)}} \int ^a _{-\infty} \exp\left(-\frac{z^2}{2\sigma ^2(h)}\right)\ dz.
\end{align*}
It begs the question whether a similar result holds for our random open systems and whether stronger statistical limit laws can be proven in either the deterministic or random settings.  
\begin{question}
    Can our approach to thermodynamic formalism for random open systems be extended to random higher dimensional piecewise expanding or hyperbolic systems?
\end{question}
The approach of Chapter \ref{part 1} heavily relies on  the geometry of the interval. In particular, there appears to be no straightforward generalization of our assumption \eqref{cond Q1} to higher dimensions. If one is willing to consider small holes, the perturbative approach of Chapter \ref{part 2}, in particular Section \ref{sec: existence}, offers a promising approach. Even still, one must then find a suitable Banach space in which to work, as $\BV$ is not necessarily the most useful space for higher dimensions. The quasi-H\"older, or Keller spaces \cite{kellerBV}, could be a suitable option as it has been used in both higher dimensions \cite{saussol00} as well as open systems \cite{ferguson_escape_2012, pollicott_open_2017}.
\\
\section[Questions from Chapter 2]{Questions from Chapter \ref{part 2}}
\begin{question}
    Can the perturbative approach to thermodynamic formalism for random open systems taken in Section \ref{sec: existence} be used on other (closed) random dynamical systems?
\end{question}
    The perturbative approach to open dynamical systems has been investigated previously in the deterministic setting in \cite{LMD,ferguson_escape_2012,FFT015,pollicott_open_2017,BDT18}. Can the perturbative approach be used to prove thermodynamic formalism results, like Theorem \ref{main thm: existence}, in other random settings, e.g. \cite{mayer_distance_2011}? 
\begin{question}
      In Theorem \ref{urp main thm B} (Theorem \ref{evtthm}) we introduced the spectral approach to extreme value theory and return time statistics---pioneered by Keller \cite{keller_rare_2012}---to the quenched random setting. Can this perturbative spectral approach be adapted to investigate higher return time statistics?
\end{question}
In \cite{AFGTV-CP}, we have adapted this approach to prove that the sequence of random variables that count the number of returns to a sequence of random shrinking targets converges to a compound Poisson distribution. In particular, it is shown that under the assumption of the scaling \eqref{xibound}, the distributions
\begin{align*}
    \mu_{\om,0}\left(\sum_{j=0}^{n-1}\ind_{H_{\sg^j\om,n}}(T_\om^jx)=k\right)
    \qquad \text{ for } k=0,1,\dots
\end{align*}
converge almost surely to a compound Poisson random variable.
One can then ask what further results can be obtained with this spectral approach. Can the techniques Chapter \ref{part 2} and \cite{AFGTV-CP} be adapted to prove stable laws? 
\begin{question}
    Can asymptotic formulas for the change in the Hausdorff dimension of the perturbed random surviving sets be found in the style of Theorem \ref{thm: dynamics perturb thm}?
\end{question}
    In \cite{pollicott_open_2017} Pollicott and Urba\'nski find asymptotic results \cite[Theorem 1.0.9]{pollicott_open_2017}, with a similar flavor to Theorem \ref{thm: dynamics perturb thm}, for the Hausdorff dimension of the (deterministic) perturbed surviving set $X_{\infty,\ep}$ as $\ep\to 0$ in the setting of conformal graph directed Markov systems. Can such a result be proven in the random setting for the interval maps of Section \ref{sec: existence} in either the presence or absence of the large images property? In particular, what can be said of the existence and/or value of the limiting quantity
    \begin{align*}
        \lim_{\ep\to 0}\frac{1-\HD(X_{\om,\infty,\ep})}{\mu_{\om,0(H_{\om,\ep})}}?
    \end{align*}

\clearpage
\thispagestyle{plain}
\chapter*{Acknowledgments}
The authors thank Harry Crimmins and Yushi Nakano for helpful discussions.
JA, GF, and CG-T thank the Centro de Giorgi in Pisa and CIRM in Luminy for their support and hospitality.
JA is supported by the ARC Discovery projects DP180101223 and DP210100357, and thanks the School of Mathematics and Physics at the University of Queensland  for their hospitality.
GF, CG-T, and SV are partially supported by the ARC Discovery Project DP180101223.
The research of SV was supported by the project {\em Dynamics and Information Research Institute} within the agreement between UniCredit Bank R\&D group for financial support through the “Dynamics and Information Theory Institute” at the Scuola Normale Superiore di Pisa and by the Laboratoire International Associé LIA LYSM, of the French CNRS and  INdAM (Italy).
The authors thank an anonymous referee for their helpful comments and bibliographic suggestions.

\bibliographystyle{plain}
\bibliography{combined}

\clearpage 

\printglossary[title=Glossary of Assumptions]
\clearpage
\printindex
\clearpage

\end{document}